\newcommand{\RNum}[1]{\uppercase\expandafter{\romannumeral #1\relax}}
\newtheorem{theorem}{Theorem}[section]
\newtheorem{definition}[theorem]{Definition}
\newtheorem{lemma}[theorem]{Lemma}
\newtheorem{exercise}[theorem]{Exercise}
\newtheorem{proposition}[theorem]{Proposition}
\newtheorem{corollary}[theorem]{Corollary}
\newtheorem{remark}[theorem]{Remark}
\newtheorem{axiom}{Axiom}
\begin{document}
\title{Axiomatic Music Theory}
\author{Seyyed Mehdi Nemati}
\date{January 1, 2020}
\maketitle
\abstract{It is tried to axiomatize the transparent theory of music.}

\paragraph*{}
\textbf{Keywords:} axiom system, music theory, sound, pitch, interval, scale, rhythm.

\paragraph*{}
\textbf{MSC 2020:} 00A65
\section{Introduction}

\paragraph*{}
One of the most important events in the history of mathematics is distinguished by going back over two thousand years to the ancient Greeks, beginning with the initial efforts at using deductive reasoning in geometrical demonstrations by Thales and culminating with Euclid around 300 B.C. as observed in his \textit{Elements}, which in fact gave birth to the \emph{axiomatic method} in mathematics, asserted that statements of geometry could (and must) be established by logical deduction rather than experimentation and empirical procedure leading to approximate achievements which were most often sufficient for practical intentions. During the nineteenth century, the discovery of non-Euclidean geometry caused geometricians to construct new axiom systems for Euclidean geometry and reveal flaws existed in Euclid's presentation of geometry by reexamining its foundations. In the late nineteenth century, Pasch made the first rigorous attempt to fill in gaps that had spoiled Euclid's work. Pasch's axiomatization of geometry made geometric notions and arguments highly explicit and remarkably modified. He emphasized the importance of purification of axiomatic treatment of the world of geometry on a logical foundation to make it absolutely accurate. At the end of the century, the most intuitive system of axioms had undoubtedly been proposed by Hilbert in which the clear simplicity of the axioms received a widespread acceptance of his contemporary mathematicians and produced a formalized axiomatic approach to develop neutral geometry. On the other hand, some axiomatizations of arithmetic was also done by other mathematicians of the time like Peano and so forth. The aim was to found a logical basis for arithmetic by virtue of the axiomatic development to make it and a fortiori number theory (concerning natural numbers to complex ones) completely rigorous. Of course, the idea of arithmetizing mathematical analysis in the real sense relied on such a deeper perspective on the foundations of mathematics some other mathematicians put into in the direction of resolving the crises in those foundations. For instance, in the 1900s, Russell's paradox in Cantor's set theory was removed through the first successful axiomatic system of set theory published by Zermelo and expanded later by Fraenkel.

Although more than 2000 years have passed, an axiomatization of the third subject of the Pythagorean quadrivium consisting of \textit{arithmetica}, \textit{geometria}, \textit{harmonia}, and \textit{astrologia} (\cite{B3}, p. 90), i.e. music, has not yet appeared. This is not at all because of a deficiency of attention to the mathematical aspects of music which is extensively noticed, from the ancient time by Pythagoras who calculated the correct ratios of harmonic intervals, to modern mathematicians working on the field of diatonic theory --- the study of music fundamentals from the mathematical point of view \cite{J1}. In this direction, many scientists have certainly provided effective help in evolving the mathematical comprehension of music rules by means of different areas of research such as basic music theory \cite{M4} (introducing the international language of music), acoustics \cite{E2} (describing physics of sound), psychoacoustics \cite{H3} (study of the vibrations of musical tones), aesthetics \cite{S2} (philosophizing on the nature of harmonic intervals), psychology \cite{D1} (researching on mental influences of pleasant melodies), anatomy \cite{A1} (studying the structure of the human ear), et cetera.

In summary, we plan to combine the mathematical perceptions within the first two sciences of quadrivium as mentioned above, take the resultant axiomatic approach to the third one, and create a vastly precise theorization of the essential structure of music theory on the base of the language of set theory (since no elements with the exception of \textit{set} really need ever be considered for mathematical purposes) in such a manner that by accepting the most intuitive facts of the music universe, we allow only pure logic to lead us to desired conclusions. Thus, the principal goal of the paper, which exhibits a mathematically axiomatized theorizing on music, is to study the foundations of this knowledge, by returning to the beginnings of that, exposing exactly what is assumed there, and building the entire subject on those foundations (as expressed in \cite{V1}, p. ix). In that regard, we will achieve explicit answers to some challenging wh-questions such as follows. What are the exact definitions of note and chord? Why do sounds with the same tone make a couple of octave intervals? Which intervals are harmonic and which are not, and how is their measure identified or well-defined at all? Where does the number $7$ originate from? When is a heptatonic scale maximally even? Whom is the chief idea of melody transposition due to? Whose time signature is equal to $\frac{2}{7}$?
\paragraph*{}
`Why has not such a substantial knowledge been axiomatized so far, if it could be possible?' one may ask. We must stress that during almost two thousand years, mathematicians were involved in the theory of parallelism dealing with the validity of Euclid's fifth postulate in the context of neutral geometry (see \cite{G1} for further information). Besides, the artistic aspect of music more than the scientific one, as always, used to be considered by different cultures throughout the history of mankind. Now, `What is art?' we do ask. We know ourselves how to answer. No art is able to compete against the art of mathematics which is also a knowledge. Factually, music is antecedently a knowledge before being an art, since we have the right to ask what it is primarily. Here, what is roughly meant by \emph{knowledge} is a collection of compatible information that is logically axiomatizable (formal definition could be found in mathematical logic textbooks). This attitude toward music theory in the spirit of Euclid's tradition (maybe regarded in his \textit{Elements of Music} completely lost) to clarify the questionable nature of music, providing a definite improvement over the other approaches, converts it into one branch of mathematics. (Readers are referred to \cite{E3}, \cite{B3}, and \cite{R2} for extra details about the history of mathematics.)

We believe that applying strictly logical rigor to the musical phenomena leads to astonishing mathematical insights into the perspicuous world of music never dreamed of.

\subsection{What is the axiomatic method?}
\paragraph*{}
A knowledge consists of objects with some properties. Speaking informally (refraining from introducing the formal definition of theory and other related logical notions), an \textit{axiom system} for a knowledge is a list of undefined terms, called \textit{primitive concepts}, which are selected for some objects together with a list of (declarative) statements dealing with properties of those objects, called \textit{axioms}, which are stipulated and presupposed to be true. Also, the logic rules, that is, tautologies of the propositional logic system, are latently contained in this list. In fact, the axioms are fresh rules to bind and restrict the primitive concepts not to admit any properties. Since no characteristics of undefined concepts may be used in a proof other than the properties provided by the axioms, one may also call the undefined concepts by any arbitrary names, just as Hilbert said (see \cite{G1} 3rd ed. p. 72). Every definable term in the axiom system is called a \textit{technical concept} and every provable statement by logical inferences is called a \textit{proposition}. In order to formulate a new definition with precision in the system, we are only allowed to use undefined concepts and all other technical concepts that have previously been introduced. Similarly, the process of proving a statement, say \emph{proof}, must be based on logic rules and correct statements consisting of all axioms and propositions that have already been proved. By doing so, we will not be caught in circularity about definitions and propositions of the system.

The main motivation to construct an axiom system for a knowledge is to determine precisely which properties of objects can be deduced from which others. In other words, the goal of such a construction is to discover new statements about (probably new) technical concepts, that is, propositions in the context of the axiom system. Whenever we feel to be misled into resulting in undesirable consequences, we have to turn back to modify the axioms immediately. Therefore, a great advantage of axiom systems is to deduce so many facts from a few intuitively and, of course, logically desirable principles in place of performing physical experiments with trial and error to check the truthfulness of the results. This is actually what is known as the axiomatic method to guide us via purified recognition of our ambient knowledge.
\paragraph*{}
The ambient knowledge in this paper is music. A system of axioms for this knowledge must be rigorously laid out. Our selection of undefined concepts is popular and the method of introducing all technical concepts is based on the most natural expectations of a typical musician. It is customary to postulate the axioms in such a way that they seem sufficiently self-evident as simple as intuitively obvious realities of the knowledge. Hence, we hardly ever intend to profoundly go into expert discussions of mathematical logic, even though we use formulas of symbolic logic as well (see \cite{E1} p. 161 \& 162 for beneficial notes) to write the axioms succinctly. Instead, it would be far preferable to making some efficient comments on the logical base of our axiom system. Applying the language of first-order logic, we are also going to use the language of sets so as not to discuss the existence of those abstract objects that have set-theoretic nature because it could be found out as direct consequences of the axiomatic set theory; here we mention that the set theory we make use of in this paper, ecpecially in working out cardinal numbers, is the one proposed by Zermelo-Fraenkel accompanied the axiom of choice and the Continuum Hypothesis (recommended sources in this context are \cite{H1, H5, A4, M5}). However, in this direction, we might occasionally lie on the threshold of the absolute obsessiveness. Aside from that, having not included any diagram because of representing some special case (perhaps misleadingly), the danger of inaccuracy in reasoning has decreased significantly. No claim is made that all musical notions and all mathematical features have been presented, just that those included would suffice for our purposes to furnish a regular strong basis for music theory. This is because of axiomatically doing music just for its own sake not for any kind of application, perhaps being not so gifted. Due to the orientation of the paper, tending to be much more conceptual in music principles, mathematically minded readers will achieve an excellent introduction to basic musical conceptions; at the opposite extreme, musically experienced readers will be able to get familiar with music fundamentals from the mathematically axiomatic perspective. This work is organized in a textbooklike manner suitable for every stratum of audiences.

It is necessary to mention that for discourse briefness, seemingly clear propositions for which no proof has been supplied are intentionally left to the readers, and some less obvious ones including a sketch of proof provide a warm-up for interested readers to proceed with the full proof. Furthermore, some similar proofs previously contained in the literature have been omitted. Important propositions from the mathematical or musical point of view, and those which play a helping role to prove the others, in addition to immediate consequences are respectively entitled \textit{theorem}, \textit{lemma}, and \textit{corollary}. Also, some considerable remarks are every so often given for more explanation. Finally, most of the untitled paragraphs talk beyond the axiomatic scope of the paper.

\subsection{Notes on Logic}
\paragraph*{}
If the axioms of a system does not lead to a contradiction, the axiom system is said to be \textit{consistent}; equivalently, a consistent axiom system contains not all well-formed formulas; in other words, there is no statement in the language of the system (namely concerning its technical concepts) which is both correct and incorrect. A statement (possibly axiom) is said to be \textit{independent} of the (other) axioms if it cannot be either proved or refuted from the axioms. An axiom system is said to be \textit{complete} if there are no independent statements in the language of that system; equivalently, it is possible to either prove or refute every statement in the language of the system (informally speaking, any meaningful question is answerable). A \textit{model} for an axiom system is a structure satisfying all the axioms. Roughly speaking, a model of an axiom system is an interpretation of that system by giving each undefined concept a particular meaning in such a way that all axioms are true (see \cite{G1} 4th ed. p. 72 for more information). In fact, objects of a model are concrete in spite of mathematical objects of the ambient knowledge which are abstract. A system is consistent if and only if there exists a model for it. A consistent axiom system is called \textit{categorical} if all its models are pairwise isomorphic; i.e., there is only one model up to isomorphism for it. By G\"{o}del's completeness theorem, categoricalness is stronger than completeness (\cite{E1} p. 135), but the converse is not right in first-order logic.

It is worth noting every logical notion above has its own specialized definition in the context of advanced mathematical logic with which we do not ever deal (our recommendations to interested readers for deeper study are \cite{H2, E1, M3} and probably \cite{M2}).

As different axiom systems may generate the same propositions, then there may be many alternative axiomatizations of the ambient knowledge. However, it is considered inelegant in mathematics to assume more axioms than are necessary. Therefore, we have got to pay for elegance by meticulously providing a minimal and irreducible set of axioms and arduously proving results (even seemingly obvious ones) on the basis of that. We will pay close attention to this aspect in the paper. Also, the axioms need themselves be chosen wisely to avoid redundancy and preserve consistency. In so doing, we are the first to propound a complete axiomatic system the music we enjoy is up to isomorphism the only one model for (Appendix \RNum{1}). We understand such a music system being satisfiable and decidable as well (because it is finitely axiomatizable too), though these logical discussions of our axiomatization will not be included in the article. Meanwhile, all references are displayed in order of priority.

\section{Pitch Music}
\paragraph*{}
In order to introduce the elementary part of music, i.e. \textit{pitch music}, we assume the undefined concepts ``sound'' and ``lower-pitched'', the first of which is just a property designated by lower-case letters and the second one is a relation between sounds, written ``$x$ is lower-pitched than $y$'' or ``$x$ is lower in pitch than $y$'', and denoted $x*y$. The inverse relation and the negation of both guide around great discrimination in sounds having higher pitch and those identical tuning respectively in such a way as follows.
\begin{remark}[Notation]
Throughout the paper $\mathbb{S}$ denotes the set of all sounds (so, $*$ is a binary relation on $\mathbb{S}$).
\end{remark}
\begin{definition}
If a sound $x$ is lower in pitch than a sound $y$, then $y$ is said to be \emph{higher-pitched} or \emph{higher in pitch} than $x$. In other words, the inverse of the binary relation of being lower-pitched, namely $*^{-1}$, is named being higher-pitched.
\end{definition}
\begin{definition}
A sound $x$ is said to be \emph{identical-pitched} or \emph{identical in pitch} with a sound $y$ if none is lower-pitched than the other. In this case, we write $x\sim y$; i.e.,
\begin{center}
$\forall x,y\in \mathbb{S}(x\sim y\Leftrightarrow \neg (x*y\vee y*x)).$
\end{center}
Otherwise, they are said to be \emph{non-identical (in pitch)}, written $x\nsim y$.
\end{definition}
\begin{corollary}
For every (not necessarily distinct) pair of sounds $x$ and $y$, exactly one of the following holds: $x*y$, $y*x$, or $x\sim y$.
\end{corollary}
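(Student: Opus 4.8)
The plan is to split the assertion ``exactly one holds'' into the two usual parts --- \emph{at least one} of $x*y$, $y*x$, $x\sim y$ is true, and \emph{no two} of them are true together --- and then to observe that the first part, together with two of the three incompatibilities, is a purely propositional consequence of the definition of $\sim$, so that only one clause has to be imported from the axioms.

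For ``at least one'' I would set $A :\equiv x*y$ and $B :\equiv y*x$ and split on the truth value of $A\vee B$: if $A\vee B$ holds, then one of the first two alternatives holds; if it fails, then $\neg(A\vee B)$ holds, which is by definition precisely the statement $x\sim y$. This dichotomy is just the law of excluded middle and never uses distinctness of $x$ and $y$, so it automatically covers the ``not necessarily distinct'' clause (when $x=y$ it simply forces $x\sim x$, the relevant instance of $\neg(A\vee B)$). For ``at most one'' I would examine the three pairings in turn. Since $x\sim y$ \emph{is} the assertion $\neg(A\vee B)$, it entails both $\neg A$ and $\neg B$, so $x\sim y$ is incompatible with $x*y$ and incompatible with $y*x$ --- again pure logic from the definition of $\sim$. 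The only remaining pairing, $x*y$ together with $y*x$, i.e.\ $A\wedge B$, is \emph{not} ruled out by the definitions alone: a binary relation may perfectly well relate $x$ to $y$ and $y$ to $x$. Excluding it is exactly the asymmetry of the lower-pitched relation, which has to be among the stipulated properties of $*$ in the system (equivalently, it follows as soon as $*$ is posited irreflexive and transitive); invoking it yields $\neg(A\wedge B)$ and closes the last case.

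Assembling the two halves gives the trichotomy. The only non-formal ingredient --- and the one step a careful reader should trace back to the axioms --- is the asymmetry of $*$, needed solely to prevent $x*y$ and $y*x$ from both holding; every other part of the argument is bookkeeping with the definition of $\sim$ and excluded middle. Accordingly I expect no genuine obstacle here beyond confirming that the axiom base actually delivers that asymmetry.
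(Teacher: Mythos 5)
Your proof is correct, and since the paper leaves this corollary unproved (it falls under its ``left to the reader'' convention) there is no official argument for it to diverge from; your route is the natural one. Your closing caveat is answered affirmatively: asymmetry of $*$ is indeed delivered by the axiom base, since Axiom 1 (irreflexivity) and Axiom 2 (transitivity) together give it (if $x*y$ and $y*x$, transitivity yields $x*x$, contradicting irreflexivity), and the paper records exactly this point in the remark proposing the alternative ``Axiom of Asymmetry in Pitch.'' The one wrinkle is placement rather than substance: in the paper the corollary is stated immediately after the definition of $\sim$ and \emph{before} Axioms 1 and 2, so at that spot only your ``at least one'' half and the incompatibility of $x\sim y$ with each of $x*y$ and $y*x$ follow from the definition alone; the exclusion of $x*y\wedge y*x$ --- precisely the ingredient you isolated --- relies on the axioms announced just afterwards. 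So your analysis is sound in the full system, and it also pinpoints the sense in which the corollary, as positioned, quietly anticipates those axioms.
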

\paragraph*{}
We differentiate between the identity relation $\sim$ and the equality relation $=$.
\begin{axiom}[Axiom of Irreflexivity in Pitch]
No sound is lower in pitch than itself; i.e.,
\begin{center}
$\forall x\in \mathbb{S}(\neg (x*x)).$
\end{center}
\end{axiom}
\begin{axiom}[Axiom of Transitivity in Pitch]
For every triple of sounds $x$, $y$, $z$, if $x$ is lower in pitch than $y$ and $y$ is lower in pitch than $z$, then $x$ is lower in pitch than $z$; that is,
\begin{center}
$\forall x,y,z\in \mathbb{S}((x*y\wedge y*z)\Rightarrow x*z).$
\end{center}
\end{axiom}
\begin{axiom}[Axiom of Existence]
There exists a pair of non-identical sounds; more precisely,
\begin{center}
$\exists x,y\in \mathbb{S}(x*y\vee y*x).$
\end{center}
\end{axiom}
\begin{corollary}
$*$ is a nonempty strict (not partial) ordering on $\mathbb{S}$.
\end{corollary}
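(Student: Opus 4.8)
The plan is to verify, one clause at a time, the defining conditions of a strict total ordering and to point to where each one has already been secured. Recall that calling $*$ a \emph{strict ordering} of $\mathbb{S}$ asks for irreflexivity, transitivity, and connectedness (comparability of any two elements), the qualifier ``not partial'' signalling exactly that the connectedness clause is included and not merely irreflexivity together with transitivity; and ``nonempty'' asks that $*$ (equivalently the carrier $\mathbb{S}$) not be empty.

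First I would dispose of the routine clauses. Irreflexivity of $*$ is nothing other than the Axiom of Irreflexivity in Pitch, and transitivity of $*$ is nothing other than the Axiom of Transitivity in Pitch, so there is literally nothing to prove for those two. Nonemptiness is immediate from the Axiom of Existence, which supplies $x,y\in\mathbb{S}$ with $x*y$ or $y*x$; in particular $\mathbb{S}\neq\emptyset$ and the relation $*$ contains at least one ordered pair. It is also worth recording, in passing, that asymmetry — the impossibility of $x*y$ and $y*x$ simultaneously — is a free consequence of the two axioms, since chaining $x*y$ with $y*x$ through transitivity yields $x*x$, contradicting irreflexivity; no separate asymmetry axiom is needed.

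The substantive clause is connectedness (the ``not partial'' part), and here I would lean entirely on the Corollary immediately preceding the statement. That corollary records the trichotomy: for every pair $x,y$ exactly one of $x*y$, $y*x$, $x\sim y$ holds. Reading off the ``at least one'' half, whenever $x\nsim y$ we must have $x*y$ or $y*x$, which is precisely the assertion that non-identical sounds are $*$-comparable — the feature distinguishing a strict total ordering from a merely partial one. (The asymmetry just noted, together with the definition of $\sim$, is what makes the ``exactly one'' in that corollary legitimate, so I would cite it cleanly rather than reprove it.)

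The one delicate point I would flag — really the only conceptual wrinkle, since the rest is bookkeeping — is the interplay of $\sim$ and $=$. Because the paper deliberately distinguishes identity in pitch $\sim$ from set-theoretic equality $=$, the connectedness obtained is connectedness modulo $\sim$, so that $*$ is, strictly speaking, a strict total ordering of the pitch classes $\mathbb{S}/{\sim}$ rather than of $\mathbb{S}$ element-by-element. I would either phrase the conclusion in that form or, following the paper's evident convention, read ``ordering on $\mathbb{S}$'' as shorthand for this $\sim$-relative ordering; in neither case is there any further obstacle, the whole argument amounting to matching each clause of the definition against the axiom or corollary that establishes it.
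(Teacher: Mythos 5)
Your handling of the clauses that the corollary actually asserts is fine and matches what the paper (which leaves this corollary unproved) evidently intends: nonemptiness from the Axiom of Existence, irreflexivity from Axiom 1, transitivity from Axiom 2, and asymmetry as the two-line consequence of chaining $x*y$ with $y*x$ to contradict irreflexivity --- precisely the point of Remark 2.6. That bookkeeping is the whole content of the statement.

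The problem is your reading of ``(not partial)''. You take it to demand connectedness, so that $*$ would be a strict \emph{total} order, and you are then forced to relocate the conclusion to the pitch classes. That is not what the corollary claims, and it could not be: the paper says explicitly at the end of Section 2 that it is not yet known ``whether or not any pair of sounds are comparable under the order relation $*$'', and once the Loudness Postulate shows that $\sim$ does not coincide with $=$ (Remark 4.2), $*$ is certainly not connected on $\mathbb{S}$. The parenthetical only contrasts the strict (irreflexive) relation $*$ with a partial order in the reflexive sense; totality is reserved for the induced relation $*'$ on $\mathbb{S}/\sim$, and only in Remark 2.10. Your fallback formulation --- ``$*$ is a strict total ordering of $\mathbb{S}/\sim$'' --- is also premature at this point of the development: it is $*'$, not $*$, that lives on the quotient, and its well-definedness rests on Lemma 2.9, which in turn needs the Axiom of Separation, both of which come after this corollary. (You are right, at least, that the ``exactly one'' in Corollary 2.4 leans on the asymmetry you derive, so citing it adds nothing here.) Delete the connectedness discussion and read the quantifier-free content as irreflexive, asymmetric, transitive, nonempty; then your first two paragraphs already constitute a complete and correct proof.
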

\begin{remark}
If we replace Axiom 1 with a (slightly more intuitive and surely) stronger one, that is,
\begin{center}
$\forall x,y\in \mathbb{S}(\neg (x*y \wedge y*x))$,
\end{center}
say ``Axiom of Asymmetry in Pitch'', we then obtain the same consequences.
\end{remark}
\paragraph*{}
In order to distinguish the order $*$ from a type of lexicographic one, which is musically counter-intuitive, a new axiom is required as follows. The reason for the name of such an axiom will be clarified at the end of the section.
\begin{axiom}[Axiom of Separation]
If a sound $x$ is lower-pitched than a sound $y$, then there is no sound $z$ which is identical-pitched with both $x$ and $y$; that is,
\begin{center}
$\forall x,y,z\in \mathbb{S}(x*y \Rightarrow \neg (x\sim z \wedge y\sim z)).$
\end{center}
\end{axiom}
\paragraph*{}
Relaxing the mind, we now have got the following.
\begin{corollary}
$\sim$ is a nonempty equivalence relation on $\mathbb{S}$.
\end{corollary}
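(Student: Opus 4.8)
The plan is to verify the three defining properties of an equivalence relation --- reflexivity, symmetry, and transitivity --- and then record nonemptiness. Reflexivity and symmetry should fall out immediately from the way $\sim$ is defined together with Axiom 1. Indeed, $x \sim x$ unfolds to $\neg(x*x \vee x*x)$, which is just $\neg(x*x)$, the Axiom of Irreflexivity in Pitch; and $x \sim y$ unfolds to $\neg(x*y \vee y*x)$, a formula visibly symmetric in $x$ and $y$, so ``$x \sim y$'' and ``$y \sim x$'' are literally the same statement. For nonemptiness, the Axiom of Existence furnishes at least one sound in $\mathbb{S}$, and reflexivity then places at least one pair into $\sim$.

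The substantive step will be transitivity: assuming $x \sim y$ and $y \sim z$, I want $x \sim z$. First I would invoke the trichotomy established in the earlier corollary --- exactly one of $x*z$, $z*x$, $x \sim z$ holds --- so it suffices to rule out the first two alternatives. If $x*z$, then the Axiom of Separation, applied to the pair $x, z$ with witness $y$, gives $\neg(x \sim y \wedge z \sim y)$; but $x \sim y$ holds by hypothesis, and $z \sim y$ holds because $y \sim z$ by hypothesis and $\sim$ is symmetric (already proved), a contradiction. The case $z*x$ is handled identically, applying the Axiom of Separation to the pair $z, x$ with the same witness $y$. Hence $x \sim z$, and $\sim$ is transitive.

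I expect the only genuine obstacle to be transitivity, and this is no accident: the Axiom of Separation was introduced precisely so that identity in pitch behaves transitively, excluding the ``lexicographic-type'' orders alluded to in the text. Everything else --- reflexivity, symmetry, nonemptiness --- is essentially bookkeeping with the definition of $\sim$ and the axioms already in hand. One small point to watch is that the transitivity argument consumes the already-established symmetry of $\sim$, so the properties should be proved in the order reflexivity and symmetry first, transitivity last.
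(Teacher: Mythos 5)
Your proof is correct, and it follows exactly the route the paper intends (the corollary is stated immediately after the Axiom of Separation precisely so that transitivity falls out of it, with reflexivity from Axiom 1, symmetry from the symmetric form of the definition, and nonemptiness from the Axiom of Existence); the paper itself leaves the verification to the reader. Your use of Corollary 2.4 together with Separation applied to the witness $y$ is the expected argument, and the ordering of steps you flag (symmetry before transitivity) is the right bookkeeping.
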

\begin{remark}[Notation]
The equivalence class of $x\in \mathbb{S}$ under $\sim$ is denoted by $\tilde{x}$; that is, $\tilde{x}:=\{a\in \mathbb{S}: a\sim x\}.$
\end{remark}
\begin{lemma}
For sounds $x$, $y$, $a$, $b$, if $x\sim a$, $y\sim b$, and $x*y$, then $a*b$.
\end{lemma}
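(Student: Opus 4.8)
The plan is to show that the relation $*$ is compatible with $\sim$ by ``transporting'' the single given instance $x*y$ across the two identities $x\sim a$ and $y\sim b$, one at a time. It suffices to prove the two one-sided statements: first, that $x*y$ together with $y\sim b$ forces $x*b$; and second, that $x*b$ together with $x\sim a$ forces $a*b$. Chaining these (apply the first with the hypotheses $x*y$ and $y\sim b$ to obtain $x*b$, then the second with $x\sim a$) yields $a*b$, which is the claim.

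For the first statement, fix $x*y$ and $y\sim b$ and run through the three mutually exclusive possibilities supplied by the trichotomy Corollary for the pair $x,b$. If $b*x$, then $b*x$ and $x*y$ give $b*y$ by the Axiom of Transitivity in Pitch, whereas $y\sim b$ (hence $b\sim y$ by symmetry of $\sim$) means precisely $\neg(b*y\vee y*b)$, a contradiction. If $x\sim b$, then we have both $x\sim b$ and $y\sim b$, while the Axiom of Separation applied to $x*y$ (with the role of $z$ played by $b$) gives $\neg(x\sim b\wedge y\sim b)$, again a contradiction. Hence the only surviving case is $x*b$. The second statement is proved by the mirror-image argument: given $x*b$ and $x\sim a$, the trichotomy for the pair $a,b$ leaves only $a*b$, since $b*a$ would give $x*a$ by transitivity (contradicting $x\sim a$), and $a\sim b$ together with $x\sim a$ would contradict the Axiom of Separation applied to $x*b$.

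There is no genuinely hard step here: the statement is the routine verification that $*$ descends to the quotient $\mathbb{S}/{\sim}$. The only point demanding care is the elimination of the ``$a$ and $b$ are identical in pitch'' case, which is not a consequence of transitivity of $*$ by itself and is exactly what the Axiom of Separation (equivalently, the already established Corollary that $\sim$ is an equivalence relation) is there to preclude; one must also be scrupulous to invoke only the trichotomy Corollary, the Axiom of Transitivity in Pitch, and the Axiom of Separation, since nothing stronger is available at this stage. A slightly shorter alternative avoids the two-step decomposition altogether: assume $x\sim a$, $y\sim b$, $x*y$, and directly refute $a\sim b$ (which would force $x\sim y$ by transitivity of $\sim$, against trichotomy) and $b*a$ (which, after promoting it to $b*x$ via trichotomy together with the cases just excluded, would give $b*y$ by transitivity, against $y\sim b$); trichotomy then delivers $a*b$.
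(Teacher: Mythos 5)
Your proof is correct; the paper states this lemma without proof (it is one of the results intentionally left to the reader), and your two-step transport argument — first passing from $x*y$ and $y\sim b$ to $x*b$, then from $x\sim a$ to $a*b$ — is exactly the intended verification. Every case elimination uses only tools available at that point (the trichotomy of Corollary 2.4, the Axiom of Transitivity in Pitch, the Axiom of Separation, and the symmetry and transitivity of $\sim$ from Corollary 2.8), and you rightly single out Separation as the indispensable ingredient for ruling out the identical-pitch cases, which transitivity of $*$ alone cannot exclude.
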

\begin{remark}
One can define the order $*'$ on the partition $\mathbb{S}/\sim \,=\{\tilde{x}: x\in \mathbb{S}\}$ of $\mathbb{S}$ induced by $\sim$ as follows:
\begin{center}
$\tilde{x}*'\tilde{y}\Longleftrightarrow x*y\vee x\sim y.$
\end{center}
It is easy to check that $*'$ is a linear (or total) ordering on $\mathbb{S}/\sim$.
\end{remark}

\subsection*{Pitch in Between}
\paragraph*{}
Up to this point, $ \mathbb{S}$ has two elements. Now, we develop our musical universe.
\begin{definition}
We say that a sound $x$ is \emph{between-pitched} a pair of (non-identical) sounds if x is higher-pitched than one of the sounds and lower-pitched than the other.
\end{definition}
\begin{remark}[Notation]
We employ the following shorthand for the ternary relation in Definition 2.11:
\begin{center}
$x*y*z\Longleftrightarrow x*y\wedge y*z.$
\end{center}
\end{remark}
\begin{proposition}
For every triple of pairewise non-identical sounds, exactly one is between-pitched the other two.
\end{proposition}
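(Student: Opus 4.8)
The plan is to reduce the claim to the elementary fact that three pairwise non-identical sounds are linearly ordered by $*$, and that in a linear order on a three-element set exactly the middle element lies strictly between the other two.

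First I would fix pairwise non-identical sounds $x,y,z$. By Corollary 2.4 (trichotomy), for each of the three pairs exactly one of the strict-order alternatives holds, the $\sim$-alternative being ruled out by non-identity. Renaming if necessary, I may assume $x*y$. Applying trichotomy to $\{y,z\}$ gives two cases, $y*z$ and $z*y$; in the latter, applying trichotomy to $\{x,z\}$ gives the two subcases $x*z$ and $z*x$. So there are three configurations to handle.

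In each configuration I would invoke the Axiom of Transitivity in Pitch to determine the one pairwise relation not yet assumed, and then check, using the Axiom of Irreflexivity in Pitch (or the asymmetry of Remark 2.6) together with trichotomy, that the cyclic arrangement cannot occur, so that $\{x,y,z\}$ really is linearly ordered by $*$. Concretely: from $x*y$ and $y*z$ one gets $x*z$ (and $z*x$ is then excluded), giving the chain $x*y*z$; from $x*y$, $z*y$, $x*z$ one gets the chain $x*z*y$; from $x*y$, $z*y$, $z*x$ one gets the chain $z*x*y$. In the three chains the middle term, namely $y$, $z$, and $x$ respectively, is higher-pitched than one of the remaining sounds and lower-pitched than the other, hence between-pitched the other two by Definition 2.11 (in the shorthand of Remark 2.12, these read $x*y*z$, $x*z*y$, $z*x*y$).

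For the ``exactly one'' part I would argue that in each chain the $*$-least sound is lower-pitched than both of the others, so it is not higher-pitched than either and therefore not between-pitched them; symmetrically the $*$-greatest sound is higher-pitched than both of the others and so is not between-pitched them. Hence the middle term is the only sound of the triple that is between-pitched the other two. The only genuine subtlety is making sure the cyclic configurations are excluded rather than silently assumed away; this is precisely where irreflexivity and transitivity combine, and I would spell it out rather than bury it under ``without loss of generality''.
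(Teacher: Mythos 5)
Your argument is correct and is essentially the intended one: the paper leaves Proposition 2.13 to the reader, and the expected proof is exactly this case analysis via Corollary 2.4 (trichotomy), the Axiom of Transitivity, and asymmetry (Remark 2.6, or irreflexivity plus transitivity) to linearly order the triple and then read off that only the middle sound satisfies Definition 2.11. Your explicit exclusion of the cyclic configuration and the separate check that the $*$-least and $*$-greatest sounds fail to be between-pitched are precisely the points the paper expects you to spell out, so nothing is missing.
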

\begin{axiom}[Axiom of Betweenness in Pitch]
$\,\,\,\,\,\,\,\,\,\,\,\,\,\,\,\,$
\begin{enumerate}
\item Each sound is between-pitched some sounds; i.e.,
\begin{center}
$\forall x\in \mathbb{S}(\exists a,b\in \mathbb{S}(a*x*b))$.
\end{center}
\item For any two non-identical sounds, there exists a sound between-pitched them; i.e.,
\begin{center}
$\forall x,y\in \mathbb{S}(x*y\Rightarrow \exists a\in \mathbb{S}(x*a*y))$.
\end{center}
\end{enumerate}
\end{axiom}
\begin{remark}
Item 1 of the Axiom of Betweenness in Pitch is practically a compound statement expressing that for a given sound $x$, there exists a sound lower-pitched than $x$, and there exists a sound higher-pitched than $x$ (the two sounds obtained are obviously non-identical in pitch). The second item analytically expresses that the ordered sets $(\mathbb{S},*)$ and $(\mathbb{S}/\sim \,,*')$ are dense.
\end{remark}
\begin{proposition}[Pitch Directness Property]
For arbitrary sounds $x$ and $y$,
\begin{enumerate}
\item there exists a sound lower in pitch than both $x$ and $y$.
\item there exists a sound higher in pitch than both $x$ and $y$.
\end{enumerate}
\begin{proof}[Sketch of Proof]
Apply item 1 of Axiom 5 to $x$ and $y$ separately and then use Corollary 2.4.
\end{proof}
\end{proposition}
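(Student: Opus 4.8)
The plan is to feed the two given sounds, one at a time, into item~1 of the Axiom of Betweenness in Pitch (Axiom~5), and then to splice the resulting witnesses together using the trichotomy of Corollary~2.4 together with transitivity (Axiom~2).

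For part~1, I would apply item~1 of Axiom~5 to $x$ to obtain a sound $a$ with $a*x$ (the companion higher-pitched than $x$ is discarded), and likewise apply it to $y$ to obtain a sound $c$ with $c*y$. It then remains to merge $a$ and $c$ into a single sound lying below both $x$ and $y$. By Corollary~2.4 exactly one of $a*c$, $c*a$, $a\sim c$ holds. If $a*c$, then $a*c*y$, so $a*y$ by Axiom~2, and together with $a*x$ the sound $a$ works. If $c*a$, then $c*a*x$, so $c*x$ by Axiom~2, and together with $c*y$ the sound $c$ works. If $a\sim c$, then from $c*y$, $a\sim c$ and $y\sim y$ Lemma~2.9 yields $a*y$, so once more $a$ lies below both $x$ and $y$. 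In every case the required sound exists.

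Part~2 is the mirror image: apply item~1 of Axiom~5 to $x$ and to $y$ to obtain $b$ and $d$ with $x*b$ and $y*d$, compare $b$ and $d$ via Corollary~2.4, and run the same three cases with every inequality reversed (e.g. in the case $b*d$ one has $x*b*d$, hence $x*d$, so $d$ is higher-pitched than both). Equivalently, part~2 is just part~1 applied to the inverse relation $*^{-1}$, which is again a nonempty strict ordering (being the inverse of the one in Corollary~2.5) and which satisfies the very hypotheses used above, since item~1 of Axiom~5 merely asserts that every sound has a lower-pitched and a higher-pitched companion and is therefore unchanged when $*$ is replaced by $*^{-1}$.

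The only step that is not pure case-chasing is the sub-case $a\sim c$ (and its dual $b\sim d$): because $a$ and $c$ need not be \emph{equal}, trichotomy and transitivity alone do not close the argument, and one genuinely invokes the $\sim$-congruence of the order recorded in Lemma~2.9 to transfer $c*y$ to $a*y$. Beyond remembering to handle that case, I expect no real obstacle.
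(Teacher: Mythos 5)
Your proposal is correct and follows exactly the route the paper's sketch indicates: apply item 1 of Axiom 5 to $x$ and $y$ separately and merge the witnesses via the trichotomy of Corollary 2.4, with Axiom 2 and Lemma 2.9 filling in the transitive and identical-pitch cases. The handling of the $a\sim c$ sub-case via Lemma 2.9 is the right way to close the one case trichotomy alone does not settle.
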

\begin{definition}[Generalization of Definition 2.11]
For every natural number $n>3$, one can generalize the binary relation $*$ to an n-ary relation on $\mathbb{S}$ (namely a subset of $\mathbb{S}^n$) as follows:
\begin{center}
$\forall x_0,x_1, ..., x_{n-1}\in \mathbb{S}(x_0*x_1*...*x_{n-1}\Leftrightarrow \forall 1\leq k < n(x_{k-1}*x_k)).$
\end{center}
\end{definition}
\begin{proposition}
For every $n\in \mathbb{N}$, there are $x_1, x_2, ..., x_n\in \mathbb{S}$ such that $x_1*x_2*...*x_n$.
\end{proposition}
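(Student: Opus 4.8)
The plan is a routine induction on $n$, with the Axiom of Betweenness in Pitch doing all the work. For the base case, the Axiom of Existence (reinforced, if one likes, by the Asymmetry in Pitch of Remark~2.7) produces sounds $x_1,x_2\in\mathbb{S}$ with $x_1*x_2$; in particular $\mathbb{S}\neq\varnothing$, so the claim holds for $n=1$ and $n=2$.

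For the inductive step, assume $x_1*x_2*\cdots*x_n$ for some $n\geq 1$. Apply item~1 of Axiom~5 (the Axiom of Betweenness in Pitch) to the sound $x_n$: this yields sounds $a,b\in\mathbb{S}$ with $a*x_n*b$, so in particular $x_n*b$. Put $x_{n+1}:=b$. Unwinding Definition~2.18 (together with Notation~2.12 in the low-index cases), the assertion $x_1*x_2*\cdots*x_n*x_{n+1}$ is by definition the conjunction of $x_{k-1}*x_k$ for $1\leq k\leq n$ — which holds by the induction hypothesis — and $x_n*x_{n+1}$ — which we have just produced. Hence a chain of length $n+1$ exists, and the induction closes.

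I do not anticipate a genuine obstacle here; the only point that deserves a moment's care is the bookkeeping around Definition~2.18, namely the observation that appending a sound \emph{at the top} of an already-existing chain leaves all the relations among the earlier sounds untouched, so the $n$-ary relation persists and merely acquires one new defining conjunct. As alternatives to the step above one could extend the chain downward (apply item~1 of Axiom~5 to $x_1$ to get a sound $a$ with $a*x_1$, and prepend it) or insert a fresh sound in the middle via item~2 of Axiom~5 applied to any consecutive pair; any one of these choices suffices, and the Pitch Directness Property (Proposition~2.16) together with Corollary~2.4 could be substituted for the direct appeal to Axiom~5 if a uniform treatment of both ends is preferred.
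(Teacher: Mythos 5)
Your proof is correct and follows exactly the route the paper intends: the paper's own proof is just the sketch ``by induction on $n$,'' and your base case via the Axiom of Existence plus the inductive step appending a higher-pitched sound obtained from item~1 of Axiom~5 is the natural way to fill it in. The only blemishes are cosmetic cross-reference slips (the generalized $n$-ary relation is Definition~2.16, the Pitch Directness Property is Proposition~2.15, and the asymmetry remark is 2.6), which do not affect the argument.
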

\begin{proof}[Sketch of Proof]
By induction on $n$.
\end{proof}
\begin{corollary}
$\mathbb{S}$, $*$, $*'$, $\sim$, and $\mathbb{S}/\sim$ are at least countable.
\end{corollary}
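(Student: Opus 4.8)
The plan is to reduce every clause of the corollary to the single assertion that the set $\mathbb{S}$ is infinite, and then to invoke the standard fact of $\mathsf{ZFC}$ (under which we are working) that an infinite set always possesses a countably infinite subset, i.e. is ``at least countable'' in the sense used here.

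First I would establish that $\mathbb{S}$ is infinite. Suppose, for contradiction, that $\lvert\mathbb{S}\rvert=m$ for some $m\in\mathbb{N}$, and apply Proposition 2.17 with $n=m+1$ to obtain sounds $x_1*x_2*\cdots*x_{m+1}$. Since $*$ is a strict ordering on $\mathbb{S}$ (Corollary 2.5), these $m+1$ sounds are pairwise distinct --- indeed, $x_i=x_j$ with $i<j$ would force $x_i*x_i$ by transitivity, contradicting irreflexivity --- so $\mathbb{S}$ has at least $m+1$ elements, the desired contradiction. Hence $\mathbb{S}$ is infinite, and therefore at least countable. (If one wishes to avoid citing ``infinite $\Rightarrow$ at least countable'' as a black box, one can instead build a strictly $*$-increasing sequence $(x_k)_{k\in\mathbb{N}}$ recursively: start from any sound and, given $x_k$, use item 1 of the Axiom of Betweenness in Pitch to pick a sound $x_{k+1}$ with $x_k*x_{k+1}$; by dependent choice this exhibits an injection $\mathbb{N}\hookrightarrow\mathbb{S}$ directly.)

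Next I would transfer infiniteness to the remaining four objects, each by a one-line observation. The relation $\sim$ is reflexive (Corollary 2.7), so its graph contains the diagonal $\{(x,x):x\in\mathbb{S}\}$, which is equinumerous with $\mathbb{S}$; hence $\sim$ is at least countable. For $\mathbb{S}/\sim$: given any chain $x_1*\cdots*x_n$ as above, the classes $\tilde{x}_1,\dots,\tilde{x}_n$ are pairwise distinct, because $x_i*x_j$ together with $\tilde{x}_i=\tilde{x}_j$ (i.e.\ $x_i\sim x_j$) would violate the trichotomy of Corollary 2.4; so $\mathbb{S}/\sim$ contains arbitrarily large finite sets and is infinite. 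The relation $*'$ is reflexive on $\mathbb{S}/\sim$ (from its definition in Remark 2.10, since $x\sim x$), so its graph contains the diagonal of $\mathbb{S}/\sim$ and is therefore at least countable. Finally, from a chain $x_1*\cdots*x_n$ the generalized convention of Definition 2.16 together with transitivity yields all pairs $(x_i,x_j)$ with $1\le i<j\le n$ in the graph of $*$, and these $\binom{n}{2}$ pairs are distinct; letting $n$ grow shows $*$ is infinite, hence at least countable.

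I do not expect any serious obstacle: once $\mathbb{S}$ is known to be infinite the rest is bookkeeping. The only spot deserving a word of caution is the inference ``infinite $\Rightarrow$ at least countable'', which is not a theorem of bare $\mathsf{ZF}$ but is entirely legitimate here because the paper adopts $\mathsf{ZFC}$ (indeed $\mathsf{ZFC}+\mathsf{CH}$); and even without it, the explicit recursive construction above, using only dependent choice, delivers the injection $\mathbb{N}\hookrightarrow\mathbb{S}$, from which injections into the graphs of $\sim$ and $*$ and onto infinite subsets of $\mathbb{S}/\sim$ and the graph of $*'$ follow immediately.
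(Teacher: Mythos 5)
Your proof is correct and follows the route the paper intends: Corollary 2.18 is meant as an immediate consequence of Proposition 2.17 (arbitrarily long $*$-chains), from which infinitude of $\mathbb{S}$, of the graphs of $*$, $*'$, $\sim$, and of $\mathbb{S}/\sim$ follows exactly by the bookkeeping you carry out, with ``infinite $\Rightarrow$ at least countable'' licensed by the paper's ZFC setting (or by your explicit recursive construction via item 1 of Axiom 5). No gap; your write-up just makes explicit what the paper leaves to the reader.
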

\begin{proposition}
If $A\subseteq \mathbb{S}$ is finite, then there exist two (non-identical) sounds so that any element of $A$ is between-pitched them.
\end{proposition}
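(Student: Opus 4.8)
The plan is to argue by induction on the cardinality $n=|A|$. For $n=0$ the Axiom of Existence (Axiom~3) already supplies a pair of non-identical sounds, and the statement is then vacuously true; for $n=1$, say $A=\{a\}$, item~1 of the Axiom of Betweenness in Pitch produces sounds $p,q$ with $p*a*q$, so $p*q$ by the Axiom of Transitivity in Pitch, hence $p\nsim q$ by Corollary~2.4, and $a$ is between-pitched $p$ and $q$ directly from Definition~2.11.

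For the inductive step I would assume the assertion for all $n$-element subsets of $\mathbb{S}$ and take $|A|=n+1$. Pick $a\in A$ and set $A'=A\setminus\{a\}$, so $|A'|=n$. By the inductive hypothesis there are non-identical sounds $p',q'$ with every $b\in A'$ between-pitched them; by Corollary~2.4 I may relabel so that $p'*q'$, and then, unwinding Definition~2.11 and using transitivity (the alternative $q'*b*p'$ would force $q'*p'$, contradicting $p'*q'$), I get $p'*b*q'$ for each $b\in A'$. Applying item~1 of the Axiom of Betweenness in Pitch to $a$ gives $p'',q''$ with $p''*a*q''$. Now the key move is to invoke the Pitch Directness Property (Proposition~2.15): there is a sound $p$ lower in pitch than both $p'$ and $p''$, and a sound $q$ higher in pitch than both $q'$ and $q''$.

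Then I would verify that $p,q$ work. By the Axiom of Transitivity in Pitch, $p*p'$ together with $p'*b$ yields $p*b$ for every $b\in A'$, and $p*p''$ together with $p''*a$ yields $p*a$; symmetrically $b*q$ for $b\in A'$ and $a*q$. Hence $p*c$ and $c*q$ for every $c\in A$. Since $A\neq\emptyset$ in this case, fixing any $c\in A$ gives $p*c*q$, so $p*q$ by transitivity, whence $p\nsim q$ by Corollary~2.4, and therefore each $c\in A$ is between-pitched $p$ and $q$ by Definition~2.11. This closes the induction.

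I do not anticipate a genuine obstacle: the only points needing care are the harmless relabelling that arranges $p'*q'$ rather than $q'*p'$, the pedantic unwinding of "between-pitched" and of the $n$-ary shorthand of Remark~2.12, and the routine bookkeeping of the repeated transitivity steps. An equivalent and perhaps tidier packaging would be to first record the auxiliary fact that every finite subset of $\mathbb{S}$ admits a strict lower bound and a strict upper bound in $\mathbb{S}$ — itself a one-line induction off item~1 of the Axiom of Betweenness in Pitch and Proposition~2.15 — and then apply that fact to $A$ itself; this is merely the same argument rearranged.
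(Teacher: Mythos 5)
Your proof is correct: the induction on $|A|$ goes through, the relabelling via Corollary 2.4 is legitimate, and the merge step combining the Pitch Directness Property with the Axiom of Transitivity in Pitch really does yield a common strict lower bound $p$ and strict upper bound $q$ for all of $A$, whence $p*q$ and so $p\nsim q$ once $A\neq\emptyset$, the empty case being handled by the Axiom of Existence. The paper's route is different and shorter: it passes to the quotient $(A/\sim\,,*')$, which by Remark 2.10 is a finite linearly ordered set and hence has a minimum $\tilde{x}$ and a maximum $\tilde{y}$, and then applies item 1 of the Axiom of Betweenness in Pitch just once, to $x$ and $y$, getting $p*x$ and $y*q$; Lemma 2.9 and transitivity then place every element of $A$ between $p$ and $q$. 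In effect the paper hides the induction inside the standard fact that a finite linear order has extreme elements, while you make the induction explicit and thereby avoid the quotient order and identity classes altogether, at the cost of more bookkeeping and of invoking Proposition 2.15 at every step; your closing remark about first establishing strict lower and upper bounds for finite sets of sounds is essentially the paper's argument with the min/max extraction replaced by inductive bound-merging.
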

\begin{proof}[Sketch of Proof]
Take the minimum and maximum elements of $(A/\sim \,,*')$ and call them $\tilde{x}$ and $\tilde{y}$. Applying item 1 of Axiom 5 to $x$ and $y$ does the job.
\end{proof}
\paragraph*{}
Until now, we just know that $\mathbb{S}$ has infinitely many elements. In the future, we will realize that all sets mentioned in Corollary 2.18 are uncountable. In the next definition, we are going to introduce a new ternary relation on $\mathbb{S}$ by virtue of which the so-called ``Sound Separation Property'' is more conveniently capable of being stated.
\begin{definition}
Let $\theta$ be an arbitrary sound, $x$ and $y$ any sounds that which not identical in pitch with $\theta$. If $x\sim y$ or $\theta$ is not between-pitched $x$ and $y$, we say $x$ and $y$ are \emph{on the same side} of $\theta$. Otherwise, we say $x$ and $y$ are \emph{on opposite sides} of $\theta$.
\end{definition}
\begin{remark}
Notice that given an arbitrary sound $\theta$, the two relations of definition 2.20 in relation to $\theta$ are regarded as binary relations on the set of sounds not identical-pitched with $\theta$.
\end{remark}
\begin{proposition}
For every $\theta \in \mathbb{S}$, the relation of \emph{being on the same side} of $\theta$ is an equivalence relation on $\mathbb{S}-\tilde{\theta}$.
\end{proposition}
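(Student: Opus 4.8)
The plan is to reduce the statement to the observation that ``being on the same side of $\theta$'' is the same-value relation of a suitable two-valued function, which is automatically an equivalence relation. Before getting there, reflexivity and symmetry can be read straight off Definition 2.20: for $x\in\mathbb{S}-\tilde\theta$ we have $x\sim x$ by Corollary 2.7, so $x$ is on the same side of $\theta$ as itself; and each of the two disjuncts in the definition is symmetric in the pair of sounds (the relation ``$\theta$ is between-pitched $x$ and $y$'' does not depend on the order of $x$ and $y$, by Definition 2.11), so the relation is symmetric. Only transitivity needs real work.

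The key step is the following characterization, for $x,y\in\mathbb{S}-\tilde\theta$: \emph{$x$ and $y$ are on the same side of $\theta$ if and only if either $x*\theta$ and $y*\theta$, or $\theta*x$ and $\theta*y$.} To prove it, first note that by Corollary 2.4 each $w\in\mathbb{S}-\tilde\theta$ satisfies exactly one of $w*\theta$ or $\theta*w$, the alternative $w\sim\theta$ being excluded by $w\notin\tilde\theta$; this assigns to each such $w$ a ``side'' $\sigma(w)$ in a two-element set (``below $\theta$'' versus ``above $\theta$''). For the \emph{if} direction: if $\sigma(x)=\sigma(y)$, then neither $x*\theta*y$ nor $y*\theta*x$ can hold (either would, via the Axioms of Transitivity and Irreflexivity in Pitch, force some sound to be both below and above $\theta$), so $\theta$ is not between-pitched $x$ and $y$ and hence $x$ and $y$ are on the same side of $\theta$. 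For the \emph{only if} direction: assume $x$ and $y$ are on the same side of $\theta$. If $x\sim y$, then $\sigma(x)=\sigma(y)$ by applying Lemma 2.9 to $x\sim y$, $\theta\sim\theta$, and whichever of $x*\theta$, $\theta*x$ holds. Otherwise $\theta$ is not between-pitched $x$ and $y$; were $\sigma(x)\neq\sigma(y)$, we would get $x*\theta*y$ or $y*\theta*x$, hence by Definition 2.11 (together with $x\nsim y$, which holds by Definition 2.3) $\theta$ \emph{would} be between-pitched $x$ and $y$ --- a contradiction, so $\sigma(x)=\sigma(y)$ here too.

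With the characterization in hand, the relation ``on the same side of $\theta$'' on $\mathbb{S}-\tilde\theta$ is exactly the relation holding between $x$ and $y$ precisely when $\sigma(x)=\sigma(y)$, which is visibly an equivalence relation; transitivity (and, redundantly, reflexivity and symmetry) follow immediately. I expect the only real friction to be in the characterization lemma itself: one must keep the two disjuncts of Definition 2.20 apart, and at each use of trichotomy one must remember to invoke the hypothesis that the relevant sounds are not identical in pitch with $\theta$, so that $\sigma$ is genuinely well defined. The order-theoretic deductions beneath all this are routine consequences of Axioms 1 and 2.
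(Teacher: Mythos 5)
Your proof is correct, but it is organized differently from the paper's. The paper verifies transitivity head-on: assuming $x,y$ and $y,z$ are on the same side of $\theta$ and (for contradiction) $x*\theta*z$ or $z*\theta*x$, it runs a case analysis on the trichotomy for $x,y$ (Corollary 2.4), using Lemma 2.9 and Axiom 2 to force $y*\theta*z$ and contradict the hypothesis; it never isolates your characterization. You instead prove up front that for $x,y\in\mathbb{S}-\tilde{\theta}$, ``same side'' holds exactly when $x$ and $y$ fall on the same value of the two-valued map $\sigma$ (below $\theta$ versus above $\theta$), after which reflexivity, symmetry and transitivity are automatic since the relation is the fiber relation of a function. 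The ingredients are the same (trichotomy, Lemma 2.9, Axioms 1--2, and the observation that $x*\theta*y$ forces $x*y$, hence $x\nsim y$, so $\theta$ really is between-pitched $x$ and $y$), but your decomposition buys more: the characterization lemma is essentially the content of Theorem 2.23 and Remark 2.24 (the two classes are precisely $\overset{\leftarrow}{\theta}$ and $\overset{\rightarrow}{\theta}$), so the Sound Separation Property would follow from your lemma with no extra work, whereas the paper proves Proposition 2.22 with the minimum needed and only identifies the classes afterwards. Your case split in the ``only if'' direction correctly keeps the two disjuncts of Definition 2.20 separate, so I see no gap.
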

\begin{proof}
Let $\theta$ be given once and for all. We need to check the following three properties: reflexivity, symmetry, and transitivity. The first two are manifest according to Axiom 1 and Definition 2.20 respectively. For transitivity, suppose $x,y,z\in \mathbb{S}-\tilde{\theta}$, and $x$, $y$ are on the same side of $\theta$, and $y$, $z$ are on the same side of $\theta$. If $x$, $z$ are not on opposite sides of $\theta$, then either $x*\theta *z$ or $z*\theta *x$. we assume that $x*\theta *z$ (the proof of the second case is similar). By corollary 2.4, only one of the following holds: $x\sim y$, $y*x$, or $x*y$.
The first two imply that $y*\theta$ (by virtue of Lemma 2.9 and Axiom 2, respectively). Then by assumption we get $y*\theta *z$ which contradicts the fact that sounds $y$ and $z$ are on the same side of $\theta$. Inevitably, we must have $x*y$. Having noted $x$ and $y$ are on the same side of $\theta$, since $x$ is lower-pitched than $\theta$, so is $y$, concluding $x*y*\theta$. Now we again obtain $y*\theta *z$ leading to the same contradiction.
\end{proof}
\paragraph*{}
The following theorem is the very reason why the Axiom of Separation is named so.
\begin{theorem}[Sound Separation Property]
For every $\theta \in \mathbb{S}$, the relation of being on the same side of $\theta$ on $\mathbb{S}-\tilde{\theta}$ has precisely two equivalence classes.
\end{theorem}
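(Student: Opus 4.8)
The plan is to name the two equivalence classes outright and then show nothing else can occur. Fix $\theta\in\mathbb{S}$ and split $\mathbb{S}-\tilde{\theta}$ into the set $L:=\{x\in\mathbb{S}-\tilde{\theta}: x*\theta\}$ of sounds lower in pitch than $\theta$ and the set $H:=\{x\in\mathbb{S}-\tilde{\theta}: \theta*x\}$ of sounds higher in pitch than $\theta$; by Corollary 2.4 these are disjoint and together exhaust $\mathbb{S}-\tilde{\theta}$, since a sound that is not identical-pitched with $\theta$ must be comparable with it under $*$. The whole theorem then amounts to showing that $L$ and $H$ are exactly the two equivalence classes.

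First I would verify that $L$ and $H$ are each contained in a single equivalence class of ``being on the same side of $\theta$''. Take $x,y\in L$, so $x*\theta$ and $y*\theta$. Were $\theta$ between-pitched $x$ and $y$, Definition 2.11 would force $x*\theta*y$ or $y*\theta*x$, hence $\theta*y$ or $\theta*x$, which together with $y*\theta$ or $x*\theta$ contradicts Axiom 1 (via Axiom 2, or directly via Remark 2.6). So $\theta$ is not between-pitched $x$ and $y$, and by Definition 2.20 they lie on the same side of $\theta$. The argument for $H$ is word-for-word the same. Hence the relation has at most two equivalence classes.

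Next I would show these classes are genuinely two and both occupied. By item 1 of Axiom 5 there are sounds $a,b$ with $a*\theta*b$; since $a*\theta$ and $\theta*b$, neither $a$ nor $b$ is identical-pitched with $\theta$ (Definition 2.3), so $a\in L$ and $b\in H$ and neither set is empty. Now pick any $x\in L$ and $y\in H$: then $x*\theta$ and $\theta*y$, i.e. $x*\theta*y$, so $\theta$ is between-pitched $x$ and $y$, and Axiom 2 gives $x*y$, whence $x\nsim y$; therefore, by Definition 2.20, $x$ and $y$ lie on opposite sides of $\theta$. Thus $L$ and $H$ cannot belong to a common class, and combined with the previous paragraph the relation has exactly the two equivalence classes $L$ and $H$.

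There is no deep obstacle here; the one point deserving care is the logical coordination of the two halves --- checking that $L$ and $H$ are not merely unions of classes but single classes (the ``at most two'' direction) while simultaneously certifying that they are separated and nonempty (the ``at least two'' direction) --- together with the routine invocations of Corollary 2.4, Axiom 1, and Axiom 2 to rule out the degenerate comparabilities. One could instead route everything through the induced linear order $*'$ on $\mathbb{S}/\sim$ from Remark 2.10, splitting at $\tilde{\theta}$, but the direct argument above is the shorter road.
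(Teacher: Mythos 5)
Your proof is correct and follows essentially the same route as the paper: both rest on Axiom 5(1) applied to $\theta$ for nonemptiness, on the observation that two sounds both lower (or both higher) than $\theta$ cannot have $\theta$ between-pitched them, and on transitivity to see that a lower and a higher sound lie on opposite sides. Your explicit naming of the classes $L$ and $H$ merely anticipates what the paper records immediately afterwards in Remark 2.24 and Definition 2.25, so there is nothing substantively different to flag.
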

\begin{proof}
Fix a $\theta \in \mathbb{S}$. There are two sounds $x$ and $y$ such that $x*\theta *y$ (why?). The two equivalence classes $[x]$ and $[y]$ are distinct, because if they have some sound in common, they must then be on the same side of $\theta$ by Proposition 2.22, contradicting the first statement. Thus, there are at least two equivalence classes $[x]$ and $[y]$. On the other hand, if $a\in \mathbb{S}-\tilde{\theta}$, then either $a*\theta$ or $\theta*a$. So, either $a$ and $x$ are both lower in pitch than (and consequently on the same side of) $\theta$, or $a$ and $y$ are both higher in pitch than $\theta$. It follows that either $a\in [x]$ or $a\in [y]$. Thus, there are at most two equivalence classes $[x]$ and $[y]$.
\end{proof}
\begin{remark}
Notice that in the proof of Theorem 2.23, the two classes $[x]$ and $[y]$ respectively coincide with the set of sounds lower-pitched than $\theta$ and the set of sounds higher-pitched than $\theta$. In conclusion, every sound of $[x]$ is lower in pitch than ($\theta$ and) every sound of $[y]$.
\end{remark}
\begin{definition}
Each equivalence class mentioned in Theorem 2.23 is called a \emph{side} of the sound $\theta$. The side consisting of those sounds which are lower in pitch than $\theta$ is called the \emph{low side} of $\theta$ and denoted by $\overset{\leftarrow}{\theta}$, and the side whose sounds are higher-pitched than $\theta$ is called the \emph{high side} of $\theta$ and denoted by $\overset{\rightarrow}{\theta}$.
\end{definition}
\begin{corollary}
For every $\theta \in \mathbb{S}$, $\mathbb{S}-\tilde{\theta}=\overset{\leftarrow}{\theta} \overset{\circ}{\cup} \overset{\rightarrow}{\theta}$.
\end{corollary}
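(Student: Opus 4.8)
The plan is to read this off directly from the Sound Separation Property together with the naming conventions of Definition 2.26 and the general fact that equivalence classes partition the underlying set. First I would fix $\theta\in\mathbb{S}$ and recall from Theorem 2.23 that the relation of being on the same side of $\theta$ is an equivalence relation on $\mathbb{S}-\tilde{\theta}$ (Proposition 2.22) with exactly two equivalence classes. Since the equivalence classes of any equivalence relation on a set $X$ are nonempty, cover $X$, and are pairwise disjoint, these two classes already furnish a partition of $\mathbb{S}-\tilde{\theta}$ into two pieces.

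Next I would identify those two pieces with $\overset{\leftarrow}{\theta}$ and $\overset{\rightarrow}{\theta}$. By Remark 2.24, the two classes appearing in the proof of Theorem 2.23 are precisely the set of sounds lower-pitched than $\theta$ and the set of sounds higher-pitched than $\theta$; by Definition 2.26 these are exactly $\overset{\leftarrow}{\theta}$ and $\overset{\rightarrow}{\theta}$. Hence $\mathbb{S}-\tilde{\theta}=\overset{\leftarrow}{\theta}\cup\overset{\rightarrow}{\theta}$.

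Finally I would record disjointness to justify the $\overset{\circ}{\cup}$ symbol. This is immediate either from the fact that distinct equivalence classes of the same relation are disjoint, or directly: a sound in $\overset{\leftarrow}{\theta}$ satisfies $a*\theta$ while a sound in $\overset{\rightarrow}{\theta}$ satisfies $\theta*b$, and by the trichotomy of Corollary 2.4 no single sound can satisfy both relations with $\theta$ (and neither can be identical-pitched with $\theta$, since both classes lie in $\mathbb{S}-\tilde{\theta}$). Therefore $\overset{\leftarrow}{\theta}\cap\overset{\rightarrow}{\theta}=\varnothing$, and the union is disjoint as claimed. I do not anticipate any real obstacle here; the statement is a bookkeeping consequence of Theorem 2.23, and the only point deserving a word is the passage from ``two equivalence classes'' to the explicitly named low and high sides, which is handled by Remark 2.24.
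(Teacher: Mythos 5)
Your proposal is correct and is exactly the argument the paper intends: the corollary is stated without proof as an immediate consequence of Theorem 2.23 together with Remark 2.24 and the naming of the sides in Definition 2.25 (note your citation ``Definition 2.26'' should be 2.25), with disjointness coming from trichotomy (Corollary 2.4) or simply from distinctness of equivalence classes. Nothing further is needed.
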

\begin{remark}
Theorem 2.23 roughly states that every $\theta\in \mathbb{S}$ has just two sides and any pair of sounds non-identical with $\theta$ are either on one side or on the two sides of $\theta$; thus in this sense, $\theta$ separates all sounds of the system in the form of Corollary 2.26. Note each side of $\theta$ has at least countably many sounds.
\end{remark}
\paragraph*{}
Eventually, loosely speaking, the aim of pitch music so presented was that \textit{pitch} is one-dimensional. However, we do not know enough about identical sounds thus far. We do not yet know whether there exist some distinct sounds identical in pitch with a given sound (other representatives of $\tilde{x}$ except $x$) or the relation of identity $\sim$ exactly coincides with the relation of equality $=$; equivalently, whether or not any pair of sounds are comparable under the order relation $*$. Of course, musicians would like to have got different musical instruments available! We postpone discussing the matter to the prospective sections. The only thing we are at the moment aware of is that there are at least one set of representatives for the equivalence $\sim$ due to the axiom of choice of set theory. In fact, there will be uncountably many ones in the future (again due to the axiom of choice). The significance of such kind of set of sounds is that, having introduced the notion of \textit{frequency}, they represent a minimal subset of $\mathbb{S}$ containing a copy of every sound with arbitrary frequency.

\section{Interval Music}
\paragraph*{}
In this section, we supply the most basic musical notion, i.e. interval, which intuitively describes the musical distance between sounds.
\begin{definition}
Given sounds $a$ and $b$. The \emph{interval} $[a,b]$ is defined as the ordered pair $(\tilde{a},\tilde{b})$.
\end{definition}
\begin{corollary}
$\forall a,b,c,d\in \mathbb{S}([a,b]=[c,d]\Leftrightarrow a\sim c\wedge b\sim d).$
\end{corollary}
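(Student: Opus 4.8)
The plan is to reduce the biconditional to two elementary set-theoretic facts and then apply them coordinatewise. First I would unfold Definition 3.1 on both sides: by definition, $[a,b]=[c,d]$ says exactly that the ordered pairs $(\tilde a,\tilde b)$ and $(\tilde c,\tilde d)$ are equal. Next I would invoke the fundamental characterization of ordered pairs from Zermelo--Fraenkel set theory (valid for whatever pairing construction one adopts): $(\tilde a,\tilde b)=(\tilde c,\tilde d)$ if and only if $\tilde a=\tilde c$ and $\tilde b=\tilde d$. This already splits the claim into the conjunction of the two single-coordinate statements $\tilde a=\tilde c$ and $\tilde b=\tilde d$.

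The remaining ingredient is the standard lemma on equivalence classes: for arbitrary sounds $x$ and $y$, one has $\tilde x=\tilde y$ if and only if $x\sim y$. Applying this with $(x,y)=(a,c)$ and then with $(x,y)=(b,d)$ completes the proof. For the forward direction of the lemma I would note that $x\in\tilde x$ by reflexivity of $\sim$ (Corollary 2.7), so $\tilde x=\tilde y$ forces $x\in\tilde y$, i.e. $x\sim y$. For the converse, assuming $x\sim y$, a routine double inclusion using symmetry and transitivity of $\sim$ (again Corollary 2.7) yields $\tilde x\subseteq\tilde y$ and $\tilde y\subseteq\tilde x$, hence $\tilde x=\tilde y$. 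Chaining the equivalences gives $[a,b]=[c,d]\Leftrightarrow(\tilde a=\tilde c\wedge\tilde b=\tilde d)\Leftrightarrow(a\sim c\wedge b\sim d)$, as desired.

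I do not expect a genuine obstacle: the corollary is essentially a transcription of Definition 3.1 together with the basic theory of equivalence classes. The only thing that warrants mild care is bookkeeping --- keeping the two coordinates separate and making sure the equivalence-class lemma is used in the appropriate direction for each half of the stated biconditional --- and, if one wants to be fully self-contained, spelling out that the ordered-pair characterization is available from the ambient set theory declared in the introduction.
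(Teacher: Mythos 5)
Your argument is correct and is exactly the routine verification the paper intends (it leaves this corollary unproved as an immediate consequence of Definition 3.1): unfold the interval as the ordered pair $(\tilde a,\tilde b)$, use the set-theoretic characterization of ordered-pair equality, and apply the standard fact that $\tilde x=\tilde y$ iff $x\sim y$, which is justified by Corollary 2.7. No gaps; nothing further is needed.
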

\paragraph*{}
We also need an additional primitive concept to explain the relationship between intervals, namely ``congruence''. We use the notation $[a,b]\cong [c,d]$ to express that ``$[a,b]$ is congruent to $[c,d]$''.
\begin{remark}
We denote the set of all intervals by $\mathbb{I}$; i.e., $\mathbb{I}:=(\mathbb{S}/\sim)\times(\mathbb{S}/\sim)$. Based on Corollary 2.18, $\mathbb{I}$ is at least countable. In addition, the congruence $\cong$ is a binary relation on $\mathbb{I}$.
\end{remark}
\begin{axiom}[Axiom of Reflexivity of Congruence]
Every interval is congruent to itself.
\end{axiom}
\begin{axiom}[Axiom of Transitivity of Congruence]
If $[a,b]\cong [c,d]$ and $[c,d]\cong [e,f]$, then $[a,b]\cong [e,f]$.
\end{axiom}
\begin{axiom}[Axiom of Motion]
Given any interval $[x,y]$ and any sound $a$, there is a unique sound $b$, up to identity in pitch, such that $[x,y]\cong [a,b]$; that is,
\begin{center}
$\forall x,y,a\in \mathbb{S}(\exists b\in \mathbb{S}([x,y]\cong [a,b]\wedge \forall b'\in \mathbb{S}([x,y]\cong [a,b']\Rightarrow b\sim b'))).$
\end{center}
\end{axiom}
The reason for naming Axiom 8 so is the intuitive fact that congruent intervals can move to lie on each other.
\begin{proposition}[Symmetry of Congruence]
For any sounds $a$, $b$, $c$, and $d$, $[a,b]\cong [c,d]$ implies $[c,d]\cong [a,b]$.
\end{proposition}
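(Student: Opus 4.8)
The plan is to reproduce the classical argument showing that a reflexive, transitive ``transport'' relation which admits \emph{unique} transport is automatically symmetric. So assume $[a,b]\cong[c,d]$; the goal is to derive $[c,d]\cong[a,b]$.

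First I would apply the Axiom of Motion to the interval $[c,d]$ and the sound $a$: this produces a sound $e$, unique up to identity in pitch, with $[c,d]\cong[a,e]$. Combining the hypothesis $[a,b]\cong[c,d]$ with $[c,d]\cong[a,e]$ through the Axiom of Transitivity of Congruence yields $[a,b]\cong[a,e]$.

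Next I would bring in the uniqueness clause of the Axiom of Motion, now applied to the interval $[a,b]$ and the sound $a$. By the Axiom of Reflexivity of Congruence, $b$ is itself a witness, i.e. $[a,b]\cong[a,b]$; since we also have $[a,b]\cong[a,e]$, uniqueness up to identity in pitch forces $e\sim b$, hence $\tilde{e}=\tilde{b}$. By Definition 3.1 (equivalently Corollary 3.2) the intervals $[a,e]=(\tilde{a},\tilde{e})$ and $[a,b]=(\tilde{a},\tilde{b})$ are then the \emph{same} element of $\mathbb{I}=(\mathbb{S}/\sim)\times(\mathbb{S}/\sim)$. Therefore the statement $[c,d]\cong[a,e]$ already reads $[c,d]\cong[a,b]$, which is what we wanted.

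I do not anticipate a genuine obstacle; the single point requiring care is the bookkeeping around the Axiom of Motion — remembering that it is the \emph{uniqueness} part, not the existence part, that carries the argument, and that congruence is a relation on intervals (ordered pairs of $\sim$-classes), so that $e\sim b$ does not merely relate $[a,e]$ and $[a,b]$ but literally identifies them.
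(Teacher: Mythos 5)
Your proof is correct and follows essentially the same route as the paper's own argument: apply the Axiom of Motion to $[c,d]$ and the sound $a$, use transitivity and reflexivity to get two witnesses for $[a,b]\cong[a,\cdot]$, invoke the uniqueness clause to conclude $e\sim b$, and identify $[a,e]$ with $[a,b]$ as elements of $\mathbb{I}$. Nothing further is needed.
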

\begin{proof}
Suppose that $[a,b]\cong [c,d]$. By applying the Axiom of Motion to the interval $[c,d]$ and the sound $a$, there is $b'\in \mathbb{S}$ so that $[c,d]\cong [a,b']$. We deduce that $[a,b]\cong [a,b']$ by transitivity of congruence. On the other hand, we have $[a,b]\cong [a,b]$ by Axiom 6. Since the sound $b'$ is unique up to identity (Axiom of Motion), it follows that $b\sim b'$. Now, we obtain $[a,b']=[a,b]$ according to the definition. Finally, we conclude that $[c,d]\cong [a,b]$, as desired.
\end{proof}
\begin{corollary}
The relation $\cong$ is an equivalence on $\mathbb{I}$.
\end{corollary}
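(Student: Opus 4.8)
The plan is to verify the three defining properties of an equivalence relation on $\mathbb{I}$ — reflexivity, symmetry, and transitivity — each of which has essentially already been isolated in the material above. The one preliminary observation I would record is a bookkeeping point: by Remark 3.3 the congruence $\cong$ is a binary relation on $\mathbb{I}$, and by Definition 3.1 every element of $\mathbb{I}$ has the form $[a,b]=(\tilde a,\tilde b)$ for suitable sounds $a,b$, since every class in $\mathbb{S}/\sim$ equals $\tilde a$ for some $a\in\mathbb{S}$. Consequently the statements of Axioms 6 and 7 and of Proposition 3.7, although phrased through representative sounds, apply to arbitrary members of $\mathbb{I}$.

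With that in place, I would argue as follows. For reflexivity: given $I\in\mathbb{I}$, write $I=[a,b]$ and invoke the Axiom of Reflexivity of Congruence to obtain $I\cong I$. For symmetry: given $I,J\in\mathbb{I}$ with $I\cong J$, write $I=[a,b]$ and $J=[c,d]$; then Proposition 3.7 (Symmetry of Congruence) yields $J\cong I$. For transitivity: given $I,J,K\in\mathbb{I}$ with $I\cong J$ and $J\cong K$, write $I=[a,b]$, $J=[c,d]$, $K=[e,f]$ and apply the Axiom of Transitivity of Congruence to conclude $I\cong K$. Assembling these three facts gives that $\cong$ is an equivalence relation on $\mathbb{I}$.

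I do not anticipate any genuine obstacle here: all the mathematical substance resides in Axioms 6 and 7 and in Proposition 3.7, and the corollary merely packages their conjunction. The only step deserving a moment's care is the remark above — that the interval notation $[a,b]$ exhausts $\mathbb{I}$, so that "relation on intervals" and "relation on $\mathbb{I}$" genuinely coincide — and this is immediate from Definition 3.1 together with the definition $\mathbb{I}=(\mathbb{S}/\sim)\times(\mathbb{S}/\sim)$ in Remark 3.3.
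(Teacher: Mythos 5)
Your proof is correct and matches the paper's intent exactly: the corollary is stated without proof precisely because it is the immediate conjunction of Axiom 6 (reflexivity), Axiom 7 (transitivity), and Proposition 3.7 (symmetry), which is what you assemble. The bookkeeping remark that every element of $\mathbb{I}$ has the form $[a,b]$ is a reasonable touch but adds nothing beyond the paper's implicit argument.
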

\begin{corollary}[Euclid's First Common Notion]
Intervals congruent to the same interval are congruent to each other.
\end{corollary}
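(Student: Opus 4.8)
The plan is to derive this as an immediate formal consequence of the equivalence property of $\cong$ just established in Corollary 3.8; no new appeal to the axioms beyond what is packaged there is needed. Concretely, suppose we are given three intervals $I$, $J$, $K$ with $I \cong K$ and $J \cong K$, and we wish to conclude $I \cong J$. The entire argument is a two-line chain: first invoke the Symmetry of Congruence (Proposition 3.6) to turn $J \cong K$ into $K \cong J$, and then invoke the Axiom of Transitivity of Congruence (Axiom 7) applied to $I \cong K$ and $K \cong J$ to obtain $I \cong J$.

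So the key steps, in order, are: (1) restate the hypothesis in the form $I \cong K$ and $J \cong K$; (2) apply symmetry to the second congruence to get $K \cong J$; (3) apply transitivity to $I \cong K$ and $K \cong J$. One could equally well cite Corollary 3.8 directly and say that, in any equivalence relation, two elements related to a common element are related to each other — but since the paper favors spelling out even obvious deductions, I would present the explicit symmetry-then-transitivity chain.

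I expect no genuine obstacle here: all the substance was already absorbed into proving Proposition 3.6 (Symmetry of Congruence), which is where the Axiom of Motion and the uniqueness-up-to-identity clause did the real work. The only thing worth a sentence of care is making sure the reader sees that this corollary really is just the abstract fact ``$x R z$ and $y R z$ imply $x R y$'' for an equivalence relation $R$, instantiated at $R = {\cong}$ on $\mathbb{I}$; in particular it is a corollary of Corollary 3.8 rather than requiring a fresh look at Axioms 6–8. Thus the write-up can be kept to two or three sentences, citing Proposition 3.6 and Axiom 7 (or, more economically, Corollary 3.8) and nothing else.
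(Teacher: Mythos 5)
Your argument is correct and is exactly the route the paper intends: the corollary is left unproved there precisely because it follows, as you say, by applying Symmetry of Congruence to one hypothesis and then the Axiom of Transitivity of Congruence (equivalently, by citing the already-established fact that $\cong$ is an equivalence on $\mathbb{I}$). Only your cross-references are off relative to the paper's numbering --- symmetry is Proposition 3.4 and the equivalence statement is Corollary 3.5, the present statement itself being Corollary 3.6 --- but the content is unaffected.
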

\begin{remark}
If we replace Axiom 7 with Euclid's First Common Notion, i.e.,
\begin{center}
$\forall a,b,c,d,e,f\in \mathbb{S}(([a,b]\cong [c,d]\wedge [a,b]\cong [e,f])\Rightarrow [c,d]\cong [e,f])$,
\end{center}
we (more easily) obtain the same consequences.
\end{remark}
\begin{lemma}
Let $[a,b]\cong [c,d]$. Then $a\sim c$ if and only if $b\sim d$.
\end{lemma}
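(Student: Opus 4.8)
The plan is to reduce both implications to a cancellation inside a congruence. First I would normalize the hypothesis $[a,b]\cong[c,d]$ using the fact that an interval depends only on the $\sim$-classes of its endpoints (Definition 3.1, equivalently Corollary 3.2): in the forward direction $a\sim c$ lets me rewrite $[a,b]$ as $[c,b]$, and in the reverse direction $b\sim d$ lets me rewrite $[c,d]$ as $[c,b]$. After this normalization the forward direction becomes ``cancel the common left endpoint'' and the reverse direction becomes ``cancel the common right endpoint''.

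For the forward direction, suppose $a\sim c$. Then $\tilde{a}=\tilde{c}$, so $[a,b]=(\tilde{a},\tilde{b})=(\tilde{c},\tilde{b})=[c,b]$, and the hypothesis becomes $[c,b]\cong[c,d]$; by Proposition 3.7 also $[c,d]\cong[c,b]$, while $[c,d]\cong[c,d]$ by Axiom 6. Now apply the Axiom of Motion to the interval $[c,d]$ and the sound $c$: there is a sound, unique up to $\sim$, whose pairing with $c$ yields an interval congruent to $[c,d]$. Since both $b$ and $d$ have this property, $b\sim d$, as desired.

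For the reverse direction, suppose $b\sim d$. The same bookkeeping gives $[c,d]=[c,b]$, so the hypothesis reads $[a,b]\cong[c,b]$ (and $[c,b]\cong[a,b]$ by Proposition 3.7), and I must conclude $a\sim c$, i.e.\ cancel the common right endpoint $b$. The natural route is to first establish the left-handed analogue of the Axiom of Motion (for a given interval and a prescribed right endpoint, the left endpoint is unique up to $\sim$) and then rerun the forward argument with the two coordinates interchanged. I expect this companion statement to be the crux: the Axiom of Motion as formulated pins down only the right endpoint of an interval in a prescribed congruence class, so right-cancellation is not automatic. The approach I would try is to derive a reversibility property for congruence --- that $[a,b]\cong[c,d]$ forces $[b,a]\cong[d,c]$ --- from which the reverse direction drops out by applying the forward direction to the intervals $[b,a]$ and $[d,c]$ together with $b\sim d$; should reversibility not be obtainable from Axioms 6--8 alone, it would have to enter as a further ``motion''-type principle, and that is where I anticipate the genuine obstacle lies.
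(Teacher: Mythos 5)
Your forward direction coincides with the paper's own proof: using $a\sim c$ you rewrite $[a,b]$ as $[c,b]$, observe $[c,d]\cong[c,d]$ (Axiom 6) and $[c,d]\cong[c,b]$ (symmetry of congruence), and then the uniqueness clause of the Axiom of Motion applied to $[c,d]$ and the sound $c$ forces $b\sim d$. That half is complete and correct.

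The reverse direction, however, is not proved in your proposal: you only outline a plan (a left-handed Motion principle, or the reversibility $[a,b]\cong[c,d]\Rightarrow[b,a]\cong[d,c]$) and explicitly leave open whether it follows from Axioms 6--8, so as a proof of the stated lemma there is a genuine gap. That said, your diagnosis of the obstruction is sharper than the paper's treatment, which dismisses the converse with ``the proof of the converse is similar.'' It is not similar: Motion fixes the left endsound and makes only the right endsound unique up to identity, so left-cancellation is free while right-cancellation is not, and in fact the converse is independent of everything available at this point. For instance, interpret sounds as real numbers, $*$ as $<$, $\sim$ as equality, and declare $[a,b]\cong[c,d]$ iff $b-a^{2}=d-c^{2}$; Axioms 1--8 all hold, yet $[1,0]\cong[-1,0]$ has identical right endsounds and non-identical left endsounds. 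The reversibility you anticipate is exactly item 2 of the Axioms of Regularity (Axiom 9), which the paper introduces only after this lemma (see Corollary 3.17), and the clean two-sided cancellation is only recovered there as Corollary 3.34 (``Generalization of Lemma 3.8'') once Regularity and the Interval Addition Axiom are available. So your forward half matches the paper; your reverse half is unfinished, but the extra principle you suspect is needed is precisely the one the paper later postulates.
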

\begin{proof}
Suppose that $a\sim c$. Applying the Axiom of motion gives an $e\in \mathbb{S}$ such that $[a,b]\cong [c,e]$. On the other hand, $[a,b]=[c,b]$ by definition, and using Axiom 6 turns out that $[a,b]\cong [c,b]$. Moreover, $[a,b]\cong [c,d]$ (hypothesis). It follows that $b,d\in \tilde{e}$ by uniqueness up to identity. So $b\sim d$. The proof of the converse is similar.
\end{proof}
\begin{definition}
The interval $[a,b]$ is said to be \emph{unit} if $a\sim b$, it is said to be \emph{greater than unit} if $a*b$, and is said to be \emph{less than unit} if $a*^{-1}b$.
\end{definition}
\begin{definition}
The \emph{conversion} of the interval $[a,b]$ is defined to be the interval $[b,a]$, denoted $[a,b]^{-1}$.
\end{definition}
\begin{remark}[Notation]
We denote by $\mathbb{I}^{>1}$ the set of all intervals which are greater than unit and by $\mathbb{I}^{<1}$  the set of all intervals which are less than unit.
\end{remark}
\begin{corollary}
$\forall a,b\in \mathbb{S}(a\sim b\Leftrightarrow [a,b]=[a,b]^{-1})$.
\end{corollary}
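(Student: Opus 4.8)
The plan is to unwind both sides of the biconditional into assertions about the equivalence classes $\tilde{a}$ and $\tilde{b}$, and then observe that, modulo symmetry of $\sim$, these assertions are literally the same. So the proof will be short and will lean on exactly two earlier results: Corollary 3.2 (the criterion for equality of intervals) and Corollary 2.7 (that $\sim$ is an equivalence relation).

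First I would fix arbitrary $a,b\in\mathbb{S}$ and rewrite the right-hand side: by Definition 3.8 the conversion $[a,b]^{-1}$ is the interval $[b,a]$, so $[a,b]=[a,b]^{-1}$ is the same as $[a,b]=[b,a]$. Applying Corollary 3.2 with the substitution $(c,d):=(b,a)$ transforms this into the conjunction $a\sim b\wedge b\sim a$.

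Next I would invoke Corollary 2.7, by which $\sim$ is in particular symmetric; hence $a\sim b$ and $b\sim a$ are equivalent, and the conjunction $a\sim b\wedge b\sim a$ collapses to $a\sim b$. Chaining the two equivalences yields $a\sim b\Leftrightarrow [a,b]=[a,b]^{-1}$, and since $a,b$ were arbitrary, universal generalization completes the argument.

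There is essentially no obstacle here; the only point that deserves care is not silently omitting the appeal to symmetry of $\sim$, since Corollary 3.2 delivers a two-sided conjunction whereas the corollary is phrased with the single condition $a\sim b$. As an alternative route one could argue directly from ordered-pair equality: $[a,b]=[a,b]^{-1}$ unfolds to $(\tilde{a},\tilde{b})=(\tilde{b},\tilde{a})$, which forces $\tilde{a}=\tilde{b}$, i.e.\ $a\sim b$; this bypasses Corollary 3.2 but still tacitly uses that two $\sim$-classes coincide precisely when their representatives are $\sim$-related.
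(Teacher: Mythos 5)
Your argument is correct and is exactly the immediate argument the paper intends (the corollary is stated without proof as a direct consequence): unfold $[a,b]^{-1}=[b,a]$, apply Corollary 3.2 to get $a\sim b\wedge b\sim a$, and collapse the conjunction using symmetry of $\sim$ from Corollary 2.7. The only slip is a label: the conversion is introduced in Definition 3.10, not 3.8 (3.8 is the lemma on congruent intervals), which does not affect the proof.
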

\paragraph*{}
Up to this point, only geometric properties of intervals have been revealed within axioms based on congruence. Musical intuition does not allow us to think of the elements of $\mathbb{I}^{>1}$ to be congruent to the elements of $\mathbb{I}^{<1}$. Moreover, it is natural that whatever we want for $\mathbb{I}^{>1}$ should likewise hold for $\mathbb{I}^{<1}$. We expect the relationship between pitch and congruence to be \emph{regular} in this sense. To better visualize the matter, consider Corollary 3.12. Does the statement remain to be true if we replace $=$ with $\cong$? The directive part of the statement is obvious by definition and Axiom 6, but the converse (if $[a,b]\cong [b,a]$, then $a\sim b$) cannot be proved since it is actually independent. The correctness of the converse which is equivalent to a positive answer to the foregoing question displays a specific case of a musical property included in the following axiom.
\begin{axiom}[Axioms of Regularity]
Let $a,b,c,d\in \mathbb{S}$ and $a*b$.
\begin{enumerate}
\item If $[a,b]\cong [c,d]$, then $c*d$.
\item $[a,b]$ and $[c,d]$ are congruent if and only if so are their conversions.
\end{enumerate}
\end{axiom}
\begin{corollary}
No interval is congruent to its conversion unless it is unit; that is,
\begin{center}
$\forall a,b\in \mathbb{S}(a\sim b\Leftrightarrow [a,b]\cong [a,b]^{-1})$.
\end{center}
\end{corollary}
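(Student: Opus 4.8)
The plan is to prove the two implications of the biconditional separately, obtaining the nontrivial direction from the Axioms of Regularity together with the order axioms of Section 2.

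For the forward implication, I would suppose $a\sim b$. Then $\tilde{a}=\tilde{b}$, so by Definition 3.1 the ordered pairs $[a,b]=(\tilde{a},\tilde{b})$ and $[a,b]^{-1}=[b,a]=(\tilde{b},\tilde{a})$ coincide --- this is precisely Corollary 3.12. Since every interval is congruent to itself by Axiom 6, we get $[a,b]\cong[a,b]=[a,b]^{-1}$. This is exactly the ``directive part'' alluded to before the statement, and it invokes only Axiom 6.

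For the converse I would argue by contradiction: assume $[a,b]\cong[a,b]^{-1}$ but $a\nsim b$. By the trichotomy of Corollary 2.4 we then have either $a*b$ or $b*a$. In the first case, apply item 1 of the Axioms of Regularity to the pair $a*b$ and the congruence $[a,b]\cong[b,a]$ (reading $[b,a]$ as the interval $[c,d]$ of the axiom, with $c=b$ and $d=a$) to conclude $b*a$; together with $a*b$, transitivity (Axiom 2) yields $a*a$, contradicting Axiom 1. In the second case, I would first use Symmetry of Congruence (Proposition 3.8) to rewrite $[a,b]\cong[b,a]$ as $[b,a]\cong[a,b]$, and then apply item 1 of the Axioms of Regularity to the pair $b*a$ and this congruence to obtain $a*b$; once more $a*a$ follows, contradicting irreflexivity. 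Either branch is impossible, so $a\sim b$.

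I do not expect a genuine obstacle here. The only two points that require a little care are matching the variables of Axiom 9(1) correctly to the conversion $[b,a]$, and noticing that in the case $b*a$ one must pre-compose with Symmetry of Congruence so that the interval known to be greater than unit sits on the left-hand side, where Axiom 9(1) is entitled to act on it. It is also worth remarking that item 2 of the Axioms of Regularity is not needed for this corollary; it will earn its keep elsewhere.
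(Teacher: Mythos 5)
Your proof is correct and follows exactly the route the paper intends: the corollary is stated there without proof, the text just before Axiom 9 having already noted that the forward direction follows from Corollary 3.12 and Axiom 6 while the converse is precisely what the new axiom supplies, and your case analysis (item 1 of the Axioms of Regularity plus trichotomy, transitivity and irreflexivity, with Symmetry of Congruence pre-composed in the case $b*a$) is the natural way to carry that out. One small citation slip: Symmetry of Congruence is Proposition 3.4 in the paper, not 3.8 (Lemma 3.8 is the statement that $a\sim c$ iff $b\sim d$); otherwise your argument, including the observation that item 2 of Axiom 9 is not needed here, is exactly right.
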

\paragraph*{}
Our first main motive for using the term \textit{regularity} as the name of Axiom 9 is in fact the ``only if'' part of Corollary 3.13; but unfortunately, it does not imply even item 1 of the Axioms of Regularity, and we will not be able to do all needed to be done by replacing them. In item 1 of Remark 3.14 we shall present some models for our music system up to Axiom 8, satisfying Corollary 3.13, in which the Axioms of Regularity fail to hold. This means Corollary 3.13 is logically weaker than Axiom 9 and not useful enough to be postulated.
\begin{remark}
\begin{enumerate}
\item Let \emph{sounds} mean positive real numbers, i.e., $\mathbb{S}=\mathbb{R}^+$. Interpret $*$ as the usual ordering $<$ on $\mathbb{R}$. So $\sim$ coincides with the equal relation $=$, and the interval $[x,y]$ is the same ordered pair as $(x,y)$. Now define $(x,y)\cong (z,t)$ by
$2y-x=2t-z$. One can check the veracity of each of the first eight axioms in this interpretation. But none of the items of Axiom 9 is satisfied. This model shows that the Axioms of Regularity are independent of all previous axioms. However, Corollary 3.13 does perfectly hold.
\item Here is another interesting model disclosing that even from the hypothesis of the satisfaction of the second item of the Axioms of Regularity, the first item is not deducible. We consider the unit circle $S^1$ in the Cartesian model for Euclidean geometry as a musical manifold, namely, $\mathbb{S}=S^1$. Thus the points of the circle represent \emph{sounds} of our music. By convention all circular arcs on $S^1$ are considered counterclockwise (in the trigonometric direction). A sound $x$ \emph{is lower in pitch than} a sound $y$ if they do not lie on the same diameter and the length of the circular arc $\stackrel{\frown}{xy}$ equals the distance from $x$ to $y$ using Riemannian metric on the $S^1$ (see \cite{L3} and \cite{L2} for having a profound outlook on the world of manifolds), i.e. the smallest distance between $x$ and $y$. Clearly, $x\sim y$ iff $x$ and $y$ lie on the same diameter of $S^1$, that is, $x$ and $y$ are antipodal points. Hence, any equivalence class $\tilde{x}$ consists of just two sounds $x$ and the other one making together antipodal points of the corresponding diameter.
Another simpler equivalent interpretation of $*$ is that $x*y$ if and only if the measure of the central angle corresponding to $\stackrel{\frown}{xy}$ is nonzero and less than $\pi$. So, $x$ is higher-pitched than $y$ if and only if the measure of the central angle corresponding to $\stackrel{\frown}{xy}$ is greater than $\pi$. $x\sim y$ iff the measure of the corresponding central angle is equal to $0$ or $\pi$. Having interpreted the two undefined technical concepts of pitch music and having satisfied its axioms, we get interpretations of all defined technical concepts. For example, each \emph{side} of $x$ is an open semicircle of $S^1$ surrounded by the two antipodal points belonging to $\tilde{x}$. We now interpret \emph{congruence} in such a way that $[x,y]\cong [z,t]$ if the central angles corresponding to $\stackrel{\frown}{xy}$ and $\stackrel{\frown}{zt}$ are congruent in Euclidean sense or their difference is $\pi$, preserving the orientation (in accordance with our convention about arcs). One can observe that all axioms thus far stated in interval music (in addition to Corollary 3.13) are satisfied except item 1 of Axiom 9. The reason for failing is left to the reader.
\end{enumerate}
\end{remark}
\paragraph*{}
Roughly speaking, on the base of the Axioms of Regularity, Remark 3.14 at once reveals pitch is no more circular but rather flat and the relationship between pitch and congruence is not certainly linear, as we will see perspicuously. Let us turn back to the main framework.
\begin{exercise}
Suppose that $[a,b]\cong [b,c]$. Prove that the following conditions are equivalent:
\begin{enumerate}
\item $a$ and $c$ are on opposite sides of $b$.
\item $[a,b]$ or $[b,c]$ is not unit.
\item Neither $[a,b]$ nor $[b,c]$ is unit.
\item $[a,b]\neq [b,c]$.
\end{enumerate}
\end{exercise}
\begin{proposition}
All unit intervals are congruent to each other.
\end{proposition}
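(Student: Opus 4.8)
The plan is to reduce the statement to a single application of the Axiom of Motion and then to eliminate the non-unit alternatives by means of the Axioms of Regularity and Corollary 3.13. First I would note that an interval $[a,b]$ is unit exactly when $a\sim b$, hence when $\tilde a=\tilde b$, so that $[a,b]=(\tilde a,\tilde a)=[a,a]$ (this is recorded in Corollary 3.2). Consequently it suffices to prove $[a,a]\cong[b,b]$ for arbitrary sounds $a$ and $b$, since an arbitrary pair of unit intervals then has the form $[p,p]$ and $[r,r]$ and the general claim follows. I would also record the trivial but crucial remark that the conversion of a unit interval is itself: $[a,a]^{-1}=[a,a]$.

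Next I would apply the Axiom of Motion to the interval $[a,a]$ and the sound $b$, obtaining a sound $c$ with $[a,a]\cong[b,c]$, unique up to identity in pitch; the target reduces to showing $b\sim c$, for then $[b,c]=[b,b]$ and we are finished. So suppose, toward a contradiction, that $[b,c]$ is not unit. By Corollary 2.4 either $b*c$ or $c*b$, so one of $[b,c]$, $[c,b]$ is greater than unit. Invoking item 2 of the Axioms of Regularity with that interval playing the role of the ``greater than unit'' interval (and remembering $[a,a]^{-1}=[a,a]$), I get that $[b,c]\cong[a,a]$ is equivalent to $[c,b]\cong[a,a]$. Now $[b,c]\cong[a,a]$ does hold, by Symmetry of Congruence (Proposition 3.5) applied to $[a,a]\cong[b,c]$; hence $[c,b]\cong[a,a]$ as well, and transitivity together with symmetry of $\cong$ yields $[b,c]\cong[c,b]=[b,c]^{-1}$. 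By Corollary 3.13 this forces $b\sim c$, contradicting the assumption that $[b,c]$ is not unit. Therefore $[b,c]$ is unit, i.e. $b\sim c$, whence $[a,a]\cong[b,b]$.

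I do not expect a genuine obstacle here; the whole argument is essentially one use of the Axiom of Motion followed by Regularity and Corollary 3.13. The only point demanding care is the bookkeeping with conversions in the two symmetric subcases $b*c$ and $c*b$: one must make sure item 2 of the Axioms of Regularity is applied with its ``greater than unit'' hypothesis actually satisfied, which is why I split on which of $[b,c]$, $[c,b]$ is greater than unit. If one prefers to avoid Corollary 3.13, the same contradiction is reachable directly from item 1 of the Axioms of Regularity: in the case $b*c$, item 1 applied to $[b,c]\cong[a,a]$ gives $a*a$, contradicting the Axiom of Irreflexivity in Pitch, and the case $c*b$ is reduced to this one by first passing to conversions via item 2.
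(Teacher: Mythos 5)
Your proof is correct and follows essentially the same route as the paper's: a single application of the Axiom of Motion to transport the unit interval to the sound $b$, then the Axioms of Regularity (together with Corollary 2.4) to force the resulting interval $[b,c]$ to be unit, after which equality of intervals with identical-pitched endsounds finishes the argument. The only cosmetic difference is that your main route detours through item 2 of Regularity and Corollary 3.13, whereas the paper --- like the alternative you sketch at the end --- gets the contradiction directly from item 1 of Regularity and the trichotomy of Corollary 2.4.
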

\begin{proof}
We assume that $a\sim b$ and $c\sim d$. We prove that $[a,b]\cong [c,d]$. By the Axiom of Motion there is a sound $d'$ such that $[a,b]\cong [c,d']$. So $a\sim b$ implies that $c\sim d'$ by means of Corollary 2.4 and the Axioms of Regularity. It follows that $d\sim d'$ (transitivity of identity) and then $[c,d]=[c,d']$ (by definition). Finally, $[a,b]\cong [c,d]$.
\end{proof}
\begin{corollary}[Generalization of Axioms of Regularity] $\,\,\,\,\,\,\,\,\,\,\,\,\,\,\,$
\begin{enumerate}
\item Let $R\in \{*,*^{-1},\sim \}$ and $[a,b]\cong [c,d]$. Then $aRb$ if and only if $cRd$.
\item $[a,b]\cong [c,d]$ if and only if $[a,b]^{-1}\cong [c,d]^{-1}$.
\end{enumerate}
\end{corollary}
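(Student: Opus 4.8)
The plan is to reduce both parts to the two items of the Axioms of Regularity (Axiom 9) by a trichotomy argument, using Corollary 2.4 to split according to whether $a*b$, $b*a$, or $a\sim b$, and using Symmetry of Congruence (Proposition 3.8) to upgrade the one-directional implications in Axiom 9 to biconditionals. Note first that the case $R=*^{-1}$ of part 1 just says $b*a \Leftrightarrow d*c$, by Definition 2.1, so all three choices of $R$ live inside one statement.

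For part 1, first I would record two implications. (i) If $[a,b]\cong[c,d]$ and $a*b$, then $c*d$: this is item 1 of Axiom 9 verbatim. (ii) If $[a,b]\cong[c,d]$ and $b*a$, then $d*c$: to obtain this, apply item 2 of Axiom 9 to the pair $(b,a)$ — legitimate precisely because $b*a$ — to get $[b,a]\cong[d,c]$, and then feed this into (i) with the relabelling $a\leftrightarrow b$, $c\leftrightarrow d$. Next, since by Proposition 3.8 the relation $[a,b]\cong[c,d]$ is symmetric in the two pairs, (i) and (ii) also run in the reverse direction; hence $a*b\Leftrightarrow c*d$ and $b*a\Leftrightarrow d*c$. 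By Corollary 2.4 exactly one of $a*b$, $b*a$, $a\sim b$ holds and likewise for $c,d$, so the remaining case forces $a\sim b\Leftrightarrow c\sim d$. This settles part 1.

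For part 2, I would again split by Corollary 2.4 on the pitch of $a$ relative to $b$. If $a*b$, the equivalence $[a,b]\cong[c,d]\Leftrightarrow[a,b]^{-1}\cong[c,d]^{-1}$ is exactly item 2 of Axiom 9 together with Definition 3.10. If $b*a$, apply item 2 of Axiom 9 to the pair $(b,a)$ and to $(d,c)$ (allowed since $b*a$): after noting $[a,b]^{-1}=[b,a]$ and $[c,d]^{-1}=[d,c]$, this is the same equivalence. The only remaining case is $a\sim b$: then $\tilde a=\tilde b$, so $[a,b]=(\tilde a,\tilde b)=(\tilde b,\tilde a)=[a,b]^{-1}$; moreover, by the $R=\sim$ case of part 1, $[a,b]\cong[c,d]$ forces $c\sim d$ and $[b,a]\cong[d,c]$ forces $d\sim c$, so in either direction each interval equals its own conversion and the equivalence is immediate.

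The main obstacle — essentially the only delicate point — will be the bookkeeping when applying the items of Axiom 9 after relabelling variables, since each item carries the standing hypothesis $a*b$; I must check that every invocation is to a pair that is genuinely greater than unit. The unit subcase of part 2 is the one place where part 1 is genuinely used, so the two parts must be proved in this order.
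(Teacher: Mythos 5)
Your argument is correct and is essentially the derivation the paper intends (the corollary is stated without proof): trichotomy via Corollary 2.4, the two items of Axiom 9 applied to whichever pair is genuinely greater than unit, Symmetry of Congruence to reverse the one-directional implications, and the unit case of part 2 handled through part 1 --- the same pattern already used in the paper's proof of Proposition 3.16. One small citation slip: Symmetry of Congruence is Proposition 3.4, not 3.8 (Lemma 3.8 is the different statement that, under $[a,b]\cong [c,d]$, $a\sim c$ iff $b\sim d$).
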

\paragraph*{}
The contrapositive of Corollary 3.12 in comparison with Corollary 3.13 may look more stimulative to ask whether a given non-unit interval is intuitively larger or its conversion. From now on, we try to found some new axiom with the aim of defining an order on the sets $\mathbb{I}^{>1}$ and $\mathbb{I}^{<1}$ based on the notion of congruence.
\begin{definition}
For every pair of sounds $p$ and $q$, the \emph{segment} $\overline{pq}$ \emph{in $(\mathbb{S},*)$} is defined as
\begin{center}
$\{x\in \mathbb{S}:x\sim p \,\,\text{or}\,\, x\sim q \,\,\text{or}\,\, \text{x is between-pitched p and q}\}.$
\end{center}
Any segment in $(\mathbb{S},*)$ which is a (proper) subset of a given segment is called a (\emph{proper}) \emph{subsegment} of that. The set of all sounds lower (or higher) in pitch than a given sound $s$ is called an \emph{initial \emph{(or} final\emph{)} segment in $(\mathbb{S},*)$}. A set of sounds $F$ is said to be \emph{convex} if $\overline{pq}\subseteq F$ whenever $p,q\in F$. The \emph{convex hull} of an $F\subseteq \mathbb{S}$ is the smallest convex superset of $F$. For every $F\subseteq \mathbb{S}$, $x\in F$ is called an \emph{extreme sound} of $F$ if x does not belong to any segment $\overline{pq}$ included in $\bigcup F/\sim \,=\bigcup _{s\in F} \tilde{s}$ unless x is identical in pitch with p or q; more precisely,
\begin{center}
$\forall p,q\in \mathbb{S}(x\in \overline{pq}\wedge \dfrac{\overline{pq}}{\sim}\subseteq \dfrac{F}{\sim}\Rightarrow x\in \tilde{p}\cup \tilde{q}).$
\end{center}
\end{definition}
\begin{remark}[Notation]
The convex hull of an $F\subseteq \mathbb{S}$, the set of extreme sounds of $F$, and the set of all segments in $(\mathbb{S},*)$ are respectively denoted by $Con(F)$, $Ext(F)$, and $Seg(\mathbb{S},*)$.
\end{remark}
\paragraph*{}
In the following propositions the main properties of segments are listed, not deeply related to the matter, whose proofs are entirely left to the reader as an exercise.
\begin{proposition}
Given $F\subseteq \mathbb{S}$. If $p$ is an extreme sound of $F$, then so is every sound identical-pitched with $p$.
\end{proposition}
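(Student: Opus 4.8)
The plan is to show that the property defining an extreme sound of $F$ --- apart from the bare requirement of belonging to $F$ --- depends on the tested sound only through its identity class, so that the set of extreme sounds of $F$ is a union of $\sim$-classes; that is exactly the assertion. Recall from Definition 3.18 that a sound $x$ is extreme in $F$ precisely when $x\in F$ and
\begin{center}
$\forall p,q\in\mathbb{S}(x\in\overline{pq}\wedge \frac{\overline{pq}}{\sim}\subseteq\frac{F}{\sim}\Rightarrow x\in\tilde{p}\cup\tilde{q})$.
\end{center}
I would check that this schema is insensitive to replacing $x$ by any $x'$ with $x'\sim x$, and then read off the proposition.

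The crucial auxiliary step is to prove that every segment $\overline{pq}$ in $(\mathbb{S},*)$ is saturated under $\sim$: if $x\in\overline{pq}$ and $x\sim x'$, then $x'\in\overline{pq}$. This is a routine three-case inspection against the definition of $\overline{pq}$. If $x\sim p$ or $x\sim q$, the same holds for $x'$ by transitivity of $\sim$ (Corollary 2.7). If $x$ is between-pitched $p$ and $q$, say $p*x*q$, then Lemma 2.9 applied twice --- first with $p\sim p$ and $x\sim x'$, then with $x\sim x'$ and $q\sim q$ --- yields $p*x'$ and $x'*q$, i.e. $p*x'*q$; the case $q*x*p$ is symmetric. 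Hence $x\in\overline{pq}\Leftrightarrow x'\in\overline{pq}$ whenever $x\sim x'$.

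Next I would observe that the middle conjunct $\frac{\overline{pq}}{\sim}\subseteq\frac{F}{\sim}$ does not mention the tested sound at all, while the conclusion $x\in\tilde{p}\cup\tilde{q}$ --- that is, $x\sim p$ or $x\sim q$ --- is manifestly $\sim$-invariant in $x$. Combined with the previous paragraph, this shows that for each fixed pair $p,q$ the displayed implication holds of $x$ if and only if it holds of every $x'\sim x$, and therefore the entire universally quantified schema holds of $x$ if and only if it holds of every $x'\sim x$. Consequently, if $p$ is an extreme sound of $F$ and $p'\sim p$, the schema holds of $p'$ as well, and $p'$ is an extreme sound of $F$.

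The one point demanding care --- and, I expect, the only real obstacle --- is the clause $x\in F$ in Definition 3.18: for a genuinely arbitrary $F$, a sound identical-pitched with an element of $F$ need not itself lie in $F$. The statement is therefore to be understood for $F$ that are unions of $\sim$-classes (which is the case of interest), or, equivalently, under the convention that an extreme sound of $F$ need only belong to $\bigcup F/\sim$. Since $\bigcup F/\sim$ is itself $\sim$-saturated, the argument above then gives exactly that $Ext(F)$ is closed under $\sim$.
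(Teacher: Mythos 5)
The paper gives no proof of this proposition --- it is one of the segment properties ``entirely left to the reader as an exercise'' --- and your argument is precisely the intended routine one: segments are $\sim$-saturated (Lemma 2.9 handles the between-pitched case, transitivity of $\sim$ the endpoint cases), the clause $\overline{pq}/\sim\,\subseteq F/\sim$ does not involve the tested sound, and the conclusion $x\in\tilde{p}\cup\tilde{q}$ is manifestly $\sim$-invariant, so the whole defining schema of Definition 3.18 is insensitive to replacing $x$ by an identical-pitched sound. Your closing caveat is also a fair catch rather than an evasion: read literally, Definition 3.18 demands that an extreme sound lie in $F$, and for an $F$ that is not a union of identity classes (say a singleton, in any model with non-trivial classes such as the Cartesian model of Remark 4.8) a sound identical-pitched with an extreme sound of $F$ need not belong to $F$, so the proposition is true only under the convention you state --- membership required only in $\bigcup F/\sim$, or equivalently the claim restricted to such sounds in $F$ --- which is evidently what the author intends, given Proposition 3.27(4) and the $\sim$-saturated sets (segments, convex sets) to which $Ext$ is subsequently applied.
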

\begin{proposition}
Let $\overline{rs}\subseteq \overline{pq}$. $\overline{rs}$ is a proper subsegment of $\overline{pq}$ iff one of the extreme sounds of $\overline{rs}$ (equivalently r or s) is between-pitched $p$ and $q$.
\end{proposition}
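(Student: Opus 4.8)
The plan is to pass to the linear order $(\mathbb{S}/\sim,\,*')$ of Remark 2.10, since both the relation ``between-pitched'' and membership in a segment depend only on the $\sim$-classes of the sounds involved (Corollary 2.4 and Lemma 2.9). Under this translation the hypothesis $\overline{rs}\subseteq\overline{pq}$ says exactly that every class occurring in $\overline{rs}$ occurs in $\overline{pq}$, and ``$\overline{rs}$ is a proper subsegment of $\overline{pq}$'' means simply $\overline{rs}\subsetneq\overline{pq}$. Since a segment is unchanged when its two endpoints are interchanged, I would first arrange $\tilde p\le\tilde q$ and $\tilde r\le\tilde s$ in $*'$, and dispose directly of the degenerate cases $p\sim q$ and $r\sim s$. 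From $r,s\in\overline{pq}$ it then follows that each of $\tilde r,\tilde s$ equals $\tilde p$, equals $\tilde q$, or lies strictly between $\tilde p$ and $\tilde q$.

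For the direction ``$\Leftarrow$'' I would produce an explicit witness: if $r$ is between-pitched $p$ and $q$, then $\tilde p$ is strictly below $\tilde r\le\tilde s$, so $p\notin\overline{rs}$ although $p\in\overline{pq}$; symmetrically, if $s$ is between-pitched $p$ and $q$, then $\tilde q$ is strictly above $\tilde s\ge\tilde r$, so $q\in\overline{pq}\setminus\overline{rs}$. In either case $\overline{rs}$ is a proper subsegment. For ``$\Rightarrow$'' I would use the contrapositive: if neither $r$ nor $s$ is between-pitched $p$ and $q$, then --- the degenerate cases having been excluded --- $\tilde r$ and $\tilde s$ are two distinct elements of $\{\tilde p,\tilde q\}$, hence $\{\tilde r,\tilde s\}=\{\tilde p,\tilde q\}$; since the set $\overline{rs}$ is determined by $\tilde r$ and $\tilde s$ alone (Lemma 2.9 and transitivity of $\sim$), this forces $\overline{rs}=\overline{pq}$, so $\overline{rs}$ is not a proper subsegment.

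Finally, to justify the parenthetical ``equivalently $r$ or $s$'', I would verify from Definition 3.18 and Proposition 3.20 that $Ext(\overline{rs})=\tilde r\cup\tilde s$: testing the defining condition of an extreme sound with the segment $\overline{rs}$ itself shows that no sound whose class lies strictly between $\tilde r$ and $\tilde s$ is extreme, while for any segment $\overline{p'q'}$ all of whose classes occur in $\overline{rs}$, a sound of $\overline{p'q'}$ identical-pitched with $r$ (or with $s$) is pinned to an endpoint of $\overline{p'q'}$; by Proposition 3.20 the extreme sounds of $\overline{rs}$ are then precisely those identical-pitched with $r$ or with $s$. Hence ``some extreme sound of $\overline{rs}$ is between-pitched $p$ and $q$'' is equivalent to ``$r$ or $s$ is'', again by the $\sim$-invariance of betweenness. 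The mathematics involved is slight; the only real work --- and the one spot to slip --- is organizational: carrying out the passage to $\mathbb{S}/\sim$ cleanly and running the finite case analysis on the mutual positions of $\tilde p,\tilde q,\tilde r,\tilde s$ while correctly dispatching the degenerate segments so that the case list remains exhaustive.
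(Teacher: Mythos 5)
Your reduction to the linear order $(\mathbb{S}/\sim,\,*')$ and your two main implications are sound for the nondegenerate configuration, and since the paper leaves Proposition 3.21 (like the other segment facts) entirely to the reader, there is no authorial proof to compare routes with. The genuine problem is the step where you ``dispose directly of the degenerate cases $p\sim q$ and $r\sim s$.'' The case $p\sim q$ really is harmless: then $\overline{pq}=\tilde{p}$, the inclusion forces $\overline{rs}=\tilde{p}=\overline{pq}$, and both sides of the equivalence are false. But the case $r\sim s$ with $p\nsim q$ cannot be disposed of, because there the stated equivalence actually fails. Take $p*q$ and $r\sim s\sim p$. Then $\overline{rs}=\tilde{p}$ is a segment (Proposition 3.27, item 2) and a proper subset of $\overline{pq}$, hence a proper subsegment in the sense of Definition 3.18; yet every extreme sound of $\overline{rs}$ lies in $\tilde{p}$, and no sound identical in pitch with $p$ can satisfy $p*x*q$ or $q*x*p$ (Lemma 2.9 together with irreflexivity), so neither $r$ nor $s$ nor any extreme sound is between-pitched $p$ and $q$. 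Thus the ``only if'' direction breaks for degenerate $\overline{rs}$, and your contrapositive argument, which explicitly appeals to ``the degenerate cases having been excluded'' in order to force $\{\tilde{r},\tilde{s}\}=\{\tilde{p},\tilde{q}\}$, rests on a prior disposal that does not exist.

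The repair is to make the proviso $r\nsim s$ explicit (equivalently, to read the proposition as being about nondegenerate subsegments, which is how it is used later, e.g.\ in Remark 3.30), noting that the ``$\Leftarrow$'' direction does survive even when $r\sim s$: if $r$ is strictly between $p$ and $q$ then $p\notin\tilde{r}=\overline{rs}$, so the subsegment is proper. With that hypothesis stated, your two implications go through as written, and your verification that $Ext(\overline{rs})=\tilde{r}\cup\tilde{s}$ (this is Proposition 3.27, item 4, also left unproved in the paper) legitimately settles the parenthetical ``equivalently $r$ or $s$.'' As it stands, though, the breezy treatment of the degenerate cases hides exactly the spot where the statement needs a side condition.
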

\begin{proposition}
All segments together with initial and final ones are convex. For any $s\in \mathbb{S}$, $\overset{\leftarrow}{s}$ is the low side of $s$ and $\overset{\rightarrow}{s}$ is the high side of $s$ ($\overset{\leftarrow}{s}\stackrel{\circ}{\cup} \tilde{s}\stackrel{\circ}{\cup}\overset{\rightarrow}{s}=\mathbb{S}$). Moreover,
\begin{center}
$\overset{\leftarrow}{s}=\underset{p*s}{\bigcup}\overline{ps}-\tilde{s},\,\,\,\,\,\overset{\rightarrow}{s}=\underset{s*p}{\bigcup}\overline{ps}-\tilde{s}$.
\end{center}
\end{proposition}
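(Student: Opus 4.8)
The plan is to transfer the entire statement to the total order $*'$ on the quotient $\mathbb{S}/\!\sim$ from Remark 2.10, via the canonical surjection $\pi\colon\mathbb{S}\to\mathbb{S}/\!\sim$, $\pi(x)=\tilde x$. The first thing I would record is a saturation lemma: every segment $\overline{pq}$, and every initial or final segment, is a union of $\sim$-classes. For a segment this is exactly Lemma 2.9 (if $x$ is between-pitched $p$ and $q$ and $x'\sim x$, then $x'$ is between-pitched $p$ and $q$, while $\tilde p$ and $\tilde q$ are classes by definition), and for an initial or final segment it is Lemma 2.9 again. From this I get the transfer principle I want: a $\sim$-saturated $F\subseteq\mathbb{S}$ is convex in $(\mathbb{S},*)$ if and only if $\pi(F)$ is order-convex in $(\mathbb{S}/\!\sim,*')$. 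One direction rests on the identity $\pi(\overline{pq})=\{\tilde c:\tilde p\,*'\,\tilde c\ \wedge\ \tilde c\,*'\,\tilde q\}$ whenever $\tilde p\,*'\,\tilde q$, obtained by unfolding the definitions of segment and of $*'$ with Corollary 2.4 and Lemma 2.9; the other because $\pi^{-1}$ of an order-convex set is visibly $\sim$-saturated and convex. Meanwhile the softer assertions — that $\overset{\leftarrow}{s}$ is the low side and $\overset{\rightarrow}{s}$ the high side of $s$, and that $\overset{\leftarrow}{s}\,\overset{\circ}{\cup}\,\tilde s\,\overset{\circ}{\cup}\,\overset{\rightarrow}{s}=\mathbb{S}$ — are nothing but Remark 2.24 together with Corollary 2.26, the disjointness of $\tilde s$ from either side being immediate from Corollary 2.4.

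Convexity then follows cheaply. For a segment $\overline{pq}$, Corollary 2.4 gives exactly one of $p\sim q$, $p*q$, $q*p$: if $p\sim q$ then $\overline{pq}=\tilde p$ is trivially convex, and if $q*p$ I interchange the endpoints (the defining condition of a segment is symmetric in $p$ and $q$), so I may assume $\tilde p\,*'\,\tilde q$; then by the transfer principle $\pi(\overline{pq})$ is the order-interval between $\tilde p$ and $\tilde q$ in the total order $(\mathbb{S}/\!\sim,*')$, hence order-convex, hence $\overline{pq}$ is convex. For an initial segment $\pi(\overset{\leftarrow}{s})$ is a strict down-set and for a final segment $\pi(\overset{\rightarrow}{s})$ a strict up-set, both order-convex in a total order; transfer back. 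If one prefers to bypass the quotient, the same result comes out directly: take $a,b\in\overline{pq}$ and $x\in\overline{ab}$, reduce by Corollary 2.4 to $a*b$, rewrite membership through the equivalent description $x\in\overline{pq}\iff(x\sim p\vee p*x)\wedge(x\sim q\vee x*q)$, and push the relations through with Axiom 2 and Lemma 2.9 — a short but fiddly case check that the quotient reduction simply absorbs.

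For the union formulas I would argue by double inclusion, doing $\overset{\leftarrow}{s}=\bigcup_{p*s}\overline{ps}-\tilde s$ (the formula for $\overset{\rightarrow}{s}$ is the mirror argument, or follows by running the first one in the reversed order $*^{-1}$, cf. Remark 2.5). If $x\in\overset{\leftarrow}{s}$, i.e. $x*s$, then $x\nsim s$ by Corollary 2.4, and $x\in\overline{xs}$ because $x\sim x$; since $x*s$, this places $x$ in $\bigcup_{p*s}\overline{ps}-\tilde s$. Conversely, if $x\in\overline{ps}$ with $p*s$ and $x\notin\tilde s$, then by definition of a segment $x\sim p$, or $x\sim s$, or $x$ is between-pitched $p$ and $s$ (equivalently $p*x*s$, as $p*s$); the middle case is excluded by $x\notin\tilde s$, and in the remaining two cases $x*s$ — directly when $p*x*s$, and by Lemma 2.9 from $p*s$ when $x\sim p$ — so $x\in\overset{\leftarrow}{s}$.

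I expect the only genuine obstacle to be making the transfer principle watertight: carefully checking that segments are $\sim$-saturated and that $\pi(\overline{pq})$ is precisely the order-interval $\{\tilde c:\tilde p\,*'\,\tilde c\,*'\,\tilde q\}$ when $\tilde p\,*'\,\tilde q$. Once that is in place, convexity of all three families is a one-line consequence of $*'$ being a total order, and the decomposition together with the low-side/high-side identifications are just Corollary 2.26 and Remark 2.24; the remaining bookkeeping is routine.
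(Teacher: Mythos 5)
Your proposal is correct, and there is nothing in the paper to measure it against: Proposition 3.22 belongs to the list of segment properties whose proofs are ``entirely left to the reader as an exercise,'' so the paper supplies no argument of its own. Your route through the quotient is sound in all the places where it could go wrong. Saturation of segments and of initial/final segments is indeed just Lemma 2.9; the identity $\pi(\overline{pq})=\{\tilde c:\tilde p\,*'\,\tilde c\ \wedge\ \tilde c\,*'\,\tilde q\}$ holds in both cases $p*q$ and $p\sim q$ (in the latter both sides collapse to $\{\tilde p\}$, using Corollary 2.4 and antisymmetry of $*'$), and for a saturated $F$ one has $F=\pi^{-1}(\pi(F))$, so order-convexity of the image does give convexity of $F$; down-sets and up-sets in the total order $(\mathbb{S}/\!\sim,*')$ dispose of initial and final segments. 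The orientation reduction (interchanging $p$ and $q$ when $q*p$) is legitimate because the defining condition of $\overline{pq}$ is symmetric. The double-inclusion argument for $\overset{\leftarrow}{s}=\bigcup_{p*s}\overline{ps}-\tilde s$ is also complete: the inclusion $\subseteq$ uses $x\in\overline{xs}$, and the inclusion $\supseteq$ correctly rules out the orientation $s*x*p$ by transitivity and handles $x\sim p$ via Lemma 2.9; the mirror argument for $\overset{\rightarrow}{s}$ is fine. The side identifications and the decomposition $\overset{\leftarrow}{s}\,\overset{\circ}{\cup}\,\tilde s\,\overset{\circ}{\cup}\,\overset{\rightarrow}{s}=\mathbb{S}$ are, as you note, essentially Definition 2.25 together with Corollary 2.26 and trichotomy. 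The direct case-check you sketch as an alternative is presumably what the author had in mind; the quotient transfer costs you one preparatory lemma but buys a uniform one-line convexity argument for all three families, which is a reasonable trade.
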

\begin{proposition}[Well-definedness Property of Convex Hull]
The intersection of every family of convex sets is convex. Furthermore, The convex hull of any $F\subseteq \mathbb{S}$ is the intersection of all convex sets of sounds containing $F$ (which exists uniquely by the elementary axioms of set theory).  
\end{proposition}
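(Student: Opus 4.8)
The plan is to settle the two assertions in turn, each by a direct unwinding of the definition of convexity from Definition 3.19. For the first, I would fix an arbitrary family $\{C_i\}_{i\in I}$ of convex subsets of $\mathbb{S}$, put $G:=\bigcap_{i\in I}C_i$, and take $p,q\in G$. Then $p,q\in C_i$ for every $i\in I$, so $\overline{pq}\subseteq C_i$ by convexity of $C_i$; as this holds for all indices, $\overline{pq}\subseteq\bigcap_{i\in I}C_i=G$, which is exactly the convexity of $G$. I would also dispose of the degenerate case $I=\varnothing$ by noting that the empty intersection is $\mathbb{S}$ itself and that $\mathbb{S}$ is trivially convex, since $\overline{pq}\subseteq\mathbb{S}$ holds for every pair of sounds $p,q$ by the very definition of the segment $\overline{pq}$.

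For the second assertion, I would first record that the family $\mathcal{C}_F:=\{\,C\subseteq\mathbb{S}:F\subseteq C\ \text{and}\ C\ \text{is convex}\,\}$ is nonempty, because $\mathbb{S}\in\mathcal{C}_F$; hence $H:=\bigcap\mathcal{C}_F$ is a legitimate set (not a proper class) by the elementary axioms of set theory. By the first part of the proposition $H$ is convex, and since $F\subseteq C$ for every $C\in\mathcal{C}_F$ we get $F\subseteq H$, so $H$ is itself a convex superset of $F$, i.e.\ $H\in\mathcal{C}_F$. On the other hand $H\subseteq C$ for each $C\in\mathcal{C}_F$ by the definition of intersection, so $H$ is the $\subseteq$-smallest convex superset of $F$. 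Uniqueness of such a smallest set is then automatic --- if $H'$ were another, then $H\subseteq H'$ and $H'\subseteq H$, whence $H=H'$ --- so the term ``the convex hull of $F$'' and its notation $Con(F)$ in Definition 3.19 are justified, with $Con(F)=\bigcap\mathcal{C}_F$.

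I do not anticipate a genuine obstacle: the whole argument is routine set-theoretic bookkeeping with the definition of convexity. The one place demanding (minor) care is the point already flagged in parentheses in the statement, namely that before forming $\bigcap\mathcal{C}_F$ one must know $\mathcal{C}_F\neq\varnothing$, which is why the convexity of the ambient set $\mathbb{S}$ has to be invoked explicitly; for the same reason I would make the empty-family convention in the first sentence explicit rather than leave it tacit.
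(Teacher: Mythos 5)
Your proof is correct and is exactly the argument the paper intends: the paper leaves this proposition to the reader as an exercise, and your two steps (the pointwise intersection argument for convexity, then nonemptiness of the family of convex supersets via the trivially convex ambient set $\mathbb{S}$, followed by minimality and uniqueness) are the standard route. One trivial citation point: the definition of convexity is Definition 3.18, while the notation $Con(F)$ is introduced in Remark 3.19.
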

\begin{proposition}
$F\subseteq \mathbb{S}$ is convex if and only if $F=Con(F)$.
\end{proposition}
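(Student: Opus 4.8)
The plan is to deduce everything from Proposition 3.24, which gives the two facts we need: that $Con(F)$ exists and equals the intersection $\bigcap\{C\subseteq\mathbb{S}: F\subseteq C,\ C\text{ convex}\}$, and that an intersection of convex sets is convex. Before invoking it I would note in passing that this family of convex supersets of $F$ is nonempty, since $\mathbb{S}$ itself is convex (for any $p,q\in\mathbb{S}$ one has $\overline{pq}\subseteq\mathbb{S}$ trivially), so the intersection is legitimate; but this is already subsumed in the ``exists uniquely'' clause of Proposition 3.24.

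For the ``if'' direction, suppose $F=Con(F)$. By Proposition 3.24 the set $Con(F)$ is an intersection of a family of convex sets, and the first sentence of that same proposition says such an intersection is convex. Hence $Con(F)$ is convex, and therefore so is $F$.

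For the ``only if'' direction, suppose $F$ is convex. Then $F$ is itself a convex set containing $F$, so $F$ is one of the members of the family whose intersection is $Con(F)$; consequently $Con(F)=\bigcap\{C: F\subseteq C,\ C\text{ convex}\}\subseteq F$. On the other hand $F\subseteq Con(F)$ always, since every member of the defining family contains $F$ (or simply because $Con(F)$ is a superset of $F$ by definition of convex hull). Combining the two inclusions yields $F=Con(F)$.

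I do not expect a genuine obstacle here: the statement is essentially the standard ``a set equals its closure operator value iff it is closed'' fact, and the only point requiring a moment's care is making sure the defining intersection is over a nonempty family so that $Con(F)$ is well-defined and contains $F$ — and that has already been settled by Proposition 3.24. The entire argument is two short paragraphs once Proposition 3.24 is in hand.
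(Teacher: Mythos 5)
Your proof is correct: the paper deliberately leaves this statement unproved (it belongs to the block of segment/convexity propositions whose proofs are ``entirely left to the reader as an exercise''), and your argument via the Well-definedness Property of Convex Hull --- the intersection of convex sets is convex, and $Con(F)$ is the intersection of all convex supersets of $F$ --- is exactly the intended standard closure-operator argument, with both inclusions in the ``only if'' direction handled correctly and the nonemptiness of the defining family rightly noted as subsumed. The only correction is a labelling one: the proposition you invoke is Proposition 3.23 in the paper's numbering, while the statement you are proving is itself Proposition 3.24.
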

\begin{proposition}[Monotonicity of Convex Hull]
For every $F\subseteq \mathbb{S}$ we have the following:
\begin{center}
$Con(F)=\underset{p,q\in F}{\bigcup}\overline{pq}$.
\end{center}
\end{proposition}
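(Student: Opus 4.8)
The plan is a two-inclusion argument built on the characterization of $Con(F)$ as the smallest convex superset of $F$ (equivalently, by the Well-definedness Property of Convex Hull, the intersection of all convex sets of sounds containing $F$). Abbreviate $U:=\bigcup_{p,q\in F}\overline{pq}$. I would first record that $F\subseteq U$: for each $p\in F$ one has $\overline{pp}=\tilde p\ni p$, so $p\in U$. Next, $U\subseteq Con(F)$: since $Con(F)$ is convex and $F\subseteq Con(F)$, for every $p,q\in F$ the defining property of convexity gives $\overline{pq}\subseteq Con(F)$, and taking the union over all such pairs yields $U\subseteq Con(F)$.

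The substantive half is $Con(F)\subseteq U$, and for this it is enough to prove that $U$ is convex; being then a convex superset of $F$, it must contain the smallest one, namely $Con(F)$. So I take arbitrary $r,s\in U$ and aim to show $\overline{rs}\subseteq U$. By definition of $U$ there are endpoints $p_1,q_1,p_2,q_2\in F$ with $r\in\overline{p_1q_1}$ and $s\in\overline{p_2q_2}$. Working in the quotient, where $*'$ is a linear order on $\mathbb{S}/\sim$ (Remark after Lemma 2.9), the finite collection $\{\tilde p_1,\tilde q_1,\tilde p_2,\tilde q_2\}$ has a $*'$-minimum and a $*'$-maximum; let $p,q\in F$ be representatives of these, chosen among $p_1,q_1,p_2,q_2$. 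Using trichotomy (Corollary 2.4) together with transitivity in pitch (Axiom 2), each of $p_1,q_1,p_2,q_2$ then lies in the segment $\overline{pq}$.

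Now I invoke the earlier Proposition that every segment is convex. Since $p_1,q_1\in\overline{pq}$, convexity of $\overline{pq}$ gives $\overline{p_1q_1}\subseteq\overline{pq}$, hence $r\in\overline{pq}$; symmetrically $s\in\overline{pq}$. A final application of convexity of $\overline{pq}$ yields $\overline{rs}\subseteq\overline{pq}$, and because $p,q\in F$ we have $\overline{pq}\subseteq U$; thus $\overline{rs}\subseteq U$, establishing that $U$ is convex. The degenerate case $F=\emptyset$ is immediate, both sides equalling $\emptyset$ since $\emptyset$ is vacuously convex.

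The only place I expect to spend any care is the middle step: verifying, from the order axioms alone and without drawing a picture, that choosing $p$ and $q$ to be the $*'$-extremes of the four endpoint classes really does sweep all of $p_1,q_1,p_2,q_2$ (and hence $r$ and $s$) into $\overline{pq}$, including the boundary cases where some endpoints coincide up to identity or where $r$ or $s$ is itself an endpoint. Everything else is a routine unwinding of the definition of convex hull against the already-proven convexity of segments.
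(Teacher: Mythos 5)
Your proof is correct. The paper supplies no proof of this proposition (it is among the segment properties "entirely left to the reader as an exercise"), and your two-inclusion argument — showing $U=\bigcup_{p,q\in F}\overline{pq}$ is a convex superset of $F$ by passing to the $*'$-extremes of the four endpoint classes and using convexity of segments (Proposition 3.21), then invoking the minimality of $Con(F)$ from Proposition 3.23 for one direction and convexity of $Con(F)$ for the other — is exactly the intended routine verification, with the boundary cases you flag (coinciding classes, $F=\emptyset$, $r$ or $s$ an endpoint) all going through as you sketch.
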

\begin{remark}
One can characterize all convex sets on the base of segments, initial segments and final segments. In this context, a very special case of Krein-Milman theorem holds (the interested reader should search in functional analysis textbooks such as \cite{C3} and \cite{R4} for more information), stating that for every nonempty convex subset of $\mathbb{S}$ like $F$ which does not contain any initial or final segment in $(\mathbb{S},*)$, the equality $F=Con(Ext(F))$ is satisfied.
\end{remark}
\begin{proposition}
For every $p,q\in \mathbb{S}$, the following conditions hold:
\begin{enumerate}
\item $\overline{pq}=\overline{qp}.$
\item $\overline{pq}=\tilde{p}\Leftrightarrow p\sim q.$
\item $\bigcup \frac{\overline{pq}}{\sim}=\overline{pq}.$
\item $Ext(\overline{pq})=\tilde{p} \cup \tilde{q}.$
\item $Con(\{p,q\})=\overline{pq}.$
\end{enumerate}
\end{proposition}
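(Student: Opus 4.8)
The plan is to settle the five claims in the order (1), (2), (5), (3), (4): each reduces to unwinding Definition 3.18 together with Lemma 2.9, Corollary 2.4 and the convex-hull machinery of Propositions 3.22 and 3.25, and the later items reuse the earlier ones.

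For (1) I would note that the defining clause of $\overline{pq}$ --- that $x\sim p$, or $x\sim q$, or $x$ is between-pitched $p$ and $q$ --- is patently symmetric in the unordered pair $\{p,q\}$ (Definition 2.11 already speaks of a pair, and $\vee$ is commutative), so $\overline{pq}=\overline{qp}$ with nothing more to prove. For (2), the substantive direction is ``$\Leftarrow$'': if $p\sim q$ then no sound is between-pitched $p$ and $q$, since $p*x*q$ would force $p*q$ by Axiom 2, contradicting trichotomy (Corollary 2.4); hence $\overline{pq}=\tilde p\cup\tilde q=\tilde p$. The direction ``$\Rightarrow$'' is immediate: $q\in\overline{pq}$ always, so $\overline{pq}=\tilde p$ puts $q\in\tilde p$. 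For (5) I would apply the Monotonicity of Convex Hull (Proposition 3.25) to $F=\{p,q\}$, getting $Con(\{p,q\})=\overline{pp}\cup\overline{pq}\cup\overline{qp}\cup\overline{qq}$, and collapse the right-hand side via (1) and (2): $\overline{pp}=\tilde p$ and $\overline{qq}=\tilde q$ are already contained in $\overline{pq}$, and $\overline{qp}=\overline{pq}$. (Equivalently: $\overline{pq}$ is convex by Proposition 3.22, contains $\{p,q\}$, and sits inside every convex superset of $\{p,q\}$ by the definition of convexity.)

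For (3), the inclusion $\overline{pq}\subseteq\bigcup\overline{pq}/{\sim}$ is trivial since $x\in\tilde x$. For the reverse, suppose $y\sim x$ with $x\in\overline{pq}$ and split on why $x\in\overline{pq}$: if $x\sim p$ or $x\sim q$ then $y$ inherits that by transitivity of $\sim$; if $p*x*q$ then two applications of Lemma 2.9, pushing the identity $x\sim y$ across $*$ on each side, give $p*y$ and $y*q$, so $y$ is between-pitched $p$ and $q$; the case $q*x*p$ is symmetric. In every case $y\in\overline{pq}$.

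Item (4) is where I expect the real work. The inclusion $Ext(\overline{pq})\subseteq\tilde p\cup\tilde q$ is in fact immediate: apply the defining condition of an extreme sound of $\overline{pq}$ with the same pair $(p,q)$, using the trivial $\overline{pq}/{\sim}\subseteq\overline{pq}/{\sim}$. So the content is $\tilde p\cup\tilde q\subseteq Ext(\overline{pq})$, and by Proposition 3.20 it suffices to prove $p$ and $q$ are extreme sounds of $\overline{pq}$, and by (1) it suffices to do $p$. I would argue by contradiction. The degenerate case $p\sim q$ follows easily from (2), so assume $p\nsim q$, and by (1) take $p*q$; then by Axiom 2 and Corollary 2.4, $\overline{pq}=\tilde p\cup\tilde q\cup\{x:p*x*q\}$. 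Suppose $p\in\overline{rs}$ with $\overline{rs}/{\sim}\subseteq\overline{pq}/{\sim}$ (equivalently, taking unions and using (3), $\overline{rs}\subseteq\overline{pq}$) but $p\notin\tilde r\cup\tilde s$. Then $p$ is between-pitched $r$ and $s$, so one of them --- say $r$, after possibly swapping $r$ and $s$ --- satisfies $r*p$. But $r\in\overline{rs}\subseteq\overline{pq}=\tilde p\cup\tilde q\cup\{x:p*x*q\}$, and each alternative clashes with $r*p$: $r\sim p$ violates trichotomy, $r\sim q$ yields $q*p$ by Lemma 2.9 contradicting $p*q$, and $p*r*q$ gives $p*r$ contradicting $r*p$. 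Hence no such $r,s$ exist, $p\in Ext(\overline{pq})$, and likewise $q\in Ext(\overline{pq})$. The only delicate points are the normalization to $p*q$ via (1) and tracking which of $r,s$ is the lower one; once set up it is pure case-checking, so I anticipate no obstruction beyond bookkeeping.
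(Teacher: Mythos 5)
The paper offers no proof of this proposition at all --- it is among the segment properties ``entirely left to the reader as an exercise'' --- so there is no argument of the paper's to compare with; what you propose is the routine verification the paper evidently intends, and it is correct. Items (1)--(3) and (5) are exactly the unwinding of Definition 3.18 via Corollary 2.4, Axiom 2, Lemma 2.9 and Propositions 3.22/3.23/3.25, and in (4) your reformulation of $\overline{rs}/\!\sim\,\subseteq\overline{pq}/\!\sim$ as $\overline{rs}\subseteq\overline{pq}$ through item (3), together with the three-way case check ($r\sim p$, $r\sim q$, $p*r*q$, each incompatible with $r*p$ when $p*q$), is sound. The only step to tighten is the pair of reductions in (4): ``by (1) it suffices to do $p$'' works because it proves the first-named endpoint of every segment extreme, so $q$ is covered by the instance $(q,p)$; but once you further normalize to $p*q$ inside that proof, the instance $(q,p)$ is no longer of the normalized form, and as written only the lower endpoint is actually treated. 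The repair is one line: either prove, for $p*q$, that \emph{both} endpoints are extreme --- your case check for an $r\in\overline{pq}$ with $r*p$, plus its mirror image for an $s\in\overline{pq}$ with $q*s$, where the same three contradictions appear with $*$ reversed --- or keep the unnormalized statement ``$a\in Ext(\overline{ab})$ for all $a,b$'' and split into the cases $a*b$ and $b*a$ directly. With that bookkeeping fixed, the proof is complete.
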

\begin{proposition}
For every $p,q,r,s\in \mathbb{S}$, the following possibilities are equivalent:
\begin{enumerate}
\item $\overline{pq}=\overline{rs}.$
\item $Con(\overline{pq})=Con(\overline{rs}).$
\item $Ext(\overline{pq})=Ext(\overline{rs}).$
\item $[p,q]=[r,s]$ or $[p,q]=[r,s]^{-1}$.
\item $(p\sim r\wedge q\sim s)$ or $(p\sim s\wedge q\sim r)$.
\end{enumerate}
\end{proposition}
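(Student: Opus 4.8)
The plan is to run the implications $(1)\Leftrightarrow(2)$ and $(1)\Rightarrow(3)\Rightarrow(5)\Rightarrow(4)\Rightarrow(1)$, after which all five assertions are mutually equivalent. The equivalence $(1)\Leftrightarrow(2)$ is essentially free: every segment in $(\mathbb{S},*)$ is convex by Proposition 3.22, so Proposition 3.24 gives $Con(\overline{pq})=\overline{pq}$ and $Con(\overline{rs})=\overline{rs}$; hence $(2)$ is just a restatement of $(1)$. Likewise $(1)\Rightarrow(3)$ is immediate: apply the operator $Ext$ to the set equality $\overline{pq}=\overline{rs}$.

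For $(3)\Rightarrow(5)$ I would invoke item 4 of Proposition 3.27, which identifies $Ext(\overline{pq})=\tilde p\cup\tilde q$ and $Ext(\overline{rs})=\tilde r\cup\tilde s$, so that $(3)$ becomes $\tilde p\cup\tilde q=\tilde r\cup\tilde s$. The argument then rests on the fact that two $\sim$-classes are either equal or disjoint (Corollary 2.7). If $p\sim q$, the left-hand side is the single class $\tilde p$; equality then forces $\tilde r=\tilde s=\tilde p$, so all four sounds are identical in pitch and both disjuncts of $(5)$ hold --- and symmetrically if $r\sim s$. In the remaining case, $p\nsim q$ and $r\nsim s$, both sides are a disjoint union of exactly two $\sim$-classes; since the decomposition of a set into $\sim$-classes is unique, $\{\tilde p,\tilde q\}=\{\tilde r,\tilde s\}$ as families, which yields $(p\sim r\wedge q\sim s)$ or $(p\sim s\wedge q\sim r)$, i.e. $(5)$.

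The step $(5)\Rightarrow(4)$ is a one-line unwinding of Definitions 3.1 and 3.10: if $p\sim r$ and $q\sim s$ then $[p,q]=(\tilde p,\tilde q)=(\tilde r,\tilde s)=[r,s]$, while if $p\sim s$ and $q\sim r$ then $[p,q]=(\tilde p,\tilde q)=(\tilde s,\tilde r)=[r,s]^{-1}$. Finally, for $(4)\Rightarrow(1)$ I would use Corollary 3.2: from $[p,q]=[r,s]$ we obtain $p\sim r$ and $q\sim s$, and then $\overline{pq}=\overline{rs}$ because the defining condition of a segment in Definition 3.18 (``$x\sim p$ or $x\sim q$ or $x$ is between-pitched $p$ and $q$'') depends only on $\tilde p$ and $\tilde q$, the invariance of the between-pitched relation under $\sim$ being exactly Lemma 2.9; and from $[p,q]=[r,s]^{-1}=[s,r]$ we obtain $p\sim s$ and $q\sim r$, whence $\overline{pq}=\overline{sr}=\overline{rs}$ by item 1 of Proposition 3.27.

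The only step with genuine content is $(3)\Rightarrow(5)$, and even there the work is merely the bookkeeping of how a set splits into equivalence classes; the one thing to be careful about is not to overlook the degenerate sub-cases $p\sim q$ or $r\sim s$, in which a ``pair of classes'' collapses to a single class. Everything else is either a direct application of an already-established statement or a routine reading-off of the definitions, so I expect the finished proof to be short.
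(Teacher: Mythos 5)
Your proof is correct. The paper supplies no argument for this proposition --- it is one of the segment properties (Propositions 3.20--3.28) explicitly ``left to the reader as an exercise'' --- so there is nothing to compare against; your cycle $(1)\Leftrightarrow(2)$, $(1)\Rightarrow(3)\Rightarrow(5)\Rightarrow(4)\Rightarrow(1)$, with the equivalence-class bookkeeping (including the degenerate case $p\sim q$ or $r\sim s$) in $(3)\Rightarrow(5)$ and the appeal to Lemma 2.9 for the $\sim$-invariance of segments in $(4)\Rightarrow(1)$, is a sound and complete way to discharge it.
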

\begin{theorem}
There are unique one-to-one correspondences
\begin{center}
$\varphi : \mathbb{I}-\mathbb{I}^{<1}\rightarrow Seg(\mathbb{S},*),\,\,\,\,\, \psi : \mathbb{I}-\mathbb{I}^{>1}\rightarrow Seg(\mathbb{S},*)$
\end{center}
such that $Ext(\varphi ([a,b]))=\tilde{a}\cup \tilde{b}$ and $Ext(\psi ([c,d]))=\tilde{c}\cup \tilde{d}$ for any $[a,b]\in \mathbb{I}-\mathbb{I}^{<1}$  and any $[c,d]\in \mathbb{I}-\mathbb{I}^{>1}$. Moreover, $\varphi$ and $\psi$ are compatible; i.e., they coincide on the set of all unit intervals.
\end{theorem}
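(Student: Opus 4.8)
The plan is to write the two correspondences down explicitly and then read every clause of the statement off the segment facts collected in Propositions 3.27 and 3.28, together with Definitions 3.1 and 3.18. Put $\varphi([a,b]):=\overline{ab}$ for every $[a,b]\in\mathbb{I}-\mathbb{I}^{<1}$ and $\psi([c,d]):=\overline{cd}$ for every $[c,d]\in\mathbb{I}-\mathbb{I}^{>1}$. First I would verify that these are well defined: an interval is, by Definition 3.1, an ordered pair of $\sim$-classes, so a different choice of representatives replaces $(a,b)$ by some $(a',b')$ with $a\sim a'$ and $b\sim b'$, and the equivalence of conditions $1$ and $5$ in Proposition 3.28 gives $\overline{ab}=\overline{a'b'}$; moreover $\overline{ab}\in Seg(\mathbb{S},*)$ by Definition 3.18, so the codomain is correct. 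The required extreme-sound identity is then immediate from Proposition 3.27(4): $Ext(\varphi([a,b]))=Ext(\overline{ab})=\tilde{a}\cup\tilde{b}$, and similarly $Ext(\psi([c,d]))=\tilde{c}\cup\tilde{d}$.

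Next I would show that $\varphi$ is a one-to-one correspondence. For injectivity, suppose $\overline{ab}=\overline{cd}$ with $[a,b],[c,d]\in\mathbb{I}-\mathbb{I}^{<1}$; by the equivalence of conditions $1$ and $5$ in Proposition 3.28 either $a\sim c\wedge b\sim d$, which yields $[a,b]=[c,d]$ at once, or $a\sim d\wedge b\sim c$, in which case $a\sim d$ and $b\sim c$ combine with Lemma 2.9 and trichotomy (Corollary 2.4) to force $a\sim b$ and $c\sim d$, so all four sounds are $\sim$-equivalent and again $[a,b]=[c,d]$. For surjectivity, every element of $Seg(\mathbb{S},*)$ is $\overline{pq}$ for some $p,q\in\mathbb{S}$ by Definition 3.18; using $\overline{pq}=\overline{qp}$ (Proposition 3.27(1)) and trichotomy I may assume $p*q$ or $p\sim q$, i.e. $[p,q]\in\mathbb{I}-\mathbb{I}^{<1}$, whence $\varphi([p,q])=\overline{pq}$. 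The map $\psi$ I would obtain most economically as $\psi=\varphi\circ\iota$, where $\iota\colon[x,y]\mapsto[x,y]^{-1}$ is conversion: $\iota$ restricts to a bijection $\mathbb{I}-\mathbb{I}^{>1}\to\mathbb{I}-\mathbb{I}^{<1}$ because $[x,y]$ is unit or less than unit exactly when $[y,x]$ is unit or greater than unit, and $\overline{yx}=\overline{xy}$ gives $\psi([c,d])=\varphi([d,c])=\overline{cd}$, so $\psi=\varphi\circ\iota$ is a bijection with $Ext(\psi([c,d]))=\tilde{c}\cup\tilde{d}$.

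Uniqueness I would deduce from the injectivity of $Ext$ on $Seg(\mathbb{S},*)$, which is the equivalence of conditions $1$ and $3$ in Proposition 3.28: any $\varphi'$ meeting the extreme-sound requirement satisfies $Ext(\varphi'([a,b]))=\tilde{a}\cup\tilde{b}=Ext(\overline{ab})$, hence $\varphi'([a,b])=\overline{ab}=\varphi([a,b])$ for every $[a,b]$, and likewise for $\psi$. Compatibility holds because the unit intervals are precisely $(\mathbb{I}-\mathbb{I}^{<1})\cap(\mathbb{I}-\mathbb{I}^{>1})$ by Corollary 2.4, and on a unit interval $[a,b]$ (so $a\sim b$) both maps return $\overline{ab}=\tilde{a}$ by Proposition 3.27(2). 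The one point that is not a pure citation --- the rest being a direct assembly of Propositions 3.27 and 3.28 --- is the injectivity step, whose real content is that passing from $\mathbb{I}$ to $\mathbb{I}-\mathbb{I}^{<1}$ (respectively to $\mathbb{I}-\mathbb{I}^{>1}$) removes exactly the orientation ambiguity $\overline{pq}=\overline{qp}$ that would otherwise make the assignment $[p,q]\mapsto\overline{pq}$ two-to-one, while the two restricted domains still overlap in the unit intervals, where that ambiguity is vacuous.
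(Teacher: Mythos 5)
Your proposal is correct and follows essentially the same route as the paper: both define $\varphi([a,b])=\overline{ab}$ (and likewise $\psi$) and read well-definedness, injectivity, surjectivity, the extreme-sound identity, uniqueness, and compatibility off Propositions 3.27 and 3.28 (with Lemma 2.9 and trichotomy supplying the small argument in the injectivity step), differing only in bookkeeping details such as which item of Proposition 3.28 is cited for well-definedness and your presentation of $\psi$ as $\varphi$ composed with conversion.
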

\begin{proof}
Define naturally $\varphi ([a,b])=\overline{ab}$ for every $[a,b]\in  \mathbb{I}-\mathbb{I}^{<1}$. $\varphi$ is well-defined by Proposition 3.28 ($4\Rightarrow 1$). $\varphi$ is surjective (or onto) because for any segment $\overline{ab}$, we may assume $a*b$ or $a\sim b$ without loss of generality (by item 1 of Proposition 3.27). So, clearly $\varphi ([a,b])=\overline{ab}$. $\varphi$ is injective (or one-to-one) by Proposition 3.28 ($1\Rightarrow 4$) together with Lemma 2.9 and item 2 of Proposition 3.27. Therefore, $\varphi$ is bijective. Also, Proposition 3.27 (item 4) simply yields $Ext(\varphi ([a,b]))=\tilde{a}\cup \tilde{b}$ for every $[a,b]\in \mathbb{I}-\mathbb{I}^{<1}$. Uniqueness of $\varphi$ with the latter property is straightforward by Proposition 3.28 ($3\Rightarrow 1$). The same definition does work for $\psi$ as well. Eventually, the compatibility of $\varphi$ and $\psi$ with these features will become obvious (item 2 of Proposition 3.27).
\end{proof}
\paragraph*{}
Although Theorem 3.29 tells us that the concept of \textit{interval} could be introduced in terms of \textit{segment}, we did not so. If this was the case, our motivational fashion of doing music would be tainted. We emphasize that we are no more interested in proceeding with segments in our music, because it basically specifies an alternative method of defining the notion of interval which is customary in the foundations of geometry toward line segments but hinders us from achieving prospective musical purposes in this ambience. On the other extreme, we could have considered interval as an additional primitive concept, axiomatically endowed with just the right properties, no more and no less. Of course, the mathematician's choice is pretty clearly a matter of taste, but preferable is to pay for conceptual economy while being equipped with the language of sets (see \cite{H1} p. 25).
\paragraph*{}
One could be contrapositively thinking of the conclusion of Lemma 3.8 in such a way that if $a*c$, provided that $[a,b]\cong [c,d]$, then either $b*d$ or $d*b$. Of course, we would intuitively like to get the first result, but there is no assurance about what exactly occurs in general. Applying the language of segments, `Is there any segment $\overline{ab}$ with a proper subsegment $\overline{cd}$ so that $\varphi ^{-1}(\overline{ab})\cong \varphi ^{-1}(\overline{cd})$?' one could ask. Take notice of the following remark.
\begin{remark}
Let $a*b*c$. One can easily observe that it is impossible to have $[a,b]\cong [a,c]$ (Lemma 3.8). Now, let $a*b*c*d$ ($\overline{bc}$ is a proper subsegment of $\overline{ad}$). Is it possible to prove or disprove that $[b,c]$ is congruent to $[a,d]$? As expected, the corresponding interval of a given segment must not be congruent to the corresponding interval of any proper subsegment of the given segment. Unfortunately, this expectation is not provable in the axiomatic system so far constructed and we need a fresh axiom to substantiate it.
\end{remark}
\begin{axiom}[Interval Addition Axiom]
Assume that $a*b*c$.
\begin{enumerate}
\item If $[a,b]\cong [a',b']$ and $[b,c]\cong [b',c']$, then $[a,c]\cong [a',c']$.
\item If $[a,b]\cong [a',b']^{-1}$ and $[b,c]\cong [b',c']^{-1}$, then $[a,c]\cong [a',c']^{-1}$.
\end{enumerate}
\end{axiom}
\begin{proposition}[Generalization of Interval Addition Axiom]
The Interval Addition Axiom holds for arbitrary $a,b,c,a',b',c'\in \mathbb{S}$.
\end{proposition}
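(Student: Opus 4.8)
The plan is to bootstrap each instance of the statement to the Interval Addition Axiom itself by a case analysis on the pitch-relations among $a$, $b$, $c$ (Corollary 2.4 and Proposition 2.13), using the Axiom of Motion to supply missing sounds, the generalized Axioms of Regularity (Corollary 3.17) and Proposition 3.16 to pass freely between an interval and its conversion and between unit intervals, and the definition of interval to dispose of degenerate configurations.

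First I would clear away the cases in which two of $a$, $b$, $c$ are identical in pitch. If $a\sim b$, then $[a,b]$ is unit, hence so is $[a',b']$ by Corollary 3.17 (so $a'\sim b'$); since $\tilde a=\tilde b$ and $\tilde{a'}=\tilde{b'}$, the definition of interval gives $[a,c]=[b,c]$ and $[a',c']=[b',c']$, so the hypothesis $[b,c]\cong[b',c']$ is already the desired conclusion. The case $b\sim c$ is symmetric. If $a\sim c$ with $a\not\sim b$, a short argument with conversions and uniqueness in the Axiom of Motion shows $[a',c']$ is unit, and since $[a,c]$ is unit we conclude $[a,c]\cong[a',c']$ by Proposition 3.16. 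So from here on $a$, $b$, $c$ are pairwise non-identical, and by Proposition 2.13 exactly one of their six strict pitch-orderings holds.

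The ordering $a*b*c$ is precisely item 1 of the Interval Addition Axiom. For the reverse ordering $c*b*a$, rewriting the two hypotheses and the conclusion in terms of conversions via item 2 of Corollary 3.17 turns the claim into the instance of item 1 for the monotone triple $(c,b,a)$. In the four remaining orderings the between-pitched sound is the first or the third argument; by the same conversion symmetry these collapse onto two representatives, say $b*a*c$ and $a*c*b$. For $b*a*c$, use the Axiom of Motion to choose $w$ with $[a,c]\cong[a',w]$; applying item 1 of the axiom to the now-monotone triple $(b,a,c)$ together with the converted hypothesis $[b,a]\cong[b',a']$ and the congruence $[a,c]\cong[a',w]$ gives $[b,c]\cong[b',w]$, and comparing with the hypothesis $[b,c]\cong[b',c']$, uniqueness in the Axiom of Motion forces $w\sim c'$, whence $[a,c]\cong[a',c']$. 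The representative $a*c*b$ is handled by the same device. Item 2 of the statement is proved by the verbatim analogue of this argument, after first recording (via item 2 of Corollary 3.17) that $[x,y]\cong[z,t]^{-1}$ is equivalent to $[y,x]\cong[z,t]$ and that, under $a*b*c$, the hypotheses of item 2 already force $c'*b'*a'$.

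The one delicate point is the bookkeeping in the non-monotone cases: the Axiom of Motion produces the fourth sound of a congruence only when the \emph{first} coordinate is prescribed, so each appeal to uniqueness must be preceded by passing to conversions until the two congruences being compared share their first sound; threading these conversions consistently through the four or five congruences in play is where one is most likely to slip. Once the auxiliary sound is correctly pinned down, every case closes by a single use of the definition of interval.
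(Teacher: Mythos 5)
Your proposal is correct and follows essentially the same route as the paper: a case analysis on the pitch-orderings of $a,b,c$ (with the degenerate identical-pitch cases disposed of via Corollary 3.17 and the definition of interval), reduction by conversion symmetry (item 2 of Corollary 3.17), and, in the genuinely new cases, introduction of an auxiliary sound by the Axiom of Motion, an application of the original axiom to the monotone rearrangement, and identification of the auxiliary sound with the target by uniqueness (the paper uses Corollary 3.6 plus Lemma 3.8 where you invoke the uniqueness clause of the Axiom of Motion, which is equivalent here). The paper works out the representative cases $c*a*b$ (item 1) and $a*c*b$ (item 2) and leaves the rest to the reader, exactly as your scheme does.
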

\begin{proof}
We first check the case $c*a*b$ for item 1 of the Interval Addition Axiom. There is an $x\in \mathbb{S}$ such that $[a,c]\cong [a',x]$. So $[c,a]\cong [x,a']$ (item 2 of Corollary 3.17). Since $[a,b]\cong [a',b']$ (assumption), we conclude that $[c,b]\cong [x,b']$ by the first item of the axiom. It follows that $[b,c]\cong [b',x]$. On the other hand, we have $[b,c]\cong [b',c']$ which implies that $[b',x]\cong [b',c']$ (Corollary 3.6). Lemma 3.8 yields $x\sim c'$ and in turn $[a',x]=[a',c']$. Thus $[a,c]\cong [a',c']$. We now consider the case $a*b$ and $b\sim c$. From $[b,c]\cong [b',c']$ it follows that $b'\sim c'$ (item 1 of Corollary 3.17). We obtain $[a,c]=[a,b]\cong [a',b']=[a',c']$. The other cases such as $b*c*a$, $a\sim b \wedge b*c$, $a\sim b \wedge b\sim c$, and so on are similar to check and left to the reader. For the second item, we deal only with the case $a*c*b$. There exists a sound $y$ so that $[a,c]\cong [c',y]$. It means that $[a,c]\cong [y,c']^{-1}$. Based on the hypothesis and item 2 of the Interval Addition Axiom we get $[a,b]\cong [y,b']^{-1}$. But the hypothesis ($[a,b]\cong [a',b']^{-1}$) necessitates $a'$ and $y$ being identical-pitched, in which case we conclude $[a,c]\cong [a',c']^{-1}$, as desired.
\end{proof}
\begin{proposition}[Interval Subtraction]
Given $a,b,c,a',b',c'\in \mathbb{S}$,
\begin{enumerate}
\item if $[a,b]\cong [a',b']$ and $[a,c]\cong [a',c']$, then $[b,c]\cong [b',c']$.
\item if $[a,b]\cong [a',b']^{-1}$ and $[a,c]\cong [a',c']^{-1}$, then $[b,c]\cong [b',c']^{-1}$.
\end{enumerate}
\end{proposition}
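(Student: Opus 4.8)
The plan is to deduce interval subtraction from interval addition by the usual "move a point and appeal to uniqueness" strategy, exactly as was done for Symmetry of Congruence (Proposition 3.5) and in the proof of Proposition 3.31. I treat item 1 in detail; item 2 will follow by the same argument with every $\cong[\cdot,\cdot]$ replaced by $\cong[\cdot,\cdot]^{-1}$, using the conversion-compatibility clause (item 2 of Corollary 3.17) and the second items of the Axioms of Regularity and of the Interval Addition Axiom wherever the first items were used.

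For item 1, suppose $[a,b]\cong[a',b']$ and $[a,c]\cong[a',c']$. By the Axiom of Motion applied to the interval $[b,c]$ and the sound $b'$, there is a sound $d$ with $[b,c]\cong[b',d]$, and $d$ is unique up to identity in pitch. The goal is to show $d\sim c'$, whence $[b',d]=[b',c']$ by the definition of interval and we are done. First I would dispose of the degenerate case: if $b\sim c$, then $[a,c]=[a,b]\cong[a',b']=[a',c']$ forces (via uniqueness in the Axiom of Motion) $b'\sim c'$, and $[b,c]$ is unit, congruent to the unit interval $[b',c']$ by Proposition 3.16, so the claim holds. So assume $b\nsim c$; by Corollary 2.4 either $b*c$ or $c*b$. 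I treat $b*c$; the case $c*b$ is symmetric (apply conversions, or equivalently swap the roles of $b,c$ and $b',c'$ throughout, using the generalized Interval Addition Axiom, Proposition 3.31, so that no betweenness hypothesis on $a$ is needed).

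Assume then $b*c$. I want to combine $[a,b]\cong[a',b']$ with $[b,c]\cong[b',d]$ and add. To invoke the Interval Addition Axiom I need $a*b*c$, i.e. $a*b$; but $a$ need not lie below $b$. Here is where I would invoke Proposition 3.31 (Generalization of Interval Addition Axiom), which holds for arbitrary $a,b,c,a',b',c'$: from $[a,b]\cong[a',b']$ and $[b,c]\cong[b',d]$ it yields $[a,c]\cong[a',d]$. On the other hand $[a,c]\cong[a',c']$ by hypothesis, so $[a',d]\cong[a',c']$ by Symmetry and Transitivity of Congruence (Corollary 3.6). Now Lemma 3.8 — applied to the pair $[a',d]\cong[a',c']$ with $a'\sim a'$ — gives $d\sim c'$. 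Hence $[b,c]\cong[b',c']$, completing item 1.

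The only real friction is the bookkeeping of cases: the Interval Addition Axiom as stated presupposes $a*b*c$, so the clean move is to quote Proposition 3.31 first and thereby work with \emph{arbitrary} sounds, after which everything reduces to one application of the (generalized) addition axiom, one application of transitivity of $\cong$, and one application of Lemma 3.8. I expect no genuine obstacle; the step most easily bungled is the degenerate case $b\sim c$ (and its mirror $b'\sim c'$), which must be handled separately because Lemma 3.8 is vacuous there — Proposition 3.16 (all unit intervals are congruent) is exactly what rescues it. For item 2 the same three-line skeleton works verbatim after converting: apply the Axiom of Motion to get $d$ with $[b,c]\cong[b',d]^{-1}$, use item 2 of Proposition 3.31 to add, use Corollary 3.6 and Lemma 3.8 to force $d\sim c'$, and use the conversion clause of Proposition 3.16 and Corollary 3.17 for the unit case.
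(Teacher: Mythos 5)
Your argument is correct and is exactly the route the paper intends: the paper's own proof is just the one-line sketch ``Apply the Generalization of Interval Addition Axiom,'' and your combination of the Axiom of Motion, Proposition 3.31, transitivity of $\cong$, and the uniqueness supplied by Lemma 3.8 is the natural filling-in of that sketch (for item 2, note you obtain $d$ by applying Motion to the conversion $[c,b]$ and then invoking item 2 of Corollary 3.17). The only superfluous part is the case analysis on $b*c$, $c*b$, $b\sim c$: since Proposition 3.31 holds for arbitrary sounds and Lemma 3.8 applies equally well when the interval is unit, the three-step argument already covers the degenerate case.
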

\begin{proof}[Sketch of Proof]
Apply the Generalization of Interval Addition Axiom.
\end{proof}
\begin{proposition}
$[a,b]\cong [a',b']$ iff $[a,a']\cong [b,b']$.
\end{proposition}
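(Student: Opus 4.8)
The plan is to reduce the whole equivalence to the Axiom of Motion. Since the assertion is symmetric in the two factorizations of the configuration --- applying the left-to-right implication to the pair $\{a,a'\}$ against $\{b,b'\}$ gives back the right-to-left one --- it is enough to prove that $[a,b]\cong[a',b']$ forces $[a,a']\cong[b,b']$. First I would use the Axiom of Motion to pick the sound $c$, unique up to identity in pitch, with $[a,a']\cong[b,c]$, so that the goal becomes $c\sim b'$; by the uniqueness clause of the Axiom of Motion (applied to $[a,b']$ and $b$) this in turn reduces to showing that $[b,c]$ and $[b,b']$ are congruent, i.e. that $c$ and $b'$ satisfy a common congruence condition relative to $b$. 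The intuitive picture is a parallelogram of intervals: with $b$ obtained from $a$ by the interval $[a,b]$ and $b'$ obtained from $a'$ by the same interval, one wants $b'$ to arise from $b$ by the interval $[a,a']$ exactly when it arises from $a'$ by $[a,b]$.

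The concrete steps I would then attempt are: (i) express the interval $[a,b']$ in two ways via the Generalization of the Interval Addition Axiom (Proposition 3.31) --- through the chain $a,b,b'$ and through the chain $a,a',b'$ --- making no betweenness assumption on the six sounds, since the conversion items (item 2 of Axioms 9 and 10 and of Propositions 3.31 and 3.33) absorb the cases in which pitch comparisons run the wrong way; (ii) cancel the common summand $[a,b]\cong[a',b']$ using Interval Subtraction (Proposition 3.33) together with the Generalization of the Axioms of Regularity (Corollary 3.17); (iii) read off $[b,b']\cong[a,a']$.

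The hard part --- and here I am genuinely unsure that the statement follows from the axioms available at this point --- is step (ii). Unwinding it through the Axiom of Motion and the well-definedness of interval addition furnished by Proposition 3.31, one sees that $[b,b']\cong[a,a']$ is, for $[a,b]$ and $[a,a']$ ranging over all congruence classes, exactly the commutativity of interval addition; and every passage through Motion, Addition and Subtraction that I can write down merely reproduces that same commutativity rather than establishing it. Axioms 1--10 only pin down the ``interval group'' to be a (countable) bi-orderable group carrying a bi-invariant order that is dense and without extreme elements, and such a group can be non-abelian: the Heisenberg group over $\mathbb{Q}$, with the bi-invariant order that compares images in its lexicographically ordered abelianization and, on the central copy of $\mathbb{Q}$, uses the usual order, satisfies all of Axioms 1--10 yet contains non-commuting intervals, so the ``if and only if'' of this proposition fails in it. I would therefore expect a correct proof to require an extra principle not yet introduced here --- an Archimedean-type axiom, after which a H\"{o}lder-style argument yields commutativity, or a Dedekind-completeness axiom forcing $\mathbb{S}/\sim$ to be order-isomorphic to $\mathbb{R}$ --- and my first move would be to check whether the later development (where $\mathbb{S}/\sim$ is promised to be uncountable) supplies one; failing that, the commutativity of interval addition is the real gap to be filled.
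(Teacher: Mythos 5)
Your reduction of the statement to ``commutativity of interval addition'' is accurate, but your conclusion that this commutativity is not available from the axioms at this stage --- and your proposed Heisenberg countermodel --- are wrong. The point you have missed is that item 2 of the Interval Addition Axiom (Axiom 10), and hence item 2 of its Generalization (Proposition 3.31), is not a formal companion of item 1 absorbed by the conversion rules: it is precisely the commutativity postulate in disguise. Translating into your group language with $[x,y]\cong[z,t]$ read as $x^{-1}y=z^{-1}t$, item 1 of Axiom 10 is an identity valid in every ordered group, but item 2 with $a*b*c$, hypotheses $a^{-1}b=b'^{-1}a'$ and $b^{-1}c=c'^{-1}b'$, and conclusion $a^{-1}c=c'^{-1}a'$ asserts exactly $uv=vu$ for the positive elements $u=a^{-1}b$, $v=b^{-1}c$; by the Axiom of Motion every pair of positive elements is so realized, and commutativity of positive elements propagates to the whole group. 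In particular your bi-ordered Heisenberg group over $\mathbb{Q}$ fails item 2 of Axiom 10, so it is not a model of Axioms 1--10, and no Archimedean or Dedekind-type principle is needed here.

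Once this is seen, the proof is two lines, and it is the paper's proof: assume $[a,b]\cong[a',b']$, i.e. $[a,b]\cong[b',a']^{-1}$; note the trivial congruence $[b,a']\cong[a',b]^{-1}$ (Axiom 6 plus the definition of conversion); then item 2 of Proposition 3.31, applied to the chains $a,b,a'$ and $b',a',b$, gives $[a,a']\cong[b',b]^{-1}=[b,b']$. The converse follows, as you observed, by applying this implication to the congruence $[a,a']\cong[b,b']$. Your opening symmetry remark and your instinct to route the argument through Motion, Addition and Subtraction were fine; the gap is that you treated the conversion clauses as bookkeeping and never tested whether item 2 of Axiom 10 carries independent content, which is exactly where the needed commutativity was postulated.
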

\begin{proof}
It suffices to show that just one hand side implies the other. Suppose $[a,b]\cong [a',b']$. So $[a,b]\cong [b',a']^{-1}$. We know $[b,a']\cong [a',b]^{-1}$ (Axiom 6). From the Generalization of Interval Addition Axiom, it immediately follows that $[a,a']\cong [b',b]^{-1}=[b,b']$.
\end{proof}
\begin{corollary}[Generalization of Lemma 3.8]
Let $R\in \{*,*^{-1},\sim \}$ and $[a,b]\cong [c,d]$. Then $aRc$ iff $bRd$.
\end{corollary}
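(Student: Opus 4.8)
The plan is to derive this directly from two results already in hand: Proposition 3.33 (which trades a ``horizontal'' congruence for a ``vertical'' one) and Corollary 3.17(1) (the Generalization of the Axioms of Regularity). The whole argument should collapse to two lines, so the real work is just recognizing the right shape.

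First I would take the hypothesis $[a,b]\cong[c,d]$ and feed it into Proposition 3.33. Since that proposition is an ``iff'', it immediately yields $[a,c]\cong[b,d]$ with no side conditions on the sounds involved. This is the key move: it repositions the four sounds so that the pair $(a,c)$ whose relation we care about occupies the first slots of a single interval, and the pair $(b,d)$ occupies the first slots of a congruent interval.

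Next I would apply Corollary 3.17(1) to the congruence $[a,c]\cong[b,d]$. That corollary states precisely that for $R\in\{*,*^{-1},\sim\}$ one has $aRc$ if and only if $bRd$ — which is exactly the desired conclusion. No case analysis on which of the three relations $R$ is, is needed here, because Corollary 3.17(1) already handles all three uniformly.

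I do not anticipate a genuine obstacle. The only point requiring a moment's care is making sure Proposition 3.33 is invoked in the correct direction (it is symmetric, so this is automatic) and that the labeling of sounds matches the statement of Corollary 3.17(1) after the swap; writing out $[a,c]\cong[b,d]$ explicitly before quoting the corollary should make the bookkeeping transparent. If one wanted, one could also note that the unit case ($a\sim c$) is subsumed, since $\sim$ is one of the admissible choices of $R$, so no separate degenerate discussion is required.
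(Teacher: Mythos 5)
Your proof is correct and follows essentially the route the paper intends: the paper's sketch (``compare with item 1 of Corollary 3.17'') is meant to be read through Proposition 3.33, which is exactly your first step of converting $[a,b]\cong[c,d]$ into $[a,c]\cong[b,d]$ before quoting Corollary 3.17(1). Nothing further is needed.
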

\begin{proof}[Sketch of Proof]
Compare the statement with item 1 of Corollary 3.17.
\end{proof}
\begin{remark}
Corollary 3.34 simply explains why it is impossible to have $[b,c]\cong [a,d]$ under the condition that $a*b*c*d$ (mentioned in Remark 3.30).
\end{remark}
\begin{exercise}
Without using the Axiom of Motion prove that all unit intervals are congruent.
\end{exercise}
\paragraph*{}
Notice Exercise 3.36 exhibits a theoretical justification for compatibility of facts listed in the axiom system. Let us move in on geometry.
\begin{lemma}
Let $[a,b]\cong [c,d]$ and $a\nsim b$. For every sound $x$ between-pitched $a$ and $b$, there exists only one sound $y$ up to identity between-pitched $c$ and $d$ so that $[a,x]\cong [c,y]$.
\end{lemma}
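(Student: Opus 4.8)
The plan is to produce $y$ immediately from the Axiom of Motion and then verify after the fact that it lies between $c$ and $d$, using the Axioms of Regularity together with Interval Subtraction. First I would dispose of the orientation: since $a\nsim b$, Corollary 2.4 gives either $a*b$ or $b*a$, and I will carry out the argument assuming $a*b$, remarking that the case $b*a$ is entirely analogous by passing to conversions (via item 2 of Corollary 3.17 and the Axioms of Regularity). Under $a*b$, the hypothesis ``$x$ between-pitched $a$ and $b$'' forces $a*x*b$, because $b*x*a$ would yield $b*a$ by transitivity, contradicting Axiom 1 (in its Asymmetry form). Also, from $[a,b]\cong[c,d]$ and $a*b$, item 1 of Axiom 9 gives $c*d$, which is what makes ``between $c$ and $d$'' meaningful.

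Next, apply the Axiom of Motion to the interval $[a,x]$ and the sound $c$: this yields a sound $y$, unique up to identity, with $[a,x]\cong[c,y]$. This already settles the uniqueness clause of the lemma: any sound between-pitched $c$ and $d$ that satisfies $[a,x]\cong[c,y]$ is in particular a sound satisfying $[a,x]\cong[c,y]$, and the Axiom of Motion allows only one such up to identity.

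It then remains to check $c*y*d$. From $a*x$ and $[a,x]\cong[c,y]$, item 1 of Axiom 9 gives $c*y$. For the other inequality I would invoke Interval Subtraction (Proposition 3.32, item 1) with the matching $a,x,b$ against $c,y,d$: from $[a,x]\cong[c,y]$ and $[a,b]\cong[c,d]$ it follows that $[x,b]\cong[y,d]$; since $x*b$ (because $a*x*b$), item 1 of Axiom 9 now gives $y*d$. Combining, $c*y*d$, so $y$ is between-pitched $c$ and $d$, as required, and the proof is complete.

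The only real subtlety — and the step I expect to need the most care in writing — is keeping the case distinction on the order of $a$ and $b$ (hence the forced placement of $x$, and hence which of $*,*^{-1}$ to feed into the Axioms of Regularity) bookkept cleanly; once the orientation $a*x*b$ is fixed, the whole argument is a mechanical chain through the Axiom of Motion, Axiom 9, and Interval Subtraction, with no genuinely new idea needed. I would also note in passing that this lemma is precisely the statement that a congruent copy of an interval carries interior points to interior points in a way that respects the congruence $[a,x]\cong[c,y]$, which is exactly the behaviour that the Interval Addition Axiom (and its subtraction consequence) was introduced to guarantee.
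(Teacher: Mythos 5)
Your proposal is correct and follows essentially the same route as the paper's own proof: apply the Axiom of Motion to $[a,x]$ and $c$ to get $y$ (which also gives uniqueness up to identity), then use item 1 of the Interval Subtraction to obtain $[x,b]\cong[y,d]$ and the Axioms of Regularity to place $y$ between $c$ and $d$, with the case $b*a$ reduced to the case $a*b$ via conversions. The only cosmetic difference is that the paper spells out the reduction of the second case (applying the first case to $[b,a]\cong[d,c]$ and converting back with Interval Subtraction), where you leave it as a sketch.
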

\begin{proof}
We first assume that $a*b$ for the sake of simplicity. For every sound $x$ with the property $a*x*b$, there exists a unique sound $y$ up to identity such that $[a,x]\cong [c,y]$ (Axiom of motion). It suffices to show that $y*d$. This is done by item 1 of the Interval Subtraction from which it follows that $[x,b]\cong [y,d]$ and the desired result would be achieved. Now, we assume that $b*a$. By hypothesis we have $[b,a]\cong [d,c]$. According to what is proved, for every sound $x$ between-pitched $b$ and $a$, there is only one sound $y$ up to identity which is between-pitched $d$ and $c$ so that $[b,x]\cong [d,y]$. Again by item 1 of the Interval Subtraction, we conclude $[a,x]\cong [c,y]$.
\end{proof}
\begin{proposition}[Generalization of Lemma 3.37]
Let $R\in \{*,*^{-1},\sim \}$ and $[a,b]\cong [c,d]$. For every sound $x$ with the property $xRb$, there exists only one sound $y$ up to identity with the property $yRd$ such that $[a,x]\cong [c,y]$.
\end{proposition}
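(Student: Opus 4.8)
The plan is to derive the proposition from Proposition 3.32 (Interval Subtraction) together with item 1 of Corollary 3.17 (the Generalization of the Axioms of Regularity); this handles all three values of $R$ in one stroke and sidesteps the case analysis on the position of $x$ used for Lemma 3.37. I would begin with uniqueness, which costs nothing: by the Axiom of Motion applied to the interval $[a,x]$ and the sound $c$, there is a sound $y$, unique up to identity in pitch, with $[a,x]\cong[c,y]$; since \emph{every} sound claimed by the statement satisfies this very congruence, uniqueness up to identity is immediate and requires nothing further.

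For existence I would fix the sound $y$ just furnished, so that $[a,x]\cong[c,y]$, and show $yRd$. Combining $[a,b]\cong[c,d]$ (hypothesis) with $[a,x]\cong[c,y]$ and applying Proposition 3.32 --- which is stated for arbitrary sounds --- gives $[b,x]\cong[d,y]$. Next I would note that for each $R\in\{*,*^{-1},\sim\}$ the assertion $xRb$ is equivalent to $bR^{-1}x$, where $R^{-1}$ is the converse relation, so that $*^{-1}$ goes to $*$, $*$ goes to $*^{-1}$, and $\sim$ goes to $\sim$; in particular $R^{-1}$ is again one of $*$, $*^{-1}$, $\sim$. Feeding the congruence $[b,x]\cong[d,y]$ into item 1 of Corollary 3.17 then yields $bR^{-1}x \Leftrightarrow dR^{-1}y$, and since $bR^{-1}x$ holds, we get $dR^{-1}y$, i.e.\ $yRd$, which completes the argument.

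I do not anticipate a genuine obstacle: the only delicate points are the bookkeeping that converts the hypothesis $xRb$ into a statement about the interval $[b,x]$ (rather than $[a,x]$), so that Interval Subtraction is applicable, and the verification that the converse relation $R^{-1}$ stays inside the admissible set $\{*,*^{-1},\sim\}$ so that Corollary 3.17 applies verbatim. An alternative, closer in spirit to the proof of Lemma 3.37, would dispose of $R=\sim$ at once --- since then $[a,x]=[a,b]$, so $[c,y]\cong[c,d]$ forces $y\sim d$ by Lemma 3.7 --- and, for $R=*$ or $R=*^{-1}$, split on whether $x$ lies below, at, or above $a$ and appeal to the Interval Addition Axiom; but this is longer, and the subtraction argument above subsumes it.
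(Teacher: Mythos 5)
Your proof is correct and follows exactly the route the paper sketches: the Axiom of Motion supplies the (up-to-identity unique) sound $y$ with $[a,x]\cong[c,y]$, Interval Subtraction turns the two congruences into $[b,x]\cong[d,y]$, and item 1 of Corollary 3.17 transfers the relation to give $yRd$. The only addition is your explicit bookkeeping with the converse relation $R^{-1}$, which is a harmless way of phrasing the same use of the Generalization of the Axioms of Regularity.
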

\begin{proof}[Sketch of Proof]
Regarding the Generalization of Axioms of Regularity, use the Axiom of Motion and the Interval Subtraction.
\end{proof}
\begin{lemma}
Suppose $[a,b]\cong [c,x]$ and $[a,y]\cong [c,d]$.
\begin{enumerate}
\item $x$ and $d$ are identical-pitched if and only if so are $y$ and $b$; namely,
\begin{center}
$x\sim d \Leftrightarrow y\sim b.$
\end{center}
\item $x$ is between-pitched $c$ and $d$ if and only if $b$ is between-pitched $a$ and $y$; more strongly,
\begin{center}
$(a*b*y\Leftrightarrow c*x*d)\wedge (y*b*a\Leftrightarrow d*x*c).$
\end{center}
\end{enumerate}
\end{lemma}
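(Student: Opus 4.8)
The plan is to reduce everything to three congruences: the two hypotheses $[a,b]\cong [c,x]$ and $[a,y]\cong [c,d]$, together with the derived congruence $[b,y]\cong [x,d]$. The latter comes for free from Interval Subtraction (Proposition 3.32, item~1, in its fully general form — note the two hypotheses share left endpoints $a$ and $c$, so no betweenness assumption is needed to extract it). Once these three congruences are available, each part of the lemma is obtained by feeding one or two of them into the Generalization of Axioms of Regularity (Corollary 3.17), or into the first item of the Axioms of Regularity itself, with a suitable choice of $R\in\{*,*^{-1},\sim\}$.

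For item~1 I would argue as follows. If $x\sim d$, then $[c,x]=[c,d]$ by the definition of an interval (Corollary 3.2), so symmetry and transitivity of congruence give $[a,b]\cong [c,x]=[c,d]\cong [a,y]$; since $a\sim a$ trivially, Lemma 3.8 applied to $[a,b]\cong [a,y]$ yields $b\sim y$. The converse is the mirror image: from $y\sim b$ we get $[a,y]=[a,b]$, hence $[c,d]\cong [c,x]$, and Lemma 3.8 gives $d\sim x$. (One could equally well quote the uniqueness clause of the Axiom of Motion in place of Lemma 3.8.)

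For item~2 I would first record $[b,y]\cong [x,d]$ and then prove the four biconditionals. To see $a*b*y\Leftrightarrow c*x*d$: from $a*b$ and $[a,b]\cong [c,x]$ the first item of the Axioms of Regularity gives $c*x$, and from $b*y$ and $[b,y]\cong [x,d]$ it gives $x*d$, whence $c*x*d$; reading the same two congruences in reversed form $[c,x]\cong [a,b]$ and $[x,d]\cong [b,y]$ and running the argument the other way establishes the reverse implication. The pair $y*b*a\Leftrightarrow d*x*c$ is identical but with $R=*^{-1}$ in Corollary 3.17: $b*a$ is equivalent to $x*c$ through $[a,b]\cong [c,x]$, and $y*b$ is equivalent to $d*x$ through $[b,y]\cong [x,d]$, so $y*b*a$ unpacks to $d*x*c$. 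Finally, the stated weaker equivalence — ``$x$ is between-pitched $c$ and $d$'' iff ``$b$ is between-pitched $a$ and $y$'' — follows at once, because by Definition 2.11 the first condition is exactly $c*x*d\vee d*x*c$ and the second is exactly $a*b*y\vee y*b*a$, and the four biconditionals match the disjuncts termwise.

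I do not anticipate a real difficulty; the only genuine care points are (i) verifying that Interval Subtraction is legitimately applied before any order is assumed, so that $[b,y]\cong [x,d]$ is on solid ground throughout the argument, and (ii) bookkeeping the roles of the sounds when passing the $*^{-1}$ instances through Corollary 3.17, since there it is easy to swap an endpoint and flip the inequality in the wrong direction.
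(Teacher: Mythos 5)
Your proposal is correct. The paper supplies no proof of this lemma (it is among the statements left to the reader), and your argument uses exactly the machinery the paper has prepared for it: the unrestricted Interval Subtraction gives $[b,y]\cong[x,d]$ with no betweenness assumption, and then Lemma 3.8 / Corollary 3.17 deliver both items; the four order biconditionals do match the disjuncts of Definition 2.11 termwise, so the weaker between-pitched statement follows as you say. The only remark worth adding is that item 1 can be read off in one step from the derived congruence $[b,y]\cong[x,d]$ by taking $R=\sim$ in item 1 of Corollary 3.17, which slightly shortens your detour through $[a,b]\cong[a,y]$, though that detour is perfectly sound.
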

\begin{definition}
Let $[a,b],[c,d]\in \mathbb{I}^{>1}$. We say $[a,b]$ is \emph{less than} $[c,d]$, written $[a,b]<[c,d]$, if there exists a sound $x$ lower in pitch than $d$ such that $[a,b]\cong [c,x]$; equivalently (item 2 of Lemma 3.39), there exists a sound $y$ higher in pitch than $b$ such that $[a,y]\cong [c,d]$. Naively, in this case we also say that $[c,d]$ is \emph{greater than} $[a,b]$ and occasionally write $[c,d]>[a,b]$; that is, the inverse of $<$ is called \emph{greater than} and denoted by $>$.
\end{definition}
\begin{proposition}[Interval Ordering]
For any $[a,b],[c,d],[e,f]\in \mathbb{I}^{>1}$,
\begin{enumerate}
\item only one of the following holds: $[a,b]<[c,d]$, $[a,b]\cong [c,d]$, or $[a,b]>[c,d]$.
\item if $[a,b]<[c,d]$ and $[c,d]\cong [e,f]$, then $[a,b]<[e,f]$.
\item if $[a,b]>[c,d]$ and $[c,d]\cong [e,f]$, then $[a,b]>[e,f]$.
\item if $[a,b]<[c,d]$ and $[c,d]<[e,f]$, then $[a,b]<[e,f]$.
\end{enumerate}
\end{proposition}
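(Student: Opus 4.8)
The plan is to reduce item~1 to the trichotomy of the pitch order (Corollary~2.4) applied to a single pair of sounds produced by the Axiom of Motion, and to get items~2--4 by transporting congruences with transitivity of congruence and the Generalization of Lemma~3.37 (Proposition~3.38). First I would prove the existence part of item~1: apply the Axiom of Motion to $[a,b]$ and the sound $c$ to obtain a sound $x$, unique up to identity, with $[a,b]\cong[c,x]$; since $a*b$ (as $[a,b]\in\mathbb{I}^{>1}$), the Generalization of the Axioms of Regularity forces $c*x$. By Corollary~2.4 exactly one of $x*d$, $x\sim d$, $d*x$ holds. If $x*d$, then $[a,b]<[c,d]$ by Definition~3.40. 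If $x\sim d$, then $[c,x]=[c,d]$ by Corollary~3.2, hence $[a,b]\cong[c,d]$. If $d*x$, then, using $c*d$ (as $[c,d]\in\mathbb{I}^{>1}$), we have $c*d*x$; applying Lemma~3.37 to $[c,x]\cong[a,b]$ (here $c\nsim x$) with the sound $d$ between-pitched $c$ and $x$ produces a sound $w$ between-pitched $a$ and $b$ with $[c,d]\cong[a,w]$, and $a*w*b$ gives $w*b$, so $[c,d]<[a,b]$, i.e. $[a,b]>[c,d]$. Thus at least one of the three relations holds.

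Next I would establish the mutual exclusivity by contradiction, using the two equivalent forms of ``$<$'' coming from item~2 of Lemma~3.39, together with Lemma~3.8, Corollary~2.4 and transitivity of congruence. If $[a,b]<[c,d]$ and $[a,b]\cong[c,d]$ held together, the first gives $x*d$ with $[a,b]\cong[c,x]$, so $[c,x]\cong[c,d]$ and Lemma~3.8 yields $x\sim d$, contradicting $x*d$. If $[a,b]\cong[c,d]$ and $[a,b]>[c,d]$ held, the latter gives $z*b$ with $[c,d]\cong[a,z]$, so $[a,b]\cong[a,z]$, and Lemma~3.8 gives $z\sim b$, contradicting $z*b$. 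Finally, if $[a,b]<[c,d]$ and $[a,b]>[c,d]$ held, rewrite the first (item~2 of Lemma~3.39) as: there is $y$ with $b*y$ and $[a,y]\cong[c,d]$, while the second gives $z$ with $z*b$ and $[c,d]\cong[a,z]$; then $[a,y]\cong[a,z]$, so $y\sim z$ by Lemma~3.8, contradicting $z*b*y$.

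For item~2 I would rewrite $[a,b]<[c,d]$ in the equivalent form (item~2 of Lemma~3.39): there is $y$ with $b*y$ and $[a,y]\cong[c,d]$; then $[c,d]\cong[e,f]$ and transitivity of congruence give $[a,y]\cong[e,f]$ with $b*y$, so $[a,b]<[e,f]$. Item~3 is the dual: $[a,b]>[c,d]$ means $[c,d]<[a,b]$, so there is $z$ with $z*b$ and $[c,d]\cong[a,z]$; with $[e,f]\cong[c,d]$ this yields $[e,f]\cong[a,z]$, whence $[e,f]<[a,b]$, i.e. $[a,b]>[e,f]$. For the transitivity in item~4 I would chain the machinery: from $[c,d]<[e,f]$ pick $x$ with $x*f$ and $[c,d]\cong[e,x]$, where $c*d$ and Regularity give $e*x$, hence $e*x*f$; from $[a,b]<[c,d]$ pick $z$ with $z*d$ and $[a,b]\cong[c,z]$; then apply the Generalization of Lemma~3.37 (Proposition~3.38, with $R=*$) to $[c,d]\cong[e,x]$ and the sound $z$ with $z*d$, obtaining $w$ with $w*x$ and $[c,z]\cong[e,w]$. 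Transitivity of congruence gives $[a,b]\cong[c,z]\cong[e,w]$, and $w*x*f$ gives $w*f$, so $[a,b]<[e,f]$ by Definition~3.40.

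I expect the trichotomy in item~1 to be the main obstacle: the existence of one of the three alternatives is a clean case split on the pitch of $x$ relative to $d$, but the mutual exclusivity forces one to translate the \emph{defined} relation ``$<$'' back into the pitch order and, in each of the three contradiction arguments, to select the appropriate one of the two characterizations of ``$<$'' in Lemma~3.39 (and in one case to use transitivity of the pitch order on $z*b*y$) before Lemma~3.8 can be applied. Once that is done and Proposition~3.38 is in hand, items~2--4 are essentially bookkeeping with transitivity of congruence.
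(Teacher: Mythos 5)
Your proof is correct, and its skeleton is the same as the paper's: the Axiom of Motion produces a sound $\theta$ (your $x$) with $[a,b]\cong[c,\theta]$, Corollary~2.4 gives the three-way case split for existence in item~1, and Lemma~3.37 (or its generalization, Proposition~3.38) drives items~2 and~4. Where you genuinely diverge is in the mutual-exclusivity step of item~1: the paper proves asymmetry of $<$ by invoking the Generalization of the Interval Addition Axiom --- it takes $z$ with $[x,d]\cong[b,z]$, adds to get $[a,z]\cong[c,d]\cong[a,y]$, and derives $z\sim y$ against $y*b*z$ --- whereas you avoid interval addition entirely by switching between the two equivalent characterizations of $<$ (Definition~3.40 via item~2 of Lemma~3.39) and then applying Lemma~3.8 to a congruence of two intervals sharing an endsound; you also handle item~2 directly from the dual characterization plus transitivity of congruence, where the paper routes it through Lemma~3.37, and you replace the paper's one-line appeal to the second clause of Definition~3.40 in the case $d*\theta$ by an explicit application of Lemma~3.37. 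The trade-off is mild: your route leans on the equivalence packaged in Lemma~3.39 (which the paper states without proof but endorses inside Definition~3.40) and is somewhat leaner, while the paper's addition-based asymmetry argument only needs Definition~3.40's first clause and showcases the Interval Addition machinery; both are legitimate within the axioms available at this point.
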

\begin{proof}
For item 1, we first show that $[a,b]<[c,d]$ and $[c,d]<[a,b]$ can never hold simultaneously (asymmetry). If this happens, then based on Definition 3.40 there would be sounds $x$ and $y$ so that $[a,b]\cong [c,x]$ and $[c,d]\cong [a,y]$ and so that $x*d$ and $y*b$. Moreover, a sound $z$ exists so that $[x,d]\cong [b,z]$. According to the Generalization of Interval Addition Axiom we get $[a,z]\cong [c,d]$. Thus $[a,z]\cong [a,y]$ which implies that $z\sim y$. But this is not possible because we have $y*b*z$. Returning to the main issue, there exists a sound $\theta$ such that $[a,b]\cong [c,\theta]$. Hence $c*\theta$. Now, if $\theta \sim d$, then $[a,b]\cong [c,d]$ and by virtue of Corollary 2.4 the other two cases do not hold. If $\theta *d$, then $[a,b]<[c,d]$ and likewise the second case does not hold as well as the third one based on what is proved in the first step. If $d*\theta$, then $[a,b]>[c,d]$ and by the same argument as before, the other cases fail to hold. For item 2, there is some $\alpha \in \mathbb{S}$ such that $[a,b]\cong [c,\alpha]$. Since $[c,d]\cong [e,f]$, from Lemma 3.37 it follows that there is some $\beta \in \mathbb{S}$ so that $[c,\alpha]\cong [e,\beta]$ and so that $e*\beta *f$. We deduce that $[a,b]\cong [e,\beta]$ and $\beta *f$, and then $[a,b]<[e,f]$. Item 3 is merely a matter of inspection. In item 4, some $\gamma ,\delta \in \mathbb{S}$ would be found such that $[a,b]\cong [c, \gamma]$ and $[c,d]\cong [e,\delta]$ and such that $\gamma *d$ and $\delta *f$. Also by Lemma 3.37, there is some $\zeta \in \mathbb{S}$ so that $[c,\gamma]\cong [e,\zeta]$ and $e*\zeta *\delta$. Thus, $[a,b]\cong [e,\zeta]$ and $\zeta *\delta * f$ which implies that $[a,b]<[e,f]$.
\end{proof}
\begin{corollary}
$(\mathbb{I}^{>1},<)$ is a strictly ordered set.
\end{corollary}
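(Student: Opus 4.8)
Recall that a \emph{strictly ordered set} is a set together with a binary relation that is irreflexive and transitive. Since $\mathbb{I}^{>1}$ is visibly nonempty (Axiom of Existence guarantees a pair of non-identical sounds, hence, after possibly converting, an interval greater than unit), it remains only to verify these two properties for $<$ on $\mathbb{I}^{>1}$, and the plan is to read both of them off Proposition 3.42.

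First I would dispose of transitivity: it is literally item 4 of the Interval Ordering proposition, so nothing further is needed there. For irreflexivity, fix an arbitrary $[a,b]\in\mathbb{I}^{>1}$. By the Axiom of Reflexivity of Congruence we have $[a,b]\cong[a,b]$. Item 1 of Proposition 3.42 asserts that exactly one of $[a,b]<[a,b]$, $[a,b]\cong[a,b]$, $[a,b]>[a,b]$ holds; since the middle alternative is true, the first one must be false, i.e. $[a,b]\not<[a,b]$. This establishes irreflexivity.

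Finally I would remark, for completeness, that asymmetry comes for free: if $[a,b]<[c,d]$ and $[c,d]<[a,b]$ held simultaneously, transitivity (item 4) would give $[a,b]<[a,b]$, contradicting irreflexivity — though this is already contained in the trichotomy of item 1 as well.

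I do not anticipate any real obstacle here; the statement is an immediate consequence of Proposition 3.42, and the only point requiring a moment's care is simply matching the bookkeeping ("strictly ordered" $=$ irreflexive $+$ transitive, with asymmetry derived) to the three clauses already proved, rather than re-deriving anything about congruence or the Axiom of Motion.
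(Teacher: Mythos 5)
Your argument is correct and is exactly the intended derivation: the paper states this corollary without proof as an immediate consequence of the Interval Ordering proposition, and your reading of irreflexivity from trichotomy (via Axiom 6) plus transitivity from item 4 is precisely that. One bookkeeping slip: the Interval Ordering proposition is Proposition 3.41, not 3.42 (3.42 is the corollary itself).
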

\begin{definition}
We define the ordering $<$ on $\mathbb{I}^{<1}$ (not to be confused with the same symbol as the ordering on $\mathbb{I}^{>1}$) as follows:
\begin{center}
$\forall [a,b],[c,d]\in \mathbb{I}^{<1}([a,b]<[c,d]\Leftrightarrow [c,d]^{-1}<[a,b]^{-1}).$
\end{center}
One may reformulate this in terms of congruence alike to Definition 3.40.
\end{definition}
\paragraph*{}
Having clarified the reason for naming in Definition 3.9, one can intuitively comprehend the naive extension of the ordering $<$ on whole $\mathbb{I}$. However, this will formally be done later (see Definition 3.66).

\subsection*{Continuity}
\paragraph*{}
It is time to approach measuring the sizes of intervals adjusted to the treatment of physics of sound concerning the notion of frequency. But there is a problem. No guarantee is given that there does not exist any gaps in pitch of our music, or in other words, the sounds in $\mathbb{S}$ are together arranged continuously in pitch. For more clarification carry on with the following.
\begin{definition}
A sound $m$ is called a \emph{midsound} of $[a,b]$ if $[a,m]\cong [m,b]$. Any sound of $\tilde{a}$ and $\tilde{b}$ is called an \emph{endsound} of $[a,b]$.
\end{definition}
\begin{proposition}
Any interval and its conversion have the same midsounds. Every midsound of a non-unit interval is between-pitched any two non-identical endsounds of that interval.
\end{proposition}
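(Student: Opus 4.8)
The plan is to prove the two assertions separately, both resting on the Generalization of the Axioms of Regularity (Corollary 3.17) together with Corollary 2.4 and Lemma 2.9, with symmetry of congruence used freely.

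First I would dispatch the claim about conversions. Suppose $m$ is a midsound of $[a,b]$, so $[a,m]\cong[m,b]$. Applying item 2 of Corollary 3.17 gives $[a,m]^{-1}\cong[m,b]^{-1}$, that is $[m,a]\cong[b,m]$, and symmetry of congruence then yields $[b,m]\cong[m,a]$, which is exactly the statement that $m$ is a midsound of $[b,a]=[a,b]^{-1}$. The reverse inclusion follows by running the same argument on $[a,b]^{-1}$ (noting $([a,b]^{-1})^{-1}=[a,b]$), so the two intervals share precisely the same set of midsounds.

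For the second assertion, let $[a,b]$ be non-unit (hence $a\nsim b$), let $m$ be one of its midsounds, and let $x,y$ be two non-identical endsounds of $[a,b]$. Since all members of $\tilde a$ are mutually identical and likewise all members of $\tilde b$, the hypothesis $x\nsim y$ forces, up to relabeling, $x\sim a$ and $y\sim b$. The first step is to rule out $m\sim a$: if it held, then $[a,m]$ would be unit, so by item 1 of Corollary 3.17 so would $[m,b]$, giving $a\sim m\sim b$ against $a\nsim b$; by symmetry $m\nsim b$ as well, so $m$ is non-identical with both $x$ and $y$. Now by Corollary 2.4 either $a*m$ or $m*a$. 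If $a*m$, then $[a,m]\in\mathbb{I}^{>1}$, so by Corollary 3.17 $[m,b]\in\mathbb{I}^{>1}$ too, i.e. $m*b$; thus $a*m*b$, and transporting along $x\sim a$ and $y\sim b$ via Lemma 2.9 gives $x*m*y$. If instead $m*a$, the same reasoning through the $*^{-1}$ clause of Corollary 3.17 gives $b*m*a$, hence $y*m*x$. In either case $m$ is between-pitched $x$ and $y$, as required.

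I do not anticipate a real obstacle; the only point needing a little care is the bookkeeping in the second part — observing that ``non-identical endsounds'' already pins down a representative of $\tilde a$ and a representative of $\tilde b$, and then making sure the unit / greater-than-unit / less-than-unit status of $[a,m]$ is correctly transferred to $[m,b]$ through Corollary 3.17 before Lemma 2.9 is invoked. Everything else is a routine chain of congruence manipulations.
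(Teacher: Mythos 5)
Your proof is correct and follows exactly the route the paper intends: its own proof is just the one-line sketch ``Done by the Generalization of Axioms of Regularity,'' and your argument is a faithful expansion of that, using item 2 of Corollary 3.17 plus symmetry for the conversion claim and item 1 (with $R\in\{\sim,*,*^{-1}\}$) together with Corollary 2.4 and Lemma 2.9 for the betweenness claim. No gaps.
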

\begin{proof}[Sketch of Proof]
Done by the Generalization of Axioms of Regularity.
\end{proof}
\begin{proposition}
Every interval has at most one midsound up to identity.
\end{proposition}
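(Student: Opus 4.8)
The plan is to assume $m$ and $m'$ are both midsounds of an interval $[a,b]$ and show $m\sim m'$. I would first dispense with the unit case, then treat the non-unit case by playing two monotonicity estimates off against each other inside the strictly ordered set $(\mathbb{I}^{>1},<)$.

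If $[a,b]$ is unit, i.e. $a\sim b$, then $[m,b]=[m,a]=[a,m]^{-1}$ (using $\tilde{a}=\tilde{b}$ and Corollary 3.2), so the defining congruence $[a,m]\cong[m,b]$ becomes $[a,m]\cong[a,m]^{-1}$, whence $a\sim m$ by Corollary 3.13. The same reasoning gives $a\sim m'$, and transitivity of $\sim$ closes this case. For the non-unit case, since an interval and its conversion share the same midsounds (Proposition 3.46) I may assume without loss of generality that $[a,b]$ is greater than unit, i.e. $a*b$. By Proposition 3.47 each midsound lies strictly between the endsounds, so $a*m*b$ and $a*m'*b$.

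Now I would argue by contradiction: suppose $m\nsim m'$; by Corollary 2.4 we may assume $m*m'$ (the other case is symmetric in $m,m'$), so $a*m*m'*b$ and all four intervals $[a,m],[m,b],[a,m'],[m',b]$ lie in $\mathbb{I}^{>1}$. First, taking $x=m$ in Definition 3.40 (with $[a,m]\cong[a,m]$ and $m*m'$) gives $[a,m]<[a,m']$, and since $[a,m']\cong[m',b]$ the Interval Ordering proposition upgrades this to $[a,m]<[m',b]$. Second, the Axiom of Motion produces a sound $x$ with $[m',b]\cong[m,x]$; applying the Generalization of Lemma 3.8 (Corollary 3.34) to this congruence with $R=*^{-1}$ forces $x*b$, so Definition 3.40 yields $[m',b]<[m,b]$, and since $[m,b]\cong[a,m]$ (symmetry of congruence applied to the relation defining $m$) the Interval Ordering proposition upgrades this to $[m',b]<[a,m]$. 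Thus $[a,m]<[m',b]$ and $[m',b]<[a,m]$, which is impossible because $(\mathbb{I}^{>1},<)$ is strictly ordered (Corollary 3.42). Hence $m\sim m'$.

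The one genuinely delicate point is the estimate $[m',b]<[m,b]$ — intuitively "raising the lower endsound shrinks the interval" — which I would get exactly as above: transport $[m',b]$ onto an interval $[m,x]$ with fixed lower endsound $m$ via the Axiom of Motion, then pin down $x*b$ using Corollary 3.34. Everything else is routine bookkeeping with the congruence axioms and the order on $\mathbb{I}^{>1}$. One could instead try to prove $[a,m]\cong[a,m']$ directly and invoke uniqueness in the Axiom of Motion, but obtaining that congruence appears to need the same monotonicity input, so I would keep the trichotomy-and-asymmetry route.
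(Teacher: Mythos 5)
Your argument is correct, but it is a genuinely different (and considerably heavier) route than the paper's. The paper proves the statement in one stroke, uniformly in the unit and non-unit cases: from $[a,m]\cong[m,b]=[b,m]^{-1}$ and $[a,m']\cong[m',b]=[b,m']^{-1}$, item 2 of the Interval Subtraction (Proposition 3.32) immediately gives $[m,m']\cong[m',m]$, and Corollary 3.13 then forces $m\sim m'$ --- no case split, no WLOG, no betweenness facts, no order on $\mathbb{I}^{>1}$. You instead split off the unit case (where you also use Corollary 3.13), reduce to $a*b$ via the fact that an interval and its conversion share midsounds, and then derive a contradiction from trichotomy/asymmetry in $(\mathbb{I}^{>1},<)$ by establishing the two monotonicity estimates $[a,m]<[m',b]$ and $[m',b]<[a,m]$; your handling of the delicate step $[m',b]<[m,b]$ via the Axiom of Motion and Corollary 3.34 with $R=*^{-1}$ is sound, and all the results you invoke precede Proposition 3.46, so there is no circularity. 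What your approach buys is an explicit ``raising the lower endsound shrinks the interval'' picture, which is good geometric intuition; what it costs is length and dependence on Proposition 3.45, Definition 3.40, the Interval Ordering and Corollary 3.34, where the paper needs only subtraction of congruences and Corollary 3.13. (Minor bookkeeping: the two facts you cite as Propositions 3.46 and 3.47 --- conversion-invariance of midsounds and betweenness of midsounds --- are both parts of Proposition 3.45; 3.46 is the statement being proved and 3.47 concerns cuts. Also note that the alternative you dismiss at the end, obtaining a congruence and invoking uniqueness, is essentially realized by the paper's subtraction argument without any monotonicity input.)
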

\begin{proof}
Let m and $m'$ be midsounds of $[a,b]$. We then have $[a,m]\cong [m,b]$ and $[a,m']\cong [m',b]$. Item 2 of the Interval Subtraction yields $[m,m']\cong [m',m]$. From Corollary 3.13 it follows that $m\sim m'$.
\end{proof}
\paragraph*{}
Although a sound may be the midsound of infinitely many intervals (see Lemma 5.18), (conversely thinking) there is unfortunately no idea for proving the existence of midsounds in music thus far constructed. We require one axiom of continuity. Alas, we are not endowed with the beautiful notion of \textit{circle} (and other efficient geometric shapes in the framework of the foundations of geometry) because pitch in one-dimensional as was already mentioned, and any attempt to produce such an object in our music system does not disclose new information and will finally fail to work. So we have no choice of continuity except the most bristling one proposed by Dedekind (\cite{G1} 4th ed. p. 134).
\begin{proposition}
Suppose that $\mathbb{S}$ is the union of two nonempty subsets. The following are equivalent:
\begin{enumerate}
\item No sound of one subset is lower-pitched than a sound of the other.
\item No sound of either subset is between-pitched two sounds of the other.
\end{enumerate}
Moreover, in each of the items above, either the intersection of the two subsets is empty or it consists of exactly one sound up to identity.
\end{proposition}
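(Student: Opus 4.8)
\emph{Setup and reading of item 1.} Write $\mathbb{S}=A\cup B$ with $A,B\neq\emptyset$. I read item 1 as: the two subsets can be labelled so that no sound of the second is lower-pitched than a sound of the first. Since item 2 is visibly symmetric in the two subsets, I may always relabel and assume the ``good'' labelling is $A,B$ with $\neg\exists a\in A,\,b\in B:\,b*a$; equivalently, $a*b$ or $a\sim b$ for all $a\in A$, $b\in B$. The plan is: prove $1\Rightarrow 2$ directly, prove $2\Rightarrow 1$ by contraposition through a double trichotomy, and then deduce the intersection dichotomy from item 1 (item 2 follows via the equivalence).

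\emph{Proof of $1\Rightarrow 2$.} This is immediate. If some $a\in A$ were between-pitched two sounds $b,b'\in B$, one of them, say $b$, satisfies $b*a$ with $b\in B$, $a\in A$, contradicting the labelling; symmetrically, if some $b\in B$ lies between $a,a'\in A$, then $a*b*a'$ forces $b*a'$, again impossible. Hence no sound of either subset is between-pitched two sounds of the other.

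\emph{Proof of $2\Rightarrow 1$ (contrapositive).} Assume item 1 fails for the given subsets; then there are $a_1\in A$, $b_1\in B$ with $a_1*b_1$ and there are $a_2\in A$, $b_2\in B$ with $b_2*a_2$ (failure means both ``a sound of $A$ below a sound of $B$'' and ``a sound of $B$ below a sound of $A$'' occur). Run the trichotomy of Corollary 2.4 on $a_1$ and $b_2$. If $b_2*a_1$, transitivity gives $b_2*a_1*b_1$, so $a_1\in A$ lies between $b_2,b_1\in B$; if $a_1*b_2$, then $a_1*b_2*a_2$, so $b_2\in B$ lies between $a_1,a_2\in A$; either way item 2 fails. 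In the remaining case $a_1\sim b_2$, Lemma 2.9 yields $a_1*a_2$ and $b_2*b_1$; now run a second trichotomy on $a_2$ and $b_1$. If $a_2*b_1$ then $b_2*a_2*b_1$ ($a_2\in A$ between two sounds of $B$); if $b_1*a_2$ then $a_1*b_1*a_2$ ($b_1\in B$ between two sounds of $A$); if $a_2\sim b_1$, apply item 2 of the Axiom of Betweenness in Pitch to $a_1*a_2$ to obtain a sound $c$ with $a_1*c*a_2$: if $c\in B$ this is already $c$ between $a_1,a_2\in A$, and if $c\in A$, transporting $a_1\sim b_2$ and $a_2\sim b_1$ across $c$ via Lemma 2.9 gives $b_2*c*b_1$, so $c\in A$ lies between $b_2,b_1\in B$. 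In every branch item 2 fails, as required.

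\emph{The ``moreover'' clause.} It suffices to treat item 1. With the good labelling $A,B$, suppose $A\cap B\neq\emptyset$ and pick $s,t\in A\cap B$; if $s\nsim t$ then, up to swapping names, $s*t$, and since $t\in A$ and $s\in B$ this exhibits a sound of $B$ lower-pitched than a sound of $A$, contradicting the labelling. Hence any two sounds of $A\cap B$ are identical in pitch, i.e. $A\cap B$ is either empty or consists of exactly one sound up to identity; by $1\Leftrightarrow 2$ the same holds under item 2.

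I expect the only real obstacle to be the case $a_1\sim b_2$ (and its innermost sub-case $a_2\sim b_1$) of $2\Rightarrow 1$: there no pair among the four chosen witnesses is strictly nested, so one must genuinely manufacture a new sound strictly between $a_1$ and $a_2$ by density and then route all order relations across $\sim$-classes with Lemma 2.9; everything else is bookkeeping. A secondary point needing care is the reading of ``one subset''/``the other'' in item 1, which is why I fix the good labelling at the outset and exploit the symmetry of item 2.
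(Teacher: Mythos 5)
Your proof is correct, but the harder direction $2\Rightarrow 1$ follows a genuinely different route from the paper's. You argue by contraposition: from the failure of item 1 you extract witnesses $a_1*b_1$ and $b_2*a_2$, run a double trichotomy (Corollary 2.4), transport order across $\sim$-classes with Lemma 2.9, and in the fully degenerate case $a_1\sim b_2$, $a_2\sim b_1$ you manufacture a new sound strictly between $a_1$ and $a_2$ by density (item 2 of Axiom 5) and place it in $A$ or $B$ to force a betweenness violation either way. The paper instead argues directly: assuming item 2, it fixes an orientation by choosing one non-identical cross pair $x\in P$, $y\in Q$ with $x*y$, and for any hypothetical offending pair $b*a$ ($a\in P$, $b\in Q$) it applies item 1 of the Pitch Directness Property (Proposition 2.15) to $x$ and $b$: the common lower bound $z$ lies in $P$ or in $Q$, and either $z*b*a$ or $z*x*y$ exhibits a sound of one subset between-pitched two sounds of the other, contradicting item 2 — so no degenerate case ever arises and no density argument is needed. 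What each approach buys: yours stays at the level of trichotomy, density and Lemma 2.9 and makes completely explicit which witnesses the negation of item 1 provides (including the careful reading of ``one'' versus ``either'', which the paper only remarks on); the paper's is shorter and case-free because the common lower bound automatically yields strict chains, at the price of invoking Proposition 2.15. Your treatment of $1\Rightarrow 2$ and of the ``moreover'' clause coincides with the paper's (reduce to the one-directional comparison and note that two sounds of the intersection can be compared in both directions, hence are identical).
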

\begin{proof}
We suppose $\mathbb{S}=P\cup Q$ and $P,Q\neq \emptyset$. Deducing item 2 from item 1 is simply done by a reductio ad absurdum. Conversely, we assume item 2. We choose sounds $x\in P$ and $y\in Q$ such that $x\nsim y$. Without loss of generality we may also assume that $x*y$. Now we verify that for every $a\in P$ and every $b\in Q$, either $a*b$ or $a\sim b$. Suppose not. So $b*a$ for some $a\in P$ and $b\in Q$. Applying item 1 of the Pitch Directness Property to $x$ and $b$ leads to a contradiction. Note that by the first part of this argument, it is impossible to replace the word ``one'' by the word ``either'' within item 1. Finally, suppose that $x,y\in P\cap Q$. It follows that $\neg(x*y)$ and $\neg(y*x)$, which means $x\sim y$.
\end{proof}
\begin{remark}
The two distinct set of sounds satisfying each item of Proposition 3.47 enjoy a kind of convexity in the context of segments. Actually, if we redefine the notion of convex set as follows, then each of the foregoing sets will appear in the form of convex subsets of $\mathbb{S}$; ``$F\subseteq \mathbb{S}$ is \emph{convex} if for every $x,y\in F$, any sound between-pitched $x$ and $y$ belongs to $F$.''
In fact, this new definition is weaker than convexity introduced in Definition 3.18. Therefore, in this sense each item of Proposition 3.47 is equivalent to convexity of both subsets except probably one equivalence class under $\sim$.\\
\textbf{Warning.} Do not assume the sets $P$ or $Q$ in the proof of Proposition 3.47 are in the form of initial or final segments respectively. This matter is practically equivalent to continuity axioms. Geometrically speaking, the only thing guaranteed here is that for every $x\in P$ and every $y\in Q$, $P$ contains the initial segment $\overset{\leftarrow}{x}$ and $Q$ contains the final segment $\overset{\rightarrow}{y}$; additionally, $P$ and $Q$ respectively contain no final and initial segments.
\end{remark}
\begin{definition}
Any partition $\{P,Q\}$ of $\mathbb{S}$ satisfying one of the equivalent conditions of Proposition 3.47 is called a \emph{cut} of $\mathbb{S}$.
\end{definition}
\begin{lemma}
Let $\Sigma =\{P,Q\}$ be a partition of $\mathbb{S}$. The following are equivalent:
\begin{enumerate}
\item $\Sigma$ is a cut of $\mathbb{S}$.
\item All sounds lower in pitch than a sound of one element of $\Sigma$ belong to the same element of $\Sigma$; i.e., $\forall s\in \mathbb{S}((s\in P\Rightarrow \overset{\leftarrow}{s}\subseteq P)\vee (s\in Q\Rightarrow \overset{\leftarrow}{s}\subseteq Q))$.
\item All sounds higher in pitch than a sound of one element of $\Sigma$ belong to the same element of $\Sigma$; i.e., $\forall s\in \mathbb{S}((s\in P\Rightarrow \overset{\rightarrow}{s}\subseteq P)\vee (s\in Q\Rightarrow \overset{\rightarrow}{s}\subseteq Q))$.
\end{enumerate}
\end{lemma}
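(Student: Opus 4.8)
The plan is to establish the three implications $1 \Rightarrow 2$, $2 \Rightarrow 1$, and then $1 \Leftrightarrow 3$ by a symmetric argument, so that really only $1 \Leftrightarrow 2$ requires work. I read condition~2 as saying that one of the two blocks (either $P$ or $Q$) is a down-set of $(\mathbb{S},*)$, i.e.\ it contains $\overset{\leftarrow}{s}$ whenever it contains $s$, and condition~3 as saying one block is an up-set; and I recall that, by Definition 3.50, ``$\Sigma$ is a cut'' means precisely that the partition $\{P,Q\}$ satisfies the equivalent conditions of Proposition 3.47. I would work throughout with item~1 of that proposition --- no sound of one block is lower-pitched than a sound of the other --- since it is the form most directly comparable with ``down-set'' and ``up-set''.

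For $1 \Rightarrow 2$ I would assume $\Sigma$ is a cut and take, from item~1 of Proposition 3.47, a block --- relabel it $L$, with the complementary block $H$ --- such that no sound of $L$ is lower-pitched than a sound of $H$. Trichotomy (Corollary 2.4) then forces every sound of $H$ to be higher-pitched than or identical-pitched with every sound of $L$, so in particular no sound of $H$ is strictly lower-pitched than a sound of $L$. Now I would fix $s \in L$ and an arbitrary $t$ with $t * s$: were $t \in H$, it would be a sound of $H$ strictly below the sound $s$ of $L$, a contradiction; hence $t \in L$. As $t$ ranges over $\overset{\leftarrow}{s}$ this gives $\overset{\leftarrow}{s} \subseteq L$ for every $s \in L$, i.e.\ condition~2.

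For $2 \Rightarrow 1$ (and in the same way $3 \Rightarrow 1$), suppose $Q$ is the block satisfying $\overset{\leftarrow}{s} \subseteq Q$ for all $s \in Q$. If some $p \in P$ were lower-pitched than some $q \in Q$, then $p \in \overset{\leftarrow}{q} \subseteq Q$, contradicting $p \in P$ and $P \cap Q = \emptyset$. So no sound of $P$ is lower-pitched than a sound of $Q$; since $\{P,Q\}$ is already a partition, this is exactly item~1 of Proposition 3.47, and $\Sigma$ is a cut.

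Finally $1 \Leftrightarrow 3$ is the mirror image of $1 \Leftrightarrow 2$: running the same arguments with the inverse relation $*^{-1}$ (``higher-pitched'') in place of $*$ interchanges down-sets with up-sets while leaving the cut condition of Proposition 3.47 unchanged --- that condition is symmetric under swapping ``lower'' and ``higher'' --- so it converts $1 \Leftrightarrow 2$ into $1 \Leftrightarrow 3$; alternatively, for a partition one checks directly that a block is a down-set iff the complementary block is an up-set, which yields $2 \Leftrightarrow 3$ outright. I do not anticipate a genuine obstacle here: the only delicate points are the ``relabel it $L$'' step --- correctly pinning down which block is the lower one --- and reading the quantifier layout of conditions~2 and~3 in the intended ``$P$ is a down-set, or $Q$ is'' sense; granting those, each implication is a one- or two-line consequence of trichotomy and disjointness of the blocks.
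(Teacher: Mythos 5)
Your overall plan is the natural one (and the paper itself offers only the one-line sketch ``immediately done by a reductio ad absurdum'', so filling in details via item 1 of Proposition 3.47 is exactly the right move): read condition 2 as ``one block of $\Sigma$ is a down-set'', condition 3 as ``one block is an up-set'', prove $1\Leftrightarrow 2$ from trichotomy and disjointness, and get item 3 by symmetry or by the observation that, for a two-block partition, a block is a down-set iff its complement is an up-set. Your quantifier reading is also the intended one: taken literally, the displayed formula in item 2 is vacuously true for any partition, so the content has to be the disjunction of the two universal statements, which is what you prove. The $2\Rightarrow 1$ argument and the handling of item 3 are correct as written.

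The gap is in $1\Rightarrow 2$: the trichotomy step is carried out in the wrong direction. From ``no sound of $L$ is lower-pitched than a sound of $H$'', i.e. $\neg(l*h)$ for all $l\in L$, $h\in H$, Corollary 2.4 gives $h*l$ or $h\sim l$; that is, every sound of $H$ is \emph{lower}-pitched than or identical-pitched with every sound of $L$ --- the opposite of what you assert. Consequently your next claim, that no sound of $H$ is lower-pitched than a sound of $L$, is not a consequence of the hypothesis; combined with the hypothesis it would amount to replacing ``one'' by ``either'' in item 1 of Proposition 3.47, which the paper's proof of that proposition explicitly notes is impossible (for a partition of $\mathbb{S}$ it would force all cross-pairs to be identical-pitched, contradicting Axioms 3 and 4). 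So under your labeling $L$ is the high block and is an up-set, and the block that actually satisfies the down-set property is $H$. The repair is a one-line relabeling: fix $s\in H$ and $t*s$; if $t\in L$, then $t$ is a sound of $L$ lower-pitched than the sound $s$ of $H$, contradicting the chosen instance of item 1 of Proposition 3.47; hence $t\in H$, so $\overset{\leftarrow}{s}\subseteq H$ for every $s\in H$, and since condition 2 only demands that \emph{some} block be a down-set, the implication follows. With that correction the proof is complete.
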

\begin{proof}[Sketch of Proof]
Immediately done by a reductio ad absurdum.
\end{proof}
\begin{proposition}
Let $\Sigma$ be a cut of $\mathbb{S}$. For each sound $\theta$ the following conditions are equivalent:
\begin{enumerate}
\item If $\theta$ is between-pitched two sounds, then the two sounds are in different sets of $\Sigma$.
\item If a sound is between-pitched $\theta$ and another sound, then the two sounds are in the same set of $\Sigma$.
\item No sound of one of the sets of $\Sigma$ is higher in pitch than $\theta$ and no sound of the other set of $\Sigma$ is lower in pitch than $\theta$.
\item Any sound lower-pitched than $\theta$ belongs to one of the sets of $\Sigma$ and any sound higher-pitched than $\theta$ belongs to the other set of $\Sigma$.
\end{enumerate}
\end{proposition}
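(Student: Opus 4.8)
The plan is to prove the cyclic chain of implications $1\Rightarrow 2\Rightarrow 3\Rightarrow 4\Rightarrow 1$. Fix the notation $\Sigma=\{P,Q\}$ with $P,Q$ nonempty; recall that $\mathbb{S}-\tilde{\theta}$ is the disjoint union of $\overset{\leftarrow}{\theta}$ and $\overset{\rightarrow}{\theta}$ (Corollary 2.26), and that both of these sides are nonempty by item 1 of the Axiom of Betweenness in Pitch. In conditions $3$ and $4$, "one set / the other set" is the claim that a suitable labelling of $\{P,Q\}$ exists. For $1\Rightarrow 2$, suppose a sound $x$ is between-pitched $\theta$ and another sound $y$, say $\theta*x*y$ (the case $y*x*\theta$ is symmetric, with a sound higher than $\theta$ playing the role below of a sound lower than $\theta$). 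Choose $s$ with $s*\theta$; transitivity of pitch gives $\theta*y$, so $s*\theta*x$ and $s*\theta*y$, and condition $1$ places both $x$ and $y$ in the member of $\Sigma$ not containing $s$; hence $x$ and $y$ lie in the same set.

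For $2\Rightarrow 3$, I would first show that $\overset{\rightarrow}{\theta}$ is contained in a single member of $\Sigma$. If $t_1,t_2\in\overset{\rightarrow}{\theta}$ are non-identical then $\theta,t_1,t_2$ are pairwise non-identical, so Proposition 2.13 gives that one of them is between-pitched the other two; it cannot be $\theta$, which is lower than both, so $\theta*t_1*t_2$ or $\theta*t_2*t_1$, and condition $2$ puts $t_1,t_2$ in the same set. (The subcase $t_1\sim t_2$ is immediate once one notes that a cut never separates identical sounds: otherwise $\overset{\leftarrow}{t_1}=\overset{\leftarrow}{t_2}$, which is nonempty, would by Lemma 3.50 lie in both $P$ and $Q$.) Symmetrically $\overset{\leftarrow}{\theta}$ lies in one member. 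These two members must be distinct: if $\overset{\leftarrow}{\theta}\cup\overset{\rightarrow}{\theta}$ were contained in a single member, the other member---being nonempty---would be a subset of $\tilde{\theta}$, and then picking $a\in\overset{\leftarrow}{\theta}$, $b\in\overset{\rightarrow}{\theta}$ and $c$ in that other member, Lemma 2.9 gives $a*c*b$, so $c$ is a sound of one member between-pitched two sounds of the other, contradicting that $\Sigma$ is a cut (item 2 of Proposition 3.47). Therefore one member of $\Sigma$ contains $\overset{\leftarrow}{\theta}$ and the other contains $\overset{\rightarrow}{\theta}$; since the two are disjoint, no sound of the first is higher-pitched than $\theta$ and no sound of the second is lower-pitched than $\theta$, which is condition $3$.

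For $3\Rightarrow 4$: if, say, no sound of $P$ is higher than $\theta$ and no sound of $Q$ is lower than $\theta$, then $P\cap\overset{\rightarrow}{\theta}=\emptyset$ and $Q\cap\overset{\leftarrow}{\theta}=\emptyset$, so by Corollary 2.26 every sound lower than $\theta$ lies in $P$ and every sound higher than $\theta$ lies in $Q$. For $4\Rightarrow 1$: if $\overset{\leftarrow}{\theta}\subseteq P$ and $\overset{\rightarrow}{\theta}\subseteq Q$, then whenever $\theta$ is between-pitched $x$ and $y$, say $x*\theta*y$, we have $x\in\overset{\leftarrow}{\theta}\subseteq P$ and $y\in\overset{\rightarrow}{\theta}\subseteq Q$, so $x$ and $y$ lie in different members of $\Sigma$. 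This closes the cycle.

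The implications $1\Rightarrow 2$, $3\Rightarrow 4$ and $4\Rightarrow 1$ need only that $\Sigma$ is a partition, transitivity of pitch, and Corollary 2.26. The step I expect to be the main obstacle is the "distinct members" claim inside $2\Rightarrow 3$: this is the only place where the hypothesis that $\Sigma$ is a cut---not merely a partition---is essential, as witnessed by the non-cut partition $\{\mathbb{S}-\{c\},\{c\}\}$ with $c\sim\theta$ and $c\ne\theta$, which satisfies condition $2$ but violates conditions $3$ and $4$.
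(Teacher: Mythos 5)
Your cycle $1\Rightarrow 2\Rightarrow 3\Rightarrow 4\Rightarrow 1$ is the same decomposition the paper uses, and the steps $1\Rightarrow 2$, $3\Rightarrow 4$, $4\Rightarrow 1$, as well as the non-identical case and the distinctness argument inside $2\Rightarrow 3$, all check out. The genuine flaw is the parenthetical claim that ``a cut never separates identical sounds,'' together with its justification. The claim is false in this system: a cut may split the equivalence class of its cut sound between its two members. Concretely, once $\sim$ is known not to coincide with $=$ (e.g.\ in the Cartesian model of Remark 4.8, or after the Loudness Postulate), fix $c\sim\theta$ with $c\neq\theta$ and set $P=\overset{\leftarrow}{\theta}\cup\{c\}$, $Q=\mathbb{S}-P$; no sound of $Q$ is lower-pitched than a sound of $P$, so $\{P,Q\}$ is a cut, yet $c$ and the other sounds of $\tilde{\theta}$ lie in different members --- exactly the freedom Remark 3.56 leaves open, since a cut sound only determines $P-\tilde{\theta}$ and $Q-\tilde{\theta}$. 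The misstep in your justification is a misreading of Lemma 3.50: it says that one member of the cut is downward closed (equivalently, one is upward closed), not that each member contains $\overset{\leftarrow}{s}$ for each of its own sounds $s$. So from $t_1\in P$, $t_2\in Q$, $t_1\sim t_2$ you may conclude, say, $\overset{\leftarrow}{t_1}\subseteq P$, but you cannot also conclude $\overset{\leftarrow}{t_2}\subseteq Q$, and no contradiction follows.

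The good news is that the statement you actually need is true in context and the repair is local, using only condition 2: if $t_1\sim t_2$ both lie in $\overset{\rightarrow}{\theta}$, item 2 of Axiom 5 gives $x$ with $\theta*x*t_1$, and Lemma 2.9 gives $\theta*x*t_2$; applying condition 2 twice puts $t_1$ and $t_2$ in the member containing $x$. The symmetric argument handles $\overset{\leftarrow}{\theta}$. With that patch your $2\Rightarrow 3$ is sound, and it is a somewhat different treatment of the one non-trivial step than the paper's, which instead takes the comparability form of the cut (item 1 of Proposition 3.47, every sound of $P$ lower- or identical-pitched than every sound of $Q$) and derives condition 3 directly; your route via ``each side of $\theta$ sits in one member and the two members differ'' is more self-contained, and your closing example $\{\mathbb{S}-\{c\},\{c\}\}$ correctly isolates where the cut hypothesis is indispensable.
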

\begin{proof}[Sketch of Proof]
All implications of the process $1\Rightarrow 2\Rightarrow 3\Rightarrow 4\Rightarrow 1$ are trivial unless $2\Rightarrow 3$ (looking a bit less obvious) that is simply done by assuming $\Sigma =\{P,Q\}$ and that any sound of $P$ is lower-pitched than or identical-pitched with any sound of $Q$ (item 1 of Proposition 3.47).
\end{proof}
\begin{remark}
Let us do a little logic. In Proposition 3.51, the contrapositive of item 3, which is in fact the very same item 4, is describing item 1 in a somewhat more apparent manner. Item 2 in comparison with item 4 is discussing the same matter more precisely; if $\theta *x*y$, then $x$ and $y$ are in one element of the cut $\Sigma$; if $x'*y'*\theta$, then $x'$ and $y'$ are certainly in the other element of $\Sigma$. More surprisingly (from logical standpoint), the last two items are converse to each other; regarding what is hinted in the proof, item 3 states that $P-\tilde{\theta}\subseteq \overset{\leftarrow}{\theta}$ and $Q-\tilde{\theta}\subseteq \overset{\rightarrow}{\theta}$, whereas conversely item 4 states that $\overset{\leftarrow}{\theta}\subseteq P-\tilde{\theta}$ and $\overset{\rightarrow}{\theta}\subseteq Q-\tilde{\theta}$. Thus, having rephrased the first two items as follows, the two ``if'' can be replaced by ``if and only if'':
\begin{enumerate}
\item Two sounds are on opposite sides of $\theta$ if and only if they are in different elements of $\Sigma$.
\item Two sounds are on the same sides of $\theta$ if and only if they are in the same element of $\Sigma$.
\end{enumerate}
This matter automatically provides a necessary and sufficient condition for the phenomenon involved in Proposition 3.51 leading to the following definition.
\end{remark}
\begin{definition}
Let $\Sigma$ be a cut of $\mathbb{S}$. Each sound $\theta$ satisfying one of the equivalent conditions of Proposition 3.51 is said to be a \emph{cut sound} of $\Sigma$.
\end{definition}
\begin{proposition}
For any cut $\Sigma$ of $\mathbb{S}$, a cut sound of $\Sigma$, if it exists, is unique up to identity.
\end{proposition}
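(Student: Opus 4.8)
The plan is to argue by contradiction using the characterization of cut sounds in Proposition 3.51. Suppose $\theta$ and $\theta'$ are both cut sounds of a cut $\Sigma=\{P,Q\}$ of $\mathbb{S}$, and suppose toward a contradiction that $\theta\nsim\theta'$. By Corollary 2.4 exactly one of $\theta*\theta'$ or $\theta'*\theta$ holds; since the two cases are symmetric, I would assume $\theta*\theta'$.

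First I would manufacture two auxiliary sounds from the Axiom of Betweenness in Pitch: item 2 applied to the pair $\theta,\theta'$ yields a sound $a$ with $\theta*a*\theta'$, and item 1 applied to $\theta'$ yields a sound $b$ with $\theta'*b$. By the Axiom of Transitivity in Pitch these combine into the chain $\theta*a*\theta'*b$; in particular $a*\theta'*b$, and since $a*\theta'*b$ gives $a*b$, also $\theta*a*b$. Note $\theta,a,\theta',b$ are then pairwise non-identical, so no degenerate overlaps occur.

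The heart of the argument is to play the two cut sounds off against each other. Applying item 1 of Proposition 3.51 to the cut sound $\theta'$ and the triple $a*\theta'*b$ shows that $a$ and $b$ lie in \emph{different} elements of $\Sigma$. On the other hand, applying item 2 of Proposition 3.51 to the cut sound $\theta$ and the triple $\theta*a*b$ shows that $a$ and $b$ lie in the \emph{same} element of $\Sigma$. These two conclusions are incompatible, so the assumption $\theta\nsim\theta'$ is untenable and $\theta\sim\theta'$, proving uniqueness up to identity.

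I do not expect a serious obstacle here; the only point requiring care is invoking the right clauses of Proposition 3.51. Items 1 and 2 are precisely the ``between-implies-separated'' and ``between-implies-together'' statements that make the contradiction immediate, and --- as spelled out in Remark 3.53 --- one never has to track which of $P$ and $Q$ is the low set, because $a$ and $b$ are only ever compared through a single cut sound at a time. Alternatively one could run the same argument through item 4 of Proposition 3.51, but that route forces reasoning about where sounds identical-pitched with $\theta'$ sit in $\Sigma$, which is slightly less clean; the argument with $a$ and $b$ above avoids this entirely.
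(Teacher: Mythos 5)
Your proof is correct and is essentially the paper's own argument: the paper picks sounds forming the chain $x*\theta_1*y*\theta_2$ and derives from items 1 and 2 of Proposition 3.51 that $x$ and $y$ lie in different elements of the cut and in the same element simultaneously, which is the mirror image of your chain $\theta*a*\theta'*b$ with the auxiliary sounds placed on the high side instead of the low side. No gap; the two write-ups differ only in this symmetric choice.
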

\begin{proof}
Let sounds $\theta _1$ and $\theta _2$ with $\theta _1*\theta _2$ satisfy the equivalent conditions of Proposition 3.51. Sounds $x$ and $y$ would be found such that $x*\theta _1*y*\theta _2$. It follows that $x$ and $y$ are in different elements of the cut and in the same element of that simultaneously --- a contradiction.
\end{proof}
\begin{remark}
Let $\Sigma =\{P,Q\}$ be a cut of $\mathbb{S}$. Without loss of generality suppose that no sound of $P$ is higher in pitch than a sound of $Q$. Now, consider the quotient sets $P/\sim$ and $Q/\sim$ in the ordered set $(\mathbb{S}/\sim \,,*')$. Thus, for every $\tilde{x}\in P/\sim$ and every $\tilde{y}\in Q/\sim$, one has $\tilde{x}*'\tilde{y}$. Every element of $P/\sim$ is a lower bound of $Q/\sim$ and any element of $Q/\sim$ is an upper bound of $Q/\sim$. If the supremum of $P/\sim$ or the infimum of $Q/\sim$ exists, then they are equal (both exist) since $(\mathbb{S}/\sim \,,*')$ is dense (item 2 of Axiom 5) and $\{P,Q\}$ constitutes a partition of $\mathbb{S}$. So if $\Theta =\sup P/\sim \,=\inf Q/\sim$ does exist, then it must belong to $P/\sim$ or $Q/\sim$. On the other hand, by the last part of Proposition 3.47, the intersection of the two quotient sets is either empty or singleton. If $(P/\sim )\cap (Q/\sim )=\emptyset$, then only one of the elements $\max P/\sim$ and $\min Q/\sim$ exists and equals $\Theta$, otherwise the unique element of $(P/\sim )\cap (Q/\sim )$ is equal to $\Theta$, in which case
\begin{center}
$\Theta =\sup \dfrac{P}{\sim}=\inf \dfrac{Q}{\sim}=\max \dfrac{P}{\sim}=\min \dfrac{Q}{\sim}.$
\end{center}
Notice any sound of $\Theta$ would be a cut sound of $\Sigma$, but there is no assurance that $\Theta$ exists in general. This is in fact equivalent to Dedekind's Axiom of Continuity which is supplied as follows.
\end{remark}
\begin{axiom}[Dedekind's Continuity Axiom]
Every cut of $\mathbb{S}$ has a cut sound.
\end{axiom}
\begin{remark}
Having collected all matters of continuity thus far discussed, Dedekind's Continuity Axiom states summarily that if $\mathbb{S}$ is partitioned into two subsets $P$ and $Q$ in such a way that no elements of $P$ is higher-pitched than an element of $Q$, then there exists a unique sound $\theta$, up to identity, such that $\theta$ is between-pitched two sounds if and only if the two sounds are not identical-pitched with $\theta$, and one of them belongs to P and the other one belongs to $Q$, namely,
\begin{center}
$P-\tilde{\theta}=\overset{\leftarrow}{\theta},\,\,\,\,\,Q-\tilde{\theta}=\overset{\rightarrow}{\theta}.$
\end{center}
In other words,
\begin{center}
$\forall s\in \mathbb{S}((s*\theta \Leftrightarrow s\in P\wedge s\nsim \theta) \wedge (\theta *s \Leftrightarrow s\in Q \wedge s\nsim \theta )).$
\end{center}
\end{remark}
\paragraph*{}
Dedekind's Continuity Axiom is the only axiom of music theory which essentially involves the notion of a set of sounds. The set-theoretic moral of this axiom is that the supremum and infimum elements mentioned in Remark 3.55 (i.e. $\Theta$) certainly exist(s). From geometric point of view, it states that all sounds of music are continuously, with respect to pitch, organized in $\mathbb{S}$. The music theory viewpoint on Dedekind's Continuity Axiom is the following. We realize by the Sound Separation Property that every sound $\theta$ naively gives a cut of $\mathbb{S}$ each of whose elements contains just one side of $\theta$. In such a manner, $\theta$ separates all sounds that are non-identical with itself. The axiom says that, conversely, each separation of sounds (say cut) is specified
by a unique sound up to identity. Therefore, the converse to the Sound Separation Property holds in this sense.
\begin{proposition}[Existence of Midsound]
Every interval has a midsound.
\end{proposition}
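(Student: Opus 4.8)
The plan is to realise the midsound as the cut sound of a carefully chosen cut and then invoke Dedekind's Continuity Axiom. A couple of reductions come first. If $a\sim b$ then $[a,b]$ is unit, so $[a,a]\cong[a,b]$ by Proposition 3.16 and $a$ itself is a midsound; and since, by Proposition 3.45, $[a,b]$ and its conversion $[b,a]$ have the same midsounds, I may assume $a*b$. If moreover some sound $s$ with $a*s*b$ satisfies $[a,s]\cong[s,b]$ we are already done, so I also assume that no sound strictly between $a$ and $b$ is a midsound; then $[a,s]$ and $[s,b]$ both lie in $\mathbb{I}^{>1}$, and by the Interval Ordering exactly one of $[a,s]<[s,b]$, $[a,s]>[s,b]$ holds for each such $s$. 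The workhorse throughout is a monotonicity remark: if $a*x*y*b$ then $[a,x]<[a,y]$ and $[y,b]<[x,b]$ --- the first being immediate from Definition 3.40 with $x$ itself as witness, the second from the same definition together with the Axiom of Motion and the Generalization of Lemma 3.8.

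Next I would set $P=\{s\in\mathbb{S}:s*a\ \text{or}\ s\sim a\ \text{or}\ (a*s*b\ \text{and}\ [a,s]<[s,b])\}$ and $Q=\mathbb{S}-P$; note $P$ and $Q$ are unions of $\sim$-classes. To check $\{P,Q\}$ is a cut I would verify condition~1 of Proposition 3.47: given $p\in P$ and $q\in Q$ with $q*p$, a case split on the positions of $p$ and $q$ relative to $a$ and $b$ yields a contradiction, the only nonroutine case being $a*q*p*b$, where monotonicity produces the impossible chain $[q,b]<[a,q]<[a,p]<[p,b]<[q,b]$. Since $a\in P$ and $b\in Q$ the cut is nontrivial, so Dedekind's Continuity Axiom supplies a cut sound $\theta$. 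To place $\theta$ strictly between $a$ and $b$, I would first prove $(\star)$: either some sound strictly between $a$ and $b$ is a midsound (and we are finished), or there exist $u,v$ with $a*u*b$, $[a,u]<[u,b]$, $a*v*b$, $[a,v]>[v,b]$. Indeed, starting from any $c$ with $a*c*b$ (density), if $[a,c]>[c,b]$ one transports $[c,b]$ back to $a$ by the Axiom of Motion to obtain $e$ with $[a,e]\cong[c,b]$ and $a*e*c*b$; then $[a,e]\cong[c,b]<[e,b]$ by monotonicity, so $e$ serves as $u$, and the other clause is symmetric. With $u\in P$ and $v\in Q$ strictly between $a$ and $b$ one gets $u*v$ and $u\preceq\theta\preceq v$ (a sound of $P$ cannot lie above $\theta$, a sound of $Q$ cannot lie below it), whence $a*\theta*b$.

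It remains to prove $[a,\theta]\cong[\theta,b]$, which is the heart of the matter. Assume instead $[a,\theta]<[\theta,b]$ (the reverse inequality is handled symmetrically, by passing to conversions). Because $\theta$ is a cut sound, every $x$ with $\theta*x*b$ lies in $Q$, so $[a,x]>[x,b]$. From $[a,\theta]<[\theta,b]$ and Definition 3.40 there is $y$ with $\theta*y*b$ and $[a,y]\cong[\theta,b]$, and Proposition 3.32 turns this into $[a,\theta]\cong[y,b]$. Choose $c$ with $\theta*c*y$ (density); then $[c,b]>[y,b]\cong[a,\theta]$ by monotonicity, and also $[a,c]>[c,b]$ since $c\in Q$. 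Transporting $[c,b]$ back to $a$ gives $e$ with $[a,e]\cong[c,b]$; the inequalities $[a,e]>[a,\theta]$ and $[a,e]<[a,c]$ locate $e$ with $\theta*e*c*b$, so $e\in Q$ and $[a,e]>[e,b]$, while monotonicity gives $[a,e]\cong[c,b]<[e,b]$ --- a contradiction. Hence $[a,\theta]\cong[\theta,b]$, that is, $\theta$ is a midsound of $[a,b]$.

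I expect the last step to be the real obstacle. The axioms let one \emph{transport} an interval (the Axiom of Motion) but never \emph{bisect} one, so every attempt to ``halve the remaining gap'' around $\theta$ stalls; what makes the indirect argument close is the identity $[a,\theta]\cong[y,b]$ coming from Proposition 3.32, which allows one to manufacture a sound wedged strictly between $\theta$ and the putative midpoint and thereby violate the cut-sound dichotomy. A comparatively routine but fiddly point is the bookkeeping of betweenness chains and the trichotomy cases in verifying that $\{P,Q\}$ is genuinely a cut and that the cut sound cannot coincide with $a$ or with $b$.
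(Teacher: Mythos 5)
Your proposal is correct and follows essentially the same route as the paper: you form the Dedekind cut determined by comparing $[a,s]$ with $[s,b]$, take its cut sound $\theta$, and kill the case $[a,\theta]<[\theta,b]$ by manufacturing a sound strictly above $\theta$ that is forced back into $P$ (the paper does this via the Generalization of the Interval Addition Axiom, you via Proposition 3.33 and your monotonicity remark --- the same machinery one step removed). The only nit is a citation slip: the identity $[a,y]\cong[\theta,b]\Rightarrow[a,\theta]\cong[y,b]$ is Proposition 3.33, not Proposition 3.32.
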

\begin{proof}
For unit intervals there is nothing to prove. Let $[a,b]$ with $a*b$ (Proposition 3.45). We define a cut of $\mathbb{S}$ as follows:
\begin{align*}
P&=\{s\in \mathbb{S}:s*a \vee s\sim a \vee [a,s]<[s,b] \vee [a,s]\cong [s,b]\},\\
Q&=\{s\in \mathbb{S}:s*^{-1}b \vee s\sim b \vee [a,s]>[s,b]\}.
\end{align*}
Speaking summarily, $P$ is the set of all sounds $s$ that $[a,s]$ is less than or congruent to $[s,b]$, and $Q$ is the set of all sounds $t$ that $[a,t]$ is greater than $[t,b]$. Item 1 of the Interval Ordering guarantees $\{P,Q\}$ is a partition of $\mathbb{S}$. No sound of $P$ is lower in pitch than a sound of $Q$; suppose that $x\in P$ and $y\in Q$ and moreover $x$ and $y$ are between-pitched $a$ and $b$. So $[a,x]<[x,b]$ or $[a,x]\cong [x,b]$, and also $[a,y]>[y,b]$. Based on definition, there would be $\alpha ,\beta \in \mathbb{S}$ such that $[a,\alpha]\cong [x,b]$ and $[a,\beta]\cong [y,b]$ and such that $\neg(\alpha *x)$ and $\beta *y$. By item 2 of the Generalization of Interval Subtraction we obtain $[\alpha ,\beta]\cong [y,x]$. Thus, $y*x$ implies that $\beta *\alpha$ which contradicts item 1 of the Axioms of Regularity. Therefore $\{P,Q\}$ is a cut of $\mathbb{S}$ (a more convenient way to prove this is to apply Lemma 3.50; assuming $s\in P$ and $a*x*s*b$, based on the Interval Ordering we get $[a,x]<[a,s]<or\cong [s,b]<[x,b]$ which yields $x\in P$, and hence $\overset{\leftarrow}{s}\subseteq P$). By Dedekind's Continuity Axiom, there exists a sound $\theta$ such that
\begin{center}
$P-\tilde{\theta}=\{s\in \mathbb{S}:s*\theta\}=\overset{\leftarrow}{\theta},\,\,\,\,\,Q-\tilde{\theta}=\{s\in \mathbb{S}:\theta *s\}=\overset{\rightarrow}{\theta}.$
\end{center}
We claim that $[a,\theta]\cong [\theta ,b]$. Suppose that $[a,\theta]<[\theta ,b]$. The case $[a,\theta]>[\theta ,b]$ is similar and left to the reader. We have by definition $[a,\theta]\cong [\theta ,c]$ (1) for some $\theta \in \mathbb{S}$ with $c*b$. Consider a sound $x$ between-pitched $c$ and $b$. By item 1 of the Interval Ordering we have either $[c,x]<[x,b]$, $[c,x]\cong [x,b]$, or $[c,x]>[x,b]$. We assume that one of the first two cases holds (the third one needs a little bit more intervalkeeping). Hence there are some $z\in \mathbb{S}$ so that $[c,x]\cong [x,z]$ (2) and $\neg (b*z)$. Based on the Axiom of Motion, some $y\in \mathbb{S}$ would be found such that $[\theta ,y]\cong [c,x]$ (3). Based on item 1 of the Generalization of Interval Addition Axiom, from (1) and (3) it follows that $[a,y]\cong [\theta ,x]$ (4). On the other hand, from (2) and (3) (by transitivity) it follows that $[\theta ,y]\cong [x,z]=[z,x]^{-1}$. Also $[y,x]\cong [x,y]^{-1}$. From item 2 of the Generalization of Interval Addition Axiom we deduce that $[\theta ,x]\cong [z,y]^{-1}=[y,z]$. From (4) (again by transitivity) it follows that $[a,y]\cong [y,z]$. Since either $z*b$ or $z\sim b$, it follows that either $[a,y]<[y,b]$ or $[a,y]\cong [y,b]$ which implies that $y\in P$. But $y$ is higher in pitch than $\theta$ (why?) and then it must belong to $Q$. Therefore, $y\in P\cap Q$ --- a contradiction.
\end{proof}
\paragraph*{}
The following kind of definition is a direct usage of the recursion theorem of set thory which is customary to introduce recursively some abstract entities in mathematics.
\begin{definition}
For each interval $[a,b]$ and every natural number $n$, we define the \emph{product} $n\cdot[a,b]$ by induction as follows; we define $1\cdot[a,b]$ to be $[a,b]$ and assuming $n\cdot[a,b]=[a,b_n]$, we define $(n+1)\cdot[a,b]$ to be $[a,b_{n+1}]$ such that $[b_n,b_{n+1}]\cong [a,b]$. We also define $0\cdot[a,b]$ to be $[a,a]$ and for every negative integer $n$, we extend the notion of product of a natural number with an interval as $n\cdot[a,b]=(-n)\cdot[a,b]^{-1}$. We define $(1/2)\cdot[a,b]$ to be $[a,m]$, where $m$ is a midsound of $[a,b]$. By induction, for every $k\in \mathbb{N}$, $(1/2^{k+1})\cdot[a,b]$ is defined as $(1/2)\cdot((1/2^k)\cdot[a,b])$.
\end{definition}
\begin{remark}[Notation]
The set of all dyadic numbers is denoted by $\mathbb{D}$; i.e., $\mathbb{D}:=\{n/2^k: n\in \mathbb{Z}, k\in \mathbb{N}\}$.
\end{remark}
\begin{proposition}
For every interval $[a,b]$, any integers $m, n$, and any natural numbers $k, l$, we have the following:
\begin{enumerate}
\item $m\cdot(n\cdot[a,b])=n\cdot(m\cdot[a,b])=(mn)\cdot[a,b].$
\item $\frac{1}{2^k}\cdot(\frac{1}{2^l}\cdot[a,b])=\frac{1}{2^l}\cdot(\frac{1}{2^k}\cdot[a,b])= \frac{1}{2^{k+l}}\cdot[a,b].$
\item $2^k\cdot(\frac{1}{2^k}\cdot[a,b])=\frac{1}{2^k}\cdot(2^k\cdot[a,b])=[a,b].$
\item $m\cdot(\frac{1}{2^k}\cdot[a,b])=\frac{1}{2^k}\cdot(m\cdot[a,b]).$
\item If $a\sim b$, then $m\cdot[a,b]=(1/2^k)\cdot[a,b]=[a,b]$.
\item If $a*b$ and $m$ and $n$ have the same sign, then
\begin{center}
$m<n\Leftrightarrow m\cdot[a,b]<n\cdot[a,b],\,\,\,\,\,k<l\Leftrightarrow \dfrac{1}{2^l}\cdot[a,b]<\dfrac{1}{2^k}\cdot[a,b].$
\end{center}
\item If $b*a$ and $m$ and $n$ have the same sign, then
\begin{center}
$m<n\Leftrightarrow n\cdot[a,b]<m\cdot[a,b],\,\,\,\,\,k<l\Leftrightarrow \dfrac{1}{2^k}\cdot[a,b]<\dfrac{1}{2^l}\cdot[a,b].$
\end{center}
\end{enumerate}
\end{proposition}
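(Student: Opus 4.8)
The plan is to reduce all seven items to the case of natural‑number coefficients and positive exponents, where everything is controlled by a single concatenation lemma, and then to unwind the definitions for negative integers. Item 5 is dispatched first and in isolation: if $a\sim b$ then every interval congruent to $[a,b]$ is again unit (item 1 of Corollary 3.17 with $R=\sim$), so the chain built in Definition 3.58 never leaves $\tilde a$; and a midsound of a unit interval is identical‑pitched with its endsounds (were it, say, higher‑pitched than $a$, Corollary 3.17 would make it higher‑pitched than $b$ too, contradicting $b\sim a$ by Lemma 2.9), so the halving operation also keeps us inside $\tilde a$. Hence $m\cdot[a,b]=\tfrac{1}{2^{k}}\cdot[a,b]=(\tilde a,\tilde a)=[a,b]$. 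From here on, using $n\cdot[a,b]=(-n)\cdot[a,b]^{-1}$ for $n<0$, it suffices to treat $a*b$ with natural coefficients, the remaining sign combinations being obtained by passing to the conversion $[a,b]^{-1}=[b,a]$ together with the lemma below.

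The workhorse, call it $(\star)$, is: if $x_{0},x_{1},\dots ,x_{k}$ satisfy $[x_{i-1},x_{i}]\cong [a,b]$ for every $i$, then $[x_{0},x_{k}]\cong k\cdot [a,b]$. This is a short induction on $k$ from the Generalization of the Interval Addition Axiom (Proposition 3.31), which is exactly the device that lets us add congruent intervals without fussing over betweenness; when $a*b$, item 1 of the Axioms of Regularity forces such a chain to be strictly increasing (and strictly decreasing when $b*a$). Item 1 now follows at the congruence level: the chain $a=p_{0},p_{1},\dots ,p_{mn}$ defining $(mn)\cdot[a,b]$, read every $n$‑th step, is by $(\star)$ a chain each of whose intervals is congruent to $n\cdot[a,b]$, hence a legitimate $m$‑fold chain for $n\cdot[a,b]$; since the construction is single‑valued up to identity (Axiom 8), $m\cdot(n\cdot[a,b])=[a,p_{mn}]=(mn)\cdot[a,b]$, and symmetry in $m,n$ closes item 1. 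Item 2 is then bookkeeping: a midsound exists and is unique up to identity (Propositions 3.57 and 3.46), so $\tfrac{1}{2^{k}}\cdot$ is the $k$‑fold composite of a single‑valued halving map on intervals, whence it composes additively in the exponent and commutes with itself.

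For item 3 one checks the two identities $\tfrac{1}{2}\cdot(2\cdot[a,b])=[a,b]$ and $2\cdot(\tfrac{1}{2}\cdot[a,b])=[a,b]$ directly: in $2\cdot[a,b]=[a,c]$ the middle sound $b$ is by construction a midsound, hence \emph{the} midsound (Proposition 3.46); and if $m$ is the midsound of $[a,b]$ then $2\cdot[a,m]=[a,c']$ with $[m,c']\cong [a,m]\cong [m,b]$, so $c'\sim b$ by the uniqueness in Axiom 8. One then passes to $2^{k}$ and $\tfrac{1}{2^{k}}$ by writing them as iterated $2\cdot$ and $\tfrac12\cdot$ (legitimate by item 1) and composing these mutually inverse maps. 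Item 4 is the one genuinely clever step: rather than matching the two constructions directly, observe via items 1 and 3 that $2\cdot\bigl(m\cdot(\tfrac{1}{2}\cdot[a,b])\bigr)=m\cdot\bigl(2\cdot(\tfrac{1}{2}\cdot[a,b])\bigr)=m\cdot[a,b]$, so $m\cdot(\tfrac{1}{2}\cdot[a,b])$ exhibits its far endsound as a midsound of $m\cdot[a,b]$; by uniqueness of the midsound (Proposition 3.46) it therefore equals $\tfrac{1}{2}\cdot(m\cdot[a,b])$, and iterating over the exponent through the recursive definition of $\tfrac{1}{2^{k}}$ and item 2 yields item 4.

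Finally items 6 and 7. With $a*b$, the chain for $n\cdot[a,b]$ is strictly increasing and lies above $a$, while the iterated midsounds form a strictly decreasing sequence lying strictly between $a$ and $b$ (Proposition 3.45); so for $m<n$ (resp.\ $k<l$) the products are non‑unit intervals $[a,p_{m}]$ and $[a,p_{n}]$ (resp.\ $[a,m_{l}]$ and $[a,m_{k}]$) in $\mathbb{I}^{>1}$ with the first strictly lower than the second at the moving endsound, and the strict inequality drops out of Definition 3.40 with the lower endsound as witness; the reverse implications follow from the trichotomy in item 1 of the Interval Ordering (Proposition 3.41). The case $b*a$ is the same computation with the chains now strictly decreasing, so the products land in $\mathbb{I}^{<1}$ and the inequalities are read off from Definition 3.43 (equivalently, from the $a*b$ case applied to the conversion), which reverses the direction — precisely why item 7 is the mirror of item 6 — and the negative‑coefficient cases reduce likewise. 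The main obstacle is not any single deduction but the discipline of funnelling every sign, conversion and base‑point configuration back to the $a*b$, natural‑coefficient case, keeping in mind that the order on $\mathbb{I}^{<1}$ is defined through conversion and hence flips monotonicity; the only place that calls for a real idea is item 4, which one should obtain from items 1 and 3 plus uniqueness of the midsound rather than attempt head‑on.
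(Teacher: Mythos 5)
The paper never proves this proposition --- it is one of the statements ``intentionally left to the readers'' --- so there is no official argument to measure you against; judged on its own terms, your plan is correct in substance and is essentially the canonical proof. The concatenation lemma $(\star)$ obtained by induction from the Generalization of the Interval Addition Axiom, combined with the uniqueness clause of the Axiom of Motion, does give item 1 for natural coefficients exactly as you describe; items 2 and 3 follow from existence and uniqueness of midsounds (Propositions 3.45, 3.46, 3.57) and the base cases $\tfrac12\cdot(2\cdot[a,b])=[a,b]$, $2\cdot(\tfrac12\cdot[a,b])=[a,b]$; your derivation of item 4 from items 1 and 3 plus uniqueness of the midsound is exactly the right move; and items 6--7 do drop out of Definition 3.40 (with the strictly monotone chains and iterated midsounds as witnesses) together with the trichotomy in item 1 of Proposition 3.41. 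One small repair in item 5: from $[a,m]\cong[m,b]$ and $a*m$, item 1 of Corollary 3.17 gives $m*b$, and the contradiction with $a\sim b$ comes from transitivity ($a*m*b$ yields $a*b$), not from $m$ being higher-pitched than $b$ as you wrote; the claim itself is fine.

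The one place where your reduction genuinely overreaches is the negative-coefficient cases of items 1 and 4. With the paper's convention $n\cdot[a,b]=(-n)\cdot[a,b]^{-1}$ for $n<0$, a negative multiple is an interval based at $\tilde b$ (or at the far endsound of a previously built product), so the two sides of, say, $2\cdot\bigl((-1)\cdot[a,b]\bigr)=(-1)\cdot\bigl(2\cdot[a,b]\bigr)$ are based at $\tilde b$ and at $\widetilde{b_2}$ respectively (where $2\cdot[a,b]=[a,b_2]$): they are congruent but not equal whenever $[a,b]$ is non-unit, and the same base-point mismatch occurs in item 4 for $m<0$. So ``passing to the conversion'' can only deliver these identities up to congruence, not as literal equalities of intervals. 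This is really an imprecision in the statement (and in Corollary 3.61) rather than in your argument --- the paper only ever uses these identities at the level of free intervals, where congruence is all that matters (Remark 3.69) --- but your write-up should say explicitly that for mixed or negative signs you prove the equations modulo $\cong$, since the chain-plus-uniqueness mechanism yields genuine equality only when every coefficient is positive.
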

\begin{corollary}
The product $\cdot:\mathbb{D}\times \mathbb{I}\rightarrow \mathbb{I}$ defined by
\begin{center}
$\dfrac{n}{2^k}\cdot[a,b]=n\cdot(\dfrac{1}{2^k}\cdot[a,b])$
\end{center}
is well-defined.
\end{corollary}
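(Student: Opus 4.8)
The content of the statement is that the right‑hand side $n\cdot(\tfrac{1}{2^k}\cdot[a,b])$ does not depend on which representation $n/2^k$ of a given dyadic number is used; there is no ambiguity on the interval side, since by Definition 3.58 (together with the Axiom of Motion and Proposition 3.46) each of $n\cdot[\,\cdot\,]$ and $\tfrac{1}{2^k}\cdot[\,\cdot\,]$ is a genuine interval, its free endpoint being uniquely determined up to identity, so ``equal'' below means literal equality of the ordered pairs of $\sim$‑classes. Thus the plan is: fix an interval $[a,b]$ and two dyadic representations of the same number, $n/2^k=m/2^l$ with $n,m\in\mathbb{Z}$ and $k,l\in\mathbb{N}$, and show $n\cdot(\tfrac{1}{2^k}\cdot[a,b])=m\cdot(\tfrac{1}{2^l}\cdot[a,b])$, the whole argument being a short computation inside Proposition 3.60.

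First I would normalize: we may assume $l>k$ (if $l=k$ then $n=m$ and there is nothing to prove), and then $n/2^k=m/2^l$ forces $m=n\cdot 2^{\,l-k}$, valid for every integer $n$ regardless of sign. Writing $[c,d]:=\tfrac{1}{2^k}\cdot[a,b]$, item 2 of Proposition 3.60 (applied with exponents $l-k$ and $k$, noting $l-k\in\mathbb{N}$) gives $\tfrac{1}{2^l}\cdot[a,b]=\tfrac{1}{2^{\,l-k}}\cdot\bigl(\tfrac{1}{2^k}\cdot[a,b]\bigr)=\tfrac{1}{2^{\,l-k}}\cdot[c,d]$. Then, using item 1 (the integer product is symmetric and multiplicative) and item 3 ($2^{\,l-k}\cdot(\tfrac{1}{2^{\,l-k}}\cdot[c,d])=[c,d]$),
\begin{align*}
m\cdot\Bigl(\tfrac{1}{2^l}\cdot[a,b]\Bigr) &= \bigl(n\cdot 2^{\,l-k}\bigr)\cdot\Bigl(\tfrac{1}{2^{\,l-k}}\cdot[c,d]\Bigr)= n\cdot\Bigl(2^{\,l-k}\cdot\bigl(\tfrac{1}{2^{\,l-k}}\cdot[c,d]\bigr)\Bigr)\\
&= n\cdot[c,d]= n\cdot\Bigl(\tfrac{1}{2^k}\cdot[a,b]\Bigr),
\end{align*}
which is exactly the required equality.

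I do not expect a real obstacle: once the reduction step is in place the argument is pure bookkeeping on top of Proposition 3.60, and the degenerate case $a\sim b$ is already absorbed (by item 5 every candidate value equals $[a,b]$). The only point that deserves care is precisely that reduction — making sure the comparison of $n/2^k$ with $m/2^l$ always comes down to the single case $l>k$ with $m=n\,2^{\,l-k}$, and that no sign hypothesis on $n$ enters, since items 1--3 of Proposition 3.60 are stated for all integers and all natural exponents. With that observation the displayed chain of equalities closes the proof.
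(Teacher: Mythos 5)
Your argument is correct and is exactly the intended one: the paper leaves this corollary unproved as an immediate consequence of Proposition 3.60, and your reduction to $m=n\,2^{\,l-k}$ followed by items 1--3 of that proposition is precisely the bookkeeping the paper expects the reader to supply. No gaps; the separate handling of $l=k$ and the remark on uniqueness up to identity of the interval-side constructions are the right points of care.
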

\begin{proposition}
For every interval $[a,b]$ and for any positive dyadic numbers $w$ and $v$,
\begin{enumerate}
\item $w\cdot(v\cdot[a,b])=v\cdot(w\cdot[a,b])=(wv)\cdot[a,b]$.
\item if $a\sim b$, then $w\cdot[a,b]=[a,b]$.
\item if $a*b$, then $w<v$ if and only if $w\cdot[a,b]<v\cdot[a,b]$.
\item if $b*a$, then $w<v$ if and only if $v\cdot[a,b]<w\cdot[a,b]$.
\end{enumerate}
\end{proposition}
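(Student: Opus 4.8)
The plan is to reduce the statement to Proposition 3.60 and Corollary 3.61 by writing $w=n/2^{k}$ and $v=m/2^{l}$ with $n,m$ positive integers and $k,l\in\mathbb{N}$, and then systematically commuting the halving operations past the integer multiplications. For item 1, Corollary 3.61 gives $v\cdot[a,b]=m\cdot(\tfrac{1}{2^{l}}\cdot[a,b])$, so $w\cdot(v\cdot[a,b])=n\cdot\bigl(\tfrac{1}{2^{k}}\cdot\bigl(m\cdot(\tfrac{1}{2^{l}}\cdot[a,b])\bigr)\bigr)$. Now item 4 of Proposition 3.60 moves the $\tfrac{1}{2^{k}}$ past the factor $m$, item 2 of Proposition 3.60 collapses $\tfrac{1}{2^{k}}\cdot(\tfrac{1}{2^{l}}\cdot[a,b])$ into $\tfrac{1}{2^{k+l}}\cdot[a,b]$, and item 1 of Proposition 3.60 collapses $n\cdot(m\cdot(\,\cdot\,))$ into $(nm)\cdot(\,\cdot\,)$; reading the result back through Corollary 3.61 identifies it with $\tfrac{nm}{2^{k+l}}\cdot[a,b]=(wv)\cdot[a,b]$. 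Since swapping $w$ and $v$ in this computation (i.e.\ $(n,k)\leftrightarrow(m,l)$) together with $wv=vw$ gives $v\cdot(w\cdot[a,b])=(wv)\cdot[a,b]$ as well, item 1 follows.

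Item 2 is immediate: if $a\sim b$, item 5 of Proposition 3.60 gives $\tfrac{1}{2^{l}}\cdot[a,b]=[a,b]$ and then $w\cdot[a,b]=m\cdot[a,b]=[a,b]$, again by item 5. For items 3 and 4 I would first put $w,v$ over a common denominator: with $j:=k+l$, $n':=n2^{l}$, $m':=m2^{k}$, well-definedness (Corollary 3.61) gives $w\cdot[a,b]=n'\cdot(\tfrac{1}{2^{j}}\cdot[a,b])$ and $v\cdot[a,b]=m'\cdot(\tfrac{1}{2^{j}}\cdot[a,b])$, while $w<v\iff n'<m'$. Writing $\tfrac{1}{2^{j}}\cdot[a,b]=[a,c]$, an induction on $j$ using Proposition 3.45 (a midsound of a non-unit interval lies strictly between its distinct endsounds) shows $a*c$ when $a*b$ and $c*a$ when $b*a$. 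In the first case item 6 of Proposition 3.60, applied to $[a,c]$ and the positive integers $n',m'$, gives $n'<m'\iff n'\cdot[a,c]<m'\cdot[a,c]$, i.e.\ $w\cdot[a,b]<v\cdot[a,b]$; in the second case item 7 of Proposition 3.60 gives $n'<m'\iff m'\cdot[a,c]<n'\cdot[a,c]$, i.e.\ $v\cdot[a,b]<w\cdot[a,b]$, which are exactly items 3 and 4.

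The only genuinely delicate points are ordering the three reductions in item 1 so that each invocation of Proposition 3.60 is legitimate, and the little sign-preservation induction needed in items 3 and 4; everything else is a direct bootstrapping of the integer- and single-halving versions already proved.
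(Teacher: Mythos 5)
Your proof is correct, and it follows exactly the route the paper intends (the proposition is one of those left to the reader): reduce everything to Proposition 3.60 and Corollary 3.61 by writing the dyadics over explicit denominators, commute the halvings past the integer factors, and for the order statements pass to a common denominator and check via Proposition 3.45 that repeated halving preserves the sign of the interval. The only blemish is a harmless notational slip in item 2, where $w=n/2^k$ is momentarily treated as $m/2^l$; the argument itself is sound.
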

\begin{proposition}[Archimedean Property of Intervals]
If $[a,b]$ and $[c,d]$ are any intervals greater than unit, then there exists a natural number $n$ such that either $[a,b]\cong n\cdot[c,d]$ or $[a,b]<n\cdot[c,d]$.
\end{proposition}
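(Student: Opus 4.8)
The plan is to argue by contradiction, deriving the Archimedean property from Dedekind's Continuity Axiom in the classical way. By the trichotomy in item~1 of the Interval Ordering (Proposition~3.41), the failure of the conclusion means exactly that $n\cdot[c,d]<[a,b]$ for every natural number $n$. First I would lay off copies of $[c,d]$ from $a$ in the direction of $b$: using the Axiom of Motion, set $a_0=a$ and recursively pick $a_{k+1}$ with $[a_k,a_{k+1}]\cong[c,d]$; since $c*d$, item~1 of the Axioms of Regularity gives $a_0*a_1*a_2*\cdots$, and a routine induction on $n$ via the Generalization of the Interval Addition Axiom (Proposition~3.31) yields $[a,a_n]\cong n\cdot[c,d]$. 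Translating the standing assumption $n\cdot[c,d]<[a,b]$ through Definition~3.40 together with the uniqueness clause of the Axiom of Motion and Lemma~2.9, it becomes equivalent to $a_n*b$; so in our reductio every rung $a_n$ is lower in pitch than $b$.

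Next I would manufacture a cut of $\mathbb{S}$. Put $Q=\{s\in\mathbb{S}:a_n*s\text{ for every }n\}$ and $P=\mathbb{S}\setminus Q$, so that $s\in P$ if and only if $\neg(a_n*s)$ for some $n$. Both sets are nonempty: $a_0\in P$ by the Axiom of Irreflexivity in Pitch (take $n=0$), and $b\in Q$ by the previous paragraph. If $p\in P$ and $q\in Q$, choose $n$ with $\neg(a_n*p)$; then $a_n*q$, and Corollary~2.4 together with Lemma~2.9 and transitivity gives $p*q$. Hence every sound of $P$ is lower in pitch than every sound of $Q$, so $\{P,Q\}$ satisfies item~1 of Proposition~3.47 and is a cut. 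By Dedekind's Continuity Axiom it has a cut sound $\theta$, and by Proposition~3.51 (see Remark~3.56) we have $\overset{\leftarrow}{\theta}=P-\tilde{\theta}$ and $\overset{\rightarrow}{\theta}=Q-\tilde{\theta}$.

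Finally I would push the ladder one notch past $\theta$. Applying the Axiom of Motion to the interval $[c,d]^{-1}=[d,c]$ and the sound $\theta$, and using the Axioms of Regularity (since $c*d$), I get a sound $\theta'$ with $\theta'*\theta$ and $[\theta',\theta]\cong[c,d]$. Then $\theta'\in\overset{\leftarrow}{\theta}\subseteq P$, so $\neg(a_n*\theta')$ for some $n$. From $[\theta',\theta]\cong[c,d]\cong[a_n,a_{n+1}]$ and the Generalization of Lemma~3.8 (Corollary~3.34), the relation $a_n*\theta'$ holds if and only if $a_{n+1}*\theta$ holds; hence $\neg(a_{n+1}*\theta)$, so by Corollary~2.4 either $a_{n+1}\sim\theta$ or $\theta*a_{n+1}$. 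In the latter case $a_{n+1}\in\overset{\rightarrow}{\theta}\subseteq Q$. In the former, $[a_{n+1},a_{n+2}]\cong[c,d]$ gives $a_{n+1}*a_{n+2}$ by the Axioms of Regularity, and then $\theta\sim a_{n+1}*a_{n+2}$ forces $\theta*a_{n+2}$ by Lemma~2.9, so $a_{n+2}\in\overset{\rightarrow}{\theta}\subseteq Q$. Either way some rung $a_j$ belongs to $Q$, which by the definition of $Q$ entails $a_j*a_j$, contradicting the Axiom of Irreflexivity in Pitch. This contradiction establishes the proposition.

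I expect the main obstacle to be the middle step: arranging $P$ and $Q$ so that they genuinely constitute a cut in the precise sense of Proposition~3.47 --- in particular verifying that $P$ lies entirely below $Q$ and that neither side is empty --- and then invoking the cut-sound characterization with the correct orientation, namely that everything strictly lower in pitch than $\theta$ falls into $P$. The two preliminary bookkeeping facts, $[a,a_n]\cong n\cdot[c,d]$ and the equivalence ``$n\cdot[c,d]<[a,b]$ iff $a_n*b$'', are the other spots where a careless application of Definition~3.40 or the Interval Addition Axiom could slip.
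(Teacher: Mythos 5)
Your proof is correct and follows essentially the same route as the paper: a reductio via Dedekind's Continuity Axiom, with the cut separating the sounds dominated by some multiple of $[c,d]$ (your $P$, the paper's $P$ defined through $[a,s]\cong n\cdot[c,d]$) from those beyond all multiples, and the contradiction obtained by laying one more copy of $[c,d]$ across the cut sound. The only differences are cosmetic — you phrase the lower class through the ladder $a_0*a_1*\cdots$ rather than through intervals, and you finish with a single contradiction ($a_j*a_j$) where the paper splits into the cases $\theta\in P$ and $\theta\in Q$.
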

\begin{proof}
Let $[a,b]$ and $[c,d]$ be greater than unit. Set
\begin{center}
$P=\{b'\in \mathbb{S}:\exists n\in \mathbb{N}(\exists s\in \mathbb{S}([a,s]\cong n\cdot[c,d]\wedge (b'*s\vee b'\sim s)))\}\cup \tilde{a}\cup \overset{\leftarrow}{a}.$
\end{center}
Thus, $P$ consists of all sounds lower-pitched than or identical-pitched with $a$ in addition to all sounds $b'$ so that the interval $[a,b']$ is less than or congruent to $n\cdot[c,d]$ for some natural number $n$. Now set $Q=P^c$ (the complement of $P$). If we prove that $Q=\emptyset$, we may in particular conclude that $b\in P$ and then we are done. So we assume the contrary. Since $\{P,Q\}$ constructs a partition of $\mathbb{S}$, we use item 2 of Lemma 3.50 to show that $\{P,Q\}$ is a cut of $\mathbb{S}$. If $p$ is any sound of $P$ and $x$ is lower in pitch than $p$, then obviously $x$ belongs to $P$ as well (the same $n\in \mathbb{N}$ and $s\in \mathbb{S}$ do work). Now by Dedekind's Continuity Axiom, a cut sound $\theta$ of $\{P,Q\}$ exists. Clearly, $P-\tilde{\theta}$ and $Q-\tilde{\theta}$ respectively consist of all sounds lower-pitched and higher-pitched than $\theta$ because $P$ contains an initial segment. There are only two cases. If $\theta \in P$, then by the definition of $P$ there are $n\in \mathbb{N}$ and $s\in \mathbb{S}$ such that $[a,s]\cong n\cdot[c,d]$ and $\neg(s*\theta)$. By the Axiom of Motion we obtain some $s'\in \mathbb{S}$ so that $[a,s']\cong (n+1)\cdot[c,d]$. Since $[s,s']\cong [c,d]$ by the Interval Subtraction, it follows that $s*s'$ (item 1 of the Axioms of Regularity). Thus $\theta *s'$, which implies $s'\in Q$, whereas $s'\in P$ by definition ($s'\sim s'$) --- a contradiction. If $\theta \in Q$, we pick a sound $t$ so that $[c,d]\cong [t,\theta]$ (by motion). So $t*\theta$ and in turn $t\in P$. Therefore, $m\in \mathbb{N}$ and $u\in \mathbb{S}$ would be found such that $[a,u]\cong m\cdot[c,d]$ and $\neg(u*t)$. On the other hand, there is a sound $u'$ so that $[a,u']\cong (m+1)\cdot[c,d]$. From the Interval Subtraction we deduce that $[u,u']\cong [c,d]$. By transitivity it follows that $[u,u']\cong [t,\theta]$. Based on Corollary 3.34, $\neg(u*t)$ yields $\neg(u'*\theta)$. Hence, $\theta \in P$ by the definition of $P$ --- a contradiction.
\end{proof}
\paragraph*{}
The following lemma essentially expresses the denseness of $\mathbb{D}\cdot \mathbb{I}$ in $\mathbb{I}$. For the sake of simplicity, an analogous proof will be supplied in Lemma 3.72, when we are subsequently equipped with the arithmetic of free intervals.
\begin{lemma}
Let $[a,b]$ and $[c,d]$ be greater than unit.
\begin{enumerate}
\item There is a natural number $n$ such that
\begin{center}
$(n-1)\cdot[c,d]<[a,b]\wedge ([a,b]<n\cdot[c,d]\vee [a,b]\cong n\cdot[c,d]).$
\end{center}
\item There is an integer $k$ such that
\begin{center}
$\dfrac{1}{2^k}\cdot[c,d]<[a,b]\wedge ([a,b]<\dfrac{1}{2^{k-1}}\cdot[c,d]\vee [a,b]\cong \dfrac{1}{2^{k-1}}\cdot[c,d]).$
\end{center}
\item If $[a,b]<[c,d]$, then for each non-unit interval $[x,y]$ there is a dyadic number $w$ such that $[a,b]<w\cdot [x,y]$ and $w\cdot [x,y]<[c,d]$.
\end{enumerate}
\end{lemma}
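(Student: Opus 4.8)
The plan is to extract all three parts from the \emph{Archimedean Property of Intervals}, the least-element principle for $\mathbb{N}$ (and for sets of integers bounded below), and the trichotomy in the \emph{Interval Ordering}.

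For item 1 I would consider $A=\{n\in\mathbb{N}:[a,b]<n\cdot[c,d]\text{ or }[a,b]\cong n\cdot[c,d]\}$. The Archimedean Property makes $A$ nonempty, and $0\notin A$ because $0\cdot[c,d]$ is unit while $[a,b]$ is greater than unit; hence $A$ has a least element $n\geq 1$. The second conjunct of the claim is just the membership $n\in A$, while minimality gives $n-1\notin A$, so trichotomy forces $(n-1)\cdot[c,d]<[a,b]$ (the case $n=1$ being read through the evident fact that a unit interval is below every interval greater than unit). Item 2 runs on the same template with powers of $\frac{1}{2}$. First I would record, from the monotonicity of the product in its scalar argument, that $k\mapsto\frac{1}{2^{k}}\cdot[c,d]$ is a strictly $<$-decreasing family of intervals greater than unit indexed by all of $\mathbb{Z}$; then I would put $B=\{k\in\mathbb{Z}:\frac{1}{2^{k}}\cdot[c,d]<[a,b]\}$. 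This $B$ is upward closed; it is nonempty because once $2^{k}$ dominates the Archimedean bound of $[c,d]$ over $[a,b]$ one has $[c,d]<2^{k}\cdot[a,b]$ and hence $\frac{1}{2^{k}}\cdot[c,d]<[a,b]$; and it is bounded below because for $k$ very negative $\frac{1}{2^{k}}\cdot[c,d]=2^{-k}\cdot[c,d]$ overtakes $[a,b]$, again by the Archimedean Property. So $B$ has a least element $k$, and $k-1\notin B$ together with trichotomy gives ``$[a,b]<\frac{1}{2^{k-1}}\cdot[c,d]$ or $[a,b]\cong\frac{1}{2^{k-1}}\cdot[c,d]$''.

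Item 3 carries the real weight. First I would reduce to the case $[x,y]\in\mathbb{I}^{>1}$: if instead $[x,y]\in\mathbb{I}^{<1}$, replace it by its conversion and look for a negative dyadic, using that the product commutes with conversion up to congruence. With $[x,y]$ greater than unit and $[a,b]<[c,d]$, I would unpack the hypothesis by Definition 3.40 to fix a sound $v$ with $a*b*v$ and $[a,v]\cong[c,d]$, so that the ``gap'' $[b,v]$ is greater than unit. Using item 2 (or the Archimedean Property directly) choose $k$ so large that the single step $\frac{1}{2^{k}}\cdot[x,y]$ is below both $[a,b]$ and $[b,v]$ in $<$. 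Let $n$ be least with $[a,b]<\frac{n}{2^{k}}\cdot[x,y]$; such an $n$ exists by the Archimedean Property applied to $\frac{1}{2^{k}}\cdot[x,y]$, and $n\geq 2$ since already the single step lies below $[a,b]$. By minimality $\frac{n-1}{2^{k}}\cdot[x,y]$ is less than or congruent to $[a,b]$, while $\frac{1}{2^{k}}\cdot[x,y]<[b,v]$; realising $\frac{n}{2^{k}}\cdot[x,y]$ as $\frac{n-1}{2^{k}}\cdot[x,y]$ with one further step laid after it (the recursive clause defining the product) and $[a,v]$ as $[a,b]$ with $[b,v]$ laid after it (the configuration $a*b*v$), the monotonicity of ``laying one interval after another'' in each slot yields $\frac{n}{2^{k}}\cdot[x,y]<[a,v]\cong[c,d]$. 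Combined with $[a,b]<\frac{n}{2^{k}}\cdot[x,y]$, the dyadic $w=\frac{n}{2^{k}}$ is as required.

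The part I expect to fight with is the whole of item 3, and in particular the fact that no primitive interval-valued ``addition'' or ``subtraction'' is available at this stage: every assertion ``$\frac{n}{2^{k}}\cdot[x,y]$ lies strictly between these two intervals'' has to be translated, through the Axiom of Motion, Definition 3.40, and the Generalization of Lemma 3.8, into statements about $*$ among the endpoints produced by the recursion defining the product. The small monotonicity facts used above, though each is only a few lines from the Generalization of Lemma 3.8 and the Interval Addition Axiom, must all be assembled by hand; once the arithmetic of free intervals is in place this collapses to the bare density of the dyadic rationals in the reals, which is precisely why the author prefers to give the clean version later as Lemma 3.72.
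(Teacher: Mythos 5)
Your argument is sound, but it is worth noting that the paper never proves this lemma in place: it explicitly defers, remarking that ``an analogous proof will be supplied in Lemma 3.72'' once the arithmetic of free intervals is available, and the proof actually given there works in the quotient $(\mathbb{I}/\cong,+,\prec)$, using the well-ordering principle for item 1, deriving item 2 from item 1 by comparing $n$ with powers of $2$, and, crucially for item 3, replacing your geometric gap by the group-theoretic difference $B-A$ together with the packaged monotonicity facts of Propositions 3.70--3.71. You instead work at the raw interval level: the same least-element/Archimedean scheme for items 1 and 2 (your item 2 is a direct least-element argument on a set of integers bounded below, slightly cleaner than the paper's case split on $n$), and for item 3 you extract a witness sound $v$ with $a*b*v$ and $[a,v]\cong[c,d]$ from Definition 3.40 and concatenate a dyadic multiple of $[x,y]$ inside the configuration. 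What your route buys is self-containedness at this point of the development and a genuinely strict inequality $[a,b]<w\cdot[x,y]$ (the paper's own proof of Lemma 3.72, as written, only yields $A\preceq w\cdot X$ on the left, since it takes $n$ with $A\preceq n\cdot(\tfrac{1}{2^k}\cdot X)$); what it costs is exactly what you flag: the monotonicity of ``laying one interval after another'' in each slot, the compatibility of the product with conversion up to congruence, and monotonicity of $\tfrac{1}{2^k}\cdot(-)$ in the interval argument are not stated propositions at this stage and must be assembled by hand from the Interval Addition/Subtraction results, Lemma 3.37, and Corollary 3.34 --- routine but nonzero work, which is precisely the bookkeeping the author avoids by postponing the proof. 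One further small point: in item 1 the case $n=1$ makes $(n-1)\cdot[c,d]$ unit, so the inequality must be read in the extended sense of $<$ on all of $\mathbb{I}$; this is a feature of the lemma's statement itself (the formal extension only comes with Definition 3.66 and Remark 3.85), and your reading of it is the intended one.
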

\begin{remark}
One can naturally generalize the Archimedean Property of Intervals in the following way; for every pair of non-unit intervals $[a,b]$ and $[c,d]$, there exists a nonzero integer $n$ such that $[a,b]<n\cdot[c,d]$.\\
Notice the relation $<$ is here considered between two intervals either greater than unit or less than unit which might intuitively be generalized for any two arbitrary non-unit intervals. Also, a non-uint analogue of Lemma 3.64 will automatically be satisfied, mutatis mutandis.
\end{remark}
\paragraph*{}
In order to more conveniently define a measure of intervals in a modern way (not to be confused with its intellectual parallel for integration in the context of measure theory \cite{R3}), we continue with a new technical concept supplied as follows instead of individual intervals, mainly because employing mathematical instruments is indeed the only recourse for conversational economy.
\begin{definition}
Every equivalence class of $\mathbb{I}/\cong$, equivalent to the set of all intervals congruent to a given interval $[a,b]$, is called a \emph{free interval}, denoted $\overline{[a,b]}$ (it is ordinarily designated by upper-case letters). We say that $\overline{[a,b]}$ is \emph{less than} $\overline{[c,d]}$, written $\overline{[a,b]}\prec \overline{[c,d]}$, whenever one of the following possibilities holds:
\begin{enumerate}
\item If $[c,d]$ is greater than unit, and $[a,b]$ is either less than unit, or unit, or else less than $[a,b]$.
\item If $[c,d]$ is unit and $[a,b]$ is less than unit.
\item If both $[a,b]$ and $[c,d]$ are less than unit and $[a,b]<[c,d]$.
\end{enumerate}
We say that $\overline{[a,b]}$ is \emph{less than or equal to} $\overline{[c,d]}$, written $\overline{[a,b]}\preceq \overline{[c,d]}$, if we have $\overline{[a,b]}\prec \overline{[c,d]}$ or $[a,b]\cong [c,d]$.
\end{definition}
\begin{corollary}
\begin{enumerate}
\item The relations $\prec$ is a well-defined strict ordering on $\mathbb{I}/\cong$.
\item $(\mathbb{I}/\cong \,,\preceq)$ is a linearly ordered set.
\end{enumerate}
\end{corollary}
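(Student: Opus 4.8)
The plan is to exhibit $(\mathbb{I}/\cong,\prec)$ as the ordered sum of three strictly ordered blocks --- the free intervals lying in $\mathbb{I}^{<1}$, the single free interval of unit intervals, and the free intervals lying in $\mathbb{I}^{>1}$, arranged in that order --- and then to read off both assertions from the fact that an ordered sum of strict linear orders is again a strict linear order. First I would settle well-definedness. The three properties ``$[a,b]$ is less than unit'', ``$[a,b]$ is unit'', ``$[a,b]$ is greater than unit'' are mutually exclusive and exhaustive (apply Corollary 2.4 to the endsounds, via Definition 3.9) and each is $\cong$-invariant by item 1 of the Generalization of Axioms of Regularity, together with Proposition 3.16 for the unit case. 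The ordering $<$ on $\mathbb{I}^{>1}$ descends to $\mathbb{I}^{>1}/\cong$ by items 2 and 3 of the Interval Ordering and Symmetry of Congruence, where it is a strict linear order by Corollary 3.43 and item 1 of the Interval Ordering. The conversion map $\overline{[a,b]}\mapsto\overline{[a,b]^{-1}}$ is a well-defined, order-reversing bijection $\mathbb{I}^{<1}/\cong\,\to\,\mathbb{I}^{>1}/\cong$ (Definition 3.9, Definition 3.44 and item 2 of Corollary 3.17), so $(\mathbb{I}^{<1}/\cong,<)$ is a strict linear order as well. Since each clause of Definition 3.66 is assembled only from these $\cong$-invariant data, $\prec$ descends to a well-defined relation on $\mathbb{I}/\cong$.

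Next I would reread Definition 3.66 as follows: $U\prec V$ holds exactly when either $U$ and $V$ lie in different blocks with $U$'s block preceding $V$'s block in the fixed three-term chain $[\text{less than unit}]<[\text{unit}]<[\text{greater than unit}]$, or $U$ and $V$ lie in the same non-unit block and $U<V$ there (the unit block is a singleton by Proposition 3.16, so nothing is asserted inside it). From this description irreflexivity is immediate: a free interval neither has its block precede itself nor is strictly below itself within its block, using the irreflexivity halves of Corollary 3.43 and its conversion analogue. Transitivity is then the routine ordered-sum verification, invoking transitivity of the three-element chain, item 4 of the Interval Ordering for the greater-than-unit block, and the conversion-transported version of item 4 for the less-than-unit block. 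This establishes part 1.

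For part 2, trichotomy for $\prec$ is again the ordered-sum argument. Given free intervals $U,V$: if they lie in different blocks then exactly one block precedes the other, so exactly one of $U\prec V$, $V\prec U$ holds and $U\neq V$; if they lie in the unit block then $U=V$ by Proposition 3.16; and if they lie in the same non-unit block then exactly one of $U<V$, $U=V$, $V<U$ holds by item 1 of the Interval Ordering (directly for $\mathbb{I}^{>1}/\cong$, and through the conversion isomorphism for $\mathbb{I}^{<1}/\cong$). Hence $\prec$ is a strict total order, and $\preceq$ --- defined as $\prec$ together with equality --- is reflexive, antisymmetric, transitive and total, so $(\mathbb{I}/\cong,\preceq)$ is a linearly ordered set. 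The only ingredient beyond bookkeeping is the transfer of the order-theoretic facts about $\mathbb{I}^{>1}$ to $\mathbb{I}^{<1}$ via conversion; I expect that to be the single point requiring genuine, if modest, care, everything else being a mechanical unwinding of the ordered-sum picture.
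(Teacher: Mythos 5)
Your argument is correct and is essentially the proof the paper intends: the paper's justification of this corollary is only the one-line sketch ``Based on the Interval Ordering'' (Proposition 3.41), and your three-block ordered-sum reading of Definition 3.66, with congruence-invariance of the blocks and of the descended orders supplied by Corollary 3.17, Proposition 3.16, and items 1--4 of the Interval Ordering (transported to $\mathbb{I}^{<1}/\cong$ via conversion), is the natural way to fill that sketch in. The only slips are bibliographical: what you cite as Corollary 3.43 and Definition 3.44 are Corollary 3.42 and Definition 3.43 in the paper's numbering.
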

\begin{proof}[Sketch of Proof]
Based on the Interval Ordering.
\end{proof}
\paragraph*{}
Having been inspired by the Interval Addition Axiom, one may define an operation on $\mathbb{I}$ as follows.
\begin{definition}
We define the \emph{sum} of free intervals $\overline{[a,b]}$ and $\overline{[c,d]}$, denoted $\overline{[a,b]}+\overline{[c,d]}$, to be the free interval $\overline{[x,y]}$ if there exists a sound $z$ such that $[x,z]\cong [a,b]$ and $[z,y]\cong [c,d]$.
\end{definition}
\begin{remark}
Having noticed from the algebraic point of view (read \cite{H6}), one can check that $+:(\mathbb{I}/\cong)\times(\mathbb{I}/\cong)\rightarrow \mathbb{I}/\cong$ is a binary operation on $\mathbb{I}/\cong$. Moreover, $(\mathbb{I}/\cong,+)$ is a commutative (or abelian) group whose unique identity element is the class of unit intervals and the inverse element of every $\overline{[a,b]}$ is the class of all intervals congruent to the conversion of $[a,b]$, i.e. $\overline{[b,a]}$. As expected, $n\cdot\overline{[a,b]}=\overline{n\cdot[a,b]}$ for any integer $n$. According to Corollary 2.18, the order of $(\mathbb{I}/\cong,+)$ is at least countable. On the other hand, $(\mathbb{D},+,.)$ is a commutative ring with identity $1$ (and zero element $0$) which is also an integral domain but not a division ring (and a fortiori a field) because the set of all invertible elements is $\{\pm2^k:k\in \mathbb{Z}\}$ strictly contained in $\mathbb{D}-\{0\}$. Now, we define the function $\cdot:\mathbb{D}\times (\mathbb{I}/\cong)\rightarrow \mathbb{I}/\cong$ by $w\cdot \overline{[a,b]}=\overline{w\cdot [a,b]}$. One can check that $(\mathbb{I}/\cong \,,+,\cdot)$ is a unitary left (and right) $\mathbb{D}$-module but not a vector space. We shall prove this $\mathbb{D}$-module is isomorphic to $(\mathbb{R}^+,\cdot)$ as an $\mathbb{R}^+$-module. As a result, the cardinality of $\mathbb{I}/\cong$ will have been equal to $2^{\aleph_0}$ (Proposition 3.81).
\end{remark}
Based on Definition 3.66, we have $\overline{[a,b]}+\overline{[b,c]}=\overline{[a,c]}$ as a correct equation for any sounds $a$, $b$, and $c$, which actually exhibits the Generalization of Interval addition Axiom. Although we are deprived of such an interesting fact known as the \textit{triangle inequality} in this context (because of one-dimensionality of pitch), the following propositions establish basic relationships between the order $\prec$ and the operation $+$ making proper provisions for producing a nice idea of measuring musical intervals.
\begin{proposition}
For any positive dyadic numbers $w$ and $v$ and for any $A$, $B$ and $C$ that $0\prec A$ and $0\prec B$,
\begin{enumerate}
\item $w<v$ iff $w\cdot A\prec v\cdot A$.
\item $A\prec B$ iff $w\cdot A\prec w\cdot B$ iff $A+C\prec B+C$.
\end{enumerate}
\end{proposition}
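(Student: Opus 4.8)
The plan is to route both items through two facts and then let the $\mathbb{D}$-module structure of $(\mathbb{I}/\cong,+,\cdot)$ recorded in the remark following the definition of the sum do the rest. The first fact is \emph{sign-preservation}: for a positive dyadic $w$ and any interval $[a,b]$, the interval $w\cdot[a,b]$ is greater than unit, unit, or less than unit exactly as $[a,b]$ is; equivalently $0\prec w\cdot D\Leftrightarrow 0\prec D$ for every free interval $D$, and likewise with $=$ and $\succ$ in place of $\prec$. This is already implicit in the propositions on integer and dyadic multiplication (the orderings appearing there only typecheck because a positive multiple of a non-unit interval is again non-unit on the correct side, and item 5 of Proposition 3.60 is the unit case); if one wants it self-contained it is a routine double induction on the recursion defining $w\cdot[a,b]$, using that a midsound of a non-unit interval lies strictly between its endsounds and using the Axioms of Regularity. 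The second fact is a \emph{base-point correspondence}: fixing any $a\in\mathbb{S}$, one has $\overline{[a,c]}\prec\overline{[a,c']}$ if and only if $c*c'$, for all $c,c'\in\mathbb{S}$; equivalently $\tilde x\mapsto\overline{[a,x]}$ is an order isomorphism of $(\mathbb{S}/\sim,*')$ onto $(\mathbb{I}/\cong,\preceq)$.

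Granting these, item 1 is immediate: since $0\prec A$ we may pick a representative $A=\overline{[a,b]}$ with $a*b$, and then $w\cdot A=\overline{w\cdot[a,b]}$ and $v\cdot A=\overline{v\cdot[a,b]}$ are classes of intervals greater than unit by sign-preservation, so the ``both above unit'' clause of Definition 3.66 gives $w\cdot A\prec v\cdot A\Leftrightarrow w\cdot[a,b]<v\cdot[a,b]$, which is the ordering clause for multiplication by a positive dyadic (item 3 of Proposition 3.62), i.e.\ $w<v$. For item 2 I would first prove translation-invariance. Using the Axiom of Motion, choose representatives $A=\overline{[a,b]}$ and $B=\overline{[a,b']}$ with a common first endsound, and then $c,c'$ with $[b,c]\cong[b',c']$ both representing $C$; by the identity $\overline{[p,q]}+\overline{[q,r]}=\overline{[p,r]}$ we get $A+C=\overline{[a,c]}$ and $B+C=\overline{[a,c']}$. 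The base-point correspondence turns $A\prec B$ into $b*b'$ and $A+C\prec B+C$ into $c*c'$, and $b*b'\Leftrightarrow c*c'$ by Corollary 3.34 applied to $[b,c]\cong[b',c']$; hence $A\prec B\Leftrightarrow A+C\prec B+C$ (positivity of $A,B$ is not actually needed here). Now combine this with the group law $-\overline{[a,b]}=\overline{[b,a]}$ and the $\mathbb{D}$-linearity of the product from the same remark: $A\prec B\Leftrightarrow 0\prec B-A$ and $w\cdot A\prec w\cdot B\Leftrightarrow 0\prec w\cdot B-w\cdot A=w\cdot(B-A)$, and $0\prec w\cdot(B-A)\Leftrightarrow 0\prec B-A$ by sign-preservation; this closes the chain $A\prec B\Leftrightarrow w\cdot A\prec w\cdot B\Leftrightarrow A+C\prec B+C$.

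The base-point correspondence is the one step that needs genuine care, so I expect it to be the main obstacle. When $[a,c]$ and $[a,c']$ are both greater than unit it is the easy case: $\overline{[a,c]}\prec\overline{[a,c']}$ means $[a,c]<[a,c']$, and by Definition 3.40 together with Lemma 3.8 (a congruence $[a,c]\cong[a,x]$ with a common first endsound forces $c\sim x$) this is exactly $c*c'$. When both are less than unit, $\prec$ is defined through conversions (Definition 3.43), so one must chase the congruence $[c',a]\cong[c,x]$ through Corollary 3.34 to extract $c*c'$ from $x*a$, taking care which direction of the existence-and-uniqueness in the Axiom of Motion is being invoked. The mixed cases (one side above unit, the other below or equal to unit) and the unit cases are forced directly by the clauses of Definition 3.66, using Lemma 2.9 to compare an endsound of a unit interval with a non-identical sound. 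Everything after the correspondence lemma is bookkeeping with the module axioms and the ordering facts already established.
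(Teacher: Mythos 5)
Your argument is correct, and there is nothing in the paper to measure it against: Proposition 3.70 is one of the statements whose proof the paper deliberately omits, so the comparison can only be with the route the surrounding material suggests. Judged on its own, your two pillars are sound and use only tools available at this point. Sign-preservation under positive dyadic scaling is indeed a routine induction on Definition 3.58, using Proposition 3.45 (a midsound of a non-unit interval is between-pitched its endsounds) and item 1 of the Axioms of Regularity, and, as you note, it is tacitly presupposed whenever Propositions 3.60 and 3.62 write down $<$ between multiples. The base-point correspondence $\tilde x\mapsto\overline{[a,x]}$ being an order isomorphism follows from the Axiom of Motion, Lemma 3.8, Corollary 3.34 and a clause-by-clause reading of Definition 3.66 (where you correctly read the evident typo ``less than $[a,b]$'' as $[a,b]<[c,d]$), exactly along the case split you describe. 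Item 1 then collapses onto item 3 of Proposition 3.62, and item 2 onto translation invariance (via $\overline{[p,q]}+\overline{[q,r]}=\overline{[p,r]}$ and Corollary 3.34 applied to $[b,c]\cong[b',c']$) combined with the identities $w\cdot(B-A)=w\cdot B-w\cdot A$; leaning on Remark 3.69 for the latter is legitimate, since the paper asserts the $\mathbb{D}$-module structure there, though a fully self-contained write-up would verify those distributivity facts too, as they are only stated with ``one can check.'' Compared with the direct case analysis from Definition 3.66, the Interval Ordering and Lemma 3.39 that the paper presumably had in mind, your packaging buys a uniform treatment of the unit and mixed-sign cases, an order isomorphism $(\mathbb{S}/\sim,*')\cong(\mathbb{I}/\cong,\preceq)$ that is independently useful later, and the worthwhile observation that translation invariance requires no positivity hypothesis.
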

\begin{proposition}
For any $A, B, C, D\in \mathbb{I}/\cong$,
\begin{enumerate}
\item $A\prec B$ iff there is an interval $[a,b]$ greater than unit so that $A+\overline{[a,b]}=B$.
\item if $A\prec B$ and $C\prec D$, then $A+C\prec B+D$.
\end{enumerate}
\end{proposition}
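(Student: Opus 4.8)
The plan is to get item 2 almost for free from item 1 together with the observation that the sum of two intervals greater than unit is again (the class of) an interval greater than unit; so the substance is item 1. For item 1 I would exploit the abelian group structure on $\mathbb{I}/\cong$ (Remark 3.68) by representing both $A$ and $B$ using a common first sound via the Axiom of Motion, which turns the rather case-heavy ordering $\prec$ of Definition 3.66 into the plain pitch order on $\mathbb{S}$.

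Concretely, for item 1 fix once and for all a sound $s$ (one exists by the Axiom of Existence). By the Axiom of Motion there are sounds $a$ and $b$, unique up to identity, with $\overline{[s,a]}=A$ and $\overline{[s,b]}=B$. The core claim is the equivalence
\[
\overline{[s,a]}\prec\overline{[s,b]}\iff a*b .
\]
I would prove this by unwinding Definition 3.66 in the three cases according as $[s,b]$ is greater than unit, unit, or less than unit. When $s*b$, the alternative $[s,a]<[s,b]$ unravels via Definition 3.40 and the Generalization of Lemma 3.8 to $s*a*b$, while the ``$[s,a]$ unit'' and ``$[s,a]$ less than unit'' alternatives give $a\sim s$ or $a*s$; in every alternative, combining with $s*b$ through Axiom 2 and Lemma 2.9 yields $a*b$, and conversely $a*b$ places us in one of the three alternatives. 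The case $s\sim b$ is immediate from Lemma 2.9. When $b*s$, Definition 3.43 reduces the question to $[b,s]<[a,s]$ in $\mathbb{I}^{>1}$ (the requirement ``$[s,a]$ less than unit'' being subsumed), and this is again equivalent to $a*b$ by Corollary 3.34. Granting the claim: if $A\prec B$ then $a*b$, so $[a,b]$ is an interval greater than unit, and by Definition 3.67 (take $a$ as the middle sound) $A+\overline{[a,b]}=\overline{[s,a]}+\overline{[a,b]}=\overline{[s,b]}=B$. Conversely, if $A+\overline{[c,d]}=B$ for some interval $[c,d]$ greater than unit, then writing $A=\overline{[s,a]}$ forces (Definition 3.67, plus well-definedness of $+$ from Remark 3.68) that $B=\overline{[s,b]}$ with $[a,b]\cong[c,d]$; hence $a*b$ by the Generalization of Axioms of Regularity, so $A\prec B$ by the claim.

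For item 2, suppose $A\prec B$ and $C\prec D$. By item 1 pick intervals $[a,b]$ and $[c,d]$, both greater than unit, with $B=A+\overline{[a,b]}$ and $D=C+\overline{[c,d]}$. Since $(\mathbb{I}/\cong,+)$ is commutative (Remark 3.68),
\[
B+D=(A+C)+\bigl(\overline{[a,b]}+\overline{[c,d]}\bigr).
\]
By Definition 3.67, $\overline{[a,b]}+\overline{[c,d]}=\overline{[a,y]}$ where $y$ is the sound with $[b,y]\cong[c,d]$; then $b*y$ by the Generalization of Axioms of Regularity, and since $a*b$ (Definition 3.9) transitivity gives $a*y$, so $[a,y]$ is an interval greater than unit. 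Applying the ``if'' direction of item 1 to $A+C$ and $B+D=(A+C)+\overline{[a,y]}$ gives $A+C\prec B+D$.

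The only laborious point is the case check for $\overline{[s,a]}\prec\overline{[s,b]}\iff a*b$, since it forces one to open up Definitions 3.40 and 3.43 of the orders on $\mathbb{I}^{>1}$ and $\mathbb{I}^{<1}$; once that is in hand the rest is pure group bookkeeping. A tempting shortcut --- derive general additive monotonicity $X\prec Y\Leftrightarrow A+X\prec A+Y$ and read item 1 off $X\prec Y\Leftrightarrow 0\prec Y-X$ --- does not go through directly, because Proposition 3.70 supplies monotonicity only for positive free intervals, so passing to representatives is the safer route.
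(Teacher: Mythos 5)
Your proof is correct, and in fact the paper supplies no proof of this proposition at all (it is one of the statements left to the reader), so there is nothing to compare it against; your argument fits squarely within the paper's toolkit. The key reduction — normalizing $A=\overline{[s,a]}$, $B=\overline{[s,b]}$ to a common initial sound via the Axiom of Motion and proving $\overline{[s,a]}\prec\overline{[s,b]}\Leftrightarrow a*b$ by unwinding Definition 3.66 in the three cases, with Corollary 3.34 handling the $\mathbb{I}^{<1}$ case through Definition 3.43 — is sound, and the passage from there to both items is indeed routine group bookkeeping; your caution that Proposition 3.70 states additive monotonicity only for positive free intervals, so the ``shortcut'' is not immediate, is also well taken. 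Two citation slips to fix before this could stand in the text: the sum of free intervals is Definition 3.68 (not 3.67) and the group-structure remark is Remark 3.69 (not 3.68).
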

\begin{lemma}
Let $A$ and $B$ be any free intervals with $0\prec A$ and $0\prec B$.
\begin{enumerate}
\item There is a natural number $n$ such that
\begin{center}
$(n-1)\cdot B\prec A\preceq n\cdot B.$
\end{center}
\item There is an integer $k$ such that
\begin{center}
$\dfrac{1}{2^k}\cdot B\prec A\preceq \dfrac{1}{2^{k-1}}\cdot B.$
\end{center}
\item If $A\prec B$, then for each nonzero free interval $X$ there is a dyadic number $w$ such that
\begin{center}
$A\prec w\cdot X\prec B$.
\end{center}
\end{enumerate}
\end{lemma}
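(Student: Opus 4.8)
The plan is to dispatch the three items in the order given, using as main tools the Archimedean Property of Intervals (Proposition 3.62 and its integer form in Remark 3.65), the monotonicity of dyadic scaling (Propositions 3.61 and 3.63, read off on free intervals via Remark 3.68, together with Proposition 3.70 item 1), and the compatibility of $\prec$ with $+$ (Proposition 3.70 item 2 and Proposition 3.71). All the arithmetic takes place in the abelian group $(\mathbb{I}/\cong,+)$ and the $\mathbb{D}$-module of Remark 3.68, so that $B-A$ is meaningful and $m\cdot(\tfrac{1}{2^{k}}\cdot X)=\tfrac{m}{2^{k}}\cdot X$. For item 1, I would look at $S=\{m\in\mathbb{N}:A\preceq m\cdot B\}$. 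Applying the Archimedean Property to representatives of $A$ and $B$ (both greater than unit because $0\prec A$, $0\prec B$) shows $S\neq\emptyset$, and $0\notin S$ since $0\cdot B=0\prec A$; so by well-ordering $n:=\min S\geq 1$, giving $A\preceq n\cdot B$, while $n-1\notin S$ forces $(n-1)\cdot B\prec A$ by linearity of $\preceq$ (Corollary 3.67).

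For item 2, the idea is to trap $A$ between two consecutive powers-of-two multiples of $B$. I would show that $K=\{k\in\mathbb{Z}:B\prec 2^{k}\cdot A\}$ is nonempty and bounded below: nonemptiness follows from item 1 (pick $n$ with $B\preceq n\cdot A$, then $k$ with $n\leq 2^{k-1}$, so $B\preceq n\cdot A\prec 2^{k}\cdot A$ by monotonicity of $t\mapsto 2^{t}\cdot A$, valid since $0\prec A$); for boundedness, item 1 again gives $n'$ with $A\preceq n'\cdot B$, and choosing $l$ with $n'\leq 2^{l}$ yields $2^{-l}\cdot A\preceq B$, so $-l\notin K$, and since $K$ is upward closed it cannot contain any integer $\leq -l$. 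Then $k:=\min K$ satisfies $2^{k-1}\cdot A\preceq B\prec 2^{k}\cdot A$; multiplying these by the positive dyadics $\tfrac{1}{2^{k-1}}$ and $\tfrac{1}{2^{k}}$ respectively (Proposition 3.70 item 1, with Proposition 3.63 to simplify $\tfrac{1}{2^{j}}\cdot(2^{j}\cdot A)=A$) gives exactly $\tfrac{1}{2^{k}}\cdot B\prec A\preceq\tfrac{1}{2^{k-1}}\cdot B$.

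For item 3, fix a nonzero free interval $X$; since $(\mathbb{I}/\cong,\preceq)$ is linearly ordered, either $0\prec X$ or $X\prec 0$, and by replacing $X$ with its conversion class $-X$ and negating the scalar at the end I may assume $0\prec X$. Put $C:=B-A$, which is $\succ 0$ by Proposition 3.71 item 1 because $A\prec B$. By item 2 applied to $C$ and $X$ there is a positive dyadic $v$ with $v\cdot X\prec C$. By the Archimedean Property in its integer form, applied to $A$ and $v\cdot X$ (note $v\cdot X\succ 0$), the set $\{j\in\mathbb{N}:A\prec j\cdot(v\cdot X)\}$ is nonempty; it omits $0$, so it has a least element $m\geq 1$. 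Set $w:=mv$, a positive dyadic, so that $w\cdot X=m\cdot(v\cdot X)$ and $A\prec w\cdot X$. Minimality of $m$ gives $(m-1)\cdot(v\cdot X)\preceq A$, whence, using the $+$-monotonicity of $\prec$ and $\preceq$ (Proposition 3.70 item 2) and $A+C=B$,
\[ w\cdot X=(m-1)\cdot(v\cdot X)+v\cdot X\preceq A+v\cdot X\prec A+C=B, \]
so $A\prec w\cdot X\prec B$, as required.

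I expect item 3 to be the only real obstacle: items 1 and 2 are routine least-element arguments once dyadic scaling is known to be monotone, whereas item 3 requires a single scalar $w$ to witness both strict inequalities simultaneously. The decisive move is to first make the step size $v\cdot X$ strictly smaller than the gap $C=B-A$ — which is precisely what the small-dyadic form in item 2 (equivalently, the Archimedean property used in its reciprocal direction) provides — and only then to climb from $0$ in steps of $v\cdot X$ until one first passes $A$; the last step before passing $A$ is $\preceq A$, so adding one further step of size $v\cdot X\prec C$ still lands strictly below $A+C=B$.
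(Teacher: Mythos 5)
Your proof is correct, and it is essentially the paper's strategy with two worthwhile deviations. Item 1 is the paper's argument verbatim (Archimedean Property plus well-ordering). For item 2 you differ in organization: the paper routes through item 1, taking $n$ with $(n-1)\cdot B\prec A\preceq n\cdot B$ and then the least power of two above $n$, which forces a three-way case split ($A=B$, $n>1$, $n=1$); you instead minimize directly over $K=\{k\in\mathbb{Z}:B\prec 2^{k}\cdot A\}$, which is case-free at the small cost of checking that $K$ is nonempty and bounded below (which you do correctly via item 1 and dyadic monotonicity). For item 3 both proofs use the same two-step idea --- first make the step $v\cdot X$ strictly smaller than the gap $B-A$ via item 2, then climb past $A$ --- but your choice of the least $m$ with the \emph{strict} inequality $A\prec m\cdot(v\cdot X)$ is a genuine improvement in precision: the paper takes the least $n$ with $A\preceq n\cdot(\tfrac{1}{2^{k}}\cdot X)$, so its displayed chain only yields $A\preceq w\cdot X\prec B$, and in the degenerate case where $A$ is exactly a dyadic multiple of $X$ the paper's $w$ does not literally witness the strict left-hand inequality, whereas yours does (and your minimality still gives $(m-1)\cdot(v\cdot X)\preceq A$, which is all the right-hand estimate needs). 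In short, your route buys a cleaner item 2 and a strictly correct item 3; the paper's buys a more economical reuse of item 1.
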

\begin{proof}
According to the well-ordering principle of $(\mathbb{N},\leq)$ and the Archimedean Property of Intervals, item 1 is done (take the minimum element of the set of all natural numbers $n$ so that $A\preceq n\cdot B$). For item 2, if $A=B$, then $k=0$ does the job. Otherwise, by the first item there is a natural number $n$ such that $(n-1)\cdot B\prec A\preceq n\cdot B$. We take $k=\min\{k\in \mathbb{Z}:n\leq 2^k\}$. If $n>1$, by item 1 of Proposition 3.70 we obtain $2^{k-1}\cdot B\prec A\preceq 2^k\cdot B$. So the desired integer is $1-k$ in this case. If $n=1$ (equivalently, $A\preceq B$), we may get a natural number $m$ so that $(m-1)\cdot A\preceq B\prec m\cdot A$ (a modification of item 1). We then take $k'=\min\{k'\in \mathbb{Z}:m\leq 2^{k'}\}$. By item 1 of Proposition 3.70, from $B\prec m\cdot A$ it follows that $(1/2^{k'})\cdot B\prec A$, and since $m>1$, we obtain $2^{k'-1}\cdot A\preceq (m-1)\cdot A\preceq B$ from which it follows that $A\preceq (1/2^{k'-1})\cdot B$. Hence $k'$ does work in this case. For item 3, we first assume that $0\prec X$ and consider the free interval $B-A=B+(-A)$. Since $0\prec B-A$ (item 1 of Proposition 3.71), from the second item it follows that there is an integer $k$ such that $(1/2^k)\cdot X\prec B-A$. Also, by the first item there is a natural number $n$ such that $(n-1)\cdot ((1/2^k)\cdot X)\prec A\preceq n\cdot ((1/2^k)\cdot X)$. Finally, by item 2 of Proposition 3.71 we obtain
\begin{center}
$A\preceq n\cdot (\dfrac{1}{2^k}\cdot X)=(n-1)\cdot (\dfrac{1}{2^k}\cdot X)+\dfrac{1}{2^k}\cdot X\prec A+(B-A)=B.$
\end{center}
So the dyadic number we are seeking is $w=n/2^k$. If the assumption $X\prec 0$ holds, this same argument gives a dyadic number $w$ for $0\prec -X$ and $-w$ does the job obviously.
\end{proof}
\paragraph*{}
Now, in order to introduce the notion of \textit{measure}, for the sake of simplicity we shall deal with positive free intervals, i.e. free intervals whose representatives are greater than unit (see item 1 of Proposition 3.71), and start with $\mathbb{I}^{>1}/\cong$ as a semigroup.
\begin{definition}
By a \emph{semi-measure of free intervals} we mean any function $\phi$ from $\mathbb{I}^{>1}/\cong$ to $\mathbb{R}^{>1}$ which satisfies the following property:
\begin{center}
$\forall A,B\in \dfrac{\mathbb{I}^{>1}}{\cong}(\phi(A+B)=\phi(A)\phi(B)).$
\end{center}
\end{definition}
\begin{proposition}
If $\phi$ is a semi-measure of free intervals, then for any free intervals $A$ and $B$ and for every positive dyadic number $w$,
\begin{enumerate}
\item $A\prec B$ if and only if $\phi(A)<\phi(B)$.
\item $\phi(w\cdot A)=\phi(A)^w.$
\end{enumerate}
\end{proposition}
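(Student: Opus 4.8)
The plan is to handle the two items in turn, with $A$ and $B$ understood to range over the domain $\mathbb{I}^{>1}/\cong$ of $\phi$. A small preliminary point is that $w\cdot A$ again lies in this domain for every positive dyadic $w$: a midsound of an interval greater than unit is between-pitched its endsounds (Proposition 3.45), so $\tfrac{1}{2}\cdot A$ is positive, hence so is every $\tfrac{1}{2^k}\cdot A$ by induction, and a positive integer multiple of a positive free interval is positive; thus $\phi(w\cdot A)$ is defined.

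For item 1, the forward implication is direct: if $A\prec B$, then item 1 of Proposition 3.71 supplies an interval $[a,b]$ greater than unit with $A+\overline{[a,b]}=B$, so the defining identity of a semi-measure gives $\phi(B)=\phi(A)\,\phi(\overline{[a,b]})$; since $\overline{[a,b]}\in\mathbb{I}^{>1}/\cong$ forces $\phi(\overline{[a,b]})>1$, we get $\phi(A)<\phi(B)$. For the converse I would use that $\preceq$ is a linear order (item 2 of Corollary 3.67): if $\phi(A)<\phi(B)$ while $A\not\prec B$, then either $A=B$ as free intervals, so $\phi(A)=\phi(B)$, or $B\prec A$, so the forward implication gives $\phi(B)<\phi(A)$ --- both impossible.

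For item 2, I would climb from $\mathbb{N}$ to the positive dyadics in three steps, each using the homomorphism property of $\phi$ together with the product arithmetic of Propositions 3.60--3.61. First, induction on $n$ with $(n+1)\cdot A=n\cdot A+A$ (which descends from Definition 3.58 via $\overline{[x,z]}+\overline{[z,y]}=\overline{[x,y]}$) gives $\phi(n\cdot A)=\phi(A)^n$ for all $n\in\mathbb{N}$. Second, from $2^k\cdot(\tfrac{1}{2^k}\cdot A)=A$ (item 3 of Proposition 3.60) the first step yields $\phi(A)=\phi(\tfrac{1}{2^k}\cdot A)^{2^k}$, so extracting the unique positive $2^k$-th root in $\mathbb{R}^{>1}$ gives $\phi(\tfrac{1}{2^k}\cdot A)=\phi(A)^{1/2^k}$. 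Third, writing $w=n/2^k$ and using $\tfrac{n}{2^k}\cdot A=n\cdot(\tfrac{1}{2^k}\cdot A)$ (Corollary 3.61), the previous two steps give $\phi(w\cdot A)=\big(\phi(A)^{1/2^k}\big)^n=\phi(A)^{n/2^k}=\phi(A)^w$.

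I do not anticipate a serious obstacle: the whole argument is a transcription of the abelian-group and $\mathbb{D}$-module arithmetic already established, carried through the multiplicative map $\phi$. The only points needing a moment's care are the domain check in the first paragraph and the appeal to strict monotonicity of real exponentiation with base greater than $1$, which both legitimizes the root extraction and guarantees the root is the one written.
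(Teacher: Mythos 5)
Your argument is correct and follows essentially the same route as the paper's sketch: item 1 via item 1 of Proposition 3.71 combined with the linearity of $\preceq$ (Corollary 3.67), and item 2 by induction from the multiplicativity of $\phi$, extended to dyadics through the product arithmetic of Propositions 3.60--3.61. Your explicit domain check that $w\cdot A$ stays in $\mathbb{I}^{>1}/\cong$ and the remark on unique positive root extraction are welcome details the paper leaves tacit.
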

\begin{proof}[Sketch of Proof]
Item 1 is straightforward according to item 1 of Proposition 3.71 and Corollary 3.67. Item 2 follows by induction and according to the definition of $\phi$.
\end{proof}
\begin{corollary}
Every semi-measure of free intervals is one-to-one.
\end{corollary}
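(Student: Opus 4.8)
The plan is to derive injectivity directly from the strict order-monotonicity already recorded in item~1 of Proposition~3.75, using the fact that $\prec$ is a \emph{linear} strict order on $\mathbb{I}^{>1}/\cong$ (a consequence of Corollary~3.67, which in turn rests on the Interval Ordering). Thus essentially no new work is needed beyond assembling these two facts.

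First I would take two free intervals $A,B\in \mathbb{I}^{>1}/\cong$ with $A\neq B$, i.e.\ (unwinding the definition of free interval) two representatives that are \emph{not} congruent. By item~1 of the Interval Ordering applied to these representatives — equivalently, by linearity of $\prec$ from Corollary~3.67 — we get trichotomy: either $A\prec B$ or $B\prec A$. In the first case item~1 of Proposition~3.75 yields $\phi(A)<\phi(B)$; in the second case it yields $\phi(B)<\phi(A)$. Either way $\phi(A)\neq\phi(B)$, so $\phi$ is one-to-one.

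There is no genuine obstacle here; the only point worth flagging is that one must invoke \emph{linearity} of $\prec$ (not merely that it is a strict ordering) so that distinctness of $A$ and $B$ forces one of them to be $\prec$ the other — this is exactly what Corollary~3.67, item~2, supplies. One could alternatively phrase the argument contrapositively: if $\phi(A)=\phi(B)$ then neither $\phi(A)<\phi(B)$ nor $\phi(B)<\phi(A)$, so by the ``only if'' direction of item~1 of Proposition~3.75 neither $A\prec B$ nor $B\prec A$, whence $A=B$ by trichotomy. Both phrasings are a line or two; I would present the direct one.
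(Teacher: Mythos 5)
Your argument is correct and is exactly the route the paper intends: injectivity is an immediate consequence of the equivalence $A\prec B \Leftrightarrow \phi(A)<\phi(B)$ together with trichotomy of $\prec$ on $\mathbb{I}^{>1}/\cong$ (Corollary 3.67, resting on the Interval Ordering). The only slip is a reference: the monotonicity statement you invoke is item 1 of Proposition 3.74, not 3.75 (3.75 is the corollary itself).
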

\begin{proposition}
Every semi-measure of free intervals is onto.
\end{proposition}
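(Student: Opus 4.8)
The idea is to transport Dedekind's Continuity Axiom from $\mathbb{S}$ to $\mathbb{I}^{>1}/\cong$ and combine it with the fact that the defining functional equation forces $\phi(w\cdot A)=\phi(A)^{w}$ for every positive dyadic $w$ (item~2 of Proposition 3.75). Fix once and for all a sound $a$ and, using item~1 of the Axiom of Betweenness in Pitch, a sound $b$ with $a*b$; write $U=\overline{[a,b]}$ and $u=\phi(U)>1$. By the Axiom of Motion together with the Axioms of Regularity, the map $\tilde s\mapsto\overline{[a,s]}$ (for sounds $s$ with $a*s$) is a bijection from $\overset{\rightarrow}{a}/\sim$ onto $\mathbb{I}^{>1}/\cong$, and it is an order isomorphism onto $(\mathbb{I}^{>1}/\cong,\prec)$: $\overline{[a,s]}\prec\overline{[a,s']}$ iff $[a,s]<[a,s']$ iff $s*s'$, by Definition 3.40 and the uniqueness clause of the Axiom of Motion. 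So it suffices to show that for every real $r>1$ there is a sound $s$ with $a*s$ and $\phi(\overline{[a,s]})=r$.

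Fix $r>1$ and put $P=\{s\in\mathbb{S}:\,s*a\vee s\sim a\vee(a*s\wedge\phi(\overline{[a,s]})<r)\}$ and $Q=\mathbb{S}\setminus P$. Then $\{P,Q\}$ is a partition of $\mathbb{S}$ by Corollary 2.4, $P\neq\emptyset$ since $a\in P$, and $Q\neq\emptyset$ because $\phi(n\cdot U)=u^{n}\to\infty$, so for $n$ with $u^{n}\geq r$ the sound realizing $n\cdot U$ lies in $Q$. The key point is that $\{P,Q\}$ is a cut of $\mathbb{S}$: let $s\in P$ and $x*s$; if $s*a$ or $s\sim a$ then $x*a$ by transitivity (Axiom~2, Lemma 2.9), so $x\in P$; if $a*x*s$ then $[a,x]<[a,s]$, whence $\phi(\overline{[a,x]})<\phi(\overline{[a,s]})<r$ by item~1 of Proposition 3.75, so again $x\in P$. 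Thus $\overset{\leftarrow}{s}\subseteq P$ for every $s\in P$, and Lemma 3.50 shows $\{P,Q\}$ is a cut. By Dedekind's Continuity Axiom it has a cut sound $\theta$, so $P-\tilde{\theta}=\overset{\leftarrow}{\theta}$ and $Q-\tilde{\theta}=\overset{\rightarrow}{\theta}$. Moreover $a*\theta$: we cannot have $\theta*a$ (else $a\in\overset{\rightarrow}{\theta}=Q-\tilde{\theta}$, yet $a\in P$), and we cannot have $a\sim\theta$ either, since $\phi\big((1/2^{k})\cdot U\big)=u^{1/2^{k}}\to 1<r$ yields sounds $s$ with $a*s$ and $\phi(\overline{[a,s]})<r$, i.e.\ sounds of $P$ lying in $\overset{\rightarrow}{a}$, which is impossible if $\overset{\rightarrow}{a}=\overset{\rightarrow}{\theta}=Q-\tilde{\theta}$.

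It remains to prove $\phi(\overline{[a,\theta]})=r$. Put $c=\phi(\overline{[a,\theta]})\in\mathbb{R}^{>1}$ (legitimate since $a*\theta$). If $c<r$, then by continuity of the power function and density of $\mathbb{D}$ in $\mathbb{R}$ there is a dyadic $w>1$ with $c^{w}<r$; then $w\cdot\overline{[a,\theta]}\succ\overline{[a,\theta]}$ by item~1 of Proposition 3.70, so the sound $s$ with $\overline{[a,s]}=w\cdot\overline{[a,\theta]}$ satisfies $\theta*s$, while $\phi(\overline{[a,s]})=c^{w}<r$ puts $s\in P$; this contradicts $s\in\overset{\rightarrow}{\theta}=Q-\tilde{\theta}$. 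If $c>r$, choose a dyadic $w$ with $\log_{c}r<w<1$ (possible since $0<\log_{c}r<1$); then $w\cdot\overline{[a,\theta]}\prec\overline{[a,\theta]}$ and $\phi(w\cdot\overline{[a,\theta]})=c^{w}>r$, so the corresponding sound lies in $\overset{\leftarrow}{\theta}\subseteq P$ and in $Q$ simultaneously, a contradiction. Hence $c=r$, proving surjectivity.

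I expect the main obstacle to be the bookkeeping in verifying that $\{P,Q\}$ is genuinely a cut (the trichotomy cases for sounds lying below an element of $P$) and the careful handling of the sound/free-interval dictionary so that $\prec$ corresponds faithfully to $*$; once that is in place, Proposition 3.75 and Dedekind's Continuity Axiom supply the substance, and the last step is just elementary density of dyadic exponents of a real number greater than $1$.
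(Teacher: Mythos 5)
Your proof is correct and takes essentially the same route as the paper's: the same cut $P=\{s\in\mathbb{S}: s*a \,\vee\, s\sim a \,\vee\, \phi(\overline{[a,s]})<r\}$ verified via Lemma 3.50, the same appeal to Dedekind's Continuity Axiom, and a contradiction from assuming $\phi(\overline{[a,\theta]})\neq r$. The only cosmetic difference is in the final perturbation: the paper uses its preliminary observation that $\phi(A)^{n}\to\infty$ and $\phi(A)^{2^{-k}}\to 1$ to produce an auxiliary small free interval above $\theta$ and multiplies, whereas you scale $\overline{[a,\theta]}$ by a dyadic $w$ close to $1$ and use $\phi(w\cdot A)=\phi(A)^{w}$ (Proposition 3.75) with density of dyadic exponents --- the same underlying facts.
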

\begin{proof}
Let $\phi:\mathbb{I}^{>1}/\cong \,\rightarrow \mathbb{R}^{>1}$ be a semi-measure of free intervals. We first note that for each $r\in \mathbb{R}^{>1}$ there are $A,B\in \mathbb{I}^{>1}/\cong$ such that $\phi(A)>r$ and $\phi(B)<r$. To prove this, suppose that for some $r$ and for each $A$ we have $\phi(A)<r$. Since $\lim_{n\to \infty}\phi(A)^n=\infty$, it follows that there is a natural number $n$ so that $\phi(A)^n>r$. We conclude $\phi(n\cdot A)>r$ --- a contradiction. In a similar manner, the assumption $\phi(A)>r$ cannot hold ($\lim_{k\to \infty}\phi(A)^{2^{-k}}=1$). Returning to the main problem, let $r\in \mathbb{R}^{>1}$ be given, fix a sound $x$, and set
\begin{center}
$P=\{y\in \mathbb{S}:x*y\wedge \phi(\overline{[x,y]})<r\}\cup \tilde{x}\cup \overset{\leftarrow}{x},\,\,\,\,\,Q=P^c.$
\end{center}
Based on the first step of the proof, $\{P,Q\}$ partitions $\mathbb{S}$. If $y\in Q$ and $y*z$, then $\phi(\overline{[x,z]})=\phi(\overline{[x,y]})\phi(\overline{[y,z]})>r$, which implies $z\in Q$. Regarding item 3 of Lemma 3.50, $\{P,Q\}$ is a cut of $\mathbb{S}$. Applying Axiom 11 leads to the existence of a cut sound $\theta$. Clearly no sound of $P$ ($Q$) is higher (lower) in pitch than $\theta$. We claim that $\phi(\overline{[x,\theta]})=r$. Once this is done, a contradiction is derived completing the proof. If $\phi(\overline{[x,\theta]})<r$, then according to the first step of the proof, there is a free interval in $\mathbb{I}^{>1}/\cong$, say $\overline{[\theta,a]}$ (for some $a\in \mathbb{S}$ higher in pitch than $\theta$), such that $\phi(\overline{[\theta,a]})<r/\phi(\overline{[x,\theta]})$. We then obtain $\phi(\overline{[x,a]})<r$ by the definition of $\phi$, which implies $a\in P$, whereas $a$  belongs to $Q$ --- a contradiction. Likewise, one can show that $\phi(\overline{[x,\theta]})>r$ cannot happen.
\end{proof}
\begin{proposition}
If $\phi$ is any semi-measure of free intervals, then every function of the form $\phi ^r$ (defined as $\phi ^r(A)=(\phi(A))^r$), where $r$ is any positive real number, is a semi-measure of free intervals.
\end{proposition}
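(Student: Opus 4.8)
The plan is to verify directly that $\phi^{r}$ satisfies the two conditions of Definition 3.74: that it takes values in $\mathbb{R}^{>1}$, and that it converts the operation $+$ into ordinary multiplication. Both are immediate consequences of elementary properties of real powers, so no genuine difficulty is anticipated.

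First I would check the codomain. Fix $A\in\mathbb{I}^{>1}/\cong$. Since $\phi$ is a semi-measure, $\phi(A)\in\mathbb{R}^{>1}$, i.e.\ $\phi(A)>1$. For the fixed real $r>0$, the function $t\mapsto t^{r}$ is well defined and strictly increasing on $\mathbb{R}^{>0}$, whence $\phi^{r}(A)=\phi(A)^{r}>1^{r}=1$; thus $\phi^{r}\colon\mathbb{I}^{>1}/\cong\;\longrightarrow\mathbb{R}^{>1}$ is a legitimate function. This is the one place where the hypothesis $r>0$ is genuinely used, and it is also the only point that requires the slightest care: the target must be $\mathbb{R}^{>1}$, not merely $\mathbb{R}^{>0}$.

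Next I would verify the defining functional equation. For $A,B\in\mathbb{I}^{>1}/\cong$, the sum $A+B$ again lies in $\mathbb{I}^{>1}/\cong$, since the positive free intervals form a subsemigroup of $(\mathbb{I}/\cong,+)$ by Propositions 3.70 and 3.71, so that $\phi^{r}(A+B)$ is meaningful. Using that $\phi$ is a semi-measure together with the identity $(xy)^{r}=x^{r}y^{r}$, valid for all positive reals $x$ and $y$, one computes
\[
\phi^{r}(A+B)=\bigl(\phi(A+B)\bigr)^{r}=\bigl(\phi(A)\phi(B)\bigr)^{r}=\phi(A)^{r}\phi(B)^{r}=\phi^{r}(A)\,\phi^{r}(B),
\]
which is exactly the property required of a semi-measure of free intervals. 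Hence $\phi^{r}$ is a semi-measure, as claimed. The main obstacle is in fact no obstacle at all: the statement reduces entirely to the multiplicativity of real exponentiation, once the trivial but necessary codomain check has been recorded.
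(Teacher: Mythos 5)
Your verification is correct and is exactly the routine check the paper intends: it states this proposition without proof, leaving the codomain observation ($\phi(A)>1$ and $r>0$ give $\phi(A)^r>1$) and the multiplicativity computation $(\phi(A)\phi(B))^r=\phi(A)^r\phi(B)^r$ to the reader. Nothing is missing, and your remark that $A+B$ again lies in $\mathbb{I}^{>1}/\cong$ is the only closure fact needed, already noted in the paper just before the definition of semi-measure.
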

\begin{proposition}
If $\phi_0$ is any semi-measure of free intervals, then for every semi-measure $\phi$ of free intervals there is a positive real number $r$ so that $\phi=\phi_0^r$.
\end{proposition}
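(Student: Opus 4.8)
The plan is to compare $\phi$ and $\phi_0$ by conjugating one into the other over $\mathbb{R}^{>1}$. By Corollary 3.75 and Proposition 3.76 every semi-measure is a bijection of $\mathbb{I}^{>1}/\cong$ onto $\mathbb{R}^{>1}$, and by item~1 of Proposition 3.74 it is strictly order-preserving; hence $\psi:=\phi\circ\phi_0^{-1}\colon\mathbb{R}^{>1}\to\mathbb{R}^{>1}$ is a strictly increasing bijection. The first step is to show $\psi$ is multiplicative. Given $s,t\in\mathbb{R}^{>1}$, set $A:=\phi_0^{-1}(s)$ and $B:=\phi_0^{-1}(t)$; then $0\prec A$ and $0\prec B$, so $0\prec A+B$ by item~2 of Proposition 3.71, whence $A+B\in\mathbb{I}^{>1}/\cong$ and $\phi_0(A+B)=\phi_0(A)\phi_0(B)=st$, i.e. $\phi_0^{-1}(st)=A+B$ by injectivity. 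Therefore $\psi(st)=\phi(A+B)=\phi(A)\phi(B)=\psi(s)\psi(t)$. Since $\phi(A)=\psi(\phi_0(A))$ for every $A$ and $\phi_0$ is onto $\mathbb{R}^{>1}$, the desired conclusion $\phi=\phi_0^{\,r}$ is now equivalent to $\psi(s)=s^{\,r}$ for all $s\in\mathbb{R}^{>1}$.

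Next I would feed in the dyadic homogeneity already proved. Fix any $s_0\in\mathbb{R}^{>1}$, put $A_0:=\phi_0^{-1}(s_0)$, and set $r:=\ln\psi(s_0)/\ln s_0$, which is a positive real because $\psi(s_0)>1$ and $s_0>1$. For a positive dyadic $w$ one has $0\prec w\cdot A_0$ and, by item~2 of Proposition 3.74 applied to $\phi_0$, $s_0^{\,w}=\phi_0(A_0)^{w}=\phi_0(w\cdot A_0)$; applying $\psi$ and then item~2 of Proposition 3.74 to $\phi$ gives $\psi(s_0^{\,w})=\phi(w\cdot A_0)=\phi(A_0)^{w}=\psi(s_0)^{w}=(s_0^{\,w})^{r}$. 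Thus $\psi$ already coincides with the $r$-th power map on the dyadic powers of $s_0$.

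The last step, and the only one that is not pure bookkeeping, is to upgrade this from dyadic to arbitrary real exponents by monotonicity. Given $s\in\mathbb{R}^{>1}$, write $x:=\log_{s_0}s>0$ and pick positive dyadics $w_n\uparrow x$ and $w_n'\downarrow x$ (density of the dyadics in $\mathbb{R}$; compare item~3 of Lemma 3.72). From $s_0^{\,w_n}\le s\le s_0^{\,w_n'}$ and the monotonicity of $\psi$ we get $\psi(s_0)^{w_n}=\psi(s_0^{\,w_n})\le\psi(s)\le\psi(s_0^{\,w_n'})=\psi(s_0)^{w_n'}$, and letting $n\to\infty$ squeezes $\psi(s)=\psi(s_0)^{x}=s^{\,r}$. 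Hence $\psi$ is the $r$-th power map on all of $\mathbb{R}^{>1}$, so $\phi(A)=\psi(\phi_0(A))=\phi_0(A)^{r}=\phi_0^{\,r}(A)$ for every $A\in\mathbb{I}^{>1}/\cong$, i.e. $\phi=\phi_0^{\,r}$; independence of $r$ from the choice of $s_0$ is automatic, since a competing exponent $r'$ would satisfy $s_0^{\,r}=\psi(s_0)=s_0^{\,r'}$. The main obstacle is exactly this concluding squeeze — everything before it is a formal consequence of the group/module structure of Remark 3.69 together with Propositions 3.74, 3.76 — and if one prefers to stay strictly inside the axiomatic machinery, the same passage can be effected by running a suitable cut of $\mathbb{S}$ through Dedekind's Continuity Axiom, exactly as in the proof of Proposition 3.76, though the elementary density argument is shorter.
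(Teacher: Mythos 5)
Your proposal is correct, and every external fact you invoke (Corollary 3.75, Propositions 3.74, 3.76, 3.71, Lemma 3.72) is proved in the paper before this proposition, so the citations are legitimate; but your route differs from the paper's in how the dyadic-to-real passage is made. You conjugate by $\phi_0^{-1}$ to get a strictly increasing multiplicative map $\psi=\phi\circ\phi_0^{-1}$ on $\mathbb{R}^{>1}$, pin it down on the dyadic powers of a fixed $s_0$ via item 2 of Proposition 3.74, and finish with an elementary squeeze using density of the dyadics in $\mathbb{R}$. The paper never leaves the ordered module $(\mathbb{I}^{>1}/\cong,+,\prec)$: it fixes a positive free interval $X$, sets $r=\log_{\phi_0(X)}\phi(X)$ (the same $r$ as yours, with $X=A_0$), and argues by contradiction --- if $\phi(A)<\phi_0^r(A)$, it perturbs $A$ to $B=A+(1/2^k)\cdot X$ with $k$ large, interposes a dyadic multiple $w\cdot X$ strictly between $A$ and $B$ by item 3 of Lemma 3.72, and then plays $\phi(X)^w<\phi_0^r(A)$ against $\phi_0(X)^w>\phi_0(A)$ to reach a contradiction. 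What each approach buys: the paper's argument needs neither injectivity nor surjectivity of the semi-measures, only the order-preservation of Proposition 3.74 and the denseness statement of Lemma 3.72, so its dependence structure is lighter; your argument is more transparent analytically (a monotone map agreeing with a power map on a dense set of dyadic powers must be that power map), but as written it imports Proposition 3.76, whose proof runs through Dedekind's axiom --- although, as you could note, surjectivity is dispensable in your scheme, since the only real numbers you ever apply $\psi$ to are values $\phi_0(A)$ and the dyadic powers $s_0^w=\phi_0(w\cdot A_0)$, all of which lie in the range of $\phi_0$ automatically. Both proofs ultimately rest on the same dyadic density phenomenon, exploited in the paper via Lemma 3.72(3) inside the interval structure and in your version via a limit in $\mathbb{R}$.
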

\begin{proof}
We fix a positive free interval $X$ and let $r=\log _{\phi_0(X)}\phi(X)>0$ (1). We claim that $\phi=\phi_0^r$. In order to prove this claim, we conversely suppose that $\phi(A)<\phi_0^r(A)$ for some $A\in \mathbb{I}^{>1}/\cong$. We may thus choose a natural number $k$ large enough such that $\phi(A)\phi(X)^{2^{-k}}<\phi_0^r(A)$ (2). Now we take $B=A+(1/2^k)\cdot X$ (3). From item 1 of Proposition 3.71 it follows that $A\prec B$. By item 3 of Lemma 3.72, there is a dyadic number $w$ so that $A\prec w\cdot X\prec B$ (4). Since $w>0$ (why?), based on Proposition 3.74 and the definition of semi-measure, from formulas (2) to (4), we obtain
\begin{center}
$\phi(X)^w=\phi(w\cdot X)<\phi(B)=\phi(A+(1/2^k)\cdot X)=\phi(A)\phi(X)^{2^{-k}}<\phi_0^r(A)$ (5),
\end{center}
but on the other hand, $\phi_0(X)^w=\phi_0(w\cdot X)>\phi_0(A)$, from which it follows, by (1), that $\phi(X)^w>\phi_0^r(A)$ contradicting formula (5). By a similar argument, the inequality $\phi(A)>\phi_0^r(A)$ leads to contradiction too.
\end{proof}
\begin{proposition}
For every fixed $X\in \mathbb{I}^{>1}/\cong$ and every $x\in \mathbb{R}^{>1}$, there exists just one semi-measure $\phi$ of free intervals such that $\phi(X)=x$.
\end{proposition}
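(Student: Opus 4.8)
The plan is to treat existence and uniqueness separately, disposing of uniqueness first since it reduces to facts already in hand. Suppose $\phi$ and $\psi$ are semi-measures of free intervals with $\phi(X)=\psi(X)=x$. Taking $\phi_{0}:=\phi$ in Proposition 3.80 produces a positive real $r$ with $\psi=\phi^{r}$, so that $x=\psi(X)=\phi(X)^{r}=x^{r}$; since $x>1$ this forces $r=1$, hence $\psi=\phi$. Thus the real work is to exhibit at least one semi-measure $\phi$ with $\phi(X)=x$ --- none of Propositions 3.74--3.80 asserts unconditionally that a semi-measure exists --- and I would build it as $\phi(A):=x^{r(A)}$, where $r$ is a suitable real-valued additive function on $\mathbb{I}^{>1}/\cong$ normalized by $r(X)=1$.

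For each positive free interval $A$ (that is, $A\in\mathbb{I}^{>1}/\cong$, equivalently $0\prec A$) I would set
\[
r(A):=\sup\{\,w\in\mathbb{D}:w>0\ \text{and}\ w\cdot X\preceq A\,\}.
\]
This supremum runs over a nonempty set that is bounded above: nonempty because item 2 of Lemma 3.72 applied to $A$ and $B:=X$ yields an integer $k$ with $(1/2^{k})\cdot X\prec A$; bounded above because item 1 of that lemma yields $n\in\mathbb{N}$ with $A\preceq n\cdot X$, so the monotonicity of $w\mapsto w\cdot X$ (item 1 of Proposition 3.70, using $0\prec X$) forces $w\le n$ for every $w$ in the set. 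Hence $r(A)$ is a well-defined positive real number, and $r(X)=1$ because $w\cdot X\preceq X=1\cdot X$ holds exactly when $w\le 1$.

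The heart of the matter is the additivity $r(A+B)=r(A)+r(B)$ for positive free intervals $A,B$ (the sum is again positive by item 2 of Proposition 3.71). For the inequality $r(A)+r(B)\le r(A+B)$: if $w,v$ are positive dyadic with $w\cdot X\preceq A$ and $v\cdot X\preceq B$, then the $\mathbb{D}$-module identity $w\cdot X+v\cdot X=(w+v)\cdot X$ (Remark 3.68) together with the monotonicity of $+$ in each argument (item 2 of Proposition 3.70 and item 2 of Proposition 3.71) gives $(w+v)\cdot X\preceq A+B$, so $w+v\le r(A+B)$; passing to suprema over $w$ and then over $v$ yields the claim. For the reverse inequality, assume for contradiction that $r(A)+r(B)<r(A+B)$ and choose a dyadic $w$ strictly between the two sides. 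Using the density of $\mathbb{D}$ in $\mathbb{R}$ I would split $w=w_{1}+w_{2}$ with $w_{1},w_{2}$ positive dyadic, $w_{1}>r(A)$ and $w_{2}>r(B)$; by the definition of $r$ and the linearity of $\preceq$ on $\mathbb{I}^{>1}/\cong$ (item 2 of Corollary 3.67) this forces $A\prec w_{1}\cdot X$ and $B\prec w_{2}\cdot X$, whence $A+B\prec(w_{1}+w_{2})\cdot X=w\cdot X$ by item 2 of Proposition 3.71. On the other hand $w<r(A+B)$ supplies a dyadic $w''>w$ with $w''\cdot X\preceq A+B$, hence $w\cdot X\prec A+B$ by monotonicity; the two conclusions contradict the asymmetry of $\prec$. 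With additivity established, $\phi(A):=x^{r(A)}$ takes values in $\mathbb{R}^{>1}$ (since $r(A)>0$ and $x>1$), satisfies $\phi(A+B)=x^{r(A)+r(B)}=\phi(A)\phi(B)$, and has $\phi(X)=x^{r(X)}=x$, as required.

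I expect the additivity of $r$ --- in particular the reverse inequality, with its dyadic splitting and the repeated appeals to the monotonicity of $+$ --- to be the one genuine obstacle; everything else is routine bookkeeping with the order-and-product facts already available (Propositions 3.70--3.71, Lemma 3.72, Corollary 3.67), and uniqueness costs only the single application of Proposition 3.80.
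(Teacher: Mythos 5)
Your proof is correct and takes essentially the same route as the paper's: uniqueness comes from the rigidity result $\phi=\phi_0^r$ (your ``Proposition 3.80'' is the paper's Proposition 3.78), and existence is obtained by setting $\phi(A)=x^{\sup\{w\in \mathbb{D}:\, w\cdot X\preceq A\}}$, proving additivity of the exponent with the same two inequalities (the dyadic splitting $w=w_1+w_2$ and the contradiction with monotonicity), and then checking the normalization at $X$. The only differences are cosmetic: you use $\preceq$ rather than $\prec$ in the supremum and isolate the additive function $r$ before exponentiating, which makes the normalization step $r(X)=1$ immediate where the paper argues it by a short contradiction.
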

\begin{proof}
From Proposition 3.78 it follows immediately that there exists at most one semi-measure $\phi$ of free intervals correlating the number $x$ with the free interval $X$. It therefore remains to construct such a semi-measure $\phi$. For this purpose, let $A$ be any positive free interval. Set $\alpha =\sup\{w\in \mathbb{D}:w\cdot X\prec A\}$ and define $\phi(A)=x^\alpha$. The first two items of Lemma 3.72 respectively ensure that the set of all dyadic numbers $w$ with the property $w\cdot X\prec A$ is bounded above and nonempty. So the supremum exists in $\mathbb{R}^+$, as known from the theory of real numbers. Thus, $\phi$ is well-defined. Now, consider any two positive free intervals $A$ and $B$ and let $\alpha =\sup\{w\in \mathbb{D}:w\cdot X\prec A\}$ and $\beta =\sup\{w\in \mathbb{D}:w\cdot X\prec B\}$. Suppose that $\phi(A+B)=x^\gamma$. We must prove that $\alpha +\beta =\gamma$. For any $w,v\in \mathbb{D}$ with $w\cdot X\prec A$ and $v\cdot X\prec B$, by item 2 of Proposition 3.71 and Remark 3.69 we get $(w+v)\cdot X\prec A+B$. Thus $w+v\leq \gamma$. Taking the supremum over $w$ and $v$, we obtain $\alpha +\beta \leq \gamma$. Now suppose that $\alpha +\beta <\gamma$. Since $\mathbb{D}$ is dense in $\mathbb{R}$, there is a dyadic number $w$ so that $\alpha +\beta <w<\gamma$. From the arithmetic of real numbers it is known that the dyadic number $w$ (with the property $\alpha +\beta <w$) can be represented as the sum of two dyadic numbers, $w=w_1+w_2$, such that $\alpha <w_1$ and $\beta <w_2$ (consider two decreasing sequence of dyadic numbers converging to $\alpha$ and $\beta$). From $w<\gamma$ it follows that there is a dyadic number $w'$ so that $w<w'$ and $w'\cdot X\prec A+B$. By item 1 of Proposition 3.70 we obtain $w\cdot X\prec A+B$ (1). On the other hand, from $\alpha <w_1$ it follows that $A\preceq w_1\cdot X$, and  from $\beta <w_2$ it follows that $B\preceq w_2\cdot X$. We then conclude that $A+B\preceq w\cdot X$ which contradicts formula (1). Therefore the function $\phi$ is a semi-measure of free intervals. It remains to show that $\phi(X)=x$; equivalently, $\lambda =1$ where $\lambda =\sup\{w\in \mathbb{D}:w\cdot X\prec X\}$. By item 1 of Proposition 3.70 we have got $\lambda \leq 1$ at once. Assuming $\lambda <1$, there is a natural number $k$ so that $(1/2^k)<1-\lambda$. But we have $(1-1/2^k)\cdot X\prec X$, which yields $1-1/2^k\leq \lambda$ --- a contradiction.
\end{proof}
\begin{definition}
By a \emph{measure of free intervals} we understand any function $\Phi:\mathbb{I}/\cong \,\rightarrow \mathbb{R}^+$ (correlating with every free interval a positive real number) which satisfies the following two properties:
\begin{center}
$\forall A,B\in \dfrac{\mathbb{I}}{\cong}(\Phi(A+B)=\Phi(A)\Phi(B))\wedge \exists A\in \dfrac{\mathbb{I}}{\cong}(0\prec A\wedge \Phi(A)>1).$
\end{center}
\end{definition}
\begin{proposition}
For every positive free interval $I$ and for every positive real number $r\neq 1$, there exists only one measure $\Phi$ of free intervals satisfying the following properties:
\begin{enumerate}
\item For any free intervals $A$ and $B$, $\Phi(A+B)=\Phi(A)\Phi(B)$.
\item For any free intervals $A$ and $B$, $A\prec B$ if and only if $\Phi(A)<\Phi(B)$.
\item If $r>1$, then $\Phi(I)=r$, otherwise $\Phi(I)=1/r$.

Furthermore,
\item for any free interval $A$ and dyadic number $w$, $\Phi(w\cdot A)=\Phi(A)^w$.
\item $\Phi$ is bijective (one-to-one and onto).
\end{enumerate}
\end{proposition}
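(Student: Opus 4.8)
Write $x:=r$ when $r>1$ and $x:=1/r$ when $r<1$; in either case $x\in\mathbb{R}^{>1}$ and property~3 demands $\Phi(I)=x$. The plan is to build $\Phi$ by extending a semi-measure from the positive semigroup $\mathbb{I}^{>1}/\cong$ to the whole abelian group $\mathbb{I}/\cong$ recorded in Remark 3.69. Since $I$ is a positive free interval, i.e. $I\in\mathbb{I}^{>1}/\cong$, Proposition 3.79 furnishes a \emph{unique} semi-measure $\phi:\mathbb{I}^{>1}/\cong\,\to\mathbb{R}^{>1}$ with $\phi(I)=x$. I would then define $\Phi$ on $\mathbb{I}/\cong$ by $\Phi(A)=\phi(A)$ if $0\prec A$, by $\Phi(A)=1$ if $A=0$ (the class of unit intervals), and by $\Phi(A)=1/\phi(-A)$ if $A\prec 0$, where $-A=\overline{[b,a]}$ is the class of the conversion and is positive because $A\prec 0$ (order–sum compatibility, Propositions 3.70--3.71); exactly one clause applies by linearity of $\preceq$ (Corollary 3.67), so $\Phi$ is well-defined.

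First I would check that $\Phi$ is a measure, i.e. $\Phi(A+B)=\Phi(A)\Phi(B)$ for all $A,B\in\mathbb{I}/\cong$. This is a short case analysis on the signs of $A$, $B$ and $A+B$: the case $0\prec A,\ 0\prec B$ is just multiplicativity of $\phi$; the case $A\prec 0,\ B\prec 0$ uses $-(A+B)=(-A)+(-B)$; and a mixed case such as $0\prec A$, $B\prec 0$, $0\prec A+B$ is handled by rewriting $A=(A+B)+(-B)$ with $A+B,-B$ positive, so $\phi(A)=\phi(A+B)\phi(-B)$, whence $\Phi(A+B)=\phi(A)/\phi(-B)=\Phi(A)\Phi(B)$. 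The second requirement in the definition of a measure then holds because $\Phi(I)=x>1$. Property~2 follows immediately: $\Phi$ exceeds $1$ on every positive free interval (as $\phi$ is $\mathbb{R}^{>1}$-valued), so $A\prec B$ gives $0\prec B-A$ and hence $\Phi(B)=\Phi(A)\Phi(B-A)>\Phi(A)$, while the converse is trichotomy for $\preceq$ once more; property~3 holds by construction.

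For the ``furthermore'' part, $\Phi\colon(\mathbb{I}/\cong,+)\to(\mathbb{R}^{+},\cdot)$ is now a group homomorphism, so $\Phi(n\cdot A)=\Phi(A)^{n}$ for all $n\in\mathbb{Z}$ automatically; feeding the module identity $2^{k}\cdot\bigl((1/2^{k})\cdot A\bigr)=A$ (Proposition 3.60, Remark 3.69) into this yields $\Phi\bigl((1/2^{k})\cdot A\bigr)=\Phi(A)^{2^{-k}}$ (the positive $2^{k}$-th root, all values being positive reals), and composing gives $\Phi(w\cdot A)=\Phi(A)^{w}$ for every dyadic $w=n/2^{k}$, which is property~4. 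For property~5, injectivity is immediate from property~2 and linearity of $\preceq$ (Corollary 3.67); surjectivity uses that every semi-measure is onto $\mathbb{R}^{>1}$ (Proposition 3.76), so $\phi$ covers $\mathbb{R}^{>1}$, $\Phi(0)=1$, and the negative free intervals cover the interval $(0,1)$ via $A\mapsto 1/\phi(-A)$.

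Finally, uniqueness. If $\Psi$ is any measure satisfying 1--3, then $\Psi(0)=\Psi(0)^{2}$ forces $\Psi(0)=1$, and property~2 then upgrades the bare existential clause in the definition of a measure to ``$\Psi>1$ on \emph{every} positive free interval''; hence $\Psi$ restricts to a semi-measure on $\mathbb{I}^{>1}/\cong$ with $\Psi(I)=x$, so $\Psi=\phi$ there by Proposition 3.79, and the homomorphism property then pins $\Psi$ down on $0$ and on the negative free intervals exactly as in the construction, giving $\Psi=\Phi$. I expect the only real friction to be bookkeeping: the sign case-analysis for multiplicativity of the extension across the group identity, and making sure that it is precisely property~2 that turns the restriction to $\mathbb{I}^{>1}/\cong$ into a genuine semi-measure, so that Proposition 3.79 is applicable in the uniqueness step.
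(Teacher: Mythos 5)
Your proposal is correct and follows essentially the same route as the paper's own (sketched) proof: invoke Proposition 3.79 to get the unique semi-measure $\phi$ with $\phi(I)=\max\{r,1/r\}$ and extend it casewise by sign over $\mathbb{I}/\cong$, with the remaining items as routine verifications. Your writing $1/\phi(-A)$ for $A\prec 0$ is the precise form of what the paper abbreviates as $1/\phi(A)$, and your sign case-analysis, dyadic-power argument, and uniqueness step fill in exactly the details the paper leaves to the reader.
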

\begin{proof}[Sketch of Proof]
Based on Proposition 3.79, there exists just one semi-measure $\phi$ of free intervals such that $\phi(I)=\max\{r,1/r\}$. Extend $\phi$ uniquely over the whole $\mathbb{I}/\cong$ in the following manner:
\begin{equation*}
\Phi(A)=\left\{
{\begin{array}{*{20}{c}}
{\phi(A)}&{}&{,0\prec A}\\
{1}&{}&{,A=0}\\
{1/\phi(A)}&{}&{,A\prec 0}
\end{array}} \right.
\end{equation*}
\end{proof}
\begin{corollary}
The function $\Phi$ existent in Proposition 3.81 is an isomorphism between the two ordered sets $(\mathbb{I}/\cong \,,\prec)$ and $(\mathbb{R}^+,<)$.
\end{corollary}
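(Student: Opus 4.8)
The plan is to observe that Corollary 3.82 is essentially a repackaging of properties already secured in Proposition 3.81, so that almost no new work is needed. Recall that an \emph{isomorphism of ordered sets} between $(\mathbb{I}/\cong\,,\prec)$ and $(\mathbb{R}^+,<)$ is, by definition, a bijection $\Phi$ such that $A \prec B$ if and only if $\Phi(A) < \Phi(B)$ for all $A, B \in \mathbb{I}/\cong$; equivalently, an order-preserving bijection whose inverse is also order-preserving. So the task reduces to exhibiting one function with these two features, and Proposition 3.81 has already named it.

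First I would invoke item 5 of Proposition 3.81, which states that the distinguished $\Phi$ is bijective. Next I would invoke item 2 of the same proposition, which gives the biconditional $A \prec B \Leftrightarrow \Phi(A) < \Phi(B)$: the forward implication says $\Phi$ is order-preserving, and, since $\Phi$ is a bijection, the backward implication says precisely that $\Phi^{-1}$ carries $<$ back to $\prec$, i.e. $\Phi^{-1}$ is order-preserving as well. Combining these two observations yields that $\Phi$ is an order isomorphism, which is the assertion of the corollary. One could shorten even this by recalling that $(\mathbb{I}/\cong\,,\preceq)$ is linearly ordered (Corollary 3.67) and $(\mathbb{R}^+,<)$ is linearly ordered, so that a bijection between linear orders which preserves order in one direction automatically has order-preserving inverse; but since Proposition 3.81 hands us the full biconditional, this refinement is unnecessary.

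I do not expect any genuine obstacle here. All the substantive difficulty --- constructing a semi-measure of free intervals, proving its uniqueness, and extending it across unit and sub-unit free intervals --- was discharged in Propositions 3.74 through 3.81, the core being the Dedekind-cut surjectivity argument (Propositions 3.77 and 3.79) together with the ``logarithmic rigidity'' uniqueness argument (Proposition 3.78). The only thing left for the corollary is to phrase those conclusions in the vocabulary of order theory, so the proof will be a single short paragraph citing items 2 and 5 of Proposition 3.81.
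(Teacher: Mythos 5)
Your proposal is correct and is exactly the argument the paper intends: the corollary is left unproved precisely because it is the immediate repackaging of items 2 and 5 of Proposition 3.81 (the order biconditional plus bijectivity) as the definition of an order isomorphism. Nothing further is needed.
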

\begin{theorem}[Interval Measure]
Let $[x_0,y_0]$ be a given interval greater than unit and $\lambda$ be a fixed real number greater than one. There is a unique way of assigning to each interval $[a,b]$ a real number $|[a,b]|$ in such a way that
\begin{enumerate}
\item $|[a,b]|$ is a positive real number and $|[x_0,y_0]|=\lambda$.
\item $|[a,b]|<1$ iff $b*a$, $|[a,b]|=1$ iff $a\sim b$, and $|[a,b]|>1$ iff $a*b$.
\item $|[a,b]|=|[c,d]|$ if and only if $[a,b]\cong [c,d]$.
\item $|[a,b]|<|[c,d]|$ if and only if $\overline{[a,b]}\prec \overline{[c,d]}$.
\item for any $a,b,c\in \mathbb{S}$, $|[a,c]|=|[a,b]|.|[b,c]|$.
\item for any dyadic number $w$, $|w\cdot [a,b]|=|[a,b]|^w$.
\item for any positive real number $r$, there exists a unique interval $[a,b]$ up to congruence such that $|[a,b]|=r$.
\end{enumerate}
\end{theorem}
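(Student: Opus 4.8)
The plan is to reduce the theorem entirely to the already-established Proposition 3.81 about measures of free intervals, using the passage from intervals to free intervals as the bridge. Concretely, I would first apply Proposition 3.81 with the positive free interval $I:=\overline{[x_0,y_0]}$ (which satisfies $0\prec I$ because $[x_0,y_0]$ is greater than unit) and the real number $r:=\lambda>1$, obtaining the unique measure $\Phi:\mathbb{I}/\cong\,\rightarrow\mathbb{R}^+$ with $\Phi(\overline{[x_0,y_0]})=\lambda$ enjoying properties (1)--(5) of Proposition 3.81. Then I would simply \emph{define} $|[a,b]|:=\Phi(\overline{[a,b]})$ for every interval $[a,b]$. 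This is well-defined since $\overline{[a,b]}$ is the $\cong$-class of $[a,b]$.

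Next I would verify the seven asserted properties one by one, each being a direct translation of a known fact. Property (1) is immediate from $\Phi$ taking values in $\mathbb{R}^+$ and from $\Phi(\overline{[x_0,y_0]})=\lambda$. For property (2), note $\Phi(0)=1$ (since $\Phi(0)=\Phi(0+0)=\Phi(0)^2$ and $\Phi(0)\neq 0$), so by property 2 of Proposition 3.81 one has $|[a,b]|<1\Leftrightarrow\overline{[a,b]}\prec 0$, and by Definition 3.66 together with Definition 3.9 this holds exactly when $[a,b]$ is less than unit, i.e.\ $b*a$; the cases $|[a,b]|=1$ and $|[a,b]|>1$ are handled identically (using injectivity of $\Phi$ for the equality case). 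Property (3) follows from injectivity of $\Phi$ (Proposition 3.81(5)) and the definition of a free interval. Property (4) is exactly property 2 of Proposition 3.81. Property (5) follows from Remark 3.69 ($\overline{[a,b]}+\overline{[b,c]}=\overline{[a,c]}$) and the multiplicativity $\Phi(A+B)=\Phi(A)\Phi(B)$. Property (6) follows from $w\cdot\overline{[a,b]}=\overline{w\cdot[a,b]}$ (Remark 3.69) and property 4 of Proposition 3.81. Property (7) is surjectivity of $\Phi$ combined with its injectivity (Proposition 3.81(5)), noting that every free interval has the form $\overline{[a,b]}$.

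For uniqueness, I would take any assignment $[a,b]\mapsto\|[a,b]\|$ satisfying (1)--(7). By property (3) it factors through $\cong$, so it induces a function $\Psi:\mathbb{I}/\cong\,\rightarrow\mathbb{R}^+$ with $\Psi(\overline{[a,b]})=\|[a,b]\|$. Property (5), read through Remark 3.69, says precisely $\Psi(A+B)=\Psi(A)\Psi(B)$ for all $A,B\in\mathbb{I}/\cong$, and taking $A=\overline{[x_0,y_0]}$ gives $0\prec A$ with $\Psi(A)=\lambda>1$; hence $\Psi$ is a measure of free intervals in the sense of Definition 3.80 with $\Psi(\overline{[x_0,y_0]})=\lambda$. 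Properties (2) and (4) ensure it also satisfies the order-compatibility clause, so the uniqueness part of Proposition 3.81 forces $\Psi=\Phi$, whence $\|[a,b]\|=|[a,b]|$ for every interval.

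I do not expect a serious obstacle here: the content was absorbed into Propositions 3.79--3.81. The only point requiring care is the bookkeeping in the uniqueness argument --- checking that an arbitrary assignment satisfying (1)--(7) genuinely descends to a \emph{measure} (Definition 3.80) rather than merely a semi-measure, in particular that the multiplicative identity (5) propagates to sums $A+B$ where one or both summands are $\prec 0$; this is handled by writing every free interval as $\overline{[a,b]}$ and invoking Remark 3.69 so that (5) covers all sign combinations at once.
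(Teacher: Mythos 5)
Your proposal is correct and follows essentially the same route as the paper, whose own (sketched) proof is precisely to set $|[a,b]|=\Phi(\overline{[a,b]})$ for the unique measure $\Phi$ of free intervals from Proposition 3.81 with $\Phi(\overline{[x_0,y_0]})=\lambda$. Your verification of items (1)--(7) and the uniqueness argument via descent to $\mathbb{I}/\cong$ simply supply the details the paper leaves to the reader.
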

\begin{proof}[Sketch of Proof]
Define naturally $|[a,b]|=\Phi(\overline{[a,b]})$, where $\Phi$ is the unique measure of free intervals satisfying all items of Proposition 3.81 with the property that $\Phi(\overline{[x_0,y_0]})=\lambda$.
\end{proof}
\begin{definition}
Any function $|\,|:\mathbb{I}\rightarrow \mathbb{R}^+$ as mentioned in Theorem 3.83 is called a \emph{measure of intervals}, and $|[a,b]|$ is called the \emph{measure} of $[a,b]$.
\end{definition}
\begin{remark}
By means of a measure of intervals $|\,|$, one can extend the order $<$ to the whole $\mathbb{I}$ as follows:
\begin{center}
$[a,b]<[c,d]\Longleftrightarrow |[a,b]|<|[c,d]|$.
\end{center}
Also $[a,b]\leq [c,d]$ iff either $[a,b]<[c,d]$ or $[a,b]\cong [c,d]$. Thus the relation $\leq$ is a total ordering on the set of all intervals.
\end{remark}
\paragraph*{}
About the interval $[x_0,y_0]$ and the number $\lambda$ fixed in Theorem 3.83, it is worth noting that we have a great plan to identify these indeterminates in the future, and the reader is proposed to have patience. At this point, we assume they are given once and for all and the function $|\,|$ is then the unique measure of intervals in the axiom system.
\paragraph*{}
If we review what we did in this subsection, we realize that by assuming continuity in pitch, say Dedekind's Continuity Axiom, we have proved continuity of congruence, say Archimedean Property of Intervals, on the base of which we deduced continuity of intervals, say Interval measure, and found a way to measure them. We think this progress is more in accordance with musical intuition, though the first and the last continuations are logically equivalent and stronger than the middle one (for surjectivity of measures of intervals Dedekind's Continuity Axiom is essentially required (Proposition 3.76)). The following theorem is explicitly concerned with continuity in pitch in modern terminology matching the physics of sound.
\begin{theorem}
Given an arbitrary sound $\theta$ and a positive real number $f_0$. There is a unique way to assign a real number $f(x)$ to each sound $x$ such that the following possibilities occur:
\begin{enumerate}
\item $f(x)$ is positive and $f(\theta)=f_0$.
\item $f(x)=f(y)$ if and only if $x\sim y$.
\item $f(x)<f(y)$ if and only if $x*y$.
\item For any $x,y\in \mathbb{S}$, $|[x,y]|=f(y)/f(x)$.
\item For every positive real number $r$, there exists a unique sound $x$ up to identity such that $f(x)=r$.
\end{enumerate}
\end{theorem}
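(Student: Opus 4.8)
The plan is to obtain the frequency function by rescaling the canonical measure of intervals supplied by Theorem 3.83, and then to verify the five listed properties one at a time, each of which collapses onto the corresponding item of that theorem through the single identity $|[x,y]| = f(y)/f(x)$. Writing $|\,|$ for the measure of intervals fixed once and for all in the system, I would define $f(x) := f_0\,|[\theta,x]|$ for every sound $x$; this is manifestly a function on $\mathbb{S}$, and it is constant on each $\sim$-class because $x\sim x'$ forces $[\theta,x]=[\theta,x']$ by Corollary 3.2.

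First I would dispose of item 1: $f(x)>0$ since $f_0>0$ and $|[\theta,x]|>0$ by item 1 of Theorem 3.83, while $f(\theta)=f_0\,|[\theta,\theta]|=f_0$ because $\theta\sim\theta$ forces $|[\theta,\theta]|=1$ by item 2 of Theorem 3.83. The linchpin is item 4: applying item 5 of Theorem 3.83 to the triple $\theta,x,y$ yields $|[\theta,y]|=|[\theta,x]|\,|[x,y]|$, hence $|[x,y]|=|[\theta,y]|/|[\theta,x]|=f(y)/f(x)$. Items 2 and 3 are then immediate: $f(x)=f(y)$ iff $|[x,y]|=f(y)/f(x)=1$ iff $x\sim y$, and $f(x)<f(y)$ iff $|[x,y]|>1$ iff $x*y$, both invoking item 2 of Theorem 3.83.

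For item 5 I would argue as follows. Given $r>0$, item 7 of Theorem 3.83 produces an interval, unique up to congruence, of measure $r/f_0$; feeding that interval and the sound $\theta$ into the Axiom of Motion gives a sound $x$, unique up to identity, with the interval congruent to $[\theta,x]$, so that $|[\theta,x]|=r/f_0$ by item 3 of Theorem 3.83 and therefore $f(x)=r$. Conversely, $f(x)=f(x')$ forces $|[\theta,x]|=|[\theta,x']|$, hence $[\theta,x]\cong[\theta,x']$ (item 3 of Theorem 3.83), whence $x\sim x'$ by Lemma 3.8 together with $\theta\sim\theta$; this is the claimed uniqueness up to identity.

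Finally, uniqueness of the whole assignment: any competing $g$ satisfying items 1 and 4 must obey $|[\theta,x]|=g(x)/g(\theta)=g(x)/f_0$ for every sound $x$, so $g(x)=f_0\,|[\theta,x]|=f(x)$, i.e.\ $g=f$. I do not expect a genuine obstacle here: the real content is already packed into Theorem 3.83 and the Axiom of Motion, and the only care needed is to invoke ``up to identity'' and ``up to congruence'' consistently throughout. What the theorem records is simply that once a reference sound $\theta$ and a reference number $f_0$ are chosen, the map $x\mapsto f_0\,|[\theta,x]|$ deserves to be called \emph{frequency}, since it carries $*$ to $<$, $\sim$ to $=$, and interval measure to a ratio of frequencies.
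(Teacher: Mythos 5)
Your proposal is correct and follows exactly the paper's route: the paper's (sketch of) proof is precisely the definition $f(x)=f_0\cdot|[\theta,x]|$, with the five properties left to be read off from Theorem 3.83 and the Axiom of Motion as you have done. Your write-up simply supplies the verification the paper omits, so there is nothing to add.
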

\begin{proof}[Sketch of Proof]
Define naively $f(x)=f_0\cdot |[\theta ,x]|$.
\end{proof}
\begin{definition}
For any pair of a sound $\theta$ and a positive real number $f_0$, the function $f:\mathbb{S}\rightarrow \mathbb{R}^+$ satisfying all items of Theorem 3.86 is called the \emph{frequency of sounds} in base $(\theta ,f_0)$. The ordered pair $(\theta ,f_0)$ is called the \emph{base} of the frequency of sounds. For every $x\in \mathbb{S}$ the value of the function $f$ at $x$, i.e. $f(x)$, is called the \emph{frequency} of $x$.
\end{definition}
\begin{corollary}
The two ordered sets $(\mathbb{S}/\sim \,,*')$ and $(\mathbb{R}^+,\leq)$ are isomorphic.
\end{corollary}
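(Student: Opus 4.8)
The plan is to push the frequency function of Theorem 3.86 down to the quotient $\mathbb{S}/\sim$. First I would fix an arbitrary base, say $(\theta,1)$, and let $f:\mathbb{S}\to\mathbb{R}^+$ be the frequency of sounds in that base, all of whose properties are supplied by Theorem 3.86. Because item 2 of that theorem gives $f(x)=f(y)$ whenever $x\sim y$, the value $f(x)$ depends only on the class $\tilde{x}$, so the rule $\bar{f}(\tilde{x}):=f(x)$ will unambiguously define a map $\bar{f}:\mathbb{S}/\sim\,\to\mathbb{R}^+$; verifying this well-definedness is the only point that needs any care at all.

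Next I would check that $\bar{f}$ is a bijection. Injectivity follows again from item 2: if $\bar{f}(\tilde{x})=\bar{f}(\tilde{y})$, then $f(x)=f(y)$, hence $x\sim y$, hence $\tilde{x}=\tilde{y}$. Surjectivity is exactly item 5 of Theorem 3.86, which says every $r\in\mathbb{R}^+$ equals $f(x)$ for some sound $x$, so that $r=\bar{f}(\tilde{x})$.

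Finally I would verify order preservation in both directions. Recalling from Remark 2.10 that $\tilde{x}*'\tilde{y}$ holds exactly when $x*y$ or $x\sim y$, and combining items 2 and 3 of Theorem 3.86, this is equivalent to $f(x)<f(y)$ or $f(x)=f(y)$, that is, to $f(x)\leq f(y)$, that is, to $\bar{f}(\tilde{x})\leq\bar{f}(\tilde{y})$. Hence $\tilde{x}*'\tilde{y}\Leftrightarrow\bar{f}(\tilde{x})\leq\bar{f}(\tilde{y})$; since this biconditional is symmetric in form, the inverse of $\bar{f}$ is order preserving as well, and $\bar{f}$ is the isomorphism of ordered sets we want.

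I do not expect any genuine obstacle, since Theorem 3.86 has already done all of the work; the only subtlety is conceptual rather than computational, namely keeping in mind that the strict relation $*$ on $\mathbb{S}$ corresponds on the quotient to the reflexive relation $*'$ (which absorbs the ``or $\sim$'' clause), and that it is this $*'$ which must be matched with $\leq$, not with $<$, on $\mathbb{R}^+$. Alternatively, one could deduce the statement from Corollary 3.82 together with item 4 of Theorem 3.86 by transporting the multiplicative structure along $f$, but the direct route above is shorter.
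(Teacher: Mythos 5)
Your proof is correct and is essentially the paper's argument: the paper's sketch takes the isomorphism to be $f\circ\Omega$ for a choice function $\Omega:\mathbb{S}/\sim\,\to\mathbb{S}$, which is literally the same map as your induced $\bar{f}$, since item 2 of Theorem 3.86 makes $f$ constant on identity classes. Your direct well-definedness check simply replaces the appeal to a choice function, and the bijectivity and order-preservation steps (items 2, 3, 5 together with the definition of $*'$) match the intended justification.
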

\begin{proof}[Sketch of Proof]
$fo\Omega:\mathbb{S}\rightarrow \mathbb{R}$ is the desired isomorphism, where $f$ is the frequency of sounds in a base and $\Omega :\mathbb{S}/\sim \,\rightarrow \mathbb{S}$ is a choice function for $\mathbb{S}/\sim$.
\end{proof}
\begin{corollary}
$\mathbb{S}$, $*$, $*'$, and $\mathbb{S}/\sim$ are uncountable. Moreover,
\begin{center}
$Card(\mathbb{S}/\sim)=Card(*')=Card(\mathbb{I})=2^{\aleph_0}.$
\end{center}
\end{corollary}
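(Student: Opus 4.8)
The plan is to reduce everything to Corollary 3.89 together with two standard facts about cardinals: $Card(\mathbb{R}^+)=Card(\mathbb{R})=2^{\aleph_0}$ and $(2^{\aleph_0})^2=2^{\aleph_0}$ (both provable in ZFC, so the Continuum Hypothesis is not needed here). Since Corollary 3.89 provides an order isomorphism $(\mathbb{S}/\sim\,,*')\cong(\mathbb{R}^+,\leq)$, and an order isomorphism is in particular a bijection, we get $Card(\mathbb{S}/\sim)=2^{\aleph_0}$. The canonical map $x\mapsto\tilde{x}$ is a surjection of $\mathbb{S}$ onto $\mathbb{S}/\sim$, so $Card(\mathbb{S})\geq Card(\mathbb{S}/\sim)=2^{\aleph_0}$, whence $\mathbb{S}$ is uncountable. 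For $\mathbb{I}$, Remark 3.3 gives $\mathbb{I}=(\mathbb{S}/\sim)\times(\mathbb{S}/\sim)$, so $Card(\mathbb{I})=(2^{\aleph_0})^2=2^{\aleph_0}$.

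For the two order relations, the point to keep in mind is that $*$ and $*'$ are themselves sets of ordered pairs and are to be counted as such. For $*'$: on one hand $*'\subseteq(\mathbb{S}/\sim)^2$, so $Card(*')\leq(2^{\aleph_0})^2=2^{\aleph_0}$; on the other hand $*'$ is reflexive (indeed $\tilde{x}*'\tilde{x}$ since $x\sim x$), so the diagonal $\{(\tilde{x},\tilde{x}):x\in\mathbb{S}\}$ is contained in $*'$ and already has cardinality $Card(\mathbb{S}/\sim)=2^{\aleph_0}$; hence $Card(*')=2^{\aleph_0}$, in particular $*'$ is uncountable. For $*$: fix a sound $c$ (which exists by the Axiom of Existence); the map $x\mapsto(x,c)$ injects the low side $\overset{\leftarrow}{c}$ into $*$, and $\overset{\leftarrow}{c}$ contains a representative of every class strictly $*'$-below $\tilde{c}$, which under the frequency function corresponds to the interval $(0,f(c))$ of cardinality $2^{\aleph_0}$; thus $Card(*)\geq 2^{\aleph_0}$ and $*$ is uncountable. (Note the asymmetry in the statement: $\mathbb{S}$ and $*$ are only asserted to be uncountable, not of cardinality exactly $2^{\aleph_0}$, because nothing proved so far fixes the size of an individual class $\tilde{x}$; once $\mathbb{S}$ is infinite we do get $Card(*)=Card(\mathbb{S})$ from $\mathbb{S}\hookrightarrow{*}\subseteq\mathbb{S}^2$, but this common value is not yet determined.)

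I do not expect any genuine obstacle: the entire argument is cardinal bookkeeping once Corollary 3.89 is available. The only places calling for a little care are exhibiting the explicit injections witnessing the lower bounds for $Card(*')$ and $Card(*)$ --- remembering to treat a relation as a set of pairs --- and keeping straight which of the five objects are merely uncountable and which have cardinality exactly $2^{\aleph_0}$.
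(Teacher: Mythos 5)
Your proof is correct, and since the paper supplies no proof of this corollary (it is one of the statements left to the reader), your route --- the order isomorphism of Corollary 3.88 (which you cite as 3.89; 3.89 is the statement itself) combined with standard ZFC cardinal arithmetic, plus explicit injections witnessing the lower bounds for $Card(*')$ and $Card(*)$ --- is exactly the intended one. Your closing observation that only uncountability, not the exact value $2^{\aleph_0}$, can be asserted for $\mathbb{S}$ and $*$ at this stage also matches the paper, which fixes $Card(\mathbb{S})=Card(*)=2^{\aleph_0}$ only later (Corollary 4.7) by means of the Loudness and Colour Postulates.
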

\paragraph*{}
Similar to measure of intervals, we assume that the base $(x_0,f_0)$ of the unique frequency of sounds $f$ is given once and for all in this axiom system (let $\theta$ be equal to $x_0$).
\paragraph*{}
Altogether, having considered the frequency of sounds is one-to-one up to identity, what Theorem 3.86 states intuitively is that there is no difference between the usual order on the real line and the pitch of our music system.

\section{Loudness and Colour}
\paragraph*{}
In this section, we want to speak about the sounds of a class $\tilde{s}$ for a given sound $s$. For this purpose, we adopt the acoustic approach to the sounds of our music system. From physical point of view sound waves have three main attributes affecting the way they are perceived by the ear \cite{B1}. The first is ``pitch'' measured by the physical term \textit{frequency} which is inversely proportional to the wavelength of sound. The second is ``loudness'' related to the analogous physical quantity \textit{intensity} (or sound pressure) which is proportional to the square of the wave amplitude of sound. The third is ``colour'' or ``timbre'' corresponding to the concept of \textit{spectrum} which represents the quality of the wave form of sound. We have already treated pitch axiomatically to achieve its measurement. We are going to do so as for loudness and color. Notice that pitch, loudness, and timbre are actually subjective terms and are not to be equated with their physical analogues in acoustics \cite{E2}. We have not defined the notion of pitch and we did not have such a plan. We just considered it as an intuitive tool used in axioms to access the corresponding physical quantity. We will adopt this same method to the other two attributes of sounds. (\cite{R1} is recommended for strictly physical study of the characteristics of sound waves.)
\begin{axiom}[Loudness Postulate]
Given any positive number $\iota_0$, there is a unique way to assign a positive real number $\iota(x)$ to each sound $x$ with the property $\iota(x_0)=\iota_0$ and such that for every positive real number $r$ and for every sound $s$ there exists a sound $x$ identical in pitch with $s$ so that $\iota(x)=r$; more precisely, for a given $\iota_0\in \mathbb{R}^+$ there exists a function $\iota:\mathbb{S}\rightarrow \mathbb{R}^+$ such that $\iota(x_0)=\iota_0$, and moreover,
\begin{center}
$\forall r\in \mathbb{R}^+(\forall s\in \mathbb{S}(\exists x\in \mathbb{S}(x\in \tilde{s}\wedge \iota(x)=r))).$
\end{center}
\end{axiom}
\begin{definition}
For every positive number $\iota_0$, the existing function $\iota:\mathbb{S}\rightarrow \mathbb{R}^+$ in the Loudness Postulate is said to be the \emph{intensity of sounds} in base $(x_0,\iota_0)$. The ordered pair $(x_0,\iota_0)$ is called the \emph{base} of $\iota$. For every $x\in \mathbb{S}$ the value of the function $\iota$ at $x$, i.e. $\iota (x)$, is called the \emph{intensity} of $x$.
\end{definition}
\paragraph*{}
We do assume that the number $\iota_0$ in Definition 4.1 is given once and for all.
\begin{remark}
Replacing the first two universal quantifier of the symbolic part of the Loudness Postulate, it is established that $\tilde{s}$ is uncountable for every $s\in \mathbb{S}$, and $\iota |_{\tilde{s}}$ is onto. It particularly implies that the relation $\sim$ does not coincide with $=$. Somewhat surprisingly, there are uncountably many (even greater than $2^{\aleph_0}$) sets of representatives for the equivalence relation of identity, and the relation $\sim$ is uncountable.
\end{remark}
\begin{remark}
We denote the set of all nonconstant periodic continuous functions on $\mathbb{R}$ by $C_p(\mathbb{R})$. The period of a $\Gamma \in C_p(\mathbb{R})$ is denoted by $T_\Gamma$ and is defined to be the minimum positive real number $T$ which satisfies the equation $\Gamma(x+T)=\Gamma(x)$ for every $x\in \mathbb{R}$. We also denote by $A_\Gamma$ the diameter of the range of a $\Gamma \in C_p(\mathbb{R})$. Thus, the elements
\begin{align*}
T_\Gamma &=\min \{T\in \mathbb{R}^+:\forall x\in \mathbb{R}(\Gamma(x+T)=\Gamma(x))\},\\
A_\Gamma &=\max \{|\Gamma(x)-\Gamma(y)|:x,y\in [0,T_\Gamma)\}
\end{align*}
exist and are positive for every $\Gamma \in C_p(\mathbb{R})$. From set theory we know that the cardinality of $C_p(\mathbb{R})$ equals $2^{\aleph_0}$.
\end{remark}
\begin{axiom}[Colour Postulate]
For every sound $x$ there exists a unique nonconstant periodic continuous real-valued function $\Gamma(x)$ such that $T_{\Gamma(x)}=f(x)$ and $A_{\Gamma(x)}=\iota(x)$.
\end{axiom}
\begin{definition}
The function $\Gamma:\mathbb{S}\rightarrow C_p(\mathbb{R})$ with the same properties as in the Colour Postulate is said to be the \emph{timbre}. For every $x\in \mathbb{S}$ the value of $\Gamma$ at $x$, i.e. $\Gamma(x)$, is said to be the \emph{spectrum} of $x$.
\end{definition}
\begin{remark}[Notation]
We denote by $C_p^*(\mathbb{R})$ the range of the timbre; i.e., $C_p^*(\mathbb{R}):=\Gamma (\mathbb{S})$.
\end{remark}
\paragraph*{}
The reason for phrasing the Colour Postulate so is that no really matter whether the function $\Gamma:\mathbb{S}\rightarrow C_p(\mathbb{R})$ can be obtained uniquely or not at the first look. Also, we were not so abstinent to allocate by $\Gamma$ the frequency and the intensity of sounds respectively to the wavelength and the amplitude of their spectra, since the maps $t\mapsto 1/t$ and $t\mapsto t^2$ act bijectively on $\mathbb{R}^+$. The crux of the matter is actually that whether or not $\Gamma$ gives us a simple characterization of the sounds of our music. As expected from the physical point of view, this point is guaranteed by the following axiom.
\begin{axiom}[Axiom of Extensionality]
For every sound $x$ and for every sound $y$, $\Gamma(x)=\Gamma(y)$ implies $x=y$.
\end{axiom}
\paragraph*{}
Physically speaking, the Axiom of Extensionality states that a sound is uniquely determined by its extension to the spectrum. In other words, if two sounds are equated in frequency, intensity, and spectrum (which in fact yields the equality of the first two), they are equal. However, we shall not be going to make use of such a physical identification untill Section 6, when required.
\begin{corollary}
$\Gamma$ is a one-to-one correspondence between $\mathbb{S}$ and $C_p^*(\mathbb{R})$.
\end{corollary}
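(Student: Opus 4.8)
The plan is to unwind the definitions and observe that the statement is almost immediate once the Colour Postulate and the Axiom of Extensionality are in hand. Recall that ``one-to-one correspondence'' means a bijection, so what must be checked is that $\Gamma$, regarded as a map $\mathbb{S}\to C_p^*(\mathbb{R})$, is well-defined, injective, and surjective.

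First I would dispose of well-definedness and surjectivity together. By the Colour Postulate, every sound $x$ has exactly one spectrum $\Gamma(x)\in C_p(\mathbb{R})$, so $\Gamma$ is a genuine function on $\mathbb{S}$; and by Remark 4.5 (the Notation remark) the set $C_p^*(\mathbb{R})$ is defined to be $\Gamma(\mathbb{S})$, the range of $\Gamma$. Hence $\Gamma$ does take values in $C_p^*(\mathbb{R})$, and for every $\gamma\in C_p^*(\mathbb{R})$ there is by definition of the image some $x\in\mathbb{S}$ with $\Gamma(x)=\gamma$; that is exactly surjectivity onto $C_p^*(\mathbb{R})$.

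For injectivity I would simply cite the Axiom of Extensionality: if $\Gamma(x)=\Gamma(y)$ for sounds $x,y$, then $x=y$. Combining the two, $\Gamma:\mathbb{S}\to C_p^*(\mathbb{R})$ is a bijection, which is the assertion.

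There is no real obstacle here; the only thing to be careful about is not to overclaim by trying to identify $C_p^*(\mathbb{R})$ explicitly as a subset of $C_p(\mathbb{R})$ (that would require knowing which periodic continuous functions actually arise as spectra, information the axioms do not yet supply). The corollary is stated precisely so as to avoid that issue, asserting only the abstract bijection between $\mathbb{S}$ and the range of $\Gamma$, and the proof should stay at that level.
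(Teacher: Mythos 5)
Your proof is correct and is exactly the intended argument: the Colour Postulate makes $\Gamma$ a well-defined function, $C_p^*(\mathbb{R})$ is by definition the range $\Gamma(\mathbb{S})$ so surjectivity is automatic, and the Axiom of Extensionality is precisely injectivity — which is why the paper states this as an immediate corollary with no written proof. Your closing caution about not trying to identify $C_p^*(\mathbb{R})$ inside $C_p(\mathbb{R})$ is also well taken; the paper only settles that question later (Appendix I, Axiom 13').
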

\begin{corollary}
$\mathbb{S}$, $*$, $\sim$ are equipotent to the continuum $\mathbb{R}$; i.e.,
\begin{center}
$Card(\mathbb{S)}=Card(*)=Card(\sim)=2^{\aleph_0}.$
\end{center}
\end{corollary}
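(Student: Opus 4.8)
The plan is to determine $Card(\mathbb{S})$ by a two-sided estimate and then obtain the cardinalities of the two relations $*$ and $\sim$ almost for free, since both sit inside $\mathbb{S}\times \mathbb{S}$.

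For the upper bound $Card(\mathbb{S})\leq 2^{\aleph_0}$ I would invoke Corollary 4.10: the timbre $\Gamma$ injects $\mathbb{S}$ into $C_p^*(\mathbb{R})\subseteq C_p(\mathbb{R})$, and Remark 4.5 records that $Card(C_p(\mathbb{R}))=2^{\aleph_0}$. For the matching lower bound I would fix any sound $s$ and use the Loudness Postulate, whose whole content is that $\iota$ restricted to $\tilde{s}$ is onto $\mathbb{R}^+$; hence $Card(\tilde{s})\geq Card(\mathbb{R}^+)=2^{\aleph_0}$, and since $\tilde{s}\subseteq \mathbb{S}$ this forces $Card(\mathbb{S})\geq 2^{\aleph_0}$. (One could equally cite Corollary 3.90 or Theorem 3.86, which already make $\mathbb{S}/\sim$ equipotent with $\mathbb{R}^+$.) The Cantor--Schr\"{o}der--Bernstein theorem then yields $Card(\mathbb{S})=2^{\aleph_0}$.

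Next I would treat the relations. The inclusion $*\subseteq \mathbb{S}\times \mathbb{S}$ gives $Card(*)\leq (2^{\aleph_0})^2=2^{\aleph_0}$ at once. For the reverse inequality I would fix a sound $a$, use item 1 of the Axiom of Betweenness in Pitch to produce a sound $b$ with $a*b$, and then observe via Lemma 2.9 that $a*x$ holds for every $x\in \tilde{b}$; thus $\{a\}\times \tilde{b}\subseteq *$ and $Card(*)\geq Card(\tilde{b})=2^{\aleph_0}$. For $\sim$, the same inclusion bounds it above by $2^{\aleph_0}$, while the diagonal $\{(x,x):x\in \mathbb{S}\}\subseteq \sim$ (reflexivity of $\sim$, Corollary 2.7) bounds it below by $Card(\mathbb{S})=2^{\aleph_0}$. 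This settles all three equalities.

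I do not expect a genuine obstacle here: the statement is essentially bookkeeping on top of Corollary 4.10, Remark 4.5 and the Loudness Postulate. The only point that repays a moment's attention is that the lower estimate for $\mathbb{S}$ must be drawn from something already proved --- the surjectivity of $\iota|_{\tilde{s}}$ is exactly such a fact --- and it is worth remarking that the Continuum Hypothesis, though assumed in the background of the paper, is not invoked anywhere in this particular count.
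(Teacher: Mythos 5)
Your proposal is correct and follows exactly the route the paper intends for this (unproved) corollary: the upper bound $Card(\mathbb{S})\leq 2^{\aleph_0}$ via the injection $\Gamma$ into $C_p(\mathbb{R})$ furnished by the Colour Postulate and the Axiom of Extensionality, the matching lower bound from the surjectivity of $\iota|_{\tilde{s}}$ (or, equivalently, from the earlier fact that $Card(\mathbb{S}/\sim)=2^{\aleph_0}$), and then the relations $*$ and $\sim$ squeezed between a copy of a continuum-sized set (via Lemma 2.9, resp. the diagonal) and $\mathbb{S}\times\mathbb{S}$. Your closing remark that the Continuum Hypothesis plays no role here is also accurate.
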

\paragraph*{}
One may understand that the subjective term \emph{timbre} is already defined as follows: $\Gamma(x)=\Gamma(y)$ if and only if $x=y$. Although this conclusion is true, `What is the definition of sound?' one another may ask. This is in fact the philosophy of the axiomatic method! We did suppose sound to be an undefined concept axiomatically equipped with the desired musical properties, just like the concept of \textit{ordered pair} in the context of set theory which is not actually able to be defined in terms of sets in some mathematicians' opinion and for this reason it is assumed as an additional primitive concept besides \textit{set}.
\paragraph*{}
`Did the system really entail a lot of talking about pitch to approach the notion of frequency unlike the other two attributes of sounds?' one could ask. It is worth stating that frequency is the single most important characteristic of sounds from musical point of view whose introduction involves another basic musical notion, namely interval. The lone significance of frequency will be clarified in the next section, when we develop the purely musical framework of our axiom system. For example, having introduced the notion of \textit{melody}, one can practically construct different melodies with fixing all characteristic of sounds other than frequency, whereas this is impossible even by virtue of the set of all sounds with a fixed frequency. However, the most interesting attribute of sounds from practical viewpoint is certainly intensity by which we can produce many attractive dynamics and various rhythms (Section 6). Notice that timbre is the most complicated attribute of sounds from physical viewpoint and actually a cause for the external appearance of different musical instruments that is essentially left to be dealt with in this paper.
\paragraph*{}
We denote the axiom system for music theory constructed so far by $\mathcal{M}(PI)$, and all axioms involved are from now on referred to as the $PI$\textit{-axioms}. The following remark suggests consistency of $\mathcal{M}(PI)$ by presenting an analytically geometric model for it.
\begin{remark}
The Cartesian model of $\mathcal{M}(PI)$ is constructed as follows.  We interpret \emph{sound} as an ordered triple of positive real numbers and consider the relation $*$ to be the usual strict ordering $<$ on the first coordinate of sounds, namely, for any $x=(x_1,x_2,x_3),y=(y_1,y_2,y_3)\in {\mathbb{R}^+}^3$, we have $x*y$ if and only if $x_1<y_1$. We interpret the relation $\cong$ as follows: $[x,y]\cong [z,t]$ whenever $y_1/x_1=t_1/z_1$. We define the function $\iota :\mathbb{S}\rightarrow \mathbb{R}^+$ simply by $\iota(x)=x_2$ (more precisely, multiplied by the scalar $\frac{\iota_0}{(x_0)_2}$), and let $\Gamma(x)$ be the function $g_x:\mathbb{R}\rightarrow \mathbb{R}$ defined by $g_x(t)=\frac{1}{2} x_2\sin (\frac{2\pi}{x_1}t)+x_3$ (corresponding to the physical interpretation of sounds as sinusoidal waves). One can patiently check the correctness of the $PI$-axioms and enjoy the Cartesian interpretation of all other technical concepts through such an elucidation of the musical world.
\end{remark}

\section{Tone Music}
\begin{center}
`\textbf{Let no one ignorant of music enter!}'
\end{center}
\paragraph*{}
The above entrance to this section, alike to its geometric analogue of Plato's academy, means we have already done quite a little music but not all of it. From now on, we go into the world of basic music theory with the aim of introducing key notions including \textit{scale}, \textit{note}, and \textit{melody} to develop musical aspects of our axiom system. Within such a framework, we consider two more undefined concepts ``equitonal'' and ``harmonic'', the first of which is a relation between sounds, written ``$x$ is equitonal to $y$'' and denoted $x\simeq y$, and the second one is a property of intervals, written ``$[a,b]$ is harmonic''. We do not interpret the musical meaning of these two primitive terms for now inasmuch as we intend to maintain the excitement of the story and present a mysterious scenario to clarify the matter while faithfully preserving the spirit of Euclid. Besides, they are open to interpretation from the model theory perspective without missing their own exciting genuine meanings (\cite{T1, P2} and of course \cite{M3}). (Another preferred reference to music theory other than those alluded before is \cite{S1}.)
\begin{axiom}[Euclidean Postulates of Tonality]$\,\,\,\,\,\,\,\,\,$
\begin{enumerate}
\item Every pair of identical sounds are equitonal to each other; i.e.,
\begin{center}
$\forall x,y\in \mathbb{S}(x\sim y\Rightarrow x\simeq y).$
\end{center}
\item Sounds equitonal to the same sound are equitonal to each other; i.e.,
\begin{center}
$\forall x,y,z\in \mathbb{S}((x\simeq z\wedge y\simeq z)\Rightarrow x\simeq y).$
\end{center}
\end{enumerate}
\end{axiom}
\begin{corollary}
The binary relation $\simeq$ is an uncountable equivalence on $\mathbb{S}$.
\end{corollary}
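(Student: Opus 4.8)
The plan is to obtain the three defining properties of an equivalence relation from the two clauses of the Euclidean Postulates of Tonality, using the already-established reflexivity of $\sim$, and then to read off uncountability from the inclusion of $\sim$ in $\simeq$.

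First I would prove \emph{reflexivity}: since $\sim$ is an equivalence relation on $\mathbb{S}$ (Corollary 2.7), every $x\in\mathbb{S}$ satisfies $x\sim x$, so item 1 of the Euclidean Postulates of Tonality yields $x\simeq x$. For \emph{symmetry}, suppose $x\simeq y$; instantiating item 2 with $y$, $x$, $y$ in the roles of its first, second and third sounds gives $(y\simeq y\wedge x\simeq y)\Rightarrow y\simeq x$, and both hypotheses hold (the first by reflexivity, the second by assumption), so $y\simeq x$. For \emph{transitivity}, suppose $x\simeq y$ and $y\simeq z$; by symmetry $z\simeq y$, and instantiating item 2 with $x$, $z$, $y$ gives $(x\simeq y\wedge z\simeq y)\Rightarrow x\simeq z$, whence $x\simeq z$. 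Thus $\simeq$ is an equivalence relation on $\mathbb{S}$.

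For \emph{uncountability}, note that item 1 of the Euclidean Postulates of Tonality asserts exactly $\sim\,\subseteq\,\simeq$ as subsets of $\mathbb{S}\times\mathbb{S}$. Since $\sim$ is uncountable — indeed $Card(\sim)=2^{\aleph_0}$ by Remark 4.2 (equivalently Corollary 4.7) — so is the larger relation $\simeq$; and since $\simeq\subseteq\mathbb{S}\times\mathbb{S}$ with $Card(\mathbb{S}\times\mathbb{S})=2^{\aleph_0}$, in fact $Card(\simeq)=2^{\aleph_0}$. I do not anticipate any genuine obstacle: this is the familiar ``Euclid's common notion'' passage from a reflexive-plus-Euclidean relation to an equivalence, and the only point requiring care is getting the variable substitutions in item 2 right, since that clause is written with a fixed arrangement of its three sounds.
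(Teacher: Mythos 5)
Your proof is correct and is exactly the intended argument: the paper leaves this corollary to the reader, and the expected route is precisely yours — reflexivity of $\simeq$ from item 1 of the Euclidean Postulates of Tonality together with reflexivity of $\sim$, symmetry and transitivity from the Euclidean clause (item 2) via the instantiations you give, and uncountability from the inclusion $\sim\,\subseteq\,\simeq$ with $Card(\sim)=2^{\aleph_0}$. Your variable substitutions in item 2 are handled correctly, so there is nothing to add.
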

\begin{corollary}
Whenever $[a,b]=[c,d]$, then $a\simeq b$ iff $c\simeq d$. In other words,
\begin{center}
$\forall a,a',b,b'\in \mathbb{S}((a\simeq b\wedge a\sim a' \wedge b\sim b')\Rightarrow a'\simeq b').$
\end{center}
\end{corollary}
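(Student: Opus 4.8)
The plan is to deduce both directions of the biconditional from the fact that $\simeq$ is an equivalence relation (Corollary~5.1) together with item~1 of the Euclidean Postulates of Tonality, after first rewriting the hypothesis $[a,b]=[c,d]$ in terms of the pitch-identity $\sim$ by means of Corollary~3.2. In essence the statement just records that $\simeq$ is coarser than $\sim$, hence well-defined on the quotient $\mathbb{S}/\sim$.

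Concretely, I would first prove the ``in other words'' reformulation, since it is the cleaner form. Assume $a\simeq b$, $a\sim a'$, and $b\sim b'$; the goal is $a'\simeq b'$. From $a'\sim a$ (symmetry of $\sim$) and item~1 of Axiom~15 we obtain $a'\simeq a$, and from $b\sim b'$ and the same item we obtain $b\simeq b'$. Applying transitivity of $\simeq$ (part of Corollary~5.1) twice to the chain $a'\simeq a$, $a\simeq b$, $b\simeq b'$ yields $a'\simeq b'$, as desired. For the displayed biconditional itself, note that by Corollary~3.2 the equality $[a,b]=[c,d]$ is precisely $a\sim c\wedge b\sim d$; feeding this into the reformulation with $c,d$ in the role of $a',b'$ gives $a\simeq b\Rightarrow c\simeq d$, and running the identical argument with the pairs $(a,b)$ and $(c,d)$ interchanged — legitimate because $\sim$ is symmetric — gives the reverse implication.

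I do not anticipate a genuine obstacle here; every step is a one-line invocation of an axiom or of equivalence-relation properties already established. The only point that warrants a moment's care is not to presuppose compatibility of $\simeq$ with $\sim$ anywhere in the argument, since that compatibility is exactly what is being proved: each instance of ``replace a sound by a $\sim$-identical one'' must be routed explicitly through item~1 of Axiom~15 followed by transitivity of $\simeq$, rather than treated as automatic.
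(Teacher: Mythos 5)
Your proof is correct and follows exactly the route the paper intends for this corollary (which it leaves to the reader): rewrite $[a,b]=[c,d]$ as $a\sim c\wedge b\sim d$ via Corollary~3.2, convert each pitch-identity into an equitonality via item~1 of Axiom~15, and close the chain with transitivity and symmetry of $\simeq$ from Corollary~5.1. Your closing caution about not tacitly assuming the compatibility being proved is exactly the right discipline here.
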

\begin{remark}[Notation]
For every $s\in \mathbb{S}$, we denote by $\hat{s}$ the equivalence class of all sounds equitonal to $s$.
\end{remark}
\begin{definition}
For every sound $s$, a minimal element of $\hat{s}\cap \overset{\rightarrow}{s}$ with respect to the order relation $*$, when it exists, is denoted by $s^*$, and $[s,s^*]$ is called an \emph{octave} interval.
\end{definition}
\begin{remark}
We know from set theory that for every sound $s$ a minimal element of the strict ordered set $(\hat{s}\cap \overset{\rightarrow}{s},*)$ is a sound $m\in \hat{s}\cap \overset{\rightarrow}{s}$ which satisfies the following:
\begin{center}
$\forall x\in \mathbb{S}((s\simeq x\wedge s*x)\Rightarrow \neg(x*m)).$
\end{center}
From this symbolic formulation, it becomes clear that the minimal element $s^*$ is unique up to identity, and in turn the octave interval $[s,s^*]$ is well-defined. Notice that $s^*$ (if it exists) is a sound higher in pitch than and equitonal to $s$, and further, there is no sound between-pitched them being equitonal to $s$ or $s^*$.
\end{remark}
\paragraph*{}
With regard to Definition 5.4, a musician partly understands what we are talking about in the present section.
\begin{lemma}
Let $s^*$ exist. For every sound $t$ identical-pitched with $s$, $t^*$ exists.
\end{lemma}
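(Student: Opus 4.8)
The plan is to reduce everything to a single observation: the set whose minimal element defines $t^*$ is literally the same set whose minimal element defines $s^*$. Once this is established, the hypothesis that $s^*$ exists hands us $t^*$ for free (in fact with $t^*\sim s^*$).

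First I would unwind the two pieces of the set $\hat t\cap\overset{\rightarrow}{t}$. Since $t\sim s$, item 1 of the Euclidean Postulates of Tonality gives $t\simeq s$, and as $\simeq$ is an equivalence relation (Corollary 5.1) this forces $\hat t=\hat s$. Next I would show $\overset{\rightarrow}{t}=\overset{\rightarrow}{s}$: a sound $x$ lies in $\overset{\rightarrow}{t}$ exactly when $t*x$, and by Lemma 2.9 applied with $t\sim s$ (and $x\sim x$) this is equivalent to $s*x$, i.e. to $x\in\overset{\rightarrow}{s}$. Combining the two equalities yields $\hat t\cap\overset{\rightarrow}{t}=\hat s\cap\overset{\rightarrow}{s}$.

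Now $s^*$ is, by hypothesis and Definition 5.4, a minimal element of $(\hat s\cap\overset{\rightarrow}{s},\,*)$. Because this ordered set is the very same as $(\hat t\cap\overset{\rightarrow}{t},\,*)$, the sound $s^*$ is also a minimal element of the latter; hence $\hat t\cap\overset{\rightarrow}{t}$ possesses a minimal element with respect to $*$, which is exactly what it means for $t^*$ to exist. As a bonus, uniqueness up to identity (Remark 5.5) gives $t^*\sim s^*$, so the octave intervals $[t,t^*]$ and $[s,s^*]$ are essentially the same datum.

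I do not anticipate a genuine obstacle here; the only point requiring a moment's care is the identity $\overset{\rightarrow}{t}=\overset{\rightarrow}{s}$, and even that is an immediate application of Lemma 2.9 together with Corollary 2.4 to rule out the trivial cases. The lemma is really a bookkeeping step preparing the ground for later results about octaves along an equivalence class under $\sim$.
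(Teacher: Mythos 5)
Your proof is correct, and since the paper leaves this lemma to the reader it is exactly the natural argument intended: $t\sim s$ gives $\hat{t}=\hat{s}$ (Axiom 15.1 plus transitivity of $\simeq$) and $\overset{\rightarrow}{t}=\overset{\rightarrow}{s}$ (Lemma 2.9), so the two sets whose minimal elements define $s^*$ and $t^*$ coincide and the existence of $s^*$ yields $t^*$, with the bonus $t^*\sim s^*$ agreeing with Lemma 5.7.
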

\begin{lemma}
Let $s$ and $t$ be any sounds and let $s^*$ and $t^*$ exist. Then $s\sim t$, iff $s^*\sim t^*$.
\end{lemma}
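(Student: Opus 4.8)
The plan is to establish the two implications separately: the ``only if'' part by a \emph{common minimal element} argument, and the ``if'' part by contradiction, after first converting pitch information into tonality information.

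For the forward direction, assume $s\sim t$. I would first note that then $\hat{s}=\hat{t}$ and $\overset{\rightarrow}{s}=\overset{\rightarrow}{t}$: the first because $s\sim t$ forces $s\simeq t$ (item 1 of the Euclidean Postulates of Tonality) and $\simeq$ is an equivalence (Corollary 5.2), so $\hat{s}=\hat{t}$; the second because Lemma 2.9 makes the relation $*$ respect $\sim$, so $s*x\Leftrightarrow t*x$ for every sound $x$. Consequently $\hat{s}\cap\overset{\rightarrow}{s}=\hat{t}\cap\overset{\rightarrow}{t}$, and by Definition 5.4 both $s^*$ and $t^*$ are minimal elements, with respect to $*$, of this one common strictly ordered set. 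Since the image of that set in $(\mathbb{S}/\sim,*')$ lies inside a linearly ordered set, it has at most one minimal element up to identity --- which is precisely the uniqueness recorded in Remark 5.5 --- and therefore $s^*\sim t^*$.

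For the backward direction I would argue by contradiction: suppose $s^*\sim t^*$ but $s\nsim t$, and, since the hypothesis is symmetric in $s$ and $t$, use Corollary 2.4 to assume $s*t$ without loss of generality. The crucial opening move is to recover tonality: by Definition 5.4 we have $s\simeq s^*$ and $t\simeq t^*$, while $s^*\sim t^*$ gives $s^*\simeq t^*$ (item 1 of the Euclidean Postulates of Tonality), so transitivity of $\simeq$ together with item 2 of the same axiom yields $s\simeq t$. Now $t\in\hat{s}\cap\overset{\rightarrow}{s}$, so minimality of $s^*$ rules out $t*s^*$; hence by Corollary 2.4 either $t\sim s^*$ or $s^**t$. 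In the first case $t\sim s^*\sim t^*$ contradicts $t*t^*$; in the second case Lemma 2.9 (applied to $s^*\sim t^*$, $t\sim t$ and $s^**t$) gives $t^**t$, again contradicting $t*t^*$ through Axioms 1 and 2. Either way a contradiction is reached, so $s\sim t$.

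The bookkeeping --- that $\hat{s}\cap\overset{\rightarrow}{s}=\hat{t}\cap\overset{\rightarrow}{t}$ in the forward direction, and the disposal of the two subcases in the backward direction --- is immediate from Lemma 2.9, Corollary 2.4 and the irreflexivity/transitivity axioms, and Lemma 5.9 is not even needed here since the existence of both octave sounds is hypothesised. The only spot requiring genuine care is the first move of the backward direction: one must notice that $s\nsim t$ by itself says nothing about $\simeq$, so the equitonality $s\simeq t$ has to be extracted from $s^*\sim t^*$ before the minimality of $s^*$ can be brought to bear. Once that is in place the proof closes in a couple of lines, and the octave construction enters only through the defining properties of $s^*$ and $t^*$.
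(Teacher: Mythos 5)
Your proof is correct and follows essentially the same route as the paper's: the forward direction via uniqueness up to identity of the minimal element of $\hat{s}\cap \overset{\rightarrow}{s}$ (which the paper simply dismisses by Remark 5.5), and the backward direction by reductio, recovering $s\simeq t$ from $s^*\sim t^*$ through the Euclidean Postulates of Tonality and then violating the minimality of $s^*$. The only cosmetic difference is that the paper obtains $t*s^*$ directly from $t*t^*$ and $t^*\sim s^*$ via Lemma 2.9, whereas you reach the same contradiction by a trichotomy case split; the substance is identical.
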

\begin{proof}
The ``only if'' part is obvious by Remark 5.5. The ``if'' part needs a reductio ad absurdum. Suppose that $s*t$ and $s^*\sim t^*$. Since $t*t^*$, it follow that $t*s^*$. On the other hand, we have $t\simeq t^*$ and $s\simeq s^*$. So the Euclidean Postulates of Tonality imply that $t\simeq s$. Hence we get $s*t*s^*$ and $t\in \hat{s}\cap \overset{\rightarrow}{s}$, which means that $t$ is a sound belonging to the set $\hat{s}\cap \overset{\rightarrow}{s}$ while being lower-pitched than its minimal element $s^*$ --- a contradiction.
\end{proof}
\begin{lemma}
If $[a,b]$ is an octave interval, then $a^*$ exists and is identical-pitched with $b$.
\end{lemma}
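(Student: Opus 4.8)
The plan is to unwind Definition 5.4 and then chain Lemmas 5.8 and 5.9 together; the argument is short, so I expect no real obstacle beyond keeping track of which lemma supplies which existence hypothesis. First I would spell out the hypothesis: by the way octave intervals are defined, saying that $[a,b]$ is an octave interval means precisely that there is a sound $s$ for which $s^*$ exists (that is, $(\hat{s}\cap\overset{\rightarrow}{s},*)$ has a minimal element, which we call $s^*$) and $[a,b]=[s,s^*]$. By Corollary 3.2 this is equivalent to $a\sim s$ together with $b\sim s^*$.

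Next, since $s^*$ exists and $a\sim s$, Lemma 5.8 applies and gives that $a^*$ exists; this is the first half of the claim. Now both $s^*$ and $a^*$ exist and $s\sim a$, so Lemma 5.9 applies to the pair $s,a$ and yields $s^*\sim a^*$ from $s\sim a$. Finally, from $b\sim s^*$ and $s^*\sim a^*$, the symmetry and transitivity of $\sim$ (Corollary 2.8) give $a^*\sim b$, which is the second half of the claim.

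The only point calling for a little care is the order of the steps: Lemma 5.9 cannot be invoked until we know $a^*$ exists, which is exactly what Lemma 5.8 provides, so Lemma 5.8 must be applied first. One could instead argue directly that $b$ is itself a minimal element of $\hat{a}\cap\overset{\rightarrow}{a}$ --- from $a\sim s$, $b\sim s^*$ and the Euclidean Postulates of Tonality one gets $a\simeq b$ and $a*b$, and a sound strictly between $a$ and $b$ equitonal to $a$ would, being equitonal to $s$ and higher in pitch than $s$, contradict the minimality of $s^*$ --- but routing through Lemmas 5.8 and 5.9 is cleaner and reuses machinery already established.
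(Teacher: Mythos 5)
Your argument is correct and is essentially the paper's own proof: unwind the definition to get $[a,b]=[s,s^*]$ with $a\sim s$, $b\sim s^*$, invoke the lemma on identical-pitched sounds to obtain the existence of $a^*$ (the paper's Lemma 5.6), then the iff-lemma to get $a^*\sim s^*$ (the paper's Lemma 5.7), and finish by transitivity of $\sim$. The only discrepancies are in the citation numbers (the two auxiliary lemmas are 5.6 and 5.7, not 5.8 and 5.9, and the equivalence-relation fact is Corollary 2.7), not in the substance.
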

\begin{proof}
Suppose that $[a,b]=[s,s^*]$ for some sound $s$. Since $a\sim s$ and $s^*$ exists, it follows that $a^*$ exists (Lemma 5.6) and is identical-pitched with $s^*$ (Lemma 5.7). Finally, $a^*$ is identical-pitched with $b$ because $s^*\sim b$.
\end{proof}
\paragraph*{}
Based on the axiom so far stated about tonality, i.e. the Euclidean Postulates of Tonality, we do logically not observe any difference between the two binary relations $\simeq$ and $\sim$. Is there an octave interval, or at least, is there any pair of non-identical equitonal sounds? Intuitively, we do need the fact that for every sound there is an equitonal sound higher-pitched than that, but this statement will be redundant as a new axiom. We must deeply care about postulating statements involved in the existential quantification. At the other extreme, for all that has been said so far, we might have been operating in a vacuum. Thus, to give the discussion some substance and to prevent reducibility of our axiom set, we officially assume that `there exists an octave interval' at present, based on which we will be better able to hypothesize one basic axiomatic property of tonality entitled \textit{uniformity}. Notice that since later on we shall formulate a logically stronger and (of course) more efficient existential assumption, this one plays a temporary role only.
\begin{remark}
The assumption above is explicitly stated as follows:
\begin{center}
$\exists \vartheta ,\vartheta '\in \mathbb{S}(\vartheta'=\vartheta^*)$.
\end{center}
The octave interval $[\vartheta ,\vartheta^*]$ is then produced. So up to this point, there is at least one octave interval.
\end{remark}
\paragraph*{}
Since the ground of music theory, in practice, contains many octave intervals, a new axiom is required to generate them as instinctively as follows.
\begin{axiom}[Axiom of Uniformity]
Every interval congruent to a given octave interval is an octave interval; that is,
\begin{center}
$\forall s\in \mathbb{S}(\forall I\in \mathbb{I}(I\cong [s,s^*]\Rightarrow \exists t\in \mathbb{S}(I=[t,t^*]))).$
\end{center}
\end{axiom}
\paragraph*{}
The musical reason for the name of Axiom 16 is that an octave interval can move along pitch (by the Axiom of Motion) without any deformation of tonality of its endsounds. The moral is that there are infinitely many (in fact $2^{\aleph_0}$) octave intervals in the system. As expected from physical viewpoint, we shall see the Axiom of Uniformity indirectly tends to declare that tonality of our music is dependent on pitch.
\begin{lemma}[Generalization of Lemma 5.8]
If $[a,b]$ is congruent to an octave interval, then $a^*$ exists and is identical-pitched with $b$. In particular, $[a,b]$ is equal to the octave interval $[a,a^*]$.
\end{lemma}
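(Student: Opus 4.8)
The plan is to reduce the statement to the Axiom of Uniformity together with Lemma 5.8, with essentially no new computation. First I would unpack the hypothesis: ``$[a,b]$ is congruent to an octave interval'' means there is a sound $s$ for which $s^{*}$ exists and $[a,b]\cong[s,s^{*}]$. Thus $[a,b]$ is an interval congruent to the octave interval $[s,s^{*}]$, so the Axiom of Uniformity applies directly and delivers a sound $t\in\mathbb{S}$ with $[a,b]=[t,t^{*}]$. In particular $[a,b]$ is itself an octave interval in the sense of Definition 5.4.

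Now I would feed this conclusion back into Lemma 5.8, which asserts that if $[a,b]$ is an octave interval then $a^{*}$ exists and $a^{*}\sim b$; this is precisely the first assertion of the present lemma. For the ``in particular'' clause I would then compute, using $a^{*}\sim b$ together with the characterization of interval equality (Corollary 3.2), that $[a,a^{*}]=(\tilde a,\widetilde{a^{*}})=(\tilde a,\tilde b)=[a,b]$; and since $a^{*}$ exists, $[a,a^{*}]$ is an octave interval by Definition 5.4. Hence $[a,b]$ equals the octave interval $[a,a^{*}]$, as claimed.

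The only point that warrants care — the closest thing to an obstacle here — is checking that the Axiom of Uniformity is legitimately invocable. Its symbolic form quantifies over a sound $s$ while tacitly presupposing that $s^{*}$ exists (otherwise $[s,s^{*}]$ is not even a well-formed term), so one must explicitly note that the phrase ``congruent to an octave interval'' in the hypothesis furnishes exactly such an $s$. Once this is observed, the proof is a two-step chain: Uniformity promotes ``congruent to an octave interval'' into ``is an octave interval'', and Lemma 5.8 then extracts both the existence of $a^{*}$ and its position relative to $b$; the remainder is routine manipulation of $\sim$-classes.
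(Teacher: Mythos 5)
Your proof is correct and is exactly the intended argument: the paper leaves this lemma unproved, but its placement immediately after the Axiom of Uniformity and Lemma 5.8 makes clear the intended route is yours — Uniformity upgrades ``congruent to an octave interval'' to ``is an octave interval'', Lemma 5.8 then gives the existence of $a^*$ with $a^*\sim b$, and Corollary 3.2 yields $[a,a^*]=(\tilde a,\widetilde{a^*})=(\tilde a,\tilde b)=[a,b]$. Your observation that the hypothesis itself supplies the sound $s$ for which $s^*$ exists, so that Axiom 16 is legitimately applicable, is a worthwhile point of care and is handled correctly.
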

\begin{corollary}
For all $s\in \mathbb{S}$, $s^*$ exists.
\end{corollary}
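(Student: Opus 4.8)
The plan is to transport the one octave interval we are guaranteed to have (from the assumption in Remark 5.9) to an interval based at the arbitrary sound $s$, and then invoke Lemma 5.10 to conclude that $s^*$ exists. So the argument is essentially a single application of the Axiom of Motion followed by Lemma 5.10.

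In detail, I would first fix an arbitrary sound $s \in \mathbb{S}$ and recall that the assumption preceding Axiom 16 yields an octave interval $[\vartheta,\vartheta^*]$. Next I would apply the Axiom of Motion (Axiom 8) to the interval $[\vartheta,\vartheta^*]$ and the sound $s$, producing a sound $t$ with $[\vartheta,\vartheta^*]\cong[s,t]$; by Symmetry of Congruence (Proposition 3.5) this gives $[s,t]\cong[\vartheta,\vartheta^*]$, so $[s,t]$ is an interval congruent to an octave interval. Then Lemma 5.10 (the Generalization of Lemma 5.8) applies verbatim and tells us that $s^*$ exists (and in fact $t\sim s^*$ and $[s,t]=[s,s^*]$). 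Since $s$ was arbitrary, $s^*$ exists for every $s\in\mathbb{S}$.

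I do not expect a genuine obstacle here: all the substantive content was already extracted into Lemma 5.10, whose proof uses the Axiom of Uniformity together with Lemma 5.8. The only thing worth stressing is the role of the Axiom of Motion as the bridge from \emph{``there exists some octave interval''} to \emph{``there exists an octave interval at every prescribed lower endsound''}; without it the lone octave interval of Remark 5.9 would remain isolated. One might also add, as a remark, that this corollary is exactly what makes the earlier temporary existential assumption do real work, and that combined with Lemma 5.7 it shows the map $s\mapsto\tilde{s^*}$ is well defined on all of $\mathbb{S}/\!\sim$.
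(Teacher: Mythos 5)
Your proof is correct and is essentially the paper's own argument: the paper likewise applies the Axiom of Motion to the octave interval of Remark 5.9 and the given sound $s$, and then invokes Lemma 5.10 to conclude $s^*$ exists. Your extra care with the direction of the congruence (via Symmetry of Congruence) is a minor tidying of the same two-step proof.
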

\begin{proof}
Using the Axiom of Motion, there is a sound $x$ such that $[s,x]\cong [\vartheta ,\vartheta^*]$ (see Remark 5.9). Now Lemma 5.10 ensures the existence of $s^*$.
\end{proof}
\paragraph*{}
Though $s^*$ is not unique, it leads us to the following effective musical conception whose importance will be attached, having guaranteed it is well-defined by Corollary 5.11 and Lemma 5.7.
\begin{definition}
The function $O^+:\mathbb{S}/\sim \,\rightarrow \mathbb{S}/\sim$ defined by $O^+(\tilde{s})=\tilde{s^*}$ is called the \emph{forward octave function}.
\end{definition}
\begin{exercise}
For any pair of sounds $s$ and $t$, the following are equivalent:
\begin{enumerate}
\item $s\sim t.$
\item $\overset{\rightarrow}{s}=\overset{\rightarrow}{t}.$
\item $\hat{s}=\hat{t}$ and $\overset{\rightarrow}{s}=\overset{\rightarrow}{t}.$
\item $\hat{s}\cap \overset{\rightarrow}{s}=\hat{t}\cap \overset{\rightarrow}{t}.$
\item $s^*\sim t^*.$
\item $O^+(\tilde{s})=O^+(\tilde{t}).$
\end{enumerate}
\end{exercise}
\begin{proof}[Hint]
Follow the direction $1\Rightarrow 2\Rightarrow 3\Rightarrow 4\Rightarrow 5\Rightarrow 6\Rightarrow 1$ and see Lemma 5.7.
\end{proof}
\begin{proposition}[Euclid's Tonality Postulate]
All octave intervals are congruent to each other.
\end{proposition}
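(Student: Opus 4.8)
The plan is to transport one octave interval onto an endsound of the other by the Axiom of Motion and then recognize its image as the second octave interval.

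First I would fix two octave intervals; by Definition 5.4 they may be written $[s,s^*]$ and $[t,t^*]$, with $s^*$ and $t^*$ existing by Corollary 5.11. Applying the Axiom of Motion (Axiom 8) to the interval $[s,s^*]$ and the sound $t$ produces a sound $u$ with $[s,s^*]\cong[t,u]$. Since $[t,u]$ is congruent to the octave interval $[s,s^*]$, the Generalization of Lemma 5.8 (Lemma 5.10) applies and tells us that $t^*$ exists and $u\sim t^*$, whence $[t,u]=[t,t^*]$ by Corollary 3.2. Transitivity of congruence (Axiom 7) then yields $[s,s^*]\cong[t,t^*]$, which is exactly the assertion.

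The one point that demands a moment's care is that an octave interval is specified only up to identity in pitch of its endsounds: the symbol $s^*$ denotes a minimal element of $(\hat{s}\cap\overset{\rightarrow}{s},*)$, which is unique merely up to $\sim$ by Remark 5.5 and Lemma 5.7. But an interval is by Definition 3.1 an ordered pair of $\sim$-classes, so this ambiguity is already invisible at the level of $\mathbb{I}$ and the argument above is unaffected. I do not expect a genuine obstacle here; essentially all of the weight is carried by the Axiom of Motion together with Lemma 5.10, and the latter is itself a consequence of the Axiom of Uniformity (Axiom 16). If one wished to bypass Lemma 5.10, the alternative is to invoke the Axiom of Uniformity on $[t,u]$ directly to learn that $[t,u]$ is some octave interval $[t',(t')^*]$ and then use Lemma 5.7 to deduce that $t'\sim t$ forces $(t')^*\sim t^*$, hence $[t,u]=[t,t^*]$ again — a slightly longer route that relies only on Axiom 16.
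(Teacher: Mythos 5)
Your argument is correct and is essentially the paper's own proof: apply the Axiom of Motion to move $[s,s^*]$ onto the sound $t$, use Lemma 5.10 to identify the resulting interval with $[t,t^*]$, and conclude. (A trivial remark: once $[t,u]=[t,t^*]$ as equal objects, the final step is mere substitution rather than an appeal to transitivity, but this changes nothing.)
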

\begin{proof}
For any sounds $s$ and $t$, by the Axiom of Motion we have $[s,s^*]\cong [t,\alpha]$ for some sound $\alpha$. Based on Lemma 5.10 we obtain $[t,\alpha]=[t,t^*]$. It follows that $[s,s^*]\cong [t,t^*]$.
\end{proof}
\begin{remark}
Here the independence of the Axiom of Uniformity would be under discussion. Consider the same Cartesian model of $\mathcal{M}(PI)$ as mentioned in Remark 4.8. Let the relation $\simeq$ be the union of the relation $\sim$ and the set $\{(x,y)\in \mathbb{S}\times \mathbb{S}: f(x),f(y)\in \{2,4\}\}$. One can observe that Axiom 15 together with the assumption of Remark 5.9 are satisfied, but the only existing octave interval, i.e. $[2,4]$ (indeed $[(2,0,0),(4,0,0)]$ ), cannot move in the sense that Axiom 16 fails to hold. It is interesting that Euclid's Tonality Postulate is still true in this interpretation. Therefore, in opposition to the grandeur of Euclid's Tonality Postulate from geometric point of view, the Axiom of Uniformity would not be inferred from it.
\end{remark}
\begin{remark}[Notation]
Let us temporarily denote by $k$ the measure of octave intervals. It is clear that $k>1$.
\end{remark}
\begin{remark}
Let $s\in \mathbb{S}$. Applying the operator $^*$ to $s$ gives $s^*$. We then have $|[s,s^*]|=k$. Pursuing the same procedure starting with $s^*$, we obtain a sound $s^{**}$ equitonal to $s^*$ and a fortiori to $s$ such that $|[s^*,s^{**}]|=k$. If we repeat this process ad infinitum, we will achieve arbitrary numbers of sounds higher-pitched than $s$ which are equitonal to that; furthermore, the measure of the resultant intervals produced by sequential sounds in each step are all equal to $k$. Now, `What happens to equitonal sounds lower-pitched than $s$?' one can ask. To answer this question, we continue with the following lemma which is satisfied in $\mathcal{M}(PI)$ but needed here.
\end{remark}
\begin{lemma}
For every sound $m$ and for every sound $a$ there is just one sound $b$ up to identity such that $m$ is a midsound of $[a,b]$.
\end{lemma}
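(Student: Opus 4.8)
The plan is to read the result straight off the Axiom of Motion. First I would recall that, by Definition 3.44, saying that $m$ is a midsound of $[a,b]$ is by definition the same as saying $[a,m]\cong [m,b]$; thus, for fixed $m$ and $a$, the lemma asks for existence and uniqueness up to identity of a sound $b$ with $[a,m]\cong [m,b]$.

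Next I would apply the Axiom of Motion (Axiom 8) to the interval $[a,m]$ and the sound $m$: it yields a sound $b$ with $[a,m]\cong [m,b]$, and moreover every $b'$ satisfying $[a,m]\cong [m,b']$ obeys $b\sim b'$. The first clause is exactly the existence of a sound $b$ for which $m$ is a midsound of $[a,b]$, and the second clause is exactly uniqueness up to identity in pitch. This finishes the proof.

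There is essentially no obstacle here; the only point that deserves a moment's care is keeping the instantiation straight — in the application of the Axiom of Motion the fixed sound is $m$ (not $a$) and the interval being transported is $[a,m]$ — together with the observation that this is a genuinely different assertion from Proposition 3.46: there one fixes an interval and bounds the number of its midsounds, whereas here one fixes an endsound $a$ and the prospective midsound $m$ and produces the opposite endsound $b$.
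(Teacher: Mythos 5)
Your proof is correct and is essentially the paper's own argument: the paper's sketch simply says ``Done by the Axiom of Motion,'' and your instantiation (transporting $[a,m]$ to the fixed sound $m$, so that $[a,m]\cong[m,b]$ holds for a sound $b$ unique up to identity) is exactly the intended reading. Your remark distinguishing this from Proposition 3.46 is also accurate and worth keeping in mind.
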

\begin{proof}[Sketch of Proof]
Done by the Axiom of Motion.
\end{proof}
\begin{proposition}
The forward octave function $O^+$ is an isomorphism on $(\mathbb{S}/\sim ,*')$.
\end{proposition}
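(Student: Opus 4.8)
Recall that an order automorphism of $(\mathbb{S}/\sim,*')$ is simply a bijection $\phi$ of $\mathbb{S}/\sim$ such that $\tilde{x}*'\tilde{y}$ holds if and only if $\phi(\tilde{x})*'\phi(\tilde{y})$ holds. Since (Remark 2.10) $\tilde{x}*'\tilde{y}$ means $x*y$ or $x\sim y$, the plan is to verify three things: that $O^+$ is well-defined (already in hand --- $s^*$ exists for every $s$ by Corollary 5.11, and $\tilde{s^*}$ depends only on $\tilde{s}$ by Lemma 5.7, which is exactly what legitimizes Definition 5.12), that $O^+$ is a bijection, and that for all sounds $s,t$ one has $s*t\Leftrightarrow s^**t^*$ together with $s\sim t\Leftrightarrow s^*\sim t^*$.

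The order conditions and injectivity I would obtain in one stroke from Euclid's Tonality Postulate. Fix $s,t\in\mathbb{S}$; by Proposition 5.14 the octave intervals $[s,s^*]$ and $[t,t^*]$ are congruent, so Corollary 3.34 (the Generalization of Lemma 3.8), applied once with $R=*$ and once with $R=\sim$, yields $s*t\Leftrightarrow s^**t^*$ and $s\sim t\Leftrightarrow s^*\sim t^*$. The second equivalence says that $O^+(\tilde{s})=O^+(\tilde{t})$ forces $\tilde{s}=\tilde{t}$, so $O^+$ is injective; combining both equivalences with the definition of $*'$ gives $\tilde{s}*'\tilde{t}\Leftrightarrow \tilde{s^*}*'\tilde{t^*}$, so $O^+$ and its inverse preserve $*'$ on the image.

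The genuine obstacle is surjectivity, because $O^+$ manifestly only moves a class one octave \emph{upward}, whereas we must produce, for each $\tilde{u}$, a class lying one octave \emph{below} it; the Axiom of Motion, as stated, only lets us prescribe the lower endsound of a congruence copy, not the upper one. The trick is to invert it by passing to conversions. Given $u\in\mathbb{S}$, recall the fixed octave interval $[\vartheta,\vartheta^*]$ of Remark 5.9, and apply the Axiom of Motion to the interval $[\vartheta^*,\vartheta]$ and the sound $u$ to obtain $s\in\mathbb{S}$ with $[\vartheta^*,\vartheta]\cong[u,s]$; by symmetry of congruence and item 2 of Corollary 3.17 this gives $[s,u]\cong[\vartheta,\vartheta^*]$, so $[s,u]$ is congruent to an octave interval. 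Lemma 5.10 then guarantees that $s^*$ exists and $s^*\sim u$, whence $O^+(\tilde{s})=\tilde{s^*}=\tilde{u}$. Thus $O^+$ is a bijection preserving and reflecting $*'$, i.e. an isomorphism of $(\mathbb{S}/\sim,*')$ onto itself; the conversion step turning the forward-only Axiom of Motion into a backward octave construction is the only delicate point.
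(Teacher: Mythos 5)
Your proof is correct, and its skeleton (well-definedness via Corollary 5.11 and Lemma 5.7, order-preservation and injectivity from Euclid's Tonality Postulate combined with the congruence-order compatibility results, plus a separate argument for surjectivity) matches the paper's. The one place you genuinely diverge is surjectivity: the paper produces the preimage of $\tilde{s}$ by invoking the midsound machinery --- Lemma 5.18 and Proposition 3.45 give a sound $t$ with $s$ a midsound of $[t,s^*]$, i.e. $[t,s]\cong[s,s^*]$, and then Lemma 5.10 finishes --- whereas you bypass midsounds entirely by applying the Axiom of Motion to the conversion $[\vartheta^*,\vartheta]$ of a fixed octave at the sound $u$, flipping back with item 2 of Corollary 3.17, and then invoking Lemma 5.10. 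These are really the same idea in different packaging: Lemma 5.18 is itself proved by exactly the ``copy the reversed interval'' move you perform by hand, so your route is a little more self-contained and elementary, while the paper's reuses established lemmas and gets the uniqueness of the preimage (hence injectivity) as a free by-product; you instead recover injectivity from Lemma 5.7, or equivalently from Corollary 3.34 with $R=\sim$, which is equally legitimate. A second small difference: for order-preservation you apply Corollary 3.34 directly to $[s,s^*]\cong[t,t^*]$, which is slightly more direct than the paper's detour through Proposition 3.33 to the congruence $[x,y]\cong[x^*,y^*]$; both are sound. (Minor remark: you could even dispense with the fixed $\vartheta$ of Remark 5.9 and run your motion argument on $[u^*,u]$ itself, since Corollary 5.11 already guarantees $u^*$ exists.)
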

\begin{proof}
Based on Lemma 5.18 and Proposition 3.45, for every class $\tilde{s}$ we can choose a sound $t$ so that $s$ is a midsound of $[t,s^*]$. It means that $[t,s]\cong [s,s^*]$, which yields $O^+(\tilde{t})=\tilde{t^*}=\tilde{s}$ by Lemma 5.10. Thus $O^+$ is onto. Since $t$ is unique up to identity, $O^+$ is one-to-one (Lemma 5.7 independently shows $O^+$ is injective). Finally, by Euclid's Tonality Postulate, the two intervals $(\tilde{x},O^+(\tilde{x}))$ and $(\tilde{y},O^+(\tilde{y}))$ are congruent for any sounds $x$ and $y$. From Proposition 3.33 it follows that $[x,y]\cong (O^+(\tilde{x}),O^+(\tilde{y}))$. By Corollary 3.34, $\tilde{x}*'\tilde{y}$ if and only if $O^+(\tilde{x})*'O^+(\tilde{y})$.
\end{proof}
\begin{definition}
We define the \emph{backward octave function} $O^-:\mathbb{S}/\sim \,\rightarrow \mathbb{S}/\sim$ to be the inverse of the forward octave function, that is, $O^-=(O^+)^{-1}$.
\end{definition}
\begin{exercise}
Prove that $[x,y]$ is an octave interval if and only if one of the following equivalents occurs:
\begin{enumerate}
\item $x\simeq y$ and $x*y$, and for every sound $s$, $x*s*y$ implies $s\not\simeq x$.
\item $x^*\sim y.$
\item $O^+(\tilde{x})=\tilde{y}.$
\item $O^-(\tilde{y})=\tilde{x}$
\item $|[x,y]|=k.$
\end{enumerate}
\end{exercise}
\begin{remark}
Equivalent to Definition 5.20, $O^-(\tilde{s})$ is equal to the class $\tilde{t}$ of those sounds which make $[t,s]$ be an octave interval, namely $t^*\sim s$. Clearly, the backward octave function $O^-$ is an isomorphism on $(\mathbb{S}/\sim ,*')$ as well as the $O^+$. Going back to Remark 5.17, we may also obtain, by virtue of the backward octave function, infinitely many sounds equitonal to a given sound and lower-pitched than that. The Axiom of Uniformity implies that all equitonal sounds could be obtained in such a way working with the two octave functions. This claim is proven in the following.
\end{remark}
\begin{theorem}[Tonality Representation Theorem]
For every sound $s$, the bisequence $\{{O^+}^n(\tilde{s})\}_{n\in \mathbb{Z}}$ is a partition of $\hat{s}$.
\end{theorem}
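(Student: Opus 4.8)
The plan is to verify the three defining properties of a partition in turn: each $\,{O^+}^n(\tilde{s})$ is a nonempty subset of $\hat{s}$; the classes $\,{O^+}^n(\tilde{s})$ are pairwise disjoint for distinct $n$; and their union is all of $\hat{s}$. First note that $\hat{s}$ is saturated under $\sim$: if $x\sim y$ then $x\simeq y$ by item~1 of the Euclidean Postulates of Tonality, so $x\simeq s$ forces $y\simeq s$ since $\simeq$ is an equivalence (Corollary~5.1); hence it is legitimate to ask whether a family of $\sim$-classes partitions $\hat{s}$. Containment is proved by induction on $|n|$: $\tilde{s}\subseteq\hat{s}$ is trivial, and if $\,{O^+}^n(\tilde{s})=\tilde{t}$ with $t\simeq s$, then $\,{O^+}^{n+1}(\tilde{s})=O^+(\tilde{t})=\widetilde{t^*}$ with $t^*\simeq t\simeq s$ (Remark~5.5), while $\,{O^+}^{n-1}(\tilde{s})=O^-(\tilde{t})=\tilde{u}$ where $u^*\sim t$, so $u\simeq u^*\sim t\simeq s$; in either direction the new class lies in $\hat{s}$. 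Nonemptiness is automatic, each term being a $\sim$-class.

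For pairwise disjointness I would use that $O^+$ is an order automorphism of $(\mathbb{S}/\sim,\,*')$ (Proposition~5.19) and that $\tilde{s}\neq O^+(\tilde{s})=\widetilde{s^*}$ because $s*s^*$. Since $\tilde{s}\,*'\,\widetilde{s^*}$ and these two classes differ, applying $O^+$ repeatedly, and $O^-$ repeatedly in the other direction, produces a chain $\cdots\,*'\,{O^+}^{-1}(\tilde{s})\,*'\,\tilde{s}\,*'\,O^+(\tilde{s})\,*'\cdots$ that is strictly increasing in $*'$, so all its terms are distinct; distinct equivalence classes under $\sim$ are disjoint (Corollary~2.7).

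The main work is coverage: given any $t\simeq s$, I must produce $n\in\mathbb{Z}$ with $\tilde{t}={O^+}^n(\tilde{s})$. Write $k$ for the measure of octave intervals (Remark~5.16), so $k>1$, and let $f$ be the frequency of sounds; since $|[s,s^*]|=k$ and $|[x,y]|=f(y)/f(x)$ (Theorem~3.86), the induction above shows that $f$ takes the constant value $k^{n}f(s)$ on the class $\,{O^+}^n(\tilde{s})$. Because $k>1$, the blocks $[\,k^{n}f(s),\,k^{n+1}f(s)\,)$ for $n\in\mathbb{Z}$ tile $(0,\infty)$, so there is a unique $n$ with $k^{n}f(s)\le f(t)<k^{n+1}f(s)$; in terms of pitch this says $\,{O^+}^n(\tilde{s})$ is $*'$-below-or-equal $\tilde{t}$, which is $*'$-strictly-below $\,{O^+}^{n+1}(\tilde{s})$. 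If $f(t)=k^{n}f(s)$ then $\tilde{t}={O^+}^n(\tilde{s})$ and we are done. Otherwise choose representatives $t_n\in{O^+}^n(\tilde{s})$ and $t_{n+1}\in{O^+}^{n+1}(\tilde{s})$; the strict inequalities force $t_n*t*t_{n+1}$, and since $O^+(\widetilde{t_n})={O^+}^{n+1}(\tilde{s})=\widetilde{t_{n+1}}$ we have $t_n^{*}\sim t_{n+1}$, so $[t_n,t_{n+1}]=[t_n,t_n^{*}]$ is an octave interval. But $t$ is between-pitched $t_n$ and $t_n^{*}$ while $t\simeq s\simeq t_n$, contradicting the minimality built into $t_n^{*}$ (Definition~5.4, Remark~5.5: no sound strictly between the endsounds of an octave is equitonal to them). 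Hence $\tilde{t}={O^+}^n(\tilde{s})$ in every case, and the union is all of $\hat{s}$.

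I expect the third step to be the crux: one must know both that the iterates are cofinal and coinitial in $\hat{s}$ for the pitch order — which is exactly the multiplicative Archimedean spacing furnished by $k>1$ (one could instead invoke the Archimedean Property of Intervals, Proposition~3.63, together with $n\cdot[s,s^*]=[s,t_n]$, in place of the frequency computation) — and that nothing equitonal to $s$ can sit strictly between two consecutive iterates, which is just the minimality defining an octave. Everything else, namely containment, disjointness, and nonemptiness, is routine bookkeeping with the properties of $O^+$ and $O^-$ already established.
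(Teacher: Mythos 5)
Your proof is correct, and its skeleton coincides with the paper's: nonemptiness is trivial, containment in $\hat{s}$ goes by induction using transitivity of $\simeq$, disjointness follows from the chain $\{{O^+}^n(\tilde{s})\}_{n\in\mathbb{Z}}$ being strictly increasing under $*'$ (Proposition 5.19), and coverage ends, exactly as in the paper, by sandwiching $\tilde{t}$ between two consecutive iterates and using the octave minimality (item 1 of Exercise 5.21, equivalently Remark 5.5) to rule out a sound equitonal to $s$ strictly between them. The one genuine divergence is how the sandwiching index $n$ is produced. The paper stays order-theoretic: it iterates $O^+$ or $O^-$ depending on which side of $s$ contains $x$ and invokes the well-ordering of $\mathbb{N}$, leaving implicit the cofinality fact that some iterate eventually passes $\tilde{x}$. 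You instead compute $f({O^+}^n(\tilde{s}))=k^{n}f(s)$ from $|[s,s^*]|=k$ and Theorem 3.86, and read off the unique $n$ from the tiling of $(0,\infty)$ by the blocks $[\,k^{n}f(s),k^{n+1}f(s)\,)$; you also correctly observe that the Archimedean Property of Intervals (Proposition 3.63) with $n\cdot[s,s^*]$ would serve the same purpose without frequencies. What each buys: the paper's route is more economical and intrinsic, not needing the real-valued measure at this point, while yours makes the Archimedean spacing explicit and thereby closes the small gap the paper's appeal to well-ordering glosses over, at the cost of importing the machinery of Theorems 3.83 and 3.86.
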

\begin{proof}
The proof is based on the principle of mathematical induction. Clearly $(O^+)^n(\tilde{s})\in \mathbb{S}/\sim$ is nonempty for every $n\in \mathbb{Z}$. We notice the bisequence is strictly increasing with respect to $*'$; this is immediate by Proposition 5.19 and Definition 5.12. Hence, if ${O^+}^m(\tilde{s})$ intersects ${O^+}^n(\tilde{s})$ which implies that ${O^+}^m(\tilde{s})={O^+}^n(\tilde{s})$, then $m$ necessarily equals $n$. Finally, we must check the equality $\hat{s}=\bigcup_{n\in \mathbb{Z}} {O^+}^n(\tilde{s})$. Trivially, $\tilde{s}\subseteq \hat{s}$, and for every sound $x$ equitonal to $s$ we have $O^+(\tilde{x})\subseteq \hat{s}$ and $O^-(\tilde{x})\subseteq \hat{s}$ (transitivity). We deduce from induction that ${O^+}^n(\tilde{s})\subseteq \hat{s}$ for every integer $n$. Conversely, suppose that $x\in \hat{s}$. If $x\sim s$, we are done. Otherwise, depending on which side of $s$ contains $x$, by applying the forward or backward octave function, the well-ordering property of natural numbers guarantees the existence of an integer $n$ so that ${O^+}^n(\tilde{s})*'\tilde{x}$ and $\tilde{x}*'{O^+}^{n+1}(\tilde{s})$. Since $({O^+}^n(\tilde{s}),{O^+}^{n+1}(\tilde{s}))$ is an octave interval, it follows that $x$ belongs to either ${O^+}^n(\tilde{s})$ or ${O^+}^{n+1}(\tilde{s})$ (see item 1 of Exercise 5.21) which concludes the proof.
\end{proof}
\paragraph*{}
Having replaced the forward octave function by the backward one, another equivalent representation of tonality can appear in Theorem 5.23.
\begin{corollary}
For every sound $s$, the ordered set $(\hat{s}/\sim \,,*')$ is isomorphic to $(\mathbb{Z},\leq)$.
\end{corollary}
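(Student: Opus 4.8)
The plan is to write down the explicit order isomorphism that Theorem 5.23 is practically handing us. Define $F:\mathbb{Z}\to \hat{s}/\sim$ by $F(n)=(O^+)^n(\tilde{s})$. First I would verify that $F$ is well-defined and surjective: by the Tonality Representation Theorem the family $\{(O^+)^n(\tilde{s})\}_{n\in\mathbb{Z}}$ is a partition of $\hat{s}$, and since $O^+$ maps $\mathbb{S}/\sim$ to $\mathbb{S}/\sim$, each block $(O^+)^n(\tilde{s})$ is itself a single $\sim$-class; because these blocks cover $\hat{s}$, they are exactly the $\sim$-classes contained in $\hat{s}$, i.e.\ exactly the elements of $\hat{s}/\sim$. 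Hence $F$ is a well-defined surjection onto $\hat{s}/\sim$.

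Next I would check that $F$ is strictly monotone, meaning $m<n$ implies $F(m)\neq F(n)$ and $F(m)*'F(n)$. Since $s^*$ is higher in pitch than $s$ we have $s*s^*$, so $\tilde{s}*'O^+(\tilde{s})$ with $\tilde{s}\neq O^+(\tilde{s})$; as $O^+$ is an isomorphism of $(\mathbb{S}/\sim,*')$ (Proposition 5.19), applying it repeatedly shows the bisequence $\{(O^+)^n(\tilde{s})\}_{n\in\mathbb{Z}}$ is strictly increasing for $*'$ — this is precisely what is recorded in the proof of Theorem 5.23. Transitivity of $*'$ then gives $F(m)*'F(n)$ whenever $m<n$, and distinctness of $F(m)$ and $F(n)$ for $m\neq n$ is immediate from the partition property (different blocks are disjoint, hence unequal).

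For the reverse implication $F(m)*'F(n)\Rightarrow m\leq n$ I would argue by contraposition: if $n<m$, then by the previous paragraph $F(n)*'F(m)$ with $F(n)\neq F(m)$, and since $*'$ is a (linear, hence antisymmetric) ordering, $F(m)*'F(n)$ is impossible. Therefore $F$ is an order embedding of $(\mathbb{Z},\leq)$ into $(\hat{s}/\sim,*')$; being also surjective, it is an order isomorphism, which is the assertion of the corollary. (One could equally use the backward octave function $O^-$ to produce the ``same'' isomorphism, cf.\ Remark 5.22.)

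I do not expect a real obstacle here: the statement is essentially a repackaging of Theorem 5.23 together with the fact that $O^+$ is an order automorphism of $(\mathbb{S}/\sim,*')$. The only point needing a little care is bookkeeping around the fact that $*'$ is a \emph{non-strict} total order, so one must phrase the monotonicity of $F$ so that strict increase on $\mathbb{Z}$ corresponds to strict $*'$-increase on $\hat{s}/\sim$, and invoke antisymmetry of $*'$ to pull the order back.
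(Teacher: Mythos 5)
Your proposal is correct and matches the paper's intended argument: the paper's sketch simply says to define the map $n\mapsto {O^+}^n(\tilde{s})$, and you fill in exactly the verifications it implicitly relies on (the Tonality Representation Theorem for surjectivity and Proposition 5.19 for order preservation). No further comment needed.
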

\begin{proof}[Sketch of Proof]
Define naturally the function $\mathbb{Z}\rightarrow \hat{s}/\sim$ as $n\mapsto {O^+}^n(\tilde{s})$.
\end{proof}
\paragraph*{}
The moral is that $(\hat{s}/\sim \,,*')$ inherits all set-theoretic features of $(\mathbb{Z},\leq)$. In particular, the relation $*'$ is a well-ordering on $(\hat{s}\cap \overset{\rightarrow}{s})/\sim$; i.e., every nonempty subset of $(\hat{s}\cap \overset{\rightarrow}{s})/\sim$ has its minimum element. On the other extreme, If we consider $\Omega :\mathbb{S}/\sim \,\rightarrow \mathbb{S}$ to be a choice function for $\mathbb{S}/\sim$, we then understand that the map $n\mapsto \Omega({O^+}^n(\tilde{s}))$ from $\mathbb{Z}$ to $\mathbb{S}$ injects all integers into the class $\hat{s}$ consisting of all sounds equitonal to $s$. Therefore, corresponding to each integer we may obtain a unique sound, up to identity, of different frequency from the others, which is equitonal to $s$. Having made some adjustment of this musical phenomenon, an alternate consequence is obtained as follows.
\begin{corollary}
Given a sound $s$, every family of all pairwise non-identical sounds which are equitonal to $s$ (and thus to each other) is countable.
\end{corollary}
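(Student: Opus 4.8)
The plan is to reduce the claim to the countability of the pitch-quotient $\hat{s}/\sim$, which has already been pinned down in Corollary 5.24. First I would unpack the hypothesis: a family of pairwise non-identical sounds equitonal to $s$ is simply a set $F\subseteq\hat{s}$ such that any two distinct members of $F$ are non-identical in pitch, i.e.\ $x\neq y$ in $F$ implies $\tilde{x}\neq\tilde{y}$. (If ``family'' is read as an indexed collection, the same reasoning applies verbatim to its index set.)

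Next I would pass to the quotient map $q:F\to\hat{s}/\sim$ given by $x\mapsto\tilde{x}$. This is well-defined because $F\subseteq\hat{s}$ and, by the Tonality Representation Theorem, $\hat{s}$ is a union of $\sim$-classes, so each $\tilde{x}$ genuinely lies in $\hat{s}/\sim$. The pairwise-non-identity assumption on $F$ is precisely the statement that $q$ is injective. Then I would invoke Corollary 5.24, which furnishes an order isomorphism, hence in particular a bijection, between $(\hat{s}/\sim,\,*')$ and $(\mathbb{Z},\leq)$; thus $\hat{s}/\sim$ is countable. Since an injective image inside a countable set is countable, $F$ is countable, which is the assertion.

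There is essentially no obstacle here: the substantive content was already extracted in the Tonality Representation Theorem and Corollary 5.24, and this corollary is merely the translation of ``$q$ injective into a countable set''. The one subtlety worth flagging is that, in contrast to the earlier discussion of sets of representatives for $\sim$, no appeal to the axiom of choice is needed, because Corollary 5.24 already supplies an explicit countable enumeration of $\hat{s}/\sim$ through the iterates of the forward octave function $O^+$, and $q$ then does the job directly.
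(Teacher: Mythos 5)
Your argument is correct and is essentially the paper's intended derivation: the corollary is stated immediately after Corollary 5.24 precisely so that one maps a pairwise non-identical family injectively into $\hat{s}/\sim$, which is in bijection with $\mathbb{Z}$ by the Tonality Representation Theorem and Corollary 5.24. Your added remark that the canonical map $x\mapsto\tilde{x}$ avoids any use of the axiom of choice (unlike the paper's surrounding discussion of representatives via a choice function $\Omega$) is accurate and a nice clarification.
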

\begin{remark}
Entering the world of dynamical systems (see \cite{H4, B2, A2} for deep study), if we equip the ordered set $(\mathbb{S}/\sim,*')$ with the order topology inherited from $\mathbb{R}^+$ (see Corollary 3.88), we will observe that the dynamics of the forward octave function $O^+:\mathbb{S}/\sim \,\rightarrow \mathbb{S}/\sim$ (as a homeomorphism) is very simple. Since the set of all orbits under $O^+$ makes a partition of $\mathbb{S}/\sim$, this material is compatible with the fact that the set of all classes $\hat{s}$ partitions $\mathbb{S}$ plainly, as mentioned in the Tonality Representation Theorem;
\begin{center}
$\mathbb{S}=\underset{s\in \frac{\mathbb{S}}{\sim}}{\stackrel{\circ}{\bigcup}} \underset{n\in \mathbb{Z}}{\stackrel{\circ}{\bigcup}}{O^+}^n(s).$
\end{center}
One can see that this dynamical system has no periodic point and every class $\tilde{s}$ is wandering under $O^+$. Hence every invariant set is expressed as the union of some orbits, and especially $O^+(\hat{s})=\hat{s}$. Note that the phase space $\mathbb{S}/\sim$ is not compact, thus every minimal set of $(\mathbb{S}/\sim \,,O^+)$ (a nonempty closed invariant set having no proper subset with these three properties) is of the form $\hat{s}$ which is equivalent to the orbit of $s\in \mathbb{S}$ under $O^+$. In fact, this system is topologically conjugate to the system $(\mathbb{R}^+,x\mapsto kx)$ under the conjugacy $fo\Omega$, where $\Omega:\mathbb{S}/\sim \,\rightarrow \mathbb{S}$ is a choice function. Fortunately, we are not on the circle to get involved in chaos theory! Similar result holds for the dynamical system $(\mathbb{S}/\sim \,,O^-)$.
\end{remark}
\begin{theorem}[Tonality Characterization Theorem]
For every pair of sounds $s$ and $t$, $s\simeq t$ if and only if $|[s,t]|=k^n$ for some $n\in \mathbb{Z}$.
\end{theorem}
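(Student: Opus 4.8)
The plan is to exploit the Tonality Representation Theorem together with the multiplicativity of the interval measure (item 5 of Theorem 3.83). First I would fix the sound $s$ and build a bi-infinite ``octave tower'' of representatives: set $s_0=s$; for $m\geq 0$ let $s_{m+1}=s_m^{*}$ (which exists by Corollary 5.11); and for $m\leq 0$ let $s_{m-1}$ be any representative of the nonempty class $O^-(\tilde{s_m})$. An easy induction using Definition 5.12 gives $\tilde{s_m}=(O^+)^m(\tilde{s})$ for every $m\in\mathbb{Z}$, and in every case $[s_m,s_{m+1}]$ is an octave interval (for $m<0$ because $s_{m+1}\sim s_m^{*}$), so $|[s_m,s_{m+1}]|=k$ by Remark 5.16. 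Telescoping with item 5 of Theorem 3.83, together with $|[a,a]|=1$ and $|[b,a]|=|[a,b]|^{-1}$, then yields $|[s,s_n]|=k^n$ for every integer $n$.

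For the forward implication, suppose $s\simeq t$, i.e.\ $t\in\hat{s}$. By the Tonality Representation Theorem, $\hat{s}=\bigcup_{n\in\mathbb{Z}}(O^+)^n(\tilde{s})$, so $\tilde{t}=(O^+)^n(\tilde{s})=\tilde{s_n}$ for some $n$; hence $t\sim s_n$, so $[s,t]=[s,s_n]$ by Definition 3.1, and therefore $|[s,t]|=k^n$. For the converse, suppose $|[s,t]|=k^n$. Then $|[s,t]|=|[s,s_n]|$, so $[s,t]\cong[s,s_n]$ by item 3 of Theorem 3.83; since $s\sim s$, Corollary 3.34 gives $t\sim s_n$, whence $t\in\tilde{s_n}=(O^+)^n(\tilde{s})\subseteq\hat{s}$ (again by the Tonality Representation Theorem), i.e.\ $s\simeq t$.

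The only real work is the telescoping measure computation, and the only place to be careful is the treatment of negative exponents --- one must either run the octave tower downward using $O^-$, or obtain $|[s,s_{-m}]|=k^{-m}$ from $|[s_{-m},s]|=k^{m}$ via $|[a,b]|^{-1}=|[b,a]|$. Everything else is a direct appeal to results already established: the Tonality Representation Theorem supplies the $\mathbb{Z}$-indexed decomposition of $\hat{s}$ into octave classes, and Theorem 3.83 (items 2, 3, 5) converts congruence and octave-stacking into the arithmetic of powers of $k$.
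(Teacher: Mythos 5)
Your proof is correct and rests on the same two pillars as the paper's: the Tonality Representation Theorem to decompose $\hat{s}$ into the classes $(O^+)^n(\tilde{s})$, and item 5 of Theorem 3.83 to turn octave-stacking into powers of $k$ (your telescoping along the tower $s_n$ is just the paper's induction in explicit-representative form). The only cosmetic difference is that you close the converse via item 3 of Theorem 3.83 and Corollary 3.34 to force $t\sim s_n$, where the paper proves the biconditional $|[s,t]|=k^n \Leftrightarrow t\in (O^+)^n(\tilde{s})$ by a two-sided induction; both are sound.
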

\begin{proof}
Based on the Tonality Representation Theorem, $t$ is equitonal to $s$ if and only if there is some integer $n$ such that $t\in {O^+}^n(\tilde{s})$. Thus, it suffices to show that the possibility $t\in {O^+}^n(\tilde{s})$ is equivalent to $|[s,t]|=k^n$ for any sounds $s$ and $t$. This is done by induction on $n$. Obviously, $[s,t]$ is unit if and only if $t\in \tilde{s}$. Let the statement $|[s,t]|=k^n$ be equivalent to $t\in {O^+}^n(\tilde{s})$. First suppose that $|[s,t]|=k^{n+1}$. Since $|[s,s^*]|=k$, by item 5 of Theorem 3.83 it means that $|[s^*,t]|=k^n$. By the inductive hypothesis, this means $t\in {O^+}^n(\tilde{s^*})$ which is equivalent to $t\in {O^+}^{n+1}(\tilde{s})$ by the definition of the forward octave function. In a similar way, having assumed $|[s,t]|=k^{n-1}$, we equivalently obtain $|[s,t^*]|=k^n$. This condition holds if and only if $t^*\in {O^+}^n(\tilde{s})$ (the inductive hypothesis) which means $t\in {O^+}^{n-1}(\tilde{s})$ using the backward octave function.
\end{proof}
\paragraph*{}
Therefore, two sounds are equitonal if and only if the ratio of their frequencies equals $k^n$ for some integer $n$. What happened? The subjective concept of tonality has been characterized in terms of the frequency of sounds. One may complain about such an artificial characterization of tonality and ask why it is not defined so as to have the aforementioned property, having economized on undefined concepts. Instead, we did consider an axiomatic alternative converging to the same sensational event as musical recognition leads, just like Euclid's treatment of geometry. This is why we acted in that way. It is a matter of authority to have got to either remember a few more postulates or forget a few more philosophical conceptions.
\paragraph*{}
The following consequence is a kind of generalization of uniformity which indicates instinctively that tonality of our music is invariant under congruence.
\begin{corollary}[Generalization of Corollary 5.2]
Let $[a,b]\cong [a',b']$. Then $a\simeq b$ iff $a'\simeq b'$.
\end{corollary}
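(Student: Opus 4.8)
The plan is to route the statement through the Tonality Characterization Theorem (Theorem 5.27), which converts the subjective relation $\simeq$ into the purely numerical condition that the measure of the joining interval be an integer power of $k$. Once tonality is expressed in terms of $|\,\cdot\,|$, the claim reduces to the elementary fact that congruent intervals have equal measure, which is already recorded in the Interval Measure Theorem.

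First I would apply item 3 of Theorem 3.83 to the hypothesis $[a,b]\cong [a',b']$, obtaining $|[a,b]|=|[a',b']|$; call this common value $r$. Next, by the Tonality Characterization Theorem, $a\simeq b$ holds if and only if $r=k^n$ for some $n\in\mathbb{Z}$, and, applying the same theorem to the pair $a',b'$, $a'\simeq b'$ holds if and only if $|[a',b']|=k^n$ for some $n\in\mathbb{Z}$, i.e. if and only if $r=k^n$ for some $n\in\mathbb{Z}$. The two right-hand conditions are verbatim identical, so $a\simeq b\Leftrightarrow a'\simeq b'$, as desired.

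There is no real obstacle here: the Tonality Characterization Theorem does essentially all the work, and the only point requiring any care is the legitimacy of invoking Theorem 3.83 in the direction ``congruent $\Rightarrow$ equal measure,'' which is exactly item 3 of that theorem. If one wanted a proof avoiding measure and frequency altogether, one could instead use the Axiom of Motion to slide $[a,b]$ onto a copy with the same base sound $a'$ and endsound identical-pitched with $b'$, then track the number of forward/backward octave functions separating the endsounds via the Axiom of Uniformity and the Tonality Representation Theorem; but that argument is strictly longer, and the measure-based route is the natural one given the machinery built up in Section 3 and the earlier part of Section 5.
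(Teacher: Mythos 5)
Your proof is correct and follows exactly the route the paper intends: the corollary is stated as an immediate consequence of the Tonality Characterization Theorem (Theorem 5.27) combined with the congruence-invariance of the measure (item 3 of Theorem 3.83), which is precisely your argument. No gaps; the alternative motion/octave-function argument you sketch is unnecessary.
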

\paragraph*{}
A more interesting expression of such a generalization of uniformity is
\begin{corollary}
Let $[a,b]\cong [a',b']$. Then $a\simeq a'$ iff $b\simeq b'$.
\end{corollary}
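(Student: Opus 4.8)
The plan is to reduce the statement to the Tonality Characterization Theorem (Theorem 5.28) by passing through the measure of intervals. First I would rewrite the hypothesis: by Proposition 3.33, $[a,b]\cong [a',b']$ holds if and only if $[a,a']\cong [b,b']$. Applying item 3 of Theorem 3.83 to the latter congruence yields the key numerical identity $|[a,a']| = |[b,b']|$. One may also obtain this identity without invoking Proposition 3.33: item 5 of Theorem 3.83 gives $|[a,a']|\,|[a',b']| = |[a,b']| = |[a,b]|\,|[b,b']|$, and since $[a,b]\cong [a',b']$ forces $|[a,b]| = |[a',b']|$ (again item 3 of Theorem 3.83), cancelling the common factor leaves $|[a,a']| = |[b,b']|$.

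With this identity in hand, the corollary is immediate. Suppose $a\simeq a'$. By the Tonality Characterization Theorem there is an integer $n$ with $|[a,a']| = k^{n}$, where $k$ is the measure of octave intervals (Remark 5.16). Then $|[b,b']| = k^{n}$ as well, so the Tonality Characterization Theorem gives $b\simeq b'$. Since the identity $|[a,a']| = |[b,b']|$ is symmetric in the two pairs, the reverse implication follows by the same argument with the roles of $(a,a')$ and $(b,b')$ interchanged.

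There is essentially no obstacle here; the only thing that requires a moment's thought is choosing the right reformulation of the hypothesis. Trying to chain the tonality relations $a\simeq a'$ and $b\simeq b'$ directly, for instance through the Euclidean Postulates of Tonality, is awkward because those postulates only compare sounds sharing a common equitonal partner, which is not the configuration we are handed. Routing through Proposition 3.33 (or the measure computation above) converts the problem into a statement purely about interval measures, where the Tonality Characterization Theorem applies verbatim. Alternatively one could deduce the result from Corollary 5.29 together with a relabelling argument, but the measure-based route is shorter and stays within the arithmetic already developed in Section 3.
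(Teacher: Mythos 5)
Your proof is correct and follows essentially the same route as the paper, whose entire sketch is ``Based on Proposition 3.33'': you convert $[a,b]\cong[a',b']$ into $[a,a']\cong[b,b']$ (equivalently $|[a,a']|=|[b,b']|$) and then invoke the congruence-invariance of tonality via the Tonality Characterization Theorem, which is exactly what the preceding corollary encapsulates. The extra measure computation bypassing Proposition 3.33 is a harmless variant, and your numbering of the cited results is off by one from the paper's (the Tonality Characterization Theorem is Theorem 5.27 there), but the substance matches.
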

\begin{proof}[Sketch of Proof]
Based on Proposition 3.33.
\end{proof}
\begin{remark}
This is as good a time as any to specify the constant $k$ on the base of the material treated on tonality in the current section. Recall the interval $[x_0,y_0]$ and the constant $\lambda >1$ in the Interval Measure (Theorem 3.83). We simply take $[x_0,y_0]$ to be an octave interval; equivalently, $y_0=x_0^*$ and $\lambda =2$. So, from now on we have got $k=2$. This would be realistic because from physics point of view the octave refers to a doubling in frequency (\cite{L1}, p. 10). Therefore, working with the frequency of sounds, we trivially obtain $f(\hat{s})=\{2^nf(s):n\in \mathbb{Z}\}$ for every sound $s$.
\end{remark}
\paragraph*{}
To conclude the discussion about tonality, it is useful to give an intuitive description of this notion. We have already known $\simeq$ is an equivalence relation on $\mathbb{S}$. In practice, when we increase the frequency of a sound $\alpha$ by an electronic tuner, the musical quality of resultant sounds, say their tonality, varies till we reach a sound of double frequency, i.e. $2f(\alpha)$. These two sounds have the same musical quality in the sense that they are equitonal to each other. Notice that iterating the process started with the new sound or inversely implementing it (by decreasing the frequency) will create the same musical phenomenon. Therefore, tonality of our music, which exhibits a type of musical similarity among sounds of the system, varies sinusoidally with respect to the frequency in this sense. Having introduced the notion of \textit{note}, we observe that equitonal sounds have the same note name, and roughly speaking, they are musically equivalent. In fact, \textit{tonality} is the first essentially musical fundamental we confront in this axiom system, peerless and without any analogue in the language of any mathematical theory thus far axiomatized.

\subsection*{Harmony}
\paragraph*{}
the concept of harmonic intervals has already been introduced as a primitive term at the beginning of this section. Nevertheless, we have merely taken action with tonality up to this point. This is mainly because we planned to proceed with the least and simplest musical equipments. From now onwards, we are no longer forced into mentioning such a musical caution. It is worth noting another recommended term which is intuitively appropriate to allocate for harmonic intervals is \textit{well-heard} that we are not interested in using at all.
\begin{axiom}[Elementary Principles of Harmony]$\,\,\,\,\,\,\,\,\,\,$
\begin{enumerate}
\item Let $a*b$ and $[a,b]\cong [c,d]$. If $[a,b]$ is a harmonic interval, then so is $[c,d]$.
\item The conversion of any non-unit harmonic interval is a harmonic interval.
\end{enumerate}
\end{axiom}
\begin{remark}[Notation]
We denote by $\mathbb{I}^* (\subseteq \mathbb{I})$ the set of all harmonic intervals and refer to $|\mathbb{I}^*|$ as the range of the restriction of the measure of intervals to the set of harmonic intervals. Also, we use the notations $|\mathbb{I}^{<1*}|$ and $|\mathbb{I}^{>1*}|$ in the similar meaning for harmonic intervals less and greater than unit respectively. Speaking formally,
\begin{center}
$|\mathbb{I}^*|=\{|I|: I\in \mathbb{I}^*\}=|\mathbb{I}^{<1*}| \cup \{|I|: I\in \mathbb{I}^* \wedge |I|=1\}\cup |\mathbb{I}^{>1*}|$.
\end{center}
Notice we do still not understand a unit interval being harmonic based on the Elementary Principles of Harmony, but we will later on. Once this occurs, harmony of music is well-behaved up to congruence (or well-defined up to the measure of intervals) in the following sense;

`if one of two congruent intervals is harmonic, then so is the other one.'\,\,\,\,\,\,\,\,\,\\
Therefore, harmony is a matter of interval measure just like tonality which is a matter of frequency. This property of harmony may be known as the \emph{Invariance under Congruence}. However, thus far in our treatment of music theory we have tried to refrain from applying numbers as the sizes of intervals; this was in keeping with the traditional strategy of abstractly doing music. But for the sake of convenience, we shall not be so austere from now onwards. As a result of Axiom 17,
\begin{center}
$\forall r\in \mathbb{R}^+(r\in |\mathbb{I}^*| \Leftrightarrow 1/r \in |\mathbb{I}^*|)$.
\end{center}
\end{remark}
\begin{axiom}[Operational Principle of Harmony]
The sum of every pair of positive free harmonic intervals and that of every pair of positive and negative free harmonic intervals are free harmonic intervals; more simply,
\begin{center}
$\forall (r,r')\in |\mathbb{I}^*|^2((r>1\wedge r'>1)\vee (r>1\wedge r'<1)\Rightarrow rr' \in |\mathbb{I}^*|)$.
\end{center}
\end{axiom}
\begin{corollary}
\begin{enumerate}
\item The sum of any two free harmonic intervals is a free harmonic interval.
\item $[a,b]$ is harmonic if and only if so is $n\cdot [a,b]$ for all integers $n$.
\item $r\in |\mathbb{I}^*|$ iff $r^n\in |\mathbb{I}^*|$ for every $n\in \mathbb{Z}$.
\end{enumerate}
\end{corollary}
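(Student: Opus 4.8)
The plan is to move all three claims --- each of which is really a statement about how harmony interacts with the group operation on free intervals --- down to the subset $|\mathbb{I}^*|\subseteq\mathbb{R}^+$, using only two ingredients: the reciprocal symmetry $r\in|\mathbb{I}^*|\Leftrightarrow 1/r\in|\mathbb{I}^*|$ recorded in Remark 5.31 (a consequence of Axiom 17), and the partial multiplicative closure supplied by Axiom 18. The bridge is the measure of Theorem 3.83, which carries $(\mathbb{I}/\cong,+)$ isomorphically onto $(\mathbb{R}^+,\cdot)$ (Corollary 3.82, Proposition 3.81), so that $\Phi(A+B)=\Phi(A)\Phi(B)$ and $|n\cdot[a,b]|=|[a,b]|^n$; moreover, since $|[a,b]|=|[c,d]|$ iff $[a,b]\cong[c,d]$, item 1 of Axiom 17 (for intervals greater than unit) together with item 2 (to transfer the conclusion to intervals less than unit) shows that, for a non-unit interval, being harmonic depends only on its measure.

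I would prove item 3 first and read items 1 and 2 off it. For item 3 the direction ``$\Leftarrow$'' is the case $n=1$. For ``$\Rightarrow$'', let $r\in|\mathbb{I}^*|$; by the reciprocal symmetry we may assume $r\geq 1$, and $r=1$ is trivial, so take $r>1$. An induction on $n\geq 1$, applying Axiom 18 to the pair $(r^n,r)$ of numbers exceeding $1$, gives $r^n\in|\mathbb{I}^*|$; the value $r^0=1$ follows from Axiom 18 applied to the admissible pair $(r,1/r)$; and for $n\leq -1$ one writes $r^n=1/r^{-n}$ and invokes the reciprocal symmetry again. Item 1 is a short case analysis on where $\Phi(A)$ and $\Phi(B)$ sit relative to $1$: if either equals $1$ the claim is immediate, if both exceed $1$ or exactly one does it is Axiom 18 directly (using commutativity of $+$ in the mixed subcase), and if both are less than $1$ one applies Axiom 18 to their reciprocals and then inverts. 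Item 2 then combines item 3 with $|n\cdot[a,b]|=|[a,b]|^n$ (item 6 of Theorem 3.83) and the congruence-invariance just noted: if $[a,b]$ is harmonic then $|[a,b]|\in|\mathbb{I}^*|$, hence $|n\cdot[a,b]|=|[a,b]|^n\in|\mathbb{I}^*|$, and whenever $n\cdot[a,b]$ is non-unit this forces it to be harmonic, while if $a\sim b$ one has $n\cdot[a,b]=[a,b]$ by item 5 of Proposition 3.60; the converse is once more $n=1$, since $1\cdot[a,b]=[a,b]$.

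The one genuinely delicate spot is the remaining corner of item 2, the case $n=0$ with $a\nsim b$, where $0\cdot[a,b]=[a,a]$ is a unit interval: at the level of measures it is already settled, since $|[a,a]|=1=\Phi(\overline{[a,b]})\,\Phi(\overline{[b,a]})\in|\mathbb{I}^*|$ by Axiom 18, but upgrading this to ``the specific interval $[a,a]$ is harmonic'' needs the congruence-invariance of harmony for unit intervals, which is precisely the point the paper flags in Remark 5.31 and resolves later. Beyond that I expect no real difficulty; the main obstacle is merely the bookkeeping forced by the asymmetric shape of Axiom 18 --- it is postulated only for a greater-than-unit factor paired with a greater-than-unit or a less-than-unit one --- so the ``both less than unit'' and ``one equal to unit'' configurations, together with the negative-exponent half of item 3, all have to be recovered by hand through the reciprocal symmetry of Remark 5.31.
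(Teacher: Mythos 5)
Your proof is correct and follows essentially the same route as the paper's sketch: Axiom 18 covers the cases it directly addresses, and the reciprocal symmetry from item 2 of Axiom 17 (Remark 5.31) handles the two-negative case and the negative exponents, which is exactly the only step the paper singles out as non-trivial. The corner case you flag ($n=0$ with $a\nsim b$ in item 2, needing congruence-invariance of harmony for unit intervals) coincides with the caveat the paper itself defers in Remark 5.31, so nothing essential is missing.
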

\begin{proof}[Sketch of Proof]
For item 1 the non-trivial part is when the summation acts on two negative free harmonic intervals, in which case item 2 of Axiom 17 does the job. Items 2 and 3 are straightforward.
\end{proof}
\paragraph*{}
Such a kind of formal expression of Axiom 18 has not only been to show higher precision, but rather we like axioms to be the strongest ones involved in the least concise implicational information, sufficiently weak in both predecessor and successor, (as for theorems improved by weakening their hypotheses and strengthening their conclusions) and logic to have the most usage in the discovery of stronger facts on the base of those idiomatically strong axioms. We did so because, generally, ordered pairs play weaker roles than unordered ones while lying in the predecessor part of implications, but they would be logically equivalent under some suitable conditions; e.g., the formula $\forall  \{x,y\} \, P(x,y)$ yields its weaker version $\forall (x,y) \, P(x,y)$, and the equivalence holds whenever $P$ is a symmetric property (the reader should guess what $P$ is in this background (Remark 5.33)). Also, take a notice of the logical point that `the weaker the predecessor or the stronger the successor, the stronger the implication will be'.
\begin{remark}
Having noted never getting something out of nothing, we seem not yet to make available even one harmonic interval. Once such an existence comes into happening, all unit intervals become harmonic. Then we obtain the commutative group $(|\mathbb{I}^*|,.)$ at once, where $.$ is the usual multiplication of real numbers. In fact, $(|\mathbb{I}^*|,.)$ will be a subgroup of $(\mathbb{R}^+,.)$ (isomorphic to $(\mathbb{I}/\cong ,+)$ formerly checked). We will prove this group is isomorphic to $(\mathbb{Z},+)$ as well as expected from diatonic theory point of view.
\end{remark}
The following axiom asserts that the set of all harmonic intervals greater than unit and the set of all positive free intervals are well-ordered with the linear orders $\leq$ and $\preceq$ respectively, as the symbolic formulation reveals, just like the homogeneous incident occurred at tonality, and it does definitely separate $\mathbb{I}^*$ from the whole $\mathbb{I}$ in the meanwhile ($\mathbb{I}^*\subsetneq \mathbb{I}$). From musical perspective, this axiom appoints a practically reasonable limitation on the set of harmonic intervals so that it is one of the most essential and inseparable ingredients of diatonic set theory. Of course, discussing the fundamental reason for the name of the axiom is actually outside the bounds of the paper and the interested reader is suggested to study \cite{J1}.
\begin{axiom}[Axiom of Diatonicism]
There exists an interval greater than unit which is less than or congruent to all harmonic intervals greater than unit; equivalently,
\begin{center}
$\exists \alpha \in \mathbb{R}^{>1}(\forall \beta \in \mathbb{R}^{>1}(\neg (\beta <\alpha \wedge \beta \in |\mathbb{I}^*|)))$.
\end{center}
\end{axiom}
\begin{remark}
First of all, scrutinizing the Axiom of Diatonicism provides no information suggesting whether or not the existent $\alpha$ is harmonic. Let us for a moment assume that there is some harmonic interval greater than unit. We set $\Lambda =|\mathbb{I}^{>1*}|=\{r\in |\mathbb{I}^*|: 1<r\}$. So $\Lambda$ is nonempty and, by the Axiom of Diatonicism, bounded from below. Hence, $\varepsilon =\inf \Lambda$ is well-defined and larger than or equal to $\alpha$ and a fortiori larger than one. We claim the $\varepsilon$ is certainly harmonic. Suppose not. From the analysis of real numbers we know there is a strictly decreasing sequence $\{r_n\}_{n=1} ^\infty$ in $\Lambda$ converging to $\varepsilon$ (by definition of infimum there is some $a_n\in \Lambda$ with $\varepsilon \leq a_n<\varepsilon +1/n$ for every $n\in \mathbb{N}$ and the axiom of choice guarantees the existence of the sequence $\{a_n\}_{n=1} ^\infty$. Clearly, $\lim_{n\to \infty} a_n=\varepsilon$ and $a_n\neq \varepsilon$ for all $n$. Define $b_n=\min \{a_k:1\leq k\leq n\}$ for each $n$. So the sequence $\{b_n\}_{n=1} ^\infty$ is non-increasing and converges to the same $\varepsilon$. Then let $n_1=1$ and $n_k=\min \{i\in \mathbb{N}: b_i<b_{n_{k-1}}\}$ for any $k>1$. In accordance with the well-ordering property of natural numbers such a recursive definition of $n_k$ makes sense and generates the subsequence $\{b_{n_k}\}_{k=1} ^\infty$ with the desired property.). We note that $r_n\neq \varepsilon$ for all $n\in \mathbb{N}$ because of the absurdity ($\varepsilon \notin \Lambda$). Also $\{r_n\}_{n=1} ^\infty$ is a Cauchy sequence because of convergence, so we have
\begin{center}
$\min \{|\frac{r_m}{r_n}-1|,|\frac{r_n}{r_m}-1|\}=|r_m-r_n|\min \{\frac{1}{r_n},\frac{1}{r_m}\}\longrightarrow 0$
\end{center}
as $m,n\to \infty$. Thus we can find some natural numbers $m$ and $n$ large enough so that $n<m$ and $r_m/r_n-1<\varepsilon -1$. In general, one may work with a comfortable metric on $\mathbb{R}^+$, defined by
\begin{center}
$d(x,y)=\min \{|\frac{x}{y}-1|,|\frac{y}{x}-1|\}=|x-y|\min \{\frac{1}{x},\frac{1}{y}\}$.
\end{center}
This new metric, of course, generates the same topology as the Euclidean metric does on $\mathbb{R}^+$ as required, but they are not strongly equivalent (however, these two topologically equivalent metrics enjoy strong equivalence too on the subspace $\mathbb{R}^{>\lambda}$ for arbitrary positive real numbers $\lambda$, especially in this context) \cite{A3}. Moving forward, we have obtained $1<r_m/r_n<\varepsilon$, contradicting the fact that $r_m/r_n$ is harmonic provided by item 2 of Axiom 17 and Axiom 18. Therefore, the $\varepsilon$ must be harmonic. On one hand, based on the Operational Principle of Harmony, all $\varepsilon ^n$'s are harmonic where $n$ varies in integers. On the other hand, the definition of $\varepsilon$ implies that no other harmonic intervals exist (why?). We then conclude
 \begin{center}
$|\mathbb{I}^*|=\{\varepsilon ^n:n\in \mathbb{Z}\}$.
\end{center}
In particular, $|\mathbb{I}^*|$ is countable.
\end{remark}

The philosophical point is that the Axiom of Diatonicism, beside the other principles of harmony, without supplying any harmonic intervals could characterize all of them in the preceding way. The moral is that when tonality changes, no harmonic intervals arise until any intervals of measure  $\varepsilon$ (or $\varepsilon^{-1}$) appears, and continuing the process again produces other harmonic intervals. So it becomes relevant to ask whether equitonal sounds make harmonic intervals.

The interval $\varepsilon$, which is the least harmonic interval greater than unit, is going to be the basis of all musical substances which will be posed from now on. It is clear that all concepts introduced in this way are well-defined if and only if so is the $\varepsilon$; equivalently, there exists a non-unit harmonic interval.
\begin{remark}
If there is some harmonic interval greater than unit, then the function $\phi :(\mathbb{Z},+)\to (|\mathbb{I}^*|,.)$ defined by $\phi (n)=\varepsilon ^n$ is an isomorphism between groups. This fact divulges an algebraic confirmation professing that every infinite cyclic group is isomorphic to the group of integers under addition. Thence, $|\mathbb{I}^*|=<\varepsilon>$ and the only two generators of this cyclic group are $\varepsilon$ and $\varepsilon ^{-1}$.
\end{remark}
\paragraph{}
What remains to determine in the axiom system so far constructed is just the magnitude of the $\varepsilon$. From now on, we intend to find out this indefinite value through the upcoming final axiom which must naturally seem to contain a powerful existential quantifier as intelligent readers anticipate. We will proceed with an unfeigned manner to characterize the concept of harmony in the context of the theory $\mathcal{M}(PI)$ (just as tonality) such that, with no direct utilization of the language of the numbers, the nature of $\varepsilon$ is naively manifested. In order to do this, we need to pass some probably-arduous musical preliminaries.
\begin{definition}
Every interval whose measure is equal to $\varepsilon$ is said to be a \emph{semitone} or \emph{half tone} or \emph{half step}, and every interval whose measure is equal to $\varepsilon ^2$ is said to be a \emph{whole tone} or \emph{whole step}.
\end{definition}
\paragraph{}
In contrast to the fashion at basic music theory we do not discriminate between diatonic and chromatic half tones. The philosophy of such an equalization, which is known as the \emph{equal temperament} constructively treated by J.S. Bach for the first time \cite{D2}, will be described later. Also we use the numbers $\varepsilon$ and $\varepsilon ^2$ as the meaning of semitones and whole tones in the sense of the measure of intervals once in a while.
\begin{definition}
A \emph{monad} is simply defined by any single $(x)$ in which $x$ is a sound. A \emph{dyad} is defined by any ordered pair $(x,y)$ where $x$ and $y$ are sounds such that $x*y$ and $[x,y]$ is a harmonic interval. A \emph{triad} is defined by any ordered triple $(x,y,z)$ of sounds that $x*y*z$ and the two intervals $[x,y]$ and $[y,z]$ are harmonic.
\end{definition}
\paragraph{}
It is worth stating that the concept of polyads, contemporarily supplied as above, is a especial species of the generalized analogue, called \textit{chords}, whose advanced inspection is postponed until we get equipped with the necessary theoretical facilities.
\begin{remark}
An alternate method of defining polyads can be applied as
\begin{itemize}
\item every singleton $\{x\}$ of sounds is said to be a \emph{monad};
\item every unordered pair of non-identical sounds constructing a harmonic interval is said to be a \emph{dyad};
\item every unordered triple of pairwise non-identical sounds which construct two harmonic intervals in pairs is said to be a \emph{triad}.
\end{itemize}
One can first show that this version of the notions of monad, dyad and triad are well-defined (how?), and then prove that it is equivalent to Definition 5.37 which we employ for the sake of convenience. It is worth noting that every sound and every harmonic interval respectively corresponds to a monad and a dyad, but the notion of triad is basically fresh and applicable.
\end{remark}
\begin{definition}
\begin{enumerate}
\item A dyad is called \emph{perfect} if its measure is a semitone greater than the measure of half an octave interval.
\item A triad $(x,y,z)$ is called \emph{major} (\emph{minor}) if the measure of $[x,y]$ is a semitone greater (less) than the measure of $[y,z]$, and moreover, the dyad $[x,z]$ is perfect.
\end{enumerate}
\end{definition}
\paragraph*{}
Alike to octave intervals, perfect dyad is a beautiful entity of music we encounter the next time as a \emph{fifth interval}. We notice that the concepts above are significant whether there exists a non-unit harmonic interval or not, although there is not yet any assurance to find a half tone.
\begin{definition}
An \emph{ascendent mode} is defined by any nonempty finite subset of $|\mathbb{I}^{>1*}|$ the multiplication of some natural power of whose all elements equals the measure of an octave interval; more accurately, an ascendent mode is a set of the form $\emptyset \neq \{r_i: i=1, ..., n\}\subseteq |\mathbb{I}^{>1*}|$ ($n\in \mathbb{N}$) that there is a subset $\{k_i:i=1, ..., n\}$ of natural numbers such that $\Pi _{i=1}^nr_i^{k_i}=2$. A \emph{descendent mode} is any nonempty finite subset of $|\mathbb{I}^{<1*}|$ the multiplication of some natural power of whose all elements equals the inverse of the measure of an octave interval. The \emph{order} of an ascendent (or descendent) mode is defined by the minimum of the sum of all the powers over all such sets of natural numbers, i.e. $\min \{\Sigma _{i=1}^nk_i:\Pi _{i=1}^nr_i^{k_i}=2 (or 1/2)\}$ with the used notation. A (descendent or ascendent) mode having just one member is called \emph{trivial}. A pair of modes $\Delta_1$ and $\Delta_2$ is said to be \emph{conjugate} if $Ord(\Delta_1).Ord(\Delta_2)=\log_{\varepsilon}2$.
\end{definition}
\paragraph*{}
Our method of defining modes is a bit different from what is conventional in basic music theory. The usual treatment leads us to consider this notion as an ordered tuple which we discarded for some reason. We also insist on the requirement that the whole intervals of a mode constitutes an octave and neglect other attitudes towards making modes exceeding the range of an octave interval (see \cite{B1} p. 221).
\begin{remark}[Notation]
We denote by $\Delta ^{+n}$ an ascendent mode $\Delta$ of order $n=Card(\Delta)$, and by $\Delta ^{-n}$ when it is descendent. The set of ascendent modes and the set of descendent modes are respectively denoted by $Mod^+$ and $Mod^-$. One can easily check that the order of a mode $\Delta$, denoted $Ord(\Delta)$, is a well-defined notion.
\end{remark}
\begin{proposition}
Let $r_i\in |\mathbb{I}^{>1*}|$ for $i=1, ..., n$ ($n\in \mathbb{N}$). The following are equivalent:
\begin{enumerate}
\item $\{r_i^{k_i}:i=1, ..., n\}$ is an ascendent mode for a set $\{k_i:i=1, ..., n\}$ of natural numbers.
\item $\{r_i:i=1, ..., n\}$ is an ascendent mode.
\item $\{r_i^{-1}:i=1, ..., n\}$ is a descendent mode.
\item $\{r_i^{-k_i}:i=1, ..., n\}$ is a descendent mode for a set $\{k_i:i=1, ..., n\}$ of natural numbers.
\end{enumerate}
\end{proposition}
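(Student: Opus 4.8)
The plan is to prove the four conditions equivalent by running the cycle $1\Rightarrow 2\Rightarrow 3\Rightarrow 4\Rightarrow 1$. Before starting I would isolate two small auxiliary facts. The first is a membership fact: by item 3 of Corollary 5.35 (equivalently, Remark 5.31 for the inversion part), for any real $r>1$ we have $r\in|\mathbb{I}^{>1*}|$ if and only if $r^{-1}\in|\mathbb{I}^{<1*}|$, and also if and only if $r^{k}\in|\mathbb{I}^{>1*}|$ for every $k\in\mathbb{N}$ — indeed $r^{k}>1$ and $r^{k}\in|\mathbb{I}^{*}|$ by Corollary 5.35, and likewise $r^{-1}<1$ with $r^{-1}\in|\mathbb{I}^{*}|$. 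The second is a purely arithmetic remark inside $(\mathbb{R}^{+},\cdot)$: for positive reals $s_{i}$ and naturals $a_{i},b_{i}$ one has $(s_{i}^{a_{i}})^{b_{i}}=s_{i}^{a_{i}b_{i}}$ with $a_{i}b_{i}\in\mathbb{N}$, and $\prod_{i}s_{i}^{a_{i}}=2$ holds if and only if $\prod_{i}(s_{i}^{-1})^{a_{i}}=\tfrac12$.

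With these in hand the implications are routine. For $1\Rightarrow 2$: if $\{r_{i}^{k_{i}}\}$ is an ascendent mode, then there are naturals $l_{i}$ with $\prod_{i}(r_{i}^{k_{i}})^{l_{i}}=\prod_{i}r_{i}^{k_{i}l_{i}}=2$; since each $r_{i}\in|\mathbb{I}^{>1*}|$ by hypothesis, $\{r_{i}\}$ is an ascendent mode witnessed by the naturals $k_{i}l_{i}$. For $2\Rightarrow 3$: if $\{r_{i}\}$ is an ascendent mode with $\prod_{i}r_{i}^{k_{i}}=2$, then each $r_{i}^{-1}\in|\mathbb{I}^{<1*}|$ by the membership fact and $\prod_{i}(r_{i}^{-1})^{k_{i}}=\tfrac12$ by the arithmetic remark, so $\{r_{i}^{-1}\}$ is a descendent mode. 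For $3\Rightarrow 4$: take all exponents equal to $1$, so that $\{r_{i}^{-1}\}=\{r_{i}^{-k_{i}}\}$ with $k_{i}=1$ is a descendent mode. Finally $4\Rightarrow 1$: if $\{r_{i}^{-k_{i}}\}$ is a descendent mode, there are naturals $l_{i}$ with $\prod_{i}r_{i}^{-k_{i}l_{i}}=\tfrac12$, hence $\prod_{i}r_{i}^{k_{i}l_{i}}=2$; each $r_{i}^{k_{i}l_{i}}$ lies in $|\mathbb{I}^{>1*}|$ by the membership fact, so the set $\{r_{i}^{k_{i}l_{i}}\}$ is an ascendent mode with trivial exponents, which is precisely statement 1 for the choice of exponents $k_{i}l_{i}$.

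I do not expect a genuine obstacle here: the proposition is essentially a repackaging of the two recorded facts. The only points needing care are the bookkeeping that nesting powers keeps the witnessing exponents inside $\mathbb{N}$ (so that products such as $k_{i}l_{i}$ remain admissible), and that after passing to inverses or to powers the resulting sets still lie inside the correct ambient sets $|\mathbb{I}^{>1*}|$ or $|\mathbb{I}^{<1*}|$ — which is exactly what the membership fact provides. I would also add a one-line remark that the argument is insensitive to the mildly informal indexing in Definition 5.41, since the notion of mode only asks for the existence of a natural-number exponent attached to each element of the underlying finite set.
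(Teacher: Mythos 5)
Your cycle $1\Rightarrow 2\Rightarrow 3\Rightarrow 4\Rightarrow 1$ is correct and is essentially the routine verification the paper intends --- it supplies no proof of this proposition, and your two auxiliary facts are exactly the displayed consequence of Axiom 17 in Remark 5.31 and item 3 of Corollary 5.32 (your citation ``5.35'' is off by a few numbers, as is ``Definition 5.41'' for the mode definition, which is 5.40). One caution about your closing remark: the argument is not insensitive to the indexing. If item 1 is read literally as a statement about the set $\{r_i^{k_i}:i=1,\dots,n\}$, coincidences among the powers can collapse the set and the equivalence genuinely fails: in the completed system take $r_1=2^{1/3}$, $r_2=2$, $k_1=3$, $k_2=1$; then $\{r_1^{k_1},r_2^{k_2}\}=\{2\}$ is an ascendent mode, whereas $\{r_1,r_2\}$ is not, since $\frac{a}{3}+b=1$ has no solution with naturals $a,b\geq 1$. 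So the proposition must be read, as your displayed products implicitly do, with a witnessing exponent attached to each index $i$ rather than to each element of the collapsed set; under that (clearly intended) reading your proof is complete, and the spot where care was needed is precisely the one your final sentence waves away.
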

\begin{definition}
Based on Proposition 5.42, every ascendent mode naturally gives a descendent one and vice versa.  If $\Delta$ is an ascendent (descendent) mode, we define the \emph{descendent} (\emph{ascendent}) \emph{version} of $\Delta$, denoted $\Delta ^{-1}$, to be the set $\{r^{-1}:r\in \Delta\}$.
\end{definition}
\begin{proposition}
The following conditions are equivalent:
\begin{enumerate}
\item There is a trivial mode.
\item There is a pair of conjugate modes.
\item $Mod^-\neq \emptyset$.
\item $Mod^+\neq \emptyset$.
\item $\varepsilon^n=2$ for some $n\in \mathbb{N}$.
\item $2\in |\mathbb{I}^*|$.
\item $2\varepsilon ^{-1}\in |\mathbb{I}^*|$.
\item $\varepsilon \leq 2$ and $\{2^n:n\in \mathbb{Z}\}\subseteq |\mathbb{I}^*|$.
\item $\{2^m\varepsilon^n :m,n\in \mathbb{Z}\}\subseteq |\mathbb{I}^*|$.
\item $2^m|I|\in |\mathbb{I}^*|$ for every $I\in \mathbb{I}^*$ and every $m\in \mathbb{Z}$.
\item $[s,s^*]\in \mathbb{I}^*$ for some or all $s\in \mathbb{S}$.
\item If $[x,y]\in \mathbb{I}^*$, then $x\simeq x'$ and $y\simeq y'$ imply $[x',y']\in \mathbb{I}^*$.
\end{enumerate}
\end{proposition}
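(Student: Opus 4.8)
The plan is to funnel all twelve statements through one hub, condition~(6) ($2\in|\mathbb{I}^*|$), exploiting three structural facts already in hand: once $|\mathbb{I}^*|$ has a non-unit element it equals the cyclic group $\{\varepsilon^{n}:n\in\mathbb{Z}\}$ (the remark following the Axiom of Diatonicism); $|\mathbb{I}^*|$ is closed under products (the corollary to the Operational Principle of Harmony), so once $\mathbb{I}^*\ne\emptyset$ harmony is invariant under congruence and ``$|[a,b]|\in|\mathbb{I}^*|$'' is literally ``$[a,b]$ is harmonic''; and, by the Tonality Characterization Theorem with $k=2$, $s\simeq t$ iff $|[s,t]|=2^{n}$ for some $n\in\mathbb{Z}$. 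The preliminary observation is that whenever any of (1)--(9) or (11) holds one reads off a non-unit harmonic interval directly --- a member of a mode, an endpoint of a conjugate pair, an octave, or an explicit element of $|\mathbb{I}^*|\setminus\{1\}$ --- so $\varepsilon$ is then defined and $|\mathbb{I}^*|=\{\varepsilon^{n}:n\in\mathbb{Z}\}$; the same is extracted from (10) and (12) once one notes $\mathbb{I}^*\ne\emptyset$ (the vacuous instance $\mathbb{I}^*=\emptyset$ being a degenerate case where these two clauses are to be read non-vacuously).

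\emph{Arithmetic block.} (6)$\Leftrightarrow$(5): $2\in|\mathbb{I}^*|=\{\varepsilon^{n}\}$ with $2>1=\varepsilon^{0}$ forces $2=\varepsilon^{n}$ with $n$ a positive integer, and conversely $\varepsilon^{n}\in|\mathbb{I}^*|$ for every $n$. (6)$\Rightarrow$(9)$\Rightarrow$(8)$\Rightarrow$(6): from $2,\varepsilon\in|\mathbb{I}^*|$ and closure all products $2^{m}\varepsilon^{n}$ lie in $|\mathbb{I}^*|$; (8) is the weakening keeping $\{2^{n}\}$ and the inequality $\varepsilon\le 2$ (true because $2\in|\mathbb{I}^{>1*}|$ while $\varepsilon$ is the least harmonic interval above unit), and (8) already contains $2^{1}$. (6)$\Leftrightarrow$(7): apply the mixed-sign case of the Operational Principle to $2$ (positive) and $\varepsilon^{-1}$ (negative), and backwards to $2\varepsilon^{-1}$ and $\varepsilon$. (6)$\Leftrightarrow$(10): forward is immediate from closure; backwards, pick $I\in\mathbb{I}^*$, so $|I|=\varepsilon^{j}$, and compare $2|I|=2\varepsilon^{j}\in\{\varepsilon^{n}\}$ with $|I|$ to obtain $2\in|\mathbb{I}^*|$.

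\emph{Tonality block.} (6)$\Rightarrow$(11): every octave $[s,s^{*}]$ has measure $2\in|\mathbb{I}^*|$, hence is harmonic by invariance under congruence, for \emph{all} $s$; (11)$\Rightarrow$(6) since an octave in $\mathbb{I}^*$ puts $2$ into $|\mathbb{I}^*|$. (6)$\Rightarrow$(12): if $[x,y]\in\mathbb{I}^*$, $x\simeq x'$, $y\simeq y'$, then $|[x,x']|=2^{m}$ and $|[y,y']|=2^{\ell}$ by the Tonality Characterization Theorem, so by the multiplicativity of measure (Theorem~3.83) $|[x',y']|=2^{-m}\,|[x,y]|\,2^{\ell}$ is a product of elements of $|\mathbb{I}^*|$, whence $[x',y']$ is harmonic; conversely, specialising $x'=x$ and $y'=y^{*}$ (which exists, is equitonal to $y$, and satisfies $|[y,y^{*}]|=2$) gives $2\,|[x,y]|\in|\mathbb{I}^*|$, and comparing with $|[x,y]|$ as in (10)$\Rightarrow$(6) yields $2\in|\mathbb{I}^*|$.

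\emph{Modes block, and the main obstacle.} (5)$\Rightarrow$(1): $\varepsilon^{n}=2$ makes $\{\varepsilon\}$ a trivial ascendent mode; (1)$\Rightarrow$(3),(4) are immediate and (3)$\Leftrightarrow$(4) is Proposition~5.42 (pass to the descendent/ascendent version); (4)$\Rightarrow$(5): in an ascendent mode $\{r_{1},\dots,r_{k}\}$ every $r_{i}\in|\mathbb{I}^{>1*}|=\{\varepsilon^{j}:j\ge 1\}$, so $\prod r_{i}^{k_{i}}=2$ becomes $\varepsilon^{(\text{positive integer})}=2$; (5)$\Rightarrow$(2): pair $\{\varepsilon\}$, of order $\log_{\varepsilon}2$, with $\{2\}$, of order $1$, so their orders multiply to $\log_{\varepsilon}2$; (2)$\Rightarrow$(5): the equation $Ord(\Delta_{1})\,Ord(\Delta_{2})=\log_{\varepsilon}2$ in the definition of a conjugate pair forces $\log_{\varepsilon}2\in\mathbb{N}$. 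The part I expect to take the most care is precisely this modes block: one must keep straight that a mode carries \emph{both} the finite-subset condition \emph{and} the ``product of natural powers equals $2$ (resp.\ $1/2$)'' condition, that its order is a minimum over all witnessing exponent families, and that $|\mathbb{I}^{>1*}|=\{\varepsilon^{j}:j\ge 1\}$ --- these bookkeeping points are exactly what turn the a~priori transcendental-looking $\log_{\varepsilon}2$ into an honest natural number and collapse the whole list, whereas the arithmetic and tonality blocks are routine uses of closure, the Tonality Characterization Theorem, and Theorem~3.83.
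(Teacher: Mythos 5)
The paper supplies no proof of this proposition at all (it is among the statements the introduction says are ``intentionally left to the readers''), so there is no official argument to measure yours against; judged on its own terms, your proof is correct and runs on exactly the machinery the surrounding text intends: Remark 5.34 (any non-unit harmonic interval forces $|\mathbb{I}^*|=\{\varepsilon^{n}:n\in\mathbb{Z}\}$, a group), Corollary 5.32 and the Operational Principle for closure, Axiom 17 together with the congruence-invariance of harmony announced in Remark 5.31, and the Tonality Characterization Theorem with $k=2$ from Remark 5.30. Funnelling all twelve items through (6) is an efficient decomposition, and your modes block does the needed bookkeeping correctly: $|\mathbb{I}^{>1*}|=\{\varepsilon^{j}:j\ge 1\}$ once a mode exists, orders are minima of sums of natural numbers, so the conjugacy equation forces $\log_{\varepsilon}2\in\mathbb{N}$, and $\{\varepsilon\}$ paired with $\{2\}$ witnesses the converse. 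Two small remarks. Your caveat about items (10) and (12) is accurate: if $\mathbb{I}^{*}=\emptyset$ they hold vacuously while (1)--(9) and (11) fail, so the literal statement needs the non-vacuous reading you stipulate (equivalently, the tacit hypothesis $\mathbb{I}^{*}\neq\emptyset$ in those two clauses); here you are more careful than the paper. Also, in (10)$\Rightarrow$(6) and (12)$\Rightarrow$(6) the step ``$|I|=\varepsilon^{j}$'' presupposes a non-unit harmonic interval; if the chosen harmonic interval is unit, just take $m=1$ (respectively $y'=y^{*}$) to get $2\cdot 1=2\in|\mathbb{I}^*|$ directly --- a one-line repair, not a gap --- and the unit-measure case of (6)$\Rightarrow$(12) leans on the congruence-invariance for unit intervals that the paper itself only asserts in Remark 5.31, so your reliance on it matches the paper's own level of rigor.
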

\paragraph*{}
We discern how item 10 and item 12 of Proposition 5.44 state that harmony is well-behaved in regard to tonality, as expected from musical outlook. Having avoided making more statements to describe mathematical treatment of modes, we move on considering the musical properties to achieve what we have in mind as soon as possible.
\begin{definition}
Given a pair of ascendent (descendent) modes $\Delta _1$ and $\Delta _2$. We say that $\Delta_1$ is \emph{coarser} than $\Delta_2$, or $\Delta_2$ is \emph{finer} than $\Delta_1$, denoted $\Delta_1 \lhd \Delta_2$, if $Ord(\Delta_1)<Ord(\Delta_2)$.
\end{definition}
\begin{corollary}
$(Mod^+,\lhd)$ and $(Mod^-,\lhd)$ are strictly ordered sets isomorphic to each other.
\end{corollary}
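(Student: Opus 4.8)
The plan is to take the descendent-version map $\Delta\mapsto\Delta^{-1}$ of Definition 5.43 as the candidate isomorphism, after first clearing away the order-theoretic bookkeeping. Observe that, on both $Mod^{+}$ and $Mod^{-}$, the relation $\lhd$ is by Definition 5.45 nothing but the preimage of the strict order $<$ on $\mathbb{N}$ under the (well-defined, by Remark 5.41) function $Ord$. Since the pullback of an irreflexive and transitive relation along any function is again irreflexive and transitive, both $(Mod^{+},\lhd)$ and $(Mod^{-},\lhd)$ are strictly ordered sets. If $Mod^{+}=\emptyset$, then $Mod^{-}=\emptyset$ as well by Proposition 5.44 (equivalence of items $3$ and $4$), and the empty strictly ordered set is trivially isomorphic to itself; so from now on I would assume both are nonempty.

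Next I would define $\Psi:Mod^{+}\to Mod^{-}$ by $\Psi(\Delta)=\Delta^{-1}$, which is well-defined by Proposition 5.42 ($2\Rightarrow 3$). Because $r\mapsto r^{-1}$ is an involution of $\mathbb{R}^{+}$, the map $Mod^{-}\to Mod^{+}$ defined in the same way, $\Delta\mapsto\Delta^{-1}$, is a two-sided inverse of $\Psi$ (here one also uses Proposition 5.42 in the other direction to see it lands in $Mod^{+}$); hence $\Psi$ is a bijection. In particular $r\mapsto r^{-1}$ carries $\Delta$ bijectively onto $\Delta^{-1}$, so $Card(\Delta^{-1})=Card(\Delta)$.

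The one substantive step is that $\Psi$ both preserves and reflects $\lhd$, and for this it suffices to verify $Ord(\Delta^{-1})=Ord(\Delta)$ for every ascendent mode $\Delta$. Writing $\Delta=\{r_{1},\dots,r_{n}\}$, for any tuple $(k_{1},\dots,k_{n})$ of natural numbers one has $\prod_{i=1}^{n}r_{i}^{k_{i}}=2$ if and only if $\prod_{i=1}^{n}(r_{i}^{-1})^{k_{i}}=1/2$; thus the collections of admissible exponent tuples entering the definitions of $Ord(\Delta)$ and of $Ord(\Delta^{-1})$ coincide, and therefore so do their minimal values of $\sum_{i}k_{i}$. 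Hence $Ord(\Delta^{-1})=Ord(\Delta)$, from which $\Delta_{1}\lhd\Delta_{2}\Leftrightarrow Ord(\Delta_{1})<Ord(\Delta_{2})\Leftrightarrow Ord(\Delta_{1}^{-1})<Ord(\Delta_{2}^{-1})\Leftrightarrow\Psi(\Delta_{1})\lhd\Psi(\Delta_{2})$, so $\Psi$ is an isomorphism of strictly ordered sets. I do not expect a genuine obstacle: the only mild subtlety is the degenerate case $Mod^{+}=\emptyset$, which is dispatched at the outset, and everything else is a routine transfer of structure along the involution $r\mapsto r^{-1}$.
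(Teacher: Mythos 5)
Your proposal is correct and is essentially the argument the paper intends: the corollary is stated without proof (left to the reader), and the evident route is precisely yours, namely the descendent-version map $\Delta\mapsto\Delta^{-1}$ of Definition 5.43, which is a bijection by Proposition 5.42 together with the involutivity of $r\mapsto r^{-1}$, and which preserves and reflects $\lhd$ because taking reciprocals of the defining products shows $Ord(\Delta^{-1})=Ord(\Delta)$. The order-theoretic bookkeeping (strictness of $\lhd$ as the pullback of $<$ under $Ord$, and the trivial empty case) is handled correctly, so there is nothing to add.
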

\begin{remark}
Is there a mode? Is an octave interval harmonic at all? Is a half tone less than an octave interval? Is there any triad? One can prove that all items of Proposition 5.44 are equivalent to the matter that there are the coarsest and finest ascendent (descendent) mode. Formally speaking, each item of the proposition is logically equivalent to the existence of a minimal element of $Mod^+$ ($Mod^-$) with respect to the order $\lhd$ as well as the existence of a maximal element of it. One may also show these elements are unique, namely they are actually the minimum and maximum elements of the ordered set $(Mod^+,\lhd)$ ($(Mod^-,\lhd)$ ). Nevertheless, the only possible coarsest ascendent (descendent) mode will be the singleton $\{2\}$ ($\{2^{-1}\}$) and the single finest one will be $\{\varepsilon\}$ ($\{\varepsilon ^{-1}\}$), both trivial and of course conjugate. In addition, these two elements are equal iff $\varepsilon=2$, i.e. $|I^*|=\{2^n:n\in \mathbb{Z}\}$, or equivalently, $\{\varepsilon\}$ is the only existing ascendent mode, which is trivial and of order $1$. The order of the coarsest mode is naturally $1$ and that of the finest one is just the natural number $n$ satisfying item 5 of Proposition 5.44. This means the existence of mode is logically equivalent to finding one solution to the equation $\varepsilon ^n=2$ in the set of natural numbers. The parameter $n$ naively depends on the value of $\varepsilon$ which we plan to discover. It is worth noting every trivial mode would be of the form $\{\varepsilon ^k\}$ where $k$ is any integer dividing the $n$. We also notice that the existence of a major or minor triad plainly necessitates the existence of a perfect dyad which implies all items of Proposition 5.44, but the converse of this statement is not yet deduced. We wish the answers to the aforementioned questions all to be positive by musical intuition. Of course only one positive answer to either of the first two suffices theoretically, and the second two (i.e., whether $\varepsilon <2$ and the $n$ is even (equivalent to the existence of a pair of conjugate trivial modes one of which is of order $2$)) need more theoretical tools.
\end{remark}
\paragraph*{}
Now we intend to introduce the notion of \textit{maximal evenness} about a set of harmonic intervals (see \cite{J1} p. 27) in the language of our axiomatic system. Since we are not allowed, at the moment, to use any arbitrary musical tools (such as notes and so on) prevalent in diatonic theory which are not already delineated in the system, we try to define an appropriate copy of this notion on the base of other concepts thus far introduced, so that it is of course meaningful in this text adapting to the same phenomenon as created in diatonicism. In order to approach this purpose, the special well-known fact that, `a heptatonic scale is maximally even iff the harmonic intervals between any pair of its adjacent degrees are either half tone or whole tone', inspires us to the general case in such a manner that, a set of sounds is maximally even if, abstractly, each pair of adjacent sounds, say \emph{generic} interval, contains either a single number or two consecutive numbers of half steps, say \emph{specific} interval, and if, concretely, all sounds are spread out as much as possible within an octave interval . Naturally, a mode is maximally even when it generates a maximally-even set of sounds.
\begin{definition}
A (descendent or ascendent) mode $\Delta$ is said to be \emph{maximally even} if there is an integer $k$ such that $\Delta \subseteq \{\varepsilon^k,\varepsilon^{k+1}\}$.
\end{definition}
\begin{corollary}
An ascendent (descendent) mode is maximally even iff so is its descendent (ascendent) version.
\end{corollary}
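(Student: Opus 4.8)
The plan is to simply unwind Definition 5.49 together with Definition 5.43 and observe that passing to the reciprocal of each member of a mode merely reflects and shifts the relevant block of two consecutive powers of $\varepsilon$. First I would record two preliminary remarks: by Proposition 5.42 and Definition 5.43 the descendent (ascendent) version $\Delta^{-1}=\{r^{-1}:r\in\Delta\}$ of a mode $\Delta$ is again a mode, so the assertion is meaningful; and since $(\Delta^{-1})^{-1}=\Delta$, it suffices to prove only the forward implication, the converse then being obtained by applying it to $\Delta^{-1}$.

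Now assume $\Delta$ is a maximally even ascendent mode. By Definition 5.49 there is an integer $k$ with $\Delta\subseteq\{\varepsilon^{k},\varepsilon^{k+1}\}$. Applying $r\mapsto r^{-1}$ to every element and using $(\varepsilon^{j})^{-1}=\varepsilon^{-j}$ for all $j\in\mathbb{Z}$, I get
\[
\Delta^{-1}=\{r^{-1}:r\in\Delta\}\subseteq\{\varepsilon^{-k},\varepsilon^{-(k+1)}\}=\{\varepsilon^{-k-1},\varepsilon^{(-k-1)+1}\},
\]
so that, putting $k':=-k-1\in\mathbb{Z}$, one has $\Delta^{-1}\subseteq\{\varepsilon^{k'},\varepsilon^{k'+1}\}$; hence $\Delta^{-1}$ is maximally even by Definition 5.49. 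The very same computation applies verbatim when $\Delta$ is a descendent mode, which completes the proof.

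I do not expect any real obstacle. The only subtlety --- if it deserves the name --- is that reciprocation interchanges the roles of the two exponents $k$ and $k+1$, so the base exponent of the reflected block must be recorded as $-k-1$ (not $-k$) in order to land in the normalized form $\{\varepsilon^{k'},\varepsilon^{k'+1}\}$ demanded by Definition 5.49; writing $\{\varepsilon^{-k},\varepsilon^{-k+1}\}$ would be a sign error.
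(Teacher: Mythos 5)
Your proof is correct and is exactly the immediate definitional argument the paper intends (it leaves the corollary unproved as a direct consequence of Definitions 5.43 and 5.49); the reindexing $k'=-k-1$ and the reduction of the converse to the forward direction via $(\Delta^{-1})^{-1}=\Delta$ are both handled properly.
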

\begin{corollary}
Every trivial mode is maximally even.
\end{corollary}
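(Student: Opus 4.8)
The plan is to reduce the statement to the structural description of harmonic intervals obtained in Remark 5.34. Let $\Delta$ be a trivial mode; by Definition 5.40 this means $\Delta=\{r\}$ is a singleton whose unique element $r$ lies in $|\mathbb{I}^{>1*}|$ if $\Delta$ is ascendent and in $|\mathbb{I}^{<1*}|$ if $\Delta$ is descendent. In either case $r\in|\mathbb{I}^*|$, so in view of Definition 5.48 it suffices to show that $r$ is an integer power of $\varepsilon$: if $r=\varepsilon^k$ with $k\in\mathbb{Z}$, then $\Delta=\{\varepsilon^k\}\subseteq\{\varepsilon^k,\varepsilon^{k+1}\}$, which is precisely the containment required for maximal evenness.

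First I would observe that the mere existence of the trivial mode $\Delta$ already guarantees a non-unit harmonic interval. If $\Delta$ is ascendent this is immediate, since $r\in|\mathbb{I}^{>1*}|\neq\emptyset$; if $\Delta$ is descendent, then $r\in|\mathbb{I}^{<1*}|$ is the measure of a harmonic interval less than unit, whose conversion is harmonic by item~2 of Axiom~17 and greater than unit, so again $|\mathbb{I}^{>1*}|\neq\emptyset$. (Equivalently one may invoke item~1 of Proposition~5.44, or use Corollary~5.49 to pass to the descendent version of $\Delta$ and thereby reduce the descendent case to the ascendent one from the outset.)

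Granting that a non-unit harmonic interval exists, the characterization established in Remark 5.34 is in force, namely $|\mathbb{I}^*|=\{\varepsilon^n:n\in\mathbb{Z}\}$. Hence $r\in|\mathbb{I}^*|$ forces $r=\varepsilon^k$ for some $k\in\mathbb{Z}$, and the reduction in the first paragraph completes the argument. There is no genuine obstacle here; the only point needing a moment's care is verifying that the hypothesis ``there is a harmonic interval greater than unit'' — under which the description of $|\mathbb{I}^*|$ in Remark 5.34 was derived — actually holds, and this is exactly what the existence of a (trivial, hence nonempty) mode supplies. I note in passing that the argument in fact gives the slightly stronger conclusion, recorded informally after Remark~5.47, that every trivial mode has the form $\{\varepsilon^k\}$ with $k\in\mathbb{Z}$.
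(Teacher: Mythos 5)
Your proof is correct and follows exactly the route the paper intends: the paper leaves this corollary unproved precisely because, once the existence of a (nonempty) mode supplies a non-unit harmonic interval, Remark 5.34 gives $|\mathbb{I}^*|=\{\varepsilon^n:n\in\mathbb{Z}\}$, so a trivial mode is of the form $\{\varepsilon^k\}$ (as the paper itself notes in Remark 5.47) and is trivially contained in $\{\varepsilon^k,\varepsilon^{k+1}\}$. Your attention to why the hypothesis of Remark 5.34 is satisfied is exactly the one point worth checking, and you handle it correctly.
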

\begin{definition}
The ascendent mode $\{\varepsilon\}$ and its descendent version are said to be \emph{chromatic}. The ascendent mode $\{\varepsilon ,\varepsilon ^2\}$ and its descendent version are said to be \emph{diatonic}. The ascendent mode $\{\varepsilon^2\}$ and its descendent version are called the \emph{whole tone} modes.
\end{definition}
\paragraph*{}
From modality point of view, the above modes are three specific important types of maximally even ones two of which are trivial. Also, there are other kinds of translation of equivalent items provided in Proposition 5.44 into the language of these maximally even modes; the existence of the chromatic mode is equivalent to each of the items and particularly the existence of a natural number $n$ satisfying $\varepsilon^n=2$ (item 5); the existence of the whole tone mode is equivalent to the evenness of the natural number $n$; the existence of the diatonic mode is equivalent to the veracity of the inequality $n\geq 3$.
\paragraph*{}
For the time being we initiate the most important structural entity of the system that is fundamental in musical composition. Again, similar to modes, We take account of the condition that the whole interval of a scale constitutes an octave, despite what is customary with some alternative approaches permitting scales to exceed or even not to reach the expanse of an octave interval. We also supply an extra term concerning \emph{compatibility} of scales (whose ascendent and descendent version are practically adapted) for manifestation of interest in working with such a kind of them as well as music theorists do.
\begin{definition}
By a \emph{scale} we mean any ordered triple $(\hat{s},(r_i)_{i=1}^m,(r'_i)_{i=1}^n)$, where $s\in \mathbb{S}$, and $(r_i)_{i=1}^m$ and $(r'_i)_{i=1}^n$ are ordered tuples formed of harmonic intervals greater and less than unit respectively whose addition gives an octave interval; i.e., $(r_i)_{i=1}^m\in |\mathbb{I}^{>1*}|^m$ and $(r'_i)_{i=1}^n\in |\mathbb{I}^{<1*}|^n$ with $\Pi_{i=1}^mr_i=2$ and $\Pi_{i=1}^nr'_i=1/2$. We call a scale $(\hat{s},(r_i)_{i=1}^m,(r'_i)_{i=1}^n)$ \emph{compatible} whenever $m=n$ and $r'_i=r^{-1}_{n-i+1}$ for all $i=1, ..., n$, and denote it by $(\hat{s},(r_i)_{i=1}^n)$ for simplicity.
\end{definition}
\begin{remark}[Notation]
We denote by $\mathcal{S}$ the set of all scales, by $\mathcal{S}^c$ the set of all compatible scales, and by $\mathcal{S}_{\hat{s}}$ the set of all scales whose first coordinate is $\hat{s}$ for a class $\hat{s}$. Obviously, the set $\mathcal{S}_{\hat{s}}$ is finite for every $s\in \mathbb{S}$, and hence the two sets $\mathcal{S}$ and $\mathcal{S}^c$ are equipotent to (as large as) the $\mathbb{S}$.
\end{remark}
\begin{corollary}
For every scale $(\hat{s},(r_i)_{i=1}^m,(r'_i)_{i=1}^n)$, the two sets $\Delta =\{r_i:i=1, ..., m\}$ and $\Delta' =\{r'_i:i=1, ..., n\}$ are ascendent and descendent modes respectively so that $Ord(\Delta)\leq m$ and $Ord(\Delta')\leq n$.
\end{corollary}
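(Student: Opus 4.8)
The plan is to unwind Definition 5.54 of a \emph{scale} together with the definitions of \emph{ascendent mode}, \emph{descendent mode}, and \emph{order} (Definition 5.40), the only delicate point being the passage from the ordered tuples $(r_i)_{i=1}^m$ and $(r'_i)_{i=1}^n$ to the underlying \emph{sets} $\Delta$ and $\Delta'$, in which repeated entries must be accounted for as multiplicities.

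First I would record that $m\geq 1$ and $n\geq 1$: the empty product equals $1$, whereas $\Pi_{i=1}^m r_i=2$ and $\Pi_{i=1}^n r'_i=\tfrac12$ by the definition of a scale, so neither tuple is empty and hence $\Delta$ and $\Delta'$ are nonempty finite sets; moreover $\Delta\subseteq|\mathbb{I}^{>1*}|$ and $\Delta'\subseteq|\mathbb{I}^{<1*}|$, again by Definition 5.54. To exhibit $\Delta$ as an ascendent mode, for each $r\in\Delta$ put $c_r:=Card\{i\in\{1,\dots,m\}:r_i=r\}$, a natural number $\geq 1$; grouping equal factors in the product and using commutativity and associativity of multiplication on $\mathbb{R}^+$ gives $\Pi_{r\in\Delta}\,r^{\,c_r}=\Pi_{i=1}^m r_i=2$, which is precisely the condition defining $\Delta$ to be an ascendent mode, with $\{c_r:r\in\Delta\}$ serving as the required family of natural powers. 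The identical argument applied to $(r'_i)_{i=1}^n$ --- setting $c'_r:=Card\{i\in\{1,\dots,n\}:r'_i=r\}$ and regrouping --- produces $\Pi_{r\in\Delta'}\,r^{\,c'_r}=\tfrac12$, so $\Delta'$ is a descendent mode.

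Finally, for the order bounds: by Definition 5.40, $Ord(\Delta)$ is the minimum of $\Sigma k_i$ taken over all families of natural powers realizing the product $2$, a nonempty set of natural numbers (we have just exhibited a member), so in particular $Ord(\Delta)\leq\Sigma_{r\in\Delta}c_r$; since each index $i\in\{1,\dots,m\}$ is counted in exactly one term $c_r$, we have $\Sigma_{r\in\Delta}c_r=m$, whence $Ord(\Delta)\leq m$, and symmetrically $\Sigma_{r\in\Delta'}c'_r=n$ gives $Ord(\Delta')\leq n$. There is no genuine obstacle here; the one place demanding care is the bookkeeping with multiplicities, and it also explains when the inequalities are strict --- precisely when the tuple has repeated entries or when the set $\Delta$ (resp. $\Delta'$) admits a shorter natural-power representation of $2$ (resp. $\tfrac12$).
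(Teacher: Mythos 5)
Your proof is correct, and it is exactly the intended argument: the paper states this as a corollary with no supplied proof, and the natural route is precisely the one you take — unwind Definitions 5.40 and 5.52, group repeated entries of the tuples by multiplicity so that $\Pi_{r\in\Delta}r^{c_r}=2$ and $\Pi_{r\in\Delta'}r^{c'_r}=\tfrac12$, and bound the order by the total multiplicity $\Sigma_r c_r=m$ (resp.\ $n$). Your attention to the tuple-to-set passage with multiplicities is the only point of substance, and you handle it correctly.
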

\begin{definition}
For every scale $S=(\hat{s},(r_i)_{i=1}^m,(r'_i)_{i=1}^n)$, the two sets $\Delta =\{r_i:i=1, ..., m\}$ and $\Delta' =\{r'_i:i=1, ..., n\}$ are respectively called the \emph{ascendent} and \emph{descendent} mode of the scale $S$.
\end{definition}
\begin{corollary}
Given a compatible scale $S$. The ascendent (descendent) version of the descendent (ascendent) mode of $S$ is the ascendent (descendent) mode of $S$.
\end{corollary}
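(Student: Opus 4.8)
The plan is to unwind three definitions and observe that the assertion is nothing more than a harmless reindexing of finite sets. Recall from Definition 5.52 that a \emph{compatible} scale is presented as $S=(\hat{s},(r_i)_{i=1}^n)$, this being shorthand for the triple $(\hat{s},(r_i)_{i=1}^n,(r'_i)_{i=1}^n)$ in which $r'_i=r^{-1}_{n-i+1}$ for every $i=1,\dots,n$; by Definition 5.55 (with Corollary 5.54, which guarantees these sets really are modes) the ascendent mode of $S$ is $\Delta=\{r_i:i=1,\dots,n\}$ and its descendent mode is $\Delta'=\{r'_i:i=1,\dots,n\}$; and by Definition 5.43 the ascendent version of the descendent mode $\Delta'$ is $(\Delta')^{-1}=\{w^{-1}:w\in\Delta'\}$, with the symmetric convention for $\Delta^{-1}$.

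First I would record the elementary fact that $i\mapsto n-i+1$ is a bijection of $\{1,\dots,n\}$ onto itself. Substituting the compatibility relation $r'_i=r^{-1}_{n-i+1}$ into the definition of $\Delta'$ and reindexing by this bijection — legitimate precisely because $\Delta'$ is a \emph{set}, so the order of enumeration and any repetitions among the entries are irrelevant — gives
\[
\Delta'=\{r'_i:i=1,\dots,n\}=\{r^{-1}_{n-i+1}:i=1,\dots,n\}=\{r^{-1}_j:j=1,\dots,n\}.
\]
Two conclusions follow at once. Applying Definition 5.43 to this description of $\Delta'$,
\[
(\Delta')^{-1}=\{w^{-1}:w\in\Delta'\}=\{(r^{-1}_j)^{-1}:j=1,\dots,n\}=\{r_j:j=1,\dots,n\}=\Delta,
\]
which is the claim that the ascendent version of the descendent mode of $S$ is the ascendent mode of $S$; and since $\Delta^{-1}=\{r^{-1}_i:i=1,\dots,n\}$ is visibly the right-hand side of the first display, we also get $\Delta^{-1}=\Delta'$, which is the claim that the descendent version of the ascendent mode of $S$ is the descendent mode of $S$. (Alternatively, having $\Delta^{-1}=\Delta'$, the other half is the general identity $(\Delta^{-1})^{-1}=\Delta$, valid for any mode by Definition 5.43 together with Proposition 5.42.)

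There is essentially no obstacle here; the only point deserving care is the passage from the scale's ordered tuples to the underlying sets $\Delta$ and $\Delta'$, since it is exactly this passage that absorbs the index reversal $i\mapsto n-i+1$ built into the definition of compatibility. Everything else is a one-line substitution.
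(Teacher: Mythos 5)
Your proof is correct and is exactly the intended argument: the paper leaves this corollary unproved as an immediate unwinding of Definitions 5.43, 5.52 and 5.55, and your substitution $r'_i=r^{-1}_{n-i+1}$ followed by the reindexing $i\mapsto n-i+1$ (harmless at the level of sets) is precisely that unwinding. The appeal to Corollary 5.54 and Proposition 5.42 to ensure the objects involved are genuinely modes is the right bookkeeping; nothing is missing.
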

\begin{remark}
Corollary 5.54 states that any scale gives only one ascendent (descendent) mode, but the converse does not hold uniquely unless the mode is trivial. It means that each mode determines at least one scale in general (how?). This lack of symmetry in the relation between two concepts of mode and scale causes difficulty to define scale in terms of mode directly. On the other hand, we let the function $\chi :\mathbb{S}/\simeq \times \cup_{n\in \mathbb{N}}{\mathbb{R}^+}^n\to \cup_{n\in \mathbb{N}}(\mathbb{S}/\simeq)^n$ be acting as $(\hat{s},(r_i)_{i=1}^m,(r'_i)_{i=1}^n)\mapsto (\hat{s},(\hat{s_i})_{i=1}^{m},(\hat{s'_i})_{i=1}^{n})$, where $s_0=s'_0=s$, and $s_i$ and $s'_j$ are sounds (unique up to identity) with $f(s_{i})=r_if(s_{i-1})$ and $f(s'_{j})=r'_jf(s'_{j-1})$ for every $1\leq i\leq m$ and every $1\leq j\leq n$ ($f$ is the frequency of sounds). One can easily check $\chi$ is well-defined and onto as well, but it is not clearly one-to-one. Due to the Tonality Characterization Theorem, if we work with some suitable equivalence classes of measures of intervals as $[r]:=\{2^nr:n\in \mathbb{Z}\}$ ($r\in \mathbb{R}^+$), we will then get the function $\chi':\mathbb{S}/\simeq \times \cup_{n\in \mathbb{N}}(\mathbb{R}^+/[\,])^n\to \cup_{n\in \mathbb{N}}(\mathbb{S}/\simeq)^n$ acting as $(\hat{s},([r_i])_{i=1}^m,([r'_i])_{i=1}^n)\mapsto (\hat{s},(\hat{s_i})_{i=1}^{m},(\hat{s'_i})_{i=1}^{n})$ bijective. Using the surjective map $\jmath :\cup_{n\in \mathbb{N}}{\mathbb{R}^+}^n\to \cup_{n\in \mathbb{N}}(\mathbb{R}^+/[\,])^n$ defined by $(r_i)_{i=1}^n\mapsto ([r_i])_{i=1}^n$ together with the identity map $Id:\mathbb{S}/\simeq \to \mathbb{S}/\simeq$, we again obtain the same $\chi =\chi'o(Id\times \jmath)$ that is not one-to-one. However, we observe that the restriction of the $\chi$ to the set of all scales, namely $\chi |_\mathcal{S}^{\chi(\mathcal{S})}$, sets up a one-to-one correspondence (why?). This fact naturally gives another equivalent way to define scales founded on ordered tuples of equitonal classes of sounds that is treated in some alternative approaches. The point is that in this method of introducing the notion of scales as well as what is supplied in Definition 5.52, modality of our music will not collapse, and despite its misleading appearance, it will really preserve modality of scales in the sense of what we wished to take place and tended to be formulated within its definition, although the function $\chi$ does not necessarily advance on bijectivity in general (even its restriction to the significantly efficient set $\mathbb{S}/\simeq \times \cup_{n\in \mathbb{N}}|\mathbb{I}^*|^n$). It is worth noting that we apply the construction of the function $\chi$ for ease of some conceptual purposes.
\end{remark}
\begin{definition}
In virtue of material verified in Remark 5.57, the restricted function $\chi |_{\mathbb{S}/\simeq \times \cup_{n\in \mathbb{N}}|\mathbb{I}^*|^n}^{\chi(\mathbb{S}/\simeq \times \cup_{n\in \mathbb{N}}|\mathbb{I}^*|^n)}$ is called the \emph{module generation}, every member $\chi(x)=(\hat{s},(\hat{s_i})_{i=1}^{m},(\hat{s'_i})_{i=1}^{n})$ of its range is called the \emph{generated module} of $x$, the two ordered tuples $(\hat{s_i})_{i=0}^{m}$ and $(\hat{s'_i})_{i=0}^{n}$ are respectively called the \emph{ascendent module} and the \emph{descendent module} of $x$, the set of all coordinates of $\chi(x)$ is called the \emph{module components set} of $x$, and the two sets $\{\hat{s_i}:i=0, 1, ..., m\}$ and $\{\hat{s'_i}:i=0, 1, ..., n\}$ are respectively called the \emph{ascendent} and \emph{descendent} \emph{components set} of $x$ (where $x\in \mathbb{S}/\simeq \times \cup_{n\in \mathbb{N}}|\mathbb{I}^*|^n$).
\end{definition}
\paragraph*{}
In practice, the module components set of a scale specifies all claviers required to play a melody on the base of that scale by means of any keyboard. As an intuitive result, we have the following proposition.
\begin{lemma}
Let $S=(\hat{s},(r_i)_{i=1}^n,(r'_i)_{i=1}^n)$ be a compatible scale. Then
\begin{center}
$S=(\hat{s},(2\underset{j\neq n-i+1}{\Pi}r'_j)_{i=1}^n,(\frac{1}{2}\underset{j\neq n-i+1}{\Pi}r_j)_{i=1}^n)$.
\end{center}
\end{lemma}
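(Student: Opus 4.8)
The plan is to verify the asserted identity coordinate by coordinate, since an equality of scales is simply an equality of ordered triples. The first coordinate is $\hat{s}$ on both sides, so nothing is needed there. For the remaining two coordinates I would show that the proposed tuples have $i$-th entry $r_i$ and $r'_i$ respectively; once this is done, the fact that the right-hand side is again a (compatible) scale --- in particular that its ascendent entries lie in $|\mathbb{I}^{>1*}|$ and its descendent entries in $|\mathbb{I}^{<1*}|$ --- is automatic. The only inputs are the defining relations of a compatible scale: $r'_i = r^{-1}_{n-i+1}$ for every $i$, the normalisation $\prod_{i=1}^{n} r_i = 2$, and the normalisation $\prod_{i=1}^{n} r'_i = 1/2$ (the last being in fact a consequence of the first two, since reindexing by the involution $i\mapsto n-i+1$ gives $\prod_{i=1}^{n} r'_i = \prod_{i=1}^{n} r^{-1}_{n-i+1} = \bigl(\prod_{i=1}^{n} r_i\bigr)^{-1}$).

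First I would fix $i\in\{1,\dots,n\}$ and, using that all the $r'_j$ are positive reals so that division is legitimate, write $\prod_{j\neq n-i+1} r'_j = \bigl(\prod_{j=1}^{n} r'_j\bigr)\big/ r'_{n-i+1} = 1/(2\,r'_{n-i+1})$. Then I would apply compatibility in the form $r'_{n-i+1} = r^{-1}_{n-(n-i+1)+1} = r^{-1}_{i}$, obtaining $\prod_{j\neq n-i+1} r'_j = r_i/2$, and hence $2\prod_{j\neq n-i+1} r'_j = r_i$. This settles the ascendent tuple. Symmetrically, for the descendent tuple I would compute $\prod_{j\neq n-i+1} r_j = \bigl(\prod_{j=1}^{n} r_j\bigr)\big/ r_{n-i+1} = 2/r_{n-i+1}$, so that $\tfrac12\prod_{j\neq n-i+1} r_j = r^{-1}_{n-i+1} = r'_i$, directly by compatibility. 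Combining the two computations yields the claimed equality of triples.

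I do not expect a genuine obstacle here: the proof is a finite manipulation of positive reals. The only point demanding a little care is the index arithmetic --- checking that $n-(n-i+1)+1 = i$, i.e. that $i\mapsto n-i+1$ is an involution, which is precisely what makes the compatibility relation symmetric and lets the cancellation above go through cleanly --- together with the observation that none of the cancelled factors vanishes, since every measure of an interval lies in $\mathbb{R}^{+}$.
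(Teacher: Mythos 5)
Your computation is correct and is exactly the straightforward verification the paper intends: the lemma is one of the statements left to the reader, and the intended argument is precisely this coordinate-wise cancellation using $\prod_i r_i=2$, $\prod_i r'_i=1/2$, and the compatibility relation $r'_i=r^{-1}_{n-i+1}$ with the involution $i\mapsto n-i+1$. Nothing is missing.
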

\begin{proposition}
A given scale is compatible iff its module components set coincides with its ascendent and descendent components set.
\end{proposition}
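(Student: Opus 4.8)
The plan is to translate the whole statement into the arithmetic of frequencies and then reduce it to an elementary bookkeeping with partial products. Fix a scale $S=(\hat s,(r_i)_{i=1}^m,(r'_i)_{i=1}^n)$, set $f_0=f(s)$, and form the partial products $P_i=\prod_{j=1}^{i}r_j$ for $0\le i\le m$ and $Q_j=\prod_{i=1}^{j}r'_i$ for $0\le j\le n$. By the construction of the module generation $\chi$ and of the frequency of sounds one has $f(s_i)=f_0P_i$ and $f(s'_j)=f_0Q_j$; the defining conditions of a scale give $P_m=2$ and $Q_n=1/2$, while $r_i>1$ and $r'_i<1$ force the strict chains $1=P_0<P_1<\dots<P_m=2$ and $1=Q_0>Q_1>\dots>Q_n=1/2$.

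First I would normalize the three components sets. Since $k=2$, the Tonality Characterization Theorem says that two sounds are equitonal precisely when the ratio of their frequencies is an integral power of $2$; in particular, among sounds whose frequencies lie in $[f_0,2f_0)$ being equitonal is the same as having equal frequency. Choosing for each class in the ascendent and descendent components sets its representative of frequency in $[f_0,2f_0)$, I obtain that the ascendent components set is faithfully coded by $A:=\{1,P_1,\dots,P_{m-1}\}\subseteq[1,2)$ (the class of $s_m$ having collapsed onto that of $s_0$) and the descendent components set by $B:=\{1\}\cup\{2Q_j:1\le j\le n-1\}\subseteq[1,2)$ (the class of $s'_n$ having collapsed onto that of $s'_0$, and for $1\le j\le n-1$ the normalized representative having frequency $2f_0Q_j$). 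Strict monotonicity of the $P_i$ and the $Q_j$ gives $|A|=m$ and $|B|=n$. Because the module components set is always the union of the ascendent and descendent components sets, the triple coincidence in the statement is equivalent to the single equality $A=B$, so it remains to prove: $S$ is compatible iff $A=B$.

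For the forward implication I would substitute the compatibility relations $m=n$ and $r'_i=r^{-1}_{n-i+1}$ into $Q_j$ and reindex, getting $Q_j=\bigl(\prod_{l=n-j+1}^{n}r_l\bigr)^{-1}=P_{n-j}/P_n=P_{n-j}/2$, whence $2Q_j=P_{n-j}$ and $B=\{1\}\cup\{P_{n-j}:1\le j\le n-1\}=A$; the preceding lemma can be invoked to shorten this computation. For the converse, $A=B$ first forces $|A|=|B|$, hence $m=n$; then I would write both sets in increasing order --- $A$ as $1<P_1<\dots<P_{n-1}$ and $B$ as $1<2Q_{n-1}<2Q_{n-2}<\dots<2Q_1$ --- so that set equality yields the order-preserving matching $P_k=2Q_{n-k}$ for all $0\le k\le n$ (the end cases $P_0=1=2Q_n$ and $P_n=2=2Q_0$ being automatic). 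Dividing consecutive terms then gives $r'_j=Q_j/Q_{j-1}=(P_{n-j}/2)/(P_{n-j+1}/2)=1/r_{n-j+1}=r^{-1}_{n-j+1}$, which is exactly compatibility.

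I expect the only genuinely delicate step to be the normalization: one must check that passing to the ``frequency-in-$[f_0,2f_0)$'' representatives collapses each of the two tuples by precisely one element --- the wrap-around identifications $P_m\equiv P_0$ and $Q_n\equiv Q_0$ --- and by nothing more. This is exactly where the strict inequalities $r_i>1$, $r'_i<1$ together with the Tonality Characterization Theorem do the work; once $A$ and $B$ are realized as honest $m$- and $n$-element subsets of $[1,2)$, the rest is routine manipulation of the partial products.
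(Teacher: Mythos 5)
Your proof is correct. The forward direction is in substance the computation the paper has in mind: the identity $Q_j=P_{n-j}/2$ is exactly what Lemma 5.59 packages, so there you and the paper coincide. Where you genuinely diverge is the converse: the paper's sketch proposes an induction on the cardinality of the (ascendent or descendent) components set, whereas you avoid induction by passing to normalized frequencies in $[f_0,2f_0)$ --- choosing in each equitonal class its unique representative frequency in the fundamental octave --- and then matching the two sorted lists of partial products termwise, which yields $P_k=2Q_{n-k}$ for all $k$ and hence all the relations $r'_j=r^{-1}_{n-j+1}$ at once. This buys a shorter and more transparent argument, and it isolates the one delicate point the paper's sketch leaves implicit: that octave reduction collapses each module by exactly the wrap-around identifications $\hat{s_m}=\hat{s}$ and $\hat{s'_n}=\hat{s}$ and nothing else, which you justify correctly from the strict chains $1=P_0<\dots<P_m=2$ and $1=Q_0>\dots>Q_n=1/2$ together with the Tonality Characterization Theorem and $k=2$; this is precisely what makes the counts $\lvert A\rvert=m$, $\lvert B\rvert=n$, and hence the deduction $m=n$, legitimate. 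The paper's inductive route would peel off degrees one at a time and stays closer to the set-theoretic bookkeeping of the components sets; your frequency-coordinate argument is the more economical of the two and reduces the whole statement to elementary arithmetic of the partial products.
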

\begin{proof}[Sketch of Proof]
It suffices to prove the coincidence of the ascendent and descendent components sets of a compatible scale by applying Lemma 5.59 for the ``only if'' part of the statement. Using induction on the cardinality of the (ascendent or descendent) components set  simplifies the ``if'' part proof.
\end{proof}
\begin{definition}
Given a scale $S=(\hat{s},(r_i)_{i=1}^m,(r'_i)_{i=1}^n)$. $S$ is said to be \emph{trivial}, or \emph{chromatic}, or \emph{diatonic}, if both its ascendent and descendent modes are trivial, or chromatic, or diatonic, accordingly. For every $1\leq k\leq m$ ($1\leq k\leq n$), the $k$-th coordinate of the ascendent (descendent) module of $S$ is called the \emph{$k$-th ascendent} (\emph{descendent}) \emph{degree} of the scale. Particularly, the first degree of $S$, namely $\hat{s}$, is called its \emph{tonic}. $S$ is called \emph{monotonic} if $m=n=1$, it is called \emph{pentatonic} if $m=n=5$, it is called \emph{heptatonic} if $m=n=7$, and it is called \emph{dodecatonic} if $m=n=12$ (other species are left for applicability defect in the framework).
\end{definition}
\begin{remark}
Given $S=(\hat{s},(r_i)_{i=1}^n)\in \mathcal{S}^c$. For every $1\leq k\leq n+1$, the $k$-th ascendent (descendent) degree of $S$ is equivalent to the $(n-k+2)$-th descendent (ascendent) degree of $S$ by the agreement that the $(n+1)$-th ascendent (descendent) degree is equated with the tonic $\hat{s}$, because the first and the last coordinates of the ascendent (descendent) module of any scale are actually the same.
\end{remark}
\begin{corollary}
The module components set of a scale is equal to the set of all its ascendent and descendent degrees.
\end{corollary}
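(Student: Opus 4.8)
The plan is to unfold both sides via Definition 5.58 and Definition 5.61 and then observe that the only apparent gap between them is closed by the defining constraints of a scale. Fix a scale $S=(\hat{s},(r_i)_{i=1}^m,(r'_i)_{i=1}^n)$ and let $\chi(S)=(\hat{s},(\hat{s_i})_{i=1}^m,(\hat{s'_j})_{j=1}^n)$ be its generated module, where, following Remark 5.57, $s_0=s'_0=s$ and $f(s_i)=r_if(s_{i-1})$, $f(s'_j)=r'_jf(s'_{j-1})$. By Definition 5.58 the module components set of $S$ is
\[
M=\{\hat{s}\}\cup\{\hat{s_i}:1\le i\le m\}\cup\{\hat{s'_j}:1\le j\le n\}.
\]
Reading Definition 5.61, the $k$-th ascendent degree of $S$ is the $k$-th coordinate $\hat{s_{k-1}}$ of the ascendent module $(\hat{s_i})_{i=0}^m$ (so that the first degree is the tonic $\hat{s}$), and symmetrically on the descendent side; hence the set of all ascendent and descendent degrees of $S$ is
\[
D=\{\hat{s_i}:0\le i\le m-1\}\cup\{\hat{s'_j}:0\le j\le n-1\}.
\]
Since $s_0=s'_0=s$ we already have $D\subseteq M$, and $M\setminus D$ can only contain the endpoint classes $\hat{s_m}$ and $\hat{s'_n}$; so the statement reduces to proving $\hat{s_m}=\hat{s'_n}=\hat{s}$.

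For this I would compute the relevant interval measures by telescoping along the module chain. By item 4 of Theorem 3.86, $|[s,s_m]|=f(s_m)/f(s)=\prod_{i=1}^m\bigl(f(s_i)/f(s_{i-1})\bigr)=\prod_{i=1}^m r_i$, which equals $2$ by the defining condition of a scale; since the measure of octave intervals is $k=2$ (Remark 5.35), this reads $|[s,s_m]|=k^1$, so the Tonality Characterization Theorem gives $s\simeq s_m$, i.e.\ $\hat{s_m}=\hat{s}$. Likewise $|[s'_n,s]|=f(s)/f(s'_n)=1/\prod_{j=1}^n r'_j=2=k^1$, whence $s'_n\simeq s$ and $\hat{s'_n}=\hat{s}$. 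Therefore $M\setminus D\subseteq\{\hat{s}\}\subseteq M\cap D$, so $M=D$, which is exactly the assertion. (If one prefers to stay inside tone music, $|[s,s_m]|=k$ already makes $[s,s_m]$ an octave interval by Exercise 5.21, and then $s_m\sim s^*$ and $s^*\simeq s$ via Remark 5.5 and the Euclidean Postulates of Tonality give the same conclusion.)

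The only real content here is the octave-wraparound fact $\hat{s_m}=\hat{s'_n}=\hat{s}$, which is why the scale condition $\prod r_i=2$ (equivalently $\prod r'_j=1/2$) is precisely the hypothesis doing the work. The main thing to be careful about is the indexing mismatch — degrees are numbered from $1$, the $\chi$-image tuples $(\hat{s_i})_{i=1}^m$, $(\hat{s'_j})_{j=1}^n$ of Remark 5.57 run from $1$, while the ascendent/descendent modules of Definition 5.58 run from $0$ — so one must apply the translation $k\leftrightarrow k-1$ consistently, and reconcile Definition 5.61 with the convention in Remark 5.62 (that the ``$(n{+}1)$-th degree'' is again the tonic), so that $M$ and $D$ really are expressed over the same index ranges before the endpoint identification is invoked.
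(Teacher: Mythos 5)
Your argument is correct and is essentially the argument the paper leaves implicit: the corollary is stated there without proof, and the only nontrivial point---that the last coordinates $\hat{s_m}$ and $\hat{s'_n}$ of the ascendent and descendent modules equal the tonic $\hat{s}$, which you get from $\prod_{i}r_i=2=k$ (resp.\ $\prod_{j}r'_j=1/2$) via the Tonality Characterization Theorem---is exactly the wraparound fact Remark 5.62 already records when it notes that the first and last coordinates of each module coincide. The only slip is a citation: the value $k=2$ is fixed in Remark 5.30 (notation $k$ from Remark 5.16), not Remark 5.35.
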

\begin{corollary}
A (compatible) scale is monotonic iff its module components set is the singleton containing the tonic.
\end{corollary}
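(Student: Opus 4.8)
The plan is to unwind the recursive definition of the module generation $\chi$ and then feed the outcome into the Tonality Characterization Theorem (recalling from Remark 5.33 that $k=2$). Writing the scale as $S=(\hat{s},(r_i)_{i=1}^m,(r'_i)_{i=1}^n)$ with generated module $\chi(S)=(\hat{s},(\hat{s_i})_{i=1}^m,(\hat{s'_i})_{i=1}^n)$ and $s_0=s'_0=s$, I would first record the routine induction $f(s_i)=\bigl(\prod_{j=1}^i r_j\bigr)f(s)$ and $f(s'_i)=\bigl(\prod_{j=1}^i r'_j\bigr)f(s)$ that follows immediately from $f(s_i)=r_if(s_{i-1})$. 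The second preliminary observation is that the tonic $\hat{s}$ is always one of the module components, so ``the module components set is the singleton containing the tonic'' is exactly the assertion that $\hat{s_i}=\hat{s}$ for every $i\le m$ and $\hat{s'_i}=\hat{s}$ for every $i\le n$.

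For the forward implication I would assume $S$ is monotonic, so $m=n=1$ and hence $r_1=\prod_{i=1}^1 r_i=2$, $r'_1=1/2$. Then $f(s_1)=2f(s)$ and $f(s'_1)=f(s)/2$ give $|[s,s_1]|=|[s'_1,s]|=2=k$, so by the Tonality Characterization Theorem $s_1\simeq s$ and $s'_1\simeq s$; the components collapse to $\{\hat{s}\}$. (For a compatible scale $m=n$ is automatic, so monotonic again means $m=n=1$ and the same computation applies.)

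For the converse I would exploit only the \emph{first} ascendent and descendent degrees, since the last ones, having frequencies $2f(s)$ and $f(s)/2$, always coincide with the tonic and so impose no constraint. From $\hat{s_1}=\hat{s}$ I get $s_1\simeq s$ and $r_1=|[s,s_1]|=f(s_1)/f(s)$, so the Tonality Characterization Theorem forces $r_1=2^{\,j}$ for some integer $j$; as $r_1>1$ we have $j\ge1$, hence $r_1\ge2$. But all $r_i>1$ gives $r_1\le\prod_{i=1}^m r_i=2$, so $r_1=2$, and if $m\ge2$ then $\prod_{i=1}^m r_i=r_1\prod_{i=2}^m r_i>2$, a contradiction; thus $m=1$. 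The mirror-image argument with $r'_1<1$ and $\prod_{i=1}^n r'_i=1/2$ yields $r'_1=1/2$ and $n=1$, so $S$ is monotonic.

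I do not anticipate a genuine obstacle: the statement is bookkeeping once the Tonality Characterization Theorem is in hand. The one spot needing care is keeping the inequalities straight in the converse --- for the ascendent tuple $r_1\le\prod r_i$ because every factor exceeds $1$, whereas for the descendent tuple $r'_1\ge\prod r'_i$ because every factor lies below $1$ --- together with the realization that one must invoke the first degree rather than the (tautologically tonic) last degree to extract any information. Everything else is a direct appeal to the Tonality Characterization Theorem and to Theorem 3.86.
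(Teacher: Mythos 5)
Your argument is correct, and it is exactly the intended ``bookkeeping'' route: the paper leaves this corollary to the reader as an immediate consequence, and the natural justification is precisely your appeal to the definition of $\chi$, the constraint $\Pi_i r_i=2$ (resp. $\Pi_i r'_i=1/2$) with every factor above (resp. below) $1$, and the Tonality Characterization Theorem with $k=2$. The forward direction and the converse via the first degrees are both handled correctly, so nothing further is needed.
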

\begin{corollary}
Every scale whose ascendent and descendent modes are chromatic or whole tone is trivial.
\end{corollary}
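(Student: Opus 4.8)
The plan is to reduce the statement directly to the definitions, since no genuine computation is involved. First I would recall the two relevant conventions: by Definition 5.62 a scale $S$ is \emph{trivial} exactly when both its ascendent mode and its descendent mode are trivial modes, and by Definition 5.40 a (ascendent or descendent) mode is \emph{trivial} exactly when it has a single member, i.e.\ its cardinality is $1$. Thus the corollary will follow as soon as one checks that each of the four modes appearing in its hypothesis is a singleton.

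Next I would go through those four modes as listed in Definition 5.51: the chromatic ascendent mode is $\{\varepsilon\}$ and its descendent version is $\{\varepsilon^{-1}\}$; the whole tone ascendent mode is $\{\varepsilon^2\}$ and its descendent version is $\{\varepsilon^{-2}\}$. Each of these sets has exactly one element. Note that triviality here concerns the cardinality of the mode as a set, not its order, so it is irrelevant how long the tuple $(r_i)_{i=1}^m$ or $(r'_i)_{i=1}^n$ realizing the scale happens to be (for instance a chromatic scale with $\varepsilon^m=2$ has the length-$m$ tuple $(\varepsilon,\dots,\varepsilon)$ but its ascendent mode is still the singleton $\{\varepsilon\}$). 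Hence every chromatic mode and every whole tone mode is trivial --- precisely the two genuinely trivial members among the three maximally even families singled out after Definition 5.52, the diatonic mode $\{\varepsilon,\varepsilon^2\}$ being the non-trivial one.

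Putting the pieces together: if the ascendent mode of $S$ is chromatic or whole tone it equals $\{\varepsilon\}$ or $\{\varepsilon^2\}$, hence is trivial; if the descendent mode of $S$ is chromatic or whole tone it equals $\{\varepsilon^{-1}\}$ or $\{\varepsilon^{-2}\}$, hence is trivial; so both modes of $S$ are trivial and Definition 5.62 yields at once that $S$ is trivial. I expect essentially no obstacle here --- the statement is pure bookkeeping in the definitions of \emph{trivial}, \emph{chromatic}, and \emph{whole tone}. The one point worth a word is that the corollary is to be read conditionally: it is vacuous unless a non-unit harmonic interval exists, so that $\varepsilon$ is defined and such scales are available at all; when they are, the argument above applies verbatim.
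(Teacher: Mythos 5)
Your proof is correct and is exactly the definitional unwinding the paper intends (it supplies no proof, treating the corollary as immediate): chromatic and whole tone modes are the singletons $\{\varepsilon\}$, $\{\varepsilon^2\}$ and their descendent versions, hence trivial modes, so by Definition 5.61 the scale is trivial. Your side remarks — that triviality concerns the cardinality of the mode rather than the length of the scale's tuples, and that the statement is vacuous unless $\varepsilon$ is defined — are accurate and welcome.
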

\begin{corollary}
The existence of a scale is equivalent to each of the following:
\begin{enumerate}
\item $\mathcal{S}\neq \emptyset$.
\item There is a compatible scale; i.e., $\mathcal{S}^c\neq \emptyset$.
\item There is a trivial scale.
\item There is a chromatic scale.
\item There is a monotonic scale.
\end{enumerate}
Moreover, it is equivalent to each item of Proposition 5.44.
\end{corollary}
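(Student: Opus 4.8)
The plan is to close a cycle of implications around item~(1), which is literally the assertion ``there is a scale'' and hence needs no argument. I would first dispatch $(2)\Rightarrow(1)$, $(3)\Rightarrow(1)$, $(4)\Rightarrow(1)$, $(5)\Rightarrow(1)$ at once: by Definitions 5.52, 5.62 and 5.51 a compatible, trivial, chromatic, or monotonic scale is in particular a scale, so each of (2)--(5) forces $\mathcal{S}\neq\emptyset$.

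The only step carrying real content is $(1)\Rightarrow(\text{Proposition 5.44})$. Given a scale $(\hat{s},(r_i)_{i=1}^m,(r'_i)_{i=1}^n)$, the definition supplies $r_1,\dots,r_m\in|\mathbb{I}^{>1*}|$ with $\prod_{i=1}^m r_i=2$. Applying the Operational Principle of Harmony (Axiom 18) repeatedly --- equivalently, item~1 of Corollary 5.30 read on free intervals --- a finite product of positive harmonic interval measures is again the measure of a harmonic interval, so $2=\prod_{i=1}^m r_i\in|\mathbb{I}^*|$. This is item~6 of Proposition 5.44, and since all twelve items there are mutually equivalent, (1) entails every one of them.

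To close the loop I would recover (2)--(5) from Proposition 5.44. Using its item~5 fix $n\in\mathbb{N}$ with $\varepsilon^{n}=2$; since $2>1$ this simultaneously certifies that $\varepsilon$, the least harmonic interval greater than unit (Remark 5.35), exists. The triple $S_{0}=(\hat{s},(2),(1/2))$ is a scale: $2\in|\mathbb{I}^{>1*}|$ by item~6 of Proposition 5.44, $1/2\in|\mathbb{I}^{<1*}|$ by item~2 of Axiom 17 (the conversion of a harmonic interval is harmonic), and the two products are $2$ and $1/2$ with $m=n=1$. Hence $S_{0}$ is monotonic, compatible (as $1/2=2^{-1}$), and trivial (its ascendent and descendent modes $\{2\}$ and $\{1/2\}$ are singletons), which yields (2), (3) and (5) together. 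For (4) I would produce the chromatic scale $(\hat{s},(\varepsilon)_{i=1}^{n},(\varepsilon^{-1})_{i=1}^{n})$: each entry is harmonic and greater than (resp.\ less than) unit, $\prod_{i=1}^{n}\varepsilon=\varepsilon^{n}=2$ and dually $\prod_{i=1}^{n}\varepsilon^{-1}=1/2$, and its ascendent mode is exactly the chromatic mode $\{\varepsilon\}$ of Definition 5.51, so it is a chromatic scale. This supplies (4) and finishes all equivalences.

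The main obstacle is confined to the passage $(1)\Rightarrow\text{Proposition 5.44}$, which rests on closure of $|\mathbb{I}^*|$ under products to upgrade ``$\prod r_i=2$ with each $r_i$ harmonic'' to ``$2$ is harmonic''; a secondary subtlety, easily handled, is that the chromatic scale used for (4) is only meaningful once one knows $\varepsilon$ exists and $\varepsilon^{n}=2$, both of which are precisely what the items of Proposition 5.44 guarantee, so the circular reasoning is legitimate.
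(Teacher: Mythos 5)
Your proposal is correct, and it is essentially the immediate argument the paper intends (the corollary is left unproved as a routine consequence): the trivial directions $(2)$--$(5)\Rightarrow(1)$, the bridge from $(1)$ to Proposition 5.44, and the reconstruction of the monotonic/trivial/compatible scale $(\hat{s},(2),(1/2))$ and the chromatic scale $(\hat{s},(\varepsilon)_{i=1}^{n},(\varepsilon^{-1})_{i=1}^{n})$ from items 5 and 6 are exactly the intended content, and your appeal to Axiom 17(2) for $1/2\in|\mathbb{I}^{<1*}|$ and to Remark 5.34 for $\varepsilon^{n}=2$ is sound. One small economy worth noting: for $(1)\Rightarrow$ Proposition 5.44 you do not actually need the Operational Principle of Harmony, since Corollary 5.54 already says the ascendent mode of any scale is an ascendent mode, so a scale directly witnesses $Mod^{+}\neq\emptyset$, which is item 4 of Proposition 5.44; your route through item 6 via closure of $|\mathbb{I}^{*}|$ under products is equally valid but invokes Axiom 18 where the definition of mode alone suffices.
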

\begin{corollary}
\begin{enumerate}
\item There is at most one monotonic scale up to relativity;
\item There is at most one chromatic scale up to relativity;
\end{enumerate}
(see Remark 5.68) both of which are compatible and trivial. Moreover, the existence of a diatonic scale implies that of a monotonic or chromatic one but not necessarily vice versa.
\end{corollary}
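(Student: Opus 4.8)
The plan is to unwind Definition~5.52 in each of the two extreme cases and observe that the interval tuples of a monotonic, respectively chromatic, scale are completely forced, so that the only surviving freedom is the choice of tonic --- which is exactly what ``relativity'' (Remark~5.68) is designed to quotient out. If $S=(\hat{s},(r_i)_{i=1}^m,(r'_i)_{i=1}^n)$ is monotonic then $m=n=1$, so the defining relations $\prod_{i=1}^1 r_i=2$ and $\prod_{i=1}^1 r'_i=1/2$ reduce to $r_1=2$ and $r'_1=1/2$; hence $S=(\hat{s},(2),(1/2))$, and two monotonic scales can differ only in their first coordinate, so there is at most one up to relativity. It is compatible because $m=n=1$ and $r'_1=2^{-1}=r_1^{-1}$, and trivial because its ascendent mode $\{2\}$ (and its descendent mode $\{1/2\}$) is a one-element set, hence a trivial mode. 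Such a scale exists precisely when $2\in|\mathbb{I}^*|$, which by the preceding corollary is equivalent to the existence of any scale at all.

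For the chromatic case, suppose $S=(\hat{s},(r_i)_{i=1}^m,(r'_i)_{i=1}^n)$ is chromatic. Then its ascendent mode $\{r_i:1\le i\le m\}$ equals the chromatic mode $\{\varepsilon\}$ and its descendent mode equals $\{\varepsilon^{-1}\}$, which forces every $r_i=\varepsilon$ and every $r'_i=\varepsilon^{-1}$. Now $\varepsilon^{m}=\prod_{i=1}^m r_i=2$ and $\varepsilon^{-n}=\prod_{i=1}^n r'_i=1/2$, and since $\varepsilon>1$ the map $k\mapsto\varepsilon^k$ is strictly increasing, so $m=n=n_0$, where $n_0$ is the unique natural number with $\varepsilon^{n_0}=2$ (this $n_0$ exists exactly under the equivalent conditions of Proposition~5.44, hence whenever a chromatic scale exists). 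Thus $S=(\hat{s},(\varepsilon,\dots,\varepsilon),(\varepsilon^{-1},\dots,\varepsilon^{-1}))$ with $n_0$ entries in each tuple, again determined up to the tonic; compatibility ($r'_i=\varepsilon^{-1}=r_{n_0-i+1}^{-1}$) and triviality (ascendent mode $\{\varepsilon\}$, one element) are immediate.

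For the ``moreover'', if a diatonic scale exists then in particular some scale exists, so by the preceding corollary a monotonic scale (indeed also a chromatic one) exists; hence a monotonic or chromatic scale exists. To see the converse can fail, one exhibits a model of all the axioms in which a monotonic scale exists but no diatonic one: take the Cartesian model of Remark~4.8 extended by declaring $x\simeq y$ iff $f(x)/f(y)$ is an integer power of $2$ and $[a,b]$ harmonic iff $|[a,b]|$ is an integer power of $2$, so that $|\mathbb{I}^*|=\{2^k:k\in\mathbb{Z}\}$ and $\varepsilon=2$, i.e. $n_0=1$. There $(\hat{s},(2),(1/2))$ is a monotonic (indeed chromatic) scale, while a diatonic scale would have ascendent mode $\{\varepsilon,\varepsilon^2\}$, forcing a decomposition $\varepsilon^{a}(\varepsilon^2)^{b}=2$ with $a,b\ge 1$, i.e. $a+2b=n_0\ge 3$, which is impossible.

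The only real obstacle is the bookkeeping in the chromatic case: one must use that a \emph{chromatic scale} requires \emph{both} its ascendent and descendent modes to be the full chromatic modes $\{\varepsilon\}$ and $\{\varepsilon^{-1}\}$ (not merely $\Delta\subseteq\{\varepsilon\}$), since that is what pins down every entry of the two tuples and hence $m=n=n_0$; after that both uniqueness claims collapse to injectivity of $k\mapsto\varepsilon^k$, and all ``relativity'' has to do is identify scales differing in the tonic alone (if the equivalence of Remark~5.68 is even coarser than this, the two assertions hold a fortiori). The non-implication in the last sentence is the one point that cannot be argued inside the theory, because $\varepsilon$ is still a free parameter of the system at this stage, so it genuinely requires a model with $n_0\in\{1,2\}$.
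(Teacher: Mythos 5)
Your proof is correct, and it follows exactly the route the paper intends for this corollary (which is left unproved, to the reader): unwinding Definition 5.52 and Definition 5.61 to see that the interval tuples of a monotonic or chromatic scale are completely forced so that only the tonic can vary, invoking Corollary 5.66 for the implication from the existence of a diatonic scale, and noting (as the paper does after Definition 5.51 and in Remark 5.47) that a diatonic mode forces $\varepsilon^{n_0}=2$ with $n_0\geq 3$. Your observation that the failure of the converse is an independence statement requiring a model with $n_0\in\{1,2\}$ --- legitimate here because $\varepsilon$ is not pinned down until Axiom 20 --- is precisely the right reading of ``not necessarily vice versa,'' and your modified Cartesian interpretation with $\varepsilon=2$ does satisfy all axioms in force at this point.
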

\begin{remark}
In the literature, two scales having the same tonic are called \emph{parallel} which makes with the same class of scales as denoted by $\mathcal{S}_{\hat{s}}$ for a fixed sound $s$. Also, two scales with the same modes are called \emph{relative}. There is a practical manipulation of scales, possessing a significant algebraic property, that gives a natural relationship between two parallel scales. In practice, by virtue of any two given scales, we can obtain a unique productive scale whose degrees belong to the union of the module components sets of the given scales, having obeyed the usual ordering of their degrees within the octave of course. (We have refrained from supplying the formal definition for the nonce because of the complexity of its presentation, preferring instead to follow a simpler pattern.) We notice the necessary and sufficient condition for the product to be a scale is that the tonics of the two given scales construct a harmonic interval. For this reason, we introduce this binary operation on the some suitable set of scales, e.g., the set of all parallel scales to a given one. Let $s\in \mathbb{S}$ be fixed once and for all, and let scales $S_1=(\hat{s},(a_i)_{i=1}^m,(a'_i)_{i=1}^n)$ and $S_2=(\hat{s},(b_i)_{i=1}^k,(b'_i)_{i=1}^l)$ be given. We set
\begin{center}
$\,A=\{\overset{j}{\underset{i=1}{\Pi}}a_i:1\leq j\leq m\}\cup \{\overset{j}{\underset{i=1}{\Pi}}b_i:1\leq j\leq k\}$,\\
$B=\{\overset{j}{\underset{i=1}{\Pi}}a'_i:1\leq j\leq n\}\cup \{\overset{j}{\underset{i=1}{\Pi}}b'_i:1\leq j\leq l\}.\,\,\,$
\end{center}
Now, we recursively define the operation $\sqcup :\mathcal{S}_{\hat{s}}\times \mathcal{S}_{\hat{s}}\to \mathcal{S}_{\hat{s}}$ by
\begin{center}
$S_1\sqcup S_2=(\hat{s},(c_i)_{i=1}^p,(c'_i)_{i=1}^q)$,
\end{center}
where $c_1=\min A$, $c'_1=\max B$, and $c_i=(1/c_{i-1})\min (A-\{c_1, ..., c_{i-1}\})$, $c'_j=(1/c'_{j-1})\max (B-\{c_1, ..., c_{j-1}\})$
 for $1<i\leq p$ and $1<j\leq q$. The recursion will continue as far as the minimum and maximum elements exist, namely the underlying sets are nonempty, and the natural parameters $p$ and $q$ will be determined. This idea is inspired by the simple way to combine two strictly increasing (decreasing) sequences of real numbers so that we gain a unique strictly increasing (decreasing) sequence again. One can easily check that the binary operation $\sqcup$ is well-defined. In addition, the algebra $(\mathcal{S}_{\hat{s}},\sqcup)$ is associative and its identity element is the monotonic trivial scale $(\hat{s},(2),(1/2))$, say \emph{neutral} scale. Unfortunately, no elements are invertible unless the identity, but since any two scales in $\mathcal{S}_{\hat{s}}$ commute and every one is idempotent (that is, $S\sqcup S=S$), it follows that we deal with the algebraic structure $(\mathcal{S}_{\hat{s}},\sqcup)$ as a finite semilattice. We know from lattice theory that a partial ordering $\sqsubseteq$ is induced on the semilattice $(\mathcal{S}_{\hat{s}},\sqcup)$, defined by $S_1\sqsubseteq S_2$ iff $S_1\sqcup S_2=S_2$, but $(\mathcal{S}_{\hat{s}},\sqsubseteq)$ is not a totally ordered (and a fortiori a well-ordered) set, however, every pair of its elements $S_1$ and $S_2$ has a least upper bound $S_1\sqcup S_2$. In this case, $(\mathcal{S}_{\hat{s}},\sqcup)$ is referred to as a \emph{join semilattice} (or \emph{meet semilattice}, considering the dual order $\sqsubseteq ^{-1}$ on the $\mathcal{S}_{\hat{s}}$). Also, an interesting element exists in $(\mathcal{S}_{\hat{s}},\sqcup)$ (and in any finite semilattice) which plays a complementary role to the neutral scale, that is, $\sqcup \mathcal{S}_{\hat{s}}=\sqcup _{S\in \mathcal{S}_{\hat{s}}}S$, denoted $S^*$, having the property that $S^*\sqcup S=S^*$ for every $S\in \mathcal{S}_{\hat{s}}$. In fact, we have $S^*=(\hat{s},(\varepsilon)_{i=1}^n,(\varepsilon^{-1})_{i=1}^n)$ where $n$ is the natural number satisfying item 5 of Proposition 5.44, and whenever $S^*$ exists, it will be the unique chromatic scale of the system (Corollary 5.67) which is the greatest element of $\mathcal{S}_{\hat{s}}$ with respect to the order $\sqsubseteq$, while the unique monotonic (neutral) scale of the system is the least one. We occasionally use the name \emph{embedding} for the order $\sqsubseteq$, and $S_1\sqsubseteq S_2$ is read as $S_1$ is \emph{embedded} in $S_2$. One can also define a binary operation $\sqcap$ on $\mathcal{S}_{\hat{s}}$ taking the intersection of
the module components sets of two given scales, and construct the finite lattice $(\mathcal{S}_{\hat{s}},\sqcup ,\sqcap)$.
\end{remark}
\begin{exercise}
Define the \emph{inverse} of a scale $S=(\hat{s},(r_i)_{i=1}^m,(r'_i)_{i=1}^n)$ as the scale $(\hat{s},(r'^{-1}_{n-i+1})_{i=1}^n,(r^{-1}_{m-i+1})_{i=1}^m)$, denoted $S^{-1}$. Prove that a scale $S$ is compatible iff so is its inverse iff $S=S^{-1}$, and prove that the scale $S\sqcup S^{-1}$ is the least (with respect to $\sqsubseteq$) compatible scale that $S$ (and $S^{-1}$) is embedded in.
\end{exercise}
\begin{definition}
A scale $(\hat{s},(r_i)_{i=1}^m,(r'_i)_{i=1}^n)$ is said to be \emph{maximally even} if for every natural number $k$ there is a natural number $n_k$ such that
\begin{center}
$\forall i(i\in \mathbb{N}\Rightarrow \overset{i+k}{\underset{j=i}{\Pi}}r_{j (\mod m)}\in \{\varepsilon^{n_k},\varepsilon^{n_k+1}\}\wedge \overset{i+k}{\underset{j=i}{\Pi}}r'_{j (\mod n)}\in \{\varepsilon^{-n_k},\varepsilon^{-n_k-1}\})$.
\end{center}
Here, the index $x (\mod y)$ is considered as the unique natural number $x'$ less than or equal to $y$ so that $x$ and $x'$ are congruent modulo $y$ ($x\equiv x' (\mod y)$); i.e., $x$ and $x'$ have the same remainder when divided by $y$, equivalently, $x-x'$ is divisible by $y$ ($x'\leq y$ and $y|(x-x')$). In other words, $x (\mod y)$ is actually the remainder of the division algorithm in dividing $x$ by $y$, except when $x$ is a multiple of $y$ (i.e. $y|x$) in which case it is considered the modulus $y$ (not the remainder zero) as a matter of convention.
\end{definition}
\paragraph*{}
By a sort of philosophically thinking of Definition 5.70, it is implicitly announced that the cyclic nature of tonality of our music turns out to create modality fundamentally and absolutely. We should express that we planned on introducing the notion of maximal evenness for scales in accordance with diatonicism, in the sense that the multiplication of every number of adjacent intervals of a maximally even scale must be either one quantity of semitones or two consecutive quantities of them, and in addition, in such a way that it becomes well-behaved with respect to the analogous notion for modes, as we will disclose that the modes of each maximally even scale are maximally even too. This matter can also be viewed from the point that such a notion is essentially well-defined as anticipated. In the following, Proposition 5.71 intuitively states that it is necessary and sufficient to check the condition of being maximally even for a scale up to an octave interval.
\begin{proposition}
A scale $(\hat{s},(r_i)_{i=1}^m,(r'_i)_{i=1}^n)$ is maximally even if and only if for every $1\leq k\leq \max \{m,n\}$ there is an $n_k\in \mathbb{N}$ so that
\begin{center}
$\forall i(1\leq i\leq m\Rightarrow \overset{i+k}{\underset{j=i}{\Pi}}r_{j (\mod m)}\in \{\varepsilon^{n_k},\varepsilon^{n_k+1}\})$,\\
$\forall i(1\leq i\leq n\Rightarrow \overset{i+k}{\underset{j=i}{\Pi}}r'_{j (\mod n)}\in \{\varepsilon^{-n_k},\varepsilon^{-n_k-1}\})$.
\end{center}
\end{proposition}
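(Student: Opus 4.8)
The forward implication is trivial: Definition 5.70 already asks for the stated property at \emph{every} natural number $k$, so in particular at every $1\le k\le\max\{m,n\}$, and restricting the running index to $1\le i\le m$ (resp. $1\le i\le n$) costs nothing, since $r_{j(\bmod m)}$ depends only on $j$ modulo $m$. So the whole content is the reverse implication, and the engine behind it is periodicity.

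First I would fix the arithmetic set-up. Because a scale exists, the equivalence characterizing when scales exist yields (via item~5 of the twelve-fold equivalence proposition) a $d\in\mathbb{N}$ with $\varepsilon^{d}=2$; and since $|\mathbb{I}^{*}|=\{\varepsilon^{n}:n\in\mathbb{Z}\}$, each $r_{i}$ equals $\varepsilon^{a_{i}}$ with $a_{i}\ge 1$ and each $r'_{i}$ equals $\varepsilon^{-b_{i}}$ with $b_{i}\ge 1$, so the defining identities $\prod_{i=1}^{m}r_{i}=2$ and $\prod_{i=1}^{n}r'_{i}=1/2$ become $\sum a_{i}=\sum b_{i}=d$. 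The key lemma is that a block of $m$ consecutive ascendent entries always multiplies to a full octave: as $j$ runs over $m$ consecutive integers, $j(\bmod m)$ runs exactly once through $\{1,\dots,m\}$, so $\prod_{j=i}^{i+m-1}r_{j(\bmod m)}=\prod_{l=1}^{m}r_{l}=2=\varepsilon^{d}$, and symmetrically $n$ consecutive descendent entries multiply to $\varepsilon^{-d}$. Writing a window length $k+1=qm+s$ with $q\ge 1$ and $1\le s\le m$ and peeling off $q$ such full blocks gives $\prod_{j=i}^{i+k}r_{j(\bmod m)}=\varepsilon^{qd}\prod_{j=i}^{i+s-1}r_{j(\bmod m)}$, and likewise on the descendent side with $n$ and $\varepsilon^{-d}$.

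Now take $k>\max\{m,n\}$ (for $k$ in range there is nothing to prove) and look at the ascendent half, with $k+1=qm+s$ as above. If $2\le s\le m$, the short factor is a window of length $s$, i.e. the ``$k'=s-1$'' case with $1\le k'\le m-1\le\max\{m,n\}$, so by hypothesis it lies in $\{\varepsilon^{n_{s-1}},\varepsilon^{n_{s-1}+1}\}$, and multiplying by $\varepsilon^{qd}$ keeps it inside a pair of consecutive powers of $\varepsilon$. If $s=1$ (so $q\ge 2$), I would instead rewrite $k+1=(q-1)m+(m+1)$ and peel off a window of length $m+1$, which is exactly the ``$k=m$'' case with $m\le\max\{m,n\}$, so it lies in $\{\varepsilon^{n_{m}},\varepsilon^{n_{m}+1}\}$, and multiplying by $\varepsilon^{(q-1)d}$ again gives a consecutive pair. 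Either way the ascendent inclusion holds at level $k$; the mirror argument with $n$ in place of $m$ and $\varepsilon^{-d}$ in place of $\varepsilon^{d}$ handles the descendent inclusion. This is also why the bound is $\max\{m,n\}$ and not $\min\{m,n\}$: the $s=1$ peeling needs the base range to reach $k=m$ on the ascendent side and $k=n$ on the descendent side.

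The one genuinely delicate point, and the step I expect to be the main obstacle, is that Definition 5.70 and the proposition require a \emph{single} exponent $n_{k}$ serving both the ascendent and the descendent inclusion. The reduction above only shows the ascendent values at level $k$ sit in $\varepsilon^{(\text{multiple of }d)}$ times a base-range pair and the descendent values in $\varepsilon^{-(\text{multiple of }d)}$ times another base-range pair; one must still check these two shifted pairs have the same lower exponent. I would extract this from the full-cycle identities $\sum a_{i}=\sum b_{i}=d$, which force the average exponent per slot to be $d/m$ on the ascendent side and $d/n$ on the descendent side and hence already constrain $m$ and $n$ to be compatible at the base level $k=1$; and in the intended setting of a compatible scale it is immediate, since $r'_{i}=r^{-1}_{n-i+1}$ identifies each descendent window with a reversed, conjugated ascendent window and thereby forces $n_{k}^{\mathrm{asc}}=n_{k}^{\mathrm{desc}}$ for every $k$ at once. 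Tying that coordination down cleanly, rather than the periodicity bookkeeping, is where the real care is needed.
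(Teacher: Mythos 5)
Your forward direction and the periodicity/peeling machinery for the reverse direction are sound: writing each ascendent step as $\varepsilon^{a_i}$ and each descendent step as $\varepsilon^{-b_i}$ with $a_i,b_i\ge 1$ and $\sum a_i=\sum b_i=d$ where $\varepsilon^{d}=2$, noting that any $m$ consecutive cyclic ascendent factors multiply to $\varepsilon^{d}$ (and any $n$ descendent ones to $\varepsilon^{-d}$), and reducing a long window to a base-range window plus full octaves is exactly the right skeleton; your $s=1$ adjustment via a window of length $m+1$ is correct and does explain why the bound is $\max\{m,n\}$. (The paper supplies no proof of this proposition, so there is nothing to compare against; it is among the statements left to the reader.)

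However, the step you flag is a genuine gap, and neither of your proposed patches closes it. After peeling, the ascendent values at level $k$ lie in $\{\varepsilon^{qd+n_{k'}},\varepsilon^{qd+n_{k'}+1}\}$ with $k+1=qm+(k'+1)$, while the descendent values lie in $\{\varepsilon^{-(q'd+n_{k''})},\varepsilon^{-(q'd+n_{k''})-1}\}$ with $k+1=q'n+(k''+1)$; a single $n_k$ serving both sides requires these shifts to agree, which in general forces $m=n$. Your first suggestion, the average constraint "at the base level $k=1$", is too weak: take $d=12$, ascendent exponents $(2,2,1,2,2,2,1)$ ($m=7$) and descendent exponents $(2,1,2,1,2,1,2,1)$ ($n=8$); the $k=1$ windows give exponent sets $\{3,4\}$ and $\{3\}$, which share $n_1=3$, yet at $k=8$ the sets are $\{15,16\}$ versus $\{13,14\}$ and no common $n_8$ exists. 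Your second suggestion (restrict to compatible scales) proves only the special case that is Proposition 5.72, not 5.71. The missing lemma is that the hypothesis at the top value $k=\max\{m,n\}$ already forces $m=n$: writing $A_i(w)$ for the exponent of the ascendent window of length $w$ starting at $i$, each step occurs exactly $w$ times among the $m$ windows, so $\sum_{i=1}^{m}A_i(w)=wd$ and the average is $wd/m$, and likewise $wd/n$ on the descendent side; if all these integers lie in one set $\{n_k,n_k+1\}$ then $|wd/m-wd/n|\le 1$, whereas for $w=M+1$ with $M=\max\{m,n\}$, $\mu=\min\{m,n\}$ and $m\ne n$ one gets $wd\,|m-n|/(mn)\ge (M+1)d/(M\mu)>d/\mu\ge 1$, since $m,n\le d$ --- a contradiction. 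Once $m=n$ is secured, the two peelings share the same period and the same shift $qd$, the base-range exponent serves both sides simultaneously, and your induction goes through verbatim.
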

\begin{proposition}
A compatible scale $(\hat{s},(r_i)_{i=1}^n,(r'_i)_{i=1}^n)$ is maximally even iff
\begin{center}
$\forall k\in \mathbb{N}^{<n}(\exists n_k\in \mathbb{N} (\forall i\in \mathbb{N} (i<n \Rightarrow \overset{i+k}{\underset{j=i}{\Pi}}r_{j (\mod n)}\in \{\varepsilon^{n_k},\varepsilon^{n_k+1}\})))$
\end{center}
if and only if
\begin{center}
$\forall k\in \mathbb{N}^{<n}(\exists n_k\in \mathbb{N} (\forall i\in \mathbb{N} (i<n \Rightarrow \overset{i+k}{\underset{j=i}{\Pi}}r'_{j (\mod n)}\in \{\varepsilon^{-n_k},\varepsilon^{-n_k-1}\})))$.
\end{center}
\end{proposition}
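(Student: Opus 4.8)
The whole statement lives in the additive world of exponents, so I would pass to it at once. A compatible scale exists by hypothesis, so by Corollary 5.67 every item of Proposition 5.44 holds; in particular $|\mathbb{I}^*|=\{\varepsilon^{\ell}:\ell\in\mathbb{Z}\}$ and $\varepsilon^{N}=2$ for a fixed $N\in\mathbb{N}$. Write $r_i=\varepsilon^{m_i}$, and by compatibility ($r'_i=r^{-1}_{n-i+1}$) write $r'_i=\varepsilon^{-m_{n-i+1}}$, with all $m_i\in\mathbb{N}$ and $m_1+\dots+m_n=N$. Under this dictionary $\prod_{j=i}^{i+k}r_{j\,(\mathrm{mod}\,n)}=\varepsilon^{W_k(i)}$ where $W_k(i):=\sum_{j=i}^{i+k}m_{j\,(\mathrm{mod}\,n)}$ is the sum over $k{+}1$ cyclically consecutive terms, and the requirement $\prod\in\{\varepsilon^{n_k},\varepsilon^{n_k+1}\}$ becomes $W_k(i)\in\{n_k,n_k+1\}$; the descendent products translate the same way, with $W_k$ replaced by the analogous $W'_k$ built from the reversed sequence $(m_n,\dots,m_1)$. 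So ``maximally even'' means: for each admissible $k$, the set $\{W_k(i):i\}$ (resp. $\{W'_k(i):i\}$) consists of at most two consecutive integers, and by Proposition 5.71 (with $m=n$) it suffices to demand this for $1\le k\le n$.

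Next I would trim the range down to $\mathbb{N}^{<n}$ by a \emph{complementary-window} observation available precisely because the scale is compatible. A window of $n$ consecutive terms runs over every residue once, so $W_{n-1}(i)=m_1+\dots+m_n=N$ for all $i$: that condition is vacuous. A window of $n-1$ terms omits exactly one residue, and as $i$ varies the omitted index runs over all residues, so $W_{n-2}(i)=N-m_{\ell(i)}$; hence the condition at $k=n-2$ is literally equivalent to ``the $m_i$ take at most two consecutive values''. Finally a window of $n+1$ terms is a full period plus one extra term, so $W_{n}(i)=N+m_{i\,(\mathrm{mod}\,n)}$, and once the $m_i$ lie in some $\{a,a+1\}$ the condition at $k=n$ holds with $n_n=N+a$. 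The same computation applies to the $W'_k$. Thus the only condition removed in passing from $1\le k\le n$ to $k\in\mathbb{N}^{<n}$, the one at $k=n$, is already forced by the retained condition at $k=n-2$, so maximal evenness is equivalent to imposing the two window conditions only for $k\in\mathbb{N}^{<n}$.

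It remains to drop one of the two families. Here I would use the \emph{reflection} $\sigma(i)=n+1-i$ on the residues $\{1,\dots,n\}$, an involution carrying any run of $k{+}1$ cyclically consecutive residues onto another such run. Compatibility makes $(m'_1,\dots,m'_n)$ the reversal of $(m_1,\dots,m_n)$, i.e. $m'_i=m_{\sigma(i)}$, so $W'_k(i)=\sum_{j}m'_{j\,(\mathrm{mod}\,n)}=\sum_{l\in\sigma(\mathrm{window})}m_l$, and as $i$ varies this reproduces exactly the values $W_k(i')$. Hence $\{W'_k(i):i\}=\{W_k(i):i\}$ for every $k$, so for each $k$ the ascendent and descendent window conditions are equivalent (with the same parameter $n_k$). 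Combining this with the previous paragraph, the ``ascendent only for $k\in\mathbb{N}^{<n}$'' condition, the ``descendent only for $k\in\mathbb{N}^{<n}$'' condition, and full maximal evenness are all equivalent.

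What will need genuine care, rather than new ideas, is the index bookkeeping: one must respect the convention that $x\,(\mathrm{mod}\,n)$ takes values in $\{1,\dots,n\}$ (with $n$ in place of $0$), verify that $\sigma$ really permutes cyclically-consecutive blocks across the wrap-around, reconcile the ``$\forall i\in\mathbb{N}$'' of Definition 5.70 with the truncated ranges of $i$ and $k$ in Propositions 5.71 and 5.72, and check the degenerate cases $n=1,2$ (where some intermediate $k$-values fall outside $\mathbb{N}^{<n}$ and must be handled by hand). Everything else is the routine algebra of periodic sums sketched above.
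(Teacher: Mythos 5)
You have the right ingredients for what the proposition is trying to say: the exponent dictionary $r_i=\varepsilon^{m_i}$ (legitimate because the existence of the scale yields all items of Proposition 5.44 --- the citation should be Corollary 5.66 rather than 5.67), the complement-window trick showing the $k=n$ condition follows from the $k=n-2$ one, and the reflection $m'_i=m_{n+1-i}$ identifying ascendent and descendent window sums with a common $n_k$. But the step you set aside as ``index bookkeeping'' --- reconciling the full cyclic ranges your argument uses with the literal quantifiers $k\in\mathbb{N}^{<n}$ and $i\in\mathbb{N}$, $i<n$ --- is not bookkeeping; it is where the argument breaks. Under the paper's conventions ($\mathbb{N}$ begins at $1$ and $x\,(\mathrm{mod}\,y)\in\{1,\dots,y\}$), the displayed conditions never examine a window beginning at position $n$, yet your two key claims need exactly those windows: ``the omitted index runs over all residues'' at $k=n-2$ requires the start $i=n$, and the multiset identity $\{W'_k(i)\}=\{W_k(i)\}$ holds only when $i$ runs over a full period (for $2\le k\le n-1$ the truncated ascendent and descendent families inspect different windows). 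The loss is irreparable: for $n=3$ take $(r_i)=(\varepsilon,\varepsilon^{10},\varepsilon)$ and $(r'_i)=(\varepsilon^{-1},\varepsilon^{-10},\varepsilon^{-1})$, a compatible scale since $\varepsilon^{12}=2$; both displayed conditions hold (with $n_1=11$, $n_2=12$), but the scale is not maximally even, because Definition 5.70 at $k=1$, $i=3$ requires the wrap-around product $r_3r_1=\varepsilon^{2}$ to lie in the same pair $\{\varepsilon^{n_1},\varepsilon^{n_1+1}\}$ as $\varepsilon^{11}$. So no routine algebra completes your plan for the literal ranges; your write-up must either argue explicitly that the ranges are to be read cyclically, so that all $n$ starting positions occur (e.g. $1\le i\le n$, which is what your proof actually uses), or record that the statement needs that emendation.

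The case $n=2$, which you promise to settle by hand, fails for a second structural reason that no hand computation repairs: there $k=n-2$ is not a natural number, $\mathbb{N}^{<2}=\{1\}$ produces only full-octave windows, so the displayed condition is vacuously true for every compatible two-step scale, while $(\hat{s},(\varepsilon^{2},\varepsilon^{10}),(\varepsilon^{-10},\varepsilon^{-2}))$ is not maximally even (its $k=2$ windows have exponents $14$ and $22$). Hence the reduction of the $k$-range to $\mathbb{N}^{<n}$ is sound only for $n\ge3$ even after the $i$-range is fixed; for $n\le2$ one must retain $k=n$ or dispose of those scales separately (for $n=1$ Corollary 5.73 settles it). With those two repairs made explicit --- full cyclic $i$-range and $n\ge3$, plus the small cases --- the remainder of your plan, including the reflection step giving a single $n_k$ serving both clauses of Definition 5.70, does go through.
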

\begin{corollary}
Every trivial scale is maximally even. In particular, every monotonic scale and every chromatic scale are maximally even.
\end{corollary}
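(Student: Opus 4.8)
The plan is to unpack triviality into a statement about powers of $\varepsilon$ and then verify Definition 5.70 by a one-line telescoping computation. Since a trivial scale exists, all the equivalents of Proposition 5.44 hold (this is part of the corollary equating the existence of a scale with each item of Proposition 5.44), so in particular the characterization $|\mathbb{I}^{*}|=\{\varepsilon^{n}:n\in\mathbb{Z}\}$ derived in the long Remark following the Axiom of Diatonicism is available, together with a natural number $N$ with $\varepsilon^{N}=2$. Now let $S=(\hat{s},(r_i)_{i=1}^{m},(r'_i)_{i=1}^{n})$ be a trivial scale. Triviality of its ascendent mode says $\{r_i:1\leq i\leq m\}$ is a singleton, so all the $r_i$ coincide with one common value; being a harmonic interval greater than unit, that value is $\varepsilon^{a}$ for some $a\in\mathbb{N}$. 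Symmetrically every $r'_i$ equals a common $\varepsilon^{-c}$ with $c\in\mathbb{N}$. The octave normalizations $\prod_i r_i=2$ and $\prod_i r'_i=1/2$ then read $am=N$ and $cn=N$.

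Next I would check maximal evenness directly, without even invoking the reduction to $1\leq k\leq\max\{m,n\}$ afforded by Proposition 5.71. Fix $k\in\mathbb{N}$ and any starting index. Because each ascendent step is the same power $\varepsilon^{a}$, the product of the $k+1$ cyclically consecutive ascendent steps equals $(\varepsilon^{a})^{k+1}=\varepsilon^{a(k+1)}$, independently of where one starts; symmetrically, each product of $k+1$ consecutive descendent steps is $\varepsilon^{-c(k+1)}$. Hence the single exponent $n_k:=a(k+1)$ places every ascendent product at the left endpoint of $\{\varepsilon^{n_k},\varepsilon^{n_k+1}\}$, and it also settles the descendent clause $\varepsilon^{-c(k+1)}\in\{\varepsilon^{-n_k},\varepsilon^{-n_k-1}\}$ as soon as $c=a$. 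This coupling — Definition 5.70 insists on one $n_k$ serving the ascendent and the descendent product simultaneously — is the only delicate point, and I expect it to be the main obstacle. It is resolved by $am=N=cn$: whenever $m=n$ (in particular for compatible scales, where moreover $r'_i=r_{n-i+1}^{-1}$ forces $c=a$ outright) one gets $c=a$, and the verification closes.

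Finally, the ``in particular'' clauses follow at once. A monotonic scale has $m=n=1$, so both its modes are singletons and it is trivial; a chromatic scale has ascendent mode $\{\varepsilon\}$ and descendent mode $\{\varepsilon^{-1}\}$, again singletons, hence trivial — and both kinds are compatible (by the corollary recording that the monotonic and chromatic scales are compatible and trivial), so the coupling above is automatic. The conclusion also dovetails with the mode level: trivial modes are maximally even (the corollary just after the definition of maximal evenness for modes), and the modes of a maximally even scale ought to be maximally even, so obtaining maximal evenness of a trivial scale from triviality of its two modes is exactly what one should predict. In short, the whole content of the proof is the observation ``in a trivial scale every step is the same power of $\varepsilon$,'' after which only the simultaneous-window bookkeeping for $n_k$ remains.
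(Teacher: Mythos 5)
You have isolated exactly the right subtlety---Definition 5.70 requires one exponent $n_k$ to serve the ascendent and the descendent products simultaneously---but your resolution of it does not reach the statement actually claimed. Your verification closes only ``whenever $m=n$'', and nothing in Definition 5.61 forces a trivial scale to have $m=n$, nor a common step $a=c$: triviality only demands that each of the two modes be a singleton, with no link between them. Concretely, once $\varepsilon^{12}=2$ is in force, the triple $(\hat{s},(\varepsilon^{6},\varepsilon^{6}),(\varepsilon^{-4},\varepsilon^{-4},\varepsilon^{-4}))$ is a scale (both products telescope to $2$ and $1/2$, and $\varepsilon^{6},\varepsilon^{-4}$ are harmonic measures), and it is trivial with $a=6\neq 4=c$. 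For $k=1$ the ascendent clause forces $n_1\in\{11,12\}$ while the descendent clause forces $n_1\in\{7,8\}$, so no shared $n_1$ exists. Hence the first sentence of the corollary is not established by your argument for trivial scales with $m\neq n$---and, as your own window computation shows, under the literal shared-$n_k$ reading it cannot be, since the coupling forces $a=c$.

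So the gap sits precisely at the hedge ``whenever $m=n$'', which is never discharged and cannot be discharged from the definitions as given. The two ways to complete the proof are: either read Definition 5.70 with independent exponents for the ascendent and the descendent clauses (evidently the reading under which the paper records this as an unproved immediate corollary; then each product of $k+1$ equal steps is literally a single power of $\varepsilon$, your telescoping computation finishes instantly for every trivial scale, and no coupling issue arises), or supply an argument that the two modes of a trivial scale share the same step---which would have to be an extra hypothesis, not a consequence of Definition 5.61. The remainder of your proposal is sound: the reduction via Corollary 5.66 and Proposition 5.44 to $\varepsilon^{N}=2$ and $|\mathbb{I}^{>1*}|=\{\varepsilon^{a}:a\in\mathbb{N}\}$, the relations $am=N=cn$, and the ``in particular'' clauses for the monotonic and chromatic scales (where $m=n$ and $a=c$ hold by compatibility) are all correct.
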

\begin{proposition}
The ascendent and descendent modes of every maximally even scale are maximally even.
\end{proposition}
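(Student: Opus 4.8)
The plan is to push the maximal-evenness condition of the scale to its largest admissible span, so that the cyclic product of that many consecutive scale intervals wraps once around the octave and thereby isolates a single interval; this forces every scale interval into one of two consecutive powers of $\varepsilon$, which is exactly what it means for the corresponding mode to be maximally even. If no maximally even scale exists the statement is vacuous, so let $S=(\hat{s},(r_i)_{i=1}^{m},(r'_i)_{i=1}^{n})$ be one.

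First I would record the background facts. Each $r_i$ is (the measure of) a harmonic interval greater than unit, and since such an interval exists, the characterization $|\mathbb{I}^*|=\{\varepsilon^{n}:n\in\mathbb{Z}\}$ established in the discussion of the Axiom of Diatonicism applies, so $r_i=\varepsilon^{a_i}$ for a positive integer $a_i$; as $\prod_{i=1}^{m}r_i=2$ we get $\varepsilon^{N}=2$ with $N:=a_1+\cdots+a_m$, and symmetrically $r'_i=\varepsilon^{-b_i}$ with $\prod_{i=1}^{n}r'_i=\varepsilon^{-N}$. Next I would apply the maximal evenness of $S$ --- via Proposition 5.71, which permits restricting to spans $1\le k\le\max\{m,n\}$ --- with the span $k=m$ for the ascendent part. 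The product $\prod_{j=i}^{i+m}r_{j(\bmod m)}$ runs over $m+1$ consecutive indices; under the paper's convention that $x(\bmod y)\in\{1,\dots,y\}$, the residues of $i,i+1,\dots,i+m-1$ are exactly $1,\dots,m$ once each, and $i+m$ repeats the residue of $i$, so the product equals $\bigl(\prod_{\ell=1}^{m}r_\ell\bigr)\,r_{i(\bmod m)}=2\,r_{i(\bmod m)}$. Maximal evenness then yields an $n_m\in\mathbb{N}$ with $2\,r_{i(\bmod m)}\in\{\varepsilon^{n_m},\varepsilon^{n_m+1}\}$ for all $i$; writing $r_{i(\bmod m)}=\varepsilon^{a}$ this says $\varepsilon^{N+a}\in\{\varepsilon^{n_m},\varepsilon^{n_m+1}\}$, i.e. $a\in\{n_m-N,\,n_m-N+1\}$. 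Letting $i$ run through $1,\dots,m$, so that $i(\bmod m)$ exhausts $\{1,\dots,m\}$, I conclude $\{r_i:1\le i\le m\}\subseteq\{\varepsilon^{\,n_m-N},\varepsilon^{\,n_m-N+1}\}$, which is precisely the assertion that the ascendent mode of $S$ is maximally even (with integer parameter $k=n_m-N$).

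The descendent mode falls out by the mirror computation: applying maximal evenness with span $k=n$ gives $\prod_{j=i}^{i+n}r'_{j(\bmod n)}=\bigl(\prod_{\ell=1}^{n}r'_\ell\bigr)\,r'_{i(\bmod n)}=\frac{1}{2}\,r'_{i(\bmod n)}\in\{\varepsilon^{-n_n},\varepsilon^{-n_n-1}\}$ for all $i$, hence $r'_{i(\bmod n)}\in\{\varepsilon^{\,N-n_n-1},\varepsilon^{\,N-n_n}\}$, and running $i$ through $1,\dots,n$ shows the descendent mode lies in a pair of consecutive powers of $\varepsilon$, so it too is maximally even. (Alternatively one could deduce the descendent half from the ascendent half by passing to descendent versions and invoking Corollary 5.49, but the direct computation is no longer.)

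I expect the only delicate point to be the index bookkeeping forced by the nonstandard residue convention $x(\bmod y)\in\{1,\dots,y\}$, together with the trivial-but-necessary check that $k=m$ and $k=n$ genuinely satisfy $1\le k\le\max\{m,n\}$ so that Proposition 5.71 is applicable. Choosing these maximal spans is what makes the argument uniform, with no monotonic or other small-$m$ case needing separate treatment. (By contrast, the tempting span $k=m-2$, which turns the product of $m-1$ consecutive intervals into $2$ divided by the single omitted interval, requires $m\ge 3$ and would then force a separate appeal to triviality and Corollary 5.50 when $m\le 2$.)
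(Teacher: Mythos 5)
Your proof is correct and follows essentially the same route as the paper's own sketch: apply the maximal-evenness condition (via Proposition 5.71) at the spans $k=m$ and $k=n$, use $\prod_{i=1}^{m}r_i=1/\prod_{i=1}^{n}r'_i=2$ so that the cyclic product collapses to $2\,r_{i(\bmod m)}$ (resp.\ $\tfrac12\,r'_{i(\bmod n)}$), and read off that each scale step lies in two consecutive powers of $\varepsilon$. The index bookkeeping and the applicability check for Proposition 5.71 are handled correctly, so nothing further is needed.
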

\begin{proof}[Sketch of Proof]
Check the possibility mentioned in Proposition 5.71 for the natural numbers $k=m$ and $k=n$, and use $\Pi_{i=1}^{m}r_i=1/\Pi_{i=1}^{n}r'_i=2$.
\end{proof}
\begin{remark}
It is clear that if one of two relative scales is maximally even, then so is the other, but there is a more general and interesting fact. We define a \emph{rearrangement} of a scale $(\hat{s},(r_i)_{i=1}^m,(r'_i)_{i=1}^n)$ by any scale $(\hat{s},(r_{\tau(i)})_{i=1}^m,(r'_{\tau'(i)})_{i=1}^n)$, where $\tau$ and $\tau'$ are permutations (bijections) on the sets $\{1, ..., m\}$ and $\{1, ..., n\}$ respectively. A \emph{circular rearrangement} of $(\hat{s},(r_i)_{i=1}^m,(r'_i)_{i=1}^n)$ is defined by a rearrangement of that the order of whose ascendent and descendent modules is preserved up to modulus; in other words, any scale $(\hat{s},(r_{\tau(i)})_{i=1}^m,(r'_{\tau'(i)})_{i=1}^n)$ where $\tau$ and $\tau'$ are cyclic permutations on the sets $\{1, ..., m\}$ and $\{1, ..., n\}$ respectively (i.e., $\tau(i)=i+k(\mod m)$ and $\tau'(j)=j+k'(\mod n)$  for some $1\leq k\leq m$ and some $1\leq k'\leq n$). Thus, for a given scale $S$, the set of all rearrangements of $S$ is a subset of the set of all scales parallel to $S$. Notice that an arbitrary rearrangement of a maximally even scale is not necessarily maximally even (e.g., in case of diatonic scales), although so is every circular rearrangement of that by definition.
\end{remark}
\begin{definition}
Given a scale $S=(\hat{s},(r_i)_{i=1}^m,(r'_i)_{i=1}^n)$ with the generated module $\chi(S)=(\hat{s},(\hat{s_i})_{i=1}^{m},(\hat{s'_i})_{i=1}^{n})$.
\begin{enumerate}
\item The ascendent (descendent) module of $S$ is said to have the \emph{sensible} when $r_m$ ($r'^{-1}_n$) is a half step. In this case, the last or $m$-th ascendent ($n$-th descendent) degree of $S$ is named the \emph{ascendent} (\emph{descendent}) \emph{sensible}.
\item Every ordered triple of the forms $(r_{k-1},r_k,r_{k+1})$ and $(r'_{l-1},r'_l,r'_{l+1})$ for a $1<k<m$ and an $1<l<n$ is said to be a \emph{tetrachord}, and in this case we say $S$ has that tetrachord. Every pair of tetrachords are called \emph{disjoint} if their corresponding module components sets do not intersect except probably at the tonic of $S$.
\item A triad $(x,y,z)$ is said to be \emph{ascendently} (\emph{descendently}) \emph{scale-based} on the $k$-th ascendent (descendent) degree of $S$ ($1\leq k\leq m$ ($1\leq k\leq n$)) if the module components set of $(\hat{s_k}_{-1},(|[x,y]|,|[y,z]|))$ ($(\hat{s'_k}_{-1},(|[z,y]|,|[y,x]|))$) is a subset of the ascendent (descendent) components set of the scale $S$.
\end{enumerate}
\end{definition}
\paragraph*{}
The mathematically minded reader may repine to face such an extensive area of fresh musical concepts but should know this is not our fault. These are the most natural substances a musician is inseparably involved in and enjoys for musical intentions. For example, the sensible of a scale, also known as the leading note in music theory, refers to that degree of scale (the last degree in this situation) inducing the listener an incentive to expect the next degree (namely the gamut equivalent to the same tonic) \cite{M4}; speaking compositionally, the exorbitant stop on the sensible is not so pleasant because the human ear stays waiting to hear the next note. As another example, the application of tetrachords returns to that period of time this notion used to be accounted the basis for constructing various musical scales by theoreticians (take a look at \cite{S1} p. 235), which we adopted the reverse of this method. We recommend the reader to communicate with these subjects by a musically geometrical insight. For instance, a chord (or triad) is scale-based when its components lie on the degrees of scale; easy A!
\begin{remark}
\textbf{Warning.} A scale whose ascendent (descendent) module has the sensible need not have the descendent (ascendent) sensible.\\
By the notation used in Definition 5.76, what is meant by disjoint tetrachords in item 2 is the following; whenever, e.g., $(r_{k-1},r_k,r_{k+1})$ and $(r'_{l-1},r'_l,r'_{l+1})$ are two tetrachords, they are disjoint iff the intersection of the module components set of $(\hat{s_k}_{-1},(r_{k-1},r_k,r_{k+1}))$, i.e. $\{\hat{s_k}_{-2},\hat{s_k}_{-1},\hat{s_k},\hat{s_k}_{+1}\}$, and that of $(\hat{s'_l}_{-1},(r_{l-1},r_l,r_{l+1}))$, i.e. $\{\hat{s'_l}_{-2},\hat{s'_l}_{-1},\hat{s'_l},\hat{s'_l}_{+1}\}$, (where $\hat{s_0}=\hat{s'_0}=\hat{s}$) is either empty or the singleton $\{\hat{s}\}$. The lexicology of tetrachord as a quadruple of adjacent ascendent or descendent degrees of a scale is pretty clarified right here.\\
About triads scale-based on a degree of a given scale, we indicate and apply a briefer terminology when working on compatible scales; since there would be a one-to-one correspondence between the set of triads which are ascendently scale-based on the degrees of a compatible scale $S=(\hat{s},(r_i)_{i=1}^n,(r^{-1}_{n-i+1})_{i=1}^n)$ and that of descendently scale-based on the degrees of $S$ as
\begin{center}
$(\hat{s_k}_{-1},|[x,y]|,|[y,z]|)\longleftrightarrow (\hat{s'_n}_{-k+1},|[z,y]|,|[y,x]|)$
\end{center}
for every $1\leq k\leq n$ and using the notation employed in Definition 5.76, we simply say, in this case, that the triad $(x,y,z)$ is scale-based on the $k$-th (ascendent) degree of the scale $S$ insofar as it does not lead to ambiguity. 
\end{remark}
\begin{corollary}
Given a scale $S=(\hat{s},(r_i)_{i=1}^m,(r'_i)_{i=1}^n)$.
\begin{enumerate}
\item If $m+n\leq 3$, then $S$ and particularly every monotonic scale have no tetrachords. The number of tetrachords of $S$ is equal to $m+n-4$ on condition that $m+n>3$.
\item The number of (unordered) pairs of disjoint tetrachords of $S$ with $m,n\geq 7$ equals $(m-5)(m-6)+(n-5)(n-6)$. In particular, a heptatonic scale has four pairs of disjoint tetrachords.
\end{enumerate}
\end{corollary}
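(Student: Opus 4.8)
The plan is to translate everything into a count of $4$-blocks in the cyclic list of scale degrees. For $S=(\hat{s},(r_i)_{i=1}^m,(r'_i)_{i=1}^n)$ write its generated module as $(\hat{s},(\hat{s}_i)_{i=1}^m,(\hat{s'}_i)_{i=1}^n)$ with $\hat{s}_0=\hat{s'}_0=\hat{s}$. Since $\prod_{i=1}^m r_i=2$, the last ascendent degree $\hat{s}_m$ coincides with the tonic $\hat{s}_0$ (the two frequencies differ by a factor $2=k$, an octave, hence are equitonal), and likewise $\hat{s'}_n=\hat{s'}_0$; moreover $\hat{s}_0,\dots,\hat{s}_{m-1}$ are \emph{pairwise distinct}, because the partial products $\prod_{i\le j}r_i$ lie strictly between $1$ and $2$, so by the Tonality Characterization Theorem two of them are equitonal only when equal. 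Thus the ascendent degrees form a genuine cyclic sequence of length $m$ (and similarly of length $n$ on the descendent side), and by Remark 5.78 the module components set of a tetrachord is exactly the set of the $4$ consecutive degrees it occupies.

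\emph{Item 1.} By Definition 5.76 a tetrachord of $S$ is an ascendent triple $(r_{k-1},r_k,r_{k+1})$ with $1<k<m$ or a descendent triple $(r'_{l-1},r'_l,r'_{l+1})$ with $1<l<n$, and each is pinned down by its centre $k$ (resp.\ $l$). Hence the ascendent tetrachords are indexed by $\{k:1<k<m\}$, of size $m-2$ when $m\ge 2$ and $0$ otherwise, and dually on the descendent side. If $m+n\le 3$ then $m,n\le 2$, so both index sets are empty and $S$ has no tetrachord; in particular this holds for every monotonic scale ($m=n=1$). If $m+n>3$ (and $m,n\ge 2$, which is what the closed form presumes), the total is $(m-2)+(n-2)=m+n-4$.

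\emph{Item 2.} Identify the ascendent tetrachord with centre $k$ with the block $B_k=\{k-2,k-1,k,k+1\}$ of positions in $\{0,1,\dots,m\}$, reading position $m$ as the tonic $0$. By the distinctness above, two ascendent tetrachords $T_k,T_{k'}$ are disjoint iff $B_k$ and $B_{k'}$ meet only possibly in $\{0\}$, and counting disjoint unordered pairs is then bare arithmetic, split into: (i) both blocks interior, i.e.\ $3\le k<k'\le m-2$, disjoint iff $k'-k\ge 4$; (ii) exactly one centre equal to $2$ or $m-1$ (a block touching the tonic), which forces an explicit bound on the other centre; (iii) the single cross-octave pair $k=2$, $k'=m-1$, whose blocks share only the tonic precisely when $m\ge 7$. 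Adding the three sub-counts gives $\binom{m-5}{2}$ disjoint pairs of ascendent tetrachords, and symmetrically $\binom{n-5}{2}$ of descendent ones. The remaining, \emph{mixed}, disjoint pairs are governed by which ascendent and descendent degrees coincide: under the compatibility standing in the background ($n=m$, $r'_i=r^{-1}_{n-i+1}$) one gets $\hat{s'}_j=\hat{s}_{n-j}$, so every descendent tetrachord occupies the very degrees of a unique ascendent one, which matches the disjoint mixed pairs with the ordered pairs of distinct tonic-disjoint ascendent blocks — a further $2\binom{m-5}{2}$. Summing, the number of disjoint unordered pairs is $4\binom{m-5}{2}=2(m-5)(m-6)=(m-5)(m-6)+(n-5)(n-6)$, which for $m=n=7$ equals $1+1+2=4$.

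The main obstacle is the bookkeeping in steps (ii)--(iii): pinning down exactly when a tonic-touching block and a second block overlap only at the tonic, handling the wrap-around at position $m\equiv 0$ without double-counting the pair $\{T_2,T_{m-1}\}$, and verifying that cases (i), (ii), (iii) partition all unordered pairs so that the sub-counts genuinely add up to $\binom{m-5}{2}$ (this is where $m,n\ge 7$ is needed, so that case (iii) survives). Everything afterwards — the reduction of the mixed pairs via compatibility and the simplification of $4\binom{m-5}{2}$ to the stated closed form — is routine.
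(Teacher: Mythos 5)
Your item 1 and your count of same-module disjoint pairs are fine: the ascendent tetrachords are indexed by $1<k<m$, the wrap-around identification of position $m$ with the tonic is handled correctly, and the tally of $\binom{m-5}{2}$ ascendent plus $\binom{n-5}{2}$ descendent disjoint pairs is right (the paper itself supplies no proof of this corollary, so there is nothing to compare against on that side). The genuine gap is in your mixed-pair step. You invoke ``the compatibility standing in the background'' ($n=m$, $r'_i=r^{-1}_{n-i+1}$), but the corollary is stated for an arbitrary scale $S=(\hat{s},(r_i)_{i=1}^m,(r'_i)_{i=1}^n)$ with $m,n\geq 7$; the paper reserves the shorthand $(\hat{s},(r_i)_{i=1}^n)$ for compatible scales, and nothing in the hypothesis gives you $\hat{s'}_j=\hat{s}_{n-j}$. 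Without that identity the descendent degrees need not lie among the ascendent ones, the number of disjoint mixed pairs is not a function of $(m,n)$ at all, and your final summation $4\binom{m-5}{2}$ has no basis.

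Concretely, take the heptatonic (non-compatible) scale $S=(\hat{s},(\varepsilon^2,\varepsilon^2,\varepsilon^2,\varepsilon^2,\varepsilon^2,\varepsilon,\varepsilon),(\varepsilon^{-2},\varepsilon^{-2},\varepsilon^{-2},\varepsilon^{-2},\varepsilon^{-2},\varepsilon^{-1},\varepsilon^{-1}))$. Its ascendent degrees sit (in semitones above the tonic) at $0,2,4,6,8,10,11$ and its descendent degrees at $0,10,8,6,4,2,1$. The same-module disjoint pairs are $\{T_2,T_6\}$ and $\{T'_2,T'_6\}$ exactly as in your count, but now there are three disjoint mixed pairs, $\{T_5,T'_6\}$, $\{T_6,T'_5\}$ and $\{T_6,T'_6\}$ (for instance $T_5$ occupies the classes $\{6,8,10,11\}$ and $T'_6$ occupies $\{4,2,1,0\}$), while $\{T_2,T'_2\}$, which is disjoint in the compatible case, here shares the non-tonic class at $6$. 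That makes five unordered disjoint pairs, not $(7-5)(7-6)+(7-5)(7-6)=4$. So the literal statement is false for general scales, and any proof must either restrict to compatible scales (in which case your argument works, but you must state the restriction rather than treat it as ambient) or reinterpret the count (e.g.\ as ordered pairs within each module separately, which also yields $(m-5)(m-6)+(n-5)(n-6)$). As written, the sentence reducing the mixed pairs via compatibility is the step that fails.
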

\begin{corollary}
Let $S=(\hat{s},(r_i)_{i=1}^m,(r'_i)_{i=1}^n)$ be a scale with the generated module $\chi(S)=(\hat{s},(\hat{s_i})_{i=1}^{m},(\hat{s'_i})_{i=1}^{n})$. A triad $(x,y,z)$ is ascendently (descendently) scale-based on the $k$-th ascendent (descendent) degree of $S$ iff there are natural numbers $p$ and $q$ with $k<p<q$ so that 
\begin{center}
$\chi((\hat{s_k},|[x,y]|,|[y,z]|))=(\hat{s_k}_{-1},\hat{s_p}_{-1 (\mod m)},\hat{s_q}_{-1 (\mod m)})$
($\chi((\hat{s_k},|[x,y]|,|[y,z]|))=(\hat{s'_k}_{-1},\hat{s'_p}_{-1 (\mod n)},\hat{s'_q}_{-1 (\mod n)})$).
\end{center}
\end{corollary}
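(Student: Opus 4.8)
The plan is to unwind Definition 5.76(3) together with the description of the module generation $\chi$ from Remark 5.57, and then reduce the claim to an elementary monotonicity statement about the octave-periodic extension of the sequence of degrees. Write the ascendent module of $S$ as $(\hat{s_0},\hat{s_1},\dots,\hat{s_m})$ with $\hat{s_0}=\hat{s_m}=\hat{s}$ and $f(s_i)=r_i f(s_{i-1})$, so the $k$-th ascendent degree is $\hat{s_{k-1}}$ (the corollary's $\hat{s_k}_{-1}$) and the ascendent components set of $S$ is $\{\hat{s_0},\dots,\hat{s_{m-1}}\}$. For $t\ge 0$ write $t=am+j$ with $a\ge 0$ and $0\le j<m$, and pick a sound $\sigma_t$ with $f(\sigma_t)=2^a f(s_j)$. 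Since each $r_i>1$ and $\prod_{i=1}^m r_i=2$, one has $f(s_0)<f(s_1)<\dots<f(s_m)=2f(s_0)$, hence $f(\sigma_0)<f(\sigma_1)<\dots$ is strictly increasing; moreover $f(\sigma_t)/f(s_{t\bmod m})$ is a power of $2$, so $\hat{\sigma_t}=\hat{s_{t\bmod m}}$ by the Tonality Characterization Theorem, and $\{\hat{\sigma_t}:t\ge 0\}$ is exactly the ascendent components set.

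First I would dispatch the ``if'' direction. Assume $\chi((\hat{s_{k-1}},|[x,y]|,|[y,z]|))=(\hat{s_{k-1}},\hat{s_{(p-1)\bmod m}},\hat{s_{(q-1)\bmod m}})$ for naturals $k<p<q$. Since $1\le k\le m$, each of $\hat{s_{k-1}}$, $\hat{s_{(p-1)\bmod m}}$ and $\hat{s_{(q-1)\bmod m}}$ lies in $\{\hat{s_0},\dots,\hat{s_{m-1}}\}$ (recalling $\hat{s_m}=\hat{s_0}$), i.e.\ in the ascendent components set of $S$; so the module components set of $(\hat{s_{k-1}},(|[x,y]|,|[y,z]|))$ is contained in it, and by Definition 5.76(3) the triad $(x,y,z)$ is ascendently scale-based on the $k$-th ascendent degree.

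For the ``only if'' direction I would compute, from the definition of $\chi$, that $\chi((\hat{s_{k-1}},|[x,y]|,|[y,z]|))=(\hat{s_{k-1}},\widehat{b_1},\widehat{b_2})$ with $f(b_1)=|[x,y]|\,f(s_{k-1})$ and $f(b_2)=|[y,z]|\,f(b_1)$. If the triad is scale-based then $\widehat{b_1},\widehat{b_2}\in\{\hat{\sigma_t}:t\ge 0\}$, so $f(b_1)=2^{a}f(s_j)$ and $f(b_2)=2^{a'}f(s_{j'})$ with $j,j'\in\{0,\dots,m-1\}$ and $a,a'\in\mathbb{Z}$. Because $(x,y,z)$ is a triad we have $x*y*z$, hence $|[x,y]|>1$ and $|[y,z]|>1$, so $f(b_1)>f(s_{k-1})=f(\sigma_{k-1})$ and $f(b_2)>f(b_1)$; a negative value of $a$ would force $f(b_1)\le\tfrac12 f(s_j)<f(s_0)\le f(s_{k-1})$, a contradiction, so $a\ge 0$ and likewise $a'\ge 0$, giving $f(b_1)=f(\sigma_{am+j})$ and $f(b_2)=f(\sigma_{a'm+j'})$. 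Setting $p=am+j+1$ and $q=a'm+j'+1$, strict monotonicity of $(f(\sigma_t))_t$ yields $p-1>k-1$ and $q-1>p-1$, i.e.\ $k<p<q$, while periodicity gives $\hat{s_{(p-1)\bmod m}}=\hat{\sigma_{p-1}}=\widehat{b_1}$ and $\hat{s_{(q-1)\bmod m}}=\hat{\sigma_{q-1}}=\widehat{b_2}$, which is exactly the asserted form. The descendent case is entirely symmetric, replacing the ascendent module by the descendent module $(\hat{s'_0},\dots,\hat{s'_n})$ with $f(s'_n)=\tfrac12 f(s'_0)$, using its strictly \emph{decreasing} octave-periodic extension, and the fact that $|[z,y]|<1$ and $|[y,x]|<1$ since $x*y*z$.

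The one genuinely fiddly point — the step I expect to cost the most care — is the bookkeeping around the octave wraparound: one must arrange a single pair of indices $p,q$ that simultaneously records the position of $b_1,b_2$ in the periodically extended degree sequence (so that reduction modulo $m$ reproduces the equitonal classes $\widehat{b_1},\widehat{b_2}$) and respects $k<p<q$. Both requirements are met by taking $p-1,q-1$ to be the actual positions of $b_1,b_2$ in $(\sigma_t)_{t\ge 0}$, whose existence and strict order follow from $|[x,y]|,|[y,z]|>1$ together with the strict monotonicity of $(\sigma_t)$; once the extended sequence is set up and shown to be strictly monotone, everything else is routine.
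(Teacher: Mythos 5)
Your proof is correct; the paper supplies no proof for this corollary, and your argument — unwinding Definition 5.76(3) and the module generation of Remark 5.57, computing $f(b_1)=|[x,y]|f(s_{k-1})$, $f(b_2)=|[y,z]|f(b_1)$, and using the strictly increasing octave-periodic extension $(\sigma_t)$ of the degree sequence (via the Tonality Characterization Theorem, $r_i>1$, and $\prod_{i=1}^m r_i=2$) to locate $b_1,b_2$ at positions $p-1,q-1$ with $k<p<q$ — is exactly the intended routine verification, with the $x*y*z$ ordering of the triad delivering the index inequalities and the ``if'' direction being immediate from Definition 5.76(3). Your readings of the statement's notational slips (taking the first argument of $\chi$ to be $\hat{s_{k-1}}$, and using $|[z,y]|,|[y,x]|$ in the descendent case as Definition 5.76 prescribes) are the consistent ones, and the $\{0,\dots,m-1\}$ versus $\{1,\dots,m\}$ residue conventions agree because $\hat{s_0}=\hat{s_m}$, so no gap remains.
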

\paragraph*{}
Having forbore from giving more elementary mathematical statements, we are ready to present the last and the most basal principle of the system as was previously mentioned based on all material collected up to this moment.
\begin{axiom}[Axiom of Gamme/Major Scale Axiom]
There exists a heptatonic compatible scale with a maximally even mode, whose ascendent module has the sensible but so does not the descendent one, having two disjoint and equal tetrachords, and such that a major triad is scale-based on its tonic.
\end{axiom}
\paragraph*{}
The Axiom of Gamme is the very same point the digit $7$, as the number of musical notes, essentially emerges from in the world of music. Also, it stealthily gives the real number $\varepsilon$ for the exact value of half steps as will follow. Since we aimed at philosophically hiding and logically revealing this value just as the number $7$ and we yearned to take this action in a realistic manner not seemed to be so artificial, we have borne the heavy encumbrance of all the new technical concepts recently introduced to formulize the axiom; otherwise, we could have postulated many other statements extremely simpler than one entitled ``Major Scale Axiom'' (see Remark 5.85). Note that we abstained from symbolic rephrasing of the axiom to exhibit its spiritual magnificence and, of course, to save paper.
\begin{remark}
The Axiom of Gamme is undoubtedly the most powerful axiom of our music system occasionally referred to as the \emph{Fundamental Principle of Music}. This axiom is musically fundamental for the following two basic reasons:
\begin{itemize}
\item It firstly states that there are equitonal sounds non-identical, making the point where we are no more in need of the assumption mentioned in Remark 5.9. If we ignored this property of music, then there would be infinitely many distinct musical notes making the performance practically impossible for the instrumentalist (as indicated in Figure 1)\footnote{\textsl{It is easy to play any musical instrument; all you have to do is touch the right key at the right time and the instrument will play itself.}}.
\item It secondly states that all intervals created by equitonal sounds are harmonic. More generally, all items of Proposition 5.44 are satisfied without which doing more music would be practically nonsense.
\end{itemize}
In particular, once such an axiom is established, the octave intervals are both existent and harmonic.
\end{remark}
\begin{figure}
\begin{center}
\includegraphics[width=12cm]{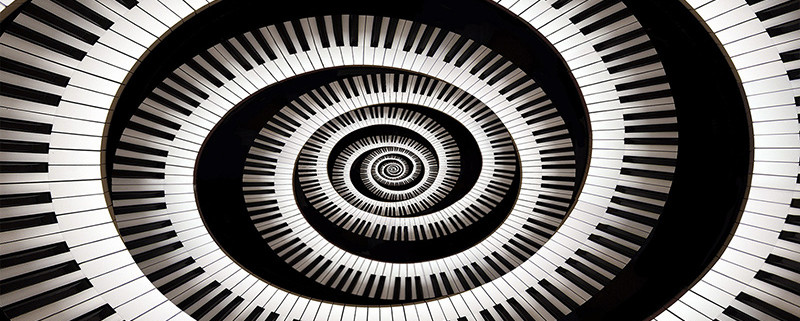}
\end{center}
\caption{\scriptsize Having had a keyboard of length $\infty$ in hand, the validity of the famous quotation exposed in the footnote, due to J.S. Bach, does happen to be distorted because of inaccessibility to all notes, in the sense that there is not enough time to touch an arbitrary key, let alone the right one!}
\end{figure}
\begin{theorem}
The $\varepsilon$ is well-defined and equal to $2^{1/12}$.
\end{theorem}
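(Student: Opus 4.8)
The plan is to use the Axiom of Gamme (Major Scale Axiom) in two stages. First, its mere assertion that a heptatonic scale exists forces (via Remark 5.85, hence all items of Proposition 5.44) the existence of a non-unit harmonic interval; so by Remark 5.33 the interval $\varepsilon=\inf|\mathbb{I}^{>1*}|$ is well-defined, $|\mathbb{I}^*|=\{\varepsilon^n:n\in\mathbb{Z}\}$, and $2\in|\mathbb{I}^*|$. Consequently $2=\varepsilon^{n_0}$ for a unique $n_0\in\mathbb{N}$ (item 5 of Proposition 5.44), i.e.\ $\varepsilon=2^{1/n_0}$, and it remains only to prove $n_0=12$. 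For this I would exploit the structural clauses of the axiom.

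Fix the heptatonic compatible scale $S=(\hat{s},(r_i)_{i=1}^{7})$ granted by the axiom and write $r_i=\varepsilon^{a_i}$ with $a_i\in\mathbb{N}$ (possible since each $r_i\in|\mathbb{I}^{>1*}|$ and $\varepsilon>1$). From $\prod_{i=1}^{7}r_i=2=\varepsilon^{n_0}$ one gets $a_1+\cdots+a_7=n_0$. Maximal evenness of the mode means $\{r_i\}\subseteq\{\varepsilon^{k},\varepsilon^{k+1}\}$, i.e.\ $a_i\in\{k,k+1\}$ for a fixed $k$; the ascendent-sensible clause forces $r_7=\varepsilon$, so $a_7=1$, which rules out $k\ge 2$, while $k=0$ would make $S$ the chromatic scale (all $a_i=1$, $n_0=7$), contradicting the next clause. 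Hence $k=1$ and every $a_i\in\{1,2\}$. The clause ``ascendent module has the sensible but the descendent one does not'', together with compatibility ($r'_7=r_1^{-1}$, so $r'^{-1}_7=r_1$), gives $a_7=1$ and $a_1\neq 1$, whence $a_1=2$.

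Next I would carry out the tetrachord combinatorics. A heptatonic scale has exactly five ascendent tetrachords $(r_{k-1},r_k,r_{k+1})$, $k=2,\dots,6$, whose module components sets are the four-element index windows $\{k-2,k-1,k,k+1\}$ in $\mathbb{Z}/7\mathbb{Z}$ (recall $\hat{s}_7=\hat{s}_0$). Since any two such windows overlap (eight indices, only seven available), two tetrachords are disjoint precisely when their windows meet only at the tonic index $0$; a short check shows this occurs for the single pair $k=2$ and $k=6$, i.e.\ the tetrachords $(r_1,r_2,r_3)$ and $(r_5,r_6,r_7)$. So ``two disjoint and equal tetrachords'' forces $(r_1,r_2,r_3)=(r_5,r_6,r_7)$, giving $a_5=a_1=2$, $a_6=a_2$, $a_3=a_7=1$, and therefore $n_0=2+a_2+1+a_4+2+a_2+1=6+2a_2+a_4\in\{9,10,11,12\}$.

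Finally, the major-triad clause pins $n_0$ down. Unwinding Definitions 5.37--5.39: a major triad $(x,y,z)$ satisfies $|[x,y]|=\varepsilon\,|[y,z]|$ and has $[x,z]$ perfect, i.e.\ $|[x,z]|=2^{1/2}\varepsilon$; since $|[x,z]|=|[x,y]|\,|[y,z]|=\varepsilon\,|[y,z]|^{2}$ by item 5 of Theorem 3.83, we obtain $|[y,z]|^{2}=2^{1/2}$, so $|[y,z]|=2^{1/4}$. But $[y,z]$ is harmonic, so $2^{1/4}\in|\mathbb{I}^*|=\{\varepsilon^{n}:n\in\mathbb{Z}\}$, whence $2^{1/4}=\varepsilon^{c}$ for some $c\in\mathbb{N}$ and $2=\varepsilon^{4c}$, forcing $4\mid n_0$. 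The only element of $\{9,10,11,12\}$ divisible by $4$ is $12$, so $n_0=12$ and $\varepsilon=2^{1/12}$ (and, as a consistency check, $(a_1,\dots,a_7)=(2,2,1,2,2,2,1)$, the whole--whole--half--whole--whole--whole--half major scale). I expect the main obstacle to be the bookkeeping around equitonal classes in the ``tetrachord'' and ``scale-based'' definitions --- getting the index arithmetic mod $7$, the identification $\hat{s}_7=\hat{s}_0$, and the translation between intervals and their measures all correct --- rather than any substantive difficulty; note in particular that only the existence of a major triad (not the full ``scale-based on the tonic'' condition) is needed for the final divisibility step.
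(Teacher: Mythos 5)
Your argument is correct and follows the same overall strategy as the paper's sketch of Theorem 5.81 --- decode the clauses of the Axiom of Gamme into arithmetic constraints on the exponents $a_i$ in $r_i=\varepsilon^{a_i}$ --- but your endgame is genuinely different. The paper uses the full clause ``a major triad is scale-based on its tonic'': after the same structural reductions it writes the scale as $(\hat{s},(\varepsilon^2,x,\varepsilon,y,\varepsilon^2,x,\varepsilon))$ and extracts the three equations $\varepsilon^6x^2y=2$ (octave), $\varepsilon^2xy=2^{1/2}$ (the perfect fifth spanning degrees $1$ to $5$), and $x=y$ (the major condition), then solves; pinning the triad to the degrees $1,3,5$ is precisely the part the paper dispatches by ``indirect proof for several times''. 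You instead lean harder on maximal evenness and the sensible/tetrachord clauses to bound $n_0=\log_\varepsilon 2\in\{9,10,11,12\}$, and then use only the \emph{existence} of a major triad: its fifth has measure $\varepsilon\cdot 2^{1/2}$, so its upper third has measure $2^{1/4}$, which must lie in $|\mathbb{I}^*|=\{\varepsilon^n:n\in\mathbb{Z}\}$, forcing $4\mid n_0$ and hence $n_0=12$. This buys a cleaner finish with no case analysis of where the scale-based triad sits, at the mild cost of recovering the step pattern $(2,2,1,2,2,2,1)$ only a posteriori, as you note. Two small points to tidy: a disjoint-and-equal pair of tetrachords could a priori be two \emph{descendent} ones (a mixed ascendent/descendent pair can never be equal, since the entries lie on opposite sides of $1$), but by compatibility $r'_i=r^{-1}_{8-i}$ this yields exactly the equalities $r_1=r_5$, $r_2=r_6$, $r_3=r_7$ that you derived from the ascendent pair, so your conclusion stands once this case is mentioned; and maximal evenness formally allows $k\leq -1$ as well, which is excluded immediately because every $a_i\geq 1$.
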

\begin{proof}[Sketch of Proof]
The existence of a harmonic octave interval followed by Remark 5.80 guarantees that $\varepsilon$ introduced in Remark 5.34 is well-defined. Using logic rules within indirect proof for several times turns out that the scale involved in Axiom 20 is of the form $(\hat{s},(\varepsilon^2,x,\varepsilon,y,\varepsilon^2,x,\varepsilon))$ for some $s\in \mathbb{S}$, controlled by three equations $\varepsilon^6x^2y=2$, $\varepsilon^2xy=2^{1/2}$, and $x=y$. It easily follows that $\varepsilon=2^{1/12}$. Moreover, $x=y=2^{1/6}$ that is a whole tone.
\end{proof}
\begin{corollary}[Representation of Harmony]
$|\mathbb{I}^*|=\{2^{n/12}:n\in \mathbb{Z}\}$.
\end{corollary}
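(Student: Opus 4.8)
The plan is to combine Theorem 5.83 with the structural description of $|\mathbb{I}^*|$ already obtained in Remark 5.34. Recall that Remark 5.34 establishes, under the hypothesis that some harmonic interval greater than unit exists, the equality $|\mathbb{I}^*|=\{\varepsilon^n:n\in\mathbb{Z}\}$, where $\varepsilon=\inf|\mathbb{I}^{>1*}|$ is the least harmonic interval greater than unit. So the first step is to discharge that hypothesis. By the Axiom of Gamme (Axiom 20), together with the consequences drawn in Remark 5.80, there exist non-identical equitonal sounds and every interval built from equitonal sounds is harmonic; in particular each octave interval $[s,s^*]$ is harmonic, and since $|[s,s^*]|=2>1$ it furnishes a harmonic interval greater than unit. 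Hence the hypothesis of Remark 5.34 is met and the equality $|\mathbb{I}^*|=\{\varepsilon^n:n\in\mathbb{Z}\}$ holds unconditionally in the full axiom system.

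Next I would invoke Theorem 5.83, which asserts that $\varepsilon$ is well-defined and equal to $2^{1/12}$. Substituting this value into the displayed equality from the previous step gives
\[
|\mathbb{I}^*|=\{(2^{1/12})^n:n\in\mathbb{Z}\}=\{2^{n/12}:n\in\mathbb{Z}\},
\]
which is exactly the asserted identity; the last equality is just the law of exponents $(2^{1/12})^n=2^{n/12}$ for integer $n$, valid for the positive real base $2$.

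There is no serious obstacle remaining once Theorem 5.83 is in hand: the entire difficulty of the result is concentrated in extracting the value $\varepsilon=2^{1/12}$ from the Axiom of Gamme, which Theorem 5.83 already does. The only point that deserves a little care in the write-up is confirming that the hypothesis ``there exists a non-unit harmonic interval'' is genuinely available, since up to Theorem 5.83 the set $|\mathbb{I}^*|$ was only conditionally described; this is precisely the content of the two items of Remark 5.80, so it should be cited explicitly rather than taken for granted.
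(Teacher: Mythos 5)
Your argument is correct and is exactly the route the paper intends: Remark 5.80 (via the Axiom of Gamme) supplies a harmonic interval greater than unit, which activates the conditional description $|\mathbb{I}^*|=\{\varepsilon^n:n\in\mathbb{Z}\}$ from Remark 5.34, and the theorem computing $\varepsilon=2^{1/12}$ (Theorem 5.81 in the paper's numbering, not 5.83) then yields the claim by the law of exponents. No gaps; only the citation number needs correcting.
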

\begin{remark}
All $\varepsilon$-based definitions such as half/whole tone are meaningful. The calculation of the number of all kinds of modes and scales existent in the system (which are all finite), i.e. $Card(Mod^{\pm})$, $Card(\mathcal{S}_{\hat{s}})$, and $Card(\mathcal{S}_{\hat{s}}^c)$ for a given $s\in \mathbb{S}$, is a matter of combinatorics. Also the classification of pairs of conjugate modes is easily possible left to the reader as a relieving exercise.\\
It is necessary to say that although the quantity $\varepsilon$ is already characterized, we still employ the notations $\varepsilon$ and $\varepsilon^2$ for half steps and whole steps respectively for ease of use, especially in the following literary definition which introduces two  species of diatonic scales which are circular rearrangement of each other and most significant of all.
\end{remark}
\begin{definition}
Any compatible scale of the form $(\hat{s},(\varepsilon^2,\varepsilon^2,\varepsilon,\varepsilon^2,\varepsilon^2,\varepsilon^2,\varepsilon))$ is said to be a \emph{major scale}, and any one of the form $(\hat{s},(\varepsilon^2,\varepsilon,\varepsilon^2,\varepsilon^2,\varepsilon,\varepsilon^2,\varepsilon^2))$ is said to be a \emph{minor scale} ($s\in \mathbb{S}$).
\end{definition}
\begin{remark}
All items of Corollary 5.66 are satisfied at the moment. In particular, the unique chromatic scale up to relativity is of the form $(\hat{s},(\varepsilon)_{i=1}^{12})$ that is dodecatonic; actually, a scale is chromatic iff it is dodecatonic at the present time. It is clear the second name of Axiom 20 is explained by its last part stating that the major triad is scale-based on the tonic, and the scale involved is actually a major scale due to Definition 5.84. This scale is diatonic as well, and hence by a little modification (specifically in the last part) of the axiom we could posit a minor version of that. Of course, we also could replace this axiom with the simplest one as follows:
\begin{center}
$\exists S\in \mathcal{S}(\exists s\in \mathbb{S}(S=(\hat{s},(\varepsilon)_{i=1}^{12},(\varepsilon^{-1})_{i=1}^{12})))$,
\end{center}
which means there exists a dodecatonic chromatic scale, say ``Axiom of Chromaticism/Chromatic Scale Axiom'', in which case we would obtain the same consequences; having compared this axiom and Axiom 20, we realize there are some substantial similarity between them such as compatibility and maximal evenness, and some structural differences stating that all the degrees of the chromatic scale are practically the sensible and the major and minor triads are scale-based on each degree. However, we did not consider such a brief axiom because it is accounted artificial or at least not so natural as the major scale is from musical philosophy perspective mentioned before. On the other hand, the marvel of the Major Scale Axiom whose harmonic intervals are uniquely determined is to logically characterize all harmonic intervals (Theorem 5.81) in a completely naive way and through such various complicated musical attributes as involved.
\end{remark}
\begin{remark}
From Proposition 5.74 we know that the modes of every maximally even scale are maximally even too, but the converse is not true; as a counterexample, we can artificially construct a (heptatonic) diatonic scale, having two half tones adjacent, whose other steps are whole tone. The following proposition, expressing how scale involved in the Major Scale Axiom is maximally even in the meanwhile, shall provide a necessary and sufficient condition for the satisfaction of a special case of the converse of Proposition 5.74 as well.
\end{remark}
\begin{proposition}
A heptatonic diatonic scale $(\hat{s},(r_i)_{i=1}^{7},(r'_i)_{i=1}^{7})$ is maximally even if and only if it is a circular rearrangement of the major scale $(\hat{s},(\varepsilon^2,\varepsilon^2,\varepsilon,\varepsilon^2,\varepsilon^2,\varepsilon^2,\varepsilon))$. In particular, every major scale and every minor scale are maximally even.
\end{proposition}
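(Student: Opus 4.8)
The plan is to reduce the statement to a short finite combinatorial verification. First I would record the shape of a heptatonic diatonic scale: since it is diatonic, each ascendent step $r_i$ lies in $\{\varepsilon,\varepsilon^2\}$ and each descendent step $r'_i$ in $\{\varepsilon^{-1},\varepsilon^{-2}\}$, while the octave constraints give $\prod_{i=1}^{7}r_i=2$ and $\prod_{i=1}^{7}r'_i=1/2$; using $\varepsilon=2^{1/12}$ (Theorem 5.81) and comparing exponents, this forces exactly two of the $r_i$ to equal $\varepsilon$ and the other five to equal $\varepsilon^2$, and symmetrically for the $r'_i$. Hence, up to its tonic, the scale is determined by two cyclic words of length $7$ whose letters are whole steps ($\varepsilon^{\pm2}$) and half steps ($\varepsilon^{\pm1}$), each word containing exactly two half steps; such a word is classified up to rotation by the unordered pair $\{p,q\}$, $p+q=5$, of lengths of its two maximal runs of whole steps, so there are exactly three types, $\{0,5\}$, $\{1,4\}$ and $\{2,3\}$. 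I would also observe that a circular rearrangement of the major scale, in the sense of Remark 5.75, is exactly a heptatonic diatonic scale both of whose step words are of type $\{2,3\}$, the most balanced partition of $5$.

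Next I would translate maximal evenness into a condition on run lengths. A cyclic window of $w$ consecutive steps containing $h$ half steps has product of measure $\varepsilon^{2w-h}$, so by Definition 5.70 (reduced to finitely many window sizes via Proposition 5.71), applied to $(r_i)$ and, separately, to $(r'_i)$, maximal evenness of the scale amounts to the following: for every $w\in\{2,\dots,7\}$, the number of half steps in any cyclic window of length $w$ of each step word takes at most two consecutive values. Now I would dispose of the two bad types. If a step word is of type $\{0,5\}$, its two half steps are adjacent, so some window of length $2$ has $h=2$ while another of length $2$ inside the run of five whole steps has $h=0$; thus $h$ ranges over $\{0,1,2\}$ for $w=2$, and maximal evenness fails. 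If it is of type $\{1,4\}$, the two half steps are three positions apart on the short side, so some window of length $3$ contains both of them ($h=2$) while another of length $3$ inside the run of four whole steps has $h=0$; maximal evenness fails for $w=3$. Therefore a maximally even heptatonic diatonic scale must have both step words of type $\{2,3\}$, i.e.\ be a circular rearrangement of the major scale.

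For the converse I would check directly that the step word $WWHWWWH$ of type $\{2,3\}$ (with $W=\varepsilon^2$, $H=\varepsilon$) satisfies the run-length condition; the rotation-invariance already built into the condition then extends this to all of its rotations, and the descendent word --- being likewise of type $\{2,3\}$ --- is handled identically (or, when the scale is compatible, by Proposition 5.72). The verification is brief: here the two half steps split the whole steps into runs of lengths $2$ and $3$, so the smallest cyclic window containing both half steps has length $4$; thus for $w\le 3$ every window has $h\in\{0,1\}$ with both values attained, while for $w=4$ no window is all whole (the longest whole-step run has length $3$), forcing $h\in\{1,2\}$; the complementary-window identity --- the exponents of a $w$-window and of its complementary $(7-w)$-window sum to $12$ --- then settles $w=5,6$, and $w=7$ is trivial. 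In each case $2w-h$ runs over two consecutive values, so the scale is maximally even, which completes the equivalence. For the final clause it suffices to note that in both $(\hat{s},(\varepsilon^2,\varepsilon^2,\varepsilon,\varepsilon^2,\varepsilon^2,\varepsilon^2,\varepsilon))$ and $(\hat{s},(\varepsilon^2,\varepsilon,\varepsilon^2,\varepsilon^2,\varepsilon,\varepsilon^2,\varepsilon^2))$ the two half steps cut the whole steps into runs of lengths $2$ and $3$, hence both scales are of type $\{2,3\}$ and so maximally even.

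I expect the main obstacle to be bookkeeping rather than mathematics: pinning down Definition 5.70's window convention (the product $\prod_{j=i}^{i+k}$ runs over the $k+1$ consecutive indices $i,\dots,i+k$, with $j\ (\mathrm{mod}\ 7)$ taken in $\{1,\dots,7\}$), phrasing the equivalence ``maximally even $\iff$ both step words of type $\{2,3\}$'' cleanly for the two tuples at once, and verifying that ``circular rearrangement of the major scale'' as defined in Remark 5.75 really does coincide with that type condition --- together with the small but necessary remark that a heptatonic diatonic scale is determined by its tonic and its two cyclic step words, so that the three-type list is exhaustive.
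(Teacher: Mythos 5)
Your argument is correct; note that the paper states this proposition without any proof (it is among the results left to the reader), so there is no argument of the paper to compare against, and what you propose is the natural proof that the paper's own machinery suggests: reduce to finitely many cyclic windows via Proposition 5.71, record a window of $w$ steps containing $h$ half steps as $\varepsilon^{2w-h}$, classify the length-$7$ cyclic words with exactly two half steps by the whole-step gap lengths $\{0,5\}$, $\{1,4\}$, $\{2,3\}$, kill the first two types with windows of length $2$ and $3$ respectively, and verify type $\{2,3\}$ directly together with the complementary-window identity. Your counting (two half steps, five whole steps, from diatonicity, $\varepsilon=2^{1/12}$ and the octave constraint) and the window computations all check out. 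Three small points to tighten in the write-up. First, Definition 5.70 requires a \emph{single} $n_k$ serving the ascendent and the descendent windows simultaneously (exponent sets $\{n_k,n_k+1\}$ and $\{-n_k,-n_k-1\}$); treating the two step words separately is legitimate here only because, once both are of type $\{2,3\}$, their half-step counts per window length coincide, so the same $n_k$ works --- this deserves an explicit sentence, since for a non-compatible heptatonic diatonic scale the two tuples are otherwise independent. Second, Proposition 5.71 ranges over $1\le k\le\max\{m,n\}=7$, i.e.\ windows of length up to $8$, so your range $w\in\{2,\dots,7\}$ omits $w=8$; that case is a one-liner (a full octave times a single step, exponents in $\{13,14\}$), but it belongs in the verification. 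Third, the identification of ``circular rearrangement of the major scale'' in the sense of Remark 5.75 (same tonic $\hat{s}$, independent cyclic shifts of the ascendent and descendent tuples) with ``both cyclic step words of type $\{2,3\}$'' needs the short argument you flag: any such word is a rotation of $(\varepsilon^2,\varepsilon^2,\varepsilon,\varepsilon^2,\varepsilon^2,\varepsilon^2,\varepsilon)$, and likewise on the descendent side; with that in place the ``in particular'' clause follows, the minor scale being compatible so that reversal-and-inversion preserves the run multiset $\{2,3\}$.
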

\begin{remark}
The study of those scales $(\hat{s},(r_i)_{i=1}^{m},(r'_i)_{i=1}^{n})$ whose ascendent (descendent) mode $\Delta$ satisfies the equality $Ord(\Delta)=m$ ($Ord(\Delta)=n$) is so interesting but out of the scope of the paper. It is clear that the equality is satisfied for trivial scales (and notably for monotonic and chromatic ones); in addition, for diatonic scales this condition is equivalent to the matter that the scale is heptatonic (especially major or minor).
\end{remark}
\begin{remark}
It is not ungraceful here to make some remarks on the Operational Principle of Harmony. This axiom, for its part, is sufficiently powerful and technical from logical point of view as much as it might be protested by musicians for the sake of having a purely mathematical nature. It is necessary to mention that we could posit another strictly weaker statement musically expressing the related algebraic property of harmony. An appropriate one in this context may roughly be presented as follows:
\begin{center}
`the sum of every number of adjacent intervals of a scale is harmonic'
\end{center}
(recall that a family of adjacent coordinates of a tuple $(r_i)_{i=1}^{n}$ is any set of the form $\{r_i, ..., r_{i+k(\mod n)}\}$ for $k\in \mathbb{N}$ and $1\leq i\leq n$), say ``Modal Principle of Harmony'', which seems more evident and arises out of the practical prospect of doing various melodies over scales within a music implementation by player. But one should pay close attention to its reasonable consequences and care to maintain the valuable musical purposes. In fact, not only is the Modal Principle of Harmony, contrary to its non-trivial appearance, the most elementary expectation of harmonic entities almost all musicians have, but it produces a widespread variety of musical modes and, in turn, an extended aria of musical scales on the base of which composers, in a remarkable position of authority, are able to create miscellaneous musical temperaments with more exceptional feedback through all types of music styles covering different genres from western to eastern ones. Of course, harmonic intervals of such new structural scales are not of the form $\varepsilon^n$ and this means the characterization affair becomes no longer practicable rationally. In other words, if it was the case, we would obtain only a proper subset of conclusions thus far discovered in our music theory. In conclusion, the axiomatic system will get involved in the musical gap not to have whole acquaintance with the science of \emph{counterpoint} as a knowledge investigating principles governing the harmonic world of musical chords in the light of polyphony, as we will fractionally have paid in the future and we had nurtured the dream of the portrayal of harmony in mind and we turned it into a musical intent, whatever the cost, for the sake of having the benefit of such a scientific blessing which determines and classifies all species of triads as the foundation of polyphonic music \cite{T2}. If only it could be possible to make at least one triad scale-based on each degree of all heptatonic eastern scales ...! But, combining and adapting the rules of eastern and western music concomitantly in a unique theoretical framework would be a futile effort. Having replaced the Operational Principle of Harmony by the modal version, we lose a part of facilities of counterpoint in practice (for instance in musical composition, arrangement and so forth) as well as our system misses its completeness in the absence of harmony characterization and hence it causes us to subconsciously enter the possible music universes in the sense that, theoretically speaking, we have to figure out the facts of the world by virtue of \emph{modal logic} \cite{C1, C2} (equipped with additional connectives such as necessity and possibility interrelated by Aristotle's classical attitude, i.e. $\square \varphi =\neg \lozenge \neg \varphi$) that changing the underlying logic of the axiom system from first-order into modal has not been of our intentions, and interpretatively speaking, we may construct abundant models within each of which harmony represents its own particular meaning not incidentally obeying general laws of counterpoint, having included a strange diversity of musical structures. After all, it is worth noting the Modal Principle of Harmony is consistent with all axioms of our music system, especially with the Axiom of Diatonicism, and indeed dependent upon the Operational Principle of Harmony, that we felt compelled not to apply in spite of the inherent desire. We extremely recommend readers interested in the study of eastern music (see \cite{F1}) discovering consequences deducible from using such an axiom and referring to Appendix \RNum{2} which supplies some oriental scales whose existence contradicts only the Operational Principle of Harmony, and instead, they are in agreement with the Modal Principle of Harmony and produced by a similar eastern copy of the Major Scale Axiom entitled ``Chaargah Scale Axiom''. From theoretical viewpoint, the heptatonic scale of Chaargah (not to be confused with the gypsy scale) is the most significant Iranian scale, including other sorts of harmonic intervals (not of the form $\varepsilon^n$ for $n\in \mathbb{Z}$), having a strange interesting analogy with the same as presented in Axiom 20, and containing some attributes of the minor and major scales simultaneously.
\end{remark}
\begin{remark}
Let $\varsigma \in \mathbb{S}$ be fixed and for all. Denote by $H_{\hat{\varsigma}}$ the largest (w.r.t. $\subseteq$) subset of $\mathbb{S}$ containing $\varsigma$ whose elements construct harmonic intervals, and name it as the \emph{harmonic set} balanced on $\varsigma$. The reason for such a notation is as clear as $H_{\hat{s}}=H_{\hat{t}}$ iff $s\simeq t$. One may easily observe and check that $f(H_{\hat{\varsigma}})=\{2^{n/12}f(\varsigma):n\in \mathbb{Z}\}$ which is obtained by a sort of combination of the two concepts of tonality and harmony for $\varsigma$. Once we construct the set $H_{\hat{\varsigma}}/\simeq$, a naive binary operation $\oplus$ on it is able to be considered as follows; given any $\hat{s},\hat{t}\in H_{\hat{\varsigma}}/\simeq$, we define $\hat{s}\oplus \hat{t}$ to be $\hat{u}$ where $f(u)=f(s)f(t)/f(\varsigma)$; in other words, a representative of $\hat{s}\oplus \hat{t}$ is with frequency $|[\varsigma ,s]|.|[\varsigma ,t]|f(\varsigma)$, clarifying why $\oplus$ is well-defined. Alternately and more accurately thinking, the harmonic set balanced on $\varsigma$ is exactly the module components set of the chromatic scale $(\hat{\varsigma},(\varepsilon)_{i=1}^{12})$ (the greatest scale w.r.t. $\sqsubseteq$) with right twelve members. Without loss of generality, we denote these members by $\mathcal{N}_i$ for $i=0, 1, ..., 11$ in their natural order of the ascendent module. Obviously, all degrees of any parallel scale (with tonic $\hat{\varsigma}$) must truthfully belong to the harmonic set $H_{\hat{\varsigma}}/\simeq =\cup_{i=0} ^{11}\mathcal{N}_i$. It is easily proved that $\mathcal{N}_i\oplus \mathcal{N}_j=\mathcal{N}_{i+j (\mod 12)}$ for any $0\leq i,j\leq 11$. Also, it is trivial to show that $H_{\hat{\varsigma}}/\simeq$ together with $\oplus$ constitutes a finite abelian group of order $Card(H_{\hat{\varsigma}}/\simeq)=12$ whose neutral element is $\mathcal{N}_0=\hat{\varsigma}$ and the inverse of an $\mathcal{N}_i$, denoted $\ominus \mathcal{N}_i$, is equal to $\mathcal{N}_{12-i}$ for each $i$. In addition, that is a cyclic group generated by $\mathcal{N}_1$. Therefore, the group $(H_{\hat{\varsigma}}/\simeq ,\oplus)$ is in fact isomorphic to the usual additive group $\mathbb{Z}_{12}$ ($\mathcal{N}_i\longleftrightarrow [i]$). We may also define $\mathcal{N}_i\ominus \mathcal{N}_j =\mathcal{N}_i\oplus (\ominus \mathcal{N}_j)$ as another binary operation on $H_{\hat{\varsigma}}/\simeq$. The \emph{accidentals} of western music are the unary operations $\sharp :H_{\hat{\varsigma}}/\simeq \to H_{\hat{\varsigma}}/\simeq$ and $\flat :H_{\hat{\varsigma}}/\simeq \to H_{\hat{\varsigma}}/\simeq$ respectively called the \emph{diese} and \emph{bemol} and defined by $\sharp \mathcal{N}=\mathcal{N}\oplus \mathcal{N}_1$ (raising the pitch of the note $\mathcal{N}$ by one half step) that is alternately denoted by $\mathcal{N}\sharp$, and $\flat \mathcal{N}=\mathcal{N}\ominus \mathcal{N}_1$ (lowering the pitch of $\mathcal{N}$ by a half step) that is alternately denoted by $\mathcal{N}\flat$.
\end{remark}
\paragraph*{}
Now, it is time to introduce the musical notes axiomatically approached, namely the most basic reason for speaking of the number $7$ concealed in the system. In order to further explore musical phenomena, the reader might simultaneously continue with the common music theory textbooks from now on. 
\begin{definition}
Given $\varsigma \in \mathbb{S}$. The group $(H_{\hat{\varsigma}}/\simeq ,\oplus)$ mentioned in Remark 5.90 is said to be the \emph{fundamental group of music} relative to the \emph{base sound} $\varsigma$. Every member of this group, i.e. $\mathcal{N}_i$ ($0\leq i\leq 11$), is called a \emph{note}. In particular, those notes belonging to the module components set of the parallel major scale are respectively named as \emph{do}, \emph{re}, \emph{mi}, \emph{fa}, \emph{sol}, \emph{la}, and \emph{si} (or \emph{ti}), and simply denoted by $C$, $D$, $E$, $F$, $G$, $A$, and $B$ in their natural order of the ascendent module. Any monotonic, or chromatic, or minor, or major scale with tonic $\mathcal{N}$ is called the \emph{$\mathcal{N}$ monotonic}, or \emph{$\mathcal{N}$ chromatic}, or \emph{$\mathcal{N}$ minor}, or \emph{$\mathcal{N}$ major} scale accordingly. The definitions of other common technical concepts conventional in the music theory language are optionally left to the reader.
\end{definition}
\begin{remark}
From now on, we fix such a sound $\varsigma$ in the system to keep using the same notations as employed in Remark 5.90 and Definition 5.91. It is worth noting that the reason for applying such notations, beginning with the third letter of the English alphabet, is essentially physical since it is customary to consider a sound with frequency $440\,Hz$ as the note la ($A$) not as the base sound for the sake of having the widest feasible tonal expanse of musical hearing from base frequencies to high ones and getting the best number of audible octaves most competent of all. However, there is no logical difference between these two methods.
\end{remark}
\begin{remark}
Why is the procedure we pursued for acquiring musical notes not basically in accordance with the ordinary classical approach in harmonic consonance theory originated from ancient Greeks by the Pythagoreans? Let us speak more in relation to the subject (the reader is initially recommended to seriously study related recourses like \cite{L1} p. 45-51). From the viewpoint of the theory of Fourier series (see \cite{B1}) in physics of sound waves, musical sounds (or tones) are consonant compound waves, having a \emph{fundamental} frequency $f_0$, and decomposable into (as a sum of) countably many sine waves with various frequencies in the form of natural multiples of the fundamental, namely $nf_0$ where $n\in \mathbb{N}$, say $n$-th \emph{harmonic}. So in the language of our axiom system, for a given $s\in \mathbb{S}$ with $f(s)=f_0$, the corresponding harmonic set, designated $H_{\hat{s}}$, must be considered as having the property $\{nf_0:n\in \mathbb{N}\}\subseteq f(H_{\hat{s}})$ in place of what we have worked on in Remark 5.90. In the event that desired harmony is supposed to be well-behaved with respect to tonality as we believe so (otherwise, our music goes down the drain), we should deal with a larger harmonic set containing $\{2^mnf_0: m\in \mathbb{Z}, n\in \mathbb{N}\}$. Whereas this set is dense in the set of positive real numbers especially in the open real interval $(1,2)$, so many harmonic intervals will be produced which are arbitrarily small, contradicting the Axiom of Diatonicism and subsequently devastating the current ground we have hitherto provided for music theory. Proceeding with a simple practicable example of such a traditional method, we suppose that $\hat{s}=C$. In practice, what is antecedently heard is the first harmonic, i.e. the fundamental frequency $f_0$, the second harmonic namely $2f_0$ making an octave interval and producing the same note as $C$ is, then the third harmonic with frequency $3f_0$ whose class is almost the note sol ($G$), and so forth. The point is that the intervals produced by consecutive harmonics tend towards the unit interval because the corresponding sequence of frequency ratio converges to one, and hence from a step onwards no harmonics will sound well-tuned, but since the intensity level of such harmonics converges to zero, consequently from a step onwards they are no more able to be heard practically, and in turn the false-intoned harmonics are not received at all even by well-equipped experienced sensitive ears. Indeed, this fact is the foundation of mix engineering to control different ranges of the sea of extra waste frequencies created by all components of contents composed in a piece of music, which plans to achieve a higher quality with a fair transparence. In summary, by means of the initial harmonics one may obtain some scales, e.g., the classic version of the major scale, i.e. the \emph{just intonation} scale, which is constructed as follows:
\begin{center}
$(\hat{s},(\frac{9}{8},\frac{10}{9},\frac{16}{15},\frac{9}{8},\frac{10}{9},\frac{9}{8},\frac{16}{15}))$.
\end{center}
But unfortunately, for that not all semitones, particularly inside an octave, are congruent in any possible way (the scale is idiomatically not \emph{equal-tempered}), which makes the underlying trend of doing music be entirely fucked up, we besides get into trouble with transposing arbitrary songs written in the key of $C$ missing transposability of melodies in turn.\\
Instead, We defined the notion of note on the base of the most natural musical structure adapted to our physical intuition of music and by virtue of the equal-tempered major scale, having replaced the traditional method of harmonic progression with the modern octave progression. We, meanwhile, emphasize that there are no rules restricting the use of different harmonics of a given sound and applying a certain collection of the initial harmonics with a fixed number of them. Aside from this, how can we be sure whether the frequencies obtained by harmonic progression relative to a given sound (like $(3/2)f_0$) are exactly corresponding to the same notes as our musical intuition may lead (the perfect fifth of $C$, namely $G$), and more important than it, how can we prove that whether this issue is not a tendency inherited from the history of music affected by the ancient Greeks thoughts (so far as we have been informed of course)? Nobody knows what the essence of a perfect fifth and the note sol are, let alone their measure and frequency, just like the unfamiliar reason why doubling the frequency leads to non-identical equitonal sounds with the same note name. It is apparent that physical experience cannot justify the relation between the nature of musical sounds and their frequency as decisively as mathematical logic does. It is equally true that the origin of our treatment enjoys both physics and history but that is not the only criterion to achieve the desired intentions. In this direction, we have entrusted ourselves to the rules of logic and wended our own way through this splendid beauteous land for meticulous discovery of musical facts in the language of mathematics. Do not ever misconstrue our opinion on logic because it is not everything. Similar to other branches of mathematics, logic is involved in some unsolved problems too which have remained still open, but there is nothing to be done except something better than anything; it is just a fact of life (see the Final Discourse). 
\end{remark}
\paragraph*{}
All students of music should be expected to prove the elementary properties of notes for themselves at least once in their life, some of which are as follows.\\
\begin{exercise}
$C\sharp =D\flat$, $(D\sharp)\sharp =E$, $E\sharp =F=(G\flat )\flat$, $A\oplus B=G\ominus B$, $B\oplus B=F\oplus F$ (equivalently, $B\ominus F=F\ominus B$), $(E\oplus E)\oplus E=F\sharp \oplus F\sharp =C\ominus C=C$, etc. 
\end{exercise}
\begin{remark}
This time we set-theoretically divide the harmonic set $H_{\hat{\varsigma}}$ balanced on $\varsigma$ by the all-purpose relation $\sim$, equip it with the appropriate version of the operation $\oplus$ endowing the fundamental group of music, and consider the restriction of the binary relation $*'$. Clearly, the resultant structure $(H_{\hat{\varsigma}}/\sim ,\oplus,*'|_{H_{\hat{\varsigma}}/\sim})$ is a countable abelian totally-ordered group whose identity element is $\tilde{\varsigma}$ and the inverse of an $\tilde{s}$ is an identity class whose representatives have a frequency of $(f(\varsigma))^2/f(s)$. Moreover, this group is cyclic and equal to $<\tilde{s}>$ where $s$ is any sound with frequency $2^{\pm 1/12}f(\varsigma)$, and hence it is isomorphic to the additive group of the integers $(\mathbb{Z},+)$. This is also able to be comprehended in the language of frequency:
\begin{center}
$f(\dfrac{H_{\hat{\varsigma}}}{\sim})=\{\{2^{\frac{n}{12}}f(\varsigma)\}:n\in \mathbb{Z}\}$.
\end{center}
\end{remark}
\begin{definition}
The ordered group $(H_{\hat{\varsigma}}/\sim ,\oplus,*')$ mentioned in Remark 5.95 is said to be the \emph{fundamental group of monophony} relative to the \emph{base sound} $\varsigma$. Every (finite) sequence in the set $H_{\hat{\varsigma}}/\sim$ is called a (\emph{finite}) \emph{melody}. The empty melody (i.e. the sequence $\emptyset$) is called \emph{silent} and every constant melody is called \emph{trivial}. The \emph{harmonic sequence} of a melody $\{\tilde{s_i}\}_{i=1}$ is defined by $\{[s_i,s_{i+1}]\}_{i=1}$ (whose terms are all harmonic). Two melodies $\{\tilde{s_i}\}_{i=1}$ and $\{\tilde{t_i}\}_{i=1}$ are called \emph{congruent} if their consecutive terms construct congruent intervals correspondingly; i.e., $(\tilde{s_i},\tilde{s_{i}}_{+1})\cong (\tilde{t_i},\tilde{t_{i}}_{+1})$ for each $i$.
\end{definition}
\begin{remark}[Notation]
We use that same old notation $\cong$ to express when melodies are congruent to each other. We denote by $\mathcal{M}$ the set of all melodies, and by $\mathcal{M}^f$ the set of all finite melodies.
\end{remark}
\begin{corollary}
$Card(\mathcal{M}^f)=\aleph_0$, and $Card(\mathcal{M})=2^{\aleph_0}$.
\end{corollary}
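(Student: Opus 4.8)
The plan is to reduce the entire statement to elementary cardinal arithmetic inside $ZFC$, so that the only substantive ingredient is the size of the common index set $H_{\hat{\varsigma}}/\sim$. By Remark 5.95 the fundamental group of monophony $(H_{\hat{\varsigma}}/\sim ,\oplus ,*')$ is isomorphic to $(\mathbb{Z},+)$; in particular $Card(H_{\hat{\varsigma}}/\sim)=\aleph_0$. Everything else should follow by bookkeeping.

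First I would dispose of $\mathcal{M}^f$. Since by definition a finite melody is precisely a finite sequence in $H_{\hat{\varsigma}}/\sim$, one has $\mathcal{M}^f=\bigcup_{n\geq 0}(H_{\hat{\varsigma}}/\sim)^n$, where the $n=0$ term contributes only the silent melody $\emptyset$. Each factor $(H_{\hat{\varsigma}}/\sim)^n$ with $n\geq 1$ is a finite product of countably infinite sets, hence of cardinality $\aleph_0^n=\aleph_0$, and a countable union of countable sets is countable (by the axiom of choice, assumed throughout the paper); thus $Card(\mathcal{M}^f)\leq \aleph_0$. The reverse inequality is immediate because the one-term melodies already form a copy of $H_{\hat{\varsigma}}/\sim$ inside $\mathcal{M}^f$, so $Card(\mathcal{M}^f)=\aleph_0$.

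Next I would treat $\mathcal{M}$ by writing $\mathcal{M}=\mathcal{M}^f\cup (H_{\hat{\varsigma}}/\sim)^{\mathbb{N}}$, the second summand collecting the infinite melodies. From $2\leq Card(H_{\hat{\varsigma}}/\sim)=\aleph_0\leq 2^{\aleph_0}$ together with monotonicity of cardinal exponentiation one obtains the squeeze
\begin{center}
$2^{\aleph_0}\leq \aleph_0^{\aleph_0}\leq (2^{\aleph_0})^{\aleph_0}=2^{\aleph_0\cdot \aleph_0}=2^{\aleph_0}$,
\end{center}
so $Card((H_{\hat{\varsigma}}/\sim)^{\mathbb{N}})=2^{\aleph_0}$, whence $Card(\mathcal{M})=\aleph_0+2^{\aleph_0}=2^{\aleph_0}$.

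There is no genuine obstacle here; the only point demanding attention is conceptual rather than technical, namely to keep in mind that $\mathcal{M}$ comprises the infinite melodies together with the finite ones — otherwise one would be tempted by the (wrong) countable answer — and that, in contrast to several earlier cardinality computations in this paper, the Continuum Hypothesis is not needed at all.
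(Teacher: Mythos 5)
Your proof is correct: the paper states this corollary without proof, and the routine argument you give — $H_{\hat{\varsigma}}/\sim$ countably infinite (Remark 5.95), $\mathcal{M}^f=\bigcup_{n\geq 0}(H_{\hat{\varsigma}}/\sim)^n$ countable, and $\aleph_0^{\aleph_0}=2^{\aleph_0}$ for the infinite melodies — is exactly the intended one. Your closing observations (that $\mathcal{M}$ contains the infinite sequences, and that CH is not needed) are accurate and harmless.
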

\begin{theorem}[Melody Transposition Theorem]
Given a melody $M=\{\tilde{s_i}\}_{i=1}$. There are countably many melodies congruent to $M$ (i.e. the family of congruent melodies is countable). In addition, for every harmonic interval $I$ there exists one and only one melody $M'=\{\tilde{t_i}\}_{i=1}$ congruent to $M$ such that the interval $(\tilde{s_1},\tilde{t_1})$ is congruent to $I$.
\end{theorem}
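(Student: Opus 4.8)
\emph{Strategy.} A melody is nothing but a sequence of elements of $H_{\hat{\varsigma}}/\sim$, and being ``congruent to $M=\{\tilde{s_i}\}$'' means that successive terms build pairwise congruent intervals; transposing $M$ by a prescribed harmonic interval is therefore a matter of relocating the first note and then dragging every later note along by the Axiom of Motion. The plan is to build the transposed melody by this recursion, the only point requiring genuine care being that each relocated note still lands in the harmonic set $H_{\hat{\varsigma}}$, so that the result is again a melody. Uniqueness and the countability count will then follow, respectively, from the uniqueness clause of the Axiom of Motion and from the fact that harmony has already been characterized as the set of intervals of measure $2^{n/12}$.

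\emph{Existence.} Fix a harmonic interval $I$. First I would apply the Axiom of Motion to $I$ and $s_1$ to get $t_1$, unique up to identity, with $[s_1,t_1]\cong I$. Since $\varsigma,s_1\in H_{\hat{\varsigma}}$ the interval $[\varsigma,s_1]$ is harmonic, and $[s_1,t_1]$ is harmonic because it is congruent to $I$ and, once the Axiom of Gamme is in force, harmonicity is invariant under congruence. Hence, by additivity of free intervals, $\overline{[\varsigma,t_1]}=\overline{[\varsigma,s_1]}+\overline{[s_1,t_1]}$ is a sum of two free harmonic intervals, so itself a free harmonic interval by the corollary to the Operational Principle of Harmony; thus $[\varsigma,t_1]$ is harmonic, which places $t_1$ in $H_{\hat{\varsigma}}$. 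Then I would recurse: given $\tilde{t_i}$ with $t_i\in H_{\hat{\varsigma}}$, the interval $[s_i,s_{i+1}]$ is harmonic (it is a term of the harmonic sequence of $M$), so the Axiom of Motion produces $t_{i+1}$, unique up to identity, with $[s_i,s_{i+1}]\cong[t_i,t_{i+1}]$; the same computation, $[t_i,t_{i+1}]$ harmonic and $\overline{[\varsigma,t_{i+1}]}=\overline{[\varsigma,t_i]}+\overline{[t_i,t_{i+1}]}$ harmonic, keeps $t_{i+1}$ in $H_{\hat{\varsigma}}$. The sequence $M'=\{\tilde{t_i}\}$ is then a melody, is congruent to $M$ by construction, and satisfies $(\tilde{s_1},\tilde{t_1})\cong I$; each class $\tilde{t_i}$ is well defined thanks to the ``unique up to identity'' clauses, and is unchanged if $I$ is replaced by a congruent interval.

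\emph{Uniqueness.} Suppose $M''=\{\tilde{u_i}\}$ is a melody congruent to $M$ with $(\tilde{s_1},\tilde{u_1})\cong I$. From $[s_1,t_1]\cong I\cong[s_1,u_1]$ and $[s_1,t_1]\cong[s_1,t_1]$, the uniqueness clause of the Axiom of Motion, applied to the interval $[s_1,t_1]$ and the sound $s_1$, forces $\tilde{u_1}=\tilde{t_1}$. Inductively, if $\tilde{u_i}=\tilde{t_i}$ then $[u_i,u_{i+1}]=(\tilde{t_i},\tilde{u_{i+1}})$, so both $[s_i,s_{i+1}]\cong(\tilde{t_i},\tilde{u_{i+1}})$ and $[s_i,s_{i+1}]\cong[t_i,t_{i+1}]$ hold, and the same uniqueness clause gives $\tilde{u_{i+1}}=\tilde{t_{i+1}}$. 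Hence $M''=M'$.

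\emph{Countability and the main obstacle.} The family $\mathcal{C}_M$ of melodies congruent to $M$ is precisely the image of the assignment $I\mapsto M'_I$ just built: if $\{\tilde{u_i}\}\in\mathcal{C}_M$ then $[s_1,u_1]$ is harmonic (both endpoints lie in $H_{\hat{\varsigma}}$) and, by the uniqueness above, $\{\tilde{u_i}\}=M'_{[s_1,u_1]}$. Since congruent harmonic intervals give the same transposed melody, this assignment descends to a surjection from $\mathbb{I}^*/\cong$ onto $\mathcal{C}_M$; and by the Representation of Harmony together with the Interval Measure theorem, $\mathbb{I}^*/\cong$ is in bijection with $\{2^{n/12}:n\in\mathbb{Z}\}$, which is countable. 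Hence $\mathcal{C}_M$ is countable, and indeed, the surjection being injective as well, countably infinite. The main obstacle throughout is exactly the invariant $t_{i+1}\in H_{\hat{\varsigma}}$: the Axiom of Motion alone would let a transposed note wander out of the harmonic set, and it is only the operational/algebraic structure of harmony, the Operational Principle of Harmony and its corollary, that keeps $M'$ a legitimate melody. Once that is secured, uniqueness and the count are soft consequences of the Axiom of Motion and of harmony being pinned down as $\{2^{n/12}:n\in\mathbb{Z}\}$.
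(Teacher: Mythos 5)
Your proposal is correct and follows essentially the same route as the paper: transpose $\tilde{s_1}$ by the Axiom of Motion and recurse along the harmonic sequence, the paper phrasing the recursion via $f(t_{i+1})=|[s_i,s_{i+1}]|\,f(t_i)$ (which keeps $t_{i+1}$ in $H_{\hat{\varsigma}}$ automatically, since $f(H_{\hat{\varsigma}})=\{2^{n/12}f(\varsigma)\}$), while you secure the same invariant through the Operational Principle of Harmony. Your explicit uniqueness induction and the countability argument via $\mathbb{I}^*/\cong$ are sound elaborations of what the paper dismisses as trivial.
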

\begin{proof}[Sketch of Proof]
The first part is trivial. For the second part, by the Axiom of Motion take a sound $t_1$ satisfying $I\cong [s_1,t_1]$, and for each $i$ pick a sound $t_{i+1}$ with the property that $f(t_{i+1})=|[s_i,s_{i+1}]|.f(t_i)$.
\end{proof}
\begin{definition}
Given a note $\mathcal{N}$ and a melody $M=\{\tilde{s_i}\}_{i=1}$. $M$ is said to be \emph{ascendent} (\emph{descendent}) if all terms of its harmonic sequence are greater (less) than unit. Any subsequence of $M$ is said to be a \emph{submelody} of $M$. Any submelody of $M$ of the form $\{\tilde{s_k}\}_{k=i} ^j$ where $i<j$ (possibly $j=\infty$) is called \emph{continuous}. For a given scale $S\in \mathcal{S}_{\mathcal{N}}$, we say $M$ is \emph{based on} $S$ whenever all terms of every ascendent and descendent continuous submelody of $M$ are respectively contained in all ascendent and descendent degrees of $S$, meaning that firstly each ascendent (and descendent) degree $\hat{s}$ of $S$ includes a term $\hat{s_i}$ of some ascendent (and descendent) continuous submelody of $M$ (namely $\tilde{s_i}\subseteq\hat{s}$), and secondly each term of an arbitrary ascendent (and descendent) continuous submelody of $M$ is included in some ascendent (and descendent) degree of $S$; in this case, we also say that $S$ is the scale of the melody $M$.
\end{definition}
\begin{remark}[Notation]
The set of all (finite) melodies based on a given scale $S$ is denoted by $\mathcal{M}_S$ ($\mathcal{M}^f_S$).
\end{remark}
\begin{corollary}
For every $S\in \mathcal{S}$, $Card(\mathcal{M}^f_S)=\aleph_0$ and $Card(\mathcal{M}_S)=2^{\aleph_0}$.
\end{corollary}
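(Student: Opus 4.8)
The plan is to sandwich each of the two cardinals: an upper bound coming from the ambient set of all melodies, and a lower bound coming from an explicitly constructed supply of melodies based on $S$. The upper bounds are immediate, since by definition $\mathcal{M}^f_S\subseteq\mathcal{M}^f$ and $\mathcal{M}_S\subseteq\mathcal{M}$, so the previously established equalities $Card(\mathcal{M}^f)=\aleph_0$ and $Card(\mathcal{M})=2^{\aleph_0}$ give $Card(\mathcal{M}^f_S)\leq\aleph_0$ and $Card(\mathcal{M}_S)\leq2^{\aleph_0}$ at once.

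The heart of the argument is to exhibit at least one finite melody based on $S$. Write $S=(\hat{s},(r_i)_{i=1}^{m},(r'_i)_{i=1}^{n})$ (its tonic $\hat{s}$ being a note, so that $\mathcal{M}_S$ is meaningful) with generated module $\chi(S)=(\hat{s},(\hat{s_i})_{i=1}^{m},(\hat{s'_i})_{i=1}^{n})$ and the convention $s_0=s'_0=s$, and set $M_0=(\tilde{s_0},\tilde{s_1},\ldots,\tilde{s_m},\tilde{s'_1},\ldots,\tilde{s'_n})$. Each $\tilde{s_i}$ and $\tilde{s'_j}$ lies in $H_{\hat{\varsigma}}/\sim$, because its frequency is obtained from $f(\varsigma)$ by a product of octaves and semitones and hence has the form $2^{q/12}f(\varsigma)$ with $q\in\mathbb{Z}$; so $M_0$ is a genuine finite melody. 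Since the $r_i$ are measures of intervals greater than unit whose partial products increase from $1$ to $2$, the consecutive block $(\tilde{s_0},\ldots,\tilde{s_m})$ is ascendent and meets every ascendent degree of $S$, the block $(\tilde{s_m},\tilde{s'_1},\ldots,\tilde{s'_n})$ is descendent and meets every descendent degree, and every ascendent (resp.\ descendent) consecutive sub-block of $M_0$ lies inside the first (resp.\ second) of these halves, hence has all its terms among the corresponding degrees. By the definition of a melody being based on a scale, $M_0\in\mathcal{M}^f_S$. I expect this verification to be the only genuinely delicate step, since the definition of ``based on'' is intricate and one must check that no stray monotone sub-block of $M_0$ escapes the degree set.

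Granting $M_0$, the countability claim follows by transposition: transposing every term of $M_0$ upward by $k$ octaves --- iterating $s\mapsto s^*$, or applying the Axiom of Motion to an integer power of an octave interval --- replaces each $\tilde{s_i}$ by an identity class still lying in the \emph{same} equitonal class $\hat{s_i}$ and preserves the pitch order, so it produces again an element of $\mathcal{M}^f_S$; these transposes are pairwise distinct over $k\in\mathbb{Z}$, whence $\mathcal{M}^f_S$ is infinite and, with the upper bound, $Card(\mathcal{M}^f_S)=\aleph_0$. (Alternatively one may invoke the Melody Transposition Theorem directly, noting octave-transposes of a melody based on $S$ remain based on $S$.)

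Finally, for $\mathcal{M}_S$ I would pass to infinite melodies. By the Tonality Representation Theorem the tonic degree $\hat{s}$ is partitioned into countably many identity classes, so in particular it contains two distinct ones $\tilde{a}\neq\tilde{b}$ (both in $H_{\hat{\varsigma}}/\sim$, being in a note). For each $\sigma:\mathbb{N}\to\{\tilde{a},\tilde{b}\}$, append to $M_0$ the infinite tail whose $k$-th odd term is $\sigma(k)$ and whose even terms are all $\tilde{a}$. Each resulting melody is still based on $S$: the requirement that every ascendent and every descendent degree be met by some ascendent, resp.\ descendent, continuous submelody is already secured by the prefix $M_0$, while the tail contributes only sounds of the tonic degree, so no continuous submelody of the whole melody leaves the degree set of $S$. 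Distinct $\sigma$ give melodies differing in some coordinate, so $\mathcal{M}_S$ has at least $2^{\aleph_0}$ elements; with the upper bound and the Cantor--Schr\"{o}der--Bernstein theorem, $Card(\mathcal{M}_S)=2^{\aleph_0}$. Thus the whole difficulty is concentrated in producing the single melody $M_0\in\mathcal{M}^f_S$; everything else is a routine cardinality sandwich.
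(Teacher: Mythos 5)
Your argument is correct: the paper states this corollary without proof (it is among the results intentionally left to the reader), and your cardinality sandwich — the upper bounds from Corollary 5.98 together with the explicit up-then-down melody $M_0$ running through the generated module of $S$, its pairwise distinct octave transposes, and the continuum-many binary tails taken inside the tonic degree — is exactly the construction the text intends, and each verification (that $M_0$'s terms lie in $H_{\hat{\varsigma}}/\sim$, that every ascendent or descendent continuous submelody stays within the corresponding half and hence within the degrees, and that the appended tails neither destroy condition one nor violate condition two of Definition 5.100) goes through. The only caveat, which you already flag, is that ``based on'' is defined only for scales whose tonic is a note, so the quantifier ``for every $S\in\mathcal{S}$'' must be read with that restriction.
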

\begin{corollary}
There is only one silent melody and it is (vacuously) both ascendent and descendent. Every trivial melody is neither ascendent nor descendent.
\end{corollary}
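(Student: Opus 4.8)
The plan is simply to unwind the definitions; nothing beyond Section~2 and the definition of a melody (Definition~5.96) and of an ascendent/descendent melody (Definition~5.100) is needed. First I would dispose of the silent melody: by Definition~5.96 it is the empty sequence $\emptyset$, and since there is exactly one function with empty domain there is precisely one silent melody. To see that it is both ascendent and descendent I would invoke Definition~5.100, where a melody is called ascendent (descendent) precisely when \emph{every} term of its harmonic sequence is greater than unit (less than unit). For the silent melody the harmonic sequence $\{[s_i,s_{i+1}]\}$ is indexed over the empty set, so it is empty, and both universally quantified conditions are vacuously satisfied. Hence the silent melody is simultaneously ascendent and descendent, exactly as the parenthetical ``(vacuously)'' in the statement signals.

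Next I would treat a trivial, i.e. constant, melody $M=\{\tilde{s_i}\}$. Here $\tilde{s_i}=\tilde{s_j}$, equivalently $s_i\sim s_j$, for all admissible indices, so in particular $s_i\sim s_{i+1}$ whenever $s_{i+1}$ occurs; by Definition~3.9 every interval $[s_i,s_{i+1}]$ in the harmonic sequence of $M$ is then a unit interval. A unit interval is neither greater than unit nor less than unit, since ``greater than unit'' would force $s_i*s_{i+1}$ and ``less than unit'' would force $s_i*^{-1}s_{i+1}$, each incompatible with $s_i\sim s_{i+1}$ by Corollary~2.4 (exactly one of $*$, $*^{-1}$, $\sim$ holds). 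Consequently no term of the harmonic sequence of $M$ is greater than unit, so $M$ is not ascendent, and no term is less than unit, so $M$ is not descendent.

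The one point deserving a word of care — and the only place where the two halves of the statement appear to pull against one another — is the degenerate case: the negation argument above shows that $M$ fails both conditions only when its harmonic sequence is \emph{nonempty}, i.e. when $M$ has at least two terms. A constant melody with at most one term has an empty harmonic sequence and, just like the silent melody, is vacuously both ascendent and descendent; so in the corollary ``trivial melody'' is to be read as a constant melody possessing at least one harmonic interval. With that understanding the proof is complete, and I anticipate no genuine obstacle: beyond this bookkeeping about vacuous truth, the argument is a direct application of Definition~3.9 and Corollary~2.4.
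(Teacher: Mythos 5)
Your proof is correct and is exactly the definition-unwinding argument the paper intends (the corollary is left unproved there, being one of the "seemingly clear" statements delegated to the reader). Your caveat about the degenerate case is also a fair catch: as the definitions literally stand, a constant melody with fewer than two terms has an empty harmonic sequence and is vacuously both ascendent and descendent, so the second assertion of the corollary must indeed be read as concerning trivial melodies possessing at least one harmonic interval.
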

\begin{proposition}
Let a melody $M$ be based on a scale $S$ with tonic $\hat{s}$. Then $S$ is the least scale (w.r.t. $\sqsubseteq$ on $\mathcal{S}_{\hat{s}}$) whose ascendent and descendent degrees contain the terms of every ascendent and descendent continuous submelody of $M$ respectively.
\end{proposition}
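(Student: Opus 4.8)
The plan is to reduce the proposition to two pieces of bookkeeping: a description of the embedding order $\sqsubseteq$ on $\mathcal{S}_{\hat{s}}$ in terms of inclusion of components sets, and a restatement of ``based on'' in the same terms. First I would record that, for $S_{1},S_{2}\in\mathcal{S}_{\hat{s}}$, one has $S_{1}\sqsubseteq S_{2}$ if and only if the ascendent components set of $S_{1}$ is contained in that of $S_{2}$ and the descendent components set of $S_{1}$ is contained in that of $S_{2}$. This is read off from the recursive definition of $\sqcup$ in Remark~5.68: the sets $A$ and $B$ constructed there are precisely the unions of the ascendent (resp.\ descendent) partial products of $S_{1}$ and $S_{2}$, i.e.\ of their ascendent (resp.\ descendent) components sets under the correspondence $\hat{s_j}\leftrightarrow\prod_{i\le j}r_{i}$, so the equation $S_{1}\sqcup S_{2}=S_{2}$ defining $S_{1}\sqsubseteq S_{2}$ amounts exactly to these two inclusions. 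Here one uses the elementary fact that a scale in $\mathcal{S}_{\hat{s}}$ is recovered from its pair of components sets by listing the partial products in their natural order within the octave, so that equality of scales and equality of components sets match up.

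Next I would unwind the definition of a melody being based on a scale (Definition~5.100). Write $T^{+}(M)$ for the set of equitonal classes $\hat{s_i}$ that occur as a term of some ascendent continuous submelody of $M$, and $T^{-}(M)$ for the analogous set coming from descendent continuous submelodies. Then, by the two clauses of that definition, ``$M$ is based on $S$'' says exactly that $T^{+}(M)$ equals the ascendent components set of $S$ and $T^{-}(M)$ equals the descendent components set of $S$ --- the second clause giving containment of $T^{\pm}(M)$ in the components sets of $S$, the first clause the reverse inclusions.

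With these identifications the argument is immediate. Let $\mathcal{A}$ be the set of scales $S'\in\mathcal{S}_{\hat{s}}$ whose ascendent degrees contain every term of every ascendent continuous submelody of $M$ and whose descendent degrees contain every term of every descendent continuous submelody of $M$; that is, $S'\in\mathcal{A}$ iff $T^{+}(M)$ is contained in the ascendent components set of $S'$ and $T^{-}(M)$ in the descendent components set of $S'$. Since for $S$ these containments hold with equality, $S\in\mathcal{A}$. Conversely, for any $S'\in\mathcal{A}$ the ascendent components set of $S$ equals $T^{+}(M)$, hence is contained in that of $S'$, and likewise on the descendent side, so $S\sqsubseteq S'$ by the first step. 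Thus $S$ is a member of $\mathcal{A}$ lying below every member of $\mathcal{A}$, i.e.\ the least element of $(\mathcal{A},\sqsubseteq)$; uniqueness needs nothing beyond antisymmetry of $\sqsubseteq$.

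The only point that is not completely mechanical is the verification in the first step that the recursion in Remark~5.68 really outputs the scale whose ascendent and descendent components sets are the prescribed unions --- in other words, that merging two strictly monotone tuples of partial products and re-reading off the step ratios produces a legitimate element of $\mathcal{S}_{\hat{s}}$ with the expected components. This is essentially the content of the assertion there that $\sqcup$ is well defined and that $(\mathcal{S}_{\hat{s}},\sqcup)$ is a join semilattice, so I would treat it as granted; once it is, the proposition is a pure inclusion chase.
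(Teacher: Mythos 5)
Your argument is correct. The paper supplies no proof of this proposition (it is one of the statements left to the reader), so there is nothing to compare against; on its own terms your reduction works: under Definition 5.100 the two clauses of ``based on'' do give equality of the sets $T^{\pm}(M)$ with the ascendent and descendent components sets of $S$, and the least-ness claim then follows once one knows that for parallel scales $S_{1}\sqsubseteq S_{2}$ is equivalent to inclusion of the ascendent and descendent components sets. That equivalence is the only load-bearing step, and your justification of it is the right one: the recursion for $\sqcup$ in Remark 5.68 produces the scale whose ascendent (descendent) partial products are exactly the union $A$ (resp.\ $B$), and since all ascendent partial products lie in the half-open octave $(1,2]$ (descendent ones in $[1/2,1)$) no two distinct partial products can differ by a power of $2$, so the correspondence $\hat{s_j}\leftrightarrow\prod_{i\le j}r_i$ between degrees and partial products is a bijection for scales with the common tonic $\hat{s}$; it would be worth writing out this last observation explicitly, since it is what lets you pass from equality/inclusion of components sets to equality/inclusion of the partial-product sets used in the definition of $\sqcup$, but granting the well-definedness asserted in Remark 5.68 (as the paper itself does) your proof is complete.
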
 
\paragraph*{}
There is a simpler description of a melody based on a compatible scale which is presented in the following, although its converse does not absolutely hold as expected.
\begin{proposition}
Given $S\in \mathcal{S}^c$. $M\in \mathcal{M}$ is based on $S$ if and only if all terms of the melody $M$ are contained in the degrees of the scale $S$.
\end{proposition}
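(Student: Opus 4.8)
The plan is to unfold Definition~5.100 into the two requirements it conceals and then to cash in the one property that distinguishes a compatible scale. Saying that $M$ is based on $S$ amounts to asking both that (i)~every ascendent, respectively descendent, degree of $S$ carries a term of some ascendent, respectively descendent, continuous submelody of $M$, and that (ii)~every term of any ascendent, respectively descendent, continuous submelody of $M$ lies in some ascendent, respectively descendent, degree of $S$. When $S$ is \emph{compatible}, Proposition~5.60 together with Corollary~5.63 makes the module components set, the ascendent components set and the descendent components set of $S$ one and the same set; so for such an $S$ the three assertions ``$\tilde{s_i}$ lies in an ascendent degree'', ``$\tilde{s_i}$ lies in a descendent degree'' and ``$\tilde{s_i}$ lies in a degree of $S$'' coincide. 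Setting up this collapse first is the key move; it is the sole place compatibility enters.

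For the ``only if'' part I would argue as follows. Assume $M=\{\tilde{s_i}\}$ is based on $S$ and fix an index $i$. Every term of the harmonic sequence of $M$ is a harmonic interval, hence greater than, less than, or equal to unit; following a maximal monotone stretch of $M$ through $\tilde{s_i}$ therefore exhibits $\tilde{s_i}$ as a term of an ascendent or a descendent continuous submelody of $M$. The one escape is that both intervals flanking $\tilde{s_i}$ are unit, in which case $\tilde{s_i}$ equals a neighbouring term and one slides along until a non-unit interval appears --- should no such interval ever turn up, $M$ is constant and there is nothing to prove. Requirement~(ii) then places $\tilde{s_i}$ in an ascendent or a descendent degree of $S$, and the collapse above places it in a degree of $S$. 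As $i$ was arbitrary, every term of $M$ lies in the degrees of $S$; this is the ``simpler description'' advertised just before the statement.

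For the ``if'' part, suppose every term of $M$ lies in the degrees of $S$. Requirement~(ii) is then immediate: a term of an ascendent --- or descendent --- continuous submelody of $M$ is in particular a term of $M$, hence lies in a degree of $S$, which by compatibility is also an ascendent --- or descendent --- degree. Requirement~(i), namely that every degree of $S$ be met by a monotone submelody of the appropriate orientation, is the real work and is where the statement is delicate: as the remark preceding it concedes, it does not hold without qualification, since the bare hypothesis does not force $M$ to traverse every degree. The reading under which the equivalence goes through is that $S$ is not an arbitrary compatible scale whose degrees happen to contain all terms of $M$, but the scale that $M$ actually determines (the closing clause of Definition~5.100, sharpened by the minimality of Proposition~5.104 --- a degree present in $S$ but unvisited could be deleted, merging its two flanking harmonic intervals into a harmonic one and producing a strictly smaller scale on the same tonic that still carries every continuous submelody of $M$). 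Under that reading requirement~(i) is part of the data, and all that compatibility contributes here is to merge the ascendent and descendent coverage conditions into a single one. With both requirements in hand, $M$ is based on $S$.

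I expect requirement~(i) in the ``if'' direction to be the principal obstacle: ``all terms lie among the degrees'' does not of itself compel $M$ to traverse every degree along a monotone submelody, so the argument must either restrict to melodies whose scale is $S$ or lean on the minimality clause of Proposition~5.104 together with the ascendent/descendent symmetry that only a compatible scale supplies --- and this is precisely the point at which the corresponding statement for a non-compatible scale gives out. Everything else is routine: the identification of the three components sets for a compatible scale and the whole ``only if'' direction present no difficulty beyond the bookkeeping for plateaus of mutually identical terms, dispatched by sliding to a neighbour as above.
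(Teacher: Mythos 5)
The paper gives no proof of this proposition at all --- it is one of the statements left to the reader --- so there is nothing to compare your route against; I can only judge the proposal on its own terms. Your ``only if'' half is correct and is surely the intended argument: compatibility collapses the ascendent, descendent and module components sets into one (Proposition 5.60 with Corollary 5.63), clause (ii) of Definition 5.100 then places every term of a monotone continuous submelody in a degree, and terms sitting inside a plateau of identical neighbours are handled by sliding to the nearest non-unit step of the harmonic sequence, whose two-term submelody is ascendent or descendent. The one loose end is the constant-melody escape: ``nothing to prove'' is true but not for free. A constant melody has no ascendent or descendent continuous submelody whatsoever, so clause (i) of Definition 5.100 cannot be satisfied and such an $M$ is not based on any scale; the case is vacuous under the hypothesis ``$M$ is based on $S$'', and you should say exactly that rather than wave at it.

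The ``if'' half, however, is not a proof of the proposition as stated, and you in effect concede this: after obtaining clause (ii) you declare clause (i) ``part of the data'' by re-reading $S$ as the scale that $M$ determines, which is proving a different statement. Your diagnosis of why the literal implication fails is right. Take $S$ a major scale and $M$ the melody that merely alternates between the tonic and the second ascendent degree: every term of $M$ lies in a degree of $S$, yet the third degree (indeed every unvisited degree) contains no term of any monotone continuous submelody, so clause (i) fails outright; equivalently, a strictly smaller parallel scale obtained by merging the unvisited steps (the merged intervals are still harmonic by the Operational Principle) already carries all terms, which is incompatible with the minimality recorded in Proposition 5.104. So the stated equivalence survives only if ``based on'' is weakened to the containment clause of Definition 5.100 --- in which case both directions are immediate for compatible $S$ --- or under the restriction you propose; the sentence preceding the proposition (``although its converse does not absolutely hold as expected'') indicates the author is aware of precisely this defect. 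The honest conclusion, which your write-up should state plainly instead of folding it into a reinterpretation, is: the forward direction holds by your argument, and the converse is not provable from Definition 5.100 as written.
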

\begin{proposition}[Existence of Melody Scale]
For every melody $M$ (not silent), there exists some scale on which $M$ is based.
\end{proposition}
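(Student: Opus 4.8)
The plan is to exhibit one scale that works for every melody at once --- the chromatic scale on the base sound $\varsigma$ --- and then to indicate how the minimal (canonical) scale of the melody is extracted. Let $M=\{\tilde{s_i}\}$ be the given non-silent melody. By the definition of melody each term $\tilde{s_i}$ lies in $H_{\hat{\varsigma}}/\sim$, so every representative $s_i$ belongs to the harmonic set $H_{\hat{\varsigma}}$ and hence its equitonal class $\hat{s_i}$ is one of the twelve notes $\mathcal{N}_0,\dots,\mathcal{N}_{11}$. Take $S$ to be the chromatic scale $(\hat{\varsigma},(\varepsilon)_{i=1}^{12})$; it exists (Corollary 5.66, via the Axiom of Gamme) and is compatible, and its module components set is exactly $H_{\hat{\varsigma}}/\simeq=\{\mathcal{N}_0,\dots,\mathcal{N}_{11}\}$ by Remark 5.90. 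Since $\sim$ refines $\simeq$ (item 1 of the Euclidean Postulates of Tonality), each term satisfies $\tilde{s_i}\subseteq\hat{s_i}$, and $\hat{s_i}$ is a degree of $S$; so every term of $M$ is contained in a degree of $S$. By Proposition 5.106, $M$ is based on $S$, which proves the statement. (This argument is uniform and covers trivial melodies as well.)

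To obtain the canonical scale, let $D=\{\hat{s_i}:\tilde{s_i}\text{ is a term of }M\}$, a finite set of notes which is nonempty because $M$ is not silent. Fix $\hat{s}\in D$, write $d=Card(D)$, choose for each note of $D$ a representative lying in one octave above a representative $x_0$ of $\hat{s}$, and order them as $x_0*x_1*\cdots*x_{d-1}$. Put $r_j=|[x_{j-1},x_j]|$ for $1\le j\le d-1$ and $r_d=|[x_{d-1},x_0^{*}]|$. By the Representation of Harmony (Corollary 5.82) each $r_j$ is a positive integral power of $\varepsilon=2^{1/12}$, hence $r_j\in|\mathbb{I}^{>1*}|$, and item 5 of the Interval Measure gives the telescoping identity $\prod_{j=1}^{d}r_j=|[x_0,x_0^{*}]|=2$. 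Thus $S'=(\hat{s},(r_j)_{j=1}^{d})$ is a bona fide compatible scale whose module components set --- and, by Proposition 5.60, whose ascendent and descendent components sets --- equal $D$. Every term of $M$ is then contained in a degree of $S'$, so $M$ is based on $S'$ by Proposition 5.106, and Proposition 5.105 identifies $S'$ as the $\sqsubseteq$-least scale in $\mathcal{S}_{\hat{s}}$ whose ascendent and descendent degrees contain the terms of the ascendent and descendent continuous submelodies of $M$.

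The only points needing genuine (and very short) verification are that the chromatic scale really has all twelve notes as degrees --- this is Remark 5.90, once the Axiom of Gamme has supplied a harmonic octave and the dodecatonic chromatic scale --- and, in the refinement, that the $r_j$ are legitimate harmonic intervals whose product is an octave, which is immediate from $|\mathbb{I}^{*}|=\{2^{n/12}:n\in\mathbb{Z}\}$ and the structure of the twelve-note group. So the expected obstacle is bookkeeping rather than anything conceptual: the content of the proposition is carried almost entirely by Proposition 5.106 applied to the always-available chromatic scale.
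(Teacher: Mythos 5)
There is a genuine gap, and it sits exactly where you lean on Proposition 5.106. Definition 5.100 makes ``$M$ is based on $S$'' a two-sided condition: not only must every term of (every ascendent/descendent continuous submelody of) $M$ lie in some degree of $S$, but \emph{firstly} every ascendent (resp.\ descendent) degree of $S$ must contain a term of some ascendent (resp.\ descendent) continuous submelody of $M$. Your chromatic-scale argument verifies only the containment half. A melody that uses, say, three notes is not based on the chromatic scale $(\hat{\varsigma},(\varepsilon)_{i=1}^{12})$, because nine of its degrees are hit by no term at all; and if your claim were right, Proposition 5.105 would force the chromatic scale to be the $\sqsubseteq$-least scale containing the melody's terms, which is absurd for such a melody, while Remark 5.108's whole discussion of ``the scale of a melody'' would collapse. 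The paper itself flags this: the sentence introducing Proposition 5.106 says the simpler description's converse ``does not absolutely hold,'' i.e.\ mere containment of the terms in the degrees is the necessary direction, not a sufficient criterion you may quote to conclude ``based on.'' So the first paragraph, and with it the claim that the proposition is ``carried almost entirely by Proposition 5.106 applied to the always-available chromatic scale,'' does not stand.

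Your second paragraph is the right construction and is surely the intended proof: take $D$, the finite nonempty set of notes occurring in $M$, order representatives inside one octave, and form the compatible scale $S'$ with module components set exactly $D$ (the telescoping product $\prod_j r_j=2$ and $r_j\in|\mathbb{I}^{>1*}|$ via Corollary 5.82 are fine, including the degenerate case $d=1$, which gives the monotonic scale). But even here you close by citing Proposition 5.106 again, so the ``firstly'' clause of Definition 5.100 is never actually checked: you must argue that every degree of $S'$ — i.e.\ every note of $D$ — is realized by a term of some \emph{ascendent} continuous submelody and of some \emph{descendent} one, and this is where the real work (and the delicate cases: a note occurring only in descending motion, monotone melodies, one-term melodies with no continuous submelodies at all) lives. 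As written, the proposal establishes containment in both constructions but never the surjectivity-onto-degrees half of ``based on,'' so the proof is incomplete; repair it by dropping the chromatic shortcut and supplying that verification for the canonical scale $S'$.
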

\paragraph*{}
The following states that the act of transposing a melody is well-behaved with respect to the scale of the melody. 
\begin{proposition}
Let $M$ be a melody based on a scale $S$. For every melody $M'$ congruent to $M$ there is a relative scale (w.r.t. $S$) that $M'$ is based on.
\end{proposition}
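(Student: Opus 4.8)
The plan is to produce $S'$ explicitly as the transposition of $S$ along the (harmonic) interval joining the first terms of $M$ and $M'$, and then to verify directly that $M'$ is based on it. Write $M=\{\tilde{s_i}\}_{i=1}$ and $M'=\{\tilde{t_i}\}_{i=1}$; since melodies are sequences in $H_{\hat{\varsigma}}/\sim$, all the $s_i,t_i$ lie in the harmonic set $H_{\hat{\varsigma}}$, so every interval among them is harmonic.

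First I would extract a single transposition interval. From $(\tilde{s_i},\tilde{s_{i+1}})\cong(\tilde{t_i},\tilde{t_{i+1}})$ Proposition 3.33 gives $[s_i,t_i]\cong[s_{i+1},t_{i+1}]$ for every $i$, so by an immediate induction using transitivity of congruence there is one harmonic interval $I$ with $[s_i,t_i]\cong I$ for all $i$; equivalently, by the Interval Measure (Theorem 3.83) and Theorem 3.86, $f(t_i)=|I|\,f(s_i)$ for every $i$, where $f$ is the frequency of sounds.

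Next I would build $S'$. Let $S=(\hat{s},(r_i)_{i=1}^{m},(r'_i)_{i=1}^{n})$ with generated module $\chi(S)=(\hat{s},(\hat{u_i})_{i=1}^{m},(\hat{v_i})_{i=1}^{n})$ and $\hat{u_0}=\hat{v_0}=\hat{s}$. Using the Axiom of Motion choose $w_0$ with $[s,w_0]\cong I$ and put $S':=(\hat{w_0},(r_i)_{i=1}^{m},(r'_i)_{i=1}^{n})$. This is a scale in the sense of Definition 5.52: the tuples $(r_i)$, $(r'_i)$ are unchanged, so $(r_i)\in|\mathbb{I}^{>1*}|^{m}$, $(r'_i)\in|\mathbb{I}^{<1*}|^{n}$, $\Pi_{i=1}^{m}r_i=2$ and $\Pi_{i=1}^{n}r'_i=1/2$ are all inherited from $S$; and $S'$ has the same ascendent and descendent modes as $S$, hence is relative to $S$. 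Its generated module $\chi(S')=(\hat{w_0},(\hat{w_i})_{i=1}^{m},(\hat{w'_i})_{i=1}^{n})$ satisfies $f(w_i)=(\Pi_{l=1}^{i}r_l)\,f(w_0)=|I|\,f(u_i)$ and similarly $f(w'_i)=|I|\,f(v_i)$, so each ascendent (resp. descendent) degree of $S'$ is the $I$-transpose of the corresponding degree of $S$ — a map that is well defined on equitonal classes by Corollary 5.30.

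Finally I would check $M'$ is based on $S'$. Since $\cong$ preserves $*$ (item 1 of Corollary 3.17), $[t_i,t_{i+1}]$ is greater than (resp. less than) unit exactly when $[s_i,s_{i+1}]$ is, so $M$ and $M'$ have the same index ranges of ascendent and of descendent continuous submelodies. For the containment condition: if $\tilde{t_i}$ is a term of an ascendent continuous submelody of $M'$, then $\tilde{s_i}\subseteq\hat{u_j}$ for some ascendent degree $\hat{u_j}$ of $S$ (as $M$ is based on $S$), i.e. $s_i\simeq u_j$; then $f(t_i)/f(w_j)=f(s_i)/f(u_j)$ is a power of $2=k$, so $t_i\simeq w_j$ by the Tonality Characterization Theorem, i.e. $\tilde{t_i}\subseteq\hat{w_j}$, an ascendent degree of $S'$. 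For the surjectivity condition: given an ascendent degree $\hat{w_j}$ of $S'$, it is the $I$-transpose of the ascendent degree $\hat{u_j}$ of $S$, which contains a term $\tilde{s_i}$ of some ascendent continuous submelody of $M$; the same frequency computation gives $\tilde{t_i}\subseteq\hat{w_j}$, with $\tilde{t_i}$ a term of the corresponding ascendent continuous submelody of $M'$. The descendent direction is verbatim the same. Hence $M'$ is based on the relative scale $S'$. The one place that needs care is exactly this last step: keeping the two \emph{based on} conditions straight, each for ascendent and for descendent submelodies, together with the bijective matching of continuous submelodies of $M$ and of $M'$; the structural core — distilling a single interval $I$ via Proposition 3.33 and observing that $S'$ is a genuine relative scale — is routine.
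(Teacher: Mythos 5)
Your proposal is correct: extracting the single transposition interval $I$ via Proposition 3.33 and transitivity, transposing the tonic by the Axiom of Motion, and checking both clauses of Definition 5.100 through the frequency computation and the Tonality Characterization Theorem is a sound argument, and the resulting $S'$ (same interval tuples, hence same modes) is indeed relative to $S$; note also that $\hat{w_0}$ is automatically a note since $I$ is harmonic and the tonic of $S$ is a note, so $S'$ is eligible for the ``based on'' relation. The paper supplies no proof of this proposition (it is among those left to the reader), but your transposition construction is plainly the intended route, in line with the Melody Transposition Theorem 5.99 and Remark 5.108.
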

\begin{remark}
Of course, a more general fact analogous to Proposition 5.107 would be under discussion;

`if $M$ and $M'$ are congruent melodies respectively based on scales $S$ and $S'$, then $S$ and $S'$ are circular rearrangements of each other with probably different tonics.'\\
Now, by a sort of conversely thinking of this reality and considering the Existence of Melody Scale, one should ask whether there is a practical way to find a scale a melody is based on or not. Notice that the scale of a melody is not necessarily unique as mentioned; by virtue of what we have from the recent fact, we understand that any circular rearrangement of the existing scale of a given melody (with the appropriate tonic obtained by the corresponding term of the melody) will be another scale of that melody. Although this result, namely the lack of uniqueness, does maybe not appear so agreeable for the reader at first sight, it is worth stating that, in practice, different musicians may correlate several distinct scales with the given melody too. In fact, that on which scale a melody factually lies is associated with the aesthetic of music. As an example, some music psychologists believe that the larger the musical interval, the pleasanter it would become. But having discarded such similar beliefs, we got ahead with another course for intervals and instead of setting up a binary relation among them, we utilized a kind of conception of harmony as a property of intervals. We adopted to put off such an aesthetic comparison to other phenomena such as melodies and chords as will follow. Let's move on. As another example in relation to the current discussion at monophonic music, most musicians believe that the tonic of a melody played in an unknown scale is uniquely determined by its melodic motion through the state of its descent; though the scale of the melody is not the only one, that scale on whose tonic the melody descends and finishes is certainly unique and the only place where the melody pleasantly glitters and its quality of movement looks coherent (many sources of this topic are published like \cite{N1}). On the other extreme, one may protest about sophisticated construction we defined for the cryptic concept of tonic. Why tonic at all? By a little consideration it is realized that we have not yet made a musically substantial use of this notion. It is imperative to mention that we tried to introduce only the mathematical structure of such an entity, just like the others, according to musical expectations, and we did never ever plan to analyse musical emotion exhibited in the form of the melodic demeanour or to explain its psychological reasons. Of course, we have also attained lots of wonderful conclusions in some rough drafts never referred to in the literature, but in order to retain the main framework of the paper we preferred to abstain from entering the profound subject of aesthetics of music and attempting at classifying species of melodies (we intentionally relinquished publishing results of working on this subject for some anonymous reasons). However, after introducing another concept of monophonic music, called \emph{modulation}, we conclude the discussion with a conjecture on music aesthetics in the direction of the classification of pleasant melodic movements at monophony.
\end{remark}
\paragraph*{}
In the following, the monophonic version of the notion of modulation is supplied whose polyphonic discernment will consequently be exploitable too on the base of the Fundamental Principle of Harmony, as well as chord tonicization totally which are left to the reader.
\begin{remark}
We define a binary operation $\otimes:\mathcal{M}^f\times \mathcal{M}^f\to  \mathcal{M}^f$ (on the set of all finite melodies) by $\{\tilde{s_i}\}_{i=1}^m\otimes \{\tilde{t_i}\}_{i=1}^n=\{\tilde{u_i}\}_{i=1}^{m+n}$, where
\begin{equation*}
\tilde{u_i}=\left\{
{\begin{array}{*{20}{c}}
{\tilde{s_i}}&{}&{,1\leq i\leq m}\\
{\tilde{t_i}_{-m}}&{}&{,m+1\leq i\leq m+n}.
\end{array}} \right.
\end{equation*}
It is clear that $\otimes$ is well-defined and $\otimes (M,M')$ is actually the unique shortest melody up to order (longer than $M$ and $M'$) that $M$ and $M'$ are two continuous submelodies of. One can easily check that $(\mathcal{M}^f,\otimes)$ constructs a non-commutative zerosumfree monoid whose left and right identities are equal to the silent melody. A mathematically interesting submonoid of this algebra is $(\mathcal{M}_S^f,\otimes|_{\mathcal{M}_S^f\times \mathcal{M}_S^f})$ for a given arbitrary scale $S$ with which we do not deal in this context, since musical interest we feel to address is just in the complements.
\end{remark}
\begin{definition}
Having set
\begin{center}
$\Delta (\mathcal{M}^f)=\{(M,M')\in \mathcal{M}^f\times \mathcal{M}^f: M \text{and}\, M' \text{are based on the same scale}\}$,
\end{center}
we call the restriction of the operation $\otimes$ on the set $\mathcal{M}^f\times \mathcal{M}^f-\Delta (\mathcal{M}^f)$ the \emph{modulation} and every member of its range a \emph{modulated} melody. In particular, for two distinct scales $S$ and $S'$ the function $\otimes |_{\mathcal{M}_S^f\times \mathcal{M}_{S'}^f}$ is called the \emph{modulation from} $S$ \emph{to} $S'$. For a given finite melody $M$ based on a scale $S$, the function $\otimes |_{\{M\}\times (\mathcal{M}^f-\mathcal{M}_{S}^f)}$ is called the \emph{modulation of} $M$.
\end{definition}
\paragraph*{}
In order to summarily present the formulation of our conjecture asserting the classification of melodies on the base of all equipments thus far provided (whose formal phrasing in the language of mathematics needs many more new technical concepts), by tradition, an additional primitive term whose rational properties are jammed into some axioms is essentially required. The difference between concepts (including all primitive ones) and axioms (regarded as theorems) is indeed that against theorems, a definition is exclusively an ascription of name to a conceptual entity which is declared by decree to increase our convenience but not our knowledge, whereas axioms and theorems are increasing both knowledge and inconvenience to discover new facts. Therefore, we consider an undefined strict ordering $\propto$ over the set of all melodies $\mathcal{M}$, called ``being pleasanter than'', and we say ``a melody $M_1$ is pleasanter than a melody $M_2$'', denoted $M_2\propto M_1$.
\paragraph*{}
\textbf{Conjecture.} \textit{Let $\mathcal{X}$ denote either the ordered set $(\mathcal{M}_S,\propto)$ or $(\mathcal{M}^f_S,\propto)$ for a given scale $S$. There exists a satisfiable axiom set describing simply acceptable properties of pleasantness of melodic motions such that for every given scale $S$ it determines a countable final segment $\mathcal{M}_S^*$ of $\mathcal{X}$, and makes the axiom system decide whether an arbitrary melody (based on $S$) belong to $\mathcal{M}_S^*$ or not. Moreover, there is a surjective function $\Xi_S:\mathcal{M}_S^*\to \mathbb{N}$ assigning $1$ to the silent melody and satisfying the basic condition that for every pair of melodies $M_1$ and $M_2$, $\Xi_S(M_1)=\Xi_S(M_2)$ iff $M_1$ and $M_2$ have the same continuous submelodies up to congruence, and $\Xi_S(M_1)<\Xi_S(M_2)$ iff $M_1\propto M_2$. Besides, focusing upon finite melodies, for any two distinct scales $S$ and $S'$ the functions $\Xi_S$ and $\Xi_{S'}$ can uniquely be extended to a surjection $\Xi:\otimes(\mathcal{M}_S^{f*}\times \mathcal{M}_{S'}^{f*})\to \mathbb{N}$ with the same basic properties and that $\Xi(M_1\otimes M_2)=\Xi_S(M_1)\Xi_{S'}(M_2)$.}
\begin{remark}
We truly confess full discussion around the Conjecture is beyond the scope of the paper, but some notes have the distinction of being under consideration. The first moral is adapted to the approval for that the select pleasant melodies are made constructively whose elements (and notes) are successively collected not in an accidental manner but rather explicitly clarified by the new axioms claimed in the Conjecture. We believe that the most complicated melodies from the aesthetic point of view are infinite whose more delicate classification would be possible in terms of ordinal numbers, and the pleasanter the melody, the more its length, but not conversely; they are able to be decomposed (maybe periodically) into finite continuous submelodies, tonicized within an octave and having the least pleasantness among the pleasantest melodies;  these less pleasant melodies of short length are directly supplied in or derived from the axioms. The point is that uncountably many of melodies based on a given scale $S$ have pathological motion and behave too chaotically to appear pleasant. In fact, the proper subset $\mathcal{M}_S^*$ of melodies based on $S$, which is certainly countable, contains the pleasantest melodies meaning that every melody of $\mathcal{M}_S^*$ is pleasanter than all others. So by a kind of sensitivity, one may say that $\mathcal{M}_S^*$ consists of all the \emph{pleasant} melodies and the rest are roughly \emph{unpleasant}. About the relation $\propto$, we first notice any pair of arbitrary melodies need not be comparable under pleasantness; we think just as comparing songs in different styles is unreasonable, so is comparison among melodies based on two distinct scales; this seems nonsense because various scales are essentially incomparable alike to various styles. Hence we avoided such a collation as far as possible up to where, if necessary, we hold some musical tools such as monophonic modulation. What is apparent inside the Conjecture is an intuitive way to construct a linear extension of the order $\propto$ on a set of appropriate classes of congruent melodies of $\mathcal{M}^*_S$; once it is done, the function $\Xi_S$ specifying the amount of pleasantness of elements of $\mathcal{M}^*_S$ guarantees that such an order is factually a well-ordering; perhaps this nice guy stands to benefit from some other good properties but its first value correlated with the empty melody (which is the endpoint of the final segment $\mathcal{M}^*_S$), namely $\Xi_S(\emptyset)=1$, gives an official translation of the fact that `the silence is better than the piffle'. It is worth noting the case of the Conjecture in relation to finite melodies is more applicable in practice; the extension $\Xi$ of $\Xi_S$ first establishes a correspondence between the range of the modulation restricted to the product of the finite pleasant melodies based on $S$ and $S'$ (i.e. $ran(\otimes |_{\mathcal{M}_S^{f*}\times \mathcal{M}_{S'}^{f*}})$) and the monoid of natural numbers under multiplication (i.e. $(\mathbb{N},\times)$); indeed, this extension characterizes the pleasant modulations from $S$ to $S'$ and additionally its formula declares intelligibly that the impact of pleasant modulation is extremely more than that of pleasant tonicization (made by the motion of pleasant melodic movements) in the direction of embellishment and beautification of melody.
\end{remark}
\paragraph*{}
Lest you simply judged the conjecture, we dared not opine on completeness, or independence, or even consistence of the resultant axiom system. It may involve many mathematicians' hair greying during a long time, just like the destination of parallelism in geometry; at least we think so. If one knew the satisfiable strange phenomena we have experienced on the Conjecture, he or she would get goose pimples undoubtedly. Just anyhow, as an example of a pleasant melody, we have prepared a simple pattern in eastern music (Appendix \RNum{3}) whose pulchritude is almost ascertained for us (as well as a song composed by L\o vland called ``Nocturne'' based on the pathetic minor scale).
\paragraph*{}
Returning to the main discourse, in order to finalize the monophonic part of basic music theory we present the following remark with the aim of merely introducing some musical notions and conventions significant and relevant. In turn, the philosophy of such naming as (first of all) octave interval, fifth interval, perfect dyad, minor and major triad, tonic, sensible, and so on will somewhat be clarified. Although we have no need to get onto the subject of consonance and dissonance in the field of music analysis (consonant intervals are special types of harmonic intervals usually described as agreeable, while dissonant intervals are those that cause tension to be resolved to consonant ones), it is graceful to proceed with a suitable musical structure, i.e. \emph{chord}, to exactly match the base of the science of polyphony. To deeper study we refer the interested reader to some related sources like \cite{T2}).
\begin{remark}
By tradition, the number of notes beginning with $C$ to $B$ included between endsounds distinguishes the music theoretic types of harmonic intervals from each other. In so treating, every unit interval is called the \emph{first} or \emph{prime} interval, every half tone and every whole tone are called the \emph{second} interval, any intervals of measure $2^{1/4}$ or $2^{1/3}$ and of measure $2^{5/12}$ (equivalent to five semitones) are respectively said to be the \emph{third} and \emph{fourth} interval, $2^{7/12}$ is also expected to be the \emph{fifth} interval by definition (the musical distance from $C$ to $G$ within an octave), the \emph{sixth} interval has measure $2^{2/3}$ or $2^{3/4}$, the measure of the \emph{seventh} equals $2^{5/6}$ or $2^{11/12}$, and finally the \emph{eight} is another name for the octave interval. We may naively generalize such a method of inductively naming harmonic intervals on the base of the Fundamental Group of Music, and by using the equivalence classes of tonality we can restrict the species of harmonic intervals to an octave interval from the first to the seventh, and again construct an objective group isomorphic to the same as $(\mathbb{Z},+)$. So the $n$-th intervals would be equivalent to some from first to seventh that is congruent to $n$ modulo $7$ ($n\in \mathbb{Z}$). Every harmonic interval whose measure oversteps the boundaries of an octave (namely $1\leq |I|<2$) just like the octave interval is known as a \emph{compound} interval, otherwise it is considered as a \emph{simple} interval. The \emph{inversion} of $[a,b]\in \mathbb{I}^*$ is customarily defined by the harmonic interval $[b,a^*]$ (or sometimes $(O^-(\tilde{b}),\tilde{a})$) which is actually the inverse of the interval $[a,b]$ as an element of the aforementioned group, and displays a kind of melodic conversion of the given interval with a similar musical property; that the class of all intervals which are pairwise inversions of each other must have the same harmonic behavior is evident, since the inversion has right twice the size of the conversion. Thus the inversion of an $n$-th interval is just a $(9-n)$-th interval ($1\leq n\leq 8$). It is worth noting that after octave intervals which produce unisons, the most agreeable interval is of the fifth type as theoretically as we did seek. For more precise differentiation of all species of harmonic intervals, theoreticians divide them into two general parts based on their quality; 1- \emph{threefold-quality} intervals such as \emph{perfect} ones including first, fourth, fifth, and octave; 2- \emph{fourfold-quality} intervals such as \emph{major} and \emph{minor} ones including second, third, sixth, and seventh (as expected, the size of minor intervals is just one semitone less than that of major ones). Two additional quality of harmonic intervals are named \emph{diminished} (having one semitone less than the main interval including perfect, major, or minor) and \emph{augmented} (having one semitone more than the main interval). In diatonic scales such as major and minor ones each (ascendent) degree plays a special musical role on the base of which they are relegated to some suitable names describing their position in the scale; the first degree is the \emph{tonic} as the basis of tonality of the scale construction, the second one is called the \emph{supertonic} that naturally comes after the tonic, the third is said to be the \emph{mediant} meaning that it is in the middle of the perfect fifth interval produced by the first and fifth (as the most important) degrees of the scale, the fourth is the \emph{subdominant} because it is under the next (i.e. the fifth) degree called the \emph{dominant} (making a perfect fifth by the tonic) which gives the most noticeable note after the tonic and shows itself more strongly than the others within the octave interval created by the tonic and its unison, the sixth one is known as both the \emph{superdominant} and the \emph{submediant} for that its position is firstly over the dominant and secondly symmetrically similar to the location of the supertonic laid below the mediant within the second tetrachord of the $C$ major scale, the seventh is of course the \emph{sensible} or \emph{leading} note having tendency to the next note as was previously mentioned, but in the minor scale and more general cases it is named the \emph{subtonic} for the sake of being before the unison of the tonic making the basic octave interval in the scale. Therefore, it becomes clear that the characteristic of tonality of a scale is exhibited by its first and fifth (whether ascendent or descendent) degrees in order of precedence and for this reason the three notes obtained, i.e. the tonic, the subdominant (the fifth descendent degree) and the dominant are well-known as the \emph{tonal} notes; furthermore, the nature of (the parallel major and minor) scale is determined by those degrees changed in the act of converting from one to another the most important of which are the third and sixth (and occasionally seventh) ascendent degrees, i.e. the submediant and the mediant (and the sensible) that are well-known as the \emph{modal} notes.
\end{remark}
\begin{remark}
About western scales, it is worth noting there are different types of minor scales common in music theory other than what is introduced in Definition 5.84 and is dubbed the \emph{natural} minor scale, but the existence of a whole tone between its seventh and eighth ascendent degrees causes the subtonic to lose its leading temperament to the unison and because of this musicologists artificially reduced the size of the mentioned interval to a half tone by applying the operator $\sharp$ to the seventh ascendent (equivalent to the second descendent) degree of the natural minor scale. The resultant heptatonic compatible scale which has the ascendent sensible is called the \emph{harmonic} minor; (again) but this scale used not to be practically handled to produce melodies so much throughout the history of music for the sake of its fairly large interval of one and a half steps between the sixth and seventh ascendent degrees that seems not to be intimate and agreeable; so by virtue of using the naturalizer $\natural$ in the descendent module of the harmonic minor scale if we turned the second descendent degree to the first situation (i.e. the case of the natural minor scale) which is equivalent to the seventh ascendent degree (that was counterfeitly elevated by a semitone to amplify its sensibility and, for now, there is no longer need for doing that) to get lowered by a half tone, then there would be found an additional artificial scale which is in turn not compatible --- the so-called \emph{melodic} minor scale. It is obvious the recent two kinds of minor scales are not maximally even. Also, a special kind of the natural minor scale is frequently used at realistic music, notably in the East, which is (of the form $(\hat{s},(\varepsilon,\varepsilon^2,\varepsilon^2,\varepsilon^2,\varepsilon,\varepsilon^2,\varepsilon^2)$) not a matter of modal changing, but is a circular rearrangement of the natural minor and major scales having the descendent sensible by means of which a despondent and doleful sensation is endued with the scale structure, despite the major scale having a jocund and tenacious construction through its ascendent sensible. The fresh scale so obtained is consequently still maximally even and we are interested in naming it the \emph{pathetic} minor scale because of inducing the cadences of melodies (remarkably descending ones) based on itself to manifest an occult grief.
\end{remark}
\paragraph*{}
Having passed the world of monophonic music, we tend to generalize the concept of triad motivated to present the ``Fundamental Theorem of Harmony'' which is in fact the mediator between monophony and polyphony as the basis of the art of counterpoint.
\begin{definition}
Every (ordered) tuple $(s_i)_{i=1}^n$ of sounds ($n\in \mathbb{N}$) in which for every $1=1, ..., n-1$, $[s_i,s_{i+1}]$ is a harmonic interval greater than unit is called a \emph{chord}, every interval $[s_i,s_j]$ ($1\leq i,j\leq n$) is called an \emph{interior} interval of the chord, and the sound $s_1$ is called the \emph{root} sound.
\end{definition}
\begin{remark}[Notation]
We denote by $\mathcal{C}$ the set of all chords.
\end{remark}
\begin{corollary}
$Card(\mathcal{C})=2^{\aleph_0}$.
\end{corollary}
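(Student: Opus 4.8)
The plan is to bound $Card(\mathcal{C})$ on both sides by $2^{\aleph_0}$ and then invoke the Cantor--Schr\"oder--Bernstein theorem. For the lower bound, I would first note that the clause defining a chord $(s_i)_{i=1}^n$ --- namely that each $[s_i,s_{i+1}]$ be a harmonic interval greater than unit --- is vacuously satisfied when $n=1$; hence every monad $(x)$, $x\in\mathbb{S}$, is already a chord. The assignment $x\mapsto (x)$ is visibly an injection of $\mathbb{S}$ into $\mathcal{C}$, so by Corollary 4.7 we obtain $Card(\mathcal{C})\geq Card(\mathbb{S})=2^{\aleph_0}$.

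For the upper bound, I would use that every chord is by definition a finite ordered tuple of sounds, whence $\mathcal{C}\subseteq \bigcup_{n\in\mathbb{N}}\mathbb{S}^n$. Appealing to the cardinal arithmetic available in $\mathrm{ZFC}$ together with $Card(\mathbb{S})=2^{\aleph_0}$, one has $Card(\mathbb{S}^n)=(2^{\aleph_0})^n=2^{\aleph_0}$ for each $n\in\mathbb{N}$, and a countable union of sets of size $2^{\aleph_0}$ has size $\aleph_0\cdot 2^{\aleph_0}=2^{\aleph_0}$; thus $Card(\mathcal{C})\leq 2^{\aleph_0}$. Combining the two estimates yields $Card(\mathcal{C})=2^{\aleph_0}$.

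I do not expect any genuine obstacle here, since the whole argument is routine cardinal bookkeeping; the only point that wants a moment's care is the lower bound, and the cheapest device is the observation above that monads count as chords. If one preferred instead to produce $2^{\aleph_0}$ chords that actually carry an interval, one could fix a set of representatives of $\mathbb{S}$ and send each $x$ to the dyad $(x,x^*)$ --- these are chords because octave intervals are harmonic intervals greater than unit once the Major Scale Axiom and Remark 5.80 are in force, and sounds of distinct frequency give distinct dyads --- which closes the gap just as well, there being $2^{\aleph_0}$ frequencies to choose from.
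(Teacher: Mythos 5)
Your argument is correct, and it is exactly the routine cardinality bookkeeping the paper intends for this corollary, which it states without proof: the lower bound via the injection $x\mapsto(x)$ is legitimate because the defining condition on a chord $(s_i)_{i=1}^n$ is vacuous for $n=1$ (so monads are chords), and together with $Card(\mathbb{S})=2^{\aleph_0}$ (Corollary 4.7) and the upper bound $\mathcal{C}\subseteq\bigcup_{n\in\mathbb{N}}\mathbb{S}^n$ with $\aleph_0\cdot 2^{\aleph_0}=2^{\aleph_0}$, Cantor--Schr\"oder--Bernstein closes the argument. Your alternative lower bound via dyads $(x,x^*)$ is also available at this point of the paper, since Axiom 20 and Remark 5.80 (octave intervals exist and are harmonic, and are greater than unit) precede the definition of chord, but it is unnecessary given the monad observation.
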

\paragraph*{}
The main word of the following proposition is that all interior intervals of a chord are harmonic as expected.
\begin{proposition}
Let $(s_i)_{i=1}^n\in \mathbb{S}^n$ for $n\in \mathbb{N}$. The following are equivalent:
\begin{enumerate}
\item For all $1\leq i<n$, $s_i*s_{i+1}$ and $[s_i,s_{i+1}]\in \mathbb{I}^*$.
\item $(s_i)_{i=1}^n$ is a chord.
\item $[s_i,s_{i+1}]\in \mathbb{I}^{>1*}$ for every $1\leq i<n$.
\item $[s_i,s_j]\in \mathbb{I}^{>1*}$ for every $1\leq i<j\leq n$
\item For every $1\leq i<n$, $[s_i,s_j]\in \mathbb{I}^{<1*}$ for all $1\leq j<i$ and $[s_i,s_j]\in \mathbb{I}^{>1*}$ for all $i<j\leq n$; in particular, $[s_1,s_j]\in \mathbb{I}^{>1*}$ for any $1<j\leq n$.
\item $[s_i,s_j]\in \mathbb{I}^*$ for any $1\leq i,j\leq n$, and $s_i*s_j$ for any $1\leq i<j\leq n$.
\end{enumerate}
\end{proposition}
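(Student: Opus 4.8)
The plan is to prove the six statements equivalent by showing $2 \Leftrightarrow 3 \Leftrightarrow 1$ directly from the definitions, and then closing the cycle $3 \Rightarrow 4 \Rightarrow 6 \Rightarrow 5 \Rightarrow 3$, in which only the arrow $3 \Rightarrow 4$ carries genuine content. Everything else is bookkeeping against the pitch ordering.

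First I would dispose of the easy equivalences. The equivalence $2 \Leftrightarrow 3$ is merely Definition 5.117 read aloud: $(s_i)_{i=1}^n$ is a chord exactly when $[s_i,s_{i+1}] \in \mathbb{I}^{>1*}$ for every $1 \leq i < n$. For $1 \Leftrightarrow 3$, I would recall that by Definition 3.9 an interval $[a,b]$ is greater than unit precisely when $a * b$, so ``$[s_i,s_{i+1}] \in \mathbb{I}^{>1*}$'' unpacks to ``$[s_i,s_{i+1}]$ harmonic and $s_i * s_{i+1}$'', which is item 1 verbatim.

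The heart of the argument is $3 \Rightarrow 4$. Fix $1 \leq i < j \leq n$. Transitivity of pitch (Axiom 2), applied along $s_i * s_{i+1} * \cdots * s_j$, yields $s_i * s_j$, so $[s_i,s_j]$ is greater than unit. For harmonicity I would use multiplicativity of the interval measure: by item 5 of Theorem 3.83 and an induction on $j-i$, $|[s_i,s_j]| = \prod_{k=i}^{j-1} |[s_k,s_{k+1}]|$. Each factor lies in $|\mathbb{I}^{>1*}|$, so by the Representation of Harmony (Corollary 5.82) equals $2^{m_k/12}$ for some integer $m_k \geq 1$; hence $|[s_i,s_j]| = 2^{M/12}$ with $M = \sum_k m_k \geq 1$, which lies in $|\mathbb{I}^*|$ (equivalently, iterate the first clause of the Operational Principle of Harmony, Axiom 18). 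Then I would pick any harmonic interval $[c,d]$ with $|[c,d]| = |[s_i,s_j]| > 1$; item 2 of Theorem 3.83 forces $c*d$ and item 3 gives $[c,d] \cong [s_i,s_j]$, so item 1 of the Elementary Principles of Harmony (Axiom 17) transfers harmonicity to $[s_i,s_j]$. This establishes $[s_i,s_j] \in \mathbb{I}^{>1*}$, i.e. item 4.

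The remaining arrows I expect to be routine. For $4 \Rightarrow 6$: when $i < j$ we already have $[s_i,s_j] \in \mathbb{I}^{>1*} \subseteq \mathbb{I}^*$ and $s_i * s_j$; when $i = j$ the unit interval $[s_i,s_i]$ is harmonic because the octave interval is harmonic once Axiom 20 holds, whence every unit interval is (Remark 5.80, Proposition 5.44); and when $i > j$ the interval $[s_i,s_j] = [s_j,s_i]^{-1}$ is the conversion of the non-unit harmonic interval $[s_j,s_i]$, hence harmonic by item 2 of Axiom 17. For $6 \Rightarrow 5$: if $j < i$ then $s_j * s_i$ makes $[s_i,s_j]$ less than unit, and it is harmonic by hypothesis, so $[s_i,s_j] \in \mathbb{I}^{<1*}$; symmetrically $[s_i,s_j] \in \mathbb{I}^{>1*}$ when $i < j$. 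And $5 \Rightarrow 3$ is just the case $j = i+1$ of the second clause of item 5. The one place I expect to need care is the harmonicity half of $3 \Rightarrow 4$: telescoping a general interior interval into consecutive ones through the measure, together with the (now legitimate) invariance of harmony under congruence; every other pitch-ordering claim reduces to transitivity and the trichotomy of Corollary 2.4.
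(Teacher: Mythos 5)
The paper supplies no proof for this proposition (it is one of the statements left to the reader), so there is nothing to compare against; judged on its own, your argument is correct and complete. The cycle $2\Leftrightarrow 3\Leftrightarrow 1$, $3\Rightarrow 4\Rightarrow 6\Rightarrow 5\Rightarrow 3$ covers all six items, and you rightly isolate $3\Rightarrow 4$ as the only step with content: telescoping the measure via item 5 of Theorem 3.83, landing in $|\mathbb{I}^*|$ by the Representation of Harmony (or by iterating the first clause of Axiom 18), and then transferring harmonicity to $[s_i,s_j]$ through items 2 and 3 of Theorem 3.83 together with item 1 of Axiom 17 — exactly the congruence-invariance mechanism the paper has in place for non-unit intervals. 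Your treatment of the corner cases is also sound: the $i=j$ case of $4\Rightarrow 6$ does genuinely require that unit intervals such as $[s_i,s_i]$ be harmonic, and grounding this in the post-Axiom-20 situation (Remark 5.80 with Proposition 5.44 — e.g.\ item 12 applied to the harmonic octave $[s_i,s_i^*]$ with $s_i^*\simeq s_i$, or the paper's own assertion in Remark 5.33) is legitimate, since the proposition lives after Axiom 20; note that for $n=1$ this is in fact the only non-vacuous content of item 6. The $i>j$ case via item 2 of Axiom 17 and the passage $6\Rightarrow 5\Rightarrow 3$ are routine, as you say. One trivial slip: the chord definition you invoke is Definition 5.114, not 5.117 — the latter is the proposition itself.
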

\begin{definition}
Every pair of chords are said to be \emph{enharmonic} whenever they have the same interior intervals; more formally, there exists a one-to-one correspondence between their (sets of) coordinates such that corresponding intervals are equal.
\end{definition}
\begin{remark}[Notation]
When a chord $X$ is enharmonic to a chord $Y$, we write $X\approx Y$. From musical point of view, the binary relation $\approx$ is expected to set up a harmonic equivalence over all chords; this is in agreement with the following.
\end{remark}
\begin{proposition}
Two chords $(s_i)_{i=1}^m$ and $(t_i)_{i=1}^n$ are enharmonic to each other iff $m=n$ and $[s_i,s_{i+1}]=[t_i,t_{i+1}]$ for every $1\leq i<m$ (the root sounds are identical in pitch when $m=n=1$) iff $m=n$ and $s_i\sim t_i$ for all $1\leq i<m$.
\end{proposition}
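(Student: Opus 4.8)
The plan is to prove the two stated ``iff''s as a single cycle $A\Rightarrow B\Rightarrow C\Rightarrow A$, where $A$ is the enharmonicity of the two chords, $B$ is the condition ``$m=n$ and $[s_i,s_{i+1}]=[t_i,t_{i+1}]$ for $1\le i<m$, with $s_1\sim t_1$ in case $m=n=1$'', and $C$ is ``$m=n$ and $s_i\sim t_i$ for $1\le i\le m$''. The implications $B\Rightarrow C$ and $C\Rightarrow A$ are routine. For $B\Rightarrow C$ I would simply invoke Corollary~3.2: the equality $[s_i,s_{i+1}]=[t_i,t_{i+1}]$ of ordered pairs of $\sim$-classes means exactly $s_i\sim t_i$ and $s_{i+1}\sim t_{i+1}$, so letting $i$ run over $1\le i<m$ (adding the root caveat when $m=1$) yields $s_i\sim t_i$ for all $1\le i\le m$. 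For $C\Rightarrow A$, if $s_i\sim t_i$ for every $i$ then each interior interval satisfies $[s_i,s_j]=(\tilde{s_i},\tilde{s_j})=(\tilde{t_i},\tilde{t_j})=[t_i,t_j]$ by the definition of an interval, so $s_i\mapsto t_i$ is precisely the one-to-one correspondence between coordinate sets carrying interior intervals to equal interior intervals required by the definition of enharmonic chords.

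The substantive step is $A\Rightarrow B$, and its heart is showing that the coordinate correspondence furnished by ``enharmonic'' is forced to preserve the pitch order, hence is the identity. First I would record that the coordinates of a chord are pairwise non-identical in pitch, since $s_i*s_j$ for $i<j$ by the six-fold characterization of chords proved just above (and likewise for the $t_k$); hence a one-to-one correspondence between the coordinate sets amounts to a bijection $\sigma$ of the index sets, giving $m=n$ together with $[s_i,s_j]=[t_{\sigma(i)},t_{\sigma(j)}]$ for all $i,j$. Taking $j=i$ gives $[s_i,s_i]=[t_{\sigma(i)},t_{\sigma(i)}]$, i.e.\ $s_i\sim t_{\sigma(i)}$ for each $i$, which already settles $m=n=1$. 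Then, for $i<j$, the chord property gives $s_i*s_j$; with $\tilde{s_i}=\tilde{t_{\sigma(i)}}$ and $\tilde{s_j}=\tilde{t_{\sigma(j)}}$, Lemma~2.9 yields $t_{\sigma(i)}*t_{\sigma(j)}$, and since $(t_k)$ is a chord this forces $\sigma(i)<\sigma(j)$. A strictly increasing bijection of $\{1,\dots,m\}$ onto itself is the identity, so $[s_i,s_j]=[t_i,t_j]$ for all $i,j$, in particular for consecutive indices, which is $B$.

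The main obstacle --- modest though it is --- is exactly this order-rigidity of the correspondence; everything else is unwinding the definitions of interval and of enharmonic chords together with Corollary~3.2. I would also flag one cosmetic adjustment: the third condition should read ``$s_i\sim t_i$ for all $1\le i\le m$'' rather than ``$1\le i<m$'', since $C\Rightarrow B$ (equivalently $C\Rightarrow A$) genuinely needs $s_m\sim t_m$; with the root caveat reinstated the two phrasings coincide, because $B$ itself already forces $s_i\sim t_i$ throughout $1\le i\le m$.
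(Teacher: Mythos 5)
Your proof is correct; the paper supplies no proof of this proposition (it is among the statements left to the reader), and your argument --- reducing the enharmonic correspondence to a bijection of index sets, forcing it to be order-preserving via $s_i*s_j$ together with Lemma 2.9 (hence the identity permutation), and then unwinding the definition of interval and Corollary 3.2 --- is exactly the intended routine verification. Your flag about the third condition is also right: as printed, ``$s_i\sim t_i$ for all $1\leq i<m$'' omits $s_m\sim t_m$ (and is vacuous when $m=n=1$), so it should read $1\leq i\leq m$ for the stated equivalences to hold.
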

\begin{corollary}
$\approx$ is an equivalence relation on $\mathcal{C}$.
\end{corollary}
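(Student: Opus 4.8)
The plan is to reduce everything to the preceding proposition, which re-expresses the relation $\approx$ purely coordinatewise in terms of the identity relation $\sim$: two chords $(s_i)_{i=1}^m$ and $(t_i)_{i=1}^n$ are enharmonic precisely when $m=n$ and $s_i\sim t_i$ for every index $i$. Once $\approx$ is known to be governed by this clean criterion, the three defining properties of an equivalence relation follow mechanically from the corresponding properties of $\sim$, which is itself an equivalence on $\mathbb{S}$ (established earlier in Section 2). That $\approx$ is genuinely a binary relation on $\mathcal{C}$ is immediate from its definition.

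First I would check reflexivity: given any chord $X=(s_i)_{i=1}^m$, trivially $m=m$ and $s_i\sim s_i$ for each $i$ by reflexivity of $\sim$, so $X\approx X$ by the preceding proposition. Next, symmetry: if $X=(s_i)_{i=1}^m\approx Y=(t_i)_{i=1}^n$, then the proposition gives $m=n$ and $s_i\sim t_i$ for all $i$; applying symmetry of $\sim$ coordinatewise and reading the proposition in the reverse direction yields $Y\approx X$. Finally, transitivity: from $X\approx Y$ and $Y=(t_i)_{i=1}^n\approx Z=(u_i)_{i=1}^p$ one extracts $m=n=p$ together with $s_i\sim t_i$ and $t_i\sim u_i$ for every $i$; transitivity of $\sim$ then gives $s_i\sim u_i$ for all $i$, whence $X\approx Z$ by the proposition once more. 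Since $\mathcal{C}\neq\emptyset$ (indeed $Card(\mathcal{C})=2^{\aleph_0}$ by the preceding corollary), $\approx$ is a genuine nonempty equivalence relation on the set of all chords.

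I do not anticipate any real obstacle: the substantive work has been front-loaded into the coordinatewise characterization, and the only point deserving a moment's care is the degenerate length-one case, where the coordinatewise condition degenerates to the single requirement that the root sounds be identical in pitch — but that case is already folded into the statement of the preceding proposition, so no separate argument is needed. The alternative route, verifying the three properties directly from the definition of \emph{enharmonic} (existence of a bijection between coordinate sets matching corresponding interior intervals), would be marginally more tedious, since one would have to compose and invert such bijections explicitly; routing through the coordinatewise criterion bypasses this entirely.
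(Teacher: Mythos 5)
Your proof is correct and takes exactly the route the paper intends: the corollary is placed as an immediate consequence of Proposition 5.119, whose coordinatewise characterization ($m=n$ and $s_i\sim t_i$) reduces reflexivity, symmetry and transitivity of $\approx$ to the corresponding properties of the equivalence $\sim$ established in Section 2. Nothing further is needed.
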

\paragraph*{}
The aim of presenting such a notion of harmonic equivalence on $\mathcal{C}$ as Definition 5.118 was right to preserve the substance of harmony institutionalized within the interior harmonic intervals of a constructed chord. We think the identity relation $\sim$ among sounds keeps the harmonic sensation induced by all enharmonic chords invariant and in practice one may find an appropriate element of an arbitrary class under $\approx$ resolving the initial chord so that it sounds more impressing, but any other variation in the construction of the initial chord, like increasing the number of its coordinates even by means of sounds equitonal to the previous ones, certainly changes the nature of chord harmony so that in no way it can look more prepossessing. Of course, it is necessary to note that the ideational use of this musical term we made in Definition 5.118 essentially differs from the usual sense. The difference between this approach and what is customary in the music theory is that enharmonic objects (such as notes, scales, and chords) only refer to diverse names of one subjective entity over there, whereas we give only one objective name to differently related harmonic subjects in this context (like distinct chords).
\paragraph*{}
For the sake of matching up the knowledge of counterpoint with monophony, we need to carry our chords in a standard framework including melodies and make their coordinates included in the harmonic set $H_{\hat{\varsigma}}$ balanced on $\varsigma$. 
\begin{remark}[Notation]
We take $\mathcal{C}^*:=\mathcal{C}\cap (\cup_{n\in \mathbb{W}}H_{\hat{\varsigma}}^n)$. So $Card(\mathcal{C}^*)=\aleph_0$.
\end{remark}
\begin{definition}
Every element of $\mathcal{C}^*$ is called a \emph{standard} chord (in base $\varsigma$).
\end{definition}
\begin{remark}
Let us construct the quotient set $\mathcal{C}^*/\approx$. Recall that a melody $\{\tilde{s_i}\}_{i=1}$ is ascendent iff $s_i*s_{i+1}$ for every $i$. We trivially observe that each class of standard chords under $\approx$ (with a representative $(s_i)_{i=1}^n$) corresponds to an ascendent melody ($\{\tilde{s_i}\}_{i=1}^n$) and vice versa. This reality is indeed the germ of counterpoint connecting polyphony to monophony. Just like scales, we may define a binary operation $\vee$ on $\mathcal{C}^*/\approx$ correlating with every pair of standard chords a new one whose coordinates consist of those of the two given chords (formal definition is left to the reader). Having attached the empty chord $\emptyset$ to $\mathcal{C}$, we may also define another operation $\wedge :\mathcal{C}^*/\approx\times \mathcal{C}^*/\approx \to \mathcal{C}^*/\approx$ which naturally takes the intersection of a given pair of chords and gives a new one whose coordinates belong to both of them. One can check the accuracy of commutative, associative, and absorption laws with respect to the binary operations $\vee$ and $\wedge$ and immediately conclude that the algebraic structure $(\mathcal{C}^*/\approx,\vee,\wedge)$ establishes a countable lattice which is admissible to be called the \emph{Fundamental Lattice of Polyphony}.
\end{remark}
\paragraph*{}
We now deal with the main intention for polyphony by presentation of the Fundamental Theorem of Harmony. The finite version of this theorem (item 1) is more practicable in music aesthetic versus the infinite case (item 2) whose idea comes from an abstract extension of finite melodies to infinite ones in virtue of some nice operators (similar to the $\otimes$ and of course with some special properties) to make compatibility with the finite version, albeit both are intuitively obvious for musicians and mathematicians. Try to visualize it by a musical insight.
\begin{theorem}[Fundamental Theorem of Harmony]$\,\,\,\,\,\,\,\,\,\,\,\,\,\,\,\,\,\,\,\,$
\begin{enumerate}
\item There exists a one-to-one correspondence between the set of  finite sequences of the enharmonic classes of standard chords $\cup_{n\in \mathbb{N}}(\mathcal{C}^*/\approx)^n$ and the set of finite sequences of finite melodies $\cup_{n\in \mathbb{N}}(\mathcal{M}^f)^n$.
\item There exists a one-to-one correspondence between the set of infinite sequences of the enharmonic classes of standard chords $(\mathcal{C}^*/\approx)^{\mathbb{N}}$ and the set of all sequences of melodies $\cup_{n\in \mathbb{N}}\mathcal{M}^n\cup \mathcal{M}^{\mathbb{N}}$.
\end{enumerate}
\end{theorem}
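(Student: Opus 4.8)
The plan is to read ``one‑to‑one correspondence'' as ``equinumerous'' and to settle both items by counting, after first recording the natural (partial) correspondence already set up in the Remark preceding this theorem. Recall from there that sending a standard chord $(s_i)_{i=1}^n$ to the ascendent melody $\{\tilde{s_i}\}_{i=1}^n$ is, by the proposition characterizing enharmonic chords ($m=n$ together with $s_i\sim t_i$), well‑defined and injective on $\mathcal{C}^*/\approx$ and has exactly the finite ascendent melodies as its image, so $\mathcal{C}^*/\approx$ is in natural bijection with the set of finite ascendent melodies. This embeds the left‑hand side of each item into the right‑hand side, and it is the map to keep in mind; but a finite melody need not be ascendent and its splitting into maximal ascendent continuous submelodies fails to be injective, so the full bijections cannot be obtained by upgrading it and must instead be produced from cardinalities.

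For item 1 I would verify that $\bigcup_{n\in\mathbb{N}}(\mathcal{C}^*/\approx)^n$ and $\bigcup_{n\in\mathbb{N}}(\mathcal{M}^f)^n$ are both countably infinite. The Notation remark recording $Card(\mathcal{C}^*)=\aleph_0$ makes $\mathcal{C}^*/\approx$ countable, and it is infinite because the enharmonic classes of the monads $(\tilde{s})$ with $s\in H_{\hat{\varsigma}}$ are pairwise distinct and there are $\aleph_0$ of them (the group $H_{\hat{\varsigma}}/\sim$ being infinite); hence each $(\mathcal{C}^*/\approx)^n$ is countably infinite and so is the countable union. Likewise $Card(\mathcal{M}^f)=\aleph_0$ gives $Card\big((\mathcal{M}^f)^n\big)=\aleph_0$ for every $n$ and a countably infinite union on the right. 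Two countably infinite sets admit a bijection, which is the asserted correspondence (one could even write one down via a standard pairing, but this is not illuminating).

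For item 2 I would show both sides have cardinality $2^{\aleph_0}$. On the left, $\mathcal{C}^*/\approx$ being countably infinite gives $Card\big((\mathcal{C}^*/\approx)^{\mathbb{N}}\big)=\aleph_0^{\aleph_0}$, and the squeeze $2^{\aleph_0}\le\aleph_0^{\aleph_0}\le(2^{\aleph_0})^{\aleph_0}=2^{\aleph_0\cdot\aleph_0}=2^{\aleph_0}$ forces $\aleph_0^{\aleph_0}=2^{\aleph_0}$. On the right, $Card(\mathcal{M})=2^{\aleph_0}$ yields $Card(\mathcal{M}^n)=(2^{\aleph_0})^n=2^{\aleph_0}$ for each finite $n\ge 1$ and $Card(\mathcal{M}^{\mathbb{N}})=(2^{\aleph_0})^{\aleph_0}=2^{\aleph_0}$, so the countable union $\bigcup_{n\in\mathbb{N}}\mathcal{M}^n\cup\mathcal{M}^{\mathbb{N}}$ has cardinality $\aleph_0\cdot 2^{\aleph_0}=2^{\aleph_0}$. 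Equinumerous sets again admit a bijection.

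The main obstacle is conceptual rather than computational: it is the temptation to exhibit the correspondence constructively via the chord/ascendent‑melody identification of the preceding Remark, which only gives an embedding; once one accepts the claim as purely one about equinumerosity, the remaining work is the routine cardinal arithmetic in the ambient Zermelo--Fraenkel set theory with choice --- chiefly the identities $\aleph_0^{\aleph_0}=2^{\aleph_0}$ and $(2^{\aleph_0})^{\aleph_0}=2^{\aleph_0}$ --- together with the small care that each underlying set is genuinely infinite, so that $\mathcal{C}^*/\approx$ contributes $\aleph_0$ rather than a finite cardinal.
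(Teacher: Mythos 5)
Your proposal is correct as a proof of the statement as literally phrased, and the cardinal arithmetic is sound: $\mathcal{C}^*/\approx$ is countably infinite (your monad argument works, since by the enharmonicity criterion two monads are enharmonic iff their root sounds are identical in pitch, and $H_{\hat{\varsigma}}/\sim$ is infinite), so both sides of item 1 are countably infinite, and the computations $\aleph_0^{\aleph_0}=2^{\aleph_0}$, $(2^{\aleph_0})^{\aleph_0}=2^{\aleph_0}$, $\aleph_0\cdot 2^{\aleph_0}=2^{\aleph_0}$ settle item 2. Be aware, though, that the paper supplies no proof at all here --- it declares both items ``intuitively obvious'' and leaves them to the reader --- and the route it gestures at is not yours: Remark 5.123 sets up the canonical bijection between $\mathcal{C}^*/\approx$ and the (nonempty) finite ascendent melodies, and the paragraph before the theorem speaks of extending this by $\otimes$-like operators so that a sequence of chord classes is read as a family of parallel melodies; that structural correspondence is the ``germ of counterpoint'' the theorem is meant to encode. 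Your reading of ``one-to-one correspondence'' as bare equinumerosity buys rigor and brevity --- the statement, taken at face value, is exactly a cardinality assertion, and your proof is complete where the paper's hint is only a sketch --- but it forfeits the musical content, since an abstract bijection produced from equal cardinals is non-canonical and says nothing about how a chord progression determines parallel melodic lines. Your own remark that the natural chord-to-melody map is only an embedding (its image being the ascendent finite melodies, with the silent melody unmatched unless one adjoins the empty chord as in Remark 5.123) correctly identifies why a naive upgrade of that map fails and why any constructive proof must do genuine encoding work; a reader wanting the paper's intended interpretation would still have to carry that out.
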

\paragraph*{}
The moral is that when a composer is juxtaposing a series of particular chords to make up or improve a piece of music, it looks as though he/she is arranging several parallel melodies with related melodic movements over each other in a specific manner for his/her song.
\paragraph*{}
There are some polyphonic expressions of the Conjecture on the base of the Fundamental Theorem of Harmony in which to study the harmonic behaviour of different melodies on a scale in relation to each other simultaneously, but its discussion gets too onerous to be involved in terms of music so far constructed. On the other hand, its detailed topic could be an absurd ideal in the vain hope such that one might not even be confident enough of provability of the monophonic version of that conjecture. It would inevitably need more advanced equipments ...
\begin{remark}
For every standard chord $(s_i)_{i=1}^n$, the note $\hat{s_1}$ is known as the \emph{root note}. In music theory, the most important chords are three-note ones which were previously dubbed triads. The triads $(x,y,z)$ give a natural basis to construct advanced chords and divide up into two general species. 1- Those contain agreeable chords (from the theory of harmony viewpoint) whose fifth interval is perfect, or equivalently speaking, the dyad $(x,z)$ is perfect; this species is known as the \emph{perfect} chords and divides into two groups; the first case are those in which the interval $[x,y]$ is a major third and in turn $[y,z]$ is a minor third, that are called \emph{major} (\emph{perfect}) chords, and the second group is the converse of the first case, called \emph{minor} (\emph{perfect}) chords. 2- These are accounted disagreeable, having imperfect fifth interval, and divided into two groups;
the first one contains those with diminished fifth and in turn consists of two (congruent) minor third intervals ($[x,y]$ and $[y,z]$) which are called \emph{diminished} chords, and the second case includes those with two major thirds and in turn making an augmented fifth which are called \emph{augmented} chords. It is worth noting the triad $(x,y,z)$ in its own initial case is said to be in the \emph{root position} and for the purpose of musical composition, it is customary to define some types of its inversion; the \emph{first inversion} of $(x,y,z)$ is defined by the triad $(y',x,z)$ (or $(y',z',x)$) where $y'^*=y$ (and $z'^*=z$) in which the first third gets the inverse and a sixth (or fourth) will appear by the initial root note; the \emph{second inversion} of $(x,y,z)$ is defined by the triad $(z',x,y)$ where $z'^*=z$ in which the fifth changes to its inversion (this kind is more applicable). The point that might be questioned about is whether replacing each component of a chord by another equitonal sound deforms the harmonic sensation of that chord or not. This is a matter of harmony that the resultant chord under such a replacement \emph{almost} keeps in harmony with the root position in practice, which naively gives a more extensive class of enharmonic chords not preferentially involved in. Further exploration of this aria of music theory including the analysis of the interior intervals of chords in their root position or first and second inversions and their relationships, involving in the four-note chords and other kinds of inversions and their classification, as well as discussing the agreeability of chords in relation to their components, and more chords and so on are all entrusted to the interested reader which may be recommended to read \cite{P1}.
\end{remark}
\paragraph*{}
Concluding the section, one may infer the two concepts of tonality and harmony (in terms, monophony and polyphony) in the world of music are incorporated into each other so as not undoubtedly be more separable than we handled. We may denote the so far constructed axiom system by $\mathcal{M}(PIT)$, and since the new primitive concepts of this section are already theoretically characterized by its axioms in the language of $\mathcal{M}(PI)$, we do not deal with a fresh theory except a mathematically extended one. Nevertheless, this glance is not much acceptable from the perspective of music.
\begin{remark}
One may naively extend the Cartesian model of $\mathcal{M}(PI)$ (recall Remark 4.8) to $\mathcal{M}(PIT)$ in the following way; $x\simeq y$ iff $x_1=2^ny_1$ for some $n\in \mathbb{Z}$, and the interval $[x,y]$ is harmonic iff $x_1=2^{n/12}y_1$ for some $n\in \mathbb{Z}$. The veracity of the axioms of Tone Music are clear because it is independent of the interpretation (characterizations of tonality and harmony are theoretically done).
\end{remark}

\section{Rhythm}
\paragraph*{}
To start this section, we should first mention that the sounds of $\mathcal{M}(PIT)$ are just abstract and not physical as the ear perceives in the material environment, and most important of all they have no time value. So it is not irrelevant to dub the axiom system $\mathcal{M}(PIT)$ \emph{free time} or \emph{abstract}. The point is that such kinds of sounds are clearly not suitable to be directly applied to introduce the notion of rhythm as a \emph{sound-valued} function in the real world of music, since in order to stabilize the framework on the ground of time we will innately require a peculiar concept of continuity on the nature of sounds. Hence, we generalize our sound in a natural way that it makes sense assigning a positive real number to its duration as the fourth characteristic of sound waves. In this direction, we make use of topological equipments whose technical study via topology textbooks (for instance \cite{M6}) is left to the reader.
\begin{remark}
Recall $C_p^*(\mathbb{R})=\Gamma (\mathbb{S})$ where $\Gamma :\mathbb{S}\to C_p(\mathbb{R})$ is the injection mentioned in $\mathcal{M}(PI)$ (i.e. the timbre), and consider it as a metric subspace of $C(\mathbb{R})$ (the set of continuous functions on $\mathbb{R}$) endowed with the usual supremum metric. Note $C_p^*(\mathbb{R})\subseteq C_p(\mathbb{R})\subseteq C(\mathbb{R})\subseteq \mathbb{R}^{\mathbb{R}}$. There exists exactly one topology $\tau$ on $\mathbb{S}$ such that each of the following equivalent conditions holds:
\begin{enumerate}
\item $\tau =\{U\subseteq \mathbb{S}:\Gamma(U) \,\text{is open in}\, C_p^*(\mathbb{R})\}$.
\item $\tau =\{\Gamma^{-1}(U): U \,\text{is open in}\, C_p^*(\mathbb{R})\}$.
\item $\tau$ is the coarsest topology on $\mathbb{S}$ relative to which the $\Gamma$ is continuous.
\item $\tau$ is the only one topology on $\mathbb{S}$ relative to which the $\Gamma$ is a homeomorphism.
\item $\tau$ is the unique topology generated by the metric $d:\mathbb{S}\times \mathbb{S}\to \mathbb{R}^{\geq 0}$ given by
\begin{center}
$d(s,t)=\sup \{|\Gamma(s)(x)-\Gamma(t)(x)|: x\in \mathbb{R}\}$.
\end{center}
\item $\tau$ is generated by the only metric $d$ making $\Gamma$ be an isometric isomorphism (between metric spaces).
\end{enumerate}
Therefore, the topological space $(\mathbb{S},\tau)$ described above on which we work from now on is metrizable by virtue of the metric $d$ (item 5) relative to which the function $\Gamma :\mathbb{S}\to C_p(\mathbb{R})$ is a topological embedding that makes the metric space $(\mathbb{S},d)$ isometrically isomorphic to $C_p^*(\mathbb{R})$ equipped with its supremum metric. One may observe that whenever two sounds are close to each other under the topology $\tau$ (or the metric $d$), they will remain to be near in the frequency and intensity sense; equivalently, for every sound $s$
\begin{center}
$\forall \epsilon >0(\exists \delta >0(\forall x\in \mathbb{S}(d(s,x)<\delta \Rightarrow (|f(s)-f(x)|<\epsilon \wedge |\iota (s)-\iota (x)|<\epsilon))))$,
\end{center}
which means the frequency of sounds $f:\mathbb{S}\to \mathbb{R}^+$ and the intensity of sounds $\iota:\mathbb{S}\to \mathbb{R}^+$ are continuous too with respect to the topology $\tau$ on $\mathbb{S}$ --- a desired result. However, they do not act homeomorphically.
\end{remark}
\begin{remark}
Here investigating other interesting spaces existent in $\mathcal{M}(PIT)$ would be delightful. We know that the sets $\mathbb{S}/\sim$ and $\mathbb{S}/\simeq$ partition the $\mathbb{S}$. Consider the surjective maps $\mathbb{S}\to \mathbb{S}/\sim$ and $\mathbb{S}\to \mathbb{S}/\simeq$ respectively defined by $s\mapsto \tilde{s}$ and $s\mapsto \hat{s}$. It is deduced from general topology that there exist unique topologies on $\mathbb{S}/\sim$ and $\mathbb{S}/\simeq$ relative to which the aforementioned maps are quotient maps (or strongly continuous, meaning that $U\subseteq \mathbb{S}/\sim$ ($\mathbb{S}/\simeq$) is open iff its inverse image is open in $(\mathbb{S},\tau)$), called the \emph{quotient} topologies induced by those maps. Thus one may obtain the quotient spaces of $\mathbb{S}$ under the two equivalence relations $\sim$ and $\simeq$, and by using the axiom of choice prove that they are respectively homeomorphic to the real line and the circle $S^1$ (this is the natural reason why some authors display musical notes on circles) with their standard topologies (how?). Notice that the order topology on $\mathbb{S}/\sim$ derived from its total ordering $*'$ is the same as the quotient topology. Also, one can show that the Fundamental Groups of Music and Monophony endowed with the subspace topology will be topological groups, since the binary operation $\oplus$ and the unary operation $^{-1}$ (the inverse) are continuous on either space. Since the singletons are open in both topological groups, their topologies will coincide with the discrete topology and in turn these two topological groups will actually become equivalent to the famous ones; namely $(\mathbb{Z},+,\mathcal{P}(\mathbb{Z}))$ (that is infinite) and $(\mathbb{Z}_{12},\oplus,\mathcal{P}(\mathbb{Z}_{12}))$ (that is finite). Defining an intrinsic topology on the Fundamental Lattice of Polyphony to construct an appropriate topological lattice is left to the reader. It is worth noting The Cartesian interpretation of these five topological structures is more enjoyable (see Remark 4.8 and Remark 5.127).
\end{remark}
\paragraph*{}
We will need to use half-open real intervals (of the form $[a,b)$ that is left-closed and right-open or vice versa for $a,b\in \mathbb{R}$ and $a<b$) and let $l(I)$ denote the length of the interval $I$ (so $l([a,b))=b-a>0$). We also apply the subspace topology on such intervals as subsets of the real line.
\begin{definition}
By a \emph{natural sound} is meant a bounded continuous function $\gamma :[a,b)\to \mathbb{S}$, and the \emph{duration} of $\gamma$, denoted $\kappa (\gamma)$, is defined by the length of its domain, i.e. $b-a$. $\gamma$ is considered a \emph{musical sound} whenever the (continuous) map $fo\gamma$ is constant.
\end{definition}
\paragraph*{}
Something has been missed in Definition 6.3! The problem gets started where we have to get nothing by something, as once we got something from nothing; there must be silence to formulate the sense of rhythm. Of course, one could consider an external object $\xi$ which is not of the sound nature and mean any constant map from a real interval $I$ with constant value $\xi$ by a silence of duration $l(I)$; but this would be as extremely artificial as losing the underlying monolithic space of work. We incline to integrate the silence into the space of sounds for obtaining a united mathematical universe to whose whole texture the concept of continuity we have in mind motivated by the metric space $(\mathbb{S},d)$ is applicable. Thus, as a topologically experienced reader shall guess, in order to make a sufficiently nice topological space $(\mathbb{S}^*,\tau^*)$ containing sounds and silences which satisfies our physical, empirical, and natural expectations, we require the Stone-\v{C}ech compactification regarded as an extension of the topological space $(\mathbb{S},\tau)$ that desirably generalizes the notion of continuity from it; formally speaking, every bounded continuous function $\gamma: I\to \mathbb{S}$ can be uniquely extended to a bounded continuous function $\gamma: I\to \mathbb{S}^*$, and consequently the $\gamma$ still remains to be a natural sound in the fresh sense of continuity. In so doing, continuation in the compactified sense and that induced by the metric $d$ will coincide in case of working on the space of sounds. Note since $(\mathbb{S},\tau)$ is a Tychonoff space, the topological requirement is provided.
\begin{remark}
Since the topological space $(\mathbb{S},\tau)$ is locally compact and metrizable, and thus it is completely regular as well, we may consider its Stone-\v{C}ech compactifications (generating the desirable property of continuity). From topology we understand that $(\mathbb{S},\tau)$ has a unique compactification $\mathbb{S}^*$ up to homeomorphism iff it is compact or $\mathbb{S}^*-\mathbb{S}$ is a singleton. From the point that in practice we need to attach just one external object $\xi$ as the silence to $\mathbb{S}$, it follows that its minimal Stone-\v{C}ech compactification containing $\xi$ is equivalent to the one-point (or Alexandroff) compactification of $\mathbb{S}$. Hence, we work with the Alexandroff compactification $\mathbb{S}^*=\mathbb{S}\cup \{\xi\}$ that is the unique compactification of $\mathbb{S}$ relative to which every natural sound into the space of sounds is uniquely extended to some into $\mathbb{S}^*$. By definition, the topology $\tau^*$ on $\mathbb{S}^*$ is finer than $\tau$ (i.e., $\tau \subseteq \tau^*$) and contains any $\mathbb{S}^*-C$ where $C$ is a compact subset of $\mathbb{S}$. $(\mathbb{S}^*,\tau^*)$ is a compact Hausdorff topological space in which $\mathbb{S}$ is dense. Furthermore, by \cite{M1}, this space is metrizable (more strongly than being Hausdorff); one of such topologically equivalent metrics (generating the topology $\tau^*$) would be constructed by fixing an arbitrary sound $s$ as follows:
\begin{equation*}
d^*(x,y)=\left\{
{\begin{array}{*{20}{c}}
{\min \{d(x,y),h(x)+h(y)\}}&{}&{,x\neq s \wedge y\neq s}\\
{\min \{h(x),h(y)\}}&{}&{,x=s\veebar y=s}\\
{0}&{}&{,x=y=s}
\end{array}} \right.
\end{equation*}
where the bounded function $h:\mathbb{S}\to [0,1]$ is defined by $h(x)=1/(1+d(s,x))$.\\
The inclusion mapping $i:\mathbb{S}\to \mathbb{S}^*$ is a uniformly continuous topological embedding. Eventually, $(\mathbb{S}^*,\tau^*)$ is a compact metrizable space (with the metric $d^*$) which will consequently become complete and totally bounded (and so bounded). Therefore, $\mathbb{S}^*$ is the completion of $\mathbb{S}$ as well; what a nice topological space!\\
We can and do now redefine a natural sound by any path $\gamma$ in the space $\mathbb{S}^*$ on condition that $\xi \in ran(\gamma)$ implies $\gamma \equiv \xi$ (constant) in which case $\gamma$ is called a \emph{silence} (as desired) and its \emph{duration} is $\kappa(\gamma)=l(dom(\gamma))$. As a matter of convention, we presume $\Gamma(\xi)\equiv 0$ and consequently $f(\xi)=\iota(\xi)=0$, so the naive extensions of frequency, intensity, and spectrum of sounds over $\mathbb{S}^*$ is done in such a manner that they are all uniformly continuous. Any natural sound $\gamma$ with the property that $fo\gamma$ is constant would be a musical sound. We denote by $\mathbb{S}'$ the set of all natural sounds and by $\mathbb{S}'_c$ the set of all musical sounds. Note for every $\gamma \in \mathbb{S}'$ all maps $fo\gamma$, $\iota o\gamma$, and $\Gamma o \gamma$ are uniformly continuous. Topologically interested readers may define appropriate topology on $\mathbb{S}'$ for their mathematical intentions having no reason to be discussed here.
\end{remark}
\begin{corollary}
$Card(\mathbb{S}^*)=Card(\mathbb{S}')=Card(\mathbb{S}'_c)= 2^{\aleph_0}$.
\end{corollary}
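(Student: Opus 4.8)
The plan is to pin down each of the three cardinalities by squeezing it between $2^{\aleph_0}$ from below and $2^{\aleph_0}$ from above and then invoking the Cantor--Schr\"{o}der--Bernstein theorem, taking as given the corollary (following the Axiom of Extensionality) that $Card(\mathbb{S})=2^{\aleph_0}$. The case of $\mathbb{S}^*$ is immediate: since $\mathbb{S}^*=\mathbb{S}\cup\{\xi\}$ is obtained from $\mathbb{S}$ by adjoining a single point, elementary cardinal arithmetic gives $Card(\mathbb{S}^*)=2^{\aleph_0}+1=2^{\aleph_0}$.

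Next I would treat $\mathbb{S}'$. For the lower bound, fix the domain $[0,1)$ and send each sound $s\in\mathbb{S}$ to the constant path $[0,1)\to\mathbb{S}^*$ with value $s$; this is an injection of $\mathbb{S}$ into $\mathbb{S}'$, so $2^{\aleph_0}\le Card(\mathbb{S}')$. Each such constant path is in fact a musical sound, since $f\circ\gamma$ is constant, so the very same injection also witnesses $2^{\aleph_0}\le Card(\mathbb{S}'_c)$. For the upper bound, recall that a natural sound is a continuous map $\gamma\colon[a,b)\to\mathbb{S}^*$; because the half-open interval $[a,b)$ carries a countable dense subset (for instance $\mathbb{Q}\cap[a,b)$) and $\mathbb{S}^*$ is metrizable, hence Hausdorff (Remark 6.4), such a $\gamma$ is uniquely determined by its restriction to that dense set. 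Consequently the collection of natural sounds with any one fixed domain has cardinality at most $Card(\mathbb{S}^*)^{\aleph_0}=(2^{\aleph_0})^{\aleph_0}=2^{\aleph_0}$, and since the set of admissible domains $\{[a,b):a<b \text{ in } \mathbb{R}\}$ (together with their left-open variants) has cardinality $2^{\aleph_0}$, we obtain $Card(\mathbb{S}')\le 2^{\aleph_0}\cdot 2^{\aleph_0}=2^{\aleph_0}$. Cantor--Schr\"{o}der--Bernstein then yields $Card(\mathbb{S}')=2^{\aleph_0}$. Finally, $\mathbb{S}'_c\subseteq\mathbb{S}'$ forces $Card(\mathbb{S}'_c)\le 2^{\aleph_0}$, which combined with the lower bound already produced gives $Card(\mathbb{S}'_c)=2^{\aleph_0}$.

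The one step I would phrase with care is the claim that a natural sound is recoverable from its values on a countable dense subset of its domain: this uses the separability of a half-open real interval in the subspace topology and the Hausdorff property of $\mathbb{S}^*$, so that two continuous maps agreeing on a dense set must coincide. I expect this to be the only genuine obstacle; a naive bound via all functions $\mathbb{S}^*\to\mathbb{S}^*$ would give $2^{2^{\aleph_0}}$, which is too large, so the separability argument is really needed. Everything else is routine cardinal arithmetic.
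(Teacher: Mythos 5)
Your argument is correct, and since the paper supplies no proof of this corollary (it is one of the statements intentionally left to the reader), your counting argument is exactly the intended one: the adjunction of the single point $\xi$ handles $\mathbb{S}^*$, constant paths give the lower bound $2^{\aleph_0}\le Card(\mathbb{S}'_c)\le Card(\mathbb{S}')$, and the determination of a continuous map on a separable domain into the Hausdorff (metrizable) space $\mathbb{S}^*$ by its values on a countable dense set gives the matching upper bound via $(2^{\aleph_0})^{\aleph_0}=2^{\aleph_0}$ together with the $2^{\aleph_0}$ possible domains. The step you flag as needing care is indeed the only non-routine one, and your justification of it is sound.
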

\begin{corollary}
Every silence is a musical sound.
\end{corollary}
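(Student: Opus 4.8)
The plan is to unwind the definitions established in Remark 6.5 and Definition 6.3. Recall that, in the redefined setting of Remark 6.5, a \emph{silence} is by stipulation a natural sound $\gamma$ which is constant with value $\xi$, i.e. $\gamma \equiv \xi$ on its domain $[a,b)$. To show $\gamma$ is a \emph{musical sound}, I must verify the single defining condition: that $f \circ \gamma$ is a constant function.

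First I would observe that, since $\gamma \equiv \xi$, the composite $f \circ \gamma$ is the map sending every $t \in [a,b)$ to $f(\xi)$. By the convention fixed in Remark 6.5, namely $\Gamma(\xi) \equiv 0$ and hence $f(\xi) = \iota(\xi) = 0$, we get $(f \circ \gamma)(t) = 0$ for all $t$ in the domain. Thus $f \circ \gamma$ is the constant function $0$, which is in particular constant (and trivially continuous). Since $\gamma$ is already a natural sound by hypothesis, the extra requirement that $f \circ \gamma$ be constant is met, so $\gamma$ qualifies as a musical sound.

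There is essentially no obstacle here: the statement is an immediate consequence of the naming conventions, and the only thing to be careful about is invoking the right convention ($f(\xi) = 0$) rather than re-deriving continuity of $f$ on $\mathbb{S}^*$, which is not needed for this particular claim. One could phrase the whole proof in a single sentence: a silence is constant with value $\xi$, and $f(\xi) = 0$, so $f \circ \gamma$ is constant, whence $\gamma$ is a musical sound.
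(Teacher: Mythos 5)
Your proof is correct and follows exactly the route the paper intends: the paper supplies no argument (it treats the corollary as an "insipid consequence" of the definitions), and your unwinding — a silence is constant with value $\xi$, the convention of Remark 6.5 gives $f(\xi)=0$, so $f\circ\gamma$ is constant and $\gamma$ is a musical sound — is the expected one. Nothing is missing.
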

\paragraph*{}
In contrast to the above insipid consequence, by replacing abstract sounds of our music system with musical ones (the elements of $\mathbb{S}'_c$) we will still not lose the musical structure concerning the concepts of tonality and harmony (how and why?).
\begin{remark}
The one-point compactification of $\mathbb{S}$ has a nice interpretation in the Cartesian model of $\mathcal{M}(PI)$ (or $\mathcal{M}(PIT)$). Based on Poincar\'{e} conjecture proved by Perelman, stating that every three-dimensional simply connected compact manifold is diffeomorphic to the 3-sphere, it is easy to observe that the Cartesian construction of the topological space $(\mathbb{S}^*,\tau^*)$ will be the three-dimensional sphere $S^3$ whose Riemannian metric generates the same topology as $d^*$ does --- the $\tau^*$. Thus you may even take integral over this musical manifold, it's just enough to say the word!
\end{remark}
\paragraph*{}
Let $I=[a,b)$ be an interval on the real line. We know that there exists a one-to-one correspondence between the set of all finite partitions of $I$ into subintervals of the form $\{[x_{i-1},x_i)\}_{i=1}^n$ and the set of all strictly increasing finite sequences in $I$ of the form $\{x_i\}_{i=0}^n$ where $x_0=a$ and $x_n=b$. Following the section, we shall simply use such partitions of real intervals to introduce the notion of \emph{rhythm}.
\begin{definition}
Let $\mathcal{I}$ be a partition of a given interval $I$.
\begin{enumerate}
\item A function $t:I\to \mathcal{I}$ is said to be a \emph{time} if every choice function on $\mathcal{I}$ is a right inverse function of $t$; equivalently, $t$ is a left inverse function of every choice function for $\mathcal{I}$. Also, we occasionally say that the ordered pair $(t,\mathcal{I})$ is a time.
\item A function $m:\mathcal{I}\to \mathbb{S}'$ is said to be a \emph{metre} if the domain function on $\mathbb{S}'$ is a left inverse function of $m$; equivalently, $m$ is a right inverse function of the domain function. Also, we occasionally say that the ordered pair $(m,\mathcal{I})$ is a metre.
\item Any function of the form $mot:I\to \mathbb{S}'$ is called a \emph{rhythm} where $t$ (or $(t,\mathcal{I})$) is a time and $m$ (or $(m,\mathcal{I})$) is a metre. The \emph{duration} of a rhythm is defined by the length of its domain.
\end{enumerate}
\end{definition}
\begin{remark}[Notation]
If applicable, $\mathcal{R}$ denotes the set of all rhythms.
\end{remark}
\begin{lemma}
Let $\mathcal{I}$ be a partition of a given interval $I$.
\begin{enumerate}
\item A function $t:I\to \mathcal{I}$ is a time iff for every choice function $\Omega :\mathcal{I}\to I$ we have $to\Omega =Id_{\mathcal{I}}$ iff $x\in t(x)$ for all $x\in I$; in particular, $t$ is onto.
\item A function $m:\mathcal{I}\to \mathbb{S}'$ is a metre iff $(dom)om=Id_{\mathcal{I}}$ (where $dom$ is considered as a function from $\mathbb{S}'$ into $\mathcal{P}(\mathbb{R})$) iff $dom(m(\alpha))=\alpha$ for every $\alpha\in \mathcal{I}$; in particular, $\kappa om=l$ in the sense that the duration of the image of each subinterval equals the length of that subinterval.
\end{enumerate}
\end{lemma}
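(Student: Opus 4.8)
The plan is to treat both items of the lemma as an unfolding of Definition 6.8 (the definitions of \emph{time} and \emph{metre}) against the defining property of a partition of $I$ --- namely that its blocks are nonempty, pairwise disjoint, and cover $I$ --- the only genuine content being the second biconditional in item 1.

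For item 1, I would first record that the first biconditional is immediate from Definition 6.8: a choice function $\Omega:\mathcal{I}\to I$ has exactly the type of a right inverse of $t:I\to\mathcal{I}$, and ``$\Omega$ is a right inverse of $t$'' unpacks to $t\circ\Omega=Id_{\mathcal{I}}$ (equivalently ``$t$ is a left inverse of $\Omega$'', matching the alternative phrasing of the definition), so $t$ is a time exactly when $t\circ\Omega=Id_{\mathcal{I}}$ for every choice function $\Omega$. For the equivalence with ``$x\in t(x)$ for all $x\in I$'' I would argue two directions. ($\Leftarrow$) Assume $x\in t(x)$ for every $x$; given a choice function $\Omega$ and $\alpha\in\mathcal{I}$, the point $p:=\Omega(\alpha)$ lies in $\alpha$ by the choice property and in $t(p)$ by hypothesis, so $\alpha$ and $t(p)$ are blocks of $\mathcal{I}$ sharing the point $p$, hence equal by pairwise disjointness; thus $t(\Omega(\alpha))=\alpha$, i.e.\ $t\circ\Omega=Id_{\mathcal{I}}$. ($\Rightarrow$) Fix $x\in I$ and let $\beta$ be the unique block of $\mathcal{I}$ containing $x$ (unique because $\mathcal{I}$ partitions $I$); build a choice function $\Omega_0$ with $\Omega_0(\beta)=x$ (take $x$ on $\beta$ and, invoking the axiom of choice as throughout the paper, any element on each remaining block); then the hypothesis gives $t(x)=t(\Omega_0(\beta))=\beta$, so $x\in\beta=t(x)$. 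Finally ``$t$ is onto'' follows by picking, for a given $\alpha\in\mathcal{I}$, some $x\in\alpha$ (blocks are nonempty) and noting that $t(x)$, being the block containing $x$, equals $\alpha$.

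Item 2 is pure bookkeeping, which I would keep to a couple of lines. By Definition 6.8, $m:\mathcal{I}\to\mathbb{S}'$ is a metre iff the domain map $dom:\mathbb{S}'\to\mathcal{P}(\mathbb{R})$ is a left inverse of $m$, i.e.\ $(dom)\circ m=Id_{\mathcal{I}}$; equality of these two maps on $\mathcal{I}$ is the same as $dom(m(\alpha))=\alpha$ for every $\alpha\in\mathcal{I}$, which is also what the alternative phrasing ``$m$ is a right inverse of the domain function'' says, read in the obvious restricted sense. For the closing clause, I would recall from Definition 6.3 that the duration of a natural sound $\gamma$ is $\kappa(\gamma)=l(dom(\gamma))$; composing with $m$ gives $\kappa(m(\alpha))=l(dom(m(\alpha)))=l(\alpha)$ for all $\alpha\in\mathcal{I}$, i.e.\ $\kappa\circ m=l$ --- precisely ``the duration of the image of each subinterval equals the length of that subinterval.''

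The one step that calls for any care is the ($\Rightarrow$) direction of item 1, where one must produce a choice function taking a prescribed value at a prescribed block before appealing to the fact that a point of $I$ determines its block uniquely; everything else is a transcription of definitions, so the write-up stays short and no further obstacle is expected.
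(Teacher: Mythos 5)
Your proof is correct and is exactly the routine unpacking of Definition 6.8 that the paper intends by leaving this lemma unproved, with the only substantive step (building a choice function taking the prescribed value $x$ on the block containing $x$) handled properly via pairwise disjointness of the blocks. As a minor remark, the appeal to the axiom of choice there is not even needed, since $\mathcal{I}$ is a finite partition into half-open subintervals and one can simply select each block's left endpoint.
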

\paragraph*{}
The motivation of such formulations of the concepts of time and metre as displayed in Definition 6.8 was the particular conclusions of Lemma 6.10 to be immediate; as the inclusive behaviour of the bracket map on the real line correlating the integer $i$ with each interval $[i,i+1)$ ($\mathbb{Z}$ as a partition of $\mathbb{R}$) implants the meaning of time in the mind, and as the preservation of the duration of the natural sounds the subintervals are mapped to is done by metre.
\begin{proposition}
A function $\rho :I\to \mathbb{S}'$ is a rhythm iff one of the following equivalents holds:
\begin{enumerate}
\item There is only one partition $\mathcal{I}$ of $I$ for which there are a time $(t,\mathcal{I})$ and a metre $(m,\mathcal{I})$ such that the following diagram commutes:
\[\begin{tikzcd}
I\ar{r}{t}\ar{dr}{\rho} & \mathcal{I}\ar{d}{m}\\
& \mathbb{S}'
\end{tikzcd}\]
i.e., $\rho =mot$.
\item There is a piecewise-and-right continuous function $\rho ':I\to \mathbb{S}^*$ (i.e. having finitely many discontinuity points at which the function is right-continuous) whose restriction to each piece is equal to the value of $\rho$ at any points of that piece.
\end{enumerate}
In particular, $\rho$ is piecewise constant and right-continuous, and $Card(\mathcal{R})=2^{\aleph_0}$.
\end{proposition}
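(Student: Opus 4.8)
The plan is to reduce everything to the combinatorial picture of times and metres supplied by Lemma 6.10 and then to prove that the three conditions ``$\rho$ is a rhythm'', (1) and (2) are mutually equivalent by moving freely between a rhythm and the partition it carves out of $I$. Write $I=[a,b)$; a partition $\mathcal{I}$ of $I$ is the same datum as a strictly increasing finite sequence $a=x_0<\dots<x_n=b$ with blocks $\alpha_i=[x_{i-1},x_i)$, and by Lemma 6.10 the only time $(t,\mathcal{I})$ is the block-membership map $t(x)=\alpha_i$ for $x\in\alpha_i$, while a metre $(m,\mathcal{I})$ is precisely an assignment to each $\alpha_i$ of a natural sound $m(\alpha_i)$ with $dom(m(\alpha_i))=\alpha_i$. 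Consequently a rhythm $\rho=m\circ t$ is exactly a map which is constant on each block $\alpha_i$, with value a natural sound whose domain is that very block.

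First I would treat ``rhythm $\Rightarrow$ (1)''. Existence of $\mathcal{I},t,m$ is the definition, so only uniqueness requires work: if $\rho=m\circ t$ over a partition $\mathcal{I}$, then for $x\in\alpha\in\mathcal{I}$ one has $\rho(x)=m(\alpha)$ and hence $dom(\rho(x))=dom(m(\alpha))=\alpha$ by Lemma 6.10(2); thus the block of $\mathcal{I}$ containing $x$ is read off from $\rho$ itself as $dom(\rho(x))$, so $\mathcal{I}=\{dom(\rho(x)):x\in I\}$ is forced, and then so are $t$ and $m$. Next, ``rhythm $\Rightarrow$ (2)'': glue the natural sounds, each being in the compactified sense a path $\alpha_i\to\mathbb{S}^*$, into one map $\rho':I\to\mathbb{S}^*$ by $\rho'|_{\alpha_i}=m(\alpha_i)$. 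This $\rho'$ is continuous on each open sub-block $(x_{i-1},x_i)$, and right-continuous at every left endpoint $x_{i-1}$ since $m(\alpha_i)$ is continuous at the left end of its domain; hence $\rho'$ is continuous off the finite set $\{x_1,\dots,x_{n-1}\}$ and right-continuous there, i.e.\ piecewise-and-right-continuous, and plainly $\rho'|_{\alpha_i}$ is the common value of $\rho$ on $\alpha_i$.

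For the converses: ``(1) $\Rightarrow$ rhythm'' is immediate since (1) in particular exhibits $\rho$ as some $m\circ t$. For ``(2) $\Rightarrow$ rhythm'', let $\{\alpha_1,\dots,\alpha_n\}$ be the blocks of the piecewise decomposition of the given $\rho'$, a finite partition of $I$ into left-closed, right-open intervals on each of which $\rho'$ is continuous, obtained by cutting at the discontinuity points and using right-continuity there. By hypothesis $\rho$ is constant on each $\alpha_i$ with value $\rho'|_{\alpha_i}$, a path with domain $\alpha_i$; since this value lies in $\mathbb{S}'$ the silence condition on it is automatic, so it is a natural sound of domain $\alpha_i$. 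Setting $\mathcal{I}=\{\alpha_1,\dots,\alpha_n\}$, $t(x)=\alpha_i$ for $x\in\alpha_i$, and $m(\alpha_i)=\rho'|_{\alpha_i}$, Lemma 6.10 says $t$ is a time and $m$ a metre, and $m\circ t=\rho$, so $\rho$ is a rhythm. The ``in particular'' clauses drop out: $\rho=m\circ t$ is constant on each left-closed block $\alpha_i$, hence piecewise constant and right-continuous. For the count, a rhythm is determined by $(I,\mathcal{I},m)$ with $2^{\aleph_0}$ choices of $I=[a,b)$, at most $2^{\aleph_0}$ finite partitions $\mathcal{I}$, and at most $|\mathbb{S}'|^{|\mathcal{I}|}=(2^{\aleph_0})^n=2^{\aleph_0}$ metres (Corollary 6.6), so $Card(\mathcal{R})\le 2^{\aleph_0}$; conversely the constant natural sounds $\gamma_s\equiv s$ ($s\in\mathbb{S}$) on $[0,1)$ give $2^{\aleph_0}$ distinct rhythms, so $Card(\mathcal{R})=2^{\aleph_0}$. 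The only place asking for care is the traffic between (2) and the rhythm data: making sure the ``pieces'' of $\rho'$ really are left-closed, right-open intervals on which $\rho$ is literally constant, with lengths matching the durations of the corresponding natural sounds --- together with the dual observation, powering the uniqueness in (1), that $dom\circ\rho$ already recovers the partition.
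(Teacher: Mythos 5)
The paper leaves Proposition 6.11 unproved, and your overall architecture is exactly the intended one: by Lemma 6.10 a time is the block-membership map and a metre is a domain-preserving assignment of natural sounds, so a rhythm is precisely a function constant on each block of a finite half-open partition with value a natural sound whose domain is that block, and $dom\circ\rho$ recovers the partition --- which is what powers your uniqueness argument for (1) (in agreement with Corollaries 6.12 and 6.13). The ``in particular'' clauses and the cardinality computation are also fine.

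The one genuine weak point is that you read ``piece'' in item (2) in two incompatible ways. In ``rhythm $\Rightarrow$ (2)'' your pieces are the blocks of $\mathcal{I}$; in ``(2) $\Rightarrow$ rhythm'' you cut $I$ at the discontinuity points of $\rho'$, i.e.\ you use the maximal continuity intervals. These do not coincide in general: take $I=[0,2)$, $\mathcal{I}=\{[0,1),[1,2)\}$, and let both metre values be the constant path at one fixed sound $s$. Then $\rho$ is a rhythm taking two distinct values (the two constant natural sounds have different domains), but the glued $\rho'$ is constant on $[0,2)$, hence continuous at $1$, so its unique maximal piece is all of $[0,2)$ and its restriction there equals no value of $\rho$; moreover no other admissible $\rho'$ exists, since matching $\rho$ on $[0,1)$ and on $[1,2)$ forces continuity at $1$. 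So under the maximal-piece reading the forward implication is simply false, and the proposition is only true under the reading in which the pieces are the blocks of a witnessing decomposition (the reading Corollary 6.13 presupposes). Under that correct reading your forward direction is fine, but your backward step ``by hypothesis $\rho$ is constant on each $\alpha_i$'', with $\alpha_i$ the maximal pieces, is not what the hypothesis supplies, since the witnessing pieces may be strictly finer than the maximal ones. The repair is immediate and shortens your argument: in ``(2) $\Rightarrow$ rhythm'' take $\mathcal{I}$ to be the witnessing decomposition itself; on each of its blocks $\rho$ is constant with value $\rho'$ restricted to that block, and that value, being $\rho(x)\in\mathbb{S}'$, is a natural sound whose domain is the block, so $m(\alpha)=\rho'|_{\alpha}$ is a metre and $\rho=m\circ t$. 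With that adjustment, and with the forward direction exhibiting the blocks of $\mathcal{I}$ as the witnessing pieces, your proof is complete.
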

\begin{corollary}
For every partition $\mathcal{I}$ of $I$,
\begin{enumerate}
\item there is exactly one time $t:I\to \mathcal{I}$.
\item there are uncountably many metres $m:\mathcal{I}\to \mathbb{S}'$.
\end{enumerate}
\end{corollary}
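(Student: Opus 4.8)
The plan is to derive both claims directly from Lemma 6.10, which already reduces the notions of time and metre to pointwise membership conditions, so that no further topology is needed; the only cardinality input required is that $\mathbb{S}$ is uncountable (Corollary 4.7, or Corollary 6.5 for $\mathbb{S}'$).

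For item 1, I would argue as follows. By item 1 of Lemma 6.10, a function $t:I\to\mathcal{I}$ is a time if and only if $x\in t(x)$ for every $x\in I$. Since $\mathcal{I}$ is a partition of $I$, each $x\in I$ belongs to exactly one member of $\mathcal{I}$, call it $\alpha_x$; hence the requirement $x\in t(x)$ leaves no freedom and forces $t(x)=\alpha_x$ for all $x$. Conversely, the assignment $x\mapsto\alpha_x$ is a well-defined function $I\to\mathcal{I}$ which, by construction, satisfies $x\in t(x)$ and is therefore a time by Lemma 6.10. Thus exactly one time $t:I\to\mathcal{I}$ exists, proving item 1. (Note that $I=[a,b)$ with $a<b$, so $I$ and $\mathcal{I}$ are nonempty, although the uniqueness half of the argument does not use this.)

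For item 2, I would again invoke item 2 of Lemma 6.10: a function $m:\mathcal{I}\to\mathbb{S}'$ is a metre if and only if $dom(m(\alpha))=\alpha$ for every $\alpha\in\mathcal{I}$. So a metre is exactly a choice, for each block $\alpha\in\mathcal{I}$, of a natural sound with domain $\alpha$. Each block $\alpha$ is a half-open bounded interval $[x_{i-1},x_i)$, and for every sound $s\in\mathbb{S}$ the constant map $c^{\alpha}_{s}:\alpha\to\mathbb{S}^{*}$ with value $s$ is continuous with range $\{s\}\subseteq\mathbb{S}$, hence a natural sound with $dom(c^{\alpha}_{s})=\alpha$ (it fulfils the clause of Remark 6.4 vacuously); in particular metres exist, e.g. $m(\alpha)=c^{\alpha}_{s_{0}}$ for a fixed $s_{0}$. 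To produce uncountably many, fix one block $\alpha_{0}\in\mathcal{I}$ and define, for each $s\in\mathbb{S}$, the metre $m_{s}$ by $m_{s}(\alpha_{0})=c^{\alpha_{0}}_{s}$ and $m_{s}(\alpha)=c^{\alpha}_{s_{0}}$ for $\alpha\neq\alpha_{0}$. The map $s\mapsto m_{s}$ is injective, since $m_{s}(\alpha_{0})$ and $m_{s'}(\alpha_{0})$ are constant natural sounds with different values whenever $s\neq s'$. As $Card(\mathbb{S})=2^{\aleph_{0}}$ by Corollary 4.7, there are uncountably many (indeed $2^{\aleph_{0}}$) metres $m:\mathcal{I}\to\mathbb{S}'$, proving item 2.

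There is essentially no serious obstacle: Lemma 6.10 has already done the substantive work, and the remaining points are bookkeeping. The only spot that calls for a moment's care is confirming that the constant maps genuinely qualify as natural sounds under the redefinition in Remark 6.4 — which they do, since their range omits the point $\xi$, so the implication ``$\xi\in ran(\gamma)\Rightarrow\gamma\equiv\xi$'' holds vacuously. If a sharp count is desired rather than merely ``uncountably many'', a matching upper bound follows from $Card(\mathbb{S}')=2^{\aleph_{0}}$ (Corollary 6.5) together with the finiteness of $\mathcal{I}$, yielding exactly $2^{\aleph_{0}}$ metres.
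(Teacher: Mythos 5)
Your proof is correct and is exactly the intended derivation: the paper states this corollary without proof as an immediate consequence of Lemma 6.10 (and Proposition 6.11), and your argument — uniqueness of the time forced by $x\in t(x)$ together with the partition property, and uncountably many metres obtained by varying the constant value of a constant natural sound on one block over the continuum-sized $\mathbb{S}$ — is the natural way to fill it in. Your side remarks (the constant maps avoid $\xi$, so the silence clause of Remark 6.4 holds vacuously, and the sharp count $2^{\aleph_0}$ from Corollary 6.5) are also accurate.
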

\begin{corollary}
For every rhythm $\rho:I\to \mathbb{S}'$ the existing time $t$ and metre $m$ in item 1 and the function $\rho'$ in item 2 of Proposition 6.11 are unique, and so that for every $\alpha \in \mathcal{I}$ and every $x\in \alpha$ we have $\rho '|_{\alpha}=m(\alpha)=\rho |_{\alpha}(x)=\rho(x)$ (which is equated with a natural sound).
\end{corollary}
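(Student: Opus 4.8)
The plan is to extract everything from the uniqueness of the underlying partition already asserted in Proposition 6.11(1), together with the ``exactly one time'' half of Corollary 6.13 and the inclusion/metre identities of Lemma 6.10. So fix a rhythm $\rho:I\to\mathbb{S}'$. By Proposition 6.11(1) there is a \emph{unique} partition $\mathcal{I}$ of $I$ admitting a time $(t,\mathcal{I})$ and a metre $(m,\mathcal{I})$ with $\rho=m\circ t$; and by Corollary 6.13(1), once this $\mathcal{I}$ is pinned down there is exactly one time $t:I\to\mathcal{I}$. That settles the uniqueness of $t$ outright.

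For $m$, I would show that $\rho$ forces the value of $m$ on every cell. By Lemma 6.10(1) one has $x\in t(x)$ for all $x\in I$, so if $\alpha\in\mathcal{I}$ and $x\in\alpha$, then $t(x)$ is the unique cell of the partition containing $x$, i.e. $t(x)=\alpha$; hence $m(\alpha)=m(t(x))=(m\circ t)(x)=\rho(x)$. This simultaneously shows the assignment is well defined ($\rho(x)=\rho(y)$ whenever $x,y\in\alpha$, since both equal $m(\alpha)$) and that $m$ is completely determined by $\rho$ through the unique partition $\mathcal{I}$; therefore $m$ is unique. Along the way we record the identity $\rho|_\alpha(x)=\rho(x)=m(\alpha)$ for every $\alpha\in\mathcal{I}$ and every $x\in\alpha$, that is, $\rho$ is constant on each cell with value the natural sound $m(\alpha)$.

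It remains to treat $\rho'$ and to close the chain of equalities. The partition into pieces furnished by the piecewise-constant right-continuous function $\rho'$ of Proposition 6.11(2) is, by the uniqueness clause of that proposition, the same $\mathcal{I}$; and on each cell $\alpha$ the defining property of $\rho'$ is that $\rho'|_\alpha$ equals the value of $\rho$ at any point of $\alpha$, which we have just identified with $m(\alpha)$. Since $m$ is a metre, Lemma 6.10(2) gives $dom(m(\alpha))=\alpha$, so the equality $\rho'|_\alpha=m(\alpha)$ is a genuine equality of functions on $\alpha$ (not merely of ranges), and $\rho'$ is thereby pinned down cell by cell from the already-unique data $(\mathcal{I},m)$; hence $\rho'$ is unique. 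Reading the three links together yields $\rho'|_\alpha=m(\alpha)=\rho|_\alpha(x)=\rho(x)$ for all $\alpha\in\mathcal{I}$ and $x\in\alpha$, as claimed.

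The only genuinely delicate point — and the place I would be most careful — is this last matching: one must ensure that the ambient partition attached to $\rho'$ in item 2 is literally the $\mathcal{I}$ of item 1 (so the coarsest constancy-partition of $\rho'$, obtained by merging adjacent cells of equal value, coincides with $\mathcal{I}$), and that the type mismatch between $\rho':I\to\mathbb{S}^*$ and the $\mathbb{S}'$-valued $\rho$ is handled correctly, i.e. that ``$\rho'|_\alpha=\rho(x)$'' is read as the restriction $\rho'|_\alpha\colon\alpha\to\mathbb{S}^*$ being equal \emph{as a function} to the natural sound $\rho(x)$ — which is exactly where $dom(m(\alpha))=\alpha$ is needed. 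Everything else is bookkeeping over the uniqueness statements already established.
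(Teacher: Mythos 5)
Your argument is correct and is exactly the intended derivation (the paper leaves this corollary unproved as an immediate consequence of Proposition 6.11, Corollary 6.12, and Lemma 6.10): the unique partition from 6.11(1) plus the unique time from 6.12(1), the identity $m(\alpha)=m(t(x))=\rho(x)$ forcing the metre, and the domain condition $dom(m(\alpha))=\alpha$ pinning each piece of $\rho'$ to a cell and hence $\rho'$ itself. Only two cosmetic slips: your citation of ``Corollary 6.13(1)'' for the uniqueness of the time should read Corollary 6.12(1), and the phrase ``coarsest constancy-partition of $\rho'$'' should refer to the discontinuity pieces of $\rho'$ (it is $\rho$, not $\rho'$, that is piecewise constant); neither affects the proof.
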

\paragraph*{}
So by virtue of the one-to-one correspondence between $\mathcal{R}$ and the set of all piecewise-and-right continuous functions from any interval to the set of natural sounds, which is arisen from item 2 of Proposition 6.11, we discern an alternative method to define rhythms.
\paragraph*{}
In the world of practice, every rhythm has right two basic characters one of which indicates the number of beats included in the rhythm and the other is its tempo stating how fast the rhythm is displaying. Thus, since some types of rhythms are well-proportioned and more elegant than the others, it is relevant to impute meaningful numbers for describing the features of their rhythmical treatment. In this direction, we do proceed with our own singular method to introduce the characteristics of rhythms and not apply the common trend in music theory employing fraction as a symbol for notating time signatures because of some logical disadvantage, although we essentially object to such a traditional idea. So for simplicity, we use the alternate form of partitions of $I=[a,b)$ as $\mathcal{I}=\{a=x_0< ... <x_n=b\}$, and contemporarily say that $\mathcal{I}$ is \emph{regular} whenever all elements of $\mathcal{I}$ have the same length, equivalently, $\Delta x_i=x_i-x_{i-1}$ is constant for all $i=1, ..., n$. Every partition of $I$ containing $\mathcal{I}$ (in the new form) is a \emph{refinement} of $\mathcal{I}$.
\begin{definition}
Let $\rho :I=[a,b)\to \mathbb{S}'$ be a rhythm whose partition is of the form $\mathcal{I}=\{a=x_0< ... <x_n=b\}$. $\rho$ is said to be \emph{regular} if there is a regular refinement of $\mathcal{I}\cap \rho^{-1}(\{\xi\}^c)$. Otherwise, $\rho$ is said to be \emph{irregular}.
\end{definition}
\paragraph*{}
Having focused on regular rhythms, such a definition of them presented above expresses that no matter when, i.e. at which point of the interval $I=[a,b)$, the silence appears but its duration together with that of the previous objects including natural sounds and silences, if existent, must reach a rational multiple of the duration of the rhythm by a difference of length $a$ according to the following theorem; in fact, it is necessary and sufficient that the set $\{\min \alpha :\alpha \in \mathcal{I},\, \rho |_{\alpha}\neq \xi \}$, where $\mathcal{I}$ is of its natural form, is included in a regular partition (of the new form). The idea of the proof of this theorem is based on elementary analysis which we decided not to include.
\begin{theorem}
Let $\rho :I=[a,b)\to \mathbb{S}'$ be a rhythm whose partition is of the form $\mathcal{I}=\{a=x_0< ... <x_n=b\}$ and assume
\begin{center}
$\mathcal{I} \cap \rho ^{-1}(\{\xi \}^c)\cup \{a\}=\{a=y_0<y_1< ... <y_m\leq b\}$ 
\end{center}
($m\leq n$). $\rho$ is regular iff there exists the coarsest regular partition of $I$ containing $\mathcal{I} \cap \rho ^{-1}(\{\xi \}^c)$ iff each of the following equivalents occurs:
\begin{enumerate}
\item $\{\frac{x-y}{l(I)}:x,y\in \mathcal{I}\wedge \rho (x)\neq \xi \wedge (\rho (y)\neq \xi \vee y=a)\}\subseteq \mathbb{Q}$.
\item $\{\frac{x_i-a}{b-a}: \rho (x_i)\neq \xi \}\subseteq \mathbb{Q}$.
\item $\{\frac{\Delta y_i}{b-a}: 1\leq i\leq m \}\subseteq \mathbb{Q}$.
\end{enumerate}
\end{theorem}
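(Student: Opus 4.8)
The plan is to reduce the whole statement to one elementary fact about equally spaced partitions of a half-open interval, and then to obtain the mutual equivalence of the three arithmetic conditions by routine differencing and summing. Throughout, I identify a partition $\{a=z_0<\cdots<z_k=b\}$ of $I=[a,b)$ with its set of nodes; such a partition is regular exactly when $\Delta z_i\equiv (b-a)/k$, i.e. when it equals $P_k:=\{\,a+\tfrac{j}{k}(b-a):0\le j\le k\,\}$. So regular partitions of $I$ correspond bijectively to positive integers, and a node $s\in I$ lies in $P_N$ iff $N\cdot\tfrac{s-a}{b-a}\in\mathbb{Z}$ (the range condition $0\le j\le N$ is automatic since $\tfrac{s-a}{b-a}\in[0,1)$).

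The key lemma I would prove first: for a finite $S\subseteq I$, the set of $N\in\mathbb{N}$ with $P_N\supseteq S$ is empty if some $\tfrac{s-a}{b-a}$ is irrational, and otherwise --- writing $\tfrac{s-a}{b-a}=p_s/q_s$ in lowest terms for $s\in S$ --- it is precisely the set of positive multiples of $N_0:=\mathrm{lcm}\{q_s:s\in S\}$ (and if $S\subseteq\{a\}$ take $N_0=1$). Since $N\mid N'$ implies $P_N\subseteq P_{N'}$, the partition $P_{N_0}$ then refines every admissible $P_{kN_0}$, so it is the unique coarsest regular partition of $I$ containing $S$; and it exists exactly when every $\tfrac{s-a}{b-a}$ is rational.

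Now I apply this with $S:=\mathcal{I}\cap\rho^{-1}(\{\xi\}^c)$, the set of partition nodes at which $\rho$ is a genuine natural sound (note $x_n=b\notin I$, so $b$ contributes no sound node). Because $a$ is a node of every partition of $I$, a regular partition contains $S$ iff it contains $S\cup\{a\}=\{y_0,\dots,y_m\}$. Hence, by Definition 6.15 and the lemma, $\rho$ is regular iff such a partition exists iff the coarsest one exists iff $\tfrac{y_i-a}{b-a}\in\mathbb{Q}$ for every $0\le i\le m$; and since $y_0=a$ kills the $i=0$ term while $\{x_i:\rho(x_i)\ne\xi\}$ differs from $\{y_1,\dots,y_m\}$ at most by the possible presence of $a$, this is exactly condition (2). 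For (3), use $\tfrac{y_i-a}{b-a}=\sum_{j=1}^{i}\tfrac{\Delta y_j}{b-a}$: rationality of all $\tfrac{y_i-a}{b-a}$ gives rationality of each $\tfrac{\Delta y_j}{b-a}=\tfrac{y_j-a}{b-a}-\tfrac{y_{j-1}-a}{b-a}$, and conversely each $\tfrac{y_i-a}{b-a}$ is a finite sum of the $\tfrac{\Delta y_j}{b-a}$, so (2) $\Leftrightarrow$ (3). For (1), substituting $y=a$ inside (1) yields (2), and conversely (2) gives $\tfrac{x-y}{l(I)}=\tfrac{x-a}{l(I)}-\tfrac{y-a}{l(I)}\in\mathbb{Q}$ for nodes $x,y$ with $\rho(x)\ne\xi$ and ($\rho(y)\ne\xi$ or $y=a$), using $l(I)=b-a$; so (1) $\Leftrightarrow$ (2). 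Chaining the implications gives all the stated equivalences.

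The only genuinely substantive step is the lemma, and inside it the identification of the admissible part-counts with the multiples of $N_0$ --- which is also what delivers existence of the \emph{coarsest} such partition, not merely of some partition; everything else is bookkeeping. The points I would spell out carefully are: the convention that a ``regular refinement of the set $\mathcal{I}\cap\rho^{-1}(\{\xi\}^c)$'' means a regular partition of $I$ whose node set contains that set; the harmless adjunction of $a$ (and automatic presence of $b$); the exclusion of $x_n=b$ from the sound nodes; and the degenerate case $\rho\equiv\xi$, where $m=0$, conditions (1)--(3) hold vacuously, and $\rho$ is regular via the trivial partition $\{a,b\}$. Each of these costs one sentence.
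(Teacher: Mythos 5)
Your argument is correct: the reduction of regularity to membership of the sound nodes in the equally spaced partitions $P_N$, the lcm lemma identifying the admissible $N$ as the multiples of $N_0$ (which simultaneously yields existence of the \emph{coarsest} regular partition containing $\mathcal{I}\cap\rho^{-1}(\{\xi\}^c)$, not just some regular one), and the substitution/telescoping equivalences of (1)--(3) together establish all the stated equivalences, including the boundary conventions you rightly flag (exclusion of $x_n=b$, harmless adjunction of $a$, and the all-silence case $m=0$). The paper omits the proof entirely, saying only that it rests on elementary analysis, and your write-up supplies precisely such an elementary argument, so there is no divergence from the paper's (unstated) route to report.
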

\begin{remark}[Notation]
Having used the notations of Theorem 6.15, we understand that once $\rho$ is regular, the coarsest regular refinement of $\mathcal{I} \cap \rho ^{-1}(\{\xi \}^c)$ is existent and unique, which we denote by $\mathcal{I}^*$.
\end{remark}
\begin{definition}
By the same notations as in Theorem 6.15 and Remark 6.16, suppose  $\rho$ is regular. Every element belonging to $\mathcal{I}^*$ is called a \emph{beat}. $\rho$ is called \emph{trivial} if $Card(\mathcal{I}^*)=1$, namely $\rho$ has only one beat, otherwise it is called \emph{non-trivial}. $\rho$ is called \emph{duple} whenever it has two beats, $\rho$ is called \emph{triple} whenever it has three beats, and so forth. The \emph{tempo} of $\rho$, denoted $T(\rho)$, is defined by the quantity $Card(\mathcal{I}^*)/l(I)$. The \emph{signature} of $\rho$ is defined by the ordered pair $(Card(\mathcal{I}^*),l(I))$.
\end{definition}
\begin{remark}
Due to the last part of Definition 6.17, having uniquely ascribed such a signature to the nature of a regular rhythm as a reasonable characteristic qualifying its intrinsic construction, one can easily earn practically enough information about the cadent structure of that rhythm including 1- the number of beats, 2- the duration of the rhythm, and 3- the tempo. As customary in music theory, a unary operation $\Im :\mathbb{W}\times \mathbb{R}^+\to \mathbb{W}\times \mathbb{R}^+$ is defined by $\Im (n,t)=(3n,t)$ together with a binary operation $\star :(\mathbb{W}\times \mathbb{R}^+)\times (\mathbb{W}\times \mathbb{R}^+)\to \mathbb{W}\times \mathbb{R}^+$ given by the formula $(m,r)\star (n,t)=(m+n,r+t)$, both acting on the set of all rhythm signatures under the first of which the duration of the the given rhythm would be invariant and the restriction of the second one to the set of signatures of those rhythms having a fixed tempo would still preserve the same tempo. Apart from the fact that the set of all rhythm signatures together with $\star$ constructs an abelian semigroup, these operations have interesting usage for classifying longer regular rhythms by virtue of a given one in practice, as follows.
\end{remark}
\begin{definition}
Every duple or triple (regular) rhythm is said to be \emph{simple}. Every (non-trivial) rhythm whose signature belongs to the range of the $\Im$ and it is neither simple nor a silence is said to be \emph{compound}. Every non-trivial rhythm whose signature belongs to the range of the $\star$ and it is neither a silence nor a simple or compound rhythm is said to be \emph{complex}.
\end{definition}
\paragraph*{}
As a straightforward result of number theory, we have
\begin{corollary}
Every regular rhythm is either silence, trivial, simple, compound, or complex.
\end{corollary}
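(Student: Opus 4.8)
The plan is to reduce the claim to an elementary arithmetic dichotomy on the number of beats. First I would dispose of the degenerate case: if $\rho$ is a silence there is nothing to prove, so from now on assume $\rho$ is not a silence, which gives $\mathcal{I}\cap\rho^{-1}(\{\xi\}^c)\neq\emptyset$. Since $\rho$ is regular, Theorem 6.15 (in the form asserting existence of the coarsest regular refinement of $\mathcal{I}\cap\rho^{-1}(\{\xi\}^c)$) together with Remark 6.16 guarantees that $\mathcal{I}^*$ exists and is unique; I set $N=Card(\mathcal{I}^*)$, a positive integer, which by Definition 6.17 is the number of beats of $\rho$, so that the signature of $\rho$ is the ordered pair $(N,l(I))$ with $l(I)>0$.

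Next I would run through the four arithmetic cases for $N$. If $N=1$, then $\rho$ is trivial by Definition 6.17 and we are done. If $N\in\{2,3\}$, then $\rho$ is duple or triple, hence simple by Definition 6.19. If $N\geq 4$ and $3\mid N$, write $N=3n$ with $n\geq 2$; then $(N,l(I))=\Im(n,l(I))$ shows the signature lies in the range of $\Im$, while $N\geq 6$ makes $\rho$ neither duple nor triple, hence not simple, and $\rho$ is not a silence by assumption, so $\rho$ is compound by Definition 6.19. Finally, if $N\geq 4$ and $3\nmid N$, then the identity $(N,l(I))=(1,\tfrac{1}{2}l(I))\star(N-1,\tfrac{1}{2}l(I))$, valid since $1,N-1\in\mathbb{W}$ and $\tfrac{1}{2}l(I)\in\mathbb{R}^+$, exhibits the signature in the range of $\star$; moreover $N\geq 4$ means $\rho$ is not simple, $3\nmid N$ means the signature is not of the form $(3k,t)$ and so $\rho$ is not compound, and $\rho$ is not a silence, whence $\rho$ is complex by Definition 6.19. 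Since every positive integer lies in exactly one of these four cases, the argument is finished.

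As the text already remarks, this is ``a straightforward result of number theory'', and I expect no genuine obstacle; the substance is simply that $\{1\}$, $\{2,3\}$, the multiples of $3$ that are at least $6$, and the remaining integers at least $4$ partition $\mathbb{N}$. The only care required is bookkeeping: verifying that $N$ is well-defined precisely because $\rho$ is regular (the content of Theorem 6.15 and Remark 6.16) and checking each exclusionary clause of Definition 6.19 --- ``neither simple nor a silence'' for compound rhythms, ``neither a silence nor a simple or compound rhythm'' for complex ones --- against the numerical constraints of the case at hand, in particular that $N\geq 4$ rules out being simple and that $3\nmid N$ rules out membership in the range of $\Im$. A pure silence rhythm may also satisfy some of the numerical conditions, but since the conclusion is stated as a disjunction with ``silence'' listed first, this causes no difficulty.
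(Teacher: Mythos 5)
Your proof is correct, and it is exactly the argument the paper leaves implicit when it dismisses the corollary as ``a straightforward result of number theory'': a case split on $N=Card(\mathcal{I}^*)$ into $\{1\}$, $\{2,3\}$, multiples of $3$ at least $6$, and the rest, checking the exclusion clauses of Definition 6.19 in each case. Your bookkeeping (well-definedness of $\mathcal{I}^*$ via Theorem 6.15 and Remark 6.16, the explicit $\Im$- and $\star$-decompositions of the signature, and the observation that $3\nmid N$ rules out compoundness) fills the gap faithfully, so there is nothing to correct.
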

\begin{remark}
The method we supplied to assign a signature to a given regular rhythm is at a remarkable advantage. To show this, we have to introduce the corresponding version of such a notion prevalent among musicians, under the pressure of the tradition of music theory. We first fix a positive real number $t$ as the time value of the \emph{whole note} on the base of which other species of note values are able to be defined such as the \emph{minim} as long as half a whole note ($t/2$), the \emph{crotchet} as long as half a minim ($t/4$), the \emph{quaver} as long as half a minim ($t/8$), and so forth. For every regular $n$-beat rhythm (i.e. $Card(\mathcal{I}^*)=n\in \mathbb{N}\cup \{0\}$) the duration of each of whose beats is equated with $t/2^k$ for some $k\in \mathbb{N}\cup \{0\}$, the time signature $\frac{n}{2^k}$ is considered just as a notation (not to be confused with the rational fractions). One may see that the basic quantities describing the rhythmical constitution of rhythm like duration and tempo are tacitly suggested in this signature as well. Although there is no limitation on such an assignment to rhythms and any arbitrary rhythm signature can be covered by changing $t>0$, the job is not at all done uniquely, as two musicians may consider different time signatures especially with distinct denominators for a given rhythm (similar to the scale of melody disserted in the previous section). Meanwhile, our trend in doing so specifies only one characteristic in a unique way. Of course, it is worth noting that this traditional approach would clearly build up a fluency in the international orthography of music and provide a facility for performing songs. Depending on the amount of the tempo of the rhythm the appropriate value of $t>0$ may be determined and the optimum note (getting the most iteration in appearance from the value point of view) as the unit of beat of the rhythm intended for composer may then obtained. In turn, the solmization during performance reaches to the most comfortable state of its own. Likewise, the corresponding operations $\Im$ and $\star$ in this context as the generators of compound and complex rhythms are respectively interpreted as follows:
\begin{center}
$\dfrac{3}{2}\cdot \dfrac{m}{2^k}:=\dfrac{3m}{2^{k+1}}$,\,\,\,\,\,\,\,$\dfrac{m}{2^k}+\dfrac{n}{2^k}:=\dfrac{m+n}{2^k}$.
\end{center}
One may define other technical concepts in the framework of rhythms including weak and strong beats, offbeat, syncope, and so on which we leave to the purposeful reader.
\end{remark}
\paragraph*{}
Mathematically thinking of irregular rhythms, the nature does never permit us to realize unrhythmically any sort of sonic phenomena, even the sound of rain drops; this reality arises from the analytic point that every irrational number is estimated by a rational one ($\mathbb{Q}\subseteq \mathbb{R}$ is dense), the job that the mind does unconsciously. Therefore, if a song sounds too discordant to be danced to, that is not necessarily for the sake of irregularity, but rather for that there are countably many regular times in the nature most of which are more complicated. Good old days, what a voluminous disputation it was! Once we made a private challenge in a small community of musicians and mathematicians discussing whether it is possible to distinguish the rhythm of a piece of music while somebody maybe assumes it not to be essentially rhythmic. How could it be proved that the signature of the complex rhythm of ``Nostalgia'', due to Yanni, is equivalent to $\frac{5}{8}$?
\paragraph*{}
It is necessary to mention that we did not deeply work on the subject of rhythm whose full discussion, really and truly, forms a separate independent book. Having concluded the section, we believe that there is also a rhythmic version of the Conjecture asserting the classification of pleasant rhythms, whose formulation in the constructed language makes a reconditely controversial topic to the extent of being out of the mood to be sought.

\section{Appendix}
\subsection*{{\large\textbf{\RNum{1}}}}
\paragraph*{}
Here, we wish to talk about categoricity of our axiom system provided that the theory of sets is consistent as promised in the Introduction. But, before that some significant points are recommended to be stated. With regard to Axiom 13, it is not specified whether the function $\Gamma :\mathbb{S}\to C_p(\mathbb{R})$ acts surjectively, equivalent to whether or not the range of $\Gamma$ is a proper subset of $C_p(\mathbb{R})$. What is clear in the Cartesian model supplied in Remark 4.8 is $C_p^*(\mathbb{R})\subsetneq C_p(\mathbb{R})$, because there can certainly be found many nonconstant periodic continuous (and even differentiable) functions on $\mathbb{R}$ that are not of the trigonometric form. But, this is not theoretically guaranteed in the axiom system $\mathcal{M}(PI)$. In fact, one may show this statement is independent of the $PI$-axioms (neither provable nor refutable), and completeness of the system will be lost consequently. To fill the vacuity, we make use of a delicate trick to append such a normal property to the theory $\mathcal{M}(PI)$; we replace Axiom 13 by a logically stronger one, making $\Gamma$ onto, as follows.\\

\textbf{Axiom 13'.}\, Axiom 13 + $C_p(\mathbb{R})\subseteq C_p^*(\mathbb{R})$.
\paragraph*{}
We denote the resultant axiom system by $\mathcal{M}^*(PI)$, and the union of that and the Tone Music by $\mathcal{M}^*(PIT)$. Since these new theories are actually some extension (in fact, completion) of the old ones, it follows that all theorems of $\mathcal{M}(PI)$ and $\mathcal{M}(PIT)$ are still satisfied in $\mathcal{M}^*(PI)$ and $\mathcal{M}^*(PIT)$ respectively, but the Cartesian model of $\mathcal{M}(PI)$ (and $\mathcal{M}(PIT)$) does not work for $\mathcal{M}^*(PI)$ (and a fortiori $\mathcal{M}^*(PIT)$) any more. However, by Axiom 13' we obtain $C_p^*(\mathbb{R})=C_p(\mathbb{R})$, from which by the Axiom of Extensionality $\Gamma :\mathbb{S}\to C_p(\mathbb{R})$ establishes a one-to-one correspondence and supplies a physical representation of the sounds of our music. Indeed, by virtue of such a bijection, a natural model for $\mathcal{M}^*(PI)$ (and in turn $\mathcal{M}^*(PIT)$) is automatically produced in which every sound is interpreted as a nonconstant periodic continuous function on the real line and the rest of undefined concepts in the same way as they are theoretically characterized in $\mathcal{M}(PI)$ and $\mathcal{M}(PIT)$ (and of course in $\mathcal{M}^*(PI)$ and $\mathcal{M}^*(PIT)$) will be interpretable. One can easily check that all axioms of $\mathcal{M}^*(PIT)$ are true in this model. We call the model constructed in such a naive manner the \emph{real} model of our axiomatic system.
\paragraph*{}
The following shows that the real model for $\mathcal{M}^*(PI)$ is up to isomorphism the unique one.\\

\textbf{Metatheorem 1.}\, \textit{The theory $\mathcal{M}^*(PI)$ is categorical.}
\begin{proof}[Sketch of Proof]
$\mathcal{M}^*(PI)$ has at least one model, i.e. the real model, hence it is consistent. What remains to prove is that $\mathcal{M}^*(PI)$ has at most one model, which tacitly assures its completeness as well. So having considered any two models $\mathcal{M}_1$ and $\mathcal{M}_2$ for the axiom system $\mathcal{M}^*(PI)$, we will prove $\mathcal{M}_1$ and $\mathcal{M}_2$ are isomorphic. We let $\mathbb{S}_{\mathcal{M}_i}$, $*_i$, $\cong _i$, and $\Gamma _i$ be respectively the corresponding interpretation of the technical concepts of the sounds set, the relation of being lower-pitched on sounds, congruence of intervals, and the timber in the model $\mathcal{M}_i$ ($i=1,2$). Notice that both models $\mathcal{M}_1$ and $\mathcal{M}_2$ satisfy all the theorems of $\mathcal{M}^*(PI)$, so according to Remark 6.1, $\Gamma _i$ is an isometry between $\mathbb{S}_{\mathcal{M}_i}$ and $C_p(\mathbb{R})$ ($i=1,2$). Thus having set $\Gamma =\Gamma _2^{-1}o\Gamma _1$, the function $\Gamma :\mathbb{S}_{\mathcal{M}_1}\to \mathbb{S}_{\mathcal{M}_2}$ establishes a one-to-one correspondence (actually an isometry). One may easily show the $\Gamma$ establishes also the isomorphism of models $\mathcal{M}_1$ and $\mathcal{M}_2$; because the following properties are satisfied:
\begin{enumerate}
\item $s\in \mathbb{S}_{\mathcal{M}_1} \Leftrightarrow \Gamma (s)\in \mathbb{S}_{\mathcal{M}_2}$,
\item $x*_1y\Leftrightarrow \Gamma (x)*_2\Gamma (y)$,
\item $[a,b]\cong _1 [c,d]\Leftrightarrow [\Gamma (a),\Gamma (b)]\cong _2 [\Gamma (c),\Gamma (d)]$,
\end{enumerate}
based on Axiom 13', item 3 of Theorem 3.83, and items 3 and 4 of Theorem 3.86.
\end{proof}
\paragraph*{}
In conclusion, we get\\

\textbf{Metatheorem 2.}\, \textit{The theory $\mathcal{M}^*(PIT)$ is categorical.}
\begin{proof}[Sketch of Proof]
This is immediate based on Theorem 5.27 and Theorem 5.81 stating that the primitive concepts of tonality and harmony in $\mathcal{M}^*(PIT)$ are theoretically characterized in the language of the theory $\mathcal{M}^*(PI)$.
\end{proof}

\subsection*{{\large\textbf{\RNum{2}}}}
\paragraph*{}
Here, in order to illuminate the grand scheme of the introductory structure of the oriental music, the most commonly applied eastern musical scales in the language of our music system are included (using their Persian names) where $\varepsilon =2^{1/12}$ and $s\in \mathbb{S}$. The strange point is all these scales have right seven notes just like the minor and major scales. Why are the most applicable scales in the musical world heptatonic (Axiom 20)? Nobody knows!\\ 
\begin{align*}
\text{Shur};\,\,\,\,\,\,\,\,\,\,\,\,\,\,\,\,\,\,\,\,\,\,\,\,& (\hat{s},(\varepsilon^\frac{3}{2},\varepsilon^\frac{3}{2},\varepsilon^2,\varepsilon^2,\varepsilon,\varepsilon^2,\varepsilon^2)) \\
\text{Abuata};\,\,\,\,\,\,\,\,\,\,\,\,\,\,\,\,\,\,\,\,\,\,\,\,& (\hat{s},(\varepsilon^\frac{3}{2},\varepsilon^2,\varepsilon^2,\varepsilon,\varepsilon^2,\varepsilon^2,\varepsilon^\frac{3}{2})) \\
\text{Bayate Zand};\,\,\,\,\,\,\,\,\,\,\,\,\,\,\,\,\,\,\,\,\,\,\,\,& (\hat{s},(\varepsilon^2,\varepsilon^2,\varepsilon,\varepsilon^2,\varepsilon^2,\varepsilon^\frac{3}{2},\varepsilon^\frac{3}{2})) \\
\text{Nava};\,\,\,\,\,\,\,\,\,\,\,\,\,\,\,\,\,\,\,\,\,\,\,\,& (\hat{s},(\varepsilon^2,\varepsilon,\varepsilon^2,\varepsilon^2,\varepsilon^\frac{3}{2},\varepsilon^\frac{3}{2},\varepsilon^2)) \\
\text{Dashti};\,\,\,\,\,\,\,\,\,\,\,\,\,\,\,\,\,\,\,\,\,\,\,\,& (\hat{s},(\varepsilon,\varepsilon^2,\varepsilon^2,\varepsilon^\frac{3}{2},\varepsilon^\frac{3}{2},\varepsilon^2,\varepsilon^2)) \\
\text{Afshari};\,\,\,\,\,\,\,\,\,\,\,\,\,\,\,\,\,\,\,\,\,\,\,\,& (\hat{s},(\varepsilon^2,\varepsilon^\frac{3}{2},\varepsilon^\frac{3}{2},\varepsilon^2,\varepsilon^2,\varepsilon,\varepsilon^2)) \\
\text{Saba}\, (\text{Arab Shur});\,\,\,\,\,\,\,\,\,\,\,\,\,\,\,\,\,\,\,\,\,\,\,\,& (\hat{s},(\varepsilon^\frac{3}{2},\varepsilon^\frac{3}{2},\varepsilon,\varepsilon^3,\varepsilon,\varepsilon^2,\varepsilon^2)), \,\text{or}\\
&(\hat{s},(\varepsilon^\frac{3}{2},\varepsilon^\frac{3}{2},\varepsilon^\frac{3}{2},\varepsilon^\frac{5}{2},\varepsilon,\varepsilon^2,\varepsilon^2)) \\
\text{Mahur}\, (=\text{Major}); \,\,\,\,\,\,\,\,\,\,\,\,\,\,\,\,\,\,\,\,\,\,\,\,& (\hat{s},(\varepsilon^2,\varepsilon^2,\varepsilon,\varepsilon^2,\varepsilon^2,\varepsilon^2,\varepsilon)) \\
\text{Rast}\, (\text{or Rast-Panjgah}); \,\,\,\,\,\,\,\,\,\,\,\,\,\,\,\,\,\,\,\,\,\,\,\,& (\hat{s},(\varepsilon^2,\varepsilon^2,\varepsilon,\varepsilon^2,\varepsilon^2,\varepsilon, \varepsilon^2)) \\
\text{Homayun};\,\,\,\,\,\,\,\,\,\,\,\,\,\,\,\,\,\,\,\,\,\,\,\,& (\hat{s},(\varepsilon^\frac{3}{2},\varepsilon^\frac{5}{2},\varepsilon,\varepsilon^2,\varepsilon,\varepsilon^2,\varepsilon^2)), \,\text{and}\\
&(\hat{s},(\varepsilon^2,\varepsilon^\frac{3}{2},\varepsilon^\frac{5}{2},\varepsilon,\varepsilon^2,\varepsilon,\varepsilon^2)) \\
(\text{Bayate})\,\text{Esfahan};\,\,\,\,\,\,\,\,\,\,\,\,\,\,\,\,\,\,\,\,\,\,\,\,& (\hat{s},(\varepsilon^2,\varepsilon,\varepsilon^2,\varepsilon^2,\varepsilon^\frac{3}{2},\varepsilon^\frac{5}{2},\varepsilon)) \\
\text{Shushtari};\,\,\,\,\,\,\,\,\,\,\,\,\,\,\,\,\,\,\,\,\,\,\,\,& (\hat{s},(\varepsilon,\varepsilon^3,\varepsilon,\varepsilon^2,\varepsilon,\varepsilon^2,\varepsilon^2)) \\
\text{Segah};\,\,\,\,\,\,\,\,\,\,\,\,\,\,\,\,\,\,\,\,\,\,\,\,& (\hat{s},(\varepsilon^2,\varepsilon^\frac{3}{2},\varepsilon^\frac{3}{2},\varepsilon^2,\varepsilon^\frac{3}{2},\varepsilon^\frac{3}{2},\varepsilon^2)) \\
\text{Chaargah};\,\,\,\,\,\,\,\,\,\,\,\,\,\,\,\,\,\,\,\,\,\,\,\,& (\hat{s},(\varepsilon^\frac{3}{2},\varepsilon^\frac{5}{2},\varepsilon,\varepsilon^2,\varepsilon^\frac{3}{2},\varepsilon^\frac{5}{2},\varepsilon)) \\
\end{align*}

\subsection*{{\large\textbf{\RNum{3}}}}
\paragraph*{}
Here, in the following, an attempt is made to provide a brief score of a pleasant melody based on the scale of Dashti. You may have got an instrumental song including such a melody (accompanying its MIDI file) via the following address:
\begin{center}
\url{https://www.dropbox.com/h?preview=Seyyed+Mehdi+Nemati+-+Hava.zip}
\end{center}
or to get the direct download link click 
\href{http://s4.picofile.com/file/8372294418/Seyyed_Mehdi_Nemati_Hava.zip.html}{here}.
\\
\begin{figure}[h]
\begin{center}
\includegraphics[width=12cm]{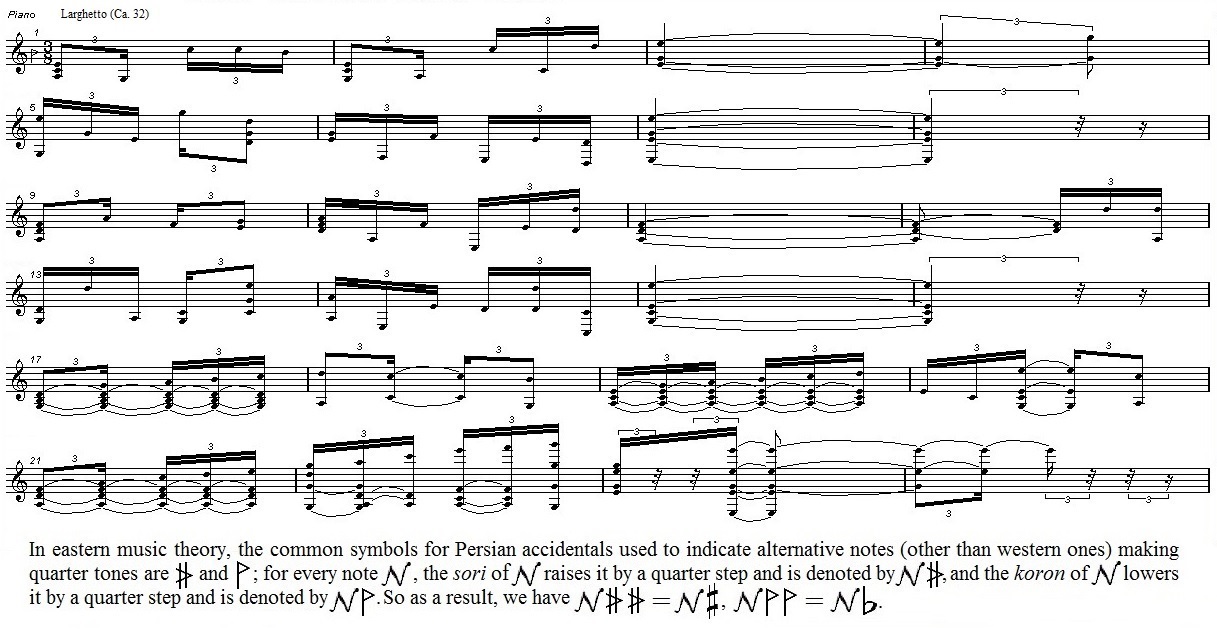}
\end{center}
\end{figure}

\subsection*{Final Discourse}
\paragraph*{}
The following would be apparent for the one who compares the structural resemblance among mathematical theories we have found axiomatized:
\paragraph*{}
`\emph{Having done not a lot, the creator of the universe has made \emph{almost} one structure on the base of which it is human to create many universes different in  structure, having done a lot. The goal may just be for joy to \underline{himself} while making sure no question ain't never ever got an absolute answer.}'

\section*{Acknowledgement}
\paragraph*{}
Mission accomplished. I would appreciate it \textbf{only} if God helped anyone (to) realize what I have meant! This is a lonely unsupported work in the privacy of my own room whose hardly-procured compilation has led to frequent mental tiredness during several years, dedicated to no one.

$\bullet$ \,\,\,\,\,\,\,\textit{E-mail}: \texttt{seyyedmehdinemati@yahoo/gmail.com}\footnote{\textsl{It was a dream inspired by Comet Hale-Bopp that was observed since childhood and needs more than two thousand years to become visible next time, in which case it is not even obvious whether there is anything remained to be done about the theory, much less anyone to respond (how awful)! Anyway, further dreams will be created and come true. Send my love, if possible.}}

\begin{thebibliography}{100}
\bibitem {A1} Ades H.W., et al.: \textit{Handbook of Sensory Physiology, Auditory System: Anatomy Physiology (Ear).} Berlin, Springer-Verlag, (1974)
\bibitem {A2} Aoki N., Hiraide K.: \textit{Topological Theory of Dynamical Systems - Recent Advances.} Amsterdam, North Holland, (1994)
\bibitem {A3} Apostol T.M.: \textit{Mathematical Analysis.} 2nd ed., Hong Kong, Pearson Education, Inc., (2004)
\bibitem {A4} Andr\'e R.: \textit{Axioms and Set Theory: A first course in Set Theory.} London, Chapman \& Hall, (2014)
\bibitem {B1} Benson D.J.: \textit{Music: A Mathematical Offering.} Cambridge, Cambridge University Press, (2006)
\bibitem {B2} Brin M., Stuck G.: \textit{Introduction to Dynamical Systems.} Cambridge, Cambridge University Press, (2002)
\bibitem {B3} Burton D.M.: \textit{The History of Mathematics: An Introduction.} 7th ed., New York, McGraw-Hill, Inc., (2011)
\bibitem {C1} Chellas B.F.: \textit{Modal Logic: An Introduction.} 1st ed., Cambridge, Cambridge University Press, Inc., (1980)
\bibitem {C2} Cocchiarella N.B., Freund M.A.: \textit{Modal Logic: An Introduction to its Syntax and Semantics.} 1st ed., Oxford, Oxford University Press, Inc., (2008)
\bibitem {C3} Conway J.B.: \textit{A Course in Functional Analysis.} 2nd ed., New York, Springer-Verlag, Inc., (1990)
\bibitem {D1} Deutsch D.: \textit{The Psychology of Music.} 3rd ed., London, Academic Press, Inc., (2013)
\bibitem {D2} Duffin R.W.: \textit{How Equal Temperament Ruined Harmony (and Why You Should Care).} New York, W. W. Norton \& Company, (2008)
\bibitem {E1} Enderton H.B.: \textit{A Mathematical Introduction to Logic.} 2nd ed., San Diego, Harcourt/Academic Press, (2001)
\bibitem {E2} Everest F.A.: \textit{Master Handbook of Acoustics.} 4th ed., New York, McGraw-Hill, Inc., (2001)
\bibitem {E3} Eves J.H.: \textit{An Introduction to the History of Mathematics.} 6th ed., Philadelpia, Saunders College Pub., Inc., (1990)
\bibitem {F1} Farhat H.: \textit{The Dastg\= ah Concept in Persian Music.} Cambridge, Cambridge University Press, (2004)
\bibitem {G1} Greenberg M.J.: \textit{Euclidean and Non-Euclidean Geometries: Development and History.} 3rd \& 4th ed., New York, W. H. Freeman and Company, (1994, 2008)
\bibitem {H1} Halmos P.R.: \textit{Naive Set Theory.} New York, Springer-Verlag, Inc., (1974)
\bibitem {H2} Hedman Sh.: \textit{A First Course in Logic: An Introduction to Model Theory, Proof Theory, Computability, and Complexity.} 1st ed., Oxford, Oxford University Press, (2004)
\bibitem {H3} Hewitt M.: \textit{Music Theory for
Computer Musicians.} Boston, Course Technology PTR, (2008)
\bibitem {H4} Holmgren R.A.: \textit{A First Course in Discrete Dynamical Systems.} New York, Springer-Verlag, Inc., (1994)
\bibitem {H5} Hrbacek K., Jech T.: \textit{Introduction to Set Theory.} 3rd ed., Revised and Expanded, New York, CRC Press, (1999)
\bibitem {H6} Hungerford T.W.: \textit{Algebra}, Revised and Expanded, New York, Springer-Verlag, Inc., (1974)
\bibitem {J1} Johnson T.A.: \textit{Foundations of Diatonic Theory: A Mathematically Based Approach to Music Fundamentals.} Plymouth, Scarecrow Press, Inc., (2008)
\bibitem {L1} Lapp D.R.: \textit{The Physics of Music and Musical Instruments.} Medford, Wright Center for Science Education (Tufts University), (2003)
\bibitem {L2} Lee J.M.: \textit{Introduction to Smooth Manifolds.} 2nd ed., New York, Springer-Verlag, Inc., (2013)
\bibitem {L3} Lee J.M.: \textit{Introduction to Topological Manifolds.} 2nd ed., New York, Springer-Verlag, Inc., (2011)
\bibitem {M1} Mandelkern M.: \textit{Metrization of the one-point compactification.} Proceedings of the American Mathematical Society \textbf{107} (4), 1111-1115, (1989)
\bibitem {M2} Manin Yu.I. : \textit{A Course in Mathematical Logic for Mathematicians.} 2nd ed., New York, Springer-Verlag, Inc., (2010)
\bibitem {M3} Marker D.: \textit{Model Theory: An Introduction.} New York, Springer-Verlag, Inc., (2002)
\bibitem {M4} Miller M.: \textit{The Complete Idiot's Guide to Music Theory.} 2nd ed., New York, Penguin Group, Inc., (2005)
\bibitem {M5} Morse A.P.: \textit{A Theory of Sets.} 2nd ed., Orlando, Academic Press, Inc., (1986)
\bibitem {M6} Munkres J.R.: \textit{Topology: a First Course.} Englewood Cliffs, New Jersey, Prentice-Hall, Inc., (1975)
\bibitem {N1} Nettl B.: \textit{Music in Primitive Culture.} 1st ed., Cambridge, Harvard University Press, (1956)
\bibitem {P1} Pace K.G.: \textit{The Comprehensive Book of Chords: For Piano and Keyboard Players.} Carolina, CreateSpace Independent Publishing Platform, (2013)
\bibitem {P2} Poizat B.: \textit{A Course in Model Theory: An Introduction to Contemporary Mathematical Logic.} New York, Springer-Verlag, Inc., (2000)
\bibitem {R1} Rienstra S.W., Hirschberg A.: \textit{An Introduction to Acoustics.} Eindhoven, Eindhoven University of Technology, (2016)
\bibitem {R2} Robson E., Stedall J.: \textit{The Oxford Handbook of the History of Mathematics.} 1st ed., Oxford, Oxford University Press, (2009)
\bibitem {R3} Royden H.L.: \textit{Real Analysis.} 3rd ed., New York, Macmillan Publishing Company, Inc., (1988)
\bibitem {R4} Rudin W.: \textit{Functional Analysis.} 2nd ed., New York, McGraw-Hill, Inc., (1991)
\bibitem {S1} Schmidt-Jones C.: \textit{Understanding Basic Music Theory.} Houston, 12th Media Services, (2018)
\bibitem {S2} Scruton R.: \textit{The Aesthetics of Music.} 1st ed., Oxford, Oxford University Press, (1999)
\bibitem {T1} Tent K., Ziegler M.: \textit{A Course in Model Theory (Lecture Notes in Logic).} 1st ed., Cambridge University Press, (2012)
\bibitem {T2} Tymoczko D.: \textit{A Geometry of Music: Harmony and Counterpoint in the Extended Common Practice.} 1st ed., Oxford, Oxford University Press, (2011)
\bibitem {V1} Venema G.A.: \textit{Foundations of Geometry.} 2nd ed., Boston, Pearson Education, Inc., (2012)
\end{thebibliography}
\end{document}